\newtheorem{thm}{Theorem}[section]
\newtheorem{lemma}[thm]{Lemma}
\newtheorem{conj}[thm]{Conjecture}
\newtheorem{prop}[thm]{Proposition}
\newtheorem{obs}[thm]{Observation}
\newtheorem*{claim*}{Claim}
\numberwithin{figure}{section}
\theoremstyle{definition}
\newtheorem{defn}[thm]{Definition}
\newtheorem{rmk}[thm]{Remark}
\newcommand{\ds}{\displaystyle}
\newcommand{\ol}{\overline}
\newcommand{\one}{\mathbf{1}}
\def\A{\mathcal{A}}
\def\B{\mathcal{B}}
\def\C{\mathcal{C}}
\def\D{\mathcal{D}}
\def\E{\mathcal{E}}
\def\F{\mathcal{F}}
\def\HH{\mathcal{H}}
\def\I{\mathcal{I}}
\def\K{\mathcal{K}}
\def\LL{\mathcal{L}}
\def\M{\mathcal{M}}
\def\n{\mathcal{N}}
\def\O{\mathcal{O}}
\def\Q{\mathcal{Q}}
\def\R{\mathcal{R}}
\def\T{\mathcal{T}}
\def\U{\mathcal{U}}
\def\V{\mathcal{V}}
\def\W{\mathcal{W}}
\def\X{\mathcal{X}}
\def\Y{\mathcal{Y}}
\def\Yh{\hat{\mathcal{Y}}}
\def\Z{\mathcal{Z}}
\def\Xb{\ol{X}}
\def\Yb{\ol{Y}}
\def\Xs{\ol{X}^*}
\def\Ys{\ol{Y}^*}
\def\Qs{Q^*}
\def\Xt{\tilde{X}}
\def\Yt{\tilde{Y}}
\def\Qt{\tilde{Q}}
\def\As{A^*}
\def\At{\tilde{A}}
\def\Nt{\tilde{N}}
\def\Ex{\mathbb{E}}
\def\N{\mathbb{N}}
\def\Pr{\mathbb{P}}
\def\RR{\mathbb{R}}
\def\XX{\mathbb{X}}
\def\YY{\mathbb{Y}}
\def\ZZ{\mathbb{Z}}
\def\a{\mathbf{a}}
\def\m{\mathbf{m}}
\def\le{\leqslant}
\def\ge{\geqslant}
\def\eps{\varepsilon}
\def\<{\langle}
\def\>{\rangle}
\def\Var{\textup{Var}}
\def\Cov{\textup{Cov}}
\def\Im{\textup{Im}}
\title{The triangle-free process and the Ramsey number $R(3,k)$}
\author{Gonzalo Fiz Pontiveros}
\author{Simon Griffiths}
\author{Robert Morris} 
 \address{
   Gonzalo Fiz Pontiveros, Simon Griffiths, Robert Morris  \hfill\break 
    IMPA, Estrada Dona Castorina 110, Jardim Bot\^anico, Rio de Janeiro, RJ, Brasil
 }
 \email{gf232|sgriff|rob@impa.br}
\thanks{Research supported in part by CNPq bolsas PDJ (GFP and SG) and by CNPq Proc.~479032/2012-2 and Proc.~303275/2013-8, and FAPERJ Proc.~201.598/2014 (RM)} 
\begin{document}

\begin{abstract}
The areas of Ramsey theory and random graphs have been closely linked ever since Erd\H{o}s' famous proof in 1947 that the `diagonal' Ramsey numbers $R(k)$ grow exponentially in $k$. In the early 1990s, the triangle-free process was introduced as a model which might potentially provide good lower bounds for the `off-diagonal' Ramsey numbers $R(3,k)$. In this model, edges of $K_n$ are introduced one-by-one at random and added to the graph if they do not create a triangle; the resulting final (random) graph is denoted $G_{n,\triangle}$. In 2009, Bohman succeeded in following this process for a positive fraction of its duration, and thus obtained a second proof of Kim's celebrated result that $R(3,k) = \Theta \big( k^2 / \log k \big)$. 

In this paper we improve the results of both Bohman and Kim, and follow the triangle-free process all the way to its asymptotic end. In particular, we shall prove that
$$e\big( G_{n,\triangle} \big) \,=\, \left( \frac{1}{2\sqrt{2}} + o(1) \right) n^{3/2} \sqrt{\log n },$$
with high probability as $n \to \infty$. We also obtain several pseudorandom properties of $G_{n,\triangle}$, and use them to bound its independence number, which gives as an immediate corollary
$$R(3,k) \, \ge \, \left( \frac{1}{4} - o(1) \right) \frac{k^2}{\log k}.$$
This significantly improves Kim's lower bound, and is within a factor of $4 + o(1)$ of the best known upper bound, proved by Shearer over 25 years ago.  
\end{abstract}

\maketitle

\section{Introduction}

For more than eighty years, since the seminal papers of Ramsey~\cite{Ramsey} and Erd\H{o}s and Szekeres~\cite{ESz}, the area now known as Ramsey theory has been of central importance in combinatorics. The subject may be summarised by the following mantra: ``Complete chaos is impossible!" or, more precisely (if less poetically), ``Every large system contains a well-ordered sub-system." The theory consists of a large number of deep and beautiful results, as well as some of the most important and intriguing open questions in combinatorics. These open problems have, over the decades, been a key catalyst in the development of several powerful techniques, most notably the Probabilistic Method, see~\cite{AS}. 

The archetypal Ramsey-type problem is that of bounding Ramsey numbers. The basic question is as follows: for which $n \in \N$ does it hold that every red-blue colouring of the edges of the complete graph $K_n$ contains either a red $K_k$  or a blue $K_\ell$? The Ramsey number, denoted $R(k,\ell)$, is defined to be the smallest such integer $n$. Shortly after Ramsey~\cite{Ramsey} proved that $R(k,\ell)$ is finite for every $k$ and $\ell$, Erd\H{o}s and Szekeres~\cite{ESz} proved in 1935 the explicit upper bound $R(k,\ell) \le {k + \ell - 1 \choose \ell - 1}$, which implies in particular that $R(k) := R(k,k) \le 4^k$, and that $R(3,k) = O(k^2)$. Despite an enormous amount of effort, the former bound has only recently been improved by a super-polynomial factor, by Conlon~\cite{Conlon}, who refined the earlier method of Thomason~\cite{T88}. A~constructive super-polynomial lower bound was not obtained until 1981, by Frankl and Wilson~\cite{FW}, whose beautiful proof used techniques from linear algebra, but an exponential lower bound was given already in 1947 by Erd\H{o}s~\cite{E47}, whose seminal idea (``Colour randomly!") initiated the study of probabilistic combinatorics. Over the past 65 years this bound has only been improved by a factor of 2, by Spencer~\cite{Sp75} in 1975, using the Lovasz Local Lemma~\cite{LLL}. In summary, the current state of knowledge in the `diagonal' case is as follows:
$$\bigg( \frac{\sqrt{2}}{e} - o(1) \bigg) k \cdot 2^{k/2} \, \le \, R(k) \, \le \, \exp\bigg( - \Omega(1) \frac{(\log k)^2}{\log \log k} \bigg) \cdot 4^k.$$

After the diagonal case, the next most extensively studied setting is the so-called `off-diagonal' case, $\ell = 3$, where much more is known. As noted above, it follows from the Erd\H{o}s--Szekeres bound mentioned above that $R(3,k) = O(k^2)$ and, in a visionary paper from 1961, Erd\H{o}s~\cite{E61} proved a  lower bound of order $k^2 / (\log k)^2$ by applying a deterministic algorithm to the random graph $G(n,p)$. An important breakthrough was obtained by Ajtai, Koml\'os and Szemer\'edi~\cite{AKSz80,AKSz81} in 1980, who proved that $R(3,k) = O(k^2 / \log k)$, and a little later by Shearer~\cite{Sh83}, who refined the method of~\cite{AKSz80} and obtained a much better constant. However, it was not until 1995 that a complimentary lower bound was obtained, in a famous paper of Kim~\cite{Kim}. We remark that the papers~\cite{AKSz81} and~\cite{Kim} were particular important, since they (respectively) introduced and greatly developed the so-called \emph{semi-random method}. 

More recently, Bohman~\cite{Boh} gave a new proof of Kim's result, using the triangle-free process (see below). In this paper, we shall significantly improve both of these results, following the triangle-free process to its (asymptotic) end, and proving a lower bound on $R(3,k)$ that is within a factor of $4 + o(1)$ of Shearer's bound. We remark that very similar results have recently been obtained independently by Bohman and Keevash~\cite{BK2} using related methods.

%For more on the history and development of Ramsey Theory, we refer the reader to~\cite{GRS}.

\subsection{Random graph processes}

The modern theory of random graph processes was initiated by Erd\H{o}s and R\'enyi~\cite{ER59,ER60} in 1959, who studied the evolution of the graph with edge set $\{e_1,\ldots,e_m\}$, where $(e_1,\ldots,e_N)$ is a (uniformly chosen) random permutation of $E(K_n)$. This model, now known as the Erd\H{o}s-R\'enyi random graph, is one of the most extensively studied objects in combinatorics, see~\cite{RG} or~\cite{JLR}. Two especially well-studied problems are the emergence of the `giant component' around $m = n/2$, see~\cite{Bela84,ER60,JKLP} (or, more recently, e.g.,~\cite{BJR,DKLP}), and the concentration (and location) of the chromatic number~\cite{Bela87,Bela88,Matula,SS} (or, more recently,~\cite{AN,Heckel}), where martingale techniques, which will play a important role in this work, were first used to study random graph processes.

In general, a \emph{random graph process} consists of a sequence of graphs $(G_0,G_1,\ldots)$, where the graph $G_m$ is chosen randomly according to some probability distribution which depends on the sequence $(G_0,\ldots,G_{m-1})$ (and often just on the graph $G_{m-1}$). The study of these objects has exploded in recent years, as the ubiquity of `random-like' graphs in nature has come to the attention of the scientific community. Particularly well-studied processes include the `preferential attachment' models of Barab\'asi and Albert~\cite{BA}, the so-called `Achlioptas processes' introduced by Achlioptas in 2000 (see, e.g.,~\cite{ASS,BohSci,BF,BKr,SW}), and studied most notably by Riordan and Warnke~\cite{RW11,RW12}. Random processes also played a key role in the breakthrough results of Johansson, Kahn and Vu~\cite{JKV} on the threshold for $H$-factors in random graphs, and of Keevash~\cite{Kee} on the existence of designs. 

A technique that has proved extremely useful in the study of random graph processes is the so-called `differential equations method'. In this method, whose application to random graphs was pioneered in the 1990s by Ruci\'nski and Wormald~\cite{RW1,RW2,Worm95,Worm99}, the idea is to `track' a collection of graph parameters, by showing that (with high probability) they closely follow the solution of a corresponding family of differential equations. This method has been used with great success in recent years (see for example~\cite{BBFP,Boh,BFL,BK}); we note in particular the recent result of Bohman, Frieze and Lubetzky~\cite{BFL} that the so-called `triangle removal process' ends with $n^{3/2 + o(1)}$ edges.

\subsection{The triangle-free process}

Consider the following random graph process $(G_m)_{m \in \N}$ on vertex set $[n] = \{1,\ldots,n\}$. Let $G_0$ be the empty graph and, for each $m \in \N$, let $G_m$ be obtained from $G_{m-1}$ by adding a single edge, chosen uniformly from those non-edges of $G_{m-1}$ which do not create a triangle. The process ends when we reach a maximal triangle-free graph; we denote by $G_{n,\triangle}$ this (random) final graph. 

The triangle-free process was first suggested by Bollob\'as and Erd\H{o}s at the ``Quo Vadis, Graph Theory?" conference in 1990. The main reason for the introduction of the triangle-free process (and the more general $H$-free process) was the hope that $G_{n,\triangle}$ may give a good lower bound for $R(3,k)$. As a first step, Bollob\'as and Erd\H{o}s asked for the size of the final graph, and made some simple observations about the triangle-free and $C_4$-free processes~\cite{Bela}. The first non-trivial results about these processes were obtained by Erd\H{o}s, Suen and Winkler~\cite{ESW}, who showed that, with high probability, $e(G_{n,\triangle}) \ge c n^{3/2}$ edges for some constant $c > 0$. (Throughout the paper, we write `with high probability' to mean with probability tending to 1 as $n \to \infty$, where $n = |V(G_{n,\triangle})|$ is the size of the vertex set in the triangle-free process.\footnote{In practice, the probabilities of our bad events will all be at most $n^{-\log n}$, see for example Theorem~\ref{finalthm}.}) 

Determining the order of magnitude of $e(G_{n,\triangle})$ remained an open problem for nearly 20 years until the breakthrough paper of Bohman~\cite{Boh}, who followed the triangle-free process for a constant proportion of its lifespan, and hence proved that 
\begin{equation}\label{eq:Bohthm}
e\big( G_{n,\triangle} \big) \, = \, \Theta\big( n^{3/2} \sqrt{\log n} \big).
\end{equation}
Shortly afterwards, Bohman and Keevash~\cite{BK} extended and generalized the method of~\cite{Boh} to the setting of the $H$-free process, where the triangle is replaced by an arbitrary `forbidden' graph $H$. Improving on earlier results of Bollob\'as and Riordan~\cite{BR00} and Osthus and Taraz~\cite{OT}, they proved that for any strictly balanced\footnote{$m_2(H) := \ds\max_{F \subseteq H}$ $\frac{e(F) - 1}{v(F) - 2}$, and a graph is said to be strictly balanced if $\frac{e(F) - 1}{v(F) - 2} < \frac{e(H) - 1}{v(H) - 2}$ for every $F \subsetneq H$.} graph $H$, the number of edges in the final graph $G_{n,H}$ satisfies
$$e\big( G_{n,H} \big) \, \ge \, c \cdot n^{2 - 1 / m_2(H)} (\log n)^{1 / (e(H) - 1)}$$
for some constant $c > 0$ which depends on $H$, which is conjectured to be within a constant factor of the truth. (See~\cite{Pic,Lutz2,Lutz3} for upper bounds in the cases $H = K_4$ and $H = C_\ell$.) Although in this paper we shall focus on the case $H = K_3$, we believe that our methods can also be applied in the general setting, and we plan to return to this topic in a future work.

We shall follow the triangle-free process until $o\big( n^{3/2} \sqrt{\log n} \big)$ steps from the end, and hence obtain a sharp version of Bohman's theorem. Our first main result is as follows.

\begin{thm}\label{triangle}
$$e\big( G_{n,\triangle} \big) \,=\, \left( \frac{1}{2\sqrt{2}} + o(1) \right) n^{3/2} \sqrt{\log n},$$
with high probability as $n \to \infty$.
\end{thm}

We shall moreover control various parameters associated with the graph process, showing that they take the values one would expect in a random graph of the same density. Thus, one may morally consider the main result of this paper to be the following imprecise statement: ``For all $m \le (1 + o(1)) e( G_{n,\triangle} )$, the graph $G_m$ closely resembles the Erd\H{o}s-R\'enyi random graph $G(n,m)$, except for the fact that it has no triangles". One significant consequence of this pseudo-theorem (or rather, of the precise theorems stated below), is that we can bound (with high probability) the independence number of $G_{n,\triangle}$, and hence obtain the following improvement of Kim's lower bound on $R(3,k)$. 

\begin{thm}\label{R3k}
$$\left( \frac{1}{4} - o(1) \right) \frac{k^2}{\log k} \, \le \, R(3,k) \, \le \, \big( 1 \pm o(1) \big) \frac{k^2}{\log k}$$
as $k \to \infty$.
\end{thm}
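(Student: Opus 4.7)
The upper bound $R(3,k) \le (1 + o(1)) k^2/\log k$ is Shearer's theorem~\cite{Sh83}, so the whole content lies in the lower bound. Since $G_{n,\triangle}$ is triangle-free by construction, it suffices to choose $n$ as large as possible with $\alpha(G_{n,\triangle}) < k$ whp; then $R(3,k) > n$. By Theorem~\ref{triangle} the average degree is $\bar d = (1/\sqrt 2 + o(1))\sqrt{n\log n}$, and the $G(n,p)$-heuristic with $p = \bar d/n$ gives a predicted independence number of $(2/p)\log(np) \sim \sqrt{2 n \log n}$. I would therefore aim to prove
\begin{equation}\label{eq:alphagoal}
\alpha(G_{n,\triangle}) \, \le \, \big( \sqrt 2 + o(1) \big) \sqrt{n \log n} \quad \text{whp;}
\end{equation}
choosing $n = \lfloor (1/4 - o(1)) k^2/\log k \rfloor$ and using $\log n = (2 + o(1))\log k$ gives $(\sqrt 2 + o(1))\sqrt{n\log n} < k$, so~\eqref{eq:alphagoal} immediately yields the claimed lower bound.

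The natural route to~\eqref{eq:alphagoal} is a union bound. Fix $s = \lceil (\sqrt 2 + \eps) \sqrt{n\log n} \rceil$ and a candidate set $S \subseteq [n]$ with $|S| = s$; let $Q_m(S)$ denote the number of \emph{open pairs} in $S$ after $m$ steps of the triangle-free process (pairs in $S$ which are not yet edges and do not yet close a triangle), and let $Q_m$ be the corresponding total over all of $[n]$. Conditional on $S$ being independent after step $m$, the $(m+1)$st edge falls inside $S$ with probability $Q_m(S)/Q_m$, so
$$\Pr\big( S \text{ independent in } G_{n,\triangle} \big) \, \le \, \exp\bigg( - \sum_m \frac{Q_m(S)}{Q_m} \bigg).$$
If one can show the pseudo-random lower bound
$$\frac{Q_m(S)}{Q_m} \, \ge \, \big( 1 - o(1) \big) \binom{s}{2}\bigg/\binom{n}{2}$$
uniformly in $S$ and throughout the whole life of the process, then summing over the $M = (1/(2\sqrt 2) + o(1))\, n^{3/2}\sqrt{\log n}$ steps gives an exponent exceeding $\log \binom{n}{s}$ by a multiplicative factor $1 + \Omega(\eps)$, which comfortably suffices for a union bound over $\binom{n}{s}$ sets $S$.

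The principal obstacle is precisely this \emph{uniform} pseudo-random control of $Q_m(S)$. It forces one to push the differential-equations-method analysis of Bohman~\cite{Boh} (which already yields Theorem~\ref{triangle}) all the way to the asymptotic end of the process, rather than stopping at a constant fraction of its lifespan; to simultaneously track a whole hierarchy of local statistics of $(G_m)$ (degrees, codegrees, triangle-counts at pairs, and open-pair counts in short configurations inside arbitrary $S$); and to prove concentration at a rate of roughly $n^{-\omega(\log n)}$, as needed to absorb the $\binom{n}{s}$-sized union bound. The precise constant $1/4$ then drops out purely from solving $(\sqrt 2 + o(1))\sqrt{n\log n} = k$ against~\eqref{eq:alphagoal}, matching the $G(n,p)$ heuristic.
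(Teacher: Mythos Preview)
Your high-level reduction is exactly right and matches the paper: the upper bound is Shearer, and the lower bound follows from $\alpha(G_{n,\triangle}) \le (\sqrt 2 + o(1))\sqrt{n\log n}$ (Theorem~\ref{thm:indepsets}); your arithmetic converting this into $R(3,k) \ge (1/4 - o(1))k^2/\log k$ is correct, and the union-bound skeleton --- bounding $\Pr(S \text{ independent})$ by $\prod_m (1 - o(G_m[S])/Q(m))$ --- is also the paper's starting point (Lemma~\ref{indep:a}).

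The gap is in your diagnosis of the obstacle. The uniform pseudo-random lower bound $o(G_m[S]) \ge (1 - o(1))\binom{s}{2}e^{-4t^2}$ is \emph{false} for some sets $S$, and no improvement in concentration will repair it. If $S$ contains a large fraction of $N_{G_{m^*}}(v)$ for some vertex $v$ --- and since $d(v) \approx (1/\sqrt 2)\sqrt{n\log n} = \Theta(s)$ this is perfectly possible --- then $\Theta(s^2)$ pairs inside $S$ are \emph{deterministically} closed by $v$, pushing $o(G_m[S])$ well below the target; ``tracking more local statistics'' cannot change this. The paper's fix (Section~\ref{indepSec}, Definitions~\ref{def:aJ}--\ref{def:ABCD}) is a four-way case split on the vector $\a = (|N_{G_{m^*}}(v_j) \cap S|)_{v_j \in J}$ of large intersections: your argument is case~$\A$; case~$\B$ ($\sum a_j^2$ small) is handled by a martingale on $o(G_m[S])$ after excising the sets $N(v_j)\cap S$; and the real work is cases~$\C$ and~$\D$, where one \emph{abandons} the open-edge count in $S$ entirely and instead shows that the required dense bipartite subgraph $G_{m^*}[J',S]$, together with the schedule $\m = (m(f))_{f\in E(H)}$ at which its edges appeared and the forbidden $Y$-neighbours never chosen, is itself so unlikely that it beats the union bound (Lemmas~\ref{indep:c} and~\ref{indep:d}). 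This structural argument for the ``bad'' sets is the missing idea.
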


We repeat, for emphasis, that the upper bound in Theorem~\ref{R3k} was proved by Shearer~\cite{Sh83} over 25 years ago. We remark that the factor of $4 + o(1)$ difference between the two bounds appears to have two separate sources, each of which contributes a factor of two. To see this, recall that Shearer proved that
\begin{equation}\label{eq:Shearer}
\alpha(G) \, \ge \, \big( 1 - o(1) \big) \frac{n \log d}{d}
\end{equation}
for any triangle-free $n$-vertex graph with average degree $d$, which implies the bound stated above, since the independence number of such a graph is clearly also at least $d$. Our results (see Theorem~\ref{thm:indepsets}, below) imply that~\eqref{eq:Shearer} is within a factor of two of being best possible (in the critical range), and we suspect that $G_{n,\triangle}$ is asymptotically extremal. Moreover, the independence number of $G_{n,\triangle}$ is (perhaps surprisingly) roughly twice its maximum degree, rather than asymptotically equal to it. % (which is an obvious lower bound for a triangle-free graph). 
%Motivated by the heuristic that `random is best' for many Ramsey-type problems, 
We conjecture that our bound is in fact sharp.

\begin{conj}
$$R(3,k) \, = \, \left( \frac{1}{4} + o(1) \right) \frac{k^2}{\log k}$$
as $k \to \infty$.
\end{conj}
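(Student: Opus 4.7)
The conjecture demands a factor-of-$4$ improvement of the current upper bound $R(3,k)\le (1+o(1))k^2/\log k$ due to Shearer, as the matching lower bound is already furnished by Theorem~\ref{R3k}. Inverting the relation $n = (1/4)k^2/\log k$, we find $k \sim \sqrt{2n\log n}$, so the conjecture is equivalent to the assertion that \emph{every} triangle-free graph $G$ on $n$ vertices satisfies
$$\alpha(G) \ \ge \ \big(1 - o(1)\big)\sqrt{2\, n \log n},$$
which is exactly twice Shearer's bound $(1-o(1))n\log d / d$ at the critical density $d = \Theta(\sqrt{n\log n})$. In short, confirming the conjecture amounts to proving that $G_{n,\triangle}$ is asymptotically extremal for the independence-number problem among triangle-free graphs of this density.

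My plan is to proceed in two stages: a stability reduction and a refined independent-set construction. For the first stage, I would attempt to show that any triangle-free graph $G$ whose independence number lies within a factor $(1+\eta)$ of the target must be strongly pseudo-random -- roughly regular of degree $d_0 \sim \tfrac{1}{\sqrt{2}}\sqrt{n\log n}$, with codegrees of order $\log n$, and with $C_4$-counts matching those of a random triangle-free graph. Approximate regularity ought to follow from the convexity of Shearer-style entropy integrands in $d$; codegree control would require a more delicate iteration, perhaps by tracking random vertex subsets whose removal would otherwise spawn many triangles. For the second stage, the goal is to give a matching refinement of Shearer's bound under these structural assumptions. Concretely, one would run a random-greedy independent-set process on $G$, mimicking the analysis carried out in this paper for $G_{n,\triangle}$: at each step choose a uniform non-neighbour of the current partial independent set and use the differential equations method to argue that the process lasts long enough to produce an independent set of size $(1-o(1))\sqrt{2n\log n}$. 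The pseudo-random inputs supplied by the stability reduction are exactly what one needs to run such an analysis outside the $G_{n,\triangle}$ model.

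The hard part will be the second stage: no known technique yields a factor-of-two improvement of Shearer's bound, even under strong pseudo-randomness. The analyses behind Theorem~\ref{triangle} and Theorem~\ref{R3k} rely on coupling the random-greedy process to the construction of $G_{n,\triangle}$ itself, which exposes very precise information about the evolution of common neighbourhoods -- information that is accessible only because one knows \emph{how} the graph was built. Transferring the analysis to an arbitrary pseudo-random triangle-free graph $G$ requires one to prove, from the pseudo-randomness axioms alone, that the independent-set process on $G$ encounters essentially the same martingale dynamics. This appears to demand quantitative concentration of codegree distributions and of higher-order subgraph counts, at a level of precision well beyond what a standard stability reduction delivers. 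A conceptual alternative would be to construct, for each triangle-free $G$, a coupling between a large random independent set in $G$ and one in $G_{n,\triangle}$, but no such coupling is currently known, and finding one may well require a genuinely new idea.
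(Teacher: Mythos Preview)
The statement you are addressing is a \emph{conjecture}, not a theorem: the paper does not prove it, and indeed explicitly presents it as open, motivated by the heuristic that random constructions are extremal for Ramsey-type problems. There is therefore no ``paper's own proof'' to compare against. Your proposal is not a proof either, and you say as much: it is a two-stage research outline whose second stage --- improving Shearer's bound by a factor of two under pseudo-randomness hypotheses --- you correctly identify as lying beyond any known technique. So the honest assessment is that there is no gap to diagnose in the usual sense, because you have not claimed to close one; you have accurately located where the difficulty lies.

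That said, a few remarks on the plan itself. Your reformulation is correct: the conjecture is equivalent to $\alpha(G) \ge (1-o(1))\sqrt{2n\log n}$ for every triangle-free $G$ on $n$ vertices. But the stability reduction in stage one is already nontrivial and not obviously true: it is not clear that a near-extremal triangle-free graph must be approximately regular or have controlled codegrees, since one could in principle have a graph that is far from pseudo-random yet still has small independence number for reasons unrelated to the random model. More fundamentally, even granting perfect pseudo-randomness, the paper's analysis of $G_{n,\triangle}$ does not straightforwardly transplant to a static graph, because (as you note) the key self-correcting martingales track quantities tied to the \emph{construction history}, not just the final structure. A genuine proof of the conjecture would almost certainly require a new deterministic or extremal idea about triangle-free graphs, not a refinement of the process analysis.
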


We remark that, as in the diagonal case, the best known constructive lower bound for $R(3,k)$ is far from the truth. To be precise, Alon~\cite{Alon} constructed a triangle-free graph with $\Theta\big( k^{3/2} \big)$ vertices and no independent set of size $k$. 

We finish this section by giving a rough outline of the proof of Theorems~\ref{triangle} and~\ref{R3k}; a more extensive sketch (together with several of our other main results) is given in Section~\ref{sketchSec}. Our plan (speaking very generally) is to use the differential equations method, exploiting the `self-correcting' nature of the triangle-free process in order to prove bounds on the various parameters we will track that become \emph{tighter} as the process progresses. However, we can assure the reader who is intimidated by this technique that no actual differential equations will be needed in the proof. Moreover, apart from the use of a martingale concentration inequality due to Freedman~\cite{F75}, our proof will be completely self-contained. We remark that Telcs, Wormald and Zhou~\cite{TWZ} were the first to make use of self-correction while applying the differential equations method in combinatorics; other early applications were obtained by Bohman and Picollelli~\cite{BPi}, and by Bohman, Frieze and Lubetsky~\cite{BFL10,BFL}. 

The basic idea is to `track' a large collection of graph parameters, such that the (expected) rate of change of each depends only on some (generally small) subset of the others. We shall show that, for each of these parameters, the probability that it is the \emph{first} parameter to go astray (that is, to have normalized error larger than~$1$) is extremely small; the lower bound in Theorem~\ref{triangle} then follows by applying the union bound to our family of events. To prove Theorem~\ref{R3k} and the upper bound in Theorem~\ref{triangle}, we assume that these parameters were all tracking up to step $m^*$ (the point at which we lose control of the process), and show (by bounding the probability of some carefully chosen events) that 
$$\alpha\big( G_{n,\triangle} \big) \, \le \, \alpha\big( G_{m^*} \big) \, \le \, \big( \sqrt{2} + o(1) \big) \sqrt{n \log n}$$ 
with high probability, and that the maximum degree of $G_{m}$ is unlikely to increase by more than $o\big(  \sqrt{n \log n} \big)$ between step $m^*$ and the end of the process.

The most basic parameter we shall need to track is the number of \emph{open edges}, where $e \in E(K_n) \setminus E(G_m)$ is said to be \emph{open} if its endpoints have no common neighbour. To be precise, we will write 
\begin{equation}\label{def:O(Gm)}
O(G_m) \, = \, \big\{ e \in E(K_n) \setminus E(G_m) \,:\, e \not\subseteq N_{G_m}(v) \textup{ for every } v \in V(G_m) \big\}
\end{equation}
for the set of open edges, and $Q(m) = |O(G_m)|$ for their number. Observe that the open edges of $G_m$ are exactly those that can be added to the graph at step $m+1$, and that the process therefore ends exactly when $Q(m) = 0$. In order to control the evolution of $Q(m)$, we will need to track the family of random variables $\big\{ Y_e(m) : e \in O(G_m) \big\}$, where $Y_e(m)$ denotes the number of edges that are closed if $e$ is added to the graph at step $m+1$ (see Definition~\ref{Ydef}, below), and in order to control these variables we will also need to control the family $\big\{ X_e(m) : e \in O(G_m) \big\}$, where $X_e(m)$ denotes the number of open edges $f$ such that $e$ and $f$ form two sides of a triangle in $O(G_m)$ (see Definition~\ref{Xdef}, below). 
%\footnote{Note that this is also twice the number of common neighbours of the endpoints of $e$ in the graph $O(G_m)$.} 
It turns out (see Section~\ref{sketchSec}) that the collection
$$Q(m) \cup \big\{ X_e(m) : e \in O(G_m) \big\} \cup \big\{ Y_e(m) : e \in O(G_m) \big\}$$
forms a `closed' system, in the sense that the expected change of each variable in step $m+1$ depends only on the collection at step $m$, and for this reason Bohman~\cite{Boh} was able to track each of these variables up to a small (but rapidly growing) absolute error.

In order to prove Theorem~\ref{triangle}, we shall need to control these parameters up to a much smaller absolute error; in fact the error term we need \emph{decreases super-exponentially quickly} in $t = m \cdot n^{-3/2}$. In order to obtain such a tiny error, we shall exploit the self-correcting nature of the triangle-free process; doing so requires three separate steps, which are all new and quite different from one another, and each of which relies crucially on the other two.%We shall briefly describe these here, and refer the reader to the next section for a more thorough outline. 

First, we show that $Q(m)$ evolves (randomly) with $\Xb(m)$ and $\Yb(m)$ (the \emph{averages} over all open edges of $G_m$ of the variables $X_e(m)$ and $Y_e(m)$, respectively) according to a `whirlpool-like' structure (see Section~\ref{XYQsec}). Using a suitably chosen Lyapunov function, we are able to show that this three-dimensional system is self-correcting, even though $Q(m)$ itself is not. 

Second, for each integer $1 \le k \le 3/\eps$ and open edge $e \in O(G_m)$, we track a variable $V_e^{(k)}(m)$, which is (roughly speaking) the $k^{th}$ derivative of $Y_e(m)$. To define this variable, consider for each $m \in \N$ the graph (the `$Y$-graph' of $G_m$) with vertex set $O(G_m)$, and an edge between each pair $\{f,f'\}$ such that $f' \in Y_f(m)$; then $V_e^{(k)}(m)$ is the weighted average\footnote{The weight of an edge $f$ in $V_e^{(k)}(m)$ is equal to the number of walks of length $k$ from $e$ to $f$.} of $Y_f(m)$ over the edges $f \in O(G_m)$ at walk-distance $k$ from $e$ in the $Y$-graph. % which may be reached from $e$ by a walk of length $k$ in the $Y$-graph. 
Crucially, our error bounds on these variables decrease exponentially quickly in $k$, and using this fact we shall be able to prove self-correction. A vital ingredient in this calculation amounts to showing that a random walk on the $Y$-graph mixes in constant time (see Sections~\ref{MixSec1} and~\ref{MixSec2}), and the proof of this property of the $Y$-graph uses the fact that we can track certain `ladder-like' graph structures in $G_m$. 

Finally, in order to control the number of `ladder-like' structures, we shall in fact track the number of copies of \emph{every} graph structure $F$ which occurs in $G_m$ (at a given `root'), up to the point at which it is likely to disappear, and after this time we shall bound the number of copies up to a polylog-factor. Such a general result is not only interesting in its own right; it is necessary for our proof to work, because (for our martingale bounds) we need to track the maximum possible number of copies of each structure that are created or destroyed in a single step of the triangle-free process, which depends on (the number of copies of) several other structures, some of which may be tracking, and others not. This part of the proof is extremely technical, and making it work requires several non-trivial new ideas; we mention in particular the `building sequences' introduced in Section~\ref{BSsec}. 

The rest of the paper is organised as follows. In Section~\ref{sketchSec} we give an overview of the proofs of Theorems~\ref{triangle} and~\ref{R3k}, and state various more detailed results about the structures that occur in the graph $G_m$. In Section~\ref{MartSec} we introduce our martingale method, and use it to track the variables $X_e$, which form a particularly simple special case. In Sections~\ref{EEsec}--\ref{XYQsec} we prove the lower bound in Theorem~\ref{triangle}; more precisely, in Section~\ref{EEsec} we study general graph structures, in Section~\ref{Ysec} we control the variables $Y_e(m)$ and $V_e^{(k)}(m)$, and in Section~\ref{XYQsec} we track $\Xb(m)$, $\Yb(m)$ and $Q(m)$. Finally, in Section~\ref{indepSec}, we study the independent sets in $G_{n,\triangle}$, and deduce the upper bound in Theorem~\ref{triangle} and the lower bound in Theorem~\ref{R3k}. To avoid distracting the reader from the key ideas with too much clutter, we postpone a few of the more tedious calculations to an Appendix~\cite{App}.

\section{An overview of the proof}\label{sketchSec}

In this section we shall lay the foundations necessary for the proofs of Theorems~\ref{triangle} and~\ref{R3k}. In particular, we shall formally introduce the various families of variables which we will need to track, and state some of our key results about these variables. We shall  attempt to give the reader a bird's-eye view of how the various components of the proofs of our main theorems fit together, whilst simultaneously introducing various notations and conventions which we shall use throughout the paper, often without further comment. We strongly encourage the reader, before plunging into the details of Sections~\ref{EEsec}--\ref{indepSec}, to read carefully both this section, and Section~\ref{SecX}. In what follows, the main heuristic to keep in mind at all times is that $G_m$ should closely resemble the Erd\H{o}s-R\'enyi random graph $G(n,m)$, except in the fact that it contains no triangles. 

We begin by choosing some parameters \emph{which will be fixed throughout the paper}. Let $\eps > 0$ be an arbitrary, sufficiently small constant. Given $\eps$, we choose a sufficiently large constant $C = C(\eps) > 0$, and a function $\omega = \omega(n)$ which goes to infinity sufficiently slowly\footnote{In particular, we could set $\omega(n) = \log\log\log n$.} as $n \to \infty$. We shall assume throughout the paper that $n$ (and hence also $\omega(n)$) is sufficiently large.

%Let us choose a sufficiently small constant $\eps > 0$, a sufficiently large constant $C = C(\eps) > 0$, and a function $\omega = \omega(n)$ which goes to infinity sufficiently slowly as $n \to \infty$, all of which will be fixed throughout the proof. Let $n \ge n_0(\eps, C, \omega) \in \N$ be sufficiently large, and 

For each $m \in \N$, let the \emph{time} $t$ after $m$ steps of the triangle-free process be defined by $m = t \cdot n^{3/2}$. (We shall use this convention throughout the paper without further comment.) Thus, setting
\begin{equation}\label{def:m^*t^*}
t^* := \bigg( \frac{1}{2\sqrt{2}} - \eps \bigg) \sqrt{\log n} \qquad \text{and} \qquad m^* := t^* \cdot n^{3/2},
\end{equation}
our aim is to follow the triangle-free process up to time $t^*$. 

Recall that we write $Q(m) = |O(G_m)|$ for the number of open edges of $G_m$, see~\eqref{def:O(Gm)}. The triangle-free process ends when $Q(m) = 0$, since only open edges of $G_m$ may be added in step $m + 1$. The lower bound in Theorem~\ref{triangle} is therefore an immediate corollary of the following theorem.\footnote{Here, and throughout the paper we write $a \in b \pm c$ to mean $b - c \le a \le b + c$.}

\begin{thm}\label{Qthm}
With high probability,   
\begin{equation}\label{Qtrack}
Q(m) \, \in \, e^{-4t^2} {n \choose 2} \pm e^{-2t^2} n^{7/4} (\log n)^3
\end{equation}
for every $m \le \big( \frac{1}{2\sqrt{2}} - \eps \big) n^{3/2} \sqrt{\log n}$.
\end{thm}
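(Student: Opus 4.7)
I would prove Theorem~\ref{Qthm} jointly with the tracking estimates for $\Xb(m)$ and $\Yb(m)$, since the one-step drift of $Q$ is determined by these averages and $Q$ on its own is not self-correcting. A direct count of the open edges closed when the uniformly random $e_{m+1} \in O(G_m)$ is added yields
\begin{equation*}
\Ex\big[ Q(m+1) - Q(m) \mid G_m \big] \,=\, - 1 - \Xb(m),
\end{equation*}
where the $-1$ records $e_{m+1}$ itself leaving $O(G_m)$ and $\Xb(m) := Q(m)^{-1} \sum_{e \in O(G_m)} X_e(m)$ is the average closure count. The predicted trajectory $q(t) := e^{-4t^2}\binom{n}{2}$ is the solution of the continuum analogue $q'(t) = -8t \cdot q(t)$, matching the leading-order value of $\Xb(m)$ expected from the $G(n,m)$-like heuristic. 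Writing $\xi := Q - q$ and defining $\xi_X$ analogously for $\Xb$, subtracting the trajectory gives $\Ex[\xi(m+1) - \xi(m) \mid G_m] = - \xi_X(m) + O(n^{-1/2})$, so $\xi$ is driven by $\xi_X$ rather than by itself.

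Let $T$ be the stopping time at which one of the normalised deviations $\xi/(e^{-2t^2} n^{7/4}(\log n)^3)$, $\xi_X/e_X(t)$ or $\xi_Y/e_Y(t)$ first exceeds $1$, where $e_X(t), e_Y(t)$ are the error windows for $\Xb, \Yb$ established in Section~\ref{XYQsec} and $\xi_Y$ is the analogous deviation of $\Yb$ from its trajectory; or at which any of the auxiliary graph-structure counts of Sections~\ref{EEsec} and~\ref{Ysec} leaves its own tracking window. The theorem reduces to bounding $\Pr(T \le m^*$ and $Q$ is the first to fail$) = o(1)$. Following the whirlpool-structure programme of Section~\ref{XYQsec}, I would construct a Lyapunov form
\begin{equation*}
\Phi(m) \,=\, \xi(m)^2 \,+\, \alpha\, \xi_X(m)^2 \,+\, \beta\, \xi_Y(m)^2 \,+\, \gamma\, \xi(m) \xi_X(m) \,+\, \delta\, \xi_X(m) \xi_Y(m)
\end{equation*}
with $\alpha, \beta > 0$ and $\gamma, \delta$ chosen so that $\Phi(m) \gtrsim \xi(m)^2$ and the one-step expectation satisfies
\begin{equation*}
\Ex\big[ \Phi(m+1) - \Phi(m) \mid G_m \big] \,\le\, - \frac{c}{n^{3/2}} \, \Phi(m) \quad \text{on } \{T > m\}
\end{equation*}
for some absolute constant $c > 0$. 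Existence of such constants is a finite-dimensional linear-algebra fact, given the leading-order drifts of $\Xb, \Yb$ from Section~\ref{XYQsec}.

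Given the supermartingale estimate, the Doob decomposition writes $\Phi(m \wedge T) - \Phi(0)$ as a martingale $M$ plus a non-positive predictable part, so the event that $\Phi(m) \gtrsim 1$ for some $m \le T \wedge m^*$ is contained in $\{\sup_m M(m) \gtrsim 1\}$. The bounded-differences input comes from the degree- and structure-tracking results of Section~\ref{EEsec}, which imply a polynomial upper bound on the step-wise change of each of $\xi, \xi_X, \xi_Y$, and hence of $\Phi$, on $\{T > m\}$. McDiarmid's martingale inequality, in the form used throughout the paper, then yields a failure probability at most $n^{-\omega(\log n)}$ for $Q$ to be the first to fail at any given step, which is more than enough for the union bound over $m \le m^*$.

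The main obstacle will be the Lyapunov construction: because the diagonal drift of $\xi$ is linear in $\xi_X$ rather than in $\xi$, contraction must be extracted from off-diagonal couplings, which depend sensitively on the refined trajectories of $\Xb$ and $\Yb$ computed in Section~\ref{XYQsec}. A secondary technical issue is matching the exponential factor $e^{-2t^2}$ in the error window of $Q$ with the corresponding factors in the windows of $\Xb, \Yb$, so that errors propagate coherently between the three variables; this is what forces the joint, rather than sequential, proof of the three tracking statements.
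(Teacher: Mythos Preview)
Your overall plan---track $Q$, $\Xb$, $\Yb$ jointly via a Lyapunov function and then apply martingale concentration---is exactly the paper's strategy, but you have the one-step drift of $Q$ wrong, and this error propagates through the rest of the argument. When the random open edge $e$ is added, the edges removed from $O(G_m)$ are $e$ itself together with the $Y$-neighbours of $e$: those open $f$ for which $e$ and $f$ form two sides of a triangle whose third side is already in $E(G_m)$. Hence
\[
\Ex\big[ \Delta Q(m) \mid G_m \big] \;=\; -\,1 - \Yb(m),
\]
not $-1 - \Xb(m)$. The variable $X_e(m)$ counts open edges forming an \emph{open} triangle with $e$; these are not closed by adding $e$. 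Correspondingly, the continuum equation $q'(t) = -8t\,q(t)$ matches $\Yt(m) = 4te^{-4t^2}\sqrt{n}$, not $\Xt(m) = 2e^{-8t^2}n$.

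Because the drift of $\Qs$ depends on $\Ys$, the relevant two-dimensional system is in $(\Ys,\Qs)$, with drift matrix $\bigl(\begin{smallmatrix} -3 & 2 \\ -2 & 1 \end{smallmatrix}\bigr)$ (a Jordan block at eigenvalue $-1$); meanwhile $\Xs$ is genuinely self-correcting once $\Ys$ and $\Qs$ are under control (Lemma~\ref{whirlpool}$(c)$ has a $-\Xs$ term), so the paper handles $\Xs$ separately and uses only a two-variable Lyapunov function $\Lambda = \lambda^2 + \mu^2$ built from a linear change of basis in $(\Ys,\Qs)$. Your proposed quadratic form couples $\xi$ to $\xi_X$ but has no $\xi\,\xi_Y$ cross-term, so with the correct drift it cannot be made to contract: the expected change of $\xi^2$ is governed by $-2\xi\,\xi_Y$, and without a compensating off-diagonal term this cannot be absorbed.
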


Note that if $t \le t^*$ then $e^{2t^2} \le n^{1/4 - \eps}$, and hence it follows from~\eqref{Qtrack} that $Q(m) > 0$ for every $m \le m^*$. The lower bound in Theorem~\ref{triangle} follows, since $\eps > 0$ was arbitrary.

To see why $Q(m)$ should decay at rate $e^{-4t^2}$, simply recall that $G_m$ resembles a random graph of density $p \approx 2m/n^2$; it follows that the proportion of open edges in $G_m$ should be roughly $(1 - p^2)^n \approx e^{-p^2 n} \approx e^{-4t^2}$. Note that the absolute error in our bound~\eqref{Qtrack} actually decreases over time; this is possible because we exploit the self-correcting nature of the process. We shall denote the (maximum allowed) relative error by
$$g_q(t) \, = \, e^{2t^2} n^{-1/4} (\log n)^3,$$
and say that $Q$ is \emph{tracking} up to step $m'$ if $Q(m) \in \big( 1 \pm g_q(t) \big) e^{-4t^2} {n \choose 2}$ holds for all $m \le m'$. Note that, by~\eqref{def:m^*t^*}, we have $g_q(t) \le n^{-\eps}$ for every $t \le t^*$. We remark that we expect (but do not prove) that this error term is best possible up to the polylog-factor. 

For each of our graph parameters, the first step will always be to determine the expected rate of change of that parameter, conditioned on the past. To simplify the notation a little, let us define, for any graph parameter $A$ and any graph $G$, a function $\Delta A(G) \colon E(K_n) \to \RR$ by
$$\Delta A(G)(e) \mapsto A(G \cup \{e\}) - A(G).$$
Thus, setting $A(m) := A(G_m)$, it follows that 
$$\Ex\big[ \Delta A(m) \big] \,=\, \Ex\big[ A(m+1) - A(m) \,\big|\, G_m \big],$$
where the expectation in both cases is over the uniformly random open edge of $G_m$ chosen in step $m+1$ of the triangle-free process. We emphasize that our process is Markovian, and thus all of the relevant information about the past (i.e., all the information needed to determine the distribution of the sequence $G_{m+1}, G_{m+2}, \ldots$) is encoded in the graph $G_m$.

The value of $\Ex\big[ \Delta Q(m) \big]$ is controlled by the following family of parameters, which were already introduced informally above. The variables $\big\{ Y_e(m) : e \in O(G_m) \big\}$ determine (the distribution of) the number of edges which are closed at each step.

\begin{defn}\label{Ydef}
We say that two open edges $e,f \in O(G_m)$ are \emph{$Y$-neighbours} in $G_m$ if $e$ and $f$ form two sides of a triangle, the third of which is in $E(G_m)$. For each edge $e \in E(K_n)$ and each $m \in \N$, define 
$$Y_e(m) \, := \, \Big| \Big\{ f \in O(G_m) \,:\, f \textup{ is a $Y$-neighbour of $e$ in $G_m$} \Big\} \Big|$$
if $e \in O(G_m)$, and set $Y_e(m) = Y_e(m-1)$ otherwise.
\end{defn}

We remark that, in order to ease our notation, we shall sometimes write $Q(m)$ and $Y_e(m)$ for (respectively) the \emph{collection} of open edges and the \emph{collection} of $Y$-neighbours of $e$ in $G_m$. (In fact, we shall do the same for all of the various variables we define below.) We trust that this slight abuse of notation will not cause any confusion. 

\begin{figure}[h]
\includegraphics[scale=1] {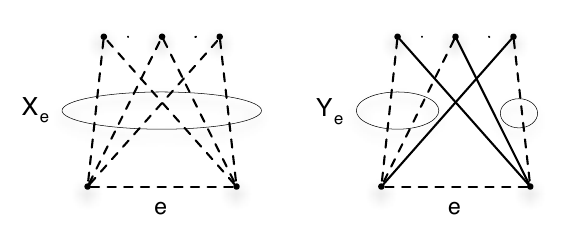}
\caption{The sets $X_e$ and $Y_e$.}
\end{figure}

Assuming once again that $G_m$ is well-approximated by $G(n,m)$, it follows that we would expect $Y_e(m)$ to be asymptotically equal to $2pn \cdot e^{-4t^2} \approx 4t e^{-4t^2} \sqrt{n}$ for every open edge $e \in O(G_m)$, and every $m \le m^*$. Showing that this is true turns out to be the most difficult part of our proof, and will be the key intermediate result in the proof of Theorem~\ref{triangle}. To begin, note that $f \in Y_e(m)$ if and only if $e \in Y_f(m)$, and define 
$$\Yb(m) \, = \, \frac{1}{Q(m)} \sum_{e \in Q(m)} Y_e(m),$$
the average number of $Y$-neighbours of an open edge in $G_m$. The following simple equation governs the expected change in $Q(m)$ at each step:
$$\Ex \big[ \Delta Q(m) \big] \, = \, - \, \Yb(m) - 1.$$
Thus, in order to control $Q(m)$, we shall need to track $\Yb(m)$, which is controlled (see Section~\ref{XYQsec}) by the following equation\footnote{More precisely, we shall show that these bounds hold for each $\omega \cdot n^{3/2} < m \le m^*$, as long as the other variables are still tracking.}:
\begin{equation}\label{eq:Yb}
\Ex \big[ \Delta \Yb(m) \big] \, \in \,  \frac{- \Yb(m)^2 + \Xb(m) - 2 \cdot \Var\big(Y_e(m)\big) \pm  O\big( \Yb(m) \big)}{Q(m)}.
\end{equation}
Here the variance is over the choice of a (uniformly) random open edge $e \in O(G_m)$. The term $\Xb(m)$ which appears in~\eqref{eq:Yb} is defined, similarly as for the $Y_e$, to be the average of the following collection of random variables:

\begin{defn}\label{Xdef}
We say that two open edges $e,f \in O(G_m)$ are \emph{$X$-neighbours} in $G_m$ if they form two sides of a triangle, the third of which is  also in $O(G_m)$. For each edge $e \in E(K_n)$ and each $m \in \N$, define 
$$X_e(m) \, := \, \Big| \Big\{ f \in E(K_n) \,:\, f \textup{ is an $X$-neighbour of $e$ in $G_m$} \Big\} \Big|$$
if $e \in O(G_m)$, and set $X_e(m) = X_e(m-1)$ otherwise.
\end{defn}

Appealing again to our random graph intuition, we would expect (and shall prove) that $X_e(m) \approx 2 e^{-8t^2} n$ for every open edge $e \in O(G_m)$. We define
$$\Xb(m) \, = \, \frac{1}{Q(m)} \sum_{e \in Q(m)} X_e(m),$$
the average number of $X$-neighbours of an open edge in $G_m$. The random variable $\Xb(m)$ is controlled (as long as the other variables are still tracking) by the following equation (see Section~\ref{XYQsec}):
$$\Ex \big[ \Delta \Xb(m) \big]  \, \in \, \frac{- \,2 \cdot \Xb(m) \Yb(m) - 3 \cdot \Cov\big( X_e(m), Y_e(m) \big)}{Q(m)}  \, \pm \, O\left( \frac{\Xb(m) + \Yb(m)^2}{Q(m)} \right).$$
Here, as in~\eqref{eq:Yb}, the covariance is over the choice of a (uniformly) random open edge $e \in O(G_m)$. Combining these equations, we will be able to show that the normalized errors of $\Yb$ and $Q$ form a stable two-dimensional system, and that $\Xb$ is self-correcting (see Section~\ref{WhirlSec}). Hence, by applying our martingale method (see Section~\ref{MartSec}) to a suitably-chosen Lyapunov function, we shall be able to prove both Theorem~\ref{Qthm} and the following bounds. % on $\Xb(m)$ and $\Yb(m)$.

\begin{thm}\label{XbYbthm}
With high probability,   
$$\Xb(m) \, \in \, \bigg( 1 \pm \frac{e^{2t^2} (\log n)^3}{n^{1/4}}  \bigg) \cdot  2 e^{-8t^2} n \quad \textup{and} \quad \Yb(m) \, \in \,  \bigg( 1 \pm \frac{e^{2t^2} (\log n)^3}{n^{1/4}}  \bigg) \cdot 4t e^{-4t^2} \sqrt{n}$$
for every $\omega \cdot n^{3/2} < m \le \big( \frac{1}{2\sqrt{2}} - \eps \big) n^{3/2} \sqrt{\log n}$.
\end{thm}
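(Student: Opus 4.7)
The plan is to track the three normalised errors
$$\hat q(m) \,=\, \frac{Q(m) - e^{-4t^2}\binom{n}{2}}{e^{-2t^2} n^{7/4}(\log n)^3}, \quad \hat y(m) \,=\, \frac{\Yb(m) - 4te^{-4t^2}\sqrt n}{4t\, e^{-2t^2}\, n^{1/4}(\log n)^3}, \quad \hat x(m) \,=\, \frac{\Xb(m) - 2 e^{-8t^2} n}{2\, e^{-6t^2}\, n^{3/4}(\log n)^3},$$
where $t = m/n^{3/2}$, so that Theorems~\ref{Qthm} and~\ref{XbYbthm} together reduce to showing $\max\big(|\hat q(m)|,|\hat y(m)|,|\hat x(m)|\big) \le 1$ for every $\omega n^{3/2} < m \le m^*$, with high probability. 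All calculations below are performed conditional on the event $\T_m$ that the auxiliary variables tracked in Sections~\ref{EEsec}--\ref{Ysec}, in particular the individual $Y_e(m)$, $X_e(m)$ and the walk-averages $V_e^{(k)}(m)$, remain inside their prescribed windows up to step~$m$.

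Substituting the trajectories into the drift identities displayed above and Taylor-expanding to first order in $\hat x, \hat y, \hat q$ (using $\Delta t = n^{-3/2}$) yields a linearised system
\begin{align*}
n^{3/2}\,\Ex[\Delta \hat x \mid G_m] \,&=\, -\,4t\,\hat x \;-\; 16t\,\hat y \;+\; 32t\,\hat q \;+\; E_x(m), \\
n^{3/2}\,\Ex[\Delta \hat y \mid G_m] \,&=\, \tfrac{1}{t}\,\hat x \;-\; \tfrac{12t^2+1}{t}\,\hat y \;+\; \tfrac{16t^2-2}{t}\,\hat q \;+\; E_y(m), \\
n^{3/2}\,\Ex[\Delta \hat q \mid G_m] \,&=\, -\,4t\,\hat y \;+\; 4t\,\hat q \;+\; E_q(m).
\end{align*}
The error terms $E_x, E_y, E_q$ absorb: (i)~second-order contributions in the $\hat{\cdot}$ variables; (ii)~the $\Var(Y_e)$ and $\Cov(X_e, Y_e)$ terms appearing in the drifts for $\Yb$ and $\Xb$, which are subleading on $\T_m$ because individual tracking of $Y_e$ and $X_e$ forces $\Var(Y_e) = o(\Yb^2)$ and $\Cov(X_e, Y_e) = o(\Xb\Yb)$; and (iii)~the explicit $O(\Yb/Q)$ and $O\big((\Xb+\Yb^2)/Q\big)$ corrections in the drift identities themselves. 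A direct bound gives $|E_x|+|E_y|+|E_q| = o(1)$ on $\T_m$.

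The $2\times 2$ $(\hat y,\hat q)$-block has trace $-8t - 1/t$ and determinant $16t^2 - 12$, both of whose signs are correct for stability (negative trace, positive determinant) throughout our window $t > \omega$, so its eigenvalues are real and strictly negative. A standard Lyapunov construction then produces a positive-definite quadratic form $\Phi_{yq}(m) = \alpha\hat y^2 + \beta\hat y\hat q + \gamma\hat q^2$, with coefficients that vary mildly in $t$ but are uniformly bounded on $(\omega,t^*]$, such that $\dot\Phi_{yq} \le -c\,\Phi_{yq}$ whenever $\Phi_{yq} \ge \tfrac14$. Since $\hat x$ is self-contracting with rate $4t$ and is merely driven linearly by $\hat y, \hat q$ with bounded coefficients, setting $\Phi(m) = \Phi_{yq}(m) + K\,\hat x(m)^2$ for a sufficiently large constant $K$ yields a single Lyapunov function for the full three-dimensional system satisfying $\dot\Phi \le -c'\Phi$ on $\{\Phi \ge \tfrac14\}$. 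Feeding $\Phi$ into the martingale inequality of Section~\ref{MartSec}, bounding the one-step fluctuations by $|\Delta\Xb|, |\Delta\Yb| = O(n^{-3/4+o(1)})$ and $|\Delta Q| = O(\sqrt n\, \log n)$ on $\T_m$, and union-bounding over $\omega n^{3/2} < m \le m^*$, gives total failure probability at most $n^{-\log n}$.

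The main obstacle is the joint Lyapunov construction itself: because $Q$ is not self-correcting (its drift depends only on $\Yb$), the contraction of $(\hat y,\hat q)$ is entirely an off-diagonal effect, and one has to verify the sign of $16t^2 - 12$ uniformly across the range to exploit the whirlpool structure of Section~\ref{XYQsec}. A subsidiary but delicate difficulty is to ensure that the error terms $E_x, E_y, E_q$ are genuinely controlled by quantities proved to track in \emph{earlier} sections --- the individual $Y_e, X_e, V_e^{(k)}$ and the structure counts from Section~\ref{EEsec} --- and do not circularly depend on $\hat x, \hat y, \hat q$ themselves. Once these are in place, the martingale concentration step proceeds exactly as in the toy case of $X_e$ treated in Section~\ref{MartSec}.
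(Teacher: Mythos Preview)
Your approach matches the paper's: linearise the drifts of the normalised errors, build a quadratic Lyapunov function for the $(\hat y,\hat q)$ subsystem (which is not itself self-correcting because $\hat q$ has positive diagonal entry), and feed it into the martingale lemma of Section~\ref{MartSec}. Your linearised coefficients agree with Lemma~\ref{whirlpool} once one accounts for the factor of two between your $\hat q$ and the paper's $\Qs$ and drops the $O(1/t)$ terms you retain. The paper carries out the $2\times 2$ Lyapunov step explicitly, via the change of basis $\bigl(\begin{smallmatrix}\Ys\\ \Qs\end{smallmatrix}\bigr)=\eps\bigl(\begin{smallmatrix}4&5\\4&3\end{smallmatrix}\bigr)\bigl(\begin{smallmatrix}\lambda\\ \mu\end{smallmatrix}\bigr)$ and $\Lambda=\lambda^2+\mu^2$; this is not purely cosmetic, since the leading-order matrix $\bigl(\begin{smallmatrix}-3&2\\-2&1\end{smallmatrix}\bigr)$ has a repeated eigenvalue $-1$ and is a non-diagonalisable Jordan block, so the ``standard construction'' you invoke must be done with some care.

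There is, however, a concrete error in your handling of $\hat x$. The paper does not fold $\Xs$ into the Lyapunov function at all: Lemma~\ref{whirlpool}(b) contains no $\Xs$ term, because the $\Xb$-contribution to $\Ex[\Delta\Yb]$ is of order $\Xt/Q\ll\Yt^2/Q$ for $t>\omega$ and is absorbed into the $\pm\eps$. The $(\Ys,\Qs)$ system is therefore closed, Proposition~\ref{Lambprop} bounds it in isolation, and $\Xs$ is dealt with afterwards as a scalar that is self-correcting once $|\Ys|+|\Qs|\le 20\eps$ has been established. Your alternative, $\Phi=\Phi_{yq}+K\hat x^2$ for \emph{large} $K$, goes the wrong way: the driving terms in $\dot{\hat x}$ carry coefficients $-16t,\,32t$ of the same order as the self-contraction rate $4t$, so $2K\hat x\,\dot{\hat x}$ produces cross-terms of size $\Theta(Kt)\,|\hat x|\big(|\hat y|+|\hat q|\big)$, which after Cauchy--Schwarz give a $+\Theta(Kt)\hat y^2$ contribution that overwhelms the $-O(t)\Phi_{yq}$ contraction when $K$ is large. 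A sufficiently \emph{small} positive constant $K$ (or, more cleanly, the paper's two-stage argument) would repair this. Finally, the positivity of $16t^2-12$ that you single out as the main obstacle is automatic for $t>\omega$; the real delicacy is the repeated-eigenvalue structure, which the paper resolves via its explicit coordinate change.
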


Note also that once again, although the relative error is increasing with $t$, the \emph{absolute} error is decreasing super-exponentially quickly. We shall write
$$\Qt(m) = e^{-4t^2} {n \choose 2}, \quad \Xt(m) = 2e^{-8t^2} n \quad \text{and} \quad \Yt(m) = 4t e^{-4t^2} \sqrt{n}$$
to denote the paths that we expect $Q(m)$, $\Xb(m)$ and $\Yb(m)$ to follow.

The alert reader will have noticed that in order to prove Theorems~\ref{Qthm} and~\ref{XbYbthm}, we are going to need some bounds on $\Var(Y)$ and $\Cov(X,Y)$. In fact, proving such bounds turns out to be the main obstacle in the proofs of Theorems~\ref{triangle} and~\ref{R3k}; more precisely, our main problem will be controlling the variables $Y_e(m)$, which are the key to the process. Our key intermediate result will therefore be the following bounds on $Y_e(m)$. 

\begin{thm}\label{Ythm}
With high probability,  
$$Y_e(m) \, \in \, 4t e^{-4t^2} \sqrt{n} \,\pm\, n^{1/4} (\log n)^3$$
for every open edge $e \in O(G_m)$ and every $m \le \omega \cdot n^{3/2}$, and
\begin{equation}\label{eq:Yevent}
Y_e(m) \, \in \, \Big( 1 \pm e^{2t^2} n^{-1/4} (\log n)^4 \Big) \cdot 4t e^{-4t^2} \sqrt{n}
\end{equation}
for every open edge $e \in O(G_m)$ and every $\omega \cdot n^{3/2} < m \le \big( \frac{1}{2\sqrt{2}} - \eps \big) n^{3/2} \sqrt{\log n}$.
\end{thm}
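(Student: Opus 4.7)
The starting point is the one-step drift of $Y_e(m)$. Conditioning on the uniformly chosen next open edge, and using the convention $\Delta Y_e = 0$ on the steps that close $e$, a short case analysis gives
\[
\Ex\big[\Delta Y_e(m)\big] \;=\; \frac{X_e(m) \;-\; \sum_{f \in Y_e(m)} Y_f(m)}{Q(m)} \;+\; O\!\left(\frac{Y_e(m)}{Q(m)}\right),
\]
the positive term coming from the newly added edge being the third side of a triangle on $e$ and an $X$-neighbour of $e$, and the negative term from the new edge closing some $f \in Y_e(m)$. Plugging in the target values $X_e \approx 2e^{-8t^2} n$, $Y_e \cdot Y_f \approx (4t e^{-4t^2}\sqrt n)^2$ and $Q(m) \approx e^{-4t^2}\binom{n}{2}$ reproduces $\Delta \Yt(m)$ per step, so the trajectory $\Yt$ is correctly calibrated; heuristically $Y_e(m)$ is self-correcting \emph{provided} the second term is controlled at a tighter scale than $Y_e$ itself.

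In the early regime $m \le \omega \cdot n^{3/2}$ the first assertion is comparatively easy, because $\Yt(m) = O(\omega \sqrt n)$ is much smaller than the permitted envelope $n^{1/4}(\log n)^3$ and no self-correction is needed. I would apply the martingale method of Section~\ref{MartSec} directly to $Y_e(m) - \Yt(m)$, stopped at the first exit of the envelope. Crude bounds on $X_e$ (from the $X$-tracking developed in Section~\ref{MartSec}) and on the maximum single-step jump of $Y_e$ feed into a McDiarmid-type inequality to give exit probability at most $n^{-\log n}$, and a union bound over all $e$ and $m$ in this range settles this part.

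For the main regime I would follow the strategy outlined in Section~\ref{sketchSec}: for each $k \ge 0$ and $e \in O(G_m)$, introduce the weighted average $V_e^{(k)}(m)$ of $Y_f(m)$ over walks of length $k$ from $e$ in the $Y$-graph (so $V_e^{(0)} = Y_e$), and track all levels $V_e^{(0)}, V_e^{(1)}, \ldots$ simultaneously with envelopes that differ geometrically in $k$. The drift of $V_e^{(k)}$ is an averaged version of the display above involving $V_e^{(k+1)}$, so the level-$k$ self-correction draws on the level-$(k+1)$ control. The cascade terminates at a fixed $k^\star$ past the mixing time of the $Y$-graph random walk, at which point $V_e^{(k^\star)}(m)$ agrees with $\Yb(m)$ up to mixing error and is controlled by Theorem~\ref{XbYbthm}. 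I would package all levels into a single Lyapunov function $L_e(m)$ (a geometric sum of normalised deviations), verify that its drift is genuinely restorative, and apply the martingale method of Section~\ref{MartSec} to it; a union bound over $e$ and $k \le k^\star$ then delivers the second assertion.

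The key obstacle, and where I expect the bulk of the work to lie, is showing that the random walk on the $Y$-graph mixes in \emph{constant} time, uniformly over $e \in O(G_m)$. By a standard spectral argument this reduces to uniform bounds on the number of short closed walks and ladder-like subgraphs of $G_m$ rooted at $e$, which is precisely what the general graph-structure tracking of Section~\ref{EEsec} (through the building-sequences machinery) is designed to supply. Without such uniform mixing, a single slow-mixing edge could resist the self-correction drift and violate the envelope at some intermediate level $k < k^\star$, collapsing the entire cascade; all the combinatorial depth of the proof is concentrated at this point, while the rest is a careful but standard deployment of the martingale framework.
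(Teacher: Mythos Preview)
Your high-level plan matches the paper closely, and you correctly identify constant-time mixing in the $Y$-graph as the crux. There is, however, a genuine gap in how you terminate the cascade.

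You track $V_e^{(k)}$ for $k\le k^\star$ with envelopes shrinking geometrically, say at rate $c$, and close the hierarchy via ``$V_e^{(k^\star)}\approx\Yb$ by mixing of the $Y$-graph walk''. But this walk does \emph{not} mix to uniform at the rate you need. From $e=\{u,v\}$, a fraction of order $2^{-k}$ of length-$k$ walks never change foot and remain inside $Q_u(m)$; their endpoint $Y$-average is the neighbourhood average $\frac{1}{Q_u}\sum_{f\in Q_u}Y_f$, which \emph{a priori} is only known to lie in $(1\pm g_y(t))\Yt$. Hence $|V_e^{(k^\star)}-\Yb|$ can be of order $2^{-k^\star}g_y(t)\Yt$, forcing $c\gtrsim 1/2$. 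On the other hand, the drift computation for $(V_e^{(k)})^*$ (carried out exactly as in Lemma~\ref{Vself} with $r=0$) yields a main term $-(V_e^{(k)})^*$ against an inherited error $\pm 2c$ from level $k{+}1$, so restorative drift at the Line of Peril forces $c<1/4$. Your Lyapunov-function packaging relaxes this only to $c<1/2$, and the boundary term at level $k^\star$ then blows up; a spectral argument cannot help either, since the obstruction is the structure of walks that never leave a single open neighbourhood, not a spectral gap.

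The paper resolves this by refining to $V_e^\sigma$ indexed by the full left/right pattern $\sigma\in\{L,R\}^*$, tracking only $k$-short sequences (at most $k$ foot changes \emph{and} at most $k$ consecutive same-foot steps). Two distinct terminations are then available. When $\sigma L$ has $\ge k$ foot changes, global mixing (Lemma~\ref{VmixQ2}) gives $V_e^{\sigma L}\approx\Yb$ --- this is your mechanism. But when $\sigma$ ends in $k$ consecutive $L$'s, neighbourhood mixing (Lemma~\ref{Vmix2}) gives the crucial relation $V_e^{\sigma L}\approx V_e^{\sigma}$ rather than $\approx\Yt$; substituting this into the drift contributes an \emph{additional} self-correcting term (the ``$r$'' term in Lemma~\ref{Vself}), and it is precisely this that permits the envelope ratio $g_{\sigma L}/g_\sigma=\eps$ to be taken arbitrarily small. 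This second mechanism is absent from your proposal and is essential. As a minor point, the paper does not use a Lyapunov function here --- each $V_e^\sigma$ is handled separately via Lemma~\ref{lem:self:mart}; the Lyapunov idea appears only later, for the $(\Xb,\Yb,Q)$ system in Section~\ref{XYQsec}.
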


Let us denote the relative error in~\eqref{eq:Yevent} by
$$g_y(t) \, = \, e^{2t^2} n^{-1/4} (\log n)^4,$$
and observe that $g_y(t) \gg g_q(t)$ and that $g_y(t)^2 \Yt(m) \approx 1$ (up to a polylog factor). It seems plausible that the distribution of the $Y_e(m)$ might be `well-approximated' (in some sense) by a collection of independent Gaussians, each centred at $\Yt(m)$; however, our current techniques seem quite far from being able to prove such a strong statement. 

The proof of Theorem~\ref{Ythm} is somewhat intricate; we shall sketch here just the basic ideas. Observe first that the variable $Y_e(m)$ is governed by the equation\footnote{In fact this is not quite correct: each variable $Y_f(m)$ should be replaced by $Y_f(m) - 1$, since if the edge~$e$ is chosen in step $m+1$ of the triangle-free process then $\Delta Y_e(m) = 0$, see Definition~\ref{Ydef}. In the interest of simplifying the presentation of the overview, we shall for the time being ignore such rounding errors, as is often done in the case of ceiling and floor symbols.}
\begin{equation}\label{eq:Yeq}
\Ex \big[ \Delta Y_e(m) \big] \, = \, \frac{1}{Q(m)} \bigg( - \sum_{f \in Y_e(m)} Y_f(m) + X_e(m) \bigg).
\end{equation}
We remark that the main term (when $t$ is large) is the first one, since $\Yt(m)^2 \gg \Xt(m)$. Note that the rate of change of $Y_e(m)$ depends on itself, but also on the $Y$-values of its $Y$-neighbours. Thus, although it appears that $Y_e(m)$ should be self-correcting (since larger values of $Y_e(m)$ produce a more negative first derivative), this effect can be outweighed for a specific edge $e$ if the $Y$-values of the edges in $Y_e(m)$ are unusually large or small. 

For this reason, it is necessary to introduce a variable (for each open edge $e \in O(G_m)$) which counts the number of walks of length two in the $Y$-graph\footnote{Recall that this is the graph whose vertices are the open edges in $G_m$, and whose edges are pairs of $Y$-neighbours.}. However, in order to control this variable we shall need to track the number of walks of length three, and so on. This leads naturally to the following definition.

\begin{defn}
For each $k \in \N$, and each open edge $e \in O(G_m)$, let $U_e^{(k)}(m)$ denote the number of walks in the $Y$-graph of length $k$, starting from $e$, and set
$$V_e^{(k)}(m) \, = \, \frac{1}{U_e^{(k)}(m)} \sum_{f_1 \in Y_e(m)} \sum_{f_2 \in Y_{f_1}(m)} \dots \sum_{f_k \in Y_{f_{k-1}}(m)} Y_{f_k}(m),$$
the average of the $Y$-values reached via such walks. Moreover, set $V_e^{(0)}(m) = Y_e(m)$.
\end{defn}

The second statement in Theorem~\ref{Ythm} is the case $k = 0$ of the following theorem. %Fix $k_0(\eps) = \lceil 3 / \eps \rceil$. 

\begin{thm}\label{Vthm}
For each $0 \le k \le \lceil 3 / \eps \rceil$, with high probability we have 
$$V_e^{(k)}(m) \, \in \, \Big( 1 \pm \eps^k \cdot e^{2t^2} n^{-1/4} (\log n)^4 \Big) \cdot 4t e^{-4t^2} \sqrt{n}$$
for every open edge $e \in O(G_m)$ and every $\omega \cdot n^{3/2} < m \le \big( \frac{1}{2\sqrt{2}} - \eps \big) n^{3/2} \sqrt{\log n}$. 
\end{thm}

In fact, in order to prove Theorem~\ref{Vthm}, we shall need to define and control (for each $k$) a more refined collection of variables, $\big\{ V_e^\sigma : \sigma \in \{L,R\}^k \big\}$, which takes into account whether each step of a walk in the $Y$-graph was taken with the `left' or `right' foot; that is, which endpoint  changed in the step from $e \in O(G_m)$ to $f \in Y_e(m)$, and which endpoint was common to~$e$ and~$f$. This is important because a key step in our proof will be to show that, for $k$ sufficiently large, after $k^2$ (random) steps in the $Y$-graph we are (in a certain sense) `well-mixed'. More precisely, we shall show that if we have `changed feet' at least $k$ times on this walk then the value of $Y_f(m)$ we reach may be well-approximated by $\Yb(m)$. On the other hand, if at some point we took $k$ consecutive steps with the same foot, with the other foot fixed at vertex $u$, then after these $k$ steps we will have (approximately) reached a uniformly-random open neighbour of~$u$. We refer the reader to Section~\ref{Ysec} for the details.

In order to prove our mixing results on the $Y$-graph, we shall need good bounds on the number of copies of certain structures in $G_m$, which correspond to paths in the $Y$-graph with either many or no changes of foot. In order to obtain such bounds, we shall in fact need to bound the number of copies of \emph{every} structure which can occur in $G_m$. We make the following definition.

\begin{defn}\label{def:structure}
A \emph{graph structure} $F$ consists of a set of (labelled) vertices $V(F)$, edges $E(F) \subseteq {V(F) \choose 2}$ and open edges $O(F)  \subseteq {V(F) \choose 2}$, where $E(F)$ and $O(F)$ are disjoint. 

Such a structure is said to be \emph{permissible} if every triangle in $E(F) \cup O(F)$ contains at least two edges of $O(F)$. 
\end{defn}

Note that structures that are not permissible cannot occur in $G_m$, since triangles with at most one open edge do not exist in $E(G_m) \cup O(G_m)$. We shall be interested in the number of copies of a graph structure $F$ `rooted' at a certain set of vertices. The following definition allows us to restrict our attention to those embeddings for which there (potentially) exists at least one copy of $F$. 

\begin{defn}\label{def:faithful}
Given a graph structure $F$ and an \emph{independent}\footnote{We will later need to extend this definition, and those below, to the case where $A$ is not an independent set in $F$ by removing the edges and open edges from $F[A]$. Hence, writing $\hat{F}^A$ for the structure thus obtained, we have $\Nt_A(F) := \Nt_A(\hat{F}^A)$, $t_A(F) := t_A(\hat{F}^A)$, and so on, see Section~\ref{BSsec}.} set $A \subseteq V(F)$, we say that an injective map $\phi \colon A \to V(G_m)$ is \emph{faithful} at time $t$ if the graph structure on $V(F)$ with open edge set $O(F)$ and edge set 
$$E(F) \cup \phi^{-1}\big( E\big( G_m[\phi(A)] \big) \big)$$
is permissible.
\end{defn}

Let us refer to a pair $(F,A)$ such that $F$ is a permissible graph structure and $A \subseteq V(F)$ is an independent set, as a \emph{graph structure pair}. If we are also given an injective map $\phi \colon A \to V(G_m)$, then we will refer to $(F,A,\phi)$ as a \emph{graph structure triple}. Given such a triple, if $\phi$ is faithful then we define 
%\begin{equation}\label{def:NF}
$$N_\phi(F)(m) \, := \, \Big| \Big\{ \psi \colon V(F) \to V(G_m) \,:\, \psi \textup{ is an injective homomorphism and } \psi|_A = \phi \Big\} \Big|,$$
%\end{equation}
and set $N_\phi(F)(m) = N_\phi(F)(m-1)$ otherwise. Thus, as long as $\phi$ is faithful, $N_\phi(F)(m)$ counts the (labelled) copies\footnote{In other words, $\psi(u)\psi(v) \in E(G_m)$ whenever $uv \in E(F)$, and $\psi(u)\psi(v) \in O(G_m)$ whenever $uv \in O(F)$.} of $F$ in $G_m$ that agree with $\phi$ on $A$; note that if $\phi$ is not faithful at time~$t$, then $N_\phi(F)(m') = 0$ for every $m' \ge m$. We remark that, for the sake of brevity of notation, we shall often suppress the dependence of $N_\phi(F)$ on $m$. 

\begin{figure}[h]
\includegraphics[scale=1] {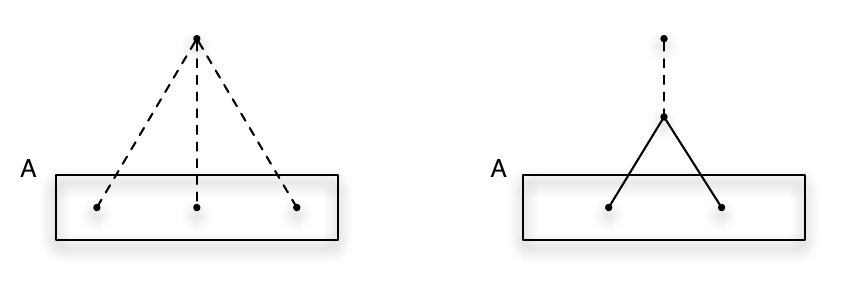}
\caption{Two graph structure pairs.}
\end{figure}

Let us first estimate the expected size of $N_\phi(F)$, i.e., the expected number of copies of $F$ rooted at $\phi(A)$, if $\phi$ is faithful. Recalling our guiding principle that $G_m$ resembles the Erd\H{o}s-R\'enyi random graph $G(n,m)$ (except in relation to containment of triangles), and that we expect that the densities of edges and open edges in $G_m$ will be roughly $2t / \sqrt{n}$ and $e^{-4t^2}$ respectively, it follows that we might expect $G_m$ to contain roughly
$$\Nt_A(F)(m) \, = \, \big( e^{-4t^2} \big)^{o(F)} \left( \frac{2t}{\sqrt{n}} \right)^{e(F)} n^{v_A(F)}$$
copies of $F$ rooted at $\phi(A)$, where $v_A(F) = |V(F)| - |A|$, $e(F) = |E(F)|$ and $o(F) = |O(F)|$. 

Our next theorem shows that $N_\phi(F)$ closely tracks the function $\Nt_A(F)$ for every graph structure triple ($F,A,\phi)$, as long as $\phi$ is faithful, up to a certain time $t_A(F) \in [0,t^*]$. Before we define $t_A(F)$, observe that if $F$ has an induced\footnote{Since we will only be interested in induced sub-structures, we shall identify each sub-structure of $F$ with its vertex set. Thus we write $A \subsetneq H \subseteq F$ to indicate that $H$ is a sub-structure of $F$, and $A \subsetneq V(H)$.} sub-structure $A \subsetneq H \subseteq F$ with $\Nt_A(H) < 1$, then we are unlikely to be able to track the number of copies of $F$, since it might be zero, but if $N_\phi(H)$ happens to be non-zero then $N_\phi(F)$ could be very large. This simple observation motivates the following definition of the \emph{tracking time} of the pair~$(F,A)$. 

\begin{defn}\label{def:tAF}
For each graph structure pair $(F,A)$, define
\begin{equation}\label{def:t*}
t_A^*(F) \, = \, \inf\Big\{ t > 0 \,:\, \Nt_A(F)(m) \le (2t)^{e(F)} \Big\} \, \in \, [0,\infty]
\end{equation}
and
$$t_A(F) \, = \, \min\Big\{ \min \big\{ t_A^*(H) : A \subsetneq H \subseteq F \big\}, \, t^* \Big\}.$$
We call $t_A(F)$ the \emph{tracking time} of the pair $(F,A)$. 
\end{defn} 

Note that we have $t_A^*(F) = 0$ if and only if $e(F) \ge 2v_A(F)$, and that if $A = V(F)$, then $t_A(F) = t^*$. Finally, define 
\begin{equation}\label{def:cFA}
c \, = \, c(F,A) \, := \, \max\bigg\{ \max_{A \subsetneq H \subseteq F} \bigg\{ \frac{2o(H)}{2v_A(H) - e(H)} \bigg\}, \, 2 \bigg\},
\end{equation}
for each graph structure pair $(F,A)$ with $t_A(F) > 0$.\footnote{For example, the first graph structure pair in Figure~2.2 has tracking time $t_A(F) = \frac{1}{2\sqrt{3}} \sqrt{\log n}$ and $c(F,A) = 3$, and the second has tracking time $t_A(F) = 0$.} We claim that $e^{ct^2} \le n^{1/4}$ when $t = t_A(F)$. Indeed, since $e^{2t^2} \le n^{1/4}$ for every $t \le t^*$, if $e^{ct^2} > n^{1/4}$ when $t = t_A(F)$ then there must exist $A \subsetneq H \subseteq F$ such that $e^{2o(H)t^2} > n^{(2v_A(H) - e(H))/4}$. But this implies that $t_A(F) \le t_A^*(H) < t$, which contradicts our assumption that $t = t_A(F)$.\footnote{Moreover, if $t_A(F) < t^*$ then in fact $e^{ct^2} = n^{1/4}$ when $t = t_A(F)$. Indeed, let $A \subsetneq H \subseteq F$ be such that $t_A(F) = t_A^*(H)$, and observe that $e^{4t^2 o(H)} = n^{v_A(H) - e(H)/2}$ when $t = t_A(F)$, and so $e^{ct^2} \ge n^{1/4}$, as required.}

%We claim that if $t_A(F) < t^*$, then $e^{ct^2} = n^{1/4}$ when $t = t_A(F)$. Indeed, if  On the other hand, if $e^{ct^2} > n^{1/4}$ then 

We can now state the `permissible graph structure theorem'.

\begin{thm}\label{NFthm}
For every permissible graph structure $F$, and every independent set $A \subseteq V(F)$, there exists a constant $\gamma(F,A) > 0$ such that, with high probability, 
$$N_\phi(F)(m) \, \in \, \Big( 1 \pm e^{ct^2} n^{-1/4} (\log n)^{\gamma(F,A)} \Big) \big( e^{-4t^2} \big)^{o(F)} \left( \frac{2t}{\sqrt{n}} \right)^{e(F)} n^{v_A(F)}$$
for every $\omega < t \le t_A(F)$, and every faithful $\phi \colon A \to V(G_m)$. 
\end{thm}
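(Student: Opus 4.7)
The plan is to prove Theorem~\ref{NFthm} by a simultaneous induction over all permissible graph structure pairs $(F,A)$, ordered along the building sequences that are to be introduced in Section~\ref{BSsec}. For each pair I would apply the martingale method of Section~\ref{MartSec} to the normalized deviation $(N_\phi(F)(m) - \Nt_A(F)(m))/\big(e^{ct^2}n^{-1/4}(\log n)^{\gamma(F,A)} \Nt_A(F)(m)\big)$, invoking the inductive hypothesis to control the simpler structures appearing in the expected one-step change.

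The first step is to identify $\Ex[\Delta N_\phi(F)(m)]$. When the edge $uv$ is chosen uniformly from $O(G_m)$, a copy $\psi$ of $F$ extending $\phi$ can change in exactly three ways: (i) created if $\psi(e_0)=uv$ for some $e_0\in E(F)$; (ii) destroyed if $\psi(f_0)=uv$ for some $f_0\in O(F)$; (iii) destroyed if some $\psi(f)\in Y_{uv}(m)$ with $f\in O(F)$, because $\psi(f)$ closes via a new triangle. Writing $F^{-e}$ for $F$ with $e\in E(F)$ converted to an open edge, and $F^{+f,i}$ ($i=1,2$) for $F$ extended by a new vertex $z$ joined by an open edge to one endpoint of $f\in O(F)$ and a closed edge to the other, this gives
\begin{equation*}
\Ex[\Delta N_\phi(F)(m)] \,=\, \frac{1}{Q(m)}\bigg(\sum_{e\in E(F)} N_\phi(F^{-e}) \,-\, o(F)\cdot N_\phi(F) \,-\, \sum_{f\in O(F)}\sum_{i=1}^{2} N_\phi(F^{+f,i})\bigg),
\end{equation*}
where all of $F^{-e}, F^{+f,i}$ are again permissible and, crucially, appear earlier in the building-sequence order.

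Substituting the inductive tracking estimates into this identity, and using the arithmetic $\Nt_A(F^{-e})/\Nt_A(F) = \sqrt{n}e^{-4t^2}/(2t)$, $\Nt_A(F^{+f,i})/\Nt_A(F) = 2t\sqrt{n}e^{-4t^2}$ together with $Q(m)\approx e^{-4t^2}\binom{n}{2}$, the main terms combine to $d\Nt_A(F)/dm = \Nt_A(F)\cdot(e(F)/t - 8t\cdot o(F))/n^{3/2}$, so the process tracks in expectation. The choices $c = c(F,A)$ and the cutoff $t\le t_A(F)$ are precisely calibrated so that the tolerance $e^{ct^2}n^{-1/4}$ reaches~$1$ at the first $t$ at which some sub-structure $A\subsetneq H\subseteq F$ leaves the regime in which it can be tracked, and so that summing the inductive errors on the right-hand side above stays within the required tolerance for $F$ itself.

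The concluding step applies the martingale concentration of Section~\ref{MartSec} to the normalized error. The drift is in fact self-correcting, because the dominant destruction term $\sum_{f,i} N_\phi(F^{+f,i})/Q(m)$ is (to leading order) proportional to $N_\phi(F)$, so an excess $\delta\Nt_A(F)$ in $N_\phi(F)$ drives an excess destruction rate of order $\delta \cdot 8to(F)/n^{3/2}$, pulling the normalized error back towards zero. The hard part will be obtaining a sharp uniform bound on the one-step change $|\Delta N_\phi(F)(m)|$, needed as input to the McDiarmid-type inequality: this is controlled by the maximum number of copies of $F$ passing through the new edge $uv$ at a designated position, which is itself an $N_{\phi'}(F')(m)$ for an auxiliary ``marked'' pair $(F', A\cup\{u,v\})$. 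The building sequence must include all such auxiliaries, so that they are already tracking (or, past their tracking times, known to be within a fallback polylogarithmic bound) whenever they are invoked. Permissibility is essential here, since it forbids the sub-structures (such as triangles with at most one open edge) whose counts are highly variable and would otherwise produce anomalously large fluctuations; and the slack $(\log n)^{\gamma(F,A)}$ will be chosen large enough to absorb the union bound over faithful $\phi$ and over $m\le t_A(F)\cdot n^{3/2}$.
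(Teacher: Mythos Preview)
Your outline has the right ingredients at the surface level, but there is a genuine circularity that prevents the induction from closing, and this is precisely the obstacle the paper works hardest to overcome.

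The problem is in the destruction term. You write it as $\sum_{f,i} N_\phi(F^{+f,i})$, where $F^{+f,i}$ is obtained from $F$ by adding a new vertex together with one edge and one open edge. These auxiliary structures are strictly \emph{larger} than $F$: they have $v_A(F)+1$ non-root vertices, $e(F)+1$ edges, and $o(F)+1$ open edges. They do not come ``earlier'' in any building-sequence order (building sequences decompose a structure into \emph{sub}-structures, not super-structures), so the induction is not well-founded. Worse, even in a simultaneous ``first-to-fail'' scheme, the tracking error you would be entitled to assume for $N_\phi(F^{+f,i})$ is $g_{F^{+f,i},A}(t)$, and this is \emph{larger} than $g_{F,A}(t)$ (both the exponent $c$ and the polylog power $\gamma$ are monotone in $v_A,e,o$). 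When you feed that error back into the drift of $N_\phi^*(F)$, the contribution is of order $o(F)\cdot g_{F^{+f,i},A}(t)/g_{F,A}(t) \cdot t/n^{3/2}$, which is $\gg t/n^{3/2}$ rather than $o(t/n^{3/2})$, and the self-correction estimate does not close. Iterating only makes it worse: to track $F^{+f,i}$ you would need $(F^{+f,i})^{+g,j}$, and so on indefinitely.

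The paper breaks this cycle by \emph{not} expressing the destruction term via larger structures. Instead it writes the destruction term as $\sum_{F^*\in N_\phi(F)}\sum_{f\in O(F^*)} Y_f(m)$ and invokes the event $\Y(m)$, which asserts $Y_f(m)\in(1\pm g_y(t))\Yt(m)$ with $g_y(t)\ll g_{F,A}(t)$ for every pair $(F,A)$ (Observation~\ref{obs:gfat}). The crucial point is that this small error $g_y(t)$ cannot itself be obtained by the generic structure-tracking argument: proving $\Y(m)$ requires the entirely separate machinery of Section~\ref{Ysec} (the hierarchy of averaged variables $V_e^\sigma$, mixing in the $Y$-graph, and the use of Theorem~\ref{NFthm} for the specific ladder structures that govern that mixing). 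Thus Theorem~\ref{NFthm} is proved conditionally on $\Y(m)\cap\Z(m)\cap\Q(m)$, and the whole system is closed only at the very end (Theorem~\ref{finalthm}). Your proposal needs to acknowledge and incorporate this external input for $Y_e$; without it, no choice of $\gamma(F,A)$ will make the errors telescope.
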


We emphasize that when $t > t_A(F)$ then $N_\phi(F)$ is no longer likely to be tracking (in fact, it is quite likely to be zero).  Nevertheless, in this case we shall still give an upper bound on $N_\phi(F)$ which is (probably) best possible up to a polylog factor, see Theorem~\ref{EEthm}. As noted earlier, this latter bound will be a crucial tool in the proof of Theorem~\ref{NFthm}; more precisely, we will use it to bound the maximum step-size in our super- and sub-martingales. We note also that we shall need a separate argument, which is based on the proof in~\cite{Boh}, to control the variables $N_\phi(F)$ in the range $0 < t \le \omega$. 

Finally, in order to give an upper bound on the number of edges in $G_{n,\triangle}$, and to show that it is a good Ramsey graph, we shall need to prove the following theorem. %As usual, we write $\Delta(G)$ and $\alpha(G)$ to denote the maximum degree and independence number of a graph $G$, respectively.

\begin{thm}\label{thm:indepsets}
$$\Delta\big( G_{n,\triangle} \big) \, = \, \bigg( \frac{1}{\sqrt{2}} + o(1) \bigg) \sqrt{ n \log n } \qquad \text{and} \qquad \alpha\big( G_{n,\triangle} \big) \, \le \, \big( \sqrt{2} + o(1) \big) \sqrt{ n \log n }$$
with high probability as $n \to \infty$.
\end{thm}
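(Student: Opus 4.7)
The theorem comes in two parts: a max-degree bound (both directions) and an independence-number upper bound. The lower bound on $\Delta$ is immediate from Theorem~\ref{triangle}, since $\Delta\ge 2e(G_{n,\triangle})/n=(1/\sqrt{2}+o(1))\sqrt{n\log n}$.

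For the upper bound on $\Delta$, I proceed in two stages. First, apply Theorem~\ref{NFthm} to the graph structure triple $(F,A,\phi)$ where $F$ is the single edge $\{u,v\}$ and $A=\{v\}$: then $N_\phi(F)(m)=d_v(m)$, the expected value is $2t\sqrt{n}$, and since $F$ has no proper sub-structure strictly containing $A$, the tracking time equals $t^*$, so with high probability $d_v(m^*)=(1+o(1))(1/\sqrt{2}-2\eps)\sqrt{n\log n}$ for every vertex $v$ simultaneously. Second, control the degree increase in the tail $m\in[m^*,e(G_{n,\triangle})]$. Applying Theorem~\ref{NFthm} to the single open edge rooted at $v$ gives $q_v(m^*)=(1+o(1))ne^{-4(t^*)^2}$ for every $v$, where $q_v(m):=|\{u:uv\in O(G_m)\}|$; since $q_v$ is non-increasing and the tracked value of $Q(m)$ yields $q_v(m)/Q(m)\approx 2/n$, and by Theorem~\ref{triangle} the tail has $(\eps+o(1))n^{3/2}\sqrt{\log n}$ steps, the expected degree increase of each~$v$ is $(2\eps+o(1))\sqrt{n\log n}$. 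A martingale Bernstein bound (predictable variance $\le$ mean, increments in $\{0,1\}$) with a union bound over the $n$ vertices upgrades this to $(2\eps+\sqrt{\eps})\sqrt{n\log n}$ for every vertex with probability $1-o(1)$. Summing, $\Delta(G_{n,\triangle})\le(1/\sqrt{2}+\sqrt{\eps}+o(1))\sqrt{n\log n}$, and since $\eps>0$ is an arbitrary parameter, the claimed bound follows.

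For the independence number, use $\alpha(G_{n,\triangle})\le\alpha(G_{m^*})$ and the first moment method. Fix $\delta>0$ and set $k=\lceil(\sqrt{2}+\delta)\sqrt{n\log n}\rceil$. For $I\in\binom{[n]}{k}$, let $Q_I(m)=|\{uv\in O(G_m):u,v\in I\}|$: since the process chooses each edge uniformly from $O(G_m)$, the conditional probability at step $m+1$ that the new edge lies inside $I$ is $Q_I(m)/Q(m)$, so the process $M_m:=\mathbf{1}[I\text{ indep at }m]\cdot\prod_{i<m}(1-Q_I(i)/Q(i))^{-1}$ is a mean-one martingale. On the tracking event $\mathcal{G}=\{Q_I(i)\ge(1-o(1))\binom{k}{2}e^{-4t_i^2}\text{ for all }i\le m^*\}$, the product $\prod_{i<m^*}(1-Q_I(i)/Q(i))$ is at most $\exp(-(1-o(1))t^*k^2/\sqrt{n})$, so
\[
\Pr[I\text{ indep at }m^*] \,\le\, \mathbb{E}[M_{m^*}\mathbf{1}_\mathcal{G}]\cdot e^{-(1-o(1))t^*k^2/\sqrt{n}} + \Pr[\mathcal{G}^c] \,\le\, e^{-(1-o(1))t^*k^2/\sqrt{n}} + \Pr[\mathcal{G}^c].
\]
Using $\binom{n}{k}\le\exp((1/2+o(1))k\log n)$ (since $\log k=(1/2+o(1))\log n$), a short computation gives
\[
\binom{n}{k}\cdot e^{-(1-o(1))t^*k^2/\sqrt{n}} \,\le\, \exp\!\Big(\!-\!\big(\delta(\sqrt{2}+\delta)/(2\sqrt{2})-o(1)\big)\sqrt{n}(\log n)^{3/2}\Big) \,=\, o(1).
\]
Provided $\binom{n}{k}\Pr[\mathcal{G}^c]=o(1)$ as well, Markov's inequality then implies that no independent set of size $k$ exists with high probability; since $\delta>0$ was arbitrary, $\alpha(G_{m^*})\le(\sqrt{2}+o(1))\sqrt{n\log n}$.

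The main obstacle is the bound on $\binom{n}{k}\Pr[\mathcal{G}^c]$. Since $\binom{n}{k}=\exp(\Theta(\sqrt{n}(\log n)^{3/2}))$, the per-$I$ failure probability must be substantially smaller than the typical $n^{-\log n}$ afforded by the rest of the paper's tracking machinery. To obtain this, I would track $Q_I$ via a martingale argument modelled on the tracking of $Q(m)$ in Section~\ref{XYQsec}: the one-step drift, conditional on $I$ still being independent, is
\[
\mathbb{E}[\Delta Q_I(m)\mid\mathcal{F}_m] \,\approx\, -\frac{Q_I(m)\,\Yb(m)}{Q(m)} \,\approx\, -\frac{8t\,Q_I(m)}{n^{3/2}},
\]
matching $\partial_t\binom{k}{2}e^{-4t^2}$ exactly, so $Q_I$ should exhibit the same self-correcting behaviour as $Q$. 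The key point is that $Q_I$'s standard deviation (of order $\sqrt{\binom{k}{2}e^{-4t^2}}$) is much smaller than its mean, so Freedman-type concentration should be pushable well below $\binom{n}{k}^{-1}$, provided one handles the dependence on $I$ via the tracked $Y$-values appearing in the jumps.
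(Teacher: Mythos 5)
Your lower bound $\Delta(G_{n,\triangle}) \ge 2e(G_{n,\triangle})/n$ is correct and matches the paper. The rest has two genuine gaps.

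For the upper bound on $\Delta$, your tail estimate does not hold up. The tracking theorems are proved only for $m \le m^*$; once the process enters the tail $[m^*, e(G_{n,\triangle})]$, there is no control whatsoever on the ratio $q_v(m)/Q(m)$ (indeed $Q(m) \to 0$ while $q_v(m)$ may not), so the claim ``$q_v(m)/Q(m) \approx 2/n$'' and the ensuing Bernstein bound have no foundation. And one really does need a nontrivial argument here: the trivial bound $d_v(\mathrm{end}) - d_v(m^*) \le q_v(m^*)$, with $q_v(m^*) \approx n e^{-4(t^*)^2} = n^{1/2 + 2\sqrt{2}\eps - 4\eps^2}$, exceeds $\sqrt{n\log n}$ by a polynomial factor. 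The paper avoids the tail entirely: it bounds $\Pr\big(N_{G_{n,\triangle}}(v) = S\big)$ for $|S| = s := (\tfrac{1}{\sqrt{2}} + \gamma)\sqrt{n\log n}$ using only properties of $G_{m^*}$, via the observation (see~\eqref{def:W},~\eqref{eq:UinABCD}) that $N_{G_{n,\triangle}}(v) = S$ forces $S$ to be independent in $G_{m^*}$, $N_{G_{m^*}}(v) \subseteq S$, and $\{u,v\} \in O(G_{m^*}) \cup E(G_{m^*})$ for all $u \in S$; one then sums over the step-times $m(f_j)$ at which the edges from $v$ were chosen, using Lemma~\ref{lem:Qprod} to estimate $\prod_j Q(m(f_j))^{-1}$ and the open-edge count in $S$ to estimate the avoidance probability (Lemma~\ref{maxdeg:a}).

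For the independence number, you correctly identify the first-moment strategy, the martingale on $Q_I(m) = o(G_m[I])$, and the need for per-set failure probabilities below $\binom{n}{k}^{-1}$. But the gap you label ``the main obstacle'' is in fact the entire content of Section~\ref{indepSec}, and your proposed fix (Freedman concentration on $Q_I$) does not work, because the single-step change $|\Delta Q_I(m)|$ is bounded only by $\max_v |N_{G_{m+1}}(v) \cap I|$, which can be of order $\sqrt{n\log n}$ when $I$ aligns with vertex neighbourhoods --- precisely the sets you cannot afford to mishandle. The paper's resolution is the four-way partition of Definition~\ref{def:ABCD} according to $J(S,\delta) = \{v : |N_{G_{m^*}}(v) \cap S| \ge n^\delta\}$ and $\a = (a_j)$: when $\sum_J a_j^2 < \delta |S|^2$, one tracks a pruned open-edge count $o_\n(S,m)$ with small increments (Lemma~\ref{indep:b}); when $\sum_J a_j^2 \ge \delta |S|^2$ and $\sum_J a_j < n^{1/2+2\delta}$, the high-degree vertices form a bipartite graph $H = G_{m^*}[J',S]$ whose appearance probability is bounded by summing over its edge set and edge times (Lemma~\ref{indep:c}); and when $\sum_J a_j \ge n^{1/2+2\delta}$ one extracts a near-regular dense bipartite subgraph of occurrence probability $n^{-\sqrt{n}}$, which beats the $\binom{n}{k}$ union bound (Lemma~\ref{indep:d}). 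There is no way around some such case analysis --- a uniform concentration bound of the kind you envision is simply false.
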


The basic idea of the proof of Theorem~\ref{thm:indepsets} is to sum over sets $S$ (of the appropriate size) the probability that $S$ is the neighbourhood of a vertex in $G_{n,\triangle}$ / an independent set in $G_{m^*}$. In order to bound this probability we shall need to track the number of open edges inside $S$, which will be possible (via our usual martingale technique, see Section~\ref{MartSec}) only if there are not too many vertices that send more than $n^\delta$ (for some $0 < \delta \ll \eps$) edges of $G_{m^*}$ into $S$. The main challenge of the proof turns out to be dealing with the other case ($S$ has a large intersection with many neighbourhoods), see Section~\ref{indepSec}. We remark that, by comparison with $G(n,m)$, it seems very likely that our upper bound on $\alpha\big( G_{n,\triangle} \big)$ is asymptotically tight; however, we leave a proof of this statement as an open problem.

\section{Martingale bounds: the Line of Peril and the Line of Death}\label{MartSec}

In this section we introduce the method we shall use to bound the probability that we lose control of a `self-correcting' random variable. As in~\cite{Boh,BK}, we shall use martingales, but the technique here differs in some crucial respects. We remark that a similar method was introduced recently by Bohman, Frieze and Lubetzky~\cite{BFL10,BFL}. 

Recall that a \emph{super-martingale} with respect to a filtration $(\F_m)_{m = r}^{r+s}$ of $\sigma$-algebras, is a sequence of random variables $(M(m))_{m = r}^{r+s}$ such that the following hold for each $m \in [r,r+s]$: 
$$M(m) \textup{ is $\F_m$-measurable,} \quad \Ex\big[ |M(m)| \big] < \infty, \quad \textup{and} \quad \Ex\big[ M(m+1) \mid \F_m \big] \le M(m).$$ 
Our main tool is the following martingale bound, which follows from a classical inequality of Freedman~\cite{F75} (alternatively, see~\cite[Theorem~3.15]{Colin}, or the Appendix~\cite{App}). We remark that, for all of the super-martingales we shall use below, $M(m)$ will depend only on the sequence $(G_0,\ldots,G_m)$, i.e., only on the process up to step $m$.   

\pagebreak

\begin{lemma}\label{mart}
Let $(M(m))_{m = r}^{r+s}$ be a super-martingale with respect to a filtration $(\F_m)_{m = r}^{r+s}$, and suppose that 
\begin{equation}\label{eq:martlemma}
|M(m + 1) - M(m)| \le \alpha \qquad  \text{and} \qquad \Ex\big[ |M(m + 1) - M(m)| \;\big|\; \F_m \big] \le \beta
\end{equation}
for every $r \le m < r+s$. Then, for every $0 \le x \le \beta s$,
$$\Pr\big( M(r+s) > M(r) + x \big) \, \le \, \exp\left( - \frac{x^2}{4 \alpha \beta s} \right).$$
\end{lemma}

For some of the more straightforward martingale calculations (for example, when $t \le \omega$), it will be more convenient to use instead Bohman's method. We therefore state here, for convenience and comparison, the martingale lemma used in~\cite{Boh}.

\begin{lemma}\label{Bohmart}
Let $(M(m))_{m = r}^{r+s}$ be a super-martingale with respect to a filtration $(\F_m)_{m = r}^{r+s}$, and suppose that 
$$- \beta \le M(m+1) - M(m) \le \alpha$$ 
for every $r \le m < r+s$. Then, for every $0 \le x \le \min\{ \alpha, \beta \} \cdot s$,
$$\Pr\big( M(r+s) > M(r) + x \big) \, \le \, \exp\left( - \frac{x^2}{4 \alpha \beta s} \right).$$
\end{lemma}

We remark that in both cases, a corresponding bound also holds for sub-martingales.

\subsection{The Line of Peril and the Line of Death}

Let $A(m)$ be a `self-correcting' (see Definition~\ref{def:selfcorrecting}, below) random variable which we wish to track, i.e., show that 
$$A(m) \in \big( 1 \pm g(t) \big) \At(m)$$ 
for some functions $\At(m)$ and $g(t)$, and for all $m$ in a given\footnote{A typical case will be $a = \omega \cdot n^{3/2}$ and $b = t_A(F) \cdot n^{3/2}$. We will need to assume that $a \ge n^{3/2}$.} interval $[a,b]$. Let us define the normalized error to be 
$$\As(m) \, = \, \frac{A(m) - \At(m)}{g(t) \At(m)}.$$
In order to motivate the rather technical statement below (see Lemma~\ref{lem:self:mart}), we shall begin by outlining the basic idea behind the method, which is in fact rather simple. 

\medskip

\noindent \textbf{The basic idea:} Suppose that $A^*$ is a random walk on $\RR$, and we wish to bound the probability that the event 
$$\big\{ |A^*(m)| > 1 \big\} \cap \K(m-1)$$ 
holds for some $m \in [a,b]$, where $\K(1) \supseteq \ldots \supseteq \K(m^*)$ is some sequence of `good' events\footnote{For us, the event $\K(m)$ will essentially say that all of our variables are still tracking at step $m$.}. When $|A^*(m)| < 1/2$ we need only that the maximum step-size is~$o(1)$. When $|A^*(m)| \ge 1/2$, on the other hand, suppose that $A^*$ has a `self-correcting drift' (in expectation) back towards the line $A^*(m) = 0$. The idea is as follows: do nothing as long as $|A^*(m)| < 1/2$, and step in only when we reach the danger zone. More precisely, we control the probability that $A^*$ `crosses' from $|A^*(m)| = 1/2$ (the `Line of Peril') to $|A^*(m)| = 1$ (the `Line of Death') in a given interval, $[r,r+s]$. There are two cases to consider: either $A^*$ crosses this interval quickly, which is unlikely even without the drift, or $A^*$ crosses slowly, which is unlikely since it has to constantly swim against the current. In both cases we apply Lemma~\ref{mart} to a suitable super-martingale, which gives bounds of the form $n^{-\log n}$ for well-chosen $g(t)$. 

\medskip

Before giving a more detailed description of our martingale method, we remark that, in order to prove the various theorems stated in Section~\ref{sketchSec}, we shall consider a polynomial number of `bad' events of the following form: \begin{align*}
&\textup{``Variable $\As$ is the first of all the variables that we are tracking to cross its Line of}\\
& \textup{ Death, it does so after $r+s$ steps, and it last crossed its Line of Peril after $r$ steps."}
\end{align*}
Clearly, if some variable crosses its Line of Death, then an event of this type must occur. Since we prove bounds of the form $n^{-\log n}$ on the probability of each event, we will be able to deduce, by the union bound, that with high probability none of the bad events occurs. 

\begin{figure}[h]
\includegraphics[scale=0.8] {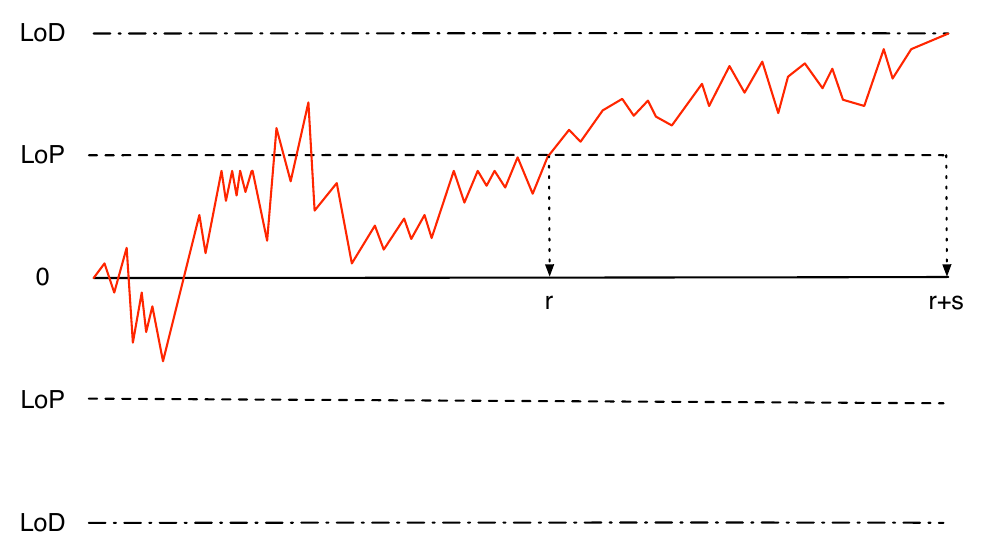}
\caption{The Lines of Peril and Death.}
\end{figure}

In order to make precise the general strategy outlined above for a variable $A^*$, we shall (roughly speaking) do the following:
\begin{itemize}
\item[$1.$] Choose a sequence of `good' events $\K(1) \supseteq \ldots \supseteq \K(m^*)$. \smallskip 
\item[$2.$] Prove that if $\K(m)$ holds then %\footnote{We write $x \in (a \pm b)(c \pm d)$ to denote the fact that $x = yz$ for some $y \in a \pm b$ and $z \in c \pm d$.}
\begin{equation}\label{eq:Aselfcorrection}
\Ex\big[ \Delta \As(m) \big] \in \ds\frac{ct}{n^{3/2}} \Big( - \As(m) \pm \eps \Big),
%\Ex\big[ \Delta \As(m) \big] \, \in \, \big( c \pm d \big) \cdot \ds\frac{t}{n^{3/2}} \Big( - \As(m) \pm \eps \Big)
\end{equation}
for some constant $c > 0$, and moreover
%for some constants $c > d \ge 0$, and moreover
$$|\Delta \As(m)| \le \alpha(t) \qquad \text{and} \qquad \Ex\big[ |\Delta \As(m)| \big] \le \beta(t).$$ 
\item[$3.$] Call the line $|\As(m)| = \frac{1}{2}$ the \emph{Line of Peril} and $|\As(m)| = 1$ the \emph{Line of Death}. \smallskip
\item[$4.$] For each pair $(r,s) \in \N^2$, consider the event $\LL^A(r,s)$ that $|\As(m)|$ crosses the Line of Death for the first time in step $r+s$, and that it crossed the Line of Peril for the last time (before step $r+s$) in step $r+1$. \smallskip
\item[$5.$] Set $s_0 = \min\{ s, n^3 / r \}$, let $\alpha$ and $\beta$ denote the suprema of $\alpha(t)$ and $\beta(t)$, respectively, over the interval $[r,r+s_0]$, and check that $\beta s_0 \ge 1$, and that  
$$\alpha \cdot \beta \cdot n^{3/2} \,\le\, \frac{1}{(\log n)^3}.$$
\item[$6.$] Set $M(r) = |\As(r)|$ and, while $\K(m)$ holds and $|\As(m)| \ge 1/3$, set 
$$M(m) \, = \, |\As(m)| \,+\, \frac{cr(m-r)}{4n^3}.$$
We will use~\eqref{eq:Aselfcorrection} to show that $(M(m))_{m = r}^{r + s_0}$ is a super-martingale with respect to the filtration $(\F_m)_{m = r}^{r + s_0}$, where $\F_m$ encodes all of the information obtained by observing the process up to step $m$. We consider separately the cases $s_0 = s$ and $s_0 = n^3 / r$, noting that $s_0 \le n^{3/2}$ if $m \ge n^{3/2}$. \smallskip
\item[$7.$] In the former case, we ignore the self-correction and, using Lemma~\ref{mart}, bound the probability of $\LL^A(r,s) \cap \K(r+s) \cap \{ s_0 = s \}$ by
$$\Pr\left( M(s) \ge M(0) + \frac{1}{2} \right) \, \le \, \exp\left( - \frac{\delta}{\alpha \beta n^{3/2}} \right) \, \le \, n^{-\log n},$$
for some $\delta > 0$. The final inequality holds by our bound on $\alpha \beta n^{3/2}$.  \smallskip
\item[$8.$] In the latter case, we use the self-correction of $\As(m)$ to bound the probability that the martingale hasn't re-crossed the Line of Peril after $s_0$ steps. Indeed, it follows from $|A^*(r + s_0)| \ge 1/2 \ge |A^*(r)|$ that $M(s_0) \ge M(0) + c/4$, due to the `drift' over the interval $[r,r+s_0]$. Thus, by Lemma~\ref{mart} and as above, we can bound the probability of the event $\LL^A(r,s) \cap \K(r+s_0) \cap \{ s_0 \neq s \}$ by
$$\Pr\left( M(s_0) \ge M(0) + \frac{c}{4} \right) \, \le \, \exp\left( - \frac{\delta}{\alpha \beta n^{3/2}} \right) \, \le \, n^{-\log n},$$
for some $\delta = \delta(c) > 0$, as required. 
\end{itemize}
We remark that, crucially, our functions $\alpha(t)$ and $\beta(t)$ in each application will vary by only a relatively small factor over the range $m \in [r,r+s_0]$. Hence we shall not lose much in applying Lemma~\ref{mart} with $\alpha$ and $\beta$ fixed, and equal to the maximum of $\alpha(t)$ and $\beta(t)$ over this interval.

\subsection{A general lemma}

In this subsection we shall prove the main technical lemma of this section, which we will use several times in the proofs that follow. In order to state it, let us first make precise what it means for a random variable to be self-correcting.

\begin{defn}[Self-correcting]\label{def:selfcorrecting}
Let $A(m)$ be a random variable, let $g(t)$ and $h(t)$ be functions defined on $(0,t^*)$, let $\K(1) \supseteq \ldots \supseteq \K(m^*)$ be a nested sequence of events, and let $n^{3/2} \le a \le b \le m^*$. We say that the variable $A$ is \emph{$(g,h;\K)$-self-correcting} on the interval $[a,b]$ relative to the function $\At(m)$ if the following holds for every $a \le m \le b$: 

If $\K(m)$ holds, then the normalized error
%\begin{equation}\label{def:A*}
$$A^*(m) \, = \, \frac{A(m) - \At(m)}{g(t) \At(m)}$$
%\end{equation}
satisfies the following inequalities:
$$A^*(m) \ge 1/3 \; \Rightarrow \;  \Ex\big[ \Delta A^*(m) \big] \le -h(t) \quad \text{and} \quad A^*(m) \le - 1/3 \; \Rightarrow \;  \Ex\big[ \Delta A^*(m) \big] \ge h(t).$$
\end{defn}

We remark that, in all of the applications below, $A^*(m)$ will satisfy an inequality of the form\footnote{We write $x \in (a \pm b)(c \pm d)$ to denote the fact that $x = yz$ for some $y \in a \pm b$ and $z \in c \pm d$.}
\begin{equation}\label{self:usual}
\Ex\big[ \Delta \As(m) \big] \, \in \, \big( c \pm d \big) \cdot \ds\frac{t}{n^{3/2}} \Big( - \As(m) \pm \eps \Big)
\end{equation}
when $\K(m)$ holds, for some constants $c > d \ge 0$ with $c - d \ge 2$. (In fact, in most of our applications $d = 0$.) It follows easily from~\eqref{self:usual} that $A$ is $(g,h;\K)$-self-correcting relative to $\At(m)$ for some function $h(t) = \Theta\big( t \cdot n^{-3/2} \big)$. 

Let us say that a function $\alpha(t)$ is $\lambda$-\emph{slow} on $[a,b]$ if it varies by at most a factor of $\lambda$ across any interval of the form $[x, x + 1/x] \subset [a,b]$. Note in particular that any function of the form $t^k e^{\ell t^2}$ is $\lambda $-slow on $[a,b]$ for some $\lambda = \lambda(k,\ell) > 0$ and any $1 \le a \le b < \infty$.

We are almost ready to state our main martingale lemma; the following definition will simplify the statement somewhat.

\begin{defn}\label{def:reasonable}
We say that a collection $(\lambda,\delta;g,h;\alpha,\beta;\K,I)$ is \emph{reasonable} if 
$$\begin{array}{lll}
& \bullet \; \textup{$\lambda \ge 1$ and $\delta \in (0,1/4)$ are constants,} \quad & \bullet \; \textup{$g$,~$h$,~$\alpha$~and~$\beta$ are functions defined on $(0,t^*)$,}\\[+0.7ex]
& \bullet \; \textup{$I = [a,b] \subseteq [n^{3/2},m^*]$ is an interval,} & \bullet \; \K(1) \supseteq \ldots \supseteq \K(m^*) \textup{ is a sequence of events,}
\end{array}$$
and moreover they satisfy the following conditions:
\begin{itemize}
\item[$(a)$] $\alpha$ and $\beta$ are $\lambda$-slow on $I$.
\item[$(b)$] $\min\big\{ \alpha(t), \, \beta(t), \, h(t) \big\} \ge \ds\frac{\delta t}{n^{3/2}}$ and $\alpha(t) \le \delta$ for every $m \in [a,b]$. %$0 < t \le t^*$.
\end{itemize}
\end{defn}

We can now state the main technical lemma we will use to bound the probability that a self-correcting random variable is the first to go off track.

\begin{lemma}\label{lem:self:mart}
Let $(\lambda,\delta;g,h;\alpha,\beta;\K,I)$ be a reasonable collection, and let $A$ be a random variable that is $(g,h;\K)$-self-correcting on $I = [a,b]$ relative to the function $\At$. Suppose that
$$|\Delta A^*(m)| \le \alpha(t) \qquad  \text{and} \qquad \Ex\big[ |\Delta A^*(m)| \big] \le \beta(t)$$
for every $m \in [a,b]$ for which $\K(m)$ holds, and that $| A^*(a) | < 1/2$. Then
$$\Pr\Big( \big\{ |A^*(m)| > 1 \big\} \cap \K(m-1) \text{ for some $m \in [a,b]$} \Big) \, \le \, n^4 \exp\left( - \frac{\delta'}{n^{3/2}} \min_{m \in [a,b]} \bigg\{ \frac{1}{\alpha(t) \beta(t)} \bigg\} \right),$$
where $\delta' = \delta^3 / (4\lambda)^2 > 0$. 
%, and the minimum is over $t$ such that $m = t \cdot n^{3/2} \in [a,b]$.
\end{lemma}

\begin{proof}
For each $(r,s) \in \N^2$, we define an event 
$$\LL_+^A(r,s) \, = \, \big\{ A^*(r) < 1/2 \big\} \cap \bigcap_{m = r+1}^{r+s-1} \big\{ 1/2 \le A^*(m) \le 1 \big\} \cap \big\{ A^*(r+s) > 1 \big\},$$
and a similar event $\LL_-^A(r,s)$ corresponding to crossing the interval $[-1,-1/2]$. If we have $| A^*(a) | < 1/2$ and $|A^*(m)| > 1$ for some $m \in [a,b]$, then the event $\LL_+^A(r,s) \cup \LL_-^A(r,s)$ must hold for some $a \le r < m$ and $1 \le s \le m - r$. By symmetry, it will therefore suffice to prove the following claim. 

\medskip
\noindent \textbf{Claim:} $\Pr\Big( \LL_+^A(r,s) \cap \K(r+s-1) \Big) \le \ds\exp\left( - \frac{\delta^3}{8 \lambda^2 n^{3/2}} \max_{m \in [r,r+s]} \frac{1}{\alpha(t) \beta(t)} \right)$ for every $r,s \in \N$. 

\begin{proof}[Proof of claim]
Set $s_0 = \min\{ s, n^3 / r \}$ and $M(r) = A^*(r)$, and for each $r \le m < r + s_0$, define 
\begin{equation}\label{eq:self:mart:Mdef}
M(m+1) \, := \, M(m) + A^*(m+1) - A^*(m) + \frac{\delta r}{n^{3}}
\end{equation}
if $\K(m)$ holds and $A^*(m) \ge 1/3$, and $M(m+1) := M(m)$ otherwise. Note that $M(m)$ depends only on the process up to step $m$, and so is $\F_m$-measurable, where $\F_m$ is the $\sigma$-algebra of information obtained by observing the sequence $(G_0,\ldots,G_m)$.

We claim that $(M(m))_{m = r}^{r+s}$ is a super-martingale with respect to the filtration $(\F_m)_{m = r}^{r+s}$. Indeed, since the function $A$ is $(g,h;\K)$-self-correcting on $[a,b]$ with respect to $\At$, we have
$$\Ex \big[ M(m+1) - M(m) \;\big|\; \F_m \big] \, = \, \Ex \big[ \Delta A^*(m) \big] + \frac{\delta r}{n^3}  \, \le \, - h(t) + \frac{\delta t}{n^{3/2}} \, \le \, 0$$  
if $\K(m)$ holds and $A^*(m) \ge 1/3$, by~\eqref{eq:self:mart:Mdef} and Definition~\ref{def:selfcorrecting}, and $M(m+1) = M(m)$ if not. 

In order to apply Lemma~\ref{mart}, we need to check that~\eqref{eq:martlemma} holds for a suitable pair $(\alpha,\beta)$. To do so, observe first that either $M(m+1) = M(m)$, or $\K(m)$ holds and
$$| M(m+1) - M(m)| \, \le \, | \Delta A^*(m) \big| + \frac{\delta r}{n^3} \, \le \, 2 \cdot \alpha(t),$$ 
since $|\Delta A^*(m)| \le \alpha(t)$ whenever $\K(m)$ holds, and $\alpha(t) \ge \delta t / n^{3/2} \ge \delta r / n^3$, since $m \ge r$. Similarly, if $\K(m)$ holds then
$$\Ex\big[ |M(m+1) - M(m)| \;\big|\; \F_m \big] \, \le \, \Ex\big[ | \Delta A^*(m) \big| \big] + \frac{\delta r}{n^3} \, \le \, 2 \cdot \beta(t),$$
since $\Ex\big[ | \Delta A^*(m) \big| \big] \le \beta(t)$ whenever $\K(m)$ holds, and $\beta(t) \ge \delta t / n^{3/2} \ge \delta r / n^3$, as before. Thus it follows that~\eqref{eq:martlemma} holds with 
$$\alpha = 2 \cdot \max_{r \le m < r + s_0} \alpha(t) \qquad \text{and} \qquad  \beta = \frac{1}{s_0} + 2 \cdot \max_{r \le m < r + s_0} \beta(t),$$
and moreover $\beta \cdot s_0 \ge 1$. 

We claim next that if $\LL_+^A(r,s) \cap \K(r+s-1)$ holds, then $M(r+s_0) \ge M(r) + \delta$. Indeed, the event $\K(r+s-1)$ implies that $\K(m)$ holds for every $r \le m < r + s_0$ (since $s_0 \le s$), and also that $A^*(r) \ge A^*(r+1) - \delta$. Combining this with the event $\LL_+^A(r,s)$, it follows that $A^*(m) \ge 1/3$ for every $r \le m < r + s_0$. Thus, if $s_0 = s \le n^3 / r$, then 
$$M(r+s_0) \, = \, A^*(r+s) + \frac{\delta rs}{n^3} \, > \, 1 \, > \, M(r) + \frac{1}{2},$$ 
\pagebreak where the second and third follow from the event $\LL_+^A(r,s)$. On the other hand, if $s_0 = n^3 / r \le s$, then we similarly obtain
$$M(r+s_0) \, = \, A^*(r+s_0) + \frac{\delta r s_0}{n^3} \, \ge \, \frac{1}{2} + \delta \, > \, M(r) + \delta,$$ 
since $\LL_+^A(r,s)$ implies that $A^*(r+s_0) \ge 1/2$. Hence, by Lemma~\ref{mart}, it follows that
\begin{equation}\label{eq:applyingmart}
\Pr\Big( \LL_+^A(r,s) \cap \K(r+s-1) \Big) \, \le \,  \exp\left( - \frac{\delta^2}{4\alpha \beta s_0} \right).
\end{equation}

Finally, we claim that 
\begin{equation}\label{eq:boundingabs}
\alpha \beta s_0 \, \le \, \frac{4\lambda^2}{\delta} \alpha(t) \beta(t) n^{3/2},
\end{equation}
for every $r \le m < r + s_0$. To see this, recall first that $\alpha(t)$ and $\beta(t)$ are $\lambda$-slow on $[a,b]$ and $s_0 \le n^3/r$, so each varies by at most a factor of $\lambda$ in the range $r \le m < r + s_0$. Now, if $\beta s_0 \ge 2$, then $\alpha \beta \le 8\lambda^2 \alpha(t) \beta(t)$, and so in this case~\eqref{eq:boundingabs} follows, since $s_0 \le n^{3/2}$ and $\delta \le 1/4$. On the other hand, if $\beta s_0 \le 2$, then $\alpha \beta s_0 \le 4\lambda \alpha(t)$, and so~\eqref{eq:boundingabs} again follows, since $\beta(t) \ge \delta t / n^{3/2}$ and $\lambda, t \ge 1$. Combining~\eqref{eq:applyingmart} and~\eqref{eq:boundingabs}, the claim follows.  
\end{proof}

It follows immediately from the claim that 
\begin{align*}
\Pr\bigg( \bigcup_{m = a}^b \big( |A^*(m)| > 1 \big) \cap \K(m-1) \bigg) & \le \, \sum_{r,s} \Pr\Big( \LL^A(r,s) \cap \K(r+s-1) \Big) \\
& \le \, n^4 \ds\exp\left( - \frac{\delta'}{n^{3/2}} \min_{m \in [a,b]} \bigg\{ \frac{1}{\alpha(t) \beta(t)} \bigg\} \right),
\end{align*}
where $\delta' = \delta^3 / (4\lambda)^2$, as required.
\end{proof}

\subsection{The events $\X(m)$, $\Y(m)$, $\Z(m)$ and $\Q(m)$} \label{Zsec}

To finish this section, we shall motivate the martingale technique introduced above by using it to track the variables $X_e$. This is a particularly simple special case of the argument used in Section~\ref{EEsec}, and should help prepare the reader for the more involved application performed there, where we use it to control~$N_\phi(F)$. 

In order to track the variables $X_e$, we shall need to introduce some more notation, which will be used in a similar way throughout the paper. Set
\begin{equation}\label{def:fy} 
f_y(t) \, = \, e^{Ct^2} n^{-1/4} (\log n)^{5/2}
\end{equation}
and $f_x(t) \, = \, e^{-4t^2} f_y(t)$, where $C = C(\eps) > 0$ is the large constant chosen above. Recall that $g_y(t) = e^{2t^2} n^{-1/4} (\log n)^4$, and set $g_x(t) = C g_y(t)$. The following simple definitions will play a crucial role in the proof of Theorems~\ref{triangle} and~\ref{R3k}. 

\pagebreak

\begin{defn}\label{def:events:XYQ}
For each $0 \le m' \le m^*$, we define events $\X(m')$, $\Y(m')$ and  $\Q(m')$  as follows: 
\begin{itemize}
\item[$(a)$] $\X(m')$ denotes the event that 
$$X_e(m) \, \in \, \Xt(m) \pm f_x(t) \Xt(n^{3/2})$$
for every open edge $e \in O(G_m)$, and every $m \le \min\big\{ \omega \cdot n^{3/2}, m' \big\}$, and
$$X_e(m) \, \in \, \big( 1 \pm g_x(t) \big) \Xt(m)$$
for every open edge $e \in O(G_m)$, and every $\omega \cdot n^{3/2} < m \le m'$.
\item[$(b)$] $\Y(m')$ denotes the event that 
$$Y_e(m) \, \in \, \Yt(m) \pm f_y(t) \Yt(n^{3/2})$$
for every open edge $e \in O(G_m)$, and every $m \le \min\big\{ \omega \cdot n^{3/2}, m' \big\}$, and 
$$Y_e(m) \, \in \, \big( 1 \pm g_y(t) \big) \Yt(m)$$
for every open edge $e \in O(G_m)$, and every $\omega \cdot n^{3/2} < m \le m'$.
\item[$(c)$] $\Q(m')$ denotes the event that
$$Q(m) \, \in \, \Qt(m) \pm \eps \cdot f_y(t) \Qt(n^{3/2})$$
for every $m \le \min\big\{ \omega \cdot n^{3/2}, m' \big\}$, and 
$$\frac{\Xb(m)}{\Xt(m)} \in 1 \pm g_q(t), \quad \frac{\Yb(m)}{\Yt(m)} \in 1 \pm g_q(t) \quad \text{and} \quad \frac{Q(m)}{\Qt(m)} \in 1 \pm g_q(t)$$
for every $\omega \cdot n^{3/2} < m \le m'$. 
\end{itemize}
\end{defn}

Similarly, we shall later define $\E(m')$ to be the event that the conclusion of Theorem~\ref{EEthm}, below, holds for all $m \le m'$ (for the precise definition, see Section~\ref{EEsec}). In this section, we shall only need the following two special cases of that event: 
\begin{itemize}
\item[1.] With a slight abuse of notation, if $f = \{u,v\} \in E(K_n)$, then let us write $\hat{Y}_f(m)$ for the number of vertices $w \in V(G_m)$ such that $\{u,w\} \in O(G_m)$ and $\{v,w\} \in E(G_m)$, or vice-versa, even if $f \not\in O(G_m)$. The event $\E(m')$ implies that, for every $f \in E(K_n)$, 
$$\hat{Y}_f(m) \, \le \, \big( 1 + \eps \big) \Yt(m)$$ 
if $\omega \cdot n^{3/2} < m \le m'$, and moreover that $\hat{Y}_f(m) \le \sqrt{n}$ if $m \le \min\{ \omega \cdot n^{3/2}, m'\}$.
\item[2.] Let $(W,A)$ denote the graph structure pair with $v(W) = 4$, $v_A(W) = 1$, $e(W) = 1$ and $o(W) = 2$. The event $\E(m')$ implies that 
$$N_\phi(W)(m) \, \le \, \max\big\{ 4t e^{-8t^2} \sqrt{n}, (\log n)^\omega \big\}$$
for every $\omega \cdot n^{3/2} < m \le m'$ and every faithful map $\phi \colon A \to V(G_m)$. 
\end{itemize}
 
Finally, given $e = \{u,v\} \in E(K_n)$, set
$$Z_e(m) \, = \, \big| N_{G_m}(u) \cap N_{G_m}(v) \big|,$$
and let $\Z(m')$ denote the event that $Z_e(m) \le (\log n)^2$ for every $e \in E(K_n)$ and every $m \le m'$. We shall use the following bound frequently throughout the proof of Theorem~\ref{triangle}.

\begin{prop}\label{Zprop}
Let $m \in [m^*]$. With probability at least $1 - n^{-C\log n}$, either the event $\big( \E(m-1) \cap \Q(m-1) \big)^c$ holds, or
$$Z_e(m) \, \le \, (\log n)^2$$
for every $e \in E(K_n)$. In other words,
$$\Pr\Big( \E(m-1) \cap \Z(m)^c \cap \Q(m-1) \Big) \, \le \, n^{-C\log n}.$$
\end{prop}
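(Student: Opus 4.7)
The plan is to fix an edge $e = \{u,v\} \in E(K_n)$, show that
$$\Pr\big( \E(m-1) \cap \{Z_e(m) > (\log n)^2\} \cap \Q(m-1) \big) \,\le\, n^{-(C+2) \log n},$$
and then apply the union bound over the ${n \choose 2}$ choices of $e$. The starting observation is that $Z_e(m)$ is non-decreasing, with increments $\Delta Z_e(m') := Z_e(m'+1) - Z_e(m') \in \{0,1\}$: indeed $\Delta Z_e(m') = 1$ precisely when the edge selected at step $m'+1$ is either $\{u,w\}$ for some $w \in N_{G_{m'}}(v)$, or $\{v,w\}$ for some $w \in N_{G_{m'}}(u)$. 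By the definition of $\hat{Y}_e$ (item~1 above the proposition) the number of such edges is exactly $\hat{Y}_e(m')$, and each of them is open, so since the step-$(m'+1)$ edge is chosen uniformly from $O(G_{m'})$ we obtain
$$\Pr\big[ \Delta Z_e(m') = 1 \,\big|\, G_{m'} \big] \,=\, \frac{\hat{Y}_e(m')}{Q(m')}.$$

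On $\E(m') \cap \Q(m')$ the bounds on $\hat{Y}_e$ recalled just above the proposition, together with the $\Q$-bound $Q(m') = (1 \pm o(1))\Qt(m')$, give a deterministic upper bound $\Pr[\Delta Z_e(m')=1 \mid G_{m'}] \le p_{m'+1}$, where for $m' > \omega n^{3/2}$ we may take $p_{m'+1} = O(\Yt(m')/\Qt(m')) = O(t'/n^{3/2})$, and for $m' \le \omega n^{3/2}$ we may take $p_{m'+1} = O(e^{4\omega^2}/n^{3/2})$. Introduce the stopping time
$$T \,:=\, \min\big\{ m' \ge 0 : \E(m') \cap \Q(m') \textup{ fails}\big\}$$
and set $Y_{m'} := \mathbf{1}[T > m'] \cdot \Delta Z_e(m')$. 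Then $\Ex[Y_{m'} \mid G_{m'}] \le p_{m'+1}$ unconditionally, and on the event $\E(m-1) \cap \Q(m-1)$ (which, by the monotonicity of $\E$ and $\Q$ in their argument, forces $T \ge m$) we have $Z_e(m) = \sum_{m'=0}^{m-1} Y_{m'} =: S_m$. Hence it suffices to prove $\Pr\big( S_m > (\log n)^2 \big) \le n^{-(C+2)\log n}$.

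For this we use a standard Chernoff argument. The expected drift is
$$P \,:=\, \sum_{m'=0}^{m-1} p_{m'+1} \,=\, O\big( \omega e^{4\omega^2} + (t^*)^2 \big) \,=\, O(\log n),$$
since $\omega = \log\log\log n$ grows slowly enough. Because $Y_{m'} \in \{0,1\}$ with conditional mean at most $p_{m'+1}$, one has $\Ex\big[ e^{\lambda Y_{m'}} \,\big|\, G_{m'} \big] \le \exp\!\big( (e^\lambda - 1) p_{m'+1} \big)$ for every $\lambda > 0$; iterating gives $\Ex[e^{\lambda S_m}] \le \exp\!\big( (e^\lambda - 1) P \big)$, and applying Markov with $\lambda := \log\big( (\log n)^2 / P \big)$ yields
$$\Pr\big( S_m \ge (\log n)^2 \big) \,\le\, \bigg( \frac{eP}{(\log n)^2} \bigg)^{(\log n)^2} \,\le\, (\log n)^{- (\log n)^2 / 2}.$$
This is $\exp(-\Omega((\log n)^2 \log\log n))$, which comfortably beats $n^{-(C+2)\log n} = \exp(-(C+2)(\log n)^2)$. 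The union bound over the ${n\choose 2}$ edges then gives the claimed bound $n^{-C \log n}$.

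The only substantive input is the identification of the increment probability as $\hat{Y}_e(m')/Q(m')$, together with the $\hat{Y}_e$-bound encoded in $\E$; once these are in place the argument is plain Chernoff, with a great deal of slack because the target $(\log n)^2$ is quadratically larger than the typical value $\Ex[Z_e(m)] = \Theta(t^2) = O(\log n)$. No Line-of-Peril / Line-of-Death machinery is needed here, since $Z_e$ is not self-correcting (it only grows) and we can afford a very loose bound.
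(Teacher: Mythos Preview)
Your proof is correct and follows essentially the same approach as the paper: identify $\Pr[\Delta Z_e(m')=1\mid G_{m'}]=\hat{Y}_e(m')/Q(m')$, bound this by $O((\log n)/n^{3/2})$ on $\E(m')\cap\Q(m')$, and use the large gap between the expected value $O(\log n)$ and the target $(\log n)^2$. The only cosmetic difference is that the paper uses a direct combinatorial bound ${n\choose 2}{m^*\choose(\log n)^2}(\log n/n^{3/2})^{(\log n)^2}$ in place of your stopping-time/Chernoff argument, but the content is the same.
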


We would like to emphasize the appearance of the event $\E(m)$ in the statement above, and the absence of $\Y(m)$. This is because the event $\Y(m)$ only gives us a bound on $Y_e(m)$ for \emph{open} edges $e \in O(G_m)$, whereas we shall require a bound on the number of (edge, open edge) pairs which form a triangle with $e$ for both open and non-open pairs in ${[n] \choose 2} \setminus E(G_m)$. As noted above, the event $\E(m)$ gives us such a bound on $\hat{Y}_e(m)$ of the form $\big(1 + \eps \big)\Yt(m)$ (see Section~\ref{EEsec} for the details). Although the error term in this bound is larger than that in Theorem~\ref{Ythm}, it easily suffices for our current purposes. 

\begin{proof}[Proof of Proposition~\ref{Zprop}]
We imitate the proof of the corresponding statement from~\cite{Boh}. Let $m' \in [m^*]$, and let $e = \{u,v\} \in E(K_n)$. As noted above, if the event $\E(m')$ holds, then  
\begin{equation}\label{eq:N+N}
\hat{Y}_e(m) \, \le \, \big( 1 + \eps \big) 4t e^{-4t^2} \sqrt{n}
\end{equation}
for every $\omega \cdot n^{3/2} < m \le m'$, and $\hat{Y}_e(m) \le \sqrt{n}$ for every $m \le \min\{ \omega \cdot n^{3/2}, m'\}$. Now, note that $0 \le \Delta Z_e(m) \le 1$ (deterministically), and that, if $\E(m) \cap \Q(m)$ holds, then
$$\Pr\big( Z_e(m+1) > Z_e(m) \big) \, = \, \frac{\hat{Y}_e(m)}{Q(m)}  \, < \, \frac{\log n}{n^{3/2}},$$ 
where the second inequality follows from~\eqref{eq:N+N} and the event $\Q(m)$. It follows that
$$\Pr\Big( \E(m'-1) \cap \Z(m')^c \cap \Q(m'-1) \Big) \, \le \, {n \choose 2} {m^* \choose (\log n)^2}  \bigg( \frac{\log n}{n^{3/2}} \bigg)^{(\log n)^2} \, \le \, e^{-C(\log n)^2}$$
for every $m' \le m^*$, as required.
\end{proof}

\subsection{Tracking $X_e$}\label{SecX}

As a concrete example to aid the reader's understanding, we shall now show how to track the variables $X_e$; in order to avoid unnecessary distractions, we postpone a couple of the (straightforward, but somewhat technical) calculations to the Appendix~\cite{App}. 

Set $a = \omega \cdot n^{3/2}$ and define, for each $a \le m \le m^*$,  
$$\K^\X(m) = \E(m) \cap \X(a) \cap \Y(m) \cap \Q(m).$$
We shall prove the following bounds on $X_e(m)$, which are slightly stronger than those given by Theorem~\ref{NFthm}. We shall need exactly this strengthening in Section~\ref{Ysec}. 

\begin{prop}\label{Xprop}
Let $\omega \cdot n^{3/2} < m \le m^*$. With probability at least $1 - n^{-C\log n}$ either $\K^\X(m-1)^c$ holds, or
\begin{equation}\label{eq:Xprop}
X_e(m) \, \in \, \Big( 1 \pm C e^{2t^2} n^{-1/4} (\log n)^4 \Big) \cdot 2 e^{-8t^2} n \, = \, \big( 1 \pm g_x(t) \big) \Xt(m)
\end{equation}
for every open edge $e \in O(G_m)$. 
\end{prop}

The first step in tracking a variable will always be as follows: we define the normalized error, in this case
$$X_e^*(m) \, = \, \frac{X_e(m) - \Xt(m)}{g_x(t) \Xt(m)},$$
and show that, while everything is still tracking, it is self-correcting. More precisely, we prove the following lemma.

\begin{lemma}\label{selfX}
Let $\omega \cdot n^{3/2} < m \le m^*$. If $\E(m) \cap \X(m) \cap \Y(m) \cap \Q(m)$ holds, then 
$$\Ex \big[ \Delta X^*_e(m) \big] \, \in \, \frac{4t}{n^{3/2}} \Big( - X_e^*(m) \pm \eps \Big)$$
for every $e \in O(G_m)$.
\end{lemma}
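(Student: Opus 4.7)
My plan is to compute $\Ex\!\big[\Delta X_e(m)\mid G_m\big]$ explicitly, substitute the deterministic approximations guaranteed by the events $\X(m)\cap\Y(m)\cap\Q(m)\cap\E(m)$, and then unwrap the quotient defining $X_e^*$. The calculation is the prototype for the more delicate self-correction arguments of Sections~\ref{Ysec} and~\ref{XYQsec}.

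\textbf{Step 1 (combinatorial identity).} Writing $e=\{u,v\}$, define $W_e(m)=\{w:\{u,w\},\{v,w\}\in O(G_m)\}$, so that $X_e(m)=2|W_e(m)|$ (each vertex $w$ contributes both $\{u,w\}$ and $\{v,w\}$ as $X$-neighbours of $e$). Since open edges only close, $W_e$ is monotone decreasing, and a vertex $w\in W_e(m)$ leaves $W_e$ at step $m+1$ precisely when $e^{m+1}$ equals or closes one of $\{u,w\},\{v,w\}$; by Definition~\ref{Ydef} these account for $1+Y_{\{u,w\}}(m)$ and $1+Y_{\{v,w\}}(m)$ open edges respectively. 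A short case check shows that \emph{no} single added edge can close both $\{u,w\}$ and $\{v,w\}$ at once: any such edge would force a common neighbour of two endpoints of a currently open edge among $\{u,v\},\{u,w\},\{v,w\}$, contradicting openness and triangle-freeness. One also checks that the ``freeze set'' $\{e\}\cup Y_e(m)$ is disjoint from the set of edges that remove any particular $w\in W_e$ (again by openness of $e$). Hence, exactly,
$$\Ex\!\big[\Delta X_e(m)\,\big|\,G_m\big] \;=\; -\,\frac{2}{Q(m)}\sum_{w\in W_e(m)}\Big(Y_{\{u,w\}}(m)+Y_{\{v,w\}}(m)+2\Big).$$
Grouping $\sum_{w\in W_e}(Y_{\{u,w\}}+Y_{\{v,w\}})=\sum_{f\in X_e(m)}Y_f(m)$, and applying $\Y(m)$ and $\Q(m)$ to replace each $Y_f$ by $(1\pm g_y(t))\Yt(m)$ and $Q(m)$ by $(1\pm g_q(t))\Qt(m)$, the identity $2\Yt(m)/\Qt(m)=16t/n^{3/2}$ yields
$$\Ex\!\big[\Delta X_e(m)\big]\;=\;-\,\frac{16t}{n^{3/2}}\,X_e(m)\,\big(1\pm O(g_y(t))\big)\;+\;O\!\left(\frac{X_e(m)}{\Qt(m)}\right).$$

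\textbf{Step 2 (extract the self-correction).} A one-line Taylor expansion gives $\Delta\Xt(m)=-\tfrac{16t}{n^{3/2}}\Xt(m)+O(t^2\Xt(m)/n^3)$, matching the main term above when $X_e=\Xt$. Set $D=g_x(t)\Xt(m)$, $D'=g_x(t')\Xt(m+1)$ with $t'=t+n^{-3/2}$, and write $X_e(m)=\Xt(m)(1+g_x(t)X_e^*(m))$. The quotient identity
$$X_e^*(m+1)-X_e^*(m)\;=\;X_e^*(m)\cdot\frac{D-D'}{D'}\;+\;\frac{\Delta X_e(m)-\Delta\Xt(m)}{D'}$$
splits the drift into two contributions. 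Since $g_x(t)\Xt(m)=2Ce^{-6t^2}n^{3/4}(\log n)^4$, we have $\frac{d}{dt}\log D=-12t$, so the first piece contributes $+\tfrac{12t}{n^{3/2}}X_e^*(m)$ to leading order. The numerator of the second piece equals $-\tfrac{16t}{n^{3/2}}\Xt(m)\,g_x(t)\,X_e^*(m)$ up to error, and dividing by $D'$ gives $-\tfrac{16t}{n^{3/2}}X_e^*(m)$. The two contributions sum to $-\tfrac{4t}{n^{3/2}}X_e^*(m)$, exactly as required.

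\textbf{Step 3 (bounding the error).} There are four sources of error: (i) the $O(g_y(t))$ relative error from replacing $Y_f$ by $\Yt$, which after dividing by $g_x=C\,g_y$ contributes $\tfrac{16t}{n^{3/2}}\cdot O(1/C)$; (ii) the $O(g_q(t))$ error from $Q$, which is strictly smaller since $g_q\ll g_y$ and is absorbed into (i); (iii) the ``$+2$'' term together with Taylor remainders $O(t^2\Xt/n^3)$ from $\Delta\Xt$ and $(D-D')/D'$, both of which are smaller than $\tfrac{t}{n^{3/2}}$ by a polynomial factor in $n$ after dividing by $g_x\Xt$; and (iv) the compound error from expanding the product $(1+g_xX_e^*)(1+O(g_y))$, which remains $O(g_y)$ since $g_x,g_y\le n^{-\eps}$ on the relevant range. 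Choosing the constant $C=C(\eps)$ fixed at the start of the paper large enough (specifically $C\ge16/\eps$) absorbs the dominant $O(1/C)$ error below $\eps$, yielding
$$\Ex\!\big[\Delta X_e^*(m)\big]\;\in\;\frac{4t}{n^{3/2}}\Big(-X_e^*(m)\pm\eps\Big).$$

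\textbf{Main obstacle.} Conceptually the argument is short, and I expect the only real work to be the careful bookkeeping in Step~3: one must verify that each $(1+O(\cdot))$ factor composes correctly and that every additive lower-order contribution (the $+2$ per $w$, the Taylor remainders, and the freeze-set overlap) is safely negligible compared to $\eps\cdot t/n^{3/2}$. I would expect to defer this bookkeeping to the Appendix, as the paper has already signalled.
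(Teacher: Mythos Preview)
Your proposal is correct and follows essentially the same route as the paper: you derive the combinatorial identity~\eqref{eq:Xeq} for $\Ex[\Delta X_e(m)]$, substitute the tracking bounds from $\Y(m)\cap\Q(m)$, and unwind the normalization, with the key point being that the relative error inherited from the $Y_f$ is $O(g_y/g_x)=O(1/C)$, which is swallowed by the $\eps$ in the conclusion once $C$ is chosen large. This is precisely what the paper does (it states~\eqref{eq:Xeq}, notes that the lemma is a special case of Lemma~\ref{selfN*}, and defers the arithmetic to the Appendix); your write-up simply supplies the details that the paper postpones. Your disjointness claims in Step~1 (that no single edge closes both $\{u,w\}$ and $\{v,w\}$, and that the freeze set $\{e\}\cup Y_e$ does not meet $R_w$) are correct and are exactly what makes~\eqref{eq:Xeq} an equality rather than an approximation---in each case the putative overlap would force a common $G_m$-neighbour of the endpoints of one of the open edges $e,\{u,w\},\{v,w\}$, contradicting openness.
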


Observe (or see the Appendix) that the evolution of the variable $X_e$ is controlled by the following equation:
\begin{equation}\label{eq:Xeq}
\Ex\big[ \Delta X_e(m) \big] \, = \, - \frac{2}{Q(m)} \sum_{f \in X_e(m)} \Big( Y_f(m) + 1 \Big).
\end{equation}
Lemma~\ref{selfX} follows from~\eqref{eq:Xeq} via a straightforward, but somewhat technical calculation, and also follows from the proof of (the much more general) Lemma~\ref{selfN*}, which will be given in the next section; however, for completeness we provide a proof in the Appendix. The key idea is that, since $g_x(t) = C \cdot g_y(t)$ and $C = C(\eps)$ is large, the error in our knowledge of the variables $Y_f$ is small compared with the error we allow in $X_e$. 

Our next lemma bounds $|\Delta X_e(m)|$. Although simple, our application of it in the martingale bound illustrates one of the key ideas in the proof of Theorem~\ref{NFthm}.  

\begin{lemma}\label{Xe_alpha}
Let $\omega \cdot n^{3/2} < m \le m^*$. If $\E(m)$ holds, then
$$|\Delta X_e(m)| \, \le \, 2 \cdot \max\big\{ 4t e^{-8t^2} \sqrt{n}, (\log n)^\omega \big\}$$
for every $e \in O(G_m)$.
\end{lemma}

\begin{proof}
The key observation is that, if edge $f$ is chosen in step $m+1$ of the triangle-free process, then $|\Delta X_e(m)| = X_e(m) - X_e(m+1)$ is bounded by one of the variables controlled by the event $\E(m)$. More precisely, note that if an open triangle $T$ containing $e$ is destroyed by $f$, then $f$ must close one of the open edges of $T$. If more than one such open triangle is destroyed, then we must have $|e \cap f| = 1$, in which case the number of open triangles which are destroyed is exactly $N_\phi(W)(m)$, where $\textup{Im}(\phi) = e \cup f$. As noted above, the event $\E(m)$ implies that
$$|N_\phi(W)(m)| \, \le \, \max\big\{ 4t e^{-8t^2} \sqrt{n}, (\log n)^\omega \big\}$$
for every faithful $\phi$, as required. 
\end{proof}

Finally, using the two lemmas above, we can easily bound $|\Delta X^*_e(m)|$ and $\Ex\big[ |\Delta X_e^*(m)| \big]$, see the Appendix for the (somewhat tedious) details. 

\begin{lemma}\label{Xe_gamma}
Let $\omega \cdot n^{3/2} < m \le m^*$. If $\E(m) \cap \X(m) \cap \Y(m) \cap \Q(m)$ holds, then 
$$|\Delta X^*_e(m)| \, \le \, \frac{C}{g_x(t)} \cdot \frac{e^{8t^2}}{n} \max\big\{ t e^{-8t^2} \sqrt{n} , (\log n)^\omega \big\} \quad \text{and} \quad \Ex \big[ |\Delta X^*_e(m)| \big] \, \le \, \frac{C}{g_x(t)} \cdot \frac{\log n}{n^{3/2}}$$
for every $e \in O(G_m)$.
\end{lemma}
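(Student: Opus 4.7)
The plan is to expand $\Delta X_e^*(m)$ from the definition of $X_e^*$ and reduce the task to the two inputs we already have in hand: the deterministic per-step bound in Lemma~\ref{Xe_alpha}, and the exact formula \eqref{eq:Xeq} for $\Ex[\Delta X_e(m)]$. Setting $\eta(m) := g_x(t)\Xt(m)$, a straightforward rearrangement gives the identity
$$\Delta X_e^*(m) \;=\; \frac{\Delta X_e(m) - \Delta\Xt(m)}{\eta(m+1)} \,-\, X_e^*(m) \cdot \frac{\Delta \eta(m)}{\eta(m+1)}.$$
This splits $\Delta X_e^*(m)$ into the normalized change in $X_e$ itself, a small deterministic drift of the centering $\Xt$, and a drift in the normalization weighted by $X_e^*(m)$ (which is bounded in modulus by $1$ on $\X(m)$). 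Since both $\Xt(m)$ and $\eta(m)$ have the form $e^{ct^2}\cdot n^{\alpha}\cdot(\log n)^{\beta}$, their per-step relative changes are $O(t/n^{3/2})$; in particular $|\Delta\Xt(m)| = O(te^{-8t^2}/\sqrt n)$, while the denominator is $\eta(m+1) = \Theta(e^{-6t^2}n^{3/4}(\log n)^4)$.

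For the almost-sure bound, Lemma~\ref{Xe_alpha} furnishes $|\Delta X_e(m)| \le \max\{4te^{-8t^2}\sqrt{n},(\log n)^\omega\}$, and dividing by $\eta(m+1)$ produces exactly the form $\frac{C}{g_x(t)}\cdot\frac{e^{8t^2}}{n}\max\{te^{-8t^2}\sqrt{n},(\log n)^\omega\}$ claimed in the statement (up to an unimportant numerical factor absorbed into $C$). The $|\Delta\Xt|/\eta$ piece and the drift term $|X_e^*(m)|\cdot|\Delta\eta|/\eta$ are both of order $t/n^{3/2}$, which is comfortably dominated by the main term since $1/(g_x(t)\sqrt{n}) \gg 1/n^{3/2}$ (indeed $g_x(t) \le n^{-\eps}$ throughout $t \le t^*$).

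For the expectation bound, the key observation is that $X_e(m)$ is monotone non-increasing in $m$: any edge added to $G_m$ can only destroy open triangles on $e$, never create them. Consequently $\Ex[|\Delta X_e(m)|] = -\Ex[\Delta X_e(m)]$, which \eqref{eq:Xeq} evaluates exactly as $\frac{2}{Q(m)}\sum_{f\in X_e(m)}(Y_f(m)+1)$. Applying the upper bounds $X_e(m) \le 2\Xt(m)$ and $Y_f(m) \le 2\Yt(m)$ together with $Q(m) \ge \Qt(m)/2$ (supplied by $\X(m)\cap\Y(m)\cap\Q(m)$) gives $\Ex[|\Delta X_e(m)|] = O(te^{-8t^2}/\sqrt{n})$; dividing through by $\eta(m+1)$ and using $t \le t^* = O(\sqrt{\log n})$ produces the claimed $O\big(\log n/(g_x(t)n^{3/2})\big)$ bound, and the deterministic drift contribution is absorbed as in the previous paragraph. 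No step here presents a real obstacle: the lemma is essentially a bookkeeping translation of Lemma~\ref{Xe_alpha} and \eqref{eq:Xeq} through the normalizing factor $g_x(t)\Xt(m)$, and the only points needing care are (i)~verifying that the normalization drift is dominated by the main term, and (ii)~exploiting monotonicity of $X_e$ to pass from $|\Delta X_e|$ to $-\Delta X_e$ in expectation.
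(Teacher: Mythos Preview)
Your proof is correct and follows essentially the same route as the paper, which packages your expansion identity as the general Lemma~\ref{lem:chainstar} and combines it with Lemma~\ref{Xe_alpha} and~\eqref{eq:Xeq}. Your explicit use of the monotonicity of $X_e(m)$ to identify $\Ex[|\Delta X_e(m)|]$ with $-\Ex[\Delta X_e(m)]$ is a clean shortcut. One small imprecision: the term $|\Delta\Xt|/\eta$ is of order $t/(g_x(t)n^{3/2})$ rather than $t/n^{3/2}$, but this is still dominated by the main term for the almost-sure bound and already of the right order for the expectation bound, so the argument is unaffected.
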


We are now ready to prove Proposition~\ref{Xprop}.

\begin{proof}[Proof of Proposition~\ref{Xprop}]
We begin by choosing a family of parameters as in Definition~\ref{def:reasonable}. Set $\K(m) = \K^\X(m) \cap \X(m)$ and $I = [a,b] = [\omega \cdot n^{3/2},m^*]$, and let
$$\alpha(t) \, = \, \frac{C}{g_x(t)} \cdot \frac{e^{8t^2}}{n} \max\big\{ t e^{-8t^2} \sqrt{n}, (\log n)^\omega \big\} \qquad \text{and} \qquad \beta(t) \, = \, \frac{C}{g_x(t)} \cdot \frac{\log n}{n^{3/2}}.$$
Moreover, set $\lambda = C$, $\delta = \eps$ and $h(t) = t \cdot n^{-3/2}$. We claim that $(\lambda,\delta;g_x,h;\alpha,\beta;\K)$ is a reasonable collection, and that $X_e$ satisfies the conditions of Lemma~\ref{lem:self:mart} if $e \in O(G_m)$.

To prove the first statement, we need to show that $\alpha$ and $\beta$ are $\lambda$-slow on $[a,b]$, and that 
$$\min\big\{ \alpha(t), \, \beta(t), \, h(t) \big\} \, \ge \, \ds\frac{\eps t}{n^{3/2}}$$ 
and $\alpha(t) \le \eps$ for every $\omega < t \le t^*$, each of which is easy to check.\footnote{To spell out the details, recall that $g_x(t) = C e^{2t^2} n^{-1/4} (\log n)^4$, so both $\alpha$ and $\beta$ are (piecewise) of the form $c t^k e^{\ell t^2}$. The lower bound on $\min\{ \alpha(t), \beta(t), h(t) \}$ holds since $g_x(t) \le 1$ for all $t \le t^*$, and the upper bound on $\alpha(t)$ holds since $(g_x(t) \sqrt{n})^{-1} \le n^{-1/4}$, and $e^{8t^2} (g_x(t) n)^{-1} \le e^{6t^2} n^{-3/4} \le n^{-\eps}$ for all $t \le t^*$.} To prove the second, we need to show that $X_e$ is $(g_x,h;\K)$-self-correcting, which follows from Lemma~\ref{selfX}, and that, for every $\omega \cdot n^{3/2} < m \le m^*$, if $\K(m)$ holds then 
$$|\Delta X_e^*(m)| \le \alpha(t) \qquad  \text{and} \qquad \Ex\big[ |\Delta X_e^*(m)| \big] \le \beta(t),$$
which follows from Lemma~\ref{Xe_gamma}. Note also that the bound $|X^*_e(a)| < 1/2$ follows from the event $\X(a)$, since $f_x(\omega) \Xt(n^{3/2}) \ll g_x(\omega) \Xt(a)$ if $\omega(n) \to \infty$ sufficiently slowly. 

Finally, observe that
$$\alpha(t) \beta(t) n^{3/2} \, \le \, \frac{e^{4t^2}}{\sqrt{n} \cdot (\log n)^7} \cdot \max\big\{ t e^{-8t^2} \sqrt{n}, (\log n)^\omega \big\} \, \le \, \frac{1}{(\log n)^3}$$
for every $\omega < t \le t^*$, since $e^{4t^2} \le n^{1/2 - \eps}$. By Lemma~\ref{lem:self:mart}, and summing over edges $e \in E(K_n)$ the probability that $e \in O(G_m)$ and $X_e^*(m) > 1$, it follows that
$$\Pr\Big( \X(m)^c \cap \K(m-1) \text{ for some $m \in [a,b]$} \Big) \, \le \, n^6 \exp\Big( - \delta' (\log n)^3 \Big) \, \le \, n^{-C \log n},$$
as required.
\end{proof}

\section{Tracking Everything Else}\label{EEsec}

In this section we shall generalize the method introduced in Section~\ref{MartSec} in order to prove Theorem~\ref{NFthm} under the assumption that the variables $Y_e$, $Z_e$ and $Q$ are tracking. In other words, as long as the events $\Y(m)$, $\Z(m)$ and $\Q(m)$ all hold, we shall give close to best possible bounds on the number of copies of an arbitrary graph structure~$F$ in~$G_m$. 

We begin by recalling that a graph structure triple $(F,A,\phi)$ consists of a permissible graph structure $F$, an independent set $A \subseteq V(F)$, and an injective map $\phi \colon A \to V(G_m)$. Recall that, given such a triple, we set 
\begin{equation}\label{def:NtF}
\Nt_A(F)(m) \, = \, \big( e^{-4t^2} \big)^{o(F)} \left( \frac{2t}{\sqrt{n}} \right)^{e(F)} n^{v_A(F)},
\end{equation}
and observe that this is roughly the expected number of (labelled) copies of $F$ rooted at $\phi(A)$ in the Erd\H{o}s-R\'enyi random graph $G(n,m)$. Recall that $t^* = \big( \frac{1}{2\sqrt{2}} - \eps \big) \sqrt{\log n}$, and that the tracking time of the pair $(F,A)$ is defined to be
$$t_A(F) \, = \, \min\left\{ \min \big\{ t_A^*(H) : A \subsetneq H \subseteq F \big\}, \,t^* \right\},$$
where $t_A^*(H)$ is defined so that $\Nt_A(H)(m) = (2t)^{e(H)}$ at $t = t_A^*(H)$. We shall track $N_\phi(F)$ up to time $t_A(F)$, or until $\phi$ is no longer faithful, for \emph{every} graph structure triple $(F,A,\phi)$. In other words, we shall control the number of (rooted) copies of every permissible graph structure, for every root $\phi(A)$, and for (essentially) as long as it is possible to do so. 

Recall that if $\phi$ is faithful, then %from~\eqref{def:NF} that
\begin{align*}
& N_\phi(F)(m) \, = \, \Big| \Big\{ \psi \colon V(F) \to V(G_m) \,:\, \psi \textup{ is an injective homomorphism and } \psi|_A = \phi \Big\} \Big|
\end{align*}
denotes the number of labelled copies of the graph structure $F$ in $G_m$ which agree with $\phi$ on $A$, and that $N_\phi(F)(m) = N_\phi(F)(m-1)$ otherwise; as noted in Section~\ref{sketchSec}, we shall often suppress the dependence of $N_\phi(F)$ on $m$. We shall also occasionally write $N_\phi(F)$ for the \emph{collection} of copies as above, and trust that this will not cause confusion. Recall that $C = C(\eps) > 0$, chosen earlier, is a sufficiently large constant.  Now define
\begin{equation}\label{def:del}
\Delta(F,A) = \big( C^3 v_A(F)^2 + 2e(F) + o(F) \big)^C
\end{equation}
and, for each $A \subseteq H \subseteq F$, set
\begin{equation}\label{def:gam}
\Delta(F,H,A) = \Delta(F,H) + \Delta(H,A) \quad \text{and} \quad \gamma(F,A) = \Delta(F,A) - e(F) - 2.
\end{equation}
We remark that the choice of these parameters is quite delicate, and that they will play an important role in the proofs of Lemmas~\ref{selfN*},~\ref{gammaNF} and~\ref{maxalphabeta}, below, which (respectively) imply that $N_\phi(F)$ is self-correcting, and control its single-step changes. 
%Recall from~\eqref{def:fy} the definition of $f_y(t) = e^{Ct^2} n^{-1/4} (\log n)^{5/2}$, and define
Define
\begin{equation}\label{def:ffat}
f_{F,A}(t) \, = \, e^{C(o(F)+1)(t^2 + 1)} n^{-1/4} (\log n)^{\Delta(F,A) - \sqrt{\Delta(F,A)}}, 
\end{equation}
and if $t_A(F) > 0$, then set
\begin{equation}\label{def:gfat}
g_{F,A}(t) \, = \, e^{ct^2} n^{-1/4} (\log n)^{\gamma(F,A)},
\end{equation}
where $c = c(F,A)$ is chosen (as defined in~\eqref{def:cFA}) so that $e^{ct^2} = n^{1/4}$ when $t = t_A(F)$ if $t_A(F) < t^*$, and so that $c(F,A) \ge 2$.  
%$e^{ct^2} = n^{1/4}$ when $t = t_A(F)$, unless $t_A(F) = t^*$, in which case $c(F,A) = 2$. 
We shall bound $N_\phi(F)$ up to a multiplicative factor  of $\big( 1 \pm g_{F,A}(t) \big)$ if $\omega < t \le t_A(F)$, and up to an additive factor of $f_{F,A}(t) \Nt_A(F)(n^{3/2})$ if $t \le \omega < t_A(F)$. Finally, set
\begin{equation}\label{def:event:KE}
\K^\E(m) \, = \, \Y(m) \cap \Z(m) \cap \Q(m)
\end{equation}
for each $m \le m^*$. As noted earlier, we assume throughout that $n \in \N$ is sufficiently large. 

The following theorem is the main result of this section. It says that we can track all graph structures up to their tracking time, and that after this time we can bound them up to a poly-log factor.

\begin{thm}\label{EEthm}
Let $(F,A)$ be a graph structure pair with no isolated vertices\footnote{The condition that $F$ has no isolated vertices is necessary in order to disallow the possibility that $A$ is extremely large, in which case $N_\phi(F) \le (n - |A|)^{v_A(F)}$, and so the conclusion of the theorem does not hold. However, one can easily deduce bounds for an arbitrary triple $(F,A,\phi)$, see the discussion after the theorem.}, and let $m \in [m^*]$. Then, with probability at least $1 - n^{-2\log n}$, either the event $\K^\E(m-1)^c$ holds, or the following holds for every faithful injection $\phi \colon A \to V(G_m)$:
\begin{itemize}
\item[$(a)$] If $0 < t \le \omega < t_A(F)$, then 
$$N_\phi(F)(m) \, \in \, \Nt_A(F)(m) \pm f_{F,A}(t) \Nt_A(F)(n^{3/2}).$$ 
\item[$(b)$] If $\omega < t \le t_A(F)$, then 
$$N_\phi(F)(m) \, \in \, \big( 1 \pm g_{F,A}(t) \big) \Nt_A(F)(m).$$ 
\item[$(c)$] If $t > t_A(F)$, then
$$N_\phi(F)(m) \, \le \, ( \log n )^{\Delta(F,H,A)} \Nt_H(F)(m^+),$$
where $A \subsetneq H \subseteq F$ is minimal such that $t < t_H(F)$, and $m^+ = \max\{ m, n^{3/2} \}$.
\end{itemize}
\end{thm}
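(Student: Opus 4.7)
The plan is to apply the martingale machinery of Section~\ref{MartSec} --- in particular Lemma~\ref{lem:self:mart} --- to the normalized error
$$N_\phi^*(F)(m) \, = \, \frac{N_\phi(F)(m) - \Nt_A(F)(m)}{g_{F,A}(t) \Nt_A(F)(m)},$$
proceeding by induction on an appropriate complexity measure of the pair $(F,A)$ (this is where the building sequences of Section~\ref{BSsec} enter the picture). At each stage the expected change, the worst-case step size, and the expected step size all depend on counts $N_\psi(F')$ for strictly \emph{simpler} structures $F'$: either a one- or two-vertex extension of $F$ in which the newly selected edge $f$ closes an open edge of $\psi$ through a triangle, or a structure obtained by deleting or realising an edge of $F$. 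The inductive hypothesis controls all of these, either through parts~(a)/(b) when $F'$ is still before its tracking time, or through the polylog upper bound of part~(c) when it is beyond.

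First I would derive $\Ex[\Delta N_\phi(F)(m)]$ by classifying how a uniformly chosen $f \in O(G_m)$ interacts with a copy $\psi$ of $F$. A copy is destroyed when $f \in \psi(O(F))$ or when $f$ closes some $\psi(xy) \in O(G_m)$ via a triangle, and a copy is created when $f$ realises an edge of $F$. Each contribution, after dividing by $Q(m)$, is a ratio of counts of one- or two-vertex extensions of $F$; substituting the inductive estimates for these extensions and using $\Y(m)$, $\Z(m)$ and $\Q(m)$ gives, after matching the leading terms predicted by differentiating~\eqref{def:NtF}, a self-correction inequality of the standard form
$$\Ex\big[ \Delta N_\phi^*(F)(m) \big] \, \in \, (c \pm d) \cdot \frac{t}{n^{3/2}} \Big( - N_\phi^*(F)(m) \pm \eps \Big)$$
with $c = c(F,A)$ as in~\eqref{def:cFA}, exactly as in the proof of Lemma~\ref{selfX}. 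The slack $d$ is small because $g_{F,A}(t) \gg g_{F',A}(t)$ for every extension $F'$ invoked, which is engineered by the choice~\eqref{def:gam} of $\gamma(F,A)$.

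Next I would bound the step sizes $|\Delta N_\phi(F)|$ and $\Ex[|\Delta N_\phi(F)|]$. The worst-case contribution to $|\Delta N_\phi(F)|$ from a single step is the number of copies of $F$ sharing a specified vertex pair with $f$, which is precisely $N_\psi(F^{++})$ for a suitable extension $F^{++}$ on one or two extra vertices. For extensions still before their tracking time we use the inductive multiplicative bound; for those past their tracking time we invoke part~(c). The definitions~\eqref{def:del}--\eqref{def:ffat} are calibrated precisely so that the resulting $\alpha(t), \beta(t)$ satisfy $\alpha(t)\beta(t) n^{3/2} \le (\log n)^{-3}$ throughout the interval $I = [\omega \cdot n^{3/2}, t_A(F) \cdot n^{3/2}]$. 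The collection $(C,\eps;g_{F,A},h;\alpha,\beta;\K^\E \cap \mathcal{J}, I)$, where $\mathcal{J}(m)$ is the event that the inductive conclusion has held up to step $m$ for all smaller pairs, is then reasonable in the sense of Definition~\ref{def:reasonable}. Lemma~\ref{lem:self:mart} now yields part~(b) with failure probability $n^{-\Omega(\log n)}$ for a single $(F,A,\phi)$; a union bound over the at most $n^{v_A(F)}$ choices of $\phi$ yields the claimed $n^{-2\log n}$ bound.

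Part~(a) (for $t \le \omega$) is an essentially identical martingale argument with the additive error $f_{F,A}(t)\Nt_A(F)(n^{3/2})$ in place of the multiplicative one; since self-correction is not yet needed in this range, Lemma~\ref{Bohmart} suffices, in the spirit of~\cite{Boh}. For part~(c), once $t$ crosses $t_H(F)$ for the minimal $A \subsetneq H \subseteq F$ with $t_H(F) \le t$, I would run a one-sided supermartingale argument: starting from the control $N_\phi(F)(m) = (1+o(1))\Nt_A(F)(m)$ available just before $t_H(F)$ by part~(b), the upward drift of $N_\phi(F)$ is bounded inductively via the one-vertex extensions, and a one-sided application of Lemma~\ref{mart} prevents $N_\phi(F)$ from ever exceeding $(\log n)^{\Delta(F,H,A)}\Nt_H(F)(m^+)$ except with probability $n^{-\Omega(\log n)}$. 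The main obstacle throughout is the bookkeeping of the induction: one must order the graph structure pairs (via the building sequences) so that every extension arising in the drift and step-size computations is already controlled, and one must check that the exponents $\Delta$ and $\gamma$ grow just fast enough at each step of the induction to absorb the accumulated polylog losses without violating the requirement that $g_{F,A}(t) < 1$ at $t = t_A(F)$.
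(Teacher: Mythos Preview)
Your outline follows the paper's approach in broad strokes --- self-correction plus Lemma~\ref{lem:self:mart} for (b), Bohman's method for (a), building sequences to reduce (c) to balanced pairs --- but it glosses over the step that is the technical heart of Section~\ref{EEsec}, and as written your argument for (c) would fail.

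The problem is your one-sided supermartingale for a balanced $(F,A)$ past $t_A(F)$. You propose to bound the upward drift of $N_\phi(F)$ and apply Lemma~\ref{mart}, but the worst-case step $|\Delta N_\phi(F)(m)|$ is not small enough: if the added edge $f$ meets $\phi(A)$, a single step can destroy as many as $N_{\phi'}(F')$ copies for some $(F',A') \in \F_{F,A}^-$ with $A' \cap F = A$, and this count need not be polylogarithmic. The paper's fix (Section~\ref{balancedsec}) is to track a modified increment $C_\phi(F)(m) - \hat D_\phi(F)(m)$ that records only destruction by edges \emph{disjoint} from $\phi(A)$; Observation~\ref{obs:destroy:disjoint} then forces every relevant $(F',A')$ to satisfy $v_{A'}(F') < v_A(F)$, which pushes $\Delta(F',A')$ below $\Delta(F-v,A)$ and yields a usable bound on $|\Delta M_\phi^F|$. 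The expected drift is still negative because each open edge of $F$ has at most one endpoint in $A$, so at least half of the $Y$-neighbours closing it avoid $\phi(A)$.

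This gap feeds back into part (b). When you bound $|\Delta N_\phi^*(F)(m)|$ for $t \le t_A(F)$ via the structures in $\F_{F,A}^-$, some of these are balanced with $t_{A'}(F') < t$ and $A' \cap F = A$, so that $\Delta(F',A')$ can \emph{exceed} $\Delta(F,A)$. Invoking part (c) alone gives only $(\log n)^{\Delta(F',A')}$, which is too weak for Lemma~\ref{maxalphabeta}; the paper needs the sharper decaying bound recorded as the event $\M(m)$ (Proposition~\ref{prop:balanced:late}), and this is precisely what the $\hat D_\phi$ trick delivers. Consequently (b) and (c) are not ordered by an induction on a complexity measure of $(F,A)$; they are proved together via a first-failure-in-time decomposition (the events $\E_1,\ldots,\E_4$ of Section~\ref{EEproofSec}), with $\M(m-1)$ assumed alongside $\E(m-1)$ at every step.
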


We emphasize that the theorem holds for \emph{all} structures $F$, not just for those of bounded size. However, the statement becomes trivial if $F$ has more than about $(\log n)^{1/C}$ vertices (outside $A$), edges, or open edges, since then both $(\log n)^{\gamma(F,A)}$ and $(\log n)^{\Delta(F,H,A)}$ become larger\footnote{More precisely, if $(\log n)^{\gamma(F,A)} \ge n^{v_A(F)+e(F)+1}$ then $f_{F,A}(t) \Nt_A(F)(n^{3/2}) \gg n^{v_A(F)}$ for every $t \le \omega$, and also $g_{F,A}(t) \Nt_A(F)(m) \gg n^{v_A(F)}$ for every $\omega < t \le t_A(F)$, since $\Nt_A(F)(m) \ge 1$ for every such $t$.} than the total possible number of labelled copies of $F$ in $G_m$. We shall therefore assume throughout this section that $v_A(F) + e(F) + o(F) \le (\log n)^{1/5}$. In particular, this implies that either $t_A(F) > \omega$ or $t_A(F) = 0$, see Observation~\ref{obs:zeroomega} below. 

Using the assumption that $F$ has no isolated vertices, it follows that $|A| \le (\log n)^{1/5}$. This assumption will be useful later on, since it will allow us to bound the number of graph structure triples $(F,A,\phi)$ in our applications of the union bound. However, this is the only point at which we will need to assume that $F$ does not have isolated vertices; indeed, it is easy to deduce a bound for general $F$, since the isolated vertices in $A$ impose no restrictions (except that their vertices cannot be reused), and because we may embed the remaining isolated vertices arbitrarily. If $v(F)$ is small, then we have about $n$ choices for each. 

\pagebreak

In what follows, we'll need to assume frequently that the conclusion of the theorem holds at all earlier times, so for each $0 \le m' \le m^*$, define an event $\E(m')$ as follows.

\begin{defn}\label{def:events:E}
$\E(m')$ is the event that parts $(a)$, $(b)$ and $(c)$ of Theorem~\ref{EEthm} all hold for:
\begin{itemize}
\item[$(i)$] every graph structure triple $(F,A,\phi)$ with no isolated vertices, and 
\item[$(ii)$] every $m \le m'$ such that the map~$\phi$ is faithful at time~$t$.
\end{itemize}
\end{defn}

%$\E(m')$ denote the event that $(a)$, $(b)$ and $(c)$ all hold for every graph structure triple $(F,A,\phi)$ with no isolated vertices, and for every $m \le m'$ such that the map~$\phi$ is faithful at time~$t$. 
%\footnote{By the comments above, this implies that $(a)$, $(b)$ and $(c)$ also hold for \emph{every} triple $(F,A,\phi)$ with $v(F) + e(F) + o(F) \le (\log n)^{1/5}$.} 

Thus, we are required to prove that
$$\Pr \Big( \E(m)^c \cap \Y(m-1) \cap \Z(m-1) \cap \Q(m-1) \Big) \, \le \, n^{-2\log n}$$
for every $m \le m^*$. 

%We will show below (see Lemma~\ref{EimpliesThm}) that the event $\E(m)$ implies that the $(a)$, $(b)$ and $(c)$ also hold for \emph{every} triple $(F,A,\phi)$ with $v(F) = n^{o(1)}$.

\begin{rmk}\label{Eremark}
In fact, the event $\E(m)$ implies that $N_\phi(F)(m)$ satisfies the conclusion of Theorem~\ref{EEthm} (i.e., either $(a)$, $(b)$ or $(c)$) for \emph{every} graph structure pair with $v(F) = n^{o(1)}$, every $m \in [m^*]$ and every faithful injection $\phi \colon A \to V(G_m)$, see the Appendix~\cite{App}. 
%To see this, let $(F',A',\phi')$ be the graph structure triple obtained by removing the isolated vertices from $F$, so in particular $A' = V(F') \cap A$ and $\phi' = \phi|_{A'}$. Set $k = v_A(F) - v_{A'}(F')$, and observe that $\Nt_A(F) = n^k \cdot \Nt_{A'}(F')$ and $t_{A'}(F') = t_A(F)$. 
%Noting that $\gamma(F,A) > \gamma(F',A')$ if $k \ne 0$, Parts~$(a)$ and~$(b)$ now follow easily; for part~$(c)$, we need the extra observation that the minimal $A \subsetneq H \subseteq F$ such that $t < t_H(F)$ is equal to $A$ union the minimal $A' \subsetneq H' \subseteq F'$ such that $t < t_{H'}(F')$, since none of the isolated vertices lie in $H \cup H'$. 
\end{rmk}

The proof of Theorem~\ref{EEthm} is roughly as follows. Proving that part $(a)$ is not the first to go wrong is the most straightforward, and follows by Bohman's method from~\cite{Boh}, suitably generalized (see Section~\ref{landbeforetimeSec}). 
For part $(b)$, we shall count the expected number of copies of $F$ which are created or destroyed in a single step of the process, and deduce that $N_\phi(F)$ exhibits a certain self-correction, of the type considered in Section~\ref{MartSec}. We also analyse the \emph{maximum} possible number of copies of $F$ rooted at $\phi(A)$ which can be created or destroyed in a single step. Together with the method outlined in Section~\ref{MartSec}, this will be enough to deduce that part $(b)$ of the theorem does not go wrong before part $(c)$. 

In order to prove part $(c)$, we break up the structure $F$ into its \emph{building sequence}  
$$A \subseteq H_0 \subsetneq \dots \subsetneq H_\ell = F,$$
which has the property that the graph structure pair $(H_{i+1},H_i)$ is balanced\footnote{For graph structure pairs $(F,A)$ with $t_A(F) > 0$, we shall say that the pair $(F,A)$ is \emph{balanced} if and only if $t_A^*(H) \ge t_A^*(F) > 0$ for every $A \subsetneq H \subseteq F$. If $t_A(F) = 0$ then the definition is slightly more complicated, see Section~\ref{BSsec}.} for every $0 \le i < \ell$, see Section~\ref{BSsec}. Since $\Delta(F,H,A)$ is super-additive with respect to this sequence, this will allow us (see Section~\ref{EEproofSec}) to restrict our attention to balanced pairs $(F,A)$. The bound in $(c)$ at time $t = t_A(F)$ follows for such structures by part $(b)$; in order to prove it for larger values of $t$ we apply a slight variation of our martingale method.\footnote{In fact, in Section~\ref{balancedsec} we shall prove a slight strengthening of this bound for balanced pairs $(F,A)$. We shall need this strengthening in order to bound $|\Delta N_\phi(F)(m)|$ when $t \le t_A(F)$, see Section~\ref{deltaNsec}.}

Finally, we note that the proof of Theorem~\ref{EEthm} is extremely long and technical. We encourage those readers who wish to avoid getting bogged down in such technical detail to skip forward to Section~\ref{Ysec}, and reassure them that the remaining sections are essentially independent of this one. The one crucial thing that they will need in order to follow the proofs in Sections~\ref{Ysec} and~\ref{indepSec} is the definition of the event $\E(m)$.

\subsection{Building sequences}\label{BSsec}

We begin the proof by introducing our key tool for controlling unbalanced graph structures. The following crucial definition allows us to break the process into a sequence of balanced steps, each of which we can control more easily.

\begin{defn}[Building sequences] \label{def:BS}
A \emph{building sequence} of a pair $(F,A)$, where $F$ is a graph structure and $A \subseteq V(F)$ is an arbitrary 
subset\footnote{Note that the edges and open edges of $F[A]$ do not affect the definition, so the reader can think of $A$ as being an independent set, as usual.} of the vertices of $F$, is a collection 
$$A \subseteq H_0 \subsetneq \dots \subsetneq H_\ell = F$$
of induced graph sub-structures of $F$ such that the following conditions hold:
\begin{itemize}
\item $H_0$ is maximal over structures with $t_A^*(H_0) = 0$ and $\Nt_A(H_0)$ at $t = 1/2$ minimal. \smallskip
\item $H_{j+1}$ is maximal over structures $H_j \subsetneq H \subseteq F$ with $t^*_{H_j}(H_{j+1})$ minimal. 
\end{itemize}
Given such a sequence, we define $t_0 = 0$ and $t_j = t_{H_{j-1}}^*(H_j)$ for each $j \in [\ell]$.
\end{defn}

We begin by stating the various properties of building sequences which we shall prove in this subsection; we assume throughout that $v_A(F) + e(F) + o(F) \le (\log n)^{1/5}$. Our first lemma shows that building sequences exist, and are unique. 

\begin{lemma}\label{BS}
Every graph structure pair $(F,A)$ has a unique building sequence.
\end{lemma}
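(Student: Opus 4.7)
The plan is to establish existence and termination via a greedy construction, and uniqueness via a submodular argument on the relevant parameters.

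For existence and termination, I would observe that the candidate set at each step is non-empty. For $H_0$: the trivial substructure $H = A$ satisfies $t_A^*(A) = 0$ (since $A$ is independent, so $e(A) = o(A) = 0$), so a maximal element among those minimising $\Nt_A(H)(n^{3/2}/2)$ exists. For $H_{j+1}$ with $H_j \subsetneq F$: the full structure $H = F$ lies in the candidate set, so a maximal minimiser of $t_{H_j}^*(\cdot)$ exists. Since $|V(H_j)|$ is strictly increasing and bounded by $|V(F)|$, the sequence terminates at $H_\ell = F$.

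For uniqueness, the key ingredient is the (sub/super-)modularity of the parameters $v_A, e, o$ on the lattice of induced substructures containing $A$: writing $H \cap H'$ and $H \cup H'$ for the induced substructures on $V(H) \cap V(H')$ and $V(H) \cup V(H')$, one has $v_A(H \cup H') + v_A(H \cap H') = v_A(H) + v_A(H')$ (modularity), while $e$ and $o$ satisfy the corresponding super-modular inequalities (edges and open edges of $F$ between $V(H) \setminus V(H')$ and $V(H') \setminus V(H)$ contribute only to $H \cup H'$). These translate into useful one-sided bounds on $t_{H_j}^*(H \cup H')$ (respectively on $\log \Nt_A(H \cup H')$ evaluated at $t = 1/2$) in terms of the corresponding values at $H$, $H'$ and $H \cap H'$. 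Supposing $H, H'$ both attain the defining optimum, one applies these bounds together with the minimality of the relevant quantity at $H \cap H'$ to deduce that the optimum is also attained at $H \cup H'$; maximality then forces $H = H' = H \cup H'$. To run this argument for $H_{j+1}$ with $j \ge 0$, one first verifies that the minimum value $r$ of $t_{H_j}^*(H)$ over $H \supsetneq H_j$ is strictly positive: indeed, any substructure $H \supsetneq H_0$ with $t_{H_0}^*(H) = 0$ would, combined with $e(H_0) \ge 2 v_A(H_0)$, give $t_A^*(H) = 0$, contradicting the maximality of $H_0$ (where the $\Nt_A$ tie-breaker is essential); an inductive variant of the same argument handles $j \ge 1$.

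The main obstacle is handling edge cases cleanly, in particular the case $H \cap H' = H_j$ (or $= A$ for $H_0$) in which the super-modular inequality degenerates, and the verification that $t_A^*(H \cap H') = 0$ whenever $H \cap H' \supsetneq A$ in the uniqueness analysis of $H_0$. For the latter, the standing assumption $v_A(F) + e(F) + o(F) \le (\log n)^{1/5}$ is crucial: the inequality $\log \Nt_A(H \cap H')(n^{3/2}/2) \le 0$ rearranges to $2v_A(H \cap H') - e(H \cap H') \le 2 o(H \cap H') / \log n < 1$, and integrality of the left-hand side then forces $2v_A(H \cap H') \le e(H \cap H')$, as required.
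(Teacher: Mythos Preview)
Your approach is the same as the paper's: show that at each stage the set of optimisers is union-closed (via the submodularity inequality of Observation~\ref{obs:inclexcl}), so the maximal element is uniquely determined; the paper in fact proves Lemmas~\ref{BS} and~\ref{lem:crashtimes} together in this way, and your identification of the key ingredients --- submodularity, strict positivity of $t_{j+1}$, and the role of the standing size assumption --- is correct.

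There is one wrinkle in your $H_0$ analysis. You read the definition as minimising $\Nt_A$ only over substructures with $t_A^* = 0$, and then need $t_A^*(H \cap H') = 0$ before you may invoke minimality at $H \cap H'$. Your proposed verification asserts $\log \Nt_A(H \cap H')(n^{3/2}/2) \le 0$, but that inequality is not justified as stated: it does not follow from the submodular bound together with $\Nt_A(H), \Nt_A(H') \le 1$ without a lower bound on $\Nt_A(H \cup H')$, which you do not yet have. The paper avoids this entirely by taking $\HH_0$ to be the set of minimisers of $\Nt_A(\cdot)$ at $t = 1/2$ over \emph{all} $A \subseteq H \subseteq F$; then $\Nt_A(H \cap H') \ge \Nt_A(H)$ is immediate from minimality, the union-closed argument goes through cleanly, and the constraint $t_A^*(H_0) = 0$ is recovered afterwards (the minimiser satisfies $\Nt_A(H_0) \le \Nt_A(A) = 1$, and your integrality argument then applies to $H_0$ itself rather than to $H \cap H'$). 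Under the standing assumption the two readings of the definition coincide, so this is a presentational fix rather than a change of strategy.
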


The times $t_j$ are not only well-defined, they are increasing. This fact will also follow from the proof of Lemma~\ref{BS}. 

\begin{lemma}\label{lem:crashtimes}
For every graph structure pair $(F,A)$, we have $0 = t_0 < t_1 < \cdots < t_\ell \le \infty$.
\end{lemma}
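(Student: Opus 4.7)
The strategy is by contradiction: assume $t_{j+1} \le t_j$ for some $j \ge 0$, and use the selection criteria defining $H_j$ to derive a contradiction. The engine is the multiplicative identity
$$\Nt_A(G)(m) \, = \, \Nt_A(H)(m) \cdot \Nt_H(G)(m) \qquad \text{for every } A \subseteq H \subseteq G,$$
which is immediate from the product form of~\eqref{def:NtF}, since $v_A(G) = v_A(H) + v_H(G)$ and the edge and open-edge counts decompose additively across induced sub-structures. I would first note that $t_j < \infty$ for every $j < \ell$: otherwise every proper super-structure of $H_{j-1}$ would have infinite $t^*_{H_{j-1}}(\cdot)$, forcing the maximizer at that step to be $F$ itself and hence $H_j = F$, contradicting $j < \ell$. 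Therefore only the case $t_{j+1} \le t_j < \infty$ needs refutation.

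For the inductive step ($j \ge 1$), applying the identity above at $t = t_j$ yields
$$\Nt_{H_{j-1}}(H_{j+1})(t_j) \, = \, \Nt_{H_{j-1}}(H_j)(t_j) \cdot \Nt_{H_j}(H_{j+1})(t_j) \, \le \, (2t_j)^{e(H_{j+1})-e(H_{j-1})},$$
because the first factor equals $(2t_j)^{e(H_j)-e(H_{j-1})}$ by the definition of $t_j$, and the second is at most $(2t_j)^{e(H_{j+1})-e(H_j)}$ since $t^*_{H_j}(H_{j+1}) = t_{j+1} \le t_j$. Thus $t^*_{H_{j-1}}(H_{j+1}) \le t_j$; by the minimality clause defining $H_j$ this must in fact be an equality, so $H_{j+1} \supsetneq H_j$ is an equally good candidate at step $j$, contradicting the maximality of $H_j$.

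For the base case ($j = 0$), the same multiplicativity argument run at an arbitrary $t > 0$ (with both $t^*_A(H_0) = 0$ and $t^*_{H_0}(H_1) = 0$) gives $t^*_A(H_1) = 0$; the only added subtlety is the tie-breaking clause in the definition of $H_0$. I would verify it by writing
$$\Nt_{H_0}(H_1)(1/2) \, = \, \exp\big( -(o(H_1)-o(H_0)) \big) \cdot n^{v_{H_0}(H_1) - (e(H_1)-e(H_0))/2}$$
and observing that both exponents are non-positive: the first because $o(H_0) \le o(H_1)$ (open edges propagate from an induced sub-structure to its super-structure), and the second because $t^*_{H_0}(H_1) = 0$ is equivalent to $e(H_1) - e(H_0) \ge 2 v_{H_0}(H_1)$. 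Hence $\Nt_A(H_1)(1/2) \le \Nt_A(H_0)(1/2)$, which by the minimality criterion must be an equality, so $H_1 \supsetneq H_0$ also satisfies both selection criteria and contradicts the maximality of $H_0$. The only step I would flag as a genuine obstacle is this tie-breaking argument in the base case; the inductive step is a clean combination of multiplicativity and extremality.
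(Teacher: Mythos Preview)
Your proof is correct and follows essentially the same approach as the paper: both use the multiplicative identity $\Nt_{H_{j-1}}(H_{j+1}) = \Nt_{H_{j-1}}(H_j)\cdot\Nt_{H_j}(H_{j+1})$ to show that if $t_{j+1}\le t_j$ then $H_{j+1}$ would have been an equally valid but strictly larger choice at stage~$j$, contradicting the maximality of $H_j$. The paper in fact establishes the slightly stronger fact $t^*_{H_j}(H)>t_j$ for every $H_j\subsetneq H\subseteq F$ (because it proves Lemma~\ref{lem:crashtimes} jointly with the existence/uniqueness Lemma~\ref{BS}), whereas you specialize to $H=H_{j+1}$, which is all that Lemma~\ref{lem:crashtimes} itself requires; your base case is also slightly more explicit in checking both selection criteria for $H_0$.
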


Lemmas~\ref{BS} and~\ref{lem:crashtimes} allow us to describe the graph structure in part $(c)$ of Theorem~\ref{EEthm}. 

\begin{lemma}\label{lem:Hj}
Let $0 < t < t^*$, and let $0 \le j \le \ell$ be such that $t_j \le t < t_{j+1}$, where $t_{\ell+1} := \infty$. Then $H_j$ is the minimal $A \subseteq H \subseteq F$ such that $t < t_H(F)$.
\end{lemma}

The next lemma gives an alternative, and perhaps more intuitive characterization of the time $t_\ell$. We remark that if $t < t_\ell$ then the conclusion of the lemma does not hold.\footnote{To see this, simply set $H = H_{\ell-1}$. Note also that Lemma~\ref{lem:tell:NHFsmall} implies the case $j = \ell$ of Lemma~\ref{lem:Hj}.} 

\begin{lemma}\label{lem:tell:NHFsmall}
Let $(F,A)$ be a graph structure pair with building sequence $A \subseteq H_0 \subsetneq \dots \subsetneq H_\ell = F$. If $t \ge t_\ell$, then 
$$\Nt_H(F)(m) \, \le \, (2t)^{e(F) - e(H)}$$
for every $A \subseteq H \subseteq F$.  
\end{lemma}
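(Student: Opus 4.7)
The plan is induction on $v_H(F) = |V(F)| - |V(H)|$, with the base case $v_H(F) = 0$ trivial since it forces $H = F$ (induced substructures being identified with their vertex sets), so both sides equal $1$. The engine of the induction is the multiplicative identity
$$\Nt_H(F)(m) \,=\, \Nt_H(K)(m) \cdot \Nt_K(F)(m) \qquad (A \subseteq H \subseteq K \subseteq F),$$
which is immediate from the definition~\eqref{def:NtF} of $\Nt$ combined with the convention $\Nt_H(K) := \Nt_H(\hat{K}^H)$ that makes the exponents of $e^{-4t^2}$, $2t/\sqrt{n}$ and $n$ telescope.

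For the inductive step, fix $t \ge t_\ell$ and a proper substructure $H \subsetneq F$. I would apply Lemma~\ref{lem:Hj} at time $t$: since $t_\ell \le t < t_{\ell+1} = \infty$, that lemma identifies $H_\ell = F$ as the inclusion-minimal structure $A \subseteq H' \subseteq F$ with $t < t_{H'}(F)$. In particular the given $H \subsetneq F$ must satisfy $t \ge t_H(F)$, and by the definition
$$t_H(F) \,=\, \min\big\{ \min\{ t_H^*(K) : H \subsetneq K \subseteq F \}, \, t^* \big\},$$
this supplies a strict extension $H \subsetneq K \subseteq F$ with $t_H^*(K) \le t$, i.e.
$$\Nt_H(K)(m) \,\le\, (2t)^{e(K) - e(H)}.$$
Since $v_K(F) < v_H(F)$, the inductive hypothesis yields $\Nt_K(F)(m) \le (2t)^{e(F) - e(K)}$, and multiplying the two bounds gives the desired conclusion.

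The one obstacle-that-isn't is that Lemma~\ref{lem:Hj} is stated only for $0 < t < t^*$, whereas a priori I may need the conclusion at $t \ge t^*$. I would resolve this by noting that
$$\frac{\Nt_H(F)(t)}{(2t)^{e(F)-e(H)}} \,=\, e^{-4t^2(o(F)-o(H))} \, n^{v_H(F) - (e(F)-e(H))/2}$$
is non-increasing in $t$, so proving the bound at some $t \in [t_\ell, t^*)$ extends it to all larger $t$ by monotonicity (when $t_\ell \ge t^*$ the hypothesis is vacuous within the paper's working range $t \le t^*$). The real content of the lemma is therefore the extraction of the intermediate $K$ from Lemma~\ref{lem:Hj}; the rest is bookkeeping.
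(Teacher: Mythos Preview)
Your induction is a clean idea, but as written it is circular: the case of Lemma~\ref{lem:Hj} you invoke is precisely $j=\ell$, and in the paper that case is \emph{deduced from} Lemma~\ref{lem:tell:NHFsmall}. So you cannot appeal to Lemma~\ref{lem:Hj} here. This is not merely a matter of reordering: the route ``$t\ge t_H(F)$, hence some $K$ with $t_H^*(K)\le t$'' also trips over the $t^*$-cap in the definition of $t_H(F)$, which is why you needed the monotonicity patch and the caveat about $t_\ell\ge t^*$.

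The fix is short: bypass Lemma~\ref{lem:Hj} and go directly to Lemma~\ref{sublem:Hj}, which is proved independently. For $H\subsetneq F$ choose the minimal $j$ with $H_j\not\subseteq H$; then Lemma~\ref{sublem:Hj} hands you the specific extension $K=H\cup H_j$ with $t_H^*(K)\le t_j\le t_\ell\le t$, valid for all $t\ge t_\ell$ with no reference to $t^*$. Your multiplicative identity and the inductive hypothesis for $\Nt_K(F)$ then finish the job exactly as you wrote, and the monotonicity argument becomes unnecessary.

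With that correction your argument is correct and essentially equivalent to the paper's: the paper telescopes along the fixed chain $H\cup H_0\subseteq H\cup H_1\subseteq\dots\subseteq F$ and bounds each factor via Lemma~\ref{sublem:Hj}, whereas your induction implicitly builds a chain one step at a time using the same lemma. The content is the same; your packaging is arguably a little cleaner once the circular citation is removed.
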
 

We now make an important definition, already mentioned above. 

\begin{defn}[Balanced graph structure pairs] \label{def:balanced}
A graph structure pair $(F,A)$ is said to be \emph{balanced} if the building sequence of $(F,A)$ is either
$$A \subseteq H_0 = F \qquad \text{or} \qquad A = H_0 \subsetneq H_1 = F.$$
Thus $(F,A)$ is balanced if and only if one of the following holds:
\begin{itemize}
\item[$(a)$] $t_A^*(H) \ge t_A^*(F)$ for every $A \subsetneq H \subseteq F$, and $t_A(F) > 0$.
\item[$(b)$]  $e(H) - 2v_A(H) \le e(F) - 2v_A(F)$ for every $A \subseteq H \subseteq F$, and $t_A(F) = 0$.
\end{itemize}
\end{defn}

It follows from Lemma~\ref{lem:crashtimes} that each pair $(H_{i+1},H_i)$ in the building sequence of each $(F,A)$ is balanced. Indeed, we have $t^*_{H_i}(H) \ge t^*_{H_i}(H_{i+1}) > 0$ for every $H_i \subsetneq H \subseteq F$, by Definition~\ref{def:BS} and Lemma~\ref{lem:crashtimes}. The pair $(H_0,A)$ is also balanced; this follows directly from the definitions. %Since we now have $t_i = t^*_{H_{i-1}}(H_{i}) > 0$ for each $i \in [\ell]$, it follows (as observed above) that the graph structure pairs $(H_{i+1},H_i)$ are balanced. 
We record this important property in the following lemma.

\begin{lemma}\label{lem:HHbalanced}
Let $(F,A)$ be a graph structure pair with building sequence $A \subseteq H_0 \subsetneq \dots \subsetneq H_\ell = F$. Then the pairs $(H_0,A)$ and $(H_i,H_{i-1})$ for each $1 \le i \le \ell$ are all balanced.
\end{lemma}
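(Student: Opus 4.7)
The plan is to verify each balance condition in Definition~\ref{def:balanced} by a direct unpacking of the two bullets in Definition~\ref{def:BS}, with Lemma~\ref{lem:crashtimes} supplying the positivity required in the $t_A(F) > 0$ clause.

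First I would handle the pairs $(H_i, H_{i-1})$ for $1 \le i \le \ell$. The second bullet of Definition~\ref{def:BS} states that $H_i$ is chosen to minimise $t^*_{H_{i-1}}(H)$ over all $H_{i-1} \subsetneq H \subseteq F$, so a fortiori for every $H_{i-1} \subsetneq H \subseteq H_i$ one has $t^*_{H_{i-1}}(H) \ge t^*_{H_{i-1}}(H_i) = t_i$. Since Lemma~\ref{lem:crashtimes} guarantees $t_i > 0$, it follows that $t_{H_{i-1}}(H_i) = \min\{t_i, t^*\} > 0$, and condition $(a)$ of Definition~\ref{def:balanced} is verified.

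Next I would handle the pair $(H_0, A)$. If $H_0 = A$ then condition $(a)$ holds vacuously (there is no $H$ with $A \subsetneq H \subseteq H_0$, and $t_A(A) = t^* > 0$), so assume $H_0 \supsetneq A$. The first bullet of Definition~\ref{def:BS} forces $t_A^*(H_0) = 0$ and hence $t_A(H_0) = 0$, so I need condition $(b)$: namely $e(H) - 2v_A(H) \le e(H_0) - 2v_A(H_0)$ for every $A \subseteq H \subseteq H_0$. I would split by the sign of $t_A^*(H)$. When $t_A^*(H) > 0$, the footnote to Definition~\ref{def:tAF} gives $e(H) < 2v_A(H)$, so $e(H) - 2v_A(H) < 0 \le e(H_0) - 2v_A(H_0)$ is automatic. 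When $t_A^*(H) = 0$, I would invoke the \emph{second} clause of the first bullet: $H_0$ minimises $\Nt_A(\cdot)$ at $t = 1/2$ among structures with $t_A^* = 0$. Since $\Nt_A(H)$ at $t = 1/2$ equals $e^{-o(H)} n^{v_A(H) - e(H)/2}$, and the blanket size bound $v_A(F)+e(F)+o(F) \le (\log n)^{1/5}$ in force throughout Section~\ref{EEsec} makes the polynomial factor dwarf the $e^{-o(H)}$ correction whenever the two exponents differ, minimising $\Nt_A$ over such $H$ is equivalent to maximising $e(H) - 2v_A(H)$, which yields the required inequality.

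The whole argument is essentially bookkeeping: the real content is already packed into Lemma~\ref{lem:crashtimes} (which supplies $t_i > 0$) and into the two defining properties of a building sequence. The only mildly subtle step is the tie-breaking in the case $t_A^*(H) = 0$ above, and even there the size restriction turns the comparison into a trivial inequality of exponents of $n$, so I do not anticipate any real obstacle.
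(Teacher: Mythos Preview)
Your proposal is correct and follows the paper's approach: for $(H_i,H_{i-1})$ you verify condition~(a) of Definition~\ref{def:balanced} via the minimality clause in Definition~\ref{def:BS} together with $t_i>0$ from Lemma~\ref{lem:crashtimes}, exactly as the paper does. For $(H_0,A)$ the paper simply says ``this follows directly from the definitions''; your case-split on the sign of $t_A^*(H)$ and the observation that under the standing size bound the $n^{v_A(H)-e(H)/2}$ factor dominates $e^{-o(H)}$ is a correct way to fill in that detail (the word ``equivalent'' is slightly stronger than needed, but the implication you use---that a minimiser of $\Nt_A$ at $t=1/2$ must maximise $e(H)-2v_A(H)$---holds since the exponents are half-integers and $o(F)\le(\log n)^{1/5}$).
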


%Finally, we extend the idea above by making the following easy but important observation: we can bound the number of copies of $F$ rooted at $\phi(A)$ in $G_m$ by counting extensions from $H_{i-1}$ to $H_i$ for each $0 \le i \le \ell$. We shall use this trivial fact in Section~\ref{EEproofSec} to show that if each balanced graph structure pair satisfies property $(c)$ of Theorem~\ref{EEthm} at time $t$, then each unbalanced graph structure also does so.

We shall use Lemma~\ref{lem:HHbalanced} in conjunction with the following lemma to deduce bounds on $N_\phi(F)$ for unbalanced pairs $(F,A)$ from bounds for balanced pairs.

\begin{lemma}\label{lem:crucial}
Let $(F,A,\phi)$ be a graph structure triple with building sequence $A \subseteq H_0 \subsetneq \dots \subsetneq H_\ell = F$, and suppose that $\phi$ is faithful at time~$t$. Then, for each $A \subseteq H \subseteq F$, 
$$N_\phi(F)(m) \, \le \, N_{\phi}(H)(m) \cdot \max_{\phi' : \, H \to V(G_m)} N_{\phi'}(F)(m),$$
and hence, for each $0 \le j \le \ell$, 
$$N_\phi(F)(m) \, \le \, N_{\phi}(H_0)(m) \cdot \bigg( \prod_{i = 0}^{j-1} \max_{\phi_i : \, H_i \to V(G_m)} N_{\phi_i}(H_{i+1})(m) \bigg) \cdot \max_{\phi_j : \, H_j \to V(G_m)} N_{\phi_j}(F)(m),$$
where for each $0 \le i \le j$ the maximum is taken over injective maps $\phi_i \colon V(H_i) \to V(G_m)$ which are faithful at time $t$. 
\end{lemma}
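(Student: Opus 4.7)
The plan is to prove the first inequality by partitioning the set counted by $N_\phi(F)(m)$ according to the restriction of each injective homomorphism $\psi$ to $V(H)$, and then to iterate this along the building sequence to obtain the second inequality.

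For the first inequality, fix $A \subseteq H \subseteq F$ and consider any injective homomorphism $\psi \colon V(F) \to V(G_m)$ with $\psi|_A = \phi$. Since $H$ is an induced sub-structure of $F$, the restriction $\phi' := \psi|_{V(H)}$ is automatically an injective homomorphism of $H$ extending $\phi$. The key point I would then verify is that $\phi'$ is itself faithful at time $t$. Suppose not: then some triangle on vertices $u,v,w \in V(H)$ in the structure with open edges $O(H)$ and edges $E(H) \cup \phi'^{-1}\bigl(E(G_m[\phi'(V(H))])\bigr)$ has at most one open edge. If it has none, then $\psi$ produces a triangle in $E(G_m)$, contradicting triangle-freeness of $G_m$. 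If it has exactly one, say $uv \in O(H)$, then $\psi(u)\psi(v) \in O(G_m)$ while $\psi(u)\psi(w), \psi(v)\psi(w) \in E(G_m)$, contradicting the fact that edges of $O(G_m)$ do not close triangles. Hence $\phi'$ is faithful.

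Having established this, one partitions
$$N_\phi(F)(m) \,=\, \sum_{\phi'} \bigl|\{\psi \colon \psi|_{V(H)} = \phi'\}\bigr|,$$
where $\phi'$ ranges over injective homomorphisms $V(H) \to V(G_m)$ extending $\phi$. Every $\phi'$ with a nonempty summand is faithful by the argument above, so each nonzero term equals $N_{\phi'}(F)(m)$. The number of $\phi'$ in the sum is $N_\phi(H)(m)$ (noting that faithfulness of $\phi$ with respect to $F$ immediately implies faithfulness with respect to $H$, since $E(H) \subseteq E(F)$, $O(H) \subseteq O(F)$ and $H$ is induced). Bounding each term by the maximum and then enlarging the maximum to range over all faithful injections $\phi' \colon V(H) \to V(G_m)$ (regardless of whether they extend $\phi$) yields the first displayed inequality.

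The second inequality then follows by iterating the first along the chain $A \subseteq H_0 \subsetneq H_1 \subsetneq \cdots \subsetneq H_j \subseteq F$: one applies the first inequality to $(F,A,\phi)$ with $H = H_0$, then to $(F, H_0, \phi_0)$ with $H = H_1$ for each faithful $\phi_0$, and so on, at each step bounding the inner quantity $N_{\phi_i}(H_{i+1})(m)$ by $\max_{\phi_i} N_{\phi_i}(H_{i+1})(m)$. The only nontrivial ingredient throughout is the faithfulness check, which I expect to be the main (though small) obstacle; once that is in place, the rest is bookkeeping.
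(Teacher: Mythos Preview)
Your partition-and-bound strategy is exactly the intended argument; the paper calls the lemma a ``simple observation'' and gives no proof, so there is nothing further to compare. The iteration for the second display is also correct.

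There is, however, a slip in your faithfulness check. Faithfulness of $\phi'$ here is for the triple $(F, V(H), \phi')$, and by the footnote to Definition~\ref{def:faithful} this is a permissibility condition on the structure on \emph{all of} $V(F)$, with edge set $(E(F)\setminus E(H)) \cup E\big(G_m[\phi'(V(H))]\big)$ and open-edge set $O(F)\setminus O(H)$. You instead test a structure on $V(H)$ with open edges $O(H)$ --- the wrong vertex set and the wrong open-edge set --- so your argument says nothing about a potential bad triangle with a vertex outside $V(H)$ (say $u,v\in V(H)$, $w\notin V(H)$, with $\phi'(u)\phi'(v)\in E(G_m)$ and $uw,vw\in E(F)\cup O(F)$). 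The cleanest repair is to invoke that same footnote directly: it already records that a non-faithful root admits no embedding, and since your $\phi'$ visibly has the extension $\psi$, it must be faithful. If you prefer a hands-on check, run your contradiction with the correct structure: every side of a bad triangle there is sent by $\psi$ into $E(G_m)\cup O(G_m)$ (sides inside $V(H)$ land in $E(G_m)$ by construction, the remaining sides because $\psi$ is a homomorphism of $F$), and the number of open sides is preserved, producing a forbidden triangle in $G_m$.
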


For example, we can use this idea to improve (under certain circumstances) our upper bound on $N_\phi(F)$ for unbalanced pairs $(F,A)$. We will use the following lemma in Section~\ref{deltaNsec}, below, to bound the maximum possible size of $|\Delta N_\phi(F)(m)|$.

\begin{lemma}\label{lem:unbalanced}
Let $(F,A,\phi)$ be an unbalanced graph structure triple with building sequence $A \subseteq H_0 \subsetneq \dots \subsetneq H_\ell = F$, and suppose that $t \ge t_\ell$, and that $\phi$ is faithful at time~$t$. If $\E(m)$ holds, then
$$N_\phi(F)(m) \, \le \, (\log n)^{\Delta(F,H_{\ell-1},A)}.$$
\end{lemma}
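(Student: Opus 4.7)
The plan is to apply Lemma~\ref{lem:crucial} with the cut $H = H_{\ell-1}$ to split
$$N_\phi(F)(m) \, \le \, N_\phi(H_{\ell-1})(m) \, \cdot \, \max_{\phi' \colon H_{\ell-1} \to V(G_m)} N_{\phi'}(F)(m),$$
and then bound each factor using the conclusions of Theorem~\ref{EEthm} encoded in the hypothesis $\E(m)$.

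For the second factor, Lemma~\ref{lem:HHbalanced} gives that $(F,H_{\ell-1})$ is balanced with tracking time $t_{H_{\ell-1}}(F) = t_\ell \le t$. Its building sequence has length one, so applying Lemma~\ref{lem:Hj} to this pair identifies $F$ itself as the minimal choice of $H$ in part~$(c)$ of Theorem~\ref{EEthm}, which (since $\Nt_F(F) \equiv 1$) yields $\max_{\phi'} N_{\phi'}(F)(m) \le (\log n)^{\Delta(F,F,H_{\ell-1})}$. For the first factor, uniqueness of building sequences (Lemma~\ref{BS}) shows that $(H_{\ell-1},A)$ inherits the truncated sequence $A \subseteq H_0 \subsetneq \cdots \subsetneq H_{\ell-1}$ with the same times. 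Since $t \ge t_\ell > t_{\ell-1}$ by Lemma~\ref{lem:crashtimes}, Lemma~\ref{lem:Hj} forces the minimal $H$ in part~$(c)$ to be $H_{\ell-1}$ itself, whence $N_\phi(H_{\ell-1})(m) \le (\log n)^{\Delta(H_{\ell-1},H_{\ell-1},A)}$.

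Combining these two estimates reduces the proof to the super-additivity inequality
$$\Delta(F,F,H_{\ell-1}) + \Delta(H_{\ell-1},H_{\ell-1},A) \, \le \, \Delta(F,H_{\ell-1},A),$$
and this is where I expect the main obstacle to lie. After expanding $\Delta(F,H,A) = \Delta(F,H) + \Delta(H,A)$, the task becomes absorbing the diagonal contributions $\Delta(F,F) = (2e(F)+o(F))^C$ and $\Delta(H_{\ell-1},H_{\ell-1})$ into the chain terms $\Delta(F,H_{\ell-1})$ and $\Delta(H_{\ell-1},A)$, each of which carries an additional $C^3 v^2$ summand inside its outer $C$-th power. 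Using the partition $v_A(F) = v_A(H_{\ell-1}) + v_{H_{\ell-1}}(F)$, the elementary bound $a^C + b^C \le (a+b)^C$ for $a,b \ge 0$, and the slight strengthening of part~$(c)$ for balanced pairs promised in Section~\ref{balancedsec} (which strips off the diagonal $\Delta(H,H)$ summands in the two balanced applications above), the large outer exponent $C$ in the definition~(\ref{def:del}) is calibrated precisely to close the remaining gap. The delicate point is that this super-additivity must survive even in the regime where the chain step-sizes $v_{H_{i-1}}(H_i)$ are small while the combined edge count $2e(H_{\ell-1}) + o(H_{\ell-1})$ is comparatively large — this is exactly what the factor $C^3$ (rather than a constant) inside~(\ref{def:del}) is designed to handle.
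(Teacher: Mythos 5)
Your decomposition and your use of Lemmas~\ref{lem:crucial}, \ref{lem:HHbalanced}, \ref{lem:crashtimes} and~\ref{lem:Hj} to pin down the minimal sub-structure in Theorem~\ref{EEthm}$(c)$ for each factor reproduce the paper's argument exactly. The problem is in your final paragraph, where you manufacture an obstacle that is not there: you assert that $\Delta(F,F) = (2e(F)+o(F))^C$, and then spend the paragraph worrying about a super-additivity inequality, convexity of $x\mapsto x^C$, the constant $C^3$, and a ``slight strengthening'' from Section~\ref{balancedsec}. None of this is needed, because $\Delta(F,F)=0$.

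The footnote to Definition~\ref{def:faithful} fixes the convention that all parameters of a pair $(F,A)$ are computed for $\hat{F}^A$, i.e.\ with the edges and open edges inside $A$ deleted; thus $e(F)$ and $o(F)$ in~\eqref{def:del} mean $e(\hat F^A)$ and $o(\hat F^A)$, not the raw edge counts of $F$. When $A=F$ we have $v_F(F)=e(\hat F^F)=o(\hat F^F)=0$, so $\Delta(F,F)=0$, $\Delta(H_{\ell-1},H_{\ell-1})=0$ and $\Nt_F(F)\equiv 1$. (You can confirm this reading elsewhere: in Lemma~\ref{lem:bal:Fo} the $H=F^o$ case of Theorem~\ref{EEthm}$(c)$ is written as $N_\phi(F^o)\le(\log n)^{\Delta(F^o,A)}$, which is consistent only if $\Delta(F^o,F^o)=0$.) Hence $\Delta(F,F,H_{\ell-1})=\Delta(F,H_{\ell-1})$ and $\Delta(H_{\ell-1},H_{\ell-1},A)=\Delta(H_{\ell-1},A)$, and the product of your two estimates is $(\log n)^{\Delta(F,H_{\ell-1})+\Delta(H_{\ell-1},A)}=(\log n)^{\Delta(F,H_{\ell-1},A)}$ by the \emph{definition}~\eqref{def:gam} --- an equality, with nothing to absorb. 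Note also that the balanced strengthening of Section~\ref{balancedsec} is encoded in $\M(m)$, which this lemma does not assume, so it would not be legitimate to invoke it here even if it were relevant. Once you replace the purported super-additivity step with the observation $\Delta(F,F)=\Delta(H_{\ell-1},H_{\ell-1})=0$, your proof is correct and coincides with the paper's.
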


%In order to prove Lemma~\ref{lem:unbalanced}, we shall need the following straightforward observation, which will also be useful throughout the section. 

%\begin{lemma}\label{EimpliesThm}
%Let $(F,A)$ be a graph structure pair with $v(F) = n^{o(1)}$, and let $m \in [m^*]$. If the event $\E(m)$ holds, then $N_\phi(F)(m)$ satsifies the conclusion (i.e., either $(a)$, $(b)$ or $(c)$) of Theorem~\ref{EEthm}, for every faithful injection $\phi \colon A \to V(G_m)$.
%\end{lemma}

Having stated the main results of this subsection, let us now turn to the proofs. We begin with a few straightforward observations, which we shall use on numerous occasions throughout the section. Recall that we write $\hat{F}^A$ for the graph structure obtained by removing the edges and open edges from $F[A]$, and that we define $\Nt_A(F) := \Nt_A(\hat{F}^A)$. %, $t_A(F) := t_A(\hat{F}^A)$, and so on.
 
\begin{obs}\label{obs:Ntchain}
For every graph structure pair $(F,A)$ and every $A \subseteq H \subseteq F$, 
$$\Nt_A(H) \cdot \Nt_H(F) \, = \, \Nt_A(F).$$
\end{obs}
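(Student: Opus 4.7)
The plan is to verify the identity directly from the definition of $\Nt_A(F)$. Recall the definition
$$\Nt_A(F)(m) \, = \, \big( e^{-4t^2} \big)^{o(F)} \bigg( \frac{2t}{\sqrt{n}} \bigg)^{e(F)} n^{v_A(F)},$$
together with the convention $\Nt_A(F) := \Nt_A(\hat{F}^A)$ which strips off edges and open edges lying entirely inside $A$. Since sub-structures in this paper are identified with (and induced on) their vertex sets, and $A \subseteq V(H) \subseteq V(F)$, I would record the elementary identities $e(H) = e(F[V(H)])$, $o(H) = o(F[V(H)])$, $e(H[A]) = e(F[A])$ and $o(H[A]) = o(F[A])$; in particular the stripping of $F[A]$ and of $H[A]$ produces the same reduction.

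With these identifications the proof is a one-line exponent calculation: writing each of $\Nt_A(H)$, $\Nt_H(F)$ and $\Nt_A(F)$ in the stripped form above, one checks that the open-edge exponents sum correctly as $(o(H) - o(F[A])) + (o(F) - o(H)) = o(F) - o(F[A])$, the edge exponents sum analogously, and the vertex-count exponents sum as $(v(H) - |A|) + (v(F) - v(H)) = v(F) - |A|$, which are exactly the exponents of $\Nt_A(F)$. There is no real obstacle; the statement is a bookkeeping identity, reflecting the fact that $\Nt_A(F)$ is the "expected-count" analogue of the obvious factorisation: a copy of $F$ rooted at $A$ is a copy of $H$ rooted at $A$, together with a copy of $F$ rooted at $V(H)$.
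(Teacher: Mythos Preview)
Your proof is correct and takes essentially the same approach as the paper: a direct exponent check from the definition~\eqref{def:NtF}, using $v_A(F)=v_A(H)+v_H(F)$, $e(\hat F^H)=e(F)-e(H)$ and $o(\hat F^H)=o(F)-o(H)$. Your version is just slightly more explicit about the stripping convention at $A$.
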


\begin{proof}
This follows easily from the definition of $\Nt_A(F)$. Indeed, simply note that $v_A(F) = v_A(H) + v_H(F)$, $e(\hat{F}^H) = e(F) - e(H)$ and $o(\hat{F}^H) = o(F) - o(H)$, and use~\eqref{def:NtF}.
\end{proof}

Given a graph structure pair $(F,A)$, and two substructures $A \subseteq H \subseteq F$ and $A \subseteq H' \subseteq F$, we shall write $H \cup H'$  and $H \cap H'$ to denote the substructures of $F$ induced by $V(H) \cup V(H')$ and $V(H) \cap V(H')$, respectively. 

\begin{obs}\label{obs:inclexcl}
For any $A \subseteq H \subseteq F$ and $A \subseteq H' \subseteq F$,
$$\Nt_A(H \cup H') \cdot \Nt_A(H \cap H') \, \le \, \Nt_A(H) \cdot \Nt_A(H').$$
%where $H \cup H'$ denotes the sub-structure of $F$ induced by $V(H) \cup V(H')$. 
\end{obs}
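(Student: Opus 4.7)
The plan is to compute the ratio $\Nt_A(H \cup H') \cdot \Nt_A(H \cap H') \,/\, \Nt_A(H) \cdot \Nt_A(H')$ directly from the definition~\eqref{def:NtF}, and check that each of the three base quantities ($n$, $e^{-4t^2}$, and $2t/\sqrt{n}$) contributes a factor at most $1$.

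First I would handle the vertex count. Since $A \subseteq V(H) \cap V(H')$, a simple inclusion--exclusion (on $V(H) \setminus A$ and $V(H') \setminus A$) gives the exact identity
$$v_A(H \cup H') + v_A(H \cap H') \, = \, v_A(H) + v_A(H'),$$
so the powers of $n$ in the numerator and denominator cancel exactly. Next I would handle the edges and open edges. Here the key point is that $H$, $H'$, $H \cap H'$ and $H \cup H'$ are all \emph{induced} sub-structures of $F$, so $E(H \cap H') = E(H) \cap E(H')$ (viewed as subsets of $E(F)$), while $E(H \cup H') \supseteq E(H) \cup E(H')$ (with the containment possibly strict, since $H \cup H'$ may pick up edges of $F$ that cross between $V(H) \setminus V(H')$ and $V(H') \setminus V(H)$). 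Ordinary inclusion--exclusion on sets then yields
$$e(H \cup H') + e(H \cap H') \, \ge \, e(H) + e(H'),$$
and exactly the same argument, applied to $O(F)$, gives the corresponding inequality for the open-edge counts.

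To finish, substitute these three (in)equalities into the quotient, obtaining
$$\frac{\Nt_A(H \cup H') \cdot \Nt_A(H \cap H')}{\Nt_A(H) \cdot \Nt_A(H')} \, = \, \big( e^{-4t^2} \big)^{\alpha} \left( \frac{2t}{\sqrt{n}} \right)^{\beta},$$
where $\alpha = o(H \cup H') + o(H \cap H') - o(H) - o(H') \ge 0$ and $\beta = e(H \cup H') + e(H \cap H') - e(H) - e(H') \ge 0$. Since $e^{-4t^2} \le 1$ trivially and $2t/\sqrt{n} \le 1$ in the range $t \le t^*$ relevant to the paper (indeed, $2t^* = O(\sqrt{\log n}) \ll \sqrt{n}$), the right-hand side is at most~$1$, which is the claim. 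The only step that requires any thought is the super-additivity of $e(\cdot)$ and $o(\cdot)$, and this is immediate from the induced-substructure convention; there is no real obstacle here, and this observation is essentially a ``log-supermodularity'' statement for the monomial $\Nt_A$ viewed as a function of the sub-structure.
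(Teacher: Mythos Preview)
Your proof is correct and follows essentially the same approach as the paper: both arguments rest on the exact identity for $v_A$ and the supermodularity inequalities for $e(\cdot)$ and $o(\cdot)$ coming from the induced-substructure convention. The paper's proof is terser (it just states the three (in)equalities and leaves the substitution implicit), while you spell out the ratio computation and the bounds $e^{-4t^2}\le 1$, $2t/\sqrt{n}\le 1$ explicitly; this is a welcome clarification but not a different idea.
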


\begin{proof}
This also follows easily from the definition. Indeed, simply note that 
$$v_A(H \cup H') + v_A(H \cap H') = v_A(H) + v_A(H'),$$
$e(H \cup H') + e(H \cap H') \ge e(H) + e(H')$ and $o(H \cup H') + o(H \cap H') \ge o(H) + o(H')$.
\end{proof}

Note that we do not necessarily have equality in the observation above, since there may be edges of $F$ between $V(H)$ and $V(H')$ which are not in either $H$ or $H'$. The next observation follows easily from the previous two. 

\begin{obs}\label{obs:tHH}
For any $A \subseteq H \subseteq F$ and $A \subseteq H' \subseteq F$, we have
\begin{equation}\label{eq:obs:tHH}
\Nt_H(H \cup H') \, \le \, \Nt_{H \cap H'}(H') \qquad \text{and} \qquad t^*_H(H \cup H') \le t^*_{H \cap H'}(H').
\end{equation}
\end{obs}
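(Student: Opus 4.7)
The first inequality should follow from direct manipulation of the two preceding observations. I would invoke Observation~\ref{obs:Ntchain} along the chains $A \subseteq H \subseteq H \cup H'$ and $A \subseteq H \cap H' \subseteq H'$ to rewrite
$$\Nt_A(H \cup H') \,=\, \Nt_A(H) \cdot \Nt_H(H \cup H') \qquad \text{and} \qquad \Nt_A(H') \,=\, \Nt_A(H \cap H') \cdot \Nt_{H \cap H'}(H').$$
Substituting these into Observation~\ref{obs:inclexcl} and cancelling the common positive factor $\Nt_A(H) \cdot \Nt_A(H \cap H')$ leaves exactly $\Nt_H(H \cup H') \le \Nt_{H \cap H'}(H')$, as required.

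For the inequality on tracking times, my plan is to prove the stronger pointwise comparison
$$\frac{\Nt_H(H \cup H')(t)}{(2t)^{e(H \cup H') - e(H)}} \,\le\, \frac{\Nt_{H \cap H'}(H')(t)}{(2t)^{e(H') - e(H \cap H')}} \qquad \text{for every } t > 0,$$
from which $t^*_H(H \cup H') \le t^*_{H \cap H'}(H')$ is immediate by the definition~\eqref{def:t*}: any $t$ making the right-hand side at most $1$ also makes the left-hand side at most $1$. Plugging in the closed form~\eqref{def:NtF}, each side simplifies to an expression of the shape $(e^{-4t^2})^{\delta o} \cdot n^{\delta v - \delta e/2}$, so the desired inequality reduces to three standard relations: $v(H \cup H') - v(H) = v(H') - v(H \cap H')$ (inclusion-exclusion on vertex sets), $e(H \cup H') - e(H) \ge e(H') - e(H \cap H')$, and $o(H \cup H') - o(H) \ge o(H') - o(H \cap H')$. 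The latter two are exactly the `superadditivity' inequalities underlying Observation~\ref{obs:inclexcl} (since edges of $H \cup H'$ not in $H$ include all edges of $H'$ not in $H \cap H'$, plus possibly edges between $V(H')\setminus V(H)$ and $V(H)\setminus V(H')$); combined with $e^{-4t^2} \le 1$, these give the pointwise bound.

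I do not foresee any genuine obstacle here: the observation is essentially a bookkeeping consequence of its two predecessors, and the only real care needed is to keep track of the quantities $v_A$, $e$, and $o$ correctly across the four substructures $H$, $H'$, $H \cup H'$ and $H \cap H'$ of $F$.
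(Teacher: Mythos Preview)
Your proof is correct and follows the same approach as the paper: both use Observation~\ref{obs:Ntchain} along the two chains and then Observation~\ref{obs:inclexcl} to obtain the first inequality. For the second inequality the paper simply writes ``hence $t^*_H(H\cup H') \le t^*_{H\cap H'}(H')$ by Definition~\ref{def:tAF}'', while you unpack this more carefully by comparing the ratios $\Nt/(2t)^{\delta e}$ pointwise and invoking the edge superadditivity $e(H\cup H')-e(H)\ge e(H')-e(H\cap H')$; this extra care is warranted, since the two $t^*$-thresholds involve different powers of $2t$, and your argument makes explicit why the inequality still transfers.
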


\begin{proof}
By Observation~\ref{obs:Ntchain}, we have $\Nt_A(H \cup H') = \Nt_A(H) \cdot \Nt_H(H \cup H')$ and $\Nt_A(H') = \Nt_A(H \cap H') \cdot \Nt_{H \cap H'}(H')$. By Observation~\ref{obs:inclexcl}, it follows that $\Nt_H(H \cup H') \le \Nt_{H \cap H'}(H')$ for every $m \in [m^*]$, and hence $t^*_H(H \cup H') \le t^*_{H \cap H'}(H')$ by Definition~\ref{def:tAF}. 
%of $t_A^*(F)$. 
\end{proof}

Our next observation is slightly more technical. It is also the point at which we use our assumption that $v_A(F) + e(F) + o(F) \le (\log n)^{1/5}$. 

\begin{obs}\label{obs:twots}
Let $(F,A)$ be a graph structure pair, and let $A \subseteq H_1 \subseteq H_2 \subseteq F$ and $A \subseteq H_3 \subseteq H_4 \subseteq F$. If $0 \le t^*_{H_1}(H_2) < t^*_{H_3}(H_4) < \infty$, then
$$t^*_{H_3}(H_4)^2 -  t^*_{H_1}(H_2)^2 \, \gg \, \sqrt{\log n} \, \gg \, e(F) \log \log n.$$ 
\end{obs}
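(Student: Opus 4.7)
My plan is to reduce the observation to an elementary gap estimate between two rationals with small denominators. Using Definition~\ref{def:tAF} and~\eqref{def:NtF}, the equation $\Nt_H(H')(m) = (2t)^{e(H')-e(H)}$ simplifies to $e^{-4ot^2} n^{v - e/2} = 1$, where I write $v$, $e$, $o$ for the number of vertices of $H'$ outside $V(H)$, the number of edges of $H'$ not lying inside $V(H)$, and the analogous quantity for open edges. Thus whenever $o \ge 1$ and $2v > e$ one has the closed form
$$t_H^*(H')^2 \;=\; \frac{(2v-e)\log n}{8 o},$$
while the remaining parameter regimes give $t_H^*(H') \in \{0,+\infty\}$ (the value $+\infty$ arising only when $o = 0$ and $2v > e$, which is excluded when $t_H^*(H') < \infty$).

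Next I would apply this to each of the pairs $(H_1,H_2)$ and $(H_3,H_4)$. The hypothesis $0 < t_{H_3}^*(H_4) < \infty$ forces the generic regime for $(H_3,H_4)$, so $t^*_{H_3}(H_4)^2 = (a_3/b_3)(\log n)/8$ with $a_3, b_3 \in \N$, and $b_3 = o(H_4) - o(H_3) \le o(F) \le (\log n)^{1/5}$ (where the last bound comes from the standing assumption $v_A(F)+e(F)+o(F) \le (\log n)^{1/5}$ made at the start of the section). For $(H_1,H_2)$ I split on whether $t^*_{H_1}(H_2) = 0$ or $t^*_{H_1}(H_2) > 0$: in the latter case the same formula yields $t^*_{H_1}(H_2)^2 = (a_1/b_1)(\log n)/8$ with $a_1,b_1 \in \N$ satisfying the same size bound.

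The key estimate is then a diophantine-gap argument. In the zero case, $t^*_{H_3}(H_4)^2 \ge \log n/(8 b_3) \ge (\log n)^{4/5}/8$. In the positive case, since the two rationals $a_1/b_1$ and $a_3/b_3$ are distinct with denominators at most $(\log n)^{1/5}$, they must differ by at least $1/(b_1 b_3) \ge (\log n)^{-2/5}$, so
$$t^*_{H_3}(H_4)^2 - t^*_{H_1}(H_2)^2 \;=\; \frac{\log n}{8} \cdot \frac{a_3 b_1 - a_1 b_3}{b_1 b_3} \;\ge\; \frac{(\log n)^{3/5}}{8}.$$
In either case this is $\gg \sqrt{\log n}$, and the second inequality $\sqrt{\log n} \gg e(F)\log\log n$ is immediate from $e(F) \le (\log n)^{1/5}$. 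There is no serious obstacle here: the lemma is essentially a bookkeeping exercise about rationals with bounded denominators, and the only subtlety is the case split on $t^*_{H_1}(H_2) = 0$ versus $>0$, because the closed-form formula for $t^*$ is only valid in the generic regime $o \ge 1$, $2v > e$.
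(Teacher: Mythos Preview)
Your proof is correct and follows essentially the same approach as the paper: derive the closed form $t_H^*(H')^2 = \frac{(2v-e)}{8o}\log n$ from the definition, observe that this is a rational multiple of $\log n$ with denominator at most $o(F) \le (\log n)^{1/5}$, and conclude via the standard gap bound for distinct rationals with small denominators. The paper states the resulting inequality as $\ge \log n / (64\,o(F)^2)$ without explicitly separating the case $t^*_{H_1}(H_2)=0$, but this is only a cosmetic difference.
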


\begin{proof}
This follows easily from the definition of $t_A^*(F)$. Indeed, it follows from~\eqref{def:t*} that if $0 < t_A^*(F) < \infty$, then 
\begin{equation}\label{eq:tstardef}
t_A^*(F) \, = \, \left( \frac{2v_A(F) - e(F)}{8o(F)} \right)^{1/2} \sqrt{\log n},
\end{equation}
and so if $0 \le t^*_{H_1}(H_2) < t^*_{H_3}(H_4) < \infty$, then 
$$t^*_{H_3}(H_4)^2 - t^*_{H_1}(H_2)^2 \, \ge \, \frac{\log n}{64 \cdot o(F)^2} \, \gg \, \sqrt{\log n} \, \gg \, e(F) \log \log n,$$
where the final two inequalities follow since $e(F) + o(F) \le (\log n)^{1/5}$. 
\end{proof}

Finally, let us note an easy consequence of the previous observation, which was already made earlier in the text.

\begin{obs}\label{obs:zeroomega}
Let $(F,A)$ be a graph structure pair. If $t_A(F) > 0$, then 
$$t_{A}(F) \gg (\log n)^{1/4} > \omega.$$ 
Moreover, $o(F) / v_A(F) \le c(F,A) \le 2o(F) + 2$. 
\end{obs}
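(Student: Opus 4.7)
The plan is to unpack definitions and apply the explicit formula for $t_A^*(H)$ derived in the proof of Observation~\ref{obs:twots}; no new ideas are required, and the whole argument is essentially bookkeeping.

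First I would note that the hypothesis $t_A(F) > 0$ forces $t_A^*(H) > 0$ for every $A \subsetneq H \subseteq F$. By~\eqref{def:t*} and the definition of $\Nt_A(H)$, this is equivalent to $2v_A(H) > e(H)$, and integrality upgrades this to $2v_A(H) - e(H) \ge 1$. I would then split into two cases. If $o(H) = 0$, then $\Nt_A(H)(m) = (2t)^{e(H)} n^{v_A(H) - e(H)/2} > (2t)^{e(H)}$ for every $t > 0$, so $t_A^*(H) = \infty$. Otherwise $1 \le o(H) \le o(F) \le (\log n)^{1/5}$ by the standing assumption of the section, and the formula~\eqref{eq:tstardef} yields
$$t_A^*(H) \, \ge \, \left(\frac{1}{8 (\log n)^{1/5}}\right)^{1/2} \sqrt{\log n} \, = \, \frac{(\log n)^{2/5}}{2\sqrt{2}} \, \gg \, (\log n)^{1/4}.$$
Combined with $t^* = \big(\frac{1}{2\sqrt{2}} - \eps\big) \sqrt{\log n} \gg (\log n)^{1/4}$, this gives $t_A(F) \gg (\log n)^{1/4}$, and since $\omega$ grows sufficiently slowly (for instance $\omega = \log\log\log n$) we also have $(\log n)^{1/4} > \omega$ for large $n$, establishing the first statement.

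For the bounds on $c(F,A)$ I would simply read them off from~\eqref{def:cFA}. Assuming $v_A(F) \ge 1$ (so the ratio $o(F)/v_A(F)$ makes sense), taking $H = F$ in the defining maximum gives
$$c(F,A) \, \ge \, \frac{2o(F)}{2v_A(F) - e(F)} \, \ge \, \frac{2o(F)}{2v_A(F)} \, = \, \frac{o(F)}{v_A(F)}.$$
For the upper bound, the integrality bound $2v_A(H) - e(H) \ge 1$ established above gives $\frac{2o(H)}{2v_A(H) - e(H)} \le 2o(H) \le 2o(F)$ for every $A \subsetneq H \subseteq F$, whence
$$c(F,A) \, \le \, \max\{2o(F),\, 2\} \, \le \, 2o(F) + 2.$$

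I do not anticipate any real obstacle here: the entire observation is a direct consequence of~\eqref{eq:tstardef} together with the running smallness hypothesis $v_A(F) + e(F) + o(F) \le (\log n)^{1/5}$, which is exactly what controls the denominator $o(H)$ appearing in the lower bound on $t_A^*(H)$.
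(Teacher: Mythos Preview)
Your proof is correct and follows essentially the same route as the paper. The paper's proof simply cites Observation~\ref{obs:twots} (applied with $H_1 = H_2 = H_3 = A$), whereas you inline the key computation from that observation's proof by appealing directly to the formula~\eqref{eq:tstardef}; both arguments rest on the same two ingredients, namely the integrality bound $2v_A(H) - e(H) \ge 1$ forced by $t_A(F) > 0$, and the standing smallness hypothesis on $o(F)$.
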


\begin{proof}
Simply apply Observation~\ref{obs:twots} with $H_1 = H_2 = H_3 = A$ and $A \subsetneq H_4 \subseteq F$ arbitrary. Since $t_A^*(A) = 0$, it follows that $t^*_{A}(H_4) \gg (\log n)^{1/4}$ for every $A \subsetneq H_4 \subseteq F$, and hence $t_A(F) \gg (\log n)^{1/4}$, as required. The bounds on $c(F,A)$ follow immediately from~\eqref{def:cFA}, noting that $2v_A(H) > e(H)$ for every $A \subsetneq H \subseteq F$, since $t_A(F) > 0$. 
%, and that if $c(F,A) = 2$ then $2v_A(F) > o(F)$. 
%To prove the bounds on $c(F,A)$, consider the time $t$ at which $e^{c(F,A)t^2} = n^{1/4}$ (if $t_A(F) < t^*$ then $t = t_A(F)$), and note that if $c(F,A) < 2$ then, by~\eqref{def:cFA}, we also have $\sqrt{n} \le e^{4o(F)t^2} \le n^{v_A(F)}$ at this time. On the other hand, if $c(F,A) = 2$ then the bounds are trivial, since in this case we have $2v_A(F) > o(F)$. 
\end{proof}

We are now ready to prove Lemmas~\ref{BS} and~\ref{lem:crashtimes}.

\begin{proof}[Proof of Lemmas~\ref{BS} and~\ref{lem:crashtimes}]
We begin by showing that the graph structure $H_0$ is well-defined, and that $t_1 \ge t_{H_0}(F) > 0$. Set
$$\HH_0 \, = \, \Big\{ A \subseteq H \subseteq F \, : \, \Nt_A(H) \textup{ at $t = 1/2$ is minimal} \Big\}.$$
We claim that the collection $\HH_0$ is union-closed,\footnote{Note that if $t_A(F) > 0$ then $\HH_0 = \{A\}$, and so the claim holds trivially.} and hence that $H_0 = \bigcup_{H \in \HH_0} H$. Indeed, let $H, H' \in \HH_0$, and recall that
\begin{equation}\label{eq:obs:inclexcl:repeat}
\Nt_A(H \cup H') \cdot \Nt_A(H \cap H') \, \le \, \Nt_A(H) \cdot \Nt_A(H')
\end{equation}
by Observation~\ref{obs:inclexcl}. Moreover, by the minimality of $\Nt_A(H) = \Nt_A(H')$, we have 
$$\Nt_A(H \cup H') \ge \Nt_A(H) \qquad \textup{and} \qquad \Nt_A(H \cap H') \ge \Nt_A(H')$$ 
at time $t = 1/2$. It follows that the four terms in~\eqref{eq:obs:inclexcl:repeat} are all equal at $t = 1/2$. In particular, we have $\Nt_A(H \cup H') = \Nt_A(H)$, and hence $H \cup H' \in \HH_0$, as claimed. 

Now, suppose that $t^*_{H_0}(H) = 0$ for some $H_0 \subsetneq H \subseteq F$. Then $e(H) - e(H_0) \ge 2v_{H_0}(H)$, and so $\Nt_{H_0}(H) \le 1$ at time $t = 1/2$. By Observation~\ref{obs:Ntchain} it follows that 
$$\Nt_A(H) \, = \, \Nt_{A}(H_0) \cdot \Nt_{H_0}(H) \, \le \, \Nt_{A}(H_0)$$
at $t = 1/2$, which contradicts the maximality of $H_0$. Thus $t_{H_0}(F) > 0$ as claimed.

Now suppose that we have already constructed $A \subseteq H_0 \subsetneq \cdots \subsetneq H_i \neq F$ in a unique way; we claim that there exists a unique $H_i \subsetneq H_{i+1} \subseteq F$ which is maximal over structures with $t^*_{H_i}(H_{i+1})$ minimal, and that $t_{i+1} \ge t_{H_i}(F) > t_i$. The argument is almost the same as that above. Indeed, setting 
$$\HH_{i+1} \, = \, \Big\{ H_i \subsetneq H \subseteq F \, : \, t^*_{H_i}(H) \textup{ is minimal} \Big\},$$
we make the following claim.

\medskip

\noindent \textbf{Claim:} $\HH_{i+1}$ is union-closed and $t_{i+1} > t_i$.

\begin{proof}[Proof of claim]
We first show that $t_{H_i}(F) > t_i$, i.e., that $t^*_{H_i}(H) > t_i$ for each $H_i \subsetneq H \subseteq F$. When $i = 0$ this was proved above, so let $i \ge 1$ and suppose that $t^*_{H_i}(H) \le t_i$ for some $H_i \subsetneq H \subseteq F$. Then, at $t = t_i$, we have
$$\Nt_{H_{i-1}}(H) \, = \, \Nt_{H_{i-1}}(H_i) \cdot \Nt_{H_i}(H) \, \le \, (2t)^{e(H) - e(H_{i-1})},$$
and so $t^*_{H_{i-1}}(H) \le t_i$, which contradicts the maximality of $H_i$. Hence $t_{H_i}(F) > t_i$, as claimed.

Next, let $H,H' \in \HH_{i+1}$, and note that we have $\Nt_{H_i}(H \cap H') \ge 1$ at $t = t_{i+1}$, since either $H \cap H' = H_i$, or $t_{H_i}^*(H \cap H') \ge t_{i+1} > \omega$, by Observation~\ref{obs:zeroomega}. It follows that
%\begin{equation}\label{eq:BS:claim:Nt:bound}
$$\Nt_{H_i}(H \cup H') \, \le \, \Nt_{H_i}(H \cup H') \cdot \Nt_{H_i}(H \cap H') \, \le \, \Nt_{H_i}(H) \cdot \Nt_{H_i}(H') \, \le \, (2t)^{e(H) + e(H') - 2e(H_i)}$$
%\end{equation}
at time $t = t_{i+1}$, by Observation~\ref{obs:inclexcl} and since $t_{H_i}^*(H) = t_{H_i}^*(H') = t_{i+1}$. If $e(H \cup H') = e(H) + e(H') - e(H_i)$, then it follows %from~\eqref{eq:BS:claim:Nt:bound} 
that $t^*_{H_i}(H \cup H') \le t_{i+1}$, and hence that $H \cup H' \in \HH_{i+1}$, as required. On the other hand, if $e(H \cup H') > e(H) + e(H') - e(H_i)$ then we gain a factor of $\sqrt{n} / 2t$ in our application of Observation~\ref{obs:inclexcl}. Since $\sqrt{n} \gg (2t)^{2e(F)}$, it follows that $t^*_{H_i}(H \cup H') \le t_{i+1}$ in this case also, as claimed.
\end{proof}

\begin{comment}
% Alternative proof of the lemma above.
%%%%%%%%%%%%%%%%%%
On the other hand, observe that, by~\eqref{def:t*} and~\eqref{def:NtF}, if $t^*_{H_i}(H \cup H') > t_{i+1}$ and $o(H \cup H') > o(H_i)$ then 
$$\Nt_{H_i}(H \cup H') \, \ge \, \big( 2t_{i+1} \big)^{e(H \cup H') - e(H_i)} \exp\bigg( 4 \Big( t^*_{H_i}(H \cup H')^2 - t_{i+1}^2 \Big) \bigg),$$
at time $t = t_{i+1}$. By Observation~\ref{obs:twots}, it follows that 
$$\Nt_{H_i}(H \cup H') \, \gg \, e^{\sqrt{\log n}} \, \gg \, (2t)^{e(H) + e(H')},$$
at time $t = t_{i+1}$, which contradicts~\eqref{eq:BS:claim:Nt:bound}. Finally, note that if $o(H \cup H') = o(H_i)$, then $t_{i+1} = t^*$, since $ t_{i+1} = t^*_{H_i}(H) > 0$. It follows, in either case, that $t^*_{H_i}(H \cup H') \le t_{i+1}$, and hence $H \cup H' \in \HH_{i+1}$, as required. 
%%%%%%%%%%%%%%%%%%%%%
\end{comment}

The claim implies that $H_{i+1} = \bigcup_{H \in \HH_{i+1}} H$, which completes the proof of the lemma.
\end{proof}

We next prove Lemmas~\ref{lem:Hj} and~\ref{lem:tell:NHFsmall}. Both are easy consequences of the following lemma.

\begin{lemma}\label{sublem:Hj}
Let $(F,A)$ be a graph structure pair, and let $A \subseteq H \subsetneq F$. If $0 \le j \le \ell$ is minimal such that $H_j \not\subseteq H$, then 
$$t_H(F) \, \le \, t^*_H(H \cup H_j) \, \le \, t_j.$$
%In particular, if $H_j \not\subseteq H$ then $t_H(F) \le t_j$. 
\end{lemma}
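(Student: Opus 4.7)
The plan is to prove the two inequalities separately. The first, $t_H(F) \le t^*_H(H \cup H_j)$, should follow immediately from the definition: since $H_j \not\subseteq H$ by hypothesis, $H \cup H_j$ is a proper super-structure of $H$ inside $F$ and hence is an admissible candidate in the minimization defining $t_H(F)$. For the second inequality, I would first invoke Observation~\ref{obs:tHH} (applied with $H' = H_j$) to obtain $t^*_H(H \cup H_j) \le t^*_{H \cap H_j}(H_j)$, reducing everything to proving $t^*_{H \cap H_j}(H_j) \le t_j$.

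To prove this remaining bound, I would split on $j$. When $j \ge 1$, the minimality of $j$ forces $H_0, \ldots, H_{j-1} \subseteq H$, so $H_{j-1} \subseteq H \cap H_j \subsetneq H_j$. If $H \cap H_j = H_{j-1}$ the claim is immediate. Otherwise, the defining property of $H_j$ as minimizing $t^*_{H_{j-1}}(H')$ over $H_{j-1} \subsetneq H' \subseteq F$ gives $t^*_{H_{j-1}}(H \cap H_j) \ge t_j$. Evaluated at $t = t_j$, this yields
$$\Nt_{H_{j-1}}(H \cap H_j) \, \ge \, (2t_j)^{e(H \cap H_j) - e(H_{j-1})} \qquad \text{and} \qquad \Nt_{H_{j-1}}(H_j) \, = \, (2t_j)^{e(H_j) - e(H_{j-1})},$$
with the equality coming from the definition of $t_j = t^*_{H_{j-1}}(H_j)$. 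Dividing these via the chain-rule identity in Observation~\ref{obs:Ntchain} then produces $\Nt_{H \cap H_j}(H_j) \le (2t_j)^{e(H_j) - e(H \cap H_j)}$ at $t = t_j$, and hence $t^*_{H \cap H_j}(H_j) \le t_j$ as required.

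The case $j = 0$ is where the main obstacle lies, because $t_0 = 0$ means that we must verify the combinatorial identity $e(H_0) - e(H \cap H_0) \ge 2v_{H \cap H_0}(H_0)$ outright. The defining conditions of $H_0$ mix two extremal properties (membership in $\HH_0$ together with a minimization of $\Nt_A$ at $t = 1/2$ within $\HH_0$), and it is not automatic that $H \cap H_0$ itself belongs to $\HH_0$, so I plan to split on exactly this. If $H \cap H_0 \in \HH_0$, the minimality of $\Nt_A(H_0)$ at $t = 1/2$ will give $v_A(H_0) - e(H_0)/2 \le v_A(H \cap H_0) - e(H \cap H_0)/2$, which rearranges to the target inequality. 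If instead $H \cap H_0 \notin \HH_0$, then $e(H \cap H_0) < 2v_A(H \cap H_0)$ by definition; combining this strict bound with $e(H_0) \ge 2v_A(H_0)$ (from $H_0 \in \HH_0$) yields the target inequality directly, completing the proof.
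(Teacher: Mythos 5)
Your proof is correct and follows the same route as the paper: the first inequality from the definition of $t_H(F)$, the reduction to $t^*_{H \cap H_j}(H_j) \le t_j$ via Observation~\ref{obs:tHH}, and for $j \ge 1$ the same use of the minimality of $H_j$ combined with the chain-rule identity $\Nt_{H_{j-1}}(H_j) = \Nt_{H_{j-1}}(H \cap H_j) \cdot \Nt_{H \cap H_j}(H_j)$ from Observation~\ref{obs:Ntchain}.

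The only place you diverge is the case $j = 0$, where you split on whether $t^*_A(H \cap H_0) = 0$. This split is unnecessary: since $\Nt_A(H_0)$ is minimal at $t = 1/2$ over \emph{all} $A \subseteq H' \subseteq F$ (not merely those with vanishing $t_A^*$), the inequality $\Nt_A(H \cap H_0) \ge \Nt_A(H_0)$ holds unconditionally, and plugging this into $\Nt_A(H_0) = \Nt_A(H \cap H_0) \cdot \Nt_{H \cap H_0}(H_0)$ gives $\Nt_{H \cap H_0}(H_0) \le 1$ at $t = 1/2$, hence $t^*_{H \cap H_0}(H_0) = 0$, in a single stroke; this is how the paper handles it, and it subsumes both of your sub-cases. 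Two further remarks: your second sub-case claim that $H \cap H_0 \notin \HH_0$ implies $e(H \cap H_0) < 2v_A(H \cap H_0)$ is correct under your reading $\HH_0 = \{H' : t_A^*(H') = 0\}$, but would be false under the paper's convention (in the proof of Lemma~\ref{BS}) where $\HH_0$ is the set of minimizers of $\Nt_A$ at $t = 1/2$, so be careful which meaning you are using. And in your first sub-case, passing from the $\Nt_A$-comparison to $v_A(H_0) - e(H_0)/2 \le v_A(H \cap H_0) - e(H \cap H_0)/2$ silently discards the $e^{-o(\cdot)}$ factor; this is legitimate because the exponent difference is a half-integer while the correction $(o(H_0) - o(H \cap H_0))/\log n$ is much smaller than $1/2$, but it deserves a word of justification.
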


\begin{proof}
We shall in fact prove that 
\begin{equation}\label{eq:sublem:tHF}
t_H(F) \, \le \, t^*_H(H \cup H_j) \, \le \, t^*_{H \cap H_j}(H_j)  \, \le \, t_j.
\end{equation}
The first inequality follows by the definition of $t_H(F)$, and the second follows by Observation~\ref{obs:tHH}. In order to prove the third, note that $H_{j-1} \subseteq H \cap H_j$, and that therefore
\begin{equation}\label{eq:sublem:NHH}
\Nt_{H_{j-1}}(H_j) \, = \, \Nt_{H_{j-1}}(H \cap H_j) \cdot \Nt_{H \cap H_j}(H_j)
\end{equation}
by Observation~\ref{obs:Ntchain}, where we set $H_{-1} = A$. Now, if $j = 0$ then, by the definition of $H_0$, we have $\Nt_A(H \cap H_0) \ge \Nt_A(H_0)$ at time $t = 1/2$. By~\eqref{eq:sublem:NHH}, it follows that $\Nt_{H \cap H_0}(H_0) \le 1$, and hence $t^*_{H \cap H_0}(H_0) = 0$, as claimed. On the other hand, if $j \ge 1$ then
$$\Nt_{H_{j-1}}(H_j) = (2t)^{e(H_j) - e(H_{j-1})} \qquad \text{and} \qquad \Nt_{H_{j-1}}(H \cap H_j) \ge (2t)^{e(H \cap H_j) - e(H_{j-1})}$$ 
at time~$t = t_j$, since $H_j$ minimizes $t^*_{H_{j-1}}(H_j) = t_j$. By~\eqref{eq:sublem:NHH}, it follows that 
$$\Nt_{H \cap H_j}(H_j) \le (2t)^{e(H_j) - e(H \cap H_j)}$$ 
at time $t = t_j$, which implies $t^*_{H \cap H_j}(H_j) \le t_j$, as claimed. Hence~\eqref{eq:sublem:tHF} holds, as required.
\end{proof}

We next prove Lemma~\ref{lem:tell:NHFsmall}, which gives a natural alternative definition of the time $t_\ell$, and deals with the case $j = \ell$ of Lemma~\ref{lem:Hj}. We will also use it later on, see Section~\ref{balancedsec}. 

\begin{proof}[Proof of Lemma~\ref{lem:tell:NHFsmall}]
Recalling that $H_\ell = F$, and this time setting $H_{-1} = \emptyset$, we have
\begin{equation}\label{eq:buildNHF}
\Nt_H(F) \, = \, \prod_{j=0}^\ell \Nt_{H \cup H_{j-1}}(H \cup H_j), %\times \Nt_{H \cup H_0}(H \cup H_1) \times \dots \times \Nt_{H \cup H_{\ell-1}}(F),
\end{equation}
by Observation~\ref{obs:Ntchain}. In order to bound $\Nt_{H \cup H_{j-1}}(H \cup H_j)$, note that if $H \cup H_{j-1} \neq H \cup H_j$ then $H_j \not\subseteq H \cup H_{j-1}$. 
%$j$ is minimal such that $H_j \not\subseteq H \cup H_{j-1}$, and 
Thus, applying Lemma~\ref{sublem:Hj} to $H \cup H_{j-1}$, we obtain
$$t^*_{H \cup H_{j-1}}(H \cup H_j) \, \le \, t_j,$$
and hence
$$\Nt_{H \cup H_{j-1}}(H \cup H_j)(m) \, \le \, (2t)^{e(H \cup H_j) - e(H \cup H_{j-1})}$$ 
for every $t \ge t_j$. Since $t \ge t_\ell \ge t_j$ for each $0 \le j \le \ell$, this (together with~\eqref{eq:buildNHF}) implies that $\Nt_H(F)(m) \le (2t)^{e(F) - e(H)}$ for every $t \ge t_\ell$, as required. 
\end{proof}

We can now easily deduce Lemma~\ref{lem:Hj}. 

\begin{proof}[Proof of Lemma~\ref{lem:Hj}]
The case $j = \ell$ follows by Lemma~\ref{lem:tell:NHFsmall}, since it implies that $t^*_H(F) \le t_\ell$ for every $A \subseteq H \subsetneq F$, and we have $t_F(F) = t^*$. So let $0 \le j \le \ell - 1$ and suppose that $t_j \le t < t_{j+1}$. Note first that $t_{H_j}(F) = \min\{t_{j+1},t^*\}$, by the definition of $H_{j+1}$, so $t < t_{H_j}(F)$, as required. On the other hand, by Lemma~\ref{sublem:Hj} we have $t_H(F) \le t_j \le t$ for every $A \subseteq H \subseteq F$ with $H \not\supseteq H_j$, since the $t_j$ are increasing, by Lemma~\ref{lem:crashtimes}. Hence $H_j$ is the minimal $A \subseteq H \subseteq F$ such that $t < t_H(F)$, as claimed.
\end{proof}

Finally, let us prove Lemma~\ref{lem:unbalanced}. 

\begin{proof}[Proof of Lemma~\ref{lem:unbalanced}]
By Lemma~\ref{lem:crashtimes} we have $t_{\ell-1} < t_\ell$. Thus, by Lemma~\ref{lem:crucial} and the event $\E(m)$, and using Remark~\ref{Eremark}, we have 
$$N_\phi(F)(m) \, \le \, N_\phi(H_{\ell-1})(m) \cdot \max_{\phi' : \, H_{\ell-1} \to V(G_m)} N_{\phi'}(F)(m) \, \le \, (\log n)^{\Delta(F,H_{\ell-1}) + \Delta(H_{\ell-1},A)},$$
as claimed.
\end{proof}

\subsection{Self-correction}\label{selfsec}

In this section we shall prove that, for each graph structure triple $(F,A,\phi)$, the random variable $N_\phi(F)$ is self-correcting (in the sense of Section~\ref{MartSec}) on the interval $\omega < t \le t_A(F)$, as long as the events $\E(m)$,  $\Y(m)$, $\Z(m)$ and $\Q(m)$ all hold, and the map $\phi$ is faithful. This will be a crucial tool in our proof that these variables track the functions $\Nt_A(F)$ on this interval. We begin by defining $N^*_\phi(F)$ to be the normalized error in $N_\phi(F)$ at time $t$, i.e.,
\begin{equation}\label{def:Nstar}
N^*_\phi(F)(m) \, = \, \frac{N_\phi(F)(m) - \Nt_A(F)(m)}{g_{F,A}(t) \Nt_A(F)(m)}.
\end{equation}
Since $N_\phi(F) = \big( 1 + g_{F,A}(t) N^*_\phi(F) \big) \Nt_A(F)$, in order to prove Theorem~\ref{EEthm}$(b)$ we are required to prove that $|N^*_\phi(F)(m)| \le 1$ for every $\omega < t \le t_A(F)$. Recall from Section~\ref{sketchSec} that $\eps > 0$ and $C = C(\eps) > 0$ are constants (with $\eps$ sufficiently small and $C$ sufficiently large) that are fixed throughout the proof, and recall from the discussion before Theorem~\ref{EEthm} that $c(F,A) \ge 2$ for every graph structure pair $(F,A)$. We will prove the following key lemma.

\begin{lemma}\label{selfN*}
Let $(F,A,\phi)$ be a graph structure triple, let $\omega < t \le t_A(F)$, and suppose that $\phi \colon A \to V(G_m)$ is faithful at time $t$. If $\E(m) \cap \Y(m) \cap \Z(m) \cap \Q(m)$ holds, then 
$$\Ex\big[ \Delta N^*_\phi(F)(m) \big] \, \in \, \bigg( c(F,A) + \frac{e(F)}{2t^2} \bigg) \cdot \frac{2t}{n^{3/2}} \cdot \Big( - N_\phi^*(F)(m) \pm \eps \Big).$$
\end{lemma}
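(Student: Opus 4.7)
The plan is to derive an exact expression for $\Ex[\Delta N_\phi(F)(m)]$ by summing over the open edge $f$ chosen at step $m+1$ (each selected with probability $1/Q(m)$) and then linearise. For each copy $\psi$ of $F$ rooted at $\phi$, the copy is destroyed by $f$ precisely when either $f = \{\psi(u),\psi(v)\}$ for some $uv \in O(F)$, or $f$ is a $Y$-neighbour (in $G_m$) of such an open edge; conversely, a new copy is created by $f$ when $f = \{\psi(u),\psi(v)\}$ for some $uv \in E(F)$ and $\psi$ was already a copy of the structure $F_{uv}$ obtained from $F$ by flipping $uv$ from an edge to an open edge. Swapping the order of summation and pushing all second-order interactions (one $f$ destroying a copy through two different open edges, or simultaneously being an open edge and a $Y$-closer of the same copy, etc.) into an error term $\mathrm{Err}(m)$, one obtains
$$\Ex\bigl[\Delta N_\phi(F)(m)\bigr] \;=\; \frac{1}{Q(m)}\left[ \sum_{uv \in E(F)} N_\phi(F_{uv})(m) \;-\; \sum_{uv \in O(F)} \sum_{\psi} \bigl(Y_{\{\psi(u),\psi(v)\}}(m) + 1\bigr) \right] \;+\; \mathrm{Err}(m).$$

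I would then replace each term by its nominal value. On $\Y(m)$ we have $Y_{\{\psi(u),\psi(v)\}}(m) = (1 \pm g_y(t))\Yt(m)$, so the destruction sum equals $o(F)\, N_\phi(F)(m)\, \Yt(m)(1 \pm g_y(t))$. Since $F_{uv}$ has one extra open edge and one fewer edge than $F$, we have $\Nt_A(F_{uv}) = \Nt_A(F) \cdot \sqrt{n}\,e^{-4t^2}/(2t)$; by $\E(m)$ together with Remark~\ref{Eremark} (and the weaker bound of Theorem~\ref{EEthm}(c) when $F_{uv}$ is past its own tracking time), $N_\phi(F_{uv})(m) = \Nt_A(F_{uv})(m)(1 \pm g')$ with $g' \ll g_{F,A}(t)$ by our choice of $C$ in $\Delta(F,A)$. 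Inserting $\Yt(m) = 4te^{-4t^2}\sqrt{n}$ and, via $\Q(m)$, $Q(m) = \Qt(m)(1 + O(g_q(t)))$, the bracket reduces to
$$\frac{\Nt_A(F)(m)}{n^{3/2}}\left(\frac{e(F)}{t} - 8o(F)t\right) \;-\; \frac{8o(F)\,t}{n^{3/2}}\bigl(N_\phi(F)(m) - \Nt_A(F)(m)\bigr).$$
The first summand is exactly $\Delta\Nt_A(F)(m)$, since $\log\Nt_A(F) = e(F)\log t - 4t^2 o(F) + \mathrm{const}$ and $dt/dm = n^{-3/2}$.

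To convert this into a drift bound for $N^*_\phi(F)$, set $R(m) := g_{F,A}(t)\Nt_A(F)(m)$ and Taylor expand:
$$\Ex\bigl[\Delta N^*_\phi(F)(m)\bigr] \;=\; \frac{\Ex[\Delta N_\phi(F)(m)] - \Delta\Nt_A(F)(m)}{R(m)} \;-\; N^*_\phi(F)(m)\cdot\frac{\Delta R(m)}{R(m)} \;+\; O\!\left(\frac{\mathrm{Err}(m)}{R(m)}\right).$$
Direct differentiation of $R = e^{c(F,A)t^2}\,n^{-1/4}(\log n)^{\gamma(F,A)}\Nt_A(F)$ gives $\Delta R/R = (2c(F,A)t + e(F)/t - 8o(F)t)/n^{3/2}$, while the previous display contributes $-8o(F)\,t\,N^*_\phi(F)(m)/n^{3/2}$ to the first fraction. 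The two $8o(F)t$ terms cancel and we arrive at
$$-\left(2c(F,A) + \frac{e(F)}{t^2}\right)\frac{t}{n^{3/2}}\,N^*_\phi(F)(m) \;=\; \left(c(F,A) + \frac{e(F)}{2t^2}\right)\frac{2t}{n^{3/2}}\bigl(-N^*_\phi(F)(m)\bigr),$$
which is exactly the claimed main term.

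The main obstacle is showing that $\mathrm{Err}(m)/R(m)$ fits inside the $\pm\eps$ slack. This error bundles together: (i) second-order destruction events, bounded using $\Z(m)$ to control common-neighbour counts together with $\E(m)$ applied to the relevant sub-structures; (ii) contributions from $F_{uv}$ that are not permissible, or whose tracking time is below $t$, controlled via Theorem~\ref{EEthm}(c) and Observation~\ref{obs:zeroomega} — losing only a polylog factor, which is absorbed by the slack in $\gamma(F,A) = \Delta(F,A) - e(F) - 2$; and (iii) linearisation errors in passing from $\Ex[\Delta N_\phi]$ to $\Ex[\Delta N^*]$, together with the $-o(F)N_\phi/Q$ contribution from the ``$+1$'' in the destruction sum, which is swallowed by a factor of $Q^{-1}n^{3/2} \ll n^{-1/4+\eps}$. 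Each piece closes because, by construction of $C$ and $\Delta(F,A)$, the tolerance $g_{F,A}(t)$ strictly dominates $g_y(t)$, $g_x(t)$, $g_q(t)$, and each relevant $g_{F_{uv},A}(t)$, so every error contributes at most $\eps$ times the main drift.
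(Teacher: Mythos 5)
Your proof is correct and follows essentially the same route as the paper's: write $\Ex[\Delta N_\phi(F)]$ as a creation term (summing $N_\phi(F^o)$ over the structures $F^o \in \F_F^o$ with one edge flipped to open) minus a destruction term (summing $Y_f$ over open edges of each copy), approximate using the events $\E,\Y,\Z,\Q$, apply the product rule to renormalise, and observe the cancellation of the $8o(F)t$ terms. The only small over-caution is your worry about $F_{uv}$ being past its tracking time or non-permissible: by Observation~\ref{obs:F-} we have $t_A(F^o)\ge t_A(F)\ge t$, and flipping an edge to open preserves permissibility, so only the tracking case (b) of Theorem~\ref{EEthm} is ever invoked — no appeal to case (c) is needed.
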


We begin by calculating the expected change in $N_\phi(F)$. We will need the following family of graph structures.

\begin{defn}
Given a permissible graph structure $F$, we define $\F^o_F$ to be the family of (labelled) graph structures $F^o$ obtained by changing an edge of $F$ into an open edge. We call this (changed) edge \emph{$F$-vulnerable}.
\end{defn}

We make a quick observation, which follows immediately from the definition, and which we shall use frequently in the proofs below.

\begin{obs}\label{obs:NtF-}
Let $(F,A)$ be a graph structure pair. Then $|\F^o_F| = e(F)$, and 
$$2te^{4t^2} \cdot \Nt_A(F^o)(m) \, = \, \sqrt{n} \cdot \Nt_A(F)(m)$$
for every $F^o \in \F_F^o$.
\end{obs}

\begin{proof}
Since $v_A(F) = v_A(F^o)$, $e(F) = e(F^o) + 1$ and $o(F) = o(F^o) - 1$, the observation follows immediately from the definition~\eqref{def:NtF}.
\end{proof}

Recall that we use $N_\phi(F)$ to denote both the collection of copies of $F$ rooted at $\phi(A)$ in $G_m$, and the size of this collection. 

\begin{lemma}\label{selfN}
Let $(F,A,\phi)$ be a graph structure triple, and suppose that $\phi$ is faithful at time~$t$, where $0 < t \le t^*$. If $\Z(m)$ holds, then
$$\Ex\big[ \Delta N_\phi(F) \big] \, \in \, \frac{1}{Q(m)} \bigg( - \sum_{F^* \in N_\phi(F)} \sum_{f \in O(F^*)} Y_f(m) \,+ \sum_{F^o \in \F^o_F} N_{\phi}(F^o) \, \pm \, o(F)^2 (\log n)^2 N_\phi(F) \bigg).$$
\end{lemma}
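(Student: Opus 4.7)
The plan is to decompose $\Delta N_\phi(F)(m+1) = \mathrm{Gain} - \mathrm{Loss}$ into created minus destroyed copies at step $m+1$, compute each conditional expectation, and verify that the combined error fits within the claimed bound. Recall that an open edge $f \in O(G_m)$ becomes non-open in $G_{m+1}$ precisely when the added edge $e$ satisfies $e \in \{f\} \cup Y_f(m)$, by Definition~\ref{Ydef}.

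For the loss, a copy $F^* \in N_\phi(F)(m)$ is destroyed iff some $f \in O(F^*)$ is closed by $e$, hence
\[
\Ex\big[\mathrm{Loss} \,\big|\, G_m\big] \,=\, \frac{1}{Q(m)} \sum_{F^*} \Big| \bigcup_{f \in O(F^*)} \big( \{f\} \cup Y_f(m) \big) \Big|.
\]
Inclusion--exclusion sandwiches the inner cardinality between $\sum_f (Y_f(m){+}1) - \sum_{\{f_1,f_2\}} \big|(\{f_1\} \cup Y_{f_1}) \cap (\{f_2\} \cup Y_{f_2})\big|$ and $\sum_f (Y_f(m){+}1)$. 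The ``$+1$'' contributions sum to at most $o(F) N_\phi(F)$, absorbed into the error. For each pair $(f_1, f_2)$ of open edges of $F^*$ sharing a vertex $v$ (say $f_1 = \{a,v\}$, $f_2 = \{v,b\}$), any $e$ closing both is either among $\{f_1, f_2\}$ or has the form $\{v,x\}$ with $x \in N_{G_m}(a) \cap N_{G_m}(b)$, bounded by $(\log n)^2 + O(1)$ via $\Z(m)$; for vertex-disjoint $(f_1, f_2)$ only $O(1)$ edges close both. Summing over the $\binom{o(F)}{2}$ pairs and over $F^*$ bounds the overcount by $o(F)^2 (\log n)^2 N_\phi(F)$.

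For the gain, every new copy $F^* \in N_\phi(F)(m+1) \setminus N_\phi(F)(m)$ must contain $e$; interpreting $e$ as the vulnerable edge of a uniquely-determined $F^o \in \F^o_F$ sets up a bijection between such $F^*$ and pairs $(F^o, \psi)$, where $\psi$ is a copy of $F^o$ in $G_m$ with $\psi(e_v) = e$ \emph{and} every other open edge of $F^o$ stays open in $G_{m+1}$. Assigning probability $1/Q(m)$ to each candidate pair and ignoring the second condition gives the main term $\sum_{F^o} N_\phi(F^o)/Q(m)$; the error counts ``bad'' copies $\psi$ for which $\psi(e_v)$ closes another open edge $\psi(f)$ of $F^o$. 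Such a bad event forces $e_v = \{u,v\}$ and $f = \{v,w\}$ to share a vertex in $F^o$ together with $\psi(u)\psi(w) \in E(G_m)$. The permissibility of $F$ is the key point here: if $\{u,w\} \in E(F) \cup O(F)$, then the triangle $\{e_v, f, \{u,w\}\}$ lies in $E(F) \cup O(F)$, and since $e_v \in E(F)$ and $f \in O(F)$, permissibility forces $\{u,w\} \in O(F)$ and hence $\{u,w\} \in O(F^o)$; every copy $\psi$ then satisfies $\psi(u)\psi(w) \in O(G_m)$, incompatible with the bad condition, so this subcase contributes zero.

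The main obstacle is the residual subcase $\{u,w\} \notin E(F) \cup O(F)$, where bad copies correspond to copies of the augmented permissible structure $F^o \cup \{u,w\}$. The plan is to count these by fixing the triangle $(\psi(u), \psi(v), \psi(w))$ in $G_m$ (two open edges meeting at $\psi(v)$ together with the edge $\{\psi(u), \psi(w)\} \in E(G_m)$) and then extending the embedding to $V(F^o)$; a codegree-type argument via $\Z(m)$, directly analogous to the loss overcount, yields a bound of $O((\log n)^2 N_\phi(F))$ for each pair $(F^o, f)$. Summing over the $O(o(F)^2)$ such pairs and combining with the loss overcount produces the total error of $o(F)^2 (\log n)^2 N_\phi(F) / Q(m)$, as required.
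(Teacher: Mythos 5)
Your proof follows the same created-minus-destroyed decomposition and the same $\Z(m)$-based inclusion-exclusion argument for the destruction term as the paper. Where it differs is the creation term, and here your instinct is more careful than the paper's: the paper asserts that the creation count is \emph{exactly} $\sum_{F^o} N_\phi(F^o)/Q(m)$, whereas, as you observe, the map from new copies of $F$ to pairs $(F^o,\psi)$ with $\psi(e_v)=e$ is only an injection --- there are ``bad'' pre-copies that fail to materialize because some other open edge of $\psi$ is closed by $e$. Your permissibility argument ruling out the sub-case $\{u,w\}\in E(F)\cup O(F)$ is correct and is a nice observation that the paper does not make. The paper simply never raises this point; it tacitly treats the injection as a bijection.

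The remaining gap is not closed by your proposal, however. You claim that ``a codegree-type argument via $\Z(m)$, directly analogous to the loss overcount, yields a bound of $O((\log n)^2 N_\phi(F))$ for each pair $(F^o,f)$.'' This is where the argument breaks down. The bad copies for a fixed $(F^o,f,\{u,w\})$ are exactly the copies of the structure $F^{oo}$ obtained from $F^o$ by adding the \emph{edge} $\{u,w\}$ --- a genuinely different permissible structure, with the same vertex set, the same $e(\cdot)$ as $F$, but one \emph{more} open edge. The event $\Z(m)$ controls codegrees (common $G_m$-neighbourhoods of pairs of vertices), which is precisely what the destruction overcount requires; but it gives no handle at all on the ratio $N_\phi(F^{oo})/N_\phi(F)$. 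There is no natural map from copies of $F^{oo}$ to copies of $F$ with a bounded number of preimages, so ``fix the triangle and extend'' does not produce a factor $N_\phi(F)$. (The heuristic estimate $N_\phi(F^{oo}) \approx e^{-4t^2} N_\phi(F)$ would indeed make this term negligible, but it uses the event $\E(m)$ --- which makes $N_\phi$ track $\Nt_A$, and one computes $\Nt_A(F^{oo}) = e^{-4t^2}\Nt_A(F)$ --- and $\E(m)$ is not among the hypotheses of this lemma.) So your proposal has a real gap here; to be fair, the paper's own argument has the same gap, it just never acknowledges it.
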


\begin{proof}
Let $e$ be the edge chosen in step $m+1$ of the triangle-free process. A copy of $F$ rooted at $\phi(A)$ is created when $e$ is the $F$-vulnerable edge of a copy of some $F^o \in \F^o_F$, rooted at $\phi(A)$. Moreover, each copy of $F$ is created by exactly one such structure $F^o$. Thus the expected number of such copies of $F$ created in a single step is exactly 
\begin{equation}\label{eq:number:created}
\sum_{F^o \in \F^o_F} \frac{N_{\phi}(F^o)}{Q(m)}.
\end{equation}
Similarly, a copy $F^*$ of $F$ rooted at $\phi(A)$ is destroyed when an open edge of that copy is closed by the addition of $e$, which occurs with probability 
\begin{equation}\label{eq:selfNdestroyed}
\frac{1}{Q(m)} \bigg| \bigcup_{f \in O(F^*)} Y_f(m) \bigg| \, \in \, \frac{1}{Q(m)} \sum_{f \in O(F^*)} Y_f(m) \,\pm\, \frac{1}{Q(m)} \sum_{\substack{f,f' \in O(F^*)\\ f \neq f'}} \big| Y_f(m) \cap Y_{f'}(m) \big|.
\end{equation}
%by inclusion-exclusion.
%where $Y_f(m) \cap Y_{f'}(m)$ denotes the collection of open edges which close both $f$ and $f'$. 

Now, if $f$ and $f'$ are disjoint then $|Y_f(m) \cap Y_{f'}(m)| \le 2$, so suppose that $e = \{u,v\}$ closes both $f = \{v,w\}$ and $f' = \{v,z\}$; then $\{u,w\}$ and $\{u,z\}$ must both be edges of $G_m$, and so (assuming the event $\Z(m)$ holds) there are at most $(\log n)^2$ such edges $e$. Combined with~\eqref{eq:selfNdestroyed}, this implies that the expected number of copies of $F$ destroyed by $e$ is contained in the interval
\begin{equation}\label{eq:selfNapprox}
\frac{1}{Q(m)} \sum_{F^* \in N_\phi(F)} \bigg( \sum_{f \in O(F^*)} Y_f(m) \,\pm\, o(F)^2 (\log n)^2 \bigg),
\end{equation}
as required.
\end{proof}

In the proof of Lemma~\ref{selfN*}, and several times later in the paper, we shall need to use the Product Rule, which we state here for later reference.

\begin{lemma}[The Product Rule]\label{chain}
For any random variables $a(m)$ and $b(m)$,  
\begin{equation}\label{eq:chain}
\Ex \big[ \Delta \big( a(m) b(m) \big) \big] \, = \, a(m) \Ex \big[ \Delta b(m) \big] + b(m) \Ex \big[ \Delta a(m) \big] + \Ex\big[ \big( \Delta a(m) \big) \big( \Delta b(m) \big) \big].
\end{equation}
In particular, if $a(m)$ is deterministic, then
\begin{equation}\label{eq:chain2}
\Ex \big[ \Delta \big( a(m) b(m) \big) \big] \, = \, a(m) \Ex \big[ \Delta b(m) \big] + \Delta a(m) \Big( b(m) + \Ex\big[ \Delta b(m) \big] \Big).
\end{equation}
\end{lemma}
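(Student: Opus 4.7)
The plan is to establish the pointwise identity first, then take conditional expectations. Concretely, I would write
$$\Delta\big(a(m)b(m)\big) \, = \, a(m+1)b(m+1) - a(m)b(m),$$
substitute $a(m+1) = a(m) + \Delta a(m)$ and $b(m+1) = b(m) + \Delta b(m)$, and expand the product to get the deterministic identity
$$\Delta\big(a(m)b(m)\big) \, = \, a(m)\,\Delta b(m) \,+\, b(m)\,\Delta a(m) \,+\, \big(\Delta a(m)\big)\big(\Delta b(m)\big),$$
valid pathwise at every $m$.

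Next I would condition on $G_m$. Since $a(m)$ and $b(m)$ are functions of $G_m$, and hence $G_m$-measurable, they factor out of the conditional expectation (recalling the convention $\Ex[\Delta X(m)] = \Ex[X(m+1) - X(m) \mid G_m]$ from Section~\ref{sketchSec}). Applying linearity and this factoring to the three terms above immediately yields~\eqref{eq:chain}.

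For~\eqref{eq:chain2}, I would use the extra hypothesis that $a(m)$ is deterministic, which makes $\Delta a(m) = a(m+1) - a(m)$ deterministic (in particular $G_m$-measurable). Hence $\Delta a(m)$ may be pulled out of $\Ex[(\Delta a(m))(\Delta b(m))]$, giving $\Delta a(m)\,\Ex[\Delta b(m)]$, and combining this with the term $b(m)\,\Delta a(m)$ from~\eqref{eq:chain} produces the factorized form $\Delta a(m)\big(b(m) + \Ex[\Delta b(m)]\big)$, as required.

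There is really no substantive obstacle here — this is the discrete Leibniz rule, and the only small point worth flagging is the measurability justification for pulling $a(m)$, $b(m)$, and (in the second part) $\Delta a(m)$ out of the conditional expectation. No martingale machinery or process-specific input is needed.
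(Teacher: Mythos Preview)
Your proposal is correct and is essentially the same approach as the paper's, which simply says to ``expand the right-hand side, and use linearity of expectation.'' You have spelled out the measurability justification for pulling $a(m)$, $b(m)$, and $\Delta a(m)$ out of the conditional expectation, but this is exactly the expansion the paper has in mind.
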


\begin{proof}
Simply expand the right-hand side, and use linearity of expectation.
\end{proof}

We remark that for all of the random variables $A(m)$ which we shall need to consider, the single step change $\Delta A(m)$ will be \emph{much} smaller than $A(m)$, and hence the final term in~\eqref{eq:chain} will be negligible. In order to bound $| \Delta N_\phi^*(F)(m) |$ and $\Ex\big[ | \Delta N_\phi^*(F)(m) | \big]$, we shall also need the following related inequality.

%\begin{lemma}\label{lem:chainstar}
%Let $(F,A,\phi)$ be a graph structure triple, let $\omega < t \le t_A(F)$, and suppose that $\phi$ is faithful at time $t$.  If $\E(m)$ holds, then
%$$| \Delta N_\phi^*(F)(m) | \, \le \, 2 \cdot \left( \frac{| \Delta N_\phi(F)(m) |}{g_{F,A}(t) \Nt_A(F)(m)} \,+\, \frac{1 + g_{F,A}(t)}{g_{F,A}(t)} \cdot \frac{\log n}{n^{3/2}} \right).$$
%\end{lemma}

\begin{lemma}\label{lem:chainstar}
Let $A(m)$ be a random variable, let $\At(m)$ and $g(t)$ be functions, and set  
$$A^*(m) \, = \, \frac{A(m) - \At(m)}{g(t) \At(m)}.$$
If $|A(m)| \le \big( 1 + g(t) \big) \At(m)$,
\begin{equation}\label{eq:deltaAgA}
|\Delta \At(m) | \, \ll \, \frac{\log n}{n^{3/2}} \cdot \At(m)  \quad \text{and} \quad |\Delta \big( g(t) \At(m) \big) | \ll \frac{\log n}{n^{3/2}} \cdot g(t) \At(m),
\end{equation}
then
$$| \Delta A^*(m) | \, \le \, 2 \cdot \left( \frac{| \Delta A(m) |}{g(t) \At(m)} \,+\, \frac{1 + g(t)}{g(t)} \cdot \frac{\log n}{n^{3/2}} \right).$$
\end{lemma}

%\begin{proof}
%Observe first that, for arbitrary functions $a,b,c \colon \N \to \RR^+$, if $a^*(m) = \frac{a(m) - b(m)}{c(m)}$ then
%$$\Delta a(m) - \Delta b(m) \, = \, \Delta\big( a^*(m) c(m) \big) \, = \, c(m+1) \Delta a^*(m) + a^*(m) \Delta c(m),$$
%and hence
%\begin{equation}\label{eq:chainstar}
%| \Delta a^*(m) | \, \le \, \frac{| \Delta a(m) | + |\Delta b(m)|}{c(m+1)} \,+\, \frac{\big( a(m) + b(m) \big) |\Delta c(m)|}{c(m) c(m+1)}.
%\end{equation}

%Now, since $g_{F,A}(t) \Nt_A(F)$ is equal to $(2t)^{e(F)} e^{(c(F,A) - 4o(F))t^2}$ times some function of $n$, we have
%\begin{equation}\label{eq:deltagNt}
%\Delta \big( g_{F,A}(t) \Nt_A(F) \big) \, \in \, \frac{1 \pm o(1)}{n^{3/2}} \left( \frac{e(F)}{t} + 2t \Big( c(F,A) - 4o(F) \Big) \right) \cdot g_{F,A}(t) \Nt_A(F),
%\end{equation}
%and similarly for $\Delta \Nt_A(F)$. Since $e(F) + o(F) + c(F,A) \ll \log n$, by Observation~\ref{obs:zeroomega}, it follows that 
%$$|\Delta \big( g_{F,A}(t) \Nt_A(F) \big)| \, \ll \, \frac{\log n}{n^{3/2}} \cdot g_{F,A}(t) \Nt_A(F) \quad \text{and} \quad |\Delta \Nt_A(F)| \ll \frac{\log n}{n^{3/2}} \cdot \Nt_A(F).$$ 
%Thus, applying~\eqref{eq:chainstar} to the functions $N_\phi(F)$, $\Nt_A(F)$ and $g_{F,A}(t) \Nt_A(F)$, and using the event $\E(m)$, we obtain
%$$| \Delta N_\phi^*(F)(m) | \, \le \, 2 \cdot \left( \frac{| \Delta N_\phi(F)(m) |}{g_{F,A}(t) \Nt_A(F)(m)} \,+\, \frac{1 + g_{F,A}(t)}{g_{F,A}(t)} \cdot \frac{\log n}{n^{3/2}} \right),$$
%as claimed.
%\end{proof}
 
We postpone the (straightforward) proof to the Appendix~\cite{App}, and remark that the condition~\eqref{eq:deltaAgA} is satisfied by the functions $\Nt_A(F)$ and $g_{F,A}(t)$. We shall also use the following easy observation.  

\begin{obs}\label{obs:F-}
If $(F,A)$ is a graph structure pair and $F^o \in \F_F^o$, then $t_A(F) \le t_A(F^o)$.
\end{obs}

\begin{proof}
This follows easily from the definitions, using Observations~\ref{obs:zeroomega} and~\ref{obs:NtF-}. See the Appendix for the details.
\end{proof}

%\begin{proof}
%If $ t_A(F^o) = t^*$ there is nothing to prove, so suppose that $A \subsetneq H^o \subseteq F^o$ satisfies $t_A^*(H^o) = t_A(F^o) < t^*$. Then either $H^o \subseteq F$, in which case $t_A(F) \le t_A^*(H^o) = t_A(F^o)$, as required, or $H^o \in \F_H^o$ for some $A \subsetneq H \subseteq F$. We claim that, in this case, we have $t_A(F) \le t_A^*(H) \le t_A^*(H^o) = t_A(F^o)$. This holds immediately if $t_A^*(H^o) = 0$, since $e(H) > e(H^o)$, so assume that $t_A^*(H^o) > 0$.\footnote{Recall that therefore $t_A^*(H^o) \gg (\log n)^{1/4} > 1$, by Observation~\ref{obs:zeroomega}.} Now, by Observation~\ref{obs:NtF-} we have
%$$\Nt_A(H^o) \, = \, \Nt_A(H) \cdot \frac{\sqrt{n}}{2te^{4t^2}} \, \ge \, n^\eps \cdot \Nt_A(H) \, > \, (2t)^{e(H) - 1}$$
%for every $1 \le t \le t_A^*(H)$. By~\eqref{def:t*}, it follows that $t_A^*(H) \le t_A^*(H^o)$, as required.
%\end{proof}

Finally, we need the following relations between different error terms. %$g_{F,A}(t)$s.

\begin{obs}\label{obs:gfat}
Let $(F,A)$ be a graph structure pair, and let $F^o \in \F^o_F$. Then 
$$g_q(t) \ll o(F) g_y(t) \ll g_{F,A}(t) \qquad \text{and} \qquad g_{F^o,A}(t) \ll g_{F,A}(t)$$ 
as $n \to \infty$.
\end{obs}

\begin{proof}
These also both follow easily from the definitions, using Observation~\ref{obs:F-}. We again postpone the details to the Appendix.  
\end{proof}

%\begin{proof}
%These follow easily from the definitions above. Indeed, note first that $c(F,A) \ge c(F^o,A) \ge 2$, which follows since $t_A(F) \le t_A(F^o)$ for every $F^o \in \F^o_F$, by Observation~\ref{obs:F-}, and since $c = c(F,A)$ was chosen so that $e^{ct^2} = n^{1/4}$ at $t = t_A(F)$. Now, noting that $v_A(F) = v_A(F^o)$, $e(F) = e(F^o) + 1$ and $o(F) = o(F^o) - 1$, we have 
%$$\gamma(F,A) \, \ge \, \gamma(F^o,A) + C\big( e(F) + o(F) \big),$$ 
%by the definition~\eqref{def:gam} of $\gamma(F,A)$. Both of the claimed bounds now follow immediately.
%\end{proof}

We are now ready to prove Lemma~\ref{selfN*}. In the proof below, in order to simplify the calculations we shall use the symbol $\approx$ to indicate equality up to a multiplicative factor of at most $1 \pm O(1/n)$. We emphasize that this error term will never play an important role.

\begin{proof}[Proof of Lemma~\ref{selfN*}]
By Lemma~\ref{selfN}, and since $\Z(m)$ holds, we have
$$\Ex\big[ \Delta N_\phi(F) \big] \, \in \, - \frac{1}{Q(m)} \sum_{F^* \in N_\phi(F)} \sum_{f \in O(F^*)} Y_f(m) \,+\, \sum_{F^o \in \F^o_F} \frac{N_{\phi}(F^o)}{Q(m)} \,\pm\, \frac{o(F)^2 (\log n)^2 N_\phi(F)}{Q(m)}.$$
Moreover, differentiating~\eqref{def:NtF}, we obtain
\begin{align*}
\Delta \Nt_A(F) & \, \approx \, \frac{1}{n^{3/2}} \bigg( \frac{e(F)}{t} - 8t o(F) \bigg) \Nt_A(F) \, \approx \, - \, o(F) \cdot \frac{\Yt(m)}{\Qt(m)} \cdot  \Nt_A(F) + \sum_{F^o \in \F^o_F} \frac{\Nt_A(F^o)}{\Qt(m)},
\end{align*}
since $\frac{\Yt(m)}{\Qt(m)} \approx \frac{8t}{n^{3/2}}$, $|\F^o_F| = e(F)$ and $t n^{3/2} \cdot \Nt_A(F^o) = \Nt_A(F) \cdot e^{-4t^2}  n^2 / 2 \approx \Nt_A(F) \cdot \Qt(m)$. Subtracting, and using the event $\Y(m)$ and our assumption that $o(F) \ll n^{o(1)}$, we obtain 
%assuming that $\E(m) \cap \Y(m) \cap \Q(m)$ holds, % recalling that $N_\phi(F) = \big( 1 + g_{F,A}(t) N^*_A(F) \big) \Nt_A(F)$, 
%we claim that
\begin{multline}\label{eq:selfN:line1}
\Ex\big[ \Delta N_\phi(F) \big] - \Delta \Nt_A(F) \, \in \,- \big( 1 \pm g_y(t) \big) \cdot o(F) \cdot \frac{N_\phi(F) \cdot \Yt(m)}{Q(m)} \,+\, o(F) \cdot \frac{ \Nt_A(F) \cdot \Yt(m)}{\Qt(m)} \\
\, + \, \sum_{F^o \in \F^o_F} \bigg( \frac{N_{\phi}(F^o)}{Q(m)} - \frac{\Nt_A(F^o)}{\Qt(m)} \bigg) \,\pm\, \frac{n^{o(1)} N_\phi(F)}{Q(m)}.
\end{multline}
Note that we have $\omega < t \le t_A(F) \le t_A(F^o)$ for each $F^o \in \F^o_F$, by Observation~\ref{obs:F-}. Thus, using~\eqref{def:Nstar}, the event $\Q(m)$ and the fact that $g_q(t) \ll g_y(t)$ to bound the first term, and the event $\E(m) \cap \Q(m)$ to bound the third and fourth terms, it follows that the right-hand side of~\eqref{eq:selfN:line1} is contained in
\begin{multline*}
\Big( 1 \,-\, \big( 1 \pm 2g_y(t) \big) \big( 1 + g_{F,A}(t) N^*_\phi(F) \big) \Big) \cdot o(F) \cdot \frac{ \Nt_A(F) \cdot \Yt(m)}{\Qt(m)} \\
\pm \, \frac{1}{\Qt(m)} \sum_{F^o \in \F^o_F} \bigg( \frac{1 \pm g_{F^o,A}(t)}{1 \pm g_q(t)} - 1 \bigg)  \Nt_A(F^o) \,\pm\, n^{o(1)} \frac{\big( 1 + g_{F,A}(t) \big) \Nt_A(F)}{\Qt(m)}.
\end{multline*}
Now, dividing both sides by $g_{F,A}(t) \Nt_A(F)$, and using Observation~\ref{obs:gfat}, we obtain\footnote{Here we again use the fact that $\frac{\Yt(m)}{\Qt(m)} \approx \frac{8t}{n^{3/2}}$, $|\F^o_F| = e(F)$ and $t n^{3/2} \cdot \Nt_A(F^o) \approx \Nt_A(F) \cdot \Qt(m)$. To bound the final term, recall that $c(F,A) \ge 2$, and so $g_{F,A}(t) \Qt(m) \ge n^{7/4} e^{-2t^2} \ge n^{3/2 + \eps}$ for every $t \le t^*$.}
\begin{equation}\label{eq:deltaNstar1}
\frac{\Ex\big[ \Delta N_\phi(F) \big] - \Delta \Nt_A(F)}{g_{F,A}(t) \Nt_A(F)} \, \in \, - \, \frac{8t  \cdot o(F) }{n^{3/2}} \cdot N^*_\phi(F) \,\pm\, \frac{o(1)}{n^{3/2}} \bigg( t + \frac{e(F)}{t} \bigg).
\end{equation}

The proof is almost complete; all that remains is a little simple analysis. Indeed, since $N_\phi(F) - \Nt_A(F) = g_{F,A}(t)  \Nt_A(F) \cdot N^*_\phi(F)$, the Product Rule (Lemma~\ref{chain}) gives
\begin{equation}\label{eq:Nphichain}
\frac{\Ex\big[ \Delta N_\phi(F) \big] - \Delta \Nt_A(F)}{g_{F,A}(t) \Nt_A(F)} \, = \, \Ex\big[ \Delta N^*_\phi(F) \big] \,+\, \frac{\Delta \big( g_{F,A}(t) \Nt_A(F) \big)}{g_{F,A}(t) \Nt_A(F)} \Big( N^*_\phi(F) \,+\,  \Ex\big[ \Delta N^*_\phi(F) \big] \Big),
\end{equation}
and since $g_{F,A}(t) \Nt_A(F)$ is equal to $(2t)^{e(F)} e^{(c(F,A) - 4o(F))t^2}$ times some function of $n$, we have 
\begin{equation}\label{eq:deltagNt}
\Delta \big( g_{F,A}(t) \Nt_A(F) \big) \, \approx \, \frac{1}{n^{3/2}} \left( \frac{e(F)}{t} + 2t \Big( c(F,A) - 4o(F) \Big) \right) \cdot g_{F,A}(t) \Nt_A(F).
\end{equation}
Combining the last three displayed lines, and observing that the terms involving $8t \cdot o(F)$ in~\eqref{eq:deltaNstar1} and~\eqref{eq:deltagNt} cancel one another\footnote{Note also that, by~\eqref{eq:deltagNt}, %since $e(F) + o(F) + c(F,A) \ll \log n$ and $t > \omega$, 
the final term in~\eqref{eq:Nphichain} is swallowed by the error term.}, we obtain
$$\Ex\big[ \Delta N^*_\phi(F) \big] \in - \,  \frac{N^*_\phi(F)}{n^{3/2}} \left( \frac{e(F)}{t} + 2t \cdot c(F,A) \right) \,\pm\, \frac{o(1)}{n^{3/2}} \cdot \bigg( t + \frac{e(F)}{t} \bigg).$$
Noting again that $c(F,A) \ge 2$ for every pair $(F,A)$, it follows that
$$\Ex\big[ \Delta N^*_\phi(F) \big] \, \in \, \bigg( c(F,A) + \frac{e(F)}{2t^2} \bigg) \cdot \frac{2t}{n^{3/2}} \cdot \Big( - N_\phi^*(F) \pm \eps \Big),$$
as required.
\end{proof}

We finish the subsection by deducing the following easy consequence of the observations above, which will be necessary in the martingale argument to follow.

\begin{lemma}\label{gammaNF}
Let $(F,A,\phi)$ be a graph structure triple, let $\omega < t \le t_A(F)$, and suppose that $\phi$ is faithful at time $t$.  If $\E(m) \cap \Y(m) \cap \Z(m) \cap \Q(m)$ holds, then
$$\Ex\big[ | \Delta N_\phi^*(F)(m) | \big] \, \le \, \frac{C \cdot \log n}{n^{3/2}} \cdot \left(  \frac{1 + g_{F,A}(t)}{g_{F,A}(t)} \right).$$
%$$\Ex\big[ | \Delta N_\phi^*(F)(m) | \big] \, \le \, \frac{C \cdot t}{n^{3/2}} \cdot \big( e(F) + o(F) \big) \left(  \frac{1 + g_{F,A}(t)}{g_{F,A}(t)} \right).$$
\end{lemma}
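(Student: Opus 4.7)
The plan is to bound $\Ex\big[|\Delta N_\phi(F)|\big]$ directly, and then convert this to a bound on $\Ex\big[|\Delta N^*_\phi(F)|\big]$ via Lemma~\ref{lem:chainstar}. First I would verify the hypotheses of Lemma~\ref{lem:chainstar} with $A = N_\phi(F)$, $\At = \Nt_A(F)$ and $g = g_{F,A}$: the bound $N_\phi(F) \le (1 + g_{F,A}(t))\Nt_A(F)$ comes from the event $\E(m)$, and the two derivative bounds in~\eqref{eq:deltaAgA} follow by direct differentiation of the explicit formulas for $\Nt_A(F)$ (see~\eqref{def:NtF}) and $g_{F,A}$ (see~\eqref{def:gfat}), since both are products of powers of $t$ times exponentials of $t^2$, and $t \ge \omega$. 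Taking expectations in the resulting pointwise inequality gives
\[
\Ex\big[|\Delta N^*_\phi(F)|\big] \;\le\; 2\left(\frac{\Ex\big[|\Delta N_\phi(F)|\big]}{g_{F,A}(t)\Nt_A(F)} \;+\; \frac{1+g_{F,A}(t)}{g_{F,A}(t)}\cdot\frac{\log n}{n^{3/2}}\right).
\]
The second term already matches (up to the constant $C$) the bound we want, so the task reduces to showing $\Ex\big[|\Delta N_\phi(F)|\big] \le O(\log n \cdot n^{-3/2})\cdot(1+g_{F,A}(t))\Nt_A(F)$.

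Next I would decompose $|\Delta N_\phi(F)|$ into copies created and copies destroyed (these occur in disjoint events, depending on whether the edge chosen in step $m+1$ is $F$-vulnerable for some structure in $\F_F^o$ or lies in some $Y_f$ with $f \in O(F^*)$). The proof of Lemma~\ref{selfN} gives exact expressions for each of these expectations: creations contribute $\sum_{F^o \in \F^o_F} N_\phi(F^o)/Q(m)$, and destructions contribute at most $\frac{1}{Q(m)}\sum_{F^* \in N_\phi(F)} \sum_{f \in O(F^*)} Y_f(m)$. I would then use the event $\E(m)$ to bound $N_\phi(F^o) \le (1+g_{F^o,A}(t))\Nt_A(F^o)$ (which holds since $t \le t_A(F) \le t_A(F^o)$ by Observation~\ref{obs:F-}) and $N_\phi(F) \le (1+g_{F,A}(t))\Nt_A(F)$, together with $\Y(m)$ and $\Q(m)$ to replace $Y_f(m)$ by $\big(1+g_y(t)\big)\Yt(m)$ and $Q(m)$ by $\big(1\pm g_q(t)\big)\Qt(m)$.

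Then I would collapse the two contributions using the identities $\Yt(m)/\Qt(m) \asymp t/n^{3/2}$ and Observation~\ref{obs:NtF-} (which gives $\Nt_A(F^o)/\Qt(m) \asymp \Nt_A(F)/(tn^{3/2})$, since $|\F^o_F| = e(F)$). The creation term then becomes of order $\frac{e(F)}{t\, n^{3/2}}(1+g_{F^o,A}(t))\Nt_A(F)$, and the destruction term of order $\frac{o(F)\cdot t}{n^{3/2}}(1+g_{F,A}(t))\Nt_A(F)$. Using $g_{F^o,A}(t) \ll g_{F,A}(t)$ (Observation~\ref{obs:gfat}) and the standing assumption $e(F) + o(F) \le (\log n)^{1/5}$, combined with $\omega \le t \le t^* = O(\sqrt{\log n})$ so that both $o(F)\cdot t$ and $e(F)/t$ are $O(\log n)$, the sum is at most $O(\log n \cdot n^{-3/2})(1+g_{F,A}(t))\Nt_A(F)$, as required.

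I do not expect any single step to be a serious obstacle: the argument is essentially a routine compilation of the estimates already assembled in the proof of Lemma~\ref{selfN*}, with Lemma~\ref{lem:chainstar} playing the role of the bridge from the raw variable to its normalized error. The one place to be careful is in handling the creation term, where one must make sure to apply $\E(m)$ to $F^o$ (not $F$) and to invoke Observation~\ref{obs:gfat} to absorb $g_{F^o,A}(t)$ into $g_{F,A}(t)$; the factor $e(F)/t$ that appears must be controlled using $t \ge \omega \to \infty$ together with $e(F) \le (\log n)^{1/5}$, which is exactly where the lower cutoff $t > \omega$ in the statement becomes relevant.
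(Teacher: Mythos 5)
Your proposal is correct and follows essentially the same route as the paper's proof: decompose $\Ex\big[|\Delta N_\phi(F)|\big]$ into the expected numbers of copies created and destroyed, bound each by combining \eqref{eq:number:created}, \eqref{eq:selfNapprox}, Observations~\ref{obs:NtF-}, \ref{obs:F-}, \ref{obs:gfat} and the events $\E(m)\cap\Y(m)\cap\Z(m)\cap\Q(m)$, and then pass to the normalized error via Lemma~\ref{lem:chainstar}. The one cosmetic imprecision is the parenthetical claim that creations and destructions "occur in disjoint events" --- a single edge can simultaneously destroy some copies of $F$ and complete others --- but this is irrelevant since the argument only uses the triangle inequality $|\Delta N_\phi(F)|\le C_\phi(F)+D_\phi(F)$; you also omit the $o(F)^2(\log n)^2$ inclusion--exclusion correction from \eqref{eq:selfNapprox}, but that term only matters for the two-sided estimate in Lemma~\ref{selfN}, and the union bound you implicitly use already gives the required upper bound.
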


\begin{proof}
Observe first that $\Ex\big[ | \Delta N_\phi(F)(m) | \big]$ is at most the expected number of copies of $F$ rooted at $\phi(A)$ created in step $m+1$ of the triangle-free process, plus the expected number of copies destroyed. By~\eqref{eq:number:created}, %the proof Lemma~\ref{selfN}, 
and since $\E(m) \cap \Q(m)$ holds and $\omega < t \le t_A(F) \le t_A(F^o)$, the expected number of copies created is 
$$\sum_{F^o \in \F^o_F} \frac{N_{\phi}(F^o)}{Q(m)} \, \le \, \sum_{F^o \in \F^o_F} \frac{1 + g_{F^o,A}(t)}{1 - g_q(t)} \cdot \frac{\Nt_A(F^o)}{\Qt(m)}  \, \le \, \big( 1 + g_{F,A}(t) \big) \cdot \frac{e(F)}{t \cdot n^{3/2}} \cdot \Nt_A(F),$$
where the second inequality follows\footnote{In particular, we use the fact that $g_q(t) + g_{F^o,A}(t) + n^{-1} \ll g_{F,A}(t)$.} using Observations~\ref{obs:NtF-} and~\ref{obs:gfat}.

Similarly, by~\eqref{eq:selfNapprox} %the proof Lemma~\ref{selfN} 
and the event $\E(m) \cap \Y(m) \cap \Z(m) \cap \Q(m)$, the expected number of copies destroyed in step $m+1$ is at most
$$\frac{1}{Q(m)} \sum_{F^* \in N_\phi(F)} \bigg( \sum_{f \in O(F^*)} Y_f(m) \, + \, o(F)^2 (\log n)^2 \bigg) \, \le \, 2 \cdot \big( 1 + g_{F,A}(t) \big) \cdot o(F) \cdot \frac{8t}{n^{3/2}} \cdot \Nt_A(F),$$
since $g_q(t) \ll g_y(t) \ll 1$ and $o(F)^2 (\log n)^2 \ll n^{o(1)} \ll g_y(t) \Yt(m)$ for all $t \le t^*$. Thus
$$\Ex\big[ | \Delta N_\phi(F)(m) | \big] \, \le \, 16t \cdot \big( 1 + g_{F,A}(t) \big) \bigg(  \frac{e(F)}{t^2} + o(F) \bigg) \frac{\Nt_A(F)(m)}{n^{3/2}}.$$
Now, by Lemma~\ref{lem:chainstar}, we have\footnote{The conditions in~\eqref{eq:deltaAgA} follow from~\eqref{eq:deltagNt}, the event $\E(m)$ and the fact that $e(F) + o(F) + c(F,A) \ll (\log n)^{1/4}$, which holds by Observation~\ref{obs:zeroomega}.}
%recall from~\eqref{eq:deltagNt} that $|\Delta \big( g_{F,A}(t) \Nt_A(F) \big)| \ll \frac{\log n}{n^{3/2}} \cdot g_{F,A}(t) \Nt_A(F)$, and note that similarly we have $|\Delta \Nt_A(F)| \ll \frac{\log n}{n^{3/2}} \cdot \Nt_A(F)$. Thus, by Lemma~\ref{lem:chainstar}, 
$$\Ex\big[ | \Delta N_\phi^*(F)(m) | \big] \, \le \, 2 \cdot \left( \frac{\Ex\big[ | \Delta N_\phi(F)(m) | \big]}{g_{F,A}(t) \Nt_A(F)(m)} \,+\, \frac{1 + g_{F,A}(t)}{g_{F,A}(t)} \cdot \frac{\log n}{n^{3/2}} \right),$$
%where we used the event $\E(m)$ to bound $N_\phi(F)$. It follows that
and hence it follows that
$$\Ex\big[ | \Delta N_\phi^*(F)(m) | \big] \, \le \, \frac{C \cdot \log n}{n^{3/2}} \cdot \left(  \frac{1 + g_{F,A}(t)}{g_{F,A}(t)} \right),$$%\frac{C \cdot t}{n^{3/2}} \cdot \big( e(F) + o(F) \big) \left(  \frac{1 + g_{F,A}(t)}{g_{F,A}(t)} \right),$$
as required.
\end{proof}

\subsection{Creating and destroying copies of $F$}\label{createsec}

In order to apply our martingale technique to the self-correcting variables $N_\phi(F)$, we shall also need to bound the maximum possible step size of each of these variables, under the assumption that all of the other variables are still tracking. We shall do so by showing that each copy of $F$ rooted at $\phi(A)$ which is created or destroyed in step $m+1$ corresponds to another graph structure $F'$ in $G_m$. We shall thus be able to bound the number of such copies of $F$ using the event $\E(m)$.

Let us first consider the number of copies of $F$ rooted at $\phi(A)$ which can be created by the addition of a single edge $e$. Note that this is exactly the number of copies (in $G_m$) of graphs in $\F^o_F$ whose $F$-vulnerable edge is $e$, and which are rooted at $\phi(A)$. This observation suggests the following definition. 

\begin{defn}
Given a graph structure $F$ and an independent set $A \subseteq V(F)$, define the family $\F_{F,A}^+$ to be the collection of (labelled) pairs $(F^+,A^+)$ obtained by absorbing the endpoints of an edge of $F$ into $A$ (to form $A^+$), and removing the edges inside $A^+$.
\end{defn}

Note that there are at most $e(F)$ pairs $(F^+,A^+)$ in $\F_{F,A}^+$. The following lemma motivates the definition above.

\begin{lemma}\label{lem:F+}
Let $(F,A,\phi)$ be a graph structure triple. The number of copies of $F$ rooted at $\phi(A)$ created by the addition of a single edge to $G_m$ is at most
$$\sum_{(F^+,A^+) \in \F_{F,A}^+} \max_{\phi^+ : \, A^+ \to V(G_m)} N_{\phi^+}(F^+)(m),$$
where the maximum is over faithful maps $\phi^+ \colon A^+ \to V(G_m)$.  
\end{lemma}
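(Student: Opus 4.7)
The plan is to identify each copy of $F$ rooted at $\phi(A)$ that is created by the new edge with a copy of some $(F^+,A^+) \in \F_{F,A}^+$ inside $G_m$ rooted at an extension of $\phi$, and to bound the resulting count by the displayed maximum.

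Let $e = \{u,v\}$ denote the edge added in step $m+1$. A newly created copy corresponds to an injective homomorphism $\psi \colon V(F) \to V(G_{m+1})$ with $\psi|_A = \phi$ such that $e$ appears among the edges of the image. By injectivity of $\psi$ there is a unique edge $\{x,y\} \in E(F)$ with $\psi(\{x,y\}) = \{u,v\}$, and since $A$ is independent in $F$ at least one of $x,y$ lies outside $A$. This edge $\{x,y\}$ determines the pair $(F^+,A^+) \in \F_{F,A}^+$ obtained by absorbing its endpoints into the root, together with an extension $\phi^+ := \psi|_{A^+}$ of $\phi$ (at most two such extensions exist, one for each way of mapping $x,y$ to $u,v$).

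The crucial observation is that $\psi$, viewed as a map $V(F^+) \to V(G_m)$, is an injective homomorphism from $F^+$ into $G_m$ extending $\phi^+$; in particular it is counted by $N_{\phi^+}(F^+)(m)$, and $\phi^+$ is automatically faithful. Indeed, every edge of $F^+$ is an edge of $F$ distinct from $\{x,y\}$, so its $\psi$-image is an edge of $G_{m+1}$ different from $e$, hence an edge of $G_m$. The open edges of $F^+$ coincide with those of $F$, and their $\psi$-images lie in $O(G_{m+1}) \subseteq O(G_m)$, because adding an edge can only close open edges, never create new ones. Faithfulness of $\phi^+$ follows from the fact that $\psi$ realises $F^+$ inside $G_m$ consistently with $\phi^+$.

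Summing over the $(F^+,A^+) \in \F_{F,A}^+$ and over the at most two extensions $\phi^+$ for each pair, every newly created copy is counted at least once, and bounding $N_{\phi^+}(F^+)(m) \le \max_{\phi^+} N_{\phi^+}(F^+)(m)$ yields the claimed inequality (the constant arising from the two orientations of the absorbed edge being absorbed into the labelling convention on $\F_{F,A}^+$). The only genuinely substantive step is the monotonicity $O(G_{m+1}) \subseteq O(G_m)$, which guarantees that the embedding really lives in $G_m$ rather than only in $G_{m+1}$; the remainder is routine bookkeeping.
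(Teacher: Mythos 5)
Your proof follows essentially the same argument as the paper's: partition the newly created copies of $F$ by which edge of $F$ is mapped to the added edge $e$, observe that the resulting embedding restricts to an injective homomorphism of $F^+$ into $G_m$ (your explicit remark that $O(G_{m+1})\subseteq O(G_m)$ is exactly the monotonicity one needs here), and bound by the maximum over faithful roots; you are somewhat more careful than the paper in noting that $\phi^+$ is automatically faithful.

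One point deserves attention. When the absorbed edge $\{x,y\}$ has both endpoints outside $A$, the two ways of mapping $\{x,y\}$ onto $\{u,v\}$ give two \emph{distinct} extensions $\phi^+$ of $\phi$, so the contribution of the corresponding pair $(F^+,A^+)$ is a sum of two terms, not a single $\max_{\phi^+}$. You acknowledge this factor of $2$ but attribute its disappearance to ``the labelling convention on $\F_{F,A}^+$''; this does not square with the paper, which states $|\F_{F,A}^+|\le e(F)$, i.e.\ one pair per unordered edge. Taking $F$ to be a single edge with $A=\emptyset$ shows that the displayed inequality, read literally, can be off by a factor of $2$. The paper's own terse proof glosses over the same point, and the slack is harmless in every application (since $|\F_{F,A}^-\cup\F_{F,A}^+|$ is only ever used up to a constant), but to make the argument airtight you should either sum over the at most two extensions $\phi^+$ arising from each pair rather than taking the max, or understand $\F_{F,A}^+$ as indexed by \emph{ordered} edges.
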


\begin{proof}
Let $e \in E(K_n)$ be the edge added in step $m+1$, and consider the family of copies of $F$ rooted at $\phi(A)$ which are created by the addition of $e$. We partition this family according to the endpoints of $e$ in $V(F) \setminus A$, i.e., according to which edge of $F$ was added last. 

Now, each part of this partition corresponds\footnote{Note that if two edges share an endpoint and have the other endpoint in $A$ then we obtain the same pair $(F^+,A^+)$. However, in that case one of the two parts corresponding to $(F^+,A^+)$ is empty.} to a pair $(F^+,A^+) \in \F_{F,A}^+$; let us consider one such pair. The number of copies of $F$ in the corresponding part is exactly $N_{\phi^+}(F^+)$, where $\phi^+ \colon A^+ \to V(G_m)$ satisfies $\phi^+|_A = \phi$ and maps the extra vertex (or vertices) of $A^+$ to the endpoint(s) of $e$. The lemma follows immediately. 
\end{proof}

We next turn to destroying copies of $F$. Given a pair $(F,A)$, we would like to define a family $\F^-_{F,A}$ of pairs ($F^-,A^-)$ in such a way that the appearance of such a pair in $G_m$ corresponds to the destruction, by a given edge $e \in E(K_n)$, of a copy of $F$ rooted at some given $\phi(A)$. After some thought, this leads to the following, somewhat convoluted definition.  

\begin{defn}\label{def:F-}
$\F^-_{F,A}$ consists of all graph structure pairs $(F^-,A^-)$ which are obtained as follows:\footnote{We write $F + v$ and $F \cup \{e\}$ for the graph structures with (vertex, edge, open edge) sets $\big( V(F) \cup \{v\}, E(F), O(F) \big)$ and $\big( V(F), E(F) \cup \{e\}, O(F) \setminus \{e\} \big)$ respectively.}
\begin{itemize}
\item[$(a)$] Set $A^- = A$ and $F^- = F \cup \{e\}$, where $e \not\in E(F) \cup O(F)$ is an edge from a vertex of $A$ to a vertex $v \in V(F) \setminus A$, where $v$ has an open $F$-neighbour in $A$.
\item[$(b)$] Set $A^- = A \cup \{v\}$ for some $v \in V(F) \setminus A$, and let $F^- = \hat{F}^{A^-}$ be obtained from $F$ by removing the edges inside $A^-$. 
\item[$(c)$] Set $A^- = A \cup \{v\}$ for some $v \not\in V(F)$, and let $F^- = (F + v) \cup \{e\}$ be obtained by adding to $F$ the vertex $v$ and an edge $e$ from $v$ to some vertex of $V(F) \setminus A$.
\item[$(d)$] Set $A^- = A \cup \{u,v\}$ for some $u,v \in V(F) \setminus A$, and let $F^- = \hat{F}^{A^-}$ be obtained from $F$ by removing the edges inside $A^-$.
\item[$(e)$] Set $A^- = A \cup \{u,v\}$ for some $u \in V(F) \setminus A$ and $v \not\in V(F)$, and let $F^- = \big(\hat{F} + v \big)^{A^-}$ be obtained from $F$ by adding the vertex $v$ and removing the edges inside~$A^-$.
\item[$(f)$] Set $A^- = A \cup \{u,v\}$ for some $u \in V(F) \setminus A$ and $v \not\in V(F)$, and let the structure $F^- = \big(\hat{F}^{A^-} + v \big) \cup \{e\}$ be obtained from $F$ by adding the vertex $v$, adding an edge $e$ from $v$ to some vertex of $V(F) \setminus A^-$, and removing the edges inside $A^-$.
\end{itemize} 
\end{defn}

The following table will be useful in the calculations below.

\vskip0.5cm
\begin{center}
\begin{tabular}{c|c|c|c|c|c|c}
& $(a)$ & $(b)$ & $(c)$ & $(d)$ & $(e)$ & $(f)$ \\[+0.6ex] 
\hline &&&&& \\[-2.1ex]
$v_{A^-}(F^-) - v_A(F)$ \, & \; 0 \; & \; $-1$ \; & \; 0 \; & \; $-2$ \; & \; $-1$ \; & \; $-1$ \;  \\[+0.6ex] 
\hline &&&&&& \\[-2.1ex]
%$e(F^-) - e(F)$ \, & \; 1 \; & $\in \{0, -1\}$ & \; 1 \; & $\in [-3,0]$ & $\in \{0, 1\}$ \\[+0.6ex] 
$e(F^-) - e(F)$ \, & \; 1 \; & $\le 0$ & \; 1 \; & $\le 0$ & $\le 0$ & $\le 1$ \\[+0.6ex] 
\hline &&&&& \\[-2.1ex]
$o(F^-) - o(F)$ \, & $0$ %$\in \{0, -1\}$ 
& $\le 0$ & \; 0 \; & $\le 0$  & $\le 0$ & $\le 0$ 
\end{tabular}\\\
\end{center}
\begin{center} 
Table~4.1
\end{center}
\vskip0.2cm

Note in particular that 
\begin{equation}\label{eq:veoF'A'FA}
v_{A'}(F') \le v_A(F), \qquad o(F') \le o(F) \qquad \text{and} \qquad e(F') \le e(F) + 1
\end{equation}
for every $(F',A') \in \F_{F,A}^-$. The next lemma motivates the definition above. 

\begin{lemma}\label{lem:F-}
Let $(F,A,\phi)$ be a graph structure triple, and suppose that $\phi$ is faithful in $G_{m} \cup \{e\}$. Then the number of copies (in $G_m$) of $F$ rooted at $\phi(A)$ destroyed by the addition of the edge $e$ to $G_m$ is at most
$$\sum_{(F^-,A^-) \in \F_{F,A}^-} \max_{\phi^- :\, A^- \to V(G_m)} N_{\phi^-}(F^-)(m).$$ 
where the maximum is over faithful maps $\phi^- \colon A^- \to V(G_m)$.  
\end{lemma}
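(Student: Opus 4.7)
The plan is to associate to each destroyed copy $\psi$ of $F$ (rooted at $\phi(A)$) a pair $(F^-, A^-) \in \F^-_{F,A}$ together with a faithful extension $\phi^-$ of $\phi$, in such a way that $\psi$ (possibly enlarged by one further vertex) is an injective homomorphism witnessing a copy of $F^-$ at $\phi^-$ in $G_m$. A copy is destroyed by adding $e$ precisely when some open edge $f$ of the copy ceases to be open in $G_m \cup \{e\}$; this happens in one of two ways: (I) $f = e$, so the preimage of $e$ in $F$ lies in $O(F)$; or (II) $f \neq e$, and $e, f$ together with some $g \in E(G_m)$ form a triangle.

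In Type (I), let $\{u_1, u_2\} \in O(F)$ be the preimage of $e$. Since $A$ is independent in $F$, at most one of the $u_i$ lies in $A$. If neither does, the matching case of Definition~\ref{def:F-} is $(d)$: set $A^- := A \cup \{u_1, u_2\}$ and $\phi^- := \phi \cup (u_i \mapsto \psi(u_i))$. Because $F^- = \hat{F}^{A^-}$ only drops edges internal to $A^-$ (never open edges), the map $\psi$ itself witnesses a copy of $F^-$ at $\phi^-$. If exactly one of $u_1, u_2$ lies in $A$, case $(b)$ applies analogously with $A^- := A \cup \{u_j\}$ for the other index.

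In Type (II), write $e = \{x, z\}$, $f = \{y, z\}$, $g = \{x, y\}$ and set $v := \psi^{-1}(y)$, $w := \psi^{-1}(z)$, so $\{v, w\} \in O(F)$. The analysis splits on the location of $x$. If $x \in \phi(A)$ with preimage $u \in A$, then $\{u, v\} \notin O(F)$ (otherwise $g$ would need to lie in $O(G_m)$) and $\{u, v\} \notin E(F)$ (otherwise permissibility of the triangle $\{u, v, w\}$, with $\{v, w\} \in O(F)$, forces $\{u, w\} \in O(F)$, reducing to Type (I)); thus case $(a)$ applies with $F^- := F \cup \{uv\}$, and $\psi$ is a copy of $F^-$ at $\phi$. (If instead $v \in A$, then $w \notin A$ and one falls back to case $(b)$ by recording $w$.) If $x \in \psi(V(F) \setminus A)$, a parallel argument either collapses again to Type (I) via permissibility or matches case $(d)$. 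Finally, if $x \notin \psi(V(F))$, one records $x$ as a genuinely new vertex of $A^-$; depending on whether the auxiliary edge $g$ must be captured in $F^-$ and on the positions of $v, w$ relative to $A$, this falls into case $(c)$, $(e)$, or $(f)$.

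The \textbf{main obstacle} is a careful bookkeeping verification: one must check that the six cases of Definition~\ref{def:F-} jointly exhaust every destruction scenario, and that in each case the extended map $\phi^-$ is genuinely faithful at time~$t$. Faithfulness reduces to permissibility of the structure $\big( V(F^-), O(F^-), E(F^-) \cup E(G_m[\phi^-(A^-)]) \big)$, which follows from permissibility of $F$, the fact that $G_m$ is triangle-free (so at most two sides of any forbidden triangle come from $G_m$), and the specific way the new vertices and edges are introduced in each case. Once this is verified, summing the maximum count of copies over each of the six cases yields the claimed inequality.
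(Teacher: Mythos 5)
Your overall strategy — case analysis on where $e$ sits relative to the copy $\psi(F)$, and constructing $(F^-,A^-)$ by absorbing vertices of $e$ into $A^-$ or recording the auxiliary edge — follows the same plan as the paper's, but your Type~(II) decomposition has a genuine gap. You classify only by the position of $x$ (the endpoint of $e$ not shared with $f$) and lose track of $z$ (the shared endpoint of $e$ and $f$). In the sub-case $x\in\phi(A)$, $v\notin A$, you invoke case~$(a)$ with $F^-=F\cup\{uv\}$; but case~$(a)$ of Definition~\ref{def:F-} requires the non-$A$ endpoint $v$ of the new edge to have an open $F$-neighbour \emph{in $A$}, and the only such neighbour you have exhibited is $w=\psi^{-1}(z)$, which lies in $A$ if and only if $z\in\phi(A)$, i.e.\ if and only if $e\subseteq\phi(A)$. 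When $z\in\psi(V(F)\setminus A)$, case~$(a)$ does not apply and case~$(b)$ (absorb $w$ into $A^-$) is the correct template — a branch you never reach. A symmetric gap occurs in your sub-case $x\in\psi(V(F)\setminus A)$, where you only allow $(d)$ but should use~$(b)$ when $z\in\phi(A)$.

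Separately, your argument that $\{u,v\}\notin E(F)$ via permissibility of the triangle $\{u,v,w\}$ is flawed: permissibility only constrains triangles all three of whose pairs lie in $E(F)\cup O(F)$, and $\{u,w\}$ maps to $e$, which was open in $G_m$, so $\{u,w\}$ may well be a non-pair of $F$, rendering the argument vacuous. The paper instead uses the hypothesis — which your proposal never invokes — that $\phi$ is faithful in $G_m\cup\{e\}$. When $e\subseteq\phi(A)$, adding $e$ makes $\{u,w\}$ an edge of $E(G_{m+1}[\phi(A)])$; the triangle $\{u,v,w\}$ then becomes fully present in the structure tested for faithfulness, and $\{u,v\}\in E(F)$ would give two closed sides and only one open side, violating permissibility. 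Without this hypothesis your case~$(a)$ argument cannot exclude $\{u,v\}\in E(F)$. The cleaner casing (which the paper uses, and which would close both gaps) is to split on $|e\cap\phi(A)|\in\{0,1,2\}$ and then on whether the remaining endpoint(s) of $e$ lie in $\psi(V(F))$; this maps directly onto the six templates $(a)$--$(f)$ and uniformly subsumes your Types~(I) and~(II).
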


\begin{proof}
Let $e = \{u,v\}$ be the edge added in step $m+1$, and suppose that $e$ destroys a copy $F^*$ of $F$, rooted at $\phi(A)$, in $G_m$. Note that this implies that either $e \in O(F^*)$ or $e$ closes an open edge of~$F^*$. We claim that there is a graph structure $H \subseteq G_m\big[V(F^*) \cup \{u,v\} \big]$ such that
\begin{equation}\label{Fcondition}
H \in N_{\phi^-}(F^-) \quad \text{for some} \quad (F^-,A^-) \in \F_{F,A}^- \quad\text{and} \quad \phi^- \colon A^- \to V(G_m)
\end{equation}
with $\phi^-|_A = \phi$ and $\Im(\phi^-) = \phi(A) \cup \{u,v\}$. There are various cases to consider. 

Suppose first that $e \subseteq \phi(A)$; we claim that~\eqref{Fcondition} holds with $(F^-,A^-)$ as in case $(a)$ of Definition~\ref{def:F-}. Indeed, $e$ closes an open edge $f \in O(F^*)$, and so this open edge must have one endpoint ($u$, say) inside $A$ and the other $w$ outside. Since adding $e$ closes $f$, it follows that $\{v,w\}$ must be an edge of $G_m$. Hence there must exist a copy of $F^-$ in $G_m$ on the same vertex set, where $F^-$ is obtained from $F$ by adding the edge ($h$, say) corresponding to $\{v,w\}$. Clearly $h \not\in O(F)$, since $\{v,w\} \in E(G_m)$, and moreover $h \not\in E(F)$, since otherwise $\phi$ would not be faithful in $G_{m} \cup \{e\}$.

Suppose next that $e \cap \phi(A) = \{u\}$. If $v \in V(F^*)$ then it is immediate that~\eqref{Fcondition} holds with $(F^-,A^-)$ as in case $(b)$, so suppose not. Then adding $e$ must close an open edge $\{u,w\}$ of~$F^*$. It follows that $\{v,w\}$ must be an edge of $G_m$, and hence~\eqref{Fcondition} holds with $(F^-,A^-)$ as in case $(c)$. 

Finally, suppose that $e \cap \phi(A) = \emptyset$. If $\{u,v\} \subseteq V(F^*)$ then~\eqref{Fcondition} holds with $(F^-,A^-)$ as in case $(d)$. On the other hand, if $\{u,v\} \cap V(F^*) = \emptyset$ then $e$ cannot destroy $F^*$. Hence we may assume that $u \in V(F^*)$ and $v \not\in V(F^*)$, and that the addition of $e$ closes an open edge $\{u,w\}$ in $F^*$. It follows that the edge $\{v,w\}$ is an edge of $G_m$, and hence if $w \in A$ then~\eqref{Fcondition} holds with $(F^-,A^-)$ as in case $(e)$, and if $w \not\in A$ then~\eqref{Fcondition} holds with $(F^-,A^-)$ as in case $(f)$ of Definition~\ref{def:F-}, as required. 
\end{proof}

\begin{rmk}
The observant reader will have noticed that the graph structures $F^+$ and $F^-$ may have isolated vertices. However, using Remark~\ref{Eremark}, the event $\E(m)$ implies bounds on $N_{\phi'}(F')$ for every pair $(F',A') \in \F_{F,A}^+ \cup  \F_{F,A}^-$, and every faithful $\phi' \colon A' \to V(G_m)$. %In fact, we shall only require an upper bound on $N_{\phi'}(F')$, for which this implication is trivial.
\end{rmk}

%We next show how to bound the right-hand side in Lemmas~\ref{lem:F+} and~\ref{lem:F-}. Let's begin 

We finish this section by proving some straightforward lemmas and observations which will be useful in later sections. %The first three follow easily from Definition~\ref{def:F-}.

\begin{obs}\label{obs:F+inF-}
$\F_{F,A}^+ \subseteq \F_{F,A}^-$ for every graph structure pair $(F,A)$. 
\end{obs}

\begin{proof}
This follows immediately from the definitions, since if $(F',A') \in \F_{F,A}^+$ then we are in either case $(b)$ or $(d)$ of Definition~\ref{def:F-}. 
\end{proof}

\begin{obs}\label{obs:AcapF}
Let $(F,A)$ be a graph structure pair, and let $(F',A') \in \F_{F,A}^-$.  
\begin{itemize}
\item[$(a)$] If $A' \cap F = A$ then $\Nt_{A'}(F')(m) = \frac{2t}{\sqrt{n}} \cdot \Nt_A(F)(m)$.
\item[$(b)$] If $A' \subsetneq H' \subseteq F'$ then $H' \cap F \neq A$.
\end{itemize}
\end{obs}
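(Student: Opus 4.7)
The plan is a direct case analysis based on Definition~\ref{def:F-} and Table~4.1; neither part requires any computation beyond bookkeeping.

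For part~$(a)$, I would first observe that the hypothesis $A' \cap F = A$ eliminates cases $(b)$, $(d)$, $(e)$, $(f)$ of Definition~\ref{def:F-}, since in each of these $A'$ is enlarged by at least one vertex of $V(F) \setminus A$, which would lie in $A' \cap F$ but not in $A$. So I only need to handle the two remaining cases $(a)$ and $(c)$. In both of these, Table~4.1 records
\[
v_{A'}(F') = v_A(F), \qquad e(F') = e(F) + 1, \qquad o(F') = o(F).
\]
Substituting these into the definition~\eqref{def:NtF} of $\Nt_{A'}(F')(m)$ and comparing with $\Nt_A(F)(m)$, the only change is a single extra factor of $2t/\sqrt{n}$ coming from the new edge, giving the claimed identity.

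For part~$(b)$, note first that $A \subseteq A' \subseteq H'$ and $A \subseteq V(F)$, so $A \subseteq H' \cap F$ automatically; the content of the claim is that this containment is strict. I would split on whether $A' \setminus A$ meets $V(F)$. In cases $(b)$, $(d)$, $(e)$, $(f)$ it does, so $A' \cap F \supsetneq A$ and hence $H' \cap F \supseteq A' \cap F \supsetneq A$. In the remaining cases $(a)$ and $(c)$, $A' \setminus A$ is disjoint from $V(F)$ (in case $(a)$ because $A' = A$, and in case $(c)$ because the new vertex $v$ is chosen outside $V(F)$), and moreover $V(F') \setminus V(F) \subseteq A'$ (trivially in $(a)$, and because $V(F') = V(F) \cup \{v\}$ with $v \in A'$ in $(c)$). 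Hence $V(F') \setminus A' \subseteq V(F) \setminus A$, and since $A' \subsetneq H'$ we may pick $w \in H' \setminus A'$; this $w$ lies in $V(F) \setminus A$, giving $w \in (H' \cap F) \setminus A$ and so $H' \cap F \supsetneq A$ again.

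The only point requiring a little care is the bookkeeping in case~$(c)$, where $F'$ has a vertex ($v$) not in $F$; I would make sure to note explicitly that $v \in A'$, so that removing $V(F') \setminus V(F)$ from $H' \setminus A'$ changes nothing, and the conclusion of part~$(b)$ goes through as above. There is no real obstacle here: the observation is essentially a restatement of Definition~\ref{def:F-} once the six cases are laid out.
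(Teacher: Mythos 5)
Your proof is correct and follows essentially the same approach as the paper's: identify which cases of Definition~\ref{def:F-} are compatible with $A'\cap F=A$, read off the parameter changes from Table~4.1 for part~$(a)$, and observe for part~$(b)$ that in cases $(a)$ and $(c)$ every new vertex of $F'$ lies in $A'$ (so $H'\setminus A'\subseteq V(F)\setminus A$), while in the remaining cases the claim is immediate from $A'\cap F\supsetneq A$. The only difference is that you spell out the trivial subcase and the bookkeeping that the paper compresses into one sentence.
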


\begin{proof}
Both statements follow easily from Definition~\ref{def:F-}. Indeed, if $A' \cap F = A$ then we must be in either case $(a)$ or $(c)$ of that definition, and in both cases we have $v_A(F) = v_{A'}(F')$, $o(F) = o(F')$ and $e(F) = e(F') - 1$. For the second statement, simply note that $v_{A'}(F') = v_A(F)$, i.e., every new vertex of $F'$ is included in $A'$. 
\end{proof}

\begin{obs}\label{obs:NH'F'NHF}
Let $(F,A)$ be a graph structure pair, and let $(F',A') \in \F_{F,A}^-$.  For every $A' \subseteq H' \subseteq F'$ and every $m \in [m^*]$, we have $\Nt_{H'}(F')(m) \le \Nt_{H' \cap F}(F)(m)$.
\end{obs}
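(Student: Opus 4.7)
The plan is to reduce the claim to checking the signs of three combinatorial differences, and then to verify these signs case-by-case from Definition~\ref{def:F-}. Set $H := H' \cap F$, the induced substructure of $F$ on $V(H') \cap V(F)$; since $A \subseteq A' \subseteq V(H')$ and $A \subseteq V(F)$ (noting that in every case of Definition~\ref{def:F-} we have $A \subseteq A'$), we have $A \subseteq V(H)$, so $\Nt_H(F)$ is well-defined. By~\eqref{def:NtF} together with Observation~\ref{obs:Ntchain}, the desired inequality $\Nt_{H'}(F')(m) \le \Nt_H(F)(m)$ is equivalent to
$$\bigg( \frac{2t}{\sqrt{n}} \bigg)^{\alpha} e^{-4t^2 \beta} n^{\gamma} \, \le \, 1,$$
where $\alpha := [e(F') - e(H')] - [e(F) - e(H)]$, $\beta := [o(F') - o(H')] - [o(F) - o(H)]$, and $\gamma := v_{H'}(F') - v_H(F)$. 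Since $t \le t^* \ll \sqrt{n}$, it suffices to show that $\alpha \ge 0$, $\beta \ge 0$ and $\gamma = 0$ in each of the six cases.

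The key observation, common to all three estimates, is that in Definition~\ref{def:F-} every vertex of $V(F') \setminus V(F)$ and every (open) edge that is removed in the passage from $F$ to $F'$ lies inside $A' = A^-$; since $A' \subseteq V(H')$ by hypothesis, all such modifications are mirrored exactly in the passage from $H$ to $H'$. Concretely, any new vertex $v$ (which appears only in cases (c), (e), (f)) lies in $A' \subseteq V(H')$, so $|V(H') \setminus V(F)| = |V(F') \setminus V(F)|$ and hence $\gamma = 0$. No open edges are ever added, and the open edges removed in cases (b), (d), (e), (f) form a subset of $\binom{A'}{2} \subseteq \binom{V(H')}{2}$, so they are removed from both $F \to F'$ and $H \to H'$, giving $\beta = 0$. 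The analogous statement for removed edges handles part of $\alpha$; the only asymmetry arises from the single edge $e$ added in cases (a), (c) and (f). If both endpoints of $e$ lie in $V(H')$ then $e \in E(H')$, contributing $0$ to $\alpha$; otherwise $e \in E(F') \setminus E(H')$, contributing $+1$. In every case $\alpha \in \{0, 1\}$, and the bound follows.

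The only part that requires care is the bookkeeping of induced edge sets under the six different modifications prescribed by Definition~\ref{def:F-}; I do not foresee any genuine obstacle beyond this routine case-checking.
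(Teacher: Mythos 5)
Your proposal is correct and follows essentially the same approach as the paper: both reduce the claim to checking $v_{H'}(F') = v_H(F)$, $e(F') - e(H') \ge e(F) - e(H)$ and $o(F') - o(H') = o(F) - o(H)$ across the six cases of Definition~\ref{def:F-}, which is exactly what the paper's one-line proof states. Your unifying remark — that every vertex and every removed (open) edge in the passage from $F$ to $F'$ lies inside $A' \subseteq V(H')$, so the only possible asymmetry is the single new edge $e$ in cases $(a)$, $(c)$, $(f)$ — is a clean way to see why those three relations hold simultaneously, and makes the case-check essentially immediate.
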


\begin{proof}
This also follows easily from Definition~\ref{def:F-}. Indeed, setting $H = H' \cap F$, we have 
$$v_{H'}(F') = v_{H}(F), \quad e(F') - e(H') \ge e(F) - e(H) \quad \text{and} \quad o(F') - o(H') = o(F) - o(H)$$
in each of the cases $(a)$-$(f)$.  
\end{proof}

We make one more simple observation, which will play a crucial role in Section~\ref{balancedsec}. Let $\F_{F,A}^* \subset \F_{F,A}^-$ denote the graph structure pairs in $\F_{F,A}^-$ with $v_{A'}(F') < v_A(F)$. 

\begin{obs}\label{obs:destroy:disjoint}
Let $(F,A,\phi)$ be a graph structure triple, and suppose that the edge $e$ which is added in step $m+1$ of the triangle-free process 
is disjoint from $\phi(A)$. Then the number of copies of $F$ rooted at $\phi(A)$ which are destroyed by $e$ is at most
\begin{equation}\label{eq:destroy:disjoint}
\sum_{(F',A') \in \F_{F,A}^*} \max_{\phi' :\, A' \to V(G_m)} N_{\phi'}(F')(m),
\end{equation}
where the maximum is over faithful maps $\phi' \colon A' \to V(G_m)$.  

Moreover, we have $\F_{F,A}^+ \subset \F_{F,A}^*$, and so the number of copies of $F$ rooted at $\phi(A)$ created in step $m+1$ of the triangle-free process is also bounded above by~\eqref{eq:destroy:disjoint}.
\end{obs}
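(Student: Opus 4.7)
The plan is to revisit the case analyses in the proofs of Lemmas~\ref{lem:F-} and~\ref{lem:F+}, and observe that the hypothesis $e \cap \phi(A) = \emptyset$ rules out precisely those cases in which $v_{A'}(F') = v_A(F)$.

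For the first bound, I would re-examine the six cases $(a)$--$(f)$ of Definition~\ref{def:F-} used in the proof of Lemma~\ref{lem:F-}. Cases $(a)$, $(b)$, and $(c)$ each require the added edge $e$ to meet $\phi(A)$ (they are introduced under the headings $e \subseteq \phi(A)$ and $e \cap \phi(A) = \{u\}$ in the proof of that lemma), so under our disjointness hypothesis none of them can occur. Hence every copy of $F$ destroyed by $e$ must correspond to a pair $(F^-,A^-)$ arising from one of cases $(d)$, $(e)$, $(f)$, and Table~4.1 shows that in each of these cases $v_{A^-}(F^-) - v_A(F) \in \{-2,-1,-1\}$. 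In particular $v_{A^-}(F^-) < v_A(F)$, so $(F^-,A^-) \in \F_{F,A}^*$, and the bound in~\eqref{eq:destroy:disjoint} then follows by restricting the sum in the conclusion of Lemma~\ref{lem:F-}.

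For the second bound, I would first verify the inclusion $\F_{F,A}^+ \subset \F_{F,A}^*$. By Observation~\ref{obs:F+inF-}, we already have $\F_{F,A}^+ \subseteq \F_{F,A}^-$, so it suffices to show that $v_{A^+}(F^+) < v_A(F)$ for every $(F^+,A^+) \in \F_{F,A}^+$. This is immediate from the definition: since $A$ is independent in $F$, any edge of $F$ has at least one endpoint in $V(F) \setminus A$, and absorbing both endpoints of that edge into $A$ therefore reduces $v_A(F)$ by either $1$ or $2$. The desired upper bound on the number of copies of $F$ created in step $m+1$ now follows by combining this inclusion with Lemma~\ref{lem:F+}, whose conclusion is exactly the sum appearing in~\eqref{eq:destroy:disjoint} but restricted to $\F_{F,A}^+ \subset \F_{F,A}^*$.

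I do not anticipate any genuine obstacle; the observation is essentially a bookkeeping refinement of Lemmas~\ref{lem:F+} and~\ref{lem:F-}, extracting the extra structural information one gains from the fact that the added edge $e$ avoids the root $\phi(A)$.
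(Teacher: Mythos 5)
Your proof is correct and follows essentially the same route as the paper: both revisit the case analysis of Lemma~\ref{lem:F-}, note that disjointness of $e$ from $\phi(A)$ forces cases $(d)$, $(e)$, $(f)$ of Definition~\ref{def:F-}, and read off from Table~4.1 that $v_{A^-}(F^-) < v_A(F)$ there. For the second part the paper just observes that $\F_{F,A}^+$ arises from cases $(b)$ or $(d)$, whereas you argue directly from the independence of $A$ that absorbing an edge strictly decreases $v_A(F)$; this is a cosmetic variation that reaches the same conclusion.
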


\begin{proof}
Proceed as in the proof of Lemma~\ref{lem:F-}, noting that if $e$ is disjoint from $\phi(A)$ then the pair $(F^-,A^-)$ in~\eqref{Fcondition} was obtained via either case $(d)$, $(e)$ or $(f)$ of Definition~\ref{def:F-}, and that in each of these cases we have $v_{A^-}(F^-) < v_A(F)$, and so $(F^-,A^-) \in \F_{F,A}^*$, as claimed. For the second part, simply note that if $(F',A') \in \F_{F,A}^+$ then we are in either case $(b)$ or $(d)$ of Definition~\ref{def:F-}. 
\end{proof}

We next introduce a piece of notation which will be extremely useful in Sections~\ref{balancedsec} and~\ref{deltaNsec}, below. Given a graph structure pair $(F,A)$, set 
$$\delta(F,A) = C^3 v_A(F)^2 + 2e(F) + o(F)$$ 
and recall that $\Delta(F,A) = \delta(F,A)^C$. We shall write 
\begin{equation}\label{def:deltaF-vF+e}
\Delta(F-v,A) := \big( \delta(F,A) - C \big)^C. % \qquad \text{and} \qquad \Delta(F+e,A) := \big( \delta(F,A) + 2 \big)^C
\end{equation}
Note that this is an upper bound on the value of $\Delta(F,A)$ one obtains by decreasing $v_A(F)$ and increasing $e(F)$ by one. 
%and note that these are upper bounds on the values of $\Delta(F,A)$ one obtains by decreasing $v_A(F)$ and increasing $e(F)$ by one, respectively. 

Recall that $g_{F,A}(t) \, = \, e^{ct^2} n^{-1/4} (\log n)^{\gamma(F,A)}$, where $\gamma(F,A) = \Delta(F,A) - e(F) - 2$, see~\eqref{def:gam} and~\eqref{def:gfat}. We will need the following properties of $\Delta(F-v,A)$.\footnote{See the Appendix~\cite{App} for more detailed proofs.} 

\begin{lemma}\label{obs:gF'A'gFA}
Let $(F,A)$ be a graph structure pair, and let $(F',A') \in \F_{F,A}^-$. If $A' \cap F \neq A$, then
$$g_{F',A'}(t) \, \le \, (\log n)^{\Delta(F-v,A)}$$% - \sqrt{\Delta(F,A)}}$$
for every $0 < t \le t_{A'}(F')$.
\end{lemma}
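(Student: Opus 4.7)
The plan is to reduce the statement to an inequality on the combinatorial quantity $\delta$ by peeling off the exponential and additive parts of $g_{F',A'}$. First, since $t \le t_{A'}(F')$, the definition of $c(F',A')$ gives $e^{c(F',A') t^2} \le n^{1/4}$ (when $t_{A'}(F') < t^*$ this is the defining equality; when $t_{A'}(F') = t^*$ one checks directly that $c(F',A') t^2 \le \log n/4$, using the formula~\eqref{eq:tstardef} for $t_{A'}^*(H)$ together with the hypothesis $t_{A'}^*(H) \ge t^*$ for every $A' \subsetneq H \subseteq F'$). Plugging this into~\eqref{def:gfat} we obtain
$$g_{F',A'}(t) \, \le \, (\log n)^{\gamma(F',A')} \, \le \, (\log n)^{\Delta(F',A')},$$
so by the definition~\eqref{def:deltaF-vF+e} it suffices to prove $\delta(F',A') \le \delta(F,A) - C$.

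The hypothesis $A' \cap F \neq A$ means that the pair $(F',A')$ comes from one of the cases $(b),(d),(e),(f)$ of Definition~\ref{def:F-}, so by Table~4.1 we always have $v_{A'}(F') \le v_A(F) - 1$, $o(F') \le o(F)$, and $e(F') \le e(F) + 1$ (with $e(F') \le e(F)$ unless we are in case $(f)$). In all of these cases the relevant vertex $u \in V(F) \setminus A$ which is absorbed into $A'$ exists, so $v_A(F) \ge 1$. Using $(v_A(F)-1)^2 \le v_A(F)^2 - 1$ for $v_A(F) \ge 1$ (and the stronger bound $(v_A(F)-2)^2 \le v_A(F)^2 - 4 v_A(F) + 4$ in case $(d)$, where automatically $v_A(F) \ge 2$), one computes in each case
$$\delta(F',A') \, = \, C^3 v_{A'}(F')^2 + 2e(F') + o(F') \, \le \, C^3 v_A(F)^2 - C^3 + 2e(F) + 2 + o(F),$$
and hence $\delta(F',A') \le \delta(F,A) - C^3 + 2 \le \delta(F,A) - C$ for $C$ a sufficiently large constant.

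Combining the two steps, $\Delta(F',A') = \delta(F',A')^C \le (\delta(F,A) - C)^C = \Delta(F-v,A)$, and therefore $g_{F',A'}(t) \le (\log n)^{\Delta(F-v,A)}$ for every $0 < t \le t_{A'}(F')$, as required. The only non-routine point is the bookkeeping in the first paragraph for the boundary case $t_{A'}(F') = t^*$, where one must use the explicit form of $t_{A'}^*(H)$ to see that the exponential factor still fits under $n^{1/4}$; every other step is a direct inspection of Definition~\ref{def:F-} and Table~4.1 together with the choice of $C$ large relative to absolute constants.
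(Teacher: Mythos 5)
Your proof is correct and follows the same route as the paper's: excluding cases $(a)$ and $(c)$ of Definition~\ref{def:F-} forces $v_{A'}(F') < v_A(F)$, the bound $g_{F',A'}(t)\le(\log n)^{\gamma(F',A')}$ on $(0,t_{A'}(F')]$ reduces everything to the inequality $\delta(F',A')\le\delta(F,A)-C$, and the $C^3 v_A(F)^2$ term absorbs the slack. The only additional content beyond the paper's one-line argument is your explicit check of the boundary case $t_{A'}(F') = t^*$ for the bound $e^{c(F',A')t^2}\le n^{1/4}$, which is a fair thing to spell out (the paper takes it as understood, cf.\ the remark after~\eqref{def:cFA} and the proof of Lemma~\ref{lem:F4}) and is handled correctly.
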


\begin{proof}
The condition $A' \cap F \neq A$ implies that we are in neither case~$(a)$ nor case~$(c)$ of Definition~\ref{def:F-}. Since $v_{A'}(F') < v_A(F)$ in each of the other cases, the result follows easily from the definition~\eqref{def:gfat}, since $g_{F',A'}(t) \, \le \, (\log n)^{\gamma(F',A')}$ for every $0 < t \le t_{A'}(F')$.
\end{proof}

\begin{obs}\label{obs:deltaF'H'A'deltaFA}
Let $(F,A)$ be a graph structure pair, and let $(F',A') \in \F_{F,A}^-$. Then 
$$\Delta(F',H',A') \, \le \, \Delta(F-v,A) \, \le \, \Delta(F,A) - 3\sqrt{\Delta(F,A)}$$
for every $A' \subsetneq H' \subsetneq F'$. Moreover, the same bounds hold if $H' = F'$ and $A'  \cap F \neq A$.
\end{obs}

\begin{proof}
Note that if $A' \neq H' \neq F'$, then $1 \le v_{A'}(H') = v_{A'}(F') - v_{H'}(F') \le v_{A'}(F') - 1$. Both inequalities now follow easily from~\eqref{eq:veoF'A'FA}, using the convexity of the function $x \mapsto x^C$. On the other hand, if $H' = F'$ and $A'  \cap F \neq A$ then we just repeat the proof of Lemma~\ref{obs:gF'A'gFA}.
\end{proof}

The next observation also follows by the same argument.

\begin{obs}\label{obs:deltaF'vA'deltaFA}
Let $(F,A)$ be a graph structure pair, and let $(F',A') \in \F_{F,A}^-$. Then 
$$\Delta(F'-v,A') \, \le \, \Delta(F,A) - 3\sqrt{\Delta(F,A)}.$$
\end{obs}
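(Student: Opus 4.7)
The plan is to follow the argument sketched in Observation~\ref{obs:deltaF'H'A'deltaFA}. Starting from Table~4.1, every $(F',A') \in \F_{F,A}^-$ satisfies
$$v_{A'}(F') \le v_A(F), \quad e(F') \le e(F) + 1, \quad o(F') \le o(F).$$
Plugging these into $\delta(F',A') = C^3 v_{A'}(F')^2 + 2e(F') + o(F')$ immediately yields $\delta(F',A') \le \delta(F,A) + 2$, and so, unpacking~\eqref{def:deltaF-vF+e},
$$\Delta(F'-v,A') \,=\, \bigl(\delta(F',A') - C\bigr)^C \,\le\, \bigl(\delta(F,A) - (C-2)\bigr)^C.$$

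The remaining step is a convexity comparison between this last quantity and $\Delta(F,A) - 3\sqrt{\Delta(F,A)} = \delta(F,A)^C - 3\,\delta(F,A)^{C/2}$. Applying the mean value theorem to $x \mapsto x^C$ gives
$$\delta(F,A)^C - \bigl(\delta(F,A) - (C-2)\bigr)^C \,\ge\, C(C-2)\,\bigl(\delta(F,A)-(C-2)\bigr)^{C-1}.$$
The only non-vacuous case is $v_A(F) \ge 1$ (otherwise $F$ having no isolated vertices forces $F$ to be empty), and there $\delta(F,A) \ge C^3$, hence $\delta(F,A) - (C-2) \ge \delta(F,A)/2$. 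So the right-hand side is at least $\frac{C(C-2)}{2^{C-1}} \delta(F,A)^{C-1}$, and since $\delta(F,A)^{C/2-1} \ge C^{3C/2-3}$ grows super-exponentially in $C$, this comfortably dominates $3\,\delta(F,A)^{C/2}$ for the large constant $C$ fixed at the start of the paper.

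The only mild obstacle is the bookkeeping: one must check that the gap $C-2$ in the base of the exponent, once amplified by the $C$-th power, is enough to absorb the target $3\sqrt{\Delta(F,A)}$. This is exactly the same calibration already carried out in Observation~\ref{obs:deltaF'H'A'deltaFA}, which justifies the text's remark that the statement ``follows by the same argument.''
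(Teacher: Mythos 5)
Your proof is correct and follows exactly the route the paper intends; the paper itself just says this observation ``follows by the same argument'' as Observation~\ref{obs:deltaF'H'A'deltaFA}, meaning the Table~4.1 bounds $v_{A'}(F')\le v_A(F)$, $e(F')\le e(F)+1$, $o(F')\le o(F)$ plus convexity of $x\mapsto x^C$, which is precisely the mean-value comparison you carry out. The one tacit point (also elided by the paper) is that the monotonicity step $\bigl(\delta(F',A')-C\bigr)^C\le\bigl(\delta(F,A)-(C-2)\bigr)^C$ needs the bases to be nonnegative; this is automatic once $v_{A'}(F')\ge1$, and the degenerate case $v_{A'}(F')=0$ forces $\delta(F',A')=0$ and is handled directly since $\Delta(F,A)\ge C^{3C}$ there, so your argument is complete in substance.
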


\begin{obs}\label{obs:deltaaddlittle}
Let $(F,A)$ be a graph structure pair, and let $(F',A') \in \F_{F,A}^-$. Then 
$$\Delta(F',A') \, \le \, (1 + \eps) \Delta(F,A) \, \le \, (1 + 2\eps) \Delta(F-v,A).$$
\end{obs}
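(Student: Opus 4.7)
The plan is to reduce both claimed inequalities to an elementary estimate on the growth of $x \mapsto x^C$, exploiting the fact that $\delta(F,A) = C^3 v_A(F)^2 + 2e(F) + o(F)$ is bounded below by $C^3$ whenever $\F_{F,A}^-$ is nonempty, and that $C = C(\eps)$ is chosen sufficiently large.

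\medskip

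First I would read off from Table~4.1 (i.e., from the three inequalities~\eqref{eq:veoF'A'FA}) that
$$\delta(F',A') \,\le\, C^3 v_A(F)^2 + 2\bigl(e(F)+1\bigr) + o(F) \,=\, \delta(F,A) + 2$$
for every $(F',A') \in \F_{F,A}^-$, since decreasing $v_{A'}(F')$ below $v_A(F)$ only helps, and the other two perturbations each change $\delta$ by at most $2$ and $0$ respectively. Hence
$$\frac{\Delta(F',A')}{\Delta(F,A)} \,=\, \left( \frac{\delta(F',A')}{\delta(F,A)} \right)^{\!C} \,\le\, \left( 1 + \frac{2}{\delta(F,A)} \right)^{\!C}.$$

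\medskip

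Next I would observe that because $\F_{F,A}^-$ is nonempty, every case of Definition~\ref{def:F-} requires a vertex in $V(F) \setminus A$, so $v_A(F) \ge 1$ and therefore $\delta(F,A) \ge C^3$. Consequently
$$\left( 1 + \frac{2}{\delta(F,A)} \right)^{\!C} \,\le\, \left( 1 + \frac{2}{C^3} \right)^{\!C} \,\le\, \exp\!\left( \frac{2}{C^2} \right) \,\le\, 1 + \eps,$$
provided $C = C(\eps)$ is sufficiently large. This gives the first inequality $\Delta(F',A') \le (1+\eps)\Delta(F,A)$.

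\medskip

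For the second inequality, the same lower bound $\delta(F,A) \ge C^3$ yields
$$\frac{\Delta(F,A)}{\Delta(F-v,A)} \,=\, \left( 1 + \frac{C}{\delta(F,A) - C} \right)^{\!C} \,\le\, \left( 1 + \frac{C}{C^3 - C} \right)^{\!C} \,\le\, \exp\!\left( \frac{2}{C} \right) \,\le\, \frac{1 + 2\eps}{1 + \eps}$$
for $C$ large enough relative to $\eps$. Multiplying the first inequality by this bound gives $(1+\eps)\Delta(F,A) \le (1+2\eps)\Delta(F-v,A)$, completing the argument. The only mildly delicate point is choosing the single constant $C$ uniformly large enough to simultaneously make both elementary estimates work, but since both required bounds are $1 + O(1/C)$ while $\eps$ is fixed, this is automatic from our standing choice $C = C(\eps)$; there is no real obstacle.
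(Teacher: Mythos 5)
Your proof is correct and takes essentially the same approach as the paper's (very terse) proof: bound $\delta(F',A') \le \delta(F,A)+2$ via \eqref{eq:veoF'A'FA}, recall $\Delta(F-v,A) = (\delta(F,A)-C)^C$, and use $\delta(F,A) \ge C^3$ together with $C = C(\eps)$ large to absorb the resulting $1 + O(1/C)$ factors. The only quibble is the final sentence's phrase ``multiplying the first inequality by this bound,'' which misdescribes what you actually do — the second chain of inequalities directly gives $(1+\eps)\Delta(F,A) \le (1+2\eps)\Delta(F-v,A)$ without reference to the first — but the mathematics is sound.
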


\begin{proof}
Note that $\Delta(F',A') \le \big( \delta(F,A) + 2 \big)^C$, by~\eqref{eq:veoF'A'FA}, and recall that $\Delta(F-v,A) = \big( \delta(F,A) - C \big)^C$, by definition. Since $\delta(F,A) \ge C^3$, the claimed bounds follow. 
\end{proof}

Finally, we give three bounds which rely on one of the following assumptions\footnote{We remark that when either of these inequalities is reversed, the conclusion of Theorem~\ref{EEthm} (in the corresponding case) will hold trivially, see below.}: either 
\begin{equation}\label{eq:Delta:assumption}
(\log n)^{\gamma(F,A)} \, \le \, n^{v_A(F)+e(F)+1}
\end{equation}
and $t_A(F) > 0$, or $(\log n)^{\Delta(F-v,A)} \le n^{v_A(F)}$.
 
\begin{lemma}\label{obs:deltaF'A'Fv}
Let $(F,A)$ be a graph structure pair. If $(\log n)^{\Delta(F-v,A)} \le n^{v_A(F)}$, then
$$\max\Big\{ \Delta(F,A), \, \Delta(F',A') \Big\} - \Delta(F-v,A) \, \le \, \frac{\eps}{v_A(F)} \cdot \frac{\log n}{\log\log n}$$
for every $(F',A') \in \F_{F,A}^-$.
\end{lemma}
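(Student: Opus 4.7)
The plan is to reduce the statement to a single inequality about $(d+2)^C - (d-C)^C$ where $d = \delta(F,A)$, and then control this difference by combining the bound supplied by the hypothesis with the observation that $d$ is already very large (at least $C^3 v_A(F)^2$), so the relative growth of $x \mapsto x^C$ over an interval of length $O(C)$ is small.

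First I would use the bounds from~\eqref{eq:veoF'A'FA} (i.e., $v_{A'}(F') \le v_A(F)$, $e(F') \le e(F)+1$, $o(F') \le o(F)$) to deduce $\delta(F',A') \le \delta(F,A) + 2$, and hence
$$\max\bigl\{\Delta(F,A),\,\Delta(F',A')\bigr\} \,\le\, (d+2)^C,$$
where $d = \delta(F,A)$. Since $\Delta(F-v,A) = (d-C)^C$ by definition, this reduces the task to showing
$$(d+2)^C - (d-C)^C \,\le\, \frac{\eps}{v_A(F)} \cdot \frac{\log n}{\log \log n}.$$

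For this I would apply the Mean Value Theorem to get $(d+2)^C - (d-C)^C \le C(C+2)(d+2)^{C-1}$, and then use that $d \ge C^3 v_A(F)^2 \ge C^3$ to conclude $(d+2)/(d-C) \le 1 + O(1/C^2)$, so $(d+2)^{C-1} \le 2(d-C)^{C-1}$ for $C$ large. The hypothesis $(\log n)^{\Delta(F-v,A)} \le n^{v_A(F)}$ translates directly into $(d-C)^C = \Delta(F-v,A) \le v_A(F) \log n / \log \log n$, hence
$$(d-C)^{C-1} \,\le\, \frac{1}{d-C} \cdot \frac{v_A(F) \log n}{\log \log n}.$$
Putting these together gives the upper bound $\frac{O(C^2)}{d-C} \cdot \frac{v_A(F) \log n}{\log \log n}$ on the difference.

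The remaining step, and the only one requiring any care, is to absorb the constants. Since $d - C \ge C^3 v_A(F)^2/2$ (using $v_A(F) \ge 1$ and $C \ge 2$), the prefactor satisfies $\frac{C^2}{d-C} \le \frac{2}{C v_A(F)^2}$, which is at most $\eps/v_A(F)^2$ once $C = C(\eps)$ is chosen large enough (concretely, once $C \ge C_0(\eps)$ the factor $O(C^2)/d$ is dominated by $\eps/v_A(F)^2$). The main potential obstacle is keeping track of the constants so that the largeness of $C$ relative to $\eps$ actually suffices; the key input that makes this go through is the cube exponent in the definition $\delta(F,A) = C^3 v_A(F)^2 + 2e(F) + o(F)$, which builds in enough slack to overcome the derivative bound $C(C+2)$ coming from the MVT.
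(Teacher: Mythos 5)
Your proof is correct and follows essentially the same route as the paper: both reduce the claim to bounding $(\delta(F,A)+2)^C - (\delta(F,A)-C)^C$, control this difference using the derivative of $x\mapsto x^C$ together with the slack from $\delta(F,A)\ge C^3 v_A(F)^2$, and then apply the hypothesis $(\log n)^{\Delta(F-v,A)}\le n^{v_A(F)}$ to obtain the stated bound. The only cosmetic difference is that you work directly with $(d-C)^{C-1}=\Delta(F-v,A)/(d-C)$, whereas the paper passes through $\Delta(F,A)$ and then invokes Observation~\ref{obs:deltaaddlittle} to compare it with $\Delta(F-v,A)$.
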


\begin{proof}
It follows from the definitions and~\eqref{eq:veoF'A'FA} that 
\begin{multline*} 
\max\Big\{ \Delta(F,A), \, \Delta(F',A') \Big\} - \Delta(F-v,A) \, \le \, \big( \delta(F,A) + 2 \big)^C - \big( \delta(F,A) - C \big)^C \\[+0.5ex]
 \, \le \, 2C^2 \cdot \delta(F,A)^{C - 1} \, = \, \frac{2C^2 \cdot \Delta(F,A)}{\delta(F,A)} \, \le \, \frac{4}{C} \cdot \frac{\Delta(F-v,A)}{v_A(F)^2},
\end{multline*}
since $\delta(F,A) \ge C^3 v_A(F)^2$ and by Observation~\ref{obs:deltaaddlittle}. Hence, if $(\log n)^{\Delta(F-v,A)} \le n^{v_A(F)}$, then 
$$\max\Big\{ \Delta(F,A), \, \Delta(F',A') \Big\} - \Delta(F-v,A) \, \le \, \frac{4}{C \cdot v_A(F)} \cdot \frac{\log n}{\log \log n} \, \le \, \frac{\eps}{v_A(F)} \cdot \frac{\log n}{\log\log n},$$
as claimed.
\end{proof}
 
The proof of the next lemma is almost identical, see the Appendix~\cite{App} for the details.
 
\begin{lemma}\label{obs:deltaFdeltaFv}
Let $(F,A)$ be a graph structure pair with $t_A(F) > 0$, and let $(F',A') \in \F_{F,A}^-$. If $(\log n)^{\gamma(F,A)} \le n^{v_A(F)+e(F)+1}$, then
$$\max\Big\{ \Delta(F',A') - \Delta(F,A), \sqrt{\Delta(F,A)} \Big\} \, \le \, \frac{\eps^2}{v_A(F)} \cdot \frac{\log n}{\log\log n}.$$
\end{lemma}

\begin{comment}
%%%%%%%%%%%%%%%
\begin{proof}
We have, as in the proof above, 
\begin{align*} 
\big| \Delta(F',A') - \Delta(F,A) \big| + \sqrt{\Delta(F,A)} & \, \le \, \big( \delta(F,A) + 2 \big)^C - \delta(F,A)^C + \delta(F,A)^{C/2} \\[+0.5ex]
& \, \le \, 4C \cdot \delta(F,A)^{C - 1} \, \le \, \frac{4C \cdot \Delta(F,A)}{\delta(F,A)} \, \le \, \frac{4}{C} \cdot \frac{\Delta(F,A)}{v_A(F)^2}.
\end{align*}
Note that $e(F) \le 2 v_A(F) - 1$, since $t_A(F) > 0$. Since $(\log n)^{\gamma(F,A)} \le n^{v_A(F)+e(F)+1}$, it follows that 
$$v_A(F) \, \ge \, \frac{v_A(F) + e(F)+1}{3} \, \ge \, \frac{\gamma(F,A)}{3} \cdot \frac{\log\log n}{\log n}  \, \ge \, \frac{\Delta(F,A)}{4} \cdot \frac{\log\log n}{\log n}.$$ 
It follows that
$$\big| \Delta(F',A') - \Delta(F,A) \big|  + \sqrt{\Delta(F,A)} \, \le \, \frac{16}{C \cdot v_A(F)} \cdot \frac{\log n}{\log \log n} \, \le \, \frac{\eps}{v_A(F)} \cdot \frac{\log n}{\log\log n},$$
as claimed.
\end{proof}
%%%%%%%%%%%%%%%%%%
\end{comment}

Finally, the following bound follows easily from Lemma~\ref{obs:deltaFdeltaFv}.

\begin{lemma}\label{lem:gF'A'gFA}
Let $(F,A)$ be a graph structure pair with $t_A(F) > 0$, and let $(F',A') \in \F_{F,A}^-$. If $(\log n)^{\gamma(F,A)} \le n^{v_A(F)+e(F)+1}$, then
$$g_{F',A'}(t) \, \le \, n^{1/4 + \eps} \cdot g_{F,A}(t)$$
for every $0 < t \le t_{A'}(F')$. 
\end{lemma}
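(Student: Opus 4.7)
The strategy is to unpack the ratio
$$\frac{g_{F',A'}(t)}{g_{F,A}(t)} \, = \, e^{(c(F',A') - c(F,A))t^2} \cdot (\log n)^{\gamma(F',A') - \gamma(F,A)}$$
and control the two factors separately. No fundamentally new idea is needed; the lemma is essentially a repackaging of Lemma~\ref{obs:deltaFdeltaFv} combined with the tracking-time calibration built into the constants $c(F,A)$.

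For the exponential factor, I will use the defining property of $c(F',A')$, which ensures $e^{c(F',A') t^2} \le n^{1/4}$ for every $t \le t_{A'}(F')$. Specifically, if $t_{A'}(F') < t^*$ then by construction $e^{c(F',A') t_{A'}(F')^2} = n^{1/4}$, so the bound is clear for any smaller $t$; if instead $t_{A'}(F') = t^*$ then every $A' \subsetneq H' \subseteq F'$ satisfies $t^*_{A'}(H') \ge t^*$, which forces $2 o(H') / (2 v_{A'}(H') - e(H')) \le (\log n)/(4(t^*)^2)$, and since $(t^*)^2 \le (\log n)/8$ the trailing $2$ in the definition of $c$ is also bounded by $(\log n)/(4(t^*)^2)$, so once again $e^{c(F',A')(t^*)^2} \le n^{1/4}$. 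Since $c(F,A) \ge 2 > 0$, dropping the factor $e^{-c(F,A)t^2} \le 1$ then yields $e^{(c(F',A') - c(F,A))t^2} \le n^{1/4}$.

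For the polylog factor I will invoke Lemma~\ref{obs:deltaFdeltaFv}, whose hypothesis $(\log n)^{\gamma(F,A)} \le n^{v_A(F) + e(F) + 1}$ is exactly the one assumed here. That lemma yields
$$\Delta(F',A') - \Delta(F,A) \, \le \, \frac{\eps^2}{v_A(F)} \cdot \frac{\log n}{\log\log n}.$$
Combining with $e(F') \le e(F) + 1$, which comes from~\eqref{eq:veoF'A'FA}, gives
$$\gamma(F',A') - \gamma(F,A) \, = \, \big(\Delta(F',A') - \Delta(F,A)\big) - \big(e(F') - e(F)\big) \, \le \, \frac{\eps^2}{v_A(F)} \cdot \frac{\log n}{\log\log n} + 1.$$
Using the elementary identity $(\log n)^{x \log n / \log\log n} = n^x$, this produces
$$(\log n)^{\gamma(F',A') - \gamma(F,A)} \, \le \, n^{\eps^2 / v_A(F) + o(1)} \, \le \, n^{\eps},$$
where the last step uses $v_A(F) \ge 1$ and that $\eps$ is a fixed positive constant.

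Multiplying the two bounds gives $g_{F',A'}(t) \le n^{1/4 + \eps} \cdot g_{F,A}(t)$, as required. The only item that even warrants a second glance is the edge case $t_{A'}(F') = t^*$ for the exponential factor, which reduces to the numerical inequality $(t^*)^2 < (\log n)/8$, so I do not expect any serious obstacle in writing this out in full.
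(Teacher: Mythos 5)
Your overall route is the paper's: drop the favourable exponential factor, absorb $e^{c(F',A')t^2}$ into $n^{1/4}$ via the calibration of $c$ at the tracking time, and bound the polylog ratio via Lemma~\ref{obs:deltaFdeltaFv}. The handling of the $t_{A'}(F')=t^*$ edge case is correct. However, there is a real error in the polylog step. You write $\gamma(F',A') - \gamma(F,A) = \big(\Delta(F',A') - \Delta(F,A)\big) + \big(e(F) - e(F')\big)$ and then claim that $e(F') \le e(F)+1$ gives an upper bound of $+1$ on the second bracket. It gives the \emph{lower} bound $e(F) - e(F') \ge -1$; the direction is reversed. In fact $e(F')$ can be much smaller than $e(F)$ — in cases $(b)$, $(d)$, $(e)$ of Definition~\ref{def:F-}, all edges meeting the absorbed vertices are deleted — so the honest bound is only $e(F) - e(F') \le e(F)$, using $e(F') \ge 0$.

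This is fixable by the same tools, and indeed is what the paper does implicitly: from $\delta(F,A) = C^3 v_A(F)^2 + 2e(F) + o(F) \ge 2e(F)$ and $\sqrt{\Delta(F,A)} = \delta(F,A)^{C/2}$ one gets $e(F) \le \sqrt{\Delta(F,A)}$, and Lemma~\ref{obs:deltaFdeltaFv} already bounds $\sqrt{\Delta(F,A)}$ by $\frac{\eps^2}{v_A(F)}\cdot\frac{\log n}{\log\log n}$ alongside $\Delta(F',A') - \Delta(F,A)$. Summing gives $\gamma(F',A') - \gamma(F,A) \le \frac{2\eps^2}{v_A(F)}\cdot\frac{\log n}{\log\log n}$, which is still $\le \frac{\eps}{v_A(F)}\cdot\frac{\log n}{\log\log n}$ for small $\eps$, and the rest of your argument goes through unchanged. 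So the structure is right but that inequality as written does not hold, and the $e(F)$ term needs to be controlled via the $\sqrt{\Delta(F,A)}$ clause of Lemma~\ref{obs:deltaFdeltaFv} rather than via $e(F') \le e(F)+1$.
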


\begin{proof}
Simply note that, by Lemma~\ref{obs:deltaFdeltaFv},
$$\gamma(F',A') - \gamma(F,A) \, \le \, \Delta(F',A') - \Delta(F,A) + e(F) \, \le \, \frac{\eps}{v_A(F)} \cdot \frac{\log n}{\log\log n}.$$
Since $t \le t_{A'}(F')$ and $v_A(F) \ge 1$, it follows that
$$g_{F',A'}(t) \, \le \, (\log n)^{\gamma(F',A')} \, \le \, n^{1/4 + \eps} \cdot g_{F,A}(t),$$
as required.
\end{proof}

\subsection{Balanced non-tracking graph structures}\label{balancedsec}

In this subsection we shall use the tools developed above to prove a slight strengthening of the bound in Theorem~\ref{EEthm}$(c)$ for balanced pairs $(F,A)$, i.e., pairs whose building sequence is either
$$A \subseteq H_0 = F \qquad \text{or} \qquad A = H_0 \subsetneq H_1 = F,$$
see Definition~\ref{def:balanced}. We shall prove the following proposition. 

\begin{prop}\label{prop:balanced:late}
Let $(F,A)$ be a balanced graph structure pair, and let $t_A(F) < t \le t^*$. With probability at least $1 - n^{- 3\log n}$, either $\big( \E(m) \cap \Z(m) \cap \Q(m) \big)^c$ holds, or
\begin{equation}\label{eq:balanced:late}
N_\phi(F)(m) \, \le \, \max\Big\{ e^{- o(F)(t^2 - t_A(F)^2)} (\log n)^{\Delta(F,A)}, (\log n)^{\Delta(F-v,A)} \Big\} 
\end{equation}
for every $\phi \colon A \to V(G_m)$ which is faithful at time~$t$. 
\end{prop}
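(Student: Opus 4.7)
My plan is to proceed by induction on $\Delta(F,A)$, assuming both the full conclusion of Theorem~\ref{EEthm} and also Proposition~\ref{prop:balanced:late} for all pairs with strictly smaller $\Delta$. The base case (very small $\Delta$) is trivial, since then $(\log n)^{\Delta(F-v,A)}$ is already larger than any relevant count. At the starting time $t = t_A(F)$ (in the case $t_A(F) > 0$), Theorem~\ref{EEthm}$(b)$ applied to the balanced pair $(F,A)$ gives $\Nt_A(F)(m) = (2t_A(F))^{e(F)}$ and hence
\[ N_\phi(F)\big(\lceil t_A(F) \cdot n^{3/2} \rceil\big) \, \le \, 2\big(2t_A(F)\big)^{e(F)} \, \le \, (\log n)^{e(F)}, \]
which is comfortably below $(\log n)^{\Delta(F,A)}$. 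The case $t_A(F) = 0$ is handled analogously, starting from the early-time bound (part~$(a)$ of Theorem~\ref{EEthm}) or the trivial bound $N_\phi(F) \le n^{v_A(F)}$ at $t = 0$.

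For $t_A(F) < t \le t^*$, I would track the rescaled variable
\[ M_\phi(m) \, := \, N_\phi(F)(m) \cdot e^{o(F)(t^2 - t_A(F)^2)}, \]
and show that $M_\phi(m)$ does not exceed $(\log n)^{\Delta(F,A)}$ until $e^{-o(F)(t^2-t_A(F)^2)}(\log n)^{\Delta(F,A)}$ drops below the polylog floor $(\log n)^{\Delta(F-v,A)}$ (after which the floor takes over). By Lemma~\ref{selfN}, past the tracking time the destruction term $-\frac{o(F)\Yt(m)}{Q(m)} N_\phi(F) \approx -\frac{8t\,o(F)}{n^{3/2}} N_\phi(F)$ dominates the creation term $\sum_{F^o \in \F^o_F} N_\phi(F^o)/Q(m)$. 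Combining this with the Product Rule (Lemma~\ref{chain}) and the derivative $\Delta\big(e^{o(F)(t^2 - t_A(F)^2)}\big) \approx \frac{2t\,o(F)}{n^{3/2}} e^{o(F)(t^2 - t_A(F)^2)}$ yields
\[ \Ex\big[\Delta M_\phi(m)\big] \, \le \, -\frac{6t\,o(F)}{n^{3/2}} M_\phi(m) \,+\, e^{o(F)(t^2 - t_A(F)^2)} \sum_{F^o \in \F^o_F} \frac{N_\phi(F^o)}{Q(m)} \,+\, \text{(lower order)}. \]
Each $F^o$ satisfies $\Delta(F^o, A) < \Delta(F,A)$ and, by Observation~\ref{obs:F-} and the balance of $(F,A)$, also $t_A(F^o) \ge t_A(F)$; so either Theorem~\ref{EEthm}$(b)$ (when $F^o$ is still in its tracking range) or the inductive hypothesis applied to $(F^o,A)$ gives $N_\phi(F^o) \le 2\, \Nt_A(F^o) \cdot e^{-o(F^o)(t^2 - t_A(F^o)^2)}(\log n)^{O(1)}$, and a direct calculation shows this creation contribution is absorbed by the $-\frac{6t\,o(F)}{n^{3/2}} M_\phi$ drift.

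For the step size $|\Delta M_\phi(m)|$, I would use Lemmas~\ref{lem:F+} and~\ref{lem:F-} to reduce to bounding $N_{\phi'}(F')$ over $(F',A') \in \F_{F,A}^+ \cup \F_{F,A}^-$. By Observations~\ref{obs:deltaF'H'A'deltaFA} and~\ref{obs:deltaF'vA'deltaFA}, in all cases of Definition~\ref{def:F-} except $(a)$ and $(c)$ we have $\Delta(F',A') \le \Delta(F,A) - 3\sqrt{\Delta(F,A)}$, so the inductive hypothesis (or Theorem~\ref{EEthm} if $(F',A')$ is still tracking) gives $N_{\phi'}(F') \le (\log n)^{\Delta(F,A) - 2\sqrt{\Delta(F,A)}}$. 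In cases $(a)$ and $(c)$ we have $A' \cap F = A$ and, by Observation~\ref{obs:AcapF}, $\Nt_{A'}(F') = (2t/\sqrt{n}) \Nt_A(F)$, so Theorem~\ref{EEthm}$(b)$ applied to $(F',A')$ gives a bound also much smaller than $(\log n)^{\Delta(F,A)}$. Summing over the $O(v_A(F)^2 + e(F))$ members of $\F_{F,A}^+ \cup \F_{F,A}^-$, the step-size parameter $\alpha$ and expected-step-size parameter $\beta$ for $M_\phi$ then satisfy $\alpha \beta n^{3/2} \le (\log n)^{-\Omega(\sqrt{\Delta(F,A)})}$. An adaptation of Lemma~\ref{lem:self:mart} to this setting (we only need a negative drift when $M_\phi$ is above its threshold, not two-sided self-correction) then yields failure probability $\le n^{-3 \log n}$, after a union bound over $m \in [m^*]$.

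The main obstacle will be verifying that the creation contribution really is swallowed by the destruction drift: this is where balance plays its essential role, since $t_A^*(H) \ge t_A^*(F)$ for every $A \subsetneq H \subseteq F$ together with Observation~\ref{obs:F-} forces each $F^o$ to track at least as long as $F$, so that $N_\phi(F^o)$ is never anomalously large relative to $\Nt_A(F^o)$ in the regime we care about. A secondary subtlety is gluing the exponentially decaying bound to the polylog floor $(\log n)^{\Delta(F-v,A)}$: once $M_\phi$ falls below $(\log n)^{\Delta(F,A)}$ but $N_\phi(F)$ is still near $(\log n)^{\Delta(F-v,A)}$, the self-correcting drift is too weak to rule out short excursions, and one must argue directly that creations of magnitude larger than $(\log n)^{\Delta(F-v,A)}$ are themselves controlled by sub-structures of smaller $\Delta$, handled inductively.
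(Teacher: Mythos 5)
Your proposal correctly identifies the right overall shape of the argument (a one-sided Line-of-Peril/Line-of-Death bound after time $t_A(F)$, initial value controlled by Theorem~\ref{EEthm}$(b)$ or Proposition~\ref{prop:balanced:late:tAFzero}, drift dominated by the $-o(F)\Yt(m)/Q(m)$ destruction term, step size reduced to counts of $(F',A') \in \F_{F,A}^-$), but there is a genuine gap in your step-size bound for the cases $(a)$ and $(c)$ of Definition~\ref{def:F-} — exactly the cases in which the newly added edge intersects $\phi(A)$. In those cases $v_{A'}(F') = v_A(F)$, $o(F') = o(F)$ and $e(F') = e(F)+1$, so $t^*_{A'}(F')$ is \emph{strictly} smaller than $t^*_A(F) = t_A(F)$ (one extra edge with the same vertices and open edges reduces the tracking time), hence $(F',A')$ is no longer tracking at any $t > t_A(F)$. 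Your appeal to Theorem~\ref{EEthm}$(b)$ for these pairs is therefore not available; the only bound the event $\E(m)$ supplies is the post-tracking bound of part $(c)$, which can be as large as $(\log n)^{\Delta(F',A')}$. Since $\Delta(F',A') > \Delta(F,A)$ (strictly, by convexity of $x \mapsto x^C$), this exceeds the Line of Death itself, so the single-step change is not controllable — a single edge with an endpoint in $\phi(A)$ really can destroy a constant fraction of all copies of $F$ rooted at $\phi(A)$. This is precisely the obstacle the paper flags ("the single step changes in $N_\phi(F)$ can be very large"), and it is not a detail that can be patched within your framework.

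The paper's resolution is to change the tracked quantity rather than try to bound these destructions: the martingale $M_\phi^F(m')$ in~\eqref{def:bal:mart} only accumulates $\hat{D}_\phi(F)(m)$, the number of copies destroyed by edges \emph{disjoint} from $\phi(A)$. By Observation~\ref{obs:destroy:disjoint}, the relevant step size is then controlled by $\F_{F,A}^*$ (the sub-family with $v_{A'}(F') < v_A(F)$), for which $\Delta(F',A') \le \Delta(F{-}v,A) - 2C$ and the $\E(m)$ bound is small enough (Lemma~\ref{lem:DeltaMF}). Since $\hat{D}_\phi \le D_\phi$, this surrogate still dominates $\Delta N_\phi(F)$ (plus the decay of the Line of Death), so crossing the Line of Death forces $M_\phi^F$ to rise substantially (Lemma~\ref{lem:LLimpliesMF}). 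The price paid is that the drift calculation (Lemma~\ref{lem:ExDeltaMF}) must use that each open edge of $F$ has at most one endpoint in $A$, so at least roughly $\frac{1}{2}\Yt(m)$ of its closers are disjoint from $\phi(A)$; the resulting margin ($-\frac{5}{2}o(F) \cdot \frac{t}{n^{3/2}} L_A^D$ destruction against $+2o(F)\cdot \frac{t}{n^{3/2}} L_A^D$ decay of $L_A^D$) is exactly enough, and is a less comfortable margin than the $-6to(F)/n^{3/2}$ you wrote down assuming all destructions were counted. Your obstacle paragraph focuses on the creation term and the polylog floor, but the decisive obstacle — the one the paper's construction is specifically designed around — is the step-size blowup from destructions by edges meeting $\phi(A)$, and your proposal as written does not address it.
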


In the case $t_A(F) = 0$, we shall prove the following slightly stronger bound.

\begin{prop}\label{prop:balanced:late:tAFzero}
Let $(F,A)$ be a balanced graph structure pair with $t_A(F) = 0$, and let $0 < t \le t^*$. With probability at least $1 - n^{- 3\log n}$, either $\big( \E(m) \cap \Z(m) \cap \Q(m) \big)^c$ holds, or
\begin{equation}\label{eq:balanced:late:tAFzero}
N_\phi(F)(m) \, \le \, (\log n)^{\Delta(F-v,A)}
\end{equation}
for every $\phi \colon A \to V(G_m)$ which is faithful at time~$t$. 
\end{prop}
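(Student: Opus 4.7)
I would adapt the super-martingale argument for Proposition~\ref{prop:balanced:late} with the sharper target $L := (\log n)^{\Delta(F-v,A)}$. Setting $A^*(m) := N_\phi(F)(m)/L$, I follow the Line-of-Peril/Line-of-Death template of Section~\ref{MartSec}: for each $(r,s) \in \N^2$, consider the event $\LL(r,s)$ that $A^*$ last crosses $1/2$ downward at step $r+1$ and first exceeds $1$ at step $r+s$, and aim to bound
$$\Pr\bigl( \LL(r,s) \cap \E(m-1) \cap \Z(m-1) \cap \Q(m-1) \bigr) \, \le \, n^{-\log n}.$$
A union bound over $(r,s)$ and over the at most $n^{v_A(F)}$ faithful maps $\phi$ then yields the claim. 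The initial condition $A^*(0) = 0$ is free: since $(F,A)$ is balanced with $t_A(F) = 0$, Definition~\ref{def:balanced}$(b)$ applied with $H = A$ gives $0 \le e(F) - 2v_A(F)$, so $e(F) \ge 2v_A(F) \ge 2$ (the case $v_A(F) = 0$ being trivial), and $G_0$ contains no copy of $F$.

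The crux is the drift estimate: when $L/2 \le N_\phi(F) \le L$, I want to show
$$\Ex\bigl[ \Delta N_\phi(F)(m) \bigr] \, \le \, - c \cdot o(F) \cdot \frac{t}{n^{3/2}} \cdot N_\phi(F)(m)$$
for some constant $c > 0$. Lemma~\ref{selfN}, combined with the events $\Y(m)$ and $\Q(m)$, gives a destruction contribution of at least $(1-o(1)) \cdot o(F) \cdot \Yt(m) N_\phi(F)/Q(m) \asymp o(F) \cdot (8t/n^{3/2}) \cdot N_\phi(F)$; the work is to show that the creation rate $\sum_{F^o \in \F^o_F} N_\phi(F^o)/Q(m)$ is a small fraction of this. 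Remark~\ref{Eremark} applied to each $F^o$ gives a bound from whichever part of Theorem~\ref{EEthm} is relevant (tracking when $t \le t_A(F^o)$, polylog otherwise), and using Observation~\ref{obs:NtF-} to relate $\Nt_A(F^o)$ to $\Nt_A(F)$, together with the exponent bounds in Observation~\ref{obs:deltaaddlittle} and Lemma~\ref{obs:deltaF'A'Fv} that guarantee $\Delta(F^o,A) - \Delta(F-v,A) = o(\log n/\log\log n)$, confirms that creation is dwarfed by destruction once $N_\phi(F) \ge L/2$. The step-size inputs $\alpha(t)$ and $\beta(t)$ for Lemma~\ref{lem:self:mart} come from Lemmas~\ref{lem:F+} and~\ref{lem:F-}, combined with Observations~\ref{obs:deltaF'H'A'deltaFA} and~\ref{obs:deltaF'vA'deltaFA}, which bound the contributing exponents by $\Delta(F-v,A)$ up to a negligible slack, ensuring $\alpha(t)\beta(t) n^{3/2} \le (\log n)^{-3}$. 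A small adaptation of Lemma~\ref{lem:self:mart} is needed because the normalized error here is a straight ratio $N_\phi(F)/L$ rather than $(A-\At)/(g\At)$, but the linear-in-$N_\phi(F)$ drift carries the same conclusion.

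The main obstacle is the creation estimate in the borderline case $e(F) = 2v_A(F)$: here each $F^o \in \F^o_F$ has $e(F^o) - 2v_A(F^o) = -1$, so $t_A(F^o) > 0$ and the tracking bound on $N_\phi(F^o)$ is a genuine power of $n$ at small $t$, requiring a careful comparison to show that destruction still dominates. A secondary subtlety is the case $o(F) = 0$, in which no copy can be destroyed and $N_\phi(F)$ is monotone; here I would replace the martingale argument with a direct union bound over the total number of creation events, each linked to a copy of some $F^o \in \F^o_F$ with $o(F^o) = 1$ whose count is controlled by the previously-established tracking bounds.
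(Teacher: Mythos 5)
Your proposal takes a genuinely different route from the paper, and unfortunately the route has a gap precisely where you describe the problem as a ``secondary subtlety.''

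The paper's proof begins with a structural reduction that you don't make: when $(F,A)$ is balanced and $t_A(F)=0$, one may \emph{without loss of generality assume $o(F)=0$}. Deleting all open edges of $F$ can only increase $N_\phi(F)$, and it strictly \emph{decreases} $\delta(F,A)$ and hence $\Delta(F-v,A)$, so proving the bound for the open-edge-free structure is stronger. (A similar reduction, together with an induction on $v_A(F)$ to handle substructures achieving $e(H)-2v_A(H)=e(F)-2v_A(F)$, reduces further to $e(F)=2v_A(F)$.) After this reduction the case $o(F)=0$ is not a side case — it is the \emph{only} case. Your whole self-correction machinery (Lines of Peril and Death, destruction dominating creation) is built to handle $o(F)>0$, and that effort is unnecessary; whereas the case that actually matters gets only a sketch.

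The gap is in that sketch. When $o(F)=0$, the variable $N_\phi(F)$ is monotone non-decreasing, and its expected final value is of order $(2t^*)^{e(F)}\le(\log n)^{e(F)/2}$, far below the target $L=(\log n)^{\Delta(F-v,A)}$. What you must show is \emph{concentration}: the actual count is unlikely to exceed $L$ despite occasional bursts of creation. A ``direct union bound over the total number of creation events'' does not deliver this. Creation events in different steps are dependent (each edge addition changes the landscape of $F^o$-copies), and even ignoring dependence, a first-moment or union-bound calculation only controls the expected number of creations, not the probability of a large deviation of the running count. What is needed is exactly a martingale concentration inequality — the paper applies Bohman's Lemma~\ref{Bohmart} to the monotone process $N_\phi(F)$ after subtracting its drift, crucially using that each creation step changes $N_\phi(F)$ by at most $\sum_{(F^+,A^+)\in\F^+_{F,A}}(\log n)^{\Delta(F^+-v,A^+)} < (\log n)^{\Delta(F-v,A)-3}$, which bounds the step size. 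Your proposal does not explain why a union bound suffices, and I do not believe it does.

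There are two further, smaller issues. First, Lemma~\ref{lem:self:mart} is stated for intervals $[a,b]\subseteq[n^{3/2},m^*]$, but the present proposition concerns $0<t\le t^*$ from the start; you use $A^*(0)=0$ as the initial condition but do not say how to bridge $0\le m<n^{3/2}$, where the reasonable-collection framework doesn't apply. Second, your claimed ``straightforward adaptation'' of Lemma~\ref{lem:self:mart} to a normalization by a constant $L$ rather than by $g(t)\At(m)$ would have to be carried out, because the drift bound in that lemma is calibrated against the specific normalized form; this is fixable, but not free.

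To summarize: your drift argument for $o(F)>0$ is plausible, but the paper shows that case need never be confronted, and the $o(F)=0$ case — the one you defer to a union bound — is exactly where a martingale inequality is unavoidable.
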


Let us begin by sketching the proof of Propositions~\ref{prop:balanced:late} and~\ref{prop:balanced:late:tAFzero}, and discussing why they are necessary for the proof of Theorem~\ref{EEthm}. When $t_A(F) = 0$ we shall use induction on $v_A(F)$, combined with Bohman's martingale method (i.e., Lemma~\ref{Bohmart}). %, to show that $N_\phi(F)(m) \le (\log n)^{\Delta(F-v,A)}$ for every $m \in [m^*]$. 
In fact, when $v_A(F)$ is bounded we shall prove a much stronger bound, see Proposition~\ref{ZpropforgeneralF}, below. 

The harder case is when $t_A(F) > 0$. First observe that, since $t_A(F) \not\in \{0,t^*\}$, it follows that $o(F) > 0$, and therefore we need to show that $N_\phi(F)$ is decreasing for a while after time $t = t_A(F)$. We would like to use our usual martingale method (i.e., Lemma~\ref{mart}), but there is a problem: the single step changes in $N_\phi(F)$ can be \emph{very} large. In order to get around this problem, we define a new variable (see~\eqref{def:bal:mart}, below), which counts the number of copies of $F$ rooted at $\phi(A)$ which are destroyed \emph{by the addition of an edge which is disjoint from $\phi(A)$}. Using Observation~\ref{obs:destroy:disjoint}, we will be able to give a sufficiently strong upper bound on the single-step change in this variable; moreover, we shall be able to bound its expected change, using the trivial observation that every open edge of~$F$ has at most one endpoint in~$A$. 

Finally, we remark that the bounds~\eqref{eq:balanced:late} and~\eqref{eq:balanced:late:tAFzero} will be used to bound $N_{\phi'}(F')(m)$ for pairs $(F',A') \in \F_{F,A}^-$ which are balanced, when $t_{A'}(F') < t \le t_A(F)$ and $g_{F,A}(t) \ge 1$. The following definition will allow us to assume that this bound holds when we need it.

\begin{defn}\label{def:event:M}%[The event $\M(m)$]
For each $m' \in [m^*]$, let $\M(m')$ denote the event that the bound~\eqref{eq:balanced:late} holds for every balanced graph structure pair $(F,A)$, every $t_A(F) \cdot n^{3/2} < m \le m'$ and every map $\phi \colon A \to V(G_m)$ which is faithful at time~$t$, and that moreover~\eqref{eq:balanced:late:tAFzero} holds if $t_A(F) = 0$.
\end{defn}

As noted above, the event $\M(m)$ will be a crucial tool in Section~\ref{deltaNsec}, where we shall give an upper bound on $|\Delta N^*_\phi(F)(m)|$ in the case $\omega < t \le t_A(F)$. Observe that~\eqref{eq:balanced:late} and~\eqref{eq:balanced:late:tAFzero} both hold trivially if $(\log n)^{\Delta(F-v,A)} \ge n^{v_A(F)}$, so we may assume otherwise. 

We begin by showing that when $t_A(F) = 0$ and $v_A(F)$ is bounded, we can obtain a much sharper result, which (almost) generalizes Proposition~\ref{Zprop}.

\begin{prop}\label{ZpropforgeneralF}
Let $(F,A)$ be a balanced graph structure pair, and suppose that $v_A(F) \le \omega$ and $t_A(F) = 0$. Then, with probability at least $1 - n^{- 3\log n}$, for every $m \in [m^*]$ either $\big( \E(m-1) \cap \Q(m-1) \big)^c$ holds, or
\begin{equation}\label{eq:ZpropforgeneralF}
N_\phi(F)(m) \, \le \, (\log n)^{3v_A(F)}
\end{equation}
for every $\phi \colon A \to V(G_m)$ which is faithful at time~$t$. 
\end{prop}

Let's begin by deducing Proposition~\ref{ZpropforgeneralF} from Lemma~\ref{Bohmart} and Proposition~\ref{Zprop}. 

\begin{proof}[Proof of Proposition~\ref{ZpropforgeneralF}]
We shall use induction on $v_A(F)$ to prove that the event in the statement holds with probability at least $1 - n^{v_A(F)} \cdot n^{-4\log n}$, which implies the claimed bound since $v_A(F) \le \omega \ll \log n$. This holds for $v_A(F) = 1$ by Proposition~\ref{Zprop}, so suppose that $v_A(F) \ge 2$, and let us assume that the induction hypothesis holds for all smaller values of $v_A(F)$. Recall from Definition~\ref{def:balanced} that the conditions that $(F,A)$ is balanced and $t_A(F) = 0$ imply (and in fact are equivalent to)
\begin{equation}\label{eq:baltAFzero:condition}
e(H) - 2v_A(H) \le e(F) - 2v_A(F) \qquad \textup{for every } A \subseteq H \subseteq F,
\end{equation}
and note in particular (setting $H = A$) that $t_A^*(F) = 0$. 

We claim first that, without loss of generality, we have $e(F) = 2v_A(F)$ and $o(F) = 0$. Indeed, if $o(F) > 0$ then we can simply remove all open edges from $F$; in doing so we only increase $N_\phi(F)$, and we retain the condition~\eqref{eq:baltAFzero:condition}. % that $(F,A)$ is balanced and $t_A^*(F) = 0$. 
If $e(F) > 2v_A(F)$ then there are two cases: either there exists a substructure $A \subsetneq H \subsetneq F$ with
$$e(H) - 2v_A(H) \, = \, e(F) - 2v_A(F),$$
or there does not. In the former case, observe that~\eqref{eq:baltAFzero:condition} holds for the pairs $(F,H)$ and $(H,A)$. %are both balanced, and that $t_A^*(H) = t^*_H(F) = 0$. 
It follows by the induction hypothesis and Lemma~\ref{lem:crucial} that
$$N_\phi(F) \, \le \, N_\phi(H) \cdot \max_{\phi' \colon H \to V(G_m)} N_{\phi'}(F) \, \le \, (\log n)^{3(v_A(H) + v_H(F))}  \, = \, (\log n)^{3v_A(F)},$$
with probability at least $1 - \big( n^{v_A(H)} + n^{v_H(F)} \big) n^{-4\log n}$, as required. In the latter case, i.e., no such $H$ exists, then we may remove an arbitrary edge from $F$. Note that $N_\phi(F') \ge N_\phi(F)$ for the resulting graph structure pair $(F',A)$, and moreover that~\eqref{eq:baltAFzero:condition} holds for $(F',A)$, since $e(F) > 2v_A(F)$. Hence $(F',A)$ is balanced and $t_A(F') = 0$, as required.

We are left to deal with the case $e(F) = 2v_A(F)$ and $o(F) = 0$. If there exists a substructure $A \subsetneq H \subsetneq F$ with
$e(H) = 2v_A(H)$ then we are easily done, exactly as above, by applying the induction hypothesis to the pairs $(F,H)$ and $(H,A)$. So assume not, and recall from~\eqref{eq:number:created} that,
$$\Ex\big[ \Delta N_\phi(F)(m) \big] \, = \, \sum_{F^o \in \F^o_F} \frac{N_{\phi}(F^o)}{Q(m)},$$
We claim that $\Nt_A(F^o) = (2t)^{e(F) - 1} e^{-4t^2} \sqrt{n}$ for every $F^o \in \F^o_F$, and that $c(F^o,A) = 2$. %$t_A(F^o) = t^*$.
Indeed, these statements follow immediately from the assumptions that $e(F) = 2v_A(F)$ and $o(F) = 0$, and that $e(H) < 2v_A(H)$ for every $A \subsetneq H \subsetneq F$, respectively. %, since $t_A(F^o) = t^*$. 
Thus, while the event $\E(m) \cap \Q(m)$ holds, we have\footnote{Note that we must consider separately the cases $t \le \omega$ and $\omega < t \le t^*$, and observe that $\textbf{1}_{t \le \omega} \cdot f_{F^o,A}(t)  + \textbf{1}_{t > \omega} \cdot g_{F^o,A}(t)  \le n^{-\eps} \cdot (\log n)^{\gamma(F,A)} \ll 1$ since $v(F) \le \omega$ and $c(F^o,A) = 2$.}
\begin{equation}\label{eq:bal:NFooverQ}
\sum_{F^o \in \F^o_F} \frac{N_\phi(F^o)(m)}{Q(m)} \, \le \, \frac{e(F) \cdot (\log n)^{(e(F) - 1)/2} e^{-4t^2} \sqrt{n}}{\Qt(m)} \, \le \, \frac{(\log n)^{v_A(F)}}{n^{3/2}},
\end{equation}
where we used the bounds $2t < \sqrt{\log n}$ and $e(F) = 2v_A(F) \le 2\omega$. 

Now, set $\ell = v_A(F)$ and, for each $m' \in [m^*]$, let $\hat{\R}_\ell(m')$ denote the event that~\eqref{eq:ZpropforgeneralF} holds for every balanced graph structure pair $(F',A')$ with $t_A(F) = 0$, $v_{A'}(F') < \ell$ and $v(F) \le \omega$, every $m \le m'$ and every $\phi \colon A \to V(G_m)$ which is faithful at time~$t$. 
%$$\Pr\Big( \E(m-1) \cap \Q(m-1) \cap \hat{F}_\ell(m) \Big) \, \le \, (\log n)^{3v_A(F)}$$
We shall bound, for each $m \in [m^*]$, the probability of the event $\hat{\LL}_\phi^F(m)$, defined as follows:
$$\hat{\LL}_\phi^F(m) \, := \, \E(m-1) \cap \Q(m-1) \cap \hat{\R}_\ell(m-1) \cap \big( N_\phi(F)(m) > (\log n)^{3v_A(F)} \big).$$
Fix $m_0 \in [m^*]$; we shall bound the probability of $\hat{\LL}_\phi^F(m_0)$. Subtracting the right-hand side of~\eqref{eq:bal:NFooverQ} from $\Delta N_\phi(F)(m)$ and summing over $m$, we obtain a function
$$M_\phi^F(m) \, = \, N_\phi(F)(m) \,-\, \frac{m \cdot (\log n)^{v_A(F)}}{n^{3/2}}$$
defined on $[m_0]$, which is super-martingale while $\E(m) \cap \Q(m)$ holds. Our plan is to apply Lemma~\ref{Bohmart} to $M_\phi^F$; in order to do so, we first claim that if $\hat{\R}_\ell(m)$ holds then %the following bounds on $\Delta N_\phi(F)(m)$ hold with probability at least $1 - e(F) n^{v_A(F) - 4\log n  - 1}$:
\begin{equation}\label{eq:bal:bounds:on:deltaNF}
0 \, \le \, \Delta N_\phi(F)(m) \, \le \, e(F) \cdot (\log n)^{3v_A(F) - 3}.
\end{equation}
The lower bound is trivial, since $o(F) = 0$. To prove the upper bound, recall from the previous section the definition of $\F_{F,A}^+$, and Lemma~\ref{lem:F+}. We claim that if $\hat{\R}_\ell(m)$ holds then %, with probability at least $1 - n^{v_{A^+}(F)} \cdot n^{-4\log n}$,
\begin{equation}\label{eq:bal:deltaNF+}
N_{\phi^+}(F^+)(m) \, \le \, (\log n)^{3v_A(F) - 3}
\end{equation}
for each $(F^+,A^+) \in \F_{F,A}^+$, which clearly implies~\eqref{eq:bal:bounds:on:deltaNF}. 

To prove~\eqref{eq:bal:deltaNF+}, simply note that $e(A^+) \le 2v_A(A^+)$, by~\eqref{eq:baltAFzero:condition} and since $e(F) = 2v_A(F)$, and that $F^+ = \hat{F}^{A^+}$. It follows that~\eqref{eq:baltAFzero:condition} holds for the pair $(F^+,A^+)$, and hence $(F^+,A^+)$ is balanced and $t_{A^+}(F^+) = 0$. The bound~\eqref{eq:bal:deltaNF+} now follows from the event $\hat{\R}_\ell(m)$, and the fact that $A^+ \neq A$, as claimed. %by the induction hypothesis.

%To prove~\eqref{eq:bal:deltaNF+}, note first that $t_{A^+}(F^+) = 0$, since $(F,A)$ is balanced, so $e(A^+) \le 2v_A(A^+)$. Next, observe that $\Nt_H(F) \le (2t)^{e(F) - e(H)}$ for every $A \subseteq H \subseteq F$ and every $t > 0$, by Lemma~\ref{lem:tell:NHFsmall}, since $(F,A)$ is balanced and $t_A(F) = 0$. It follows that the same holds for every $A^+ \subseteq H \subseteq F^+ = F$ (since such $H$ are a subset of those with $A \subseteq H \subseteq F$), and so $(F^+,A^+)$ is balanced. The bound~\eqref{eq:bal:deltaNF+} now follows by the induction hypothesis.

Set $\alpha = e(F) \cdot (\log n)^{3v_A(F) - 3}$ and $\beta = (\log n)^{3v_A(F)} / m_0 \ge (\log n)^{v_A(F)} / n^{3/2}$, and observe that
$$\alpha \cdot \beta \cdot m_0 \, = \, e(F) \cdot (\log n)^{6v_A(F) - 3}.$$
Writing $\K(m) = \E(m) \cap \Q(m) \cap \hat{\R}_\ell(m)$ and applying Lemma~\ref{Bohmart}, it follows that
$$\Pr\big( \hat{\LL}_\phi^F(m_0) \big) \, \le \, \Pr\left( \bigg( M_\phi^F(m_0) > \frac{(\log n)^{3v_A(F)}}{2} \bigg) \cap \K(m_0-1) \right) \, \le \, \exp\left( - \frac{(\log n)^3}{16 \cdot e(F)} \right).$$ % \, \le \, n^{-C\log n}.$$
Summing over choices for $m_0$, and adding this to the probability that $\hat{\R}_\ell(m^*)$ fails to hold, it follows (using the induction hypothesis)  that the probability that 
$$\E(m-1) \cap \Q(m-1) \cap \big( N_\phi(F)(m) > (\log n)^{3v_A(F)} \big)$$
holds for some $m \in [m^*]$ is at most 
$$m^* \cdot \exp\left( - \frac{(\log n)^3}{16 \cdot e(F)} \right) \,+\, \sum_{\ell' = 1}^{\ell - 1}\sum_{\substack{(F',A')\\ v_{A'}(F') = \ell'}} n^{\ell'} \cdot n^{-4\log n} \, \le \, n^{v_A(F) - 1/2} \cdot n^{-4\log n}.$$ 
Summing over the (at most most $\sqrt{n}$) choices of~$\phi$, we obtain the claimed bound on the probability of the event in the statement.
\end{proof}

Using almost the same proof, we obtain Proposition~\ref{prop:balanced:late:tAFzero}.

\begin{proof}[Proof of Proposition~\ref{prop:balanced:late:tAFzero}]
We use induction on $v_A(F)$ to prove that if $(F,A)$ is balanced and $t_A(F) = 0$, then with probability at least $1 - n^{v_A(F)} \cdot n^{- 4\log n}$, for each $m \in [m^*]$ either $\big( \E(m-1) \cap \Q(m-1) \big)^c$ holds, or
\begin{equation}\label{eq:balanced:late:tAF=0}
N_\phi(F)(m') \, \le \, (\log n)^{\Delta(F-v,A)}
\end{equation}
for every $\phi \colon A \to V(G_m)$ which is faithful at time~$t$. The proof is almost identical\footnote{Note that our applications of the induction hypothesis still work in exactly the same way, using the convexity of the function $x \mapsto x^C$.} to that of Proposition~\ref{ZpropforgeneralF}, the main difference being that the bound~\eqref{eq:bal:NFooverQ} becomes
\begin{equation}\label{eq:bal:NFooverQlargeF}
\sum_{F^o \in \F^o_F} \frac{N_\phi(F^o)(m)}{Q(m)} \, \le \, \left( 1 + \frac{(\log n)^{\gamma(F,A)}}{n^\eps} \right) \frac{(\log n)^{v_A(F)}}{n^{3/2}},
\end{equation}
if $\E(m) \cap \Q(m)$ holds, using the fact that $(F^o,A)$ is balanced, and noting that 
$$\textbf{1}_{t \le \omega} \cdot f_{F^o,A}(t) + \textbf{1}_{t > \omega} \cdot g_{F^o,A}(t) \, \le \, n^{-\eps} \cdot (\log n)^{\gamma(F,A)},$$ 
which follows since $c(F^o,A) = 2$. Note also that the bounds in~\eqref{eq:bal:bounds:on:deltaNF} become
$$0 \, \le \, \Delta N_\phi(F)(m) \, \le \, \sum_{(F^+,A^+) \in \F_{F,A}^+} (\log n)^{\Delta(F^+-v,A^+)} \, < \, (\log n)^{\Delta(F-v,A) - 3},$$
since $F^+ = \hat{F}^{A^+}$ and $A^+ \neq A$. 

Now, let $\R_\ell(m')$ denote the event that~\eqref{eq:balanced:late:tAFzero} holds for every balanced graph structure pair $(F',A')$ with $t_A(F) = 0$ and $v_{A'}(F') < \ell = v_A(F)$, every $m \le m'$ and every $\phi \colon A \to V(G_m)$ which is faithful at time~$t$. Define
$$\LL_\phi^F(m) \, := \, \E(m-1) \cap \Q(m-1) \cap \R_\ell(m-1) \cap \big( N_\phi(F)(m) > (\log n)^{\Delta(F-v,A)} \big)$$
and fix $m_0 \in [m^*]$; we shall bound the probability of the event $\LL_\phi^F(m_0)$. Indeed, subtracting the right-hand side of~\eqref{eq:bal:NFooverQlargeF} from $\Delta N_\phi(F)(m)$ and summing over $m$, we obtain a function
$$M_\phi^F(m) \, = \, N_\phi(F)(m) \,-\, \left( 1 + \frac{(\log n)^{\gamma(F,A)}}{n^\eps} \right) \frac{m \cdot (\log n)^{v_A(F)}}{n^{3/2}}$$
defined on $[m_0]$, which is super-martingale while $\E(m) \cap \Q(m)$ holds. Let $\alpha = (\log n)^{\Delta(F-v,A) - 3}$ and
$$\beta \,=\, \frac{(\log n)^{\Delta(F-v,A)}}{m_0} \, \ge \, \frac{(\log n)^{\Delta(F-v,A) - 1}}{n^{3/2}} \, \gg \, \left( 1 + \frac{(\log n)^{\gamma(F,A)}}{n^\eps} \right) \frac{(\log n)^{v_A(F)}}{n^{3/2}},$$
where the final inequality holds since $(\log n)^{\Delta(F,A)} \le (\log n)^{\Delta(F-v,A)} \cdot n^{\eps/2}$, by Lemma~\ref{obs:deltaF'A'Fv}, since $(\log n)^{\Delta(F-v,A)} \le n^{v_A(F)}$, by assumption.

Writing $\K(m) = \E(m) \cap \Q(m) \cap \R_\ell(m)$ and applying Lemma~\ref{Bohmart}, it follows that
$$\Pr\big( \LL_\phi^F(m_0) \big) \, \le \, \Pr\left( \bigg( M_\phi^F(m_0) > \frac{(\log n)^{\Delta(F-v,A)}}{2} \bigg) \cap \K(m_0-1) \right) \, \le \, n^{-C \log n}.$$ % \, \le \, n^{-C\log n}.$$
The claimed bound now follows by the induction hypothesis, exactly as in the proof of Proposition~\ref{ZpropforgeneralF}.
\end{proof}

We now turn to the more substantive part of this subsection: the proof of Proposition~\ref{prop:balanced:late} when $t_A(F) > 0$. Note that in this case we have $o(F) > 0$, since $0 < t_A(F) < t^*$. We begin by noting that at time $t = t_A(F)$, the bound on $N_\phi(F)(m)$ given by part~$(b)$ of Theorem~\ref{EEthm} implies that required in part~$(c)$. 

\begin{lemma}\label{lem:t=tAF}
Let $(F,A)$ be a balanced graph structure pair with $0 < t_A(F) < t^*$, and set $t = t_A(F)$. For each injection $\phi \colon A \to V(G_m)$, if
$$N_\phi(F)(m) \, \in \, \big( 1 \pm g_{F,A}(t) \big) \Nt_A(F)(m),$$
then
\begin{equation}\label{eq:bal:bound:at:tAF}
N_\phi(F)(m) \, \le \, \big( \log n \big)^{\Delta(F,A)-1}.
\end{equation}
\end{lemma}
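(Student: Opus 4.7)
The plan is to simply evaluate both the error factor $1 + g_{F,A}(t)$ and the main term $\Nt_A(F)(m)$ at the critical time $t = t_A(F)$, exploiting the calibration built into the definitions of $g_{F,A}$ and $t_A^*(F)$. The argument is essentially a book-keeping exercise; there is no substantive obstacle, because the constants $c(F,A)$ and $\gamma(F,A)$ were specifically chosen so that, at $t = t_A(F) < t^*$, the bound collapses to the desired polylog.

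First I would identify $t_A(F)$ with $t_A^*(F)$. Since $(F,A)$ is balanced with $t_A(F) > 0$, we are in case $(a)$ of Definition~\ref{def:balanced}, so $t_A^*(H) \ge t_A^*(F)$ for every $A \subsetneq H \subseteq F$. Consequently the minimum in the definition of $t_A(F)$ is attained at $H = F$, giving $t_A(F) = \min\{t_A^*(F), t^*\}$. Using the hypothesis $t_A(F) < t^*$, we conclude $t_A(F) = t_A^*(F)$. Now by the definition of $t_A^*(F)$ in~\eqref{def:t*}, we have
$$\Nt_A(F)(m) \, = \, (2t)^{e(F)} \qquad \text{at } t = t_A(F).$$

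Second I would compute $g_{F,A}(t)$ at the same time. Recall from~\eqref{def:gfat} that $g_{F,A}(t) = e^{ct^2} n^{-1/4} (\log n)^{\gamma(F,A)}$, and that $c = c(F,A)$ was chosen (via~\eqref{def:cFA}) so that $e^{ct^2} = n^{1/4}$ when $t = t_A(F) < t^*$. Therefore
$$g_{F,A}\bigl(t_A(F)\bigr) \, = \, (\log n)^{\gamma(F,A)}.$$

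Finally, combining the two and using $\gamma(F,A) = \Delta(F,A) - e(F) - 2$ together with $2t \le 2t^* < \sqrt{\log n}$, I would conclude
$$N_\phi(F)(m) \, \le \, \bigl( 1 + (\log n)^{\gamma(F,A)} \bigr) (2t)^{e(F)} \, \le \, 2 (\log n)^{\gamma(F,A) + e(F)/2} \, \le \, 2 (\log n)^{\Delta(F,A) - 2 - e(F)/2}.$$
Since $\Delta(F,A) \ge (C^3 v_A(F)^2)^C \ge 1$ and $e(F) \ge 0$, we have $\gamma(F,A) \ge 1$ (justifying absorbing the $1+$), and for $n$ sufficiently large the factor of $2$ is absorbed into an extra $\log n$, yielding
$$N_\phi(F)(m) \, \le \, (\log n)^{\Delta(F,A) - 1},$$
as required. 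The whole proof is a one-line substitution once the identities $t_A(F) = t_A^*(F)$ and $e^{c(F,A) t_A(F)^2} = n^{1/4}$ are noted.
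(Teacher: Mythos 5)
Your proof is correct and follows essentially the same route as the paper: identify $t_A(F)$ with $t_A^*(F)$ via balancedness, use the calibration $e^{c(F,A)t_A(F)^2} = n^{1/4}$ to get $g_{F,A}(t_A(F)) = (\log n)^{\gamma(F,A)}$, evaluate $\Nt_A(F) = (2t)^{e(F)}$, and then bound $2t < \sqrt{\log n}$ to conclude. This matches the paper's computation line for line.
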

 
\begin{proof} 
%If $t_A(F) = 0$ then there is nothing to prove, so we may assume that $t = t_A(F) > \omega$. 
We claim that, at time $t = t_A(F)$,
$$\big( 1 + g_{F,A}(t) \big) \Nt_A(F)(m) \, = \, \big( 1 + (\log n)^{\gamma(F,A)} \big) \cdot (2t)^{e(F)} \, \le \, \big( \log n \big)^{\Delta(F,A)-1},$$
from which the lemma follows immediately. The equality follows from the definitions~\eqref{def:t*} and~\eqref{def:gfat} of $t_A^*(F)$ and $g_{F,A}(t)$ respectively, the fact that $(F,A)$ is balanced, which implies that $t_A^*(F) = t_A(F)$, and the fact that $e^{ct^2} = n^{1/4}$ when $t = t_A(F)$, since we are assuming that $t_A(F) < t^*$. The inequality follows from the definition~\eqref{def:gam} of $\gamma(F,A)$, i.e.,
$$\gamma(F,A) = \Delta(F,A) - e(F) - 2,$$
and the fact that $2t < \sqrt{\log n}$, since $t < t^*$. 
\end{proof}

In order to see that~\eqref{eq:bal:bound:at:tAF} implies the bound in Theorem~\ref{EEthm}$(c)$, simply note that the building sequence for $(F,A)$ is 
$$A = H_0 \subseteq H_1 = F$$
since $(F,A)$ is balanced and $t_A(F) > 0$. Thus $t_1 = t_A(F)$ (since $t_A(F) < t^*$), and so, by Lemma~\ref{lem:Hj}, the minimal $A \subseteq H \subseteq F$ with $t < t_H(F)$ is equal to $F$ for every $t \ge t_A(F)$.

%\footnote{Here we needed to have chosen $t_A(F) = t^*$.} 

We are left with the task of showing that $N_\phi(F)$ continues to decrease for some time after $t_A(F)$, and moreover does not increase again too much later on; we shall do so using a slight variant of the martingale method of Section~\ref{MartSec}. In particular, we shall work with the original (un-renormalised) random variables, and we will prove only an upper bound on $N_\phi(F)$.  

More precisely, let us define for each balanced pair $(F,A)$ a corresponding Line of Death:
$$L_A^D(F)(m) \, = \, \max\Big\{ e^{- o(F)(t^2 - t_A(F)^2)} (\log n)^{\Delta(F,A)}, (\log n)^{\Delta(F-v,A)} \Big\},$$
and a Line of Peril, $L^P_A(F)(m) = \frac{3}{4} \cdot L^D_A(F)(m)$. For each $t_A(F) \cdot n^{3/2} \le m \le m^*$, let 
$$\LL_\phi^F(m) \, = \, \Big\{ L^P_A(F)(m) \le N_\phi(F)(m) \le L^D_A(F)(m) \Big\}$$
denote the event that $N_\phi(F)(m)$ lies between these two lines, and for each pair $(r,s) \in \N^2$ with $t_A(F) \cdot n^{3/2} \le r \le r+s \le m^*$, define
\begin{multline*}
\LL_\phi^F(r,s) \, = \, \big\{ N_\phi(F)(r) < L^P_A(F)(r) \big\} \cap \bigcap_{m = r+1}^{r+s-1} \LL_\phi^F(m) \cap \big\{ N_\phi(F)(r+s) > L^D_A(F)(r+s) \big\},
\end{multline*}
so $\LL_\phi^F(r,s)$ holds if $N_\phi(F)(m)$ crosses the Line of Death in step $r+s$, and crossed the Line of Peril for the last time (before step $r+s$) in step $r+1$. It follows immediately from the definition and Lemma~\ref{lem:t=tAF} that if $\E(m)$ holds but~\eqref{eq:balanced:late} does not, for some $t_A(F) < t \le t^*$, then the event $\LL_\phi^F(r,s)$ holds for some pair $(r,s)$. Given such a pair $(r,s)$, set\footnote{Note that $r \ge \omega \cdot n^{3/2}$, since $t_A(F) \ge \omega$.}  
$$s_0 \,=\, \min\bigg\{ s, \, \frac{n^3}{o(F) \cdot r} \bigg\},$$
and observe that $L^D_A(F)(r) \le C \cdot L^D_A(F)(r+s_0)$, since we have $s_0 \le n^{3/2} \le r$, which implies that $o(F) \big( (r+s_0)^2 - r^2 \big) = O(n^3)$. We define a random variable\footnote{Note that, to simplify the notation, we suppress the dependence of $M_\phi^F$ on the pair $(r,s)$.} as follows:
\begin{equation}\label{def:bal:mart}
M_\phi^{F}(m') \, = \, \sum_{m= r}^{m'-1} \Big( C_\phi(F)(m) - \hat{D}_\phi(F)(m) + \big| \Delta L_A^D(F)(m) \big| + \frac{r}{4 n^3} \cdot L_A^D(F)(m) \Big)
\end{equation}
for each $r \le m' \le r+s_0$, where 
\begin{itemize}
\item $C_\phi(F)(m)$ denotes the number of copies of $F$ rooted at $\phi(A)$ which are created in step $m+1$ of the triangle-free process, and 
\item $\hat{D}_\phi(F)(m)$ denotes the number of copies of $F$ rooted at $\phi(A)$ which are destroyed in step $m+1$, if the edge $e_{m+1}$ (that is, the edge which is added to $G_m$ in step $m+1$ of the triangle-free process) is disjoint from $\phi(A)$, and $\hat{D}_\phi(F)(m) = 0$ otherwise. 
\end{itemize}
The motivation for this slightly convoluted definition is given by the following lemma, which follows easily from the fact (see Observation~\ref{obs:destroy:disjoint}) that if $e_{m+1}$ is disjoint from $\phi(A)$, then the single-step change $|\Delta N_\phi(F)(m)|$ cannot be too large.

\begin{lemma}\label{lem:DeltaMF}
Let $(F,A)$ be a balanced graph structure pair with $t_A(F) > 0$, let 
$$t_A(F) \cdot n^{3/2} \, \le \, r \, \le \, m \, < \, r+s_0 \, \le \, m^*,$$ 
and suppose that $\phi \colon A \to V(G_m)$ is faithful at time~$t$. If $\E(m)$ holds, then
$$\big| \Delta M_\phi^{F}(m) \big| \, \le \, (\log n)^{\Delta(F-v,A) - C}.$$ 
\end{lemma}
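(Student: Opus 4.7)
The plan is to bound each of the four pieces of
$$\Delta M_\phi^F(m) \, = \, C_\phi(F)(m) - \hat D_\phi(F)(m) + \big|\Delta L_A^D(F)(m)\big| + \frac{r}{4n^3} L_A^D(F)(m)$$
separately by $\frac{1}{4} (\log n)^{\Delta(F-v,A) - C}$. We may assume the target bound is nontrivial, i.e.\ that $(\log n)^{\Delta(F-v,A)} \le n^{v_A(F)}$, since otherwise $|\Delta M_\phi^F(m)| \le n^{v_A(F)} + O(n^{-1}) \cdot n^{v_A(F)}$ is automatically dominated by the right-hand side. Under this assumption, Lemma~\ref{obs:deltaF'A'Fv} gives $\Delta(F,A) - \Delta(F-v,A) \le \eps \log n / (v_A(F) \log\log n)$, which is the estimate we shall use repeatedly to compare the various polylog exponents.

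The last two (deterministic) pieces are handled routinely. Since $L_A^D(F)(m) \le (\log n)^{\Delta(F,A)}$ and only its first component varies with $m$, differentiating gives $|\Delta L_A^D(F)(m)| \le O\big(\sqrt{\log n}\,o(F)/n^{3/2}\big)\cdot L_A^D(F)(m)$. Combined with $r/n^3 \le \sqrt{\log n}/n^{3/2}$, both remaining terms are at most $n^{-3/2} (\log n)^{\Delta(F,A) + O(1)}$, which is much smaller than $(\log n)^{\Delta(F-v,A) - C}$ by the $\Delta(F,A) - \Delta(F-v,A)$ estimate above and the factor of $n^{-3/2}$.

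The main work is bounding $C_\phi(F)(m)$ and $\hat D_\phi(F)(m)$. Lemma~\ref{lem:F+} reduces $C_\phi$ to a sum over $(F^+,A^+) \in \F_{F,A}^+$ of $N_{\phi^+}(F^+)(m)$, and Observation~\ref{obs:destroy:disjoint} reduces $\hat D_\phi$ to an analogous sum over $\F_{F,A}^*$. In both cases every pair $(F',A')$ that appears has $v_{A'}(F') < v_A(F)$ and, crucially, $A' \cap F \supsetneq A$ (by inspection of cases $(b)$, $(d)$, $(e)$, $(f)$ in Definition~\ref{def:F-}). We then invoke the event $\E(m)$: if $t \le t_{A'}(F')$, Theorem~\ref{EEthm}(b) gives $N_{\phi'}(F') \le (1 + g_{F',A'}(t))\Nt_{A'}(F')$, and Lemma~\ref{obs:gF'A'gFA} yields $g_{F',A'}(t) \le (\log n)^{\Delta(F-v,A)}$; if $t > t_{A'}(F')$, Theorem~\ref{EEthm}(c) and Observation~\ref{obs:deltaF'H'A'deltaFA} give $N_{\phi'}(F') \le (\log n)^{\Delta(F-v,A) - 3\sqrt{\Delta(F,A)}} \Nt_{H'}(F')(m^+)$ for the appropriate $H'$. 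In either case we dominate the remaining $\Nt$ using Observation~\ref{obs:NH'F'NHF}: $\Nt_{H'}(F') \le \Nt_{H' \cap F}(F)$.

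The crux is therefore to show that $\Nt_{H^\cap}(F)$, where $H^\cap \supsetneq A$, is small enough to absorb a factor of $(\log n)^{C + O(1)}$. Here we exploit that $(F,A)$ is balanced and $t \ge t_A(F)$: by Lemma~\ref{lem:tell:NHFsmall} we have $\Nt_{H^\cap}(F) \le (2t)^{e(F) - e(H^\cap)}$, and using $\Nt_{H^\cap}(F) = \Nt_A(F)/\Nt_A(H^\cap)$ with the fact that $H^\cap$ has at least one vertex beyond $A$, the balanced condition forces $\Nt_A(H^\cap)$ to contribute a power of $n$ that more than compensates for any polylog factor. The one subtle point — and the main obstacle — is verifying that the $n$-power gained from embedding the extra vertex of $H^\cap \setminus A$ is genuinely positive and not cancelled by open-edge factors $e^{-4t^2}$; for this we combine the balanced inequality $(2v_A(H) - e(H))/o(H) \ge (2v_A(F) - e(F))/o(F)$ with the fact that $v_A(H^\cap) - e(H^\cap)/2 \ge 1/2$ whenever $t_A(F) > 0$, so that $\Nt_A(H^\cap) \gg 1$ already at $t = t_A(F)$ and only grows from the extra polylog savings in $\Delta(F,A) - \Delta(F-v,A)$. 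Assembling the four bounds yields the desired inequality $|\Delta M_\phi^F(m)| \le (\log n)^{\Delta(F-v,A) - C}$.
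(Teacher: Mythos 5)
There is a genuine gap. The bound you need on each $N_{\phi'}(F')$ with $(F',A')\in\F_{F,A}^*$ is $(\log n)^{\Delta(F-v,A)-2C}$ (so that summing over the at most $(\log n)^{2/5}$ pairs still leaves a polylog to spare), but your case analysis only produces $(\log n)^{\Delta(F-v,A)+e(F)/2}$, which is strictly larger than the target.

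The missing ingredient is the claim, crucial to the paper's proof, that since $(F,A)$ is balanced and $t\ge t_A(F)$, one has $t\ge t_{H'}(F')$ for \emph{every} $A'\subseteq H'\subsetneq F'$. (The proof combines Lemma~\ref{lem:tell:NHFsmall} with Observation~\ref{obs:NH'F'NHF}, using that each edge of $E(H')\setminus E(H'\cap F)$ lies in $E(F')\setminus E(F)$, to show $\Nt_{H'}(F')\le(2t)^{e(F')-e(H')}$ at time $t$, hence $t^*_{H'}(F')\le t$.) Two consequences that your argument does not see: first, taking $H'=A'$ shows $t\ge t_{A'}(F')$ always, so your ``tracking'' case $t\le t_{A'}(F')$ is vacuous; second, and more importantly, the minimal $H'$ supplied by Theorem~\ref{EEthm}$(c)$ must be $F'$ itself, so the $\Nt$-factor collapses to $\Nt_{F'}(F')(m^+)=1$ and the bound becomes $N_{\phi'}(F')\le(\log n)^{\Delta(F',F',A')}=(\log n)^{\Delta(F',A')}$, with no residual $(2t)^{e(\cdot)}$ term. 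Then $v_{A'}(F')<v_A(F)$ together with $e(F')\le e(F)+1$, $o(F')\le o(F)$ forces $\delta(F',A')\le\delta(F,A)-C^3+2$, which (since $C$ is large) gives $\Delta(F',A')\le\Delta(F-v,A)-2C$ directly from the definition of $\Delta(F-v,A)$. Your path via $g_{F',A'}(t)\le(\log n)^{\Delta(F-v,A)}$ (Lemma~\ref{obs:gF'A'gFA}) or $\Delta(F',H',A')\le\Delta(F-v,A)$ (Observation~\ref{obs:deltaF'H'A'deltaFA}) does not recover this $-2C$ margin.

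Your final ``crux'' paragraph is also incorrect as stated: for $t$ close to $t_A(F)$ and $H^\cap$ a substructure achieving the balanced minimum, $\Nt_A(H^\cap)$ is of size $(2t)^{e(H^\cap)}$, a polylog, not a power of $n$, so $\Nt_{H^\cap}(F)=\Nt_A(F)/\Nt_A(H^\cap)$ is of order $(2t)^{e(F)-e(H^\cap)}\ge 1$ and cannot absorb a factor of $(\log n)^{C+O(1)}$. The argument must instead eliminate the $\Nt_{H'}(F')$ factor entirely by identifying $H'=F'$, which is exactly what the missing claim delivers.
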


\begin{proof}
Observe first that 
\begin{equation}\label{eq:deltaLoD}
\big| \Delta L_A^D(F)(m) \big| \, \le \, \big( 2o(F) + \eps \big) \cdot \frac{t}{n^{3/2}} \cdot L_A^D(F)(m) \, \le \, \frac{(\log n)^{\Delta(F-v,A)}}{n},
\end{equation}
by Lemma~\ref{obs:deltaF'A'Fv}. Thus it will suffice to bound $C_\phi(F)(m) + \hat{D}_\phi(F)(m)$. Recall that $\F_{F,A}^*$ denotes the graph structure pairs in $\F_{F,A}^-$ with $v_{A'}(F') < v_A(F)$. We claim first that 
\begin{equation}\label{eq:C+Dlessthan}
C_\phi(F)(m) + \hat{D}_\phi(F)(m) \, \le \, \sum_{(F',A') \in \F_{F,A}^*} \max_{\phi' \colon A' \to V(G_m)} N_{\phi'}(F')(m)
\end{equation}
where the maximum is taken over faithful maps $\phi' \colon A' \to V(G_m)$. Indeed, this follows immediately by Observation~\ref{obs:destroy:disjoint} and the definition of $\hat{D}_\phi(F)(m)$. 

%which are obtained via either case $(b)$, $(d)$, $(e)$ or $(f)$ of Definition~\ref{def:F-}. Indeed, this follows by Lemma~\ref{lem:F+} and the proofs of Lemma~\ref{lem:F-} and Observation~\ref{obs:F+inF-}, noting that the cases $(a)$ and $(c)$ correspond to destroying copies of $F$ using edges which intersect $\phi(A)$ (see the proof of Lemma~\ref{lem:F-}), and creating copies of $F$ corresponds to the cases $(b)$ and $(d)$ (see the proof of Observation~\ref{obs:F+inF-}). 

%Note that, in each of these cases, we have $v_{A'}(F') < v_A(F)$. 

We will show that 
\begin{equation}\label{eq:DeltaMFappofE}
N_{\phi'}(F')(m) \, \le \, (\log n)^{\Delta(F',A')} \, \le \, (\log n)^{\Delta(F-v,A) - 2C}.
\end{equation}
To prove~\eqref{eq:DeltaMFappofE}, we claim first that $t \ge t_{H'}(F')$ for every $A' \subseteq H' \subsetneq F'$. To see this, observe that since $(F,A)$ is balanced, by Lemma~\ref{lem:tell:NHFsmall} and Observation~\ref{obs:NH'F'NHF} it follows that
$$\Nt_{H'}(F')(m) \, \le \, \Nt_{H' \cap F}(F)(m) \, \le \, (2t)^{e(F) - e(H' \cap F)} \, \le \, (2t)^{e(F') - e(H')}$$ 
for every $t \ge t_A(F)$, since every edge of $E(H') \setminus E(H)$ is also in $E(F') \setminus E(F)$. Therefore $t \ge t_{H'}(F')$, as claimed. Using the event $\E(m)$, the definitions (see~\eqref{def:deltaF-vF+e}) of $\Delta(F,A)$ and $\Delta(F-v,A)$, and the fact that $v_{A'}(F') < v_A(F)$ for every $(F',A') \in \F_{F,A}^*$, we obtain~\eqref{eq:DeltaMFappofE}.

Finally, recalling that $| \F_{F,A}^- | \le 5v_A(F)^2 \le (\log n)^{2/5}$, the claimed bound now follows from~\eqref{eq:C+Dlessthan} and~\eqref{eq:DeltaMFappofE}. 
\end{proof}

We shall need two more properties of $M_\phi^{F}(m)$. Together with Lemmas~\ref{mart} and~\ref{lem:DeltaMF}, they will easily imply Proposition~\ref{prop:balanced:late}. The first connects the event $\LL_\phi^F(r,s)$ and the function~$M_\phi^{F}$. %Recall that $s_0 = \min\{ s, n^3 / ( r \cdot o(F) ) \}$, and 
Note that $L_A^D(F)(m)$ is decreasing on $t \ge t_A(F)$.

\begin{lemma}\label{lem:LLimpliesMF}
If $\LL_\phi^F(r,s)$ holds, then $M_\phi^{F}(r+s_0) \ge \frac{1}{4o(F)} \cdot L_A^D(F)(r+s_0)$.
\end{lemma}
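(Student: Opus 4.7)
The plan is to show that each of the four terms in the summand of $M_\phi^F$ contributes nonnegatively (or telescopes usefully), and then to exploit the endpoint conditions of $\LL_\phi^F(r,s)$ differently in the two cases $s_0 = s$ and $s_0 < s$.

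First I would make three straightforward observations. (i) Since $\hat{D}_\phi(F)(m) \le D_\phi(F)(m)$, the total number of copies of $F$ destroyed in step $m+1$, we have
$$C_\phi(F)(m) - \hat{D}_\phi(F)(m) \;\ge\; C_\phi(F)(m) - D_\phi(F)(m) \;=\; \Delta N_\phi(F)(m),$$
so the first two terms in the summand telescope to at least $N_\phi(F)(r+s_0) - N_\phi(F)(r)$. (ii) Inspecting the definition of $L_A^D(F)(m)$, both $e^{-o(F)(t^2 - t_A(F)^2)}(\log n)^{\Delta(F,A)}$ and $(\log n)^{\Delta(F-v,A)}$ are non-increasing in $m$, so $L_A^D(F)(m)$ is non-increasing; hence $\sum_{m=r}^{r+s_0-1} |\Delta L_A^D(F)(m)| = L_A^D(F)(r) - L_A^D(F)(r+s_0)$. (iii) For the drift term, using again that $L_A^D(F)$ is non-increasing on $[r,r+s_0]$, we obtain $\sum_{m=r}^{r+s_0-1} \tfrac{r}{4n^3} L_A^D(F)(m) \ge \tfrac{rs_0}{4n^3} L_A^D(F)(r+s_0)$. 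Combining these three bounds gives the master inequality
$$M_\phi^F(r+s_0) \;\ge\; N_\phi(F)(r+s_0) - N_\phi(F)(r) + L_A^D(F)(r) - L_A^D(F)(r+s_0) + \frac{rs_0}{4n^3}\, L_A^D(F)(r+s_0).$$

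Next I would split on the value of $s_0 = \min\{s,\, n^3/(o(F)r)\}$, using the fact that under $\LL_\phi^F(r,s)$ we have $N_\phi(F)(r) < \tfrac{3}{4} L_A^D(F)(r)$. If $s_0 = s$, then $N_\phi(F)(r+s_0) > L_A^D(F)(r+s_0)$ (the Line of Death has just been crossed), so the master inequality yields
$$M_\phi^F(r+s_0) \;>\; \tfrac{1}{4} L_A^D(F)(r) + \tfrac{rs_0}{4n^3}\, L_A^D(F)(r+s_0) \;\ge\; \tfrac{1}{4} L_A^D(F)(r+s_0),$$
which is certainly at least $\tfrac{1}{4o(F)} L_A^D(F)(r+s_0)$. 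If instead $s_0 = n^3/(o(F)r) < s$, then $r+s_0$ still lies in the interior of the interval $[r,r+s]$, so $N_\phi(F)(r+s_0) \ge L_A^P(F)(r+s_0) = \tfrac{3}{4} L_A^D(F)(r+s_0)$, and substituting $\tfrac{rs_0}{4n^3} = \tfrac{1}{4o(F)}$ into the master inequality gives
$$M_\phi^F(r+s_0) \;\ge\; \tfrac{1}{4}\bigl(L_A^D(F)(r) - L_A^D(F)(r+s_0)\bigr) + \tfrac{1}{4o(F)}\, L_A^D(F)(r+s_0) \;\ge\; \tfrac{1}{4o(F)}\, L_A^D(F)(r+s_0),$$
again using monotonicity of $L_A^D(F)$.

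There is no real obstacle here; the only point to be careful about is the asymmetry between the two cases. In the first case we exploit the explicit Line-of-Death crossing to produce the factor $\tfrac{1}{4}$, and the drift term is just a bonus. In the second case we only know $N_\phi(F)(r+s_0)$ lies above the Line of Peril, so the first two terms in the master inequality only contribute a nonnegative multiple of the (small) gap $L_A^D(F)(r) - L_A^D(F)(r+s_0)$, and the entire $\tfrac{1}{4o(F)} L_A^D(F)(r+s_0)$ lower bound must come from the drift term, whose size has been engineered precisely for this purpose by the choice $s_0 = n^3/(o(F)r)$.
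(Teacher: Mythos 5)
Your proof is correct and follows essentially the same route as the paper: telescope the $C-\hat{D}$ terms using $\hat{D}\le D$, use monotonicity of $L_A^D(F)$ on $[t_A(F)\cdot n^{3/2},m^*]$ to evaluate the $|\Delta L_A^D|$ and drift sums, and then split on $s_0=s$ versus $s_0 = n^3/(o(F)r)$ using the Line-of-Death and Line-of-Peril conditions respectively. The only cosmetic difference is that you package everything into a single ``master inequality'' before case-splitting, whereas the paper treats the two cases directly; the underlying estimates are identical.
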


\begin{proof}
Suppose first that $s_0 = s$. Then by~\eqref{def:bal:mart} we have
\begin{align*}
M_\phi^{F}(r+s_0) & \, \ge \, \big( N_\phi(F)(r+s) - N_\phi(F)(r) \big) + \big( L_A^D(F)(r) - L_A^D(F)(r+s) \big)\\
& \, \ge \, \big( L_A^D(F)(r) - N_\phi(F)(r) \big) \, \ge \, \frac{1}{4} \cdot L_A^D(F)(r),
\end{align*}
since the inequalities $N_\phi(F)(r+s) \ge L_A^D(F)(r+s)$ and $N_\phi(F)(r) \le \frac{3}{4} \cdot L_A^D(F)(r)$ follow from the event $\LL_\phi^F(r,s)$. On the other hand, if $s_0 = n^3 / ( o(F) \cdot r )$, then we have
\begin{align*}
%M_\phi^{F}(r+s_0) & \, \ge \, 
\big( N_\phi(F)(r+s_0) - N_\phi(F)(r) \big) & + \big( L_A^D(F)(r) - L_A^D(F)(r+s_0) \big)\\
& \ge \, \frac{1}{4} \cdot \big( L_A^D(F)(r) - L_A^D(F)(r+s_0) \big) \, \ge \, 0,
\end{align*}
since $N_\phi(F)(r+s_0) \ge \frac{3}{4} \cdot L_A^D(F)(r+s_0)$ and $N_\phi(F)(r) \le \frac{3}{4} \cdot L_A^D(F)(r)$, by $\LL_\phi^F(r,s)$. Hence
$$M_\phi^{F}(r+s_0) \, \ge \, \frac{r}{4n^3} \cdot \sum_{m = r}^{r+s_0-1} L_A^D(F)(m) \, \ge \, \frac{1}{4o(F)} \cdot L_A^D(F)(r+s_0),$$
as claimed. 
\end{proof}

The next lemma implies that $M_\phi^{F}$ is a super-martingale on $[r,r+s_0]$, and gives bounds on its expected step-size. Recall that~\eqref{eq:balanced:late} holds trivially if $(\log n)^{\Delta(F-v,A)} \ge n^{v_A(F)}$. %, so we may assume otherwise. 

%we may assume throughout this section that $(\log n)^{\Delta(F-v,A)} \le n^{v_A(F)}$, since otherwise Proposition~\ref{prop:balanced:late} holds trivially.

\begin{lemma}\label{lem:ExDeltaMF}
Let $(F,A)$ be a balanced graph structure pair with $t_A(F) > 0$, let 
$$t_A(F) \cdot n^{3/2}  \, < \, m \, \le \, m^*,$$ 
and suppose that $\phi \colon A \to V(G_m)$ is faithful at time~$t$. If $(\log n)^{\Delta(F-v,A)} \le n^{v_A(F)}$ and $\LL_\phi^F(m) \cap \E(m) \cap \Z(m) \cap \Q(m)$ holds, then 
$$\Ex\big[ \Delta M_\phi^{F}(m) \big] \, \le \, 0$$
and
$$\Ex\big[ | \Delta M_\phi^{F}(m) | \big] \, \le \, \frac{8 \cdot o(F) \cdot t}{n^{3/2}} \cdot L_A^D(F)(m).$$  
\end{lemma}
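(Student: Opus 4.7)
The plan is to expand $\Delta M_\phi^F(m) = C_\phi(F)(m) - \hat D_\phi(F)(m) + |\Delta L_A^D(F)(m)| + \frac{r}{4n^3}L_A^D(F)(m)$ and bound each contribution. The two deterministic terms are straightforward: differentiating the exponential factor in $L_A^D(F)(m)$ gives $|\Delta L_A^D(F)(m)| \le \frac{2 o(F)\, t}{n^{3/2}}\, L_A^D(F)(m)$ in the exponential regime (and zero in the constant regime), while $\frac{r}{4n^3} L_A^D(F)(m) \le \frac{t}{4n^{3/2}} L_A^D(F)(m)$ is smaller by a factor of at least $4 o(F)$. The expected creation $\Ex[C_\phi(F)(m)] = \sum_{F^o \in \F^o_F} N_\phi(F^o)/Q(m)$, computed from~\eqref{eq:number:created}, is controlled using $\E(m) \cap \Q(m)$ together with Observations~\ref{obs:NtF-} and~\ref{obs:F-}; since $e^{4t^2} \le n^{1/2 - \eps}$ for $t \le t^*$, it is $o\!\left(\frac{o(F)\, t}{n^{3/2}}\, L_A^D(F)(m)\right)$.

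The main task is a matching lower bound on $\Ex[\hat D_\phi(F)(m)]$. Arguing as in Lemma~\ref{selfN} (with the overlap term of order $o(F)^2 (\log n)^2 N_\phi(F)/Q(m)$ controlled by $\Z(m)$), this equals $\frac{1}{Q(m)} \sum_{F^*} \sum_{f \in O(F^*)} Y_f^{\mathrm{disj}}(m)$, where $Y_f^{\mathrm{disj}}(m)$ counts closing edges of $f$ that are disjoint from $\phi(A)$. Since $A$ is independent in $F$, every $f = \{u,v\} \in O(F^*)$ satisfies $|f \cap \phi(A)| \in \{0,1\}$. If $f \cap \phi(A) = \emptyset$, the only closing edges that hit $\phi(A)$ do so through the third vertex $w \in \phi(A)$, so $Y_f^{\mathrm{disj}}(m) \ge Y_f(m) - |A| \ge (1-o(1))\Yt(m)$ by $\Y(m)$. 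If $u \in \phi(A)$ and $v \notin \phi(A)$, then every $Y$-neighbour of $f$ with shared endpoint $u$ hits $\phi(A)$, so $Y_f^{\mathrm{disj}}(m) \ge Y_f^v(m) - |A|$, where
$$Y_f^v(m) \, := \, \big| \{ w \,:\, \{v,w\} \in O(G_m),\, \{u,w\} \in E(G_m) \} \big|.$$
The crucial observation is that $Y_f^v(m) = N_{\phi'}(F^\sharp)(m)$ for the graph structure pair $(F^\sharp, A^\sharp)$ with $V(F^\sharp) = \{u,v,w\}$, $A^\sharp = \{u,v\}$, $E(F^\sharp) = \{\{u,w\}\}$, $O(F^\sharp) = \{\{v,w\}\}$, and $\phi'$ mapping $A^\sharp$ onto the endpoints of $f$ (which is faithful since $f \in O(G_m)$). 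This pair satisfies $v_{A^\sharp}(F^\sharp) = o(F^\sharp) = e(F^\sharp) = 1$, $\Nt_{A^\sharp}(F^\sharp)(m) = \Yt(m)/2$, $t_{A^\sharp}(F^\sharp) = t^*$ and $c(F^\sharp, A^\sharp) = 2$, so by Remark~\ref{Eremark} the event $\E(m)$ yields $Y_f^v(m) \in (1 \pm o(1)) \Yt(m)/2$.

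Summing over $f \in O(F^*)$, if $o_A(F) \le o(F)$ denotes the number of open edges of $F$ incident to $A$, the contributions are at least $(o(F) - o_A(F))(1 - o(1))\Yt(m)$ from case a and $o_A(F)(1/2 - o(1))\Yt(m)$ from case b, giving in total at least $(o(F) - o_A(F)/2)(1 - o(1))\Yt(m) \ge \frac{o(F)}{2}(1 - o(1))\Yt(m)$. Summing over copies $F^*$, dividing by $Q(m)$, and using $\Y(m) \cap \Q(m)$ to identify $\Yt(m)/Q(m) = (1 \pm o(1)) \cdot 8t/n^{3/2}$, we obtain $\Ex[\hat D_\phi(F)(m)] \ge \frac{4 o(F)\, t}{n^{3/2}}\, N_\phi(F)(m)(1 - o(1))$. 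Using $\LL_\phi^F(m)$ to substitute $N_\phi(F)(m) \ge \frac{3}{4} L_A^D(F)(m)$, we find $\Ex[\hat D_\phi(F)(m)] \ge (3 - o(1)) \cdot \frac{o(F)\, t}{n^{3/2}}\, L_A^D(F)(m)$, which exceeds the sum of positive contributions $(2 + o(1)) \cdot \frac{o(F)\, t}{n^{3/2}}\, L_A^D(F)(m)$ by a comfortable margin, proving $\Ex[\Delta M_\phi^F(m)] \le 0$. For the second claim, the simple bound $|\Delta M_\phi^F(m)| \le C_\phi(F)(m) + \hat D_\phi(F)(m) + |\Delta L_A^D(F)(m)| + \frac{r}{4n^3}L_A^D(F)(m)$, combined with $\Ex[\hat D_\phi(F)(m)] \le \Ex[D_\phi(F)(m)] \le (1 + o(1)) \cdot 8 o(F)\, t/n^{3/2} \cdot L_A^D(F)(m)$ (from Lemma~\ref{selfN}, $Y_f \le (1 + g_y)\Yt$, and $N_\phi(F) \le L_A^D$), absorbs every contribution into the claimed upper bound $\frac{8 \cdot o(F) \cdot t}{n^{3/2}} L_A^D(F)(m)$.
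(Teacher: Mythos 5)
Your decomposition and overall strategy match the paper's proof, but there are two concrete issues.

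First, your bound on the expected number of created copies, $\Ex[C_\phi(F)(m)] = \sum_{F^o} N_\phi(F^o)/Q(m)$, is asserted to be $o\big(\frac{o(F)t}{n^{3/2}} L_A^D(F)(m)\big)$ ``using $\E(m) \cap \Q(m)$ together with Observations~\ref{obs:NtF-} and~\ref{obs:F-}''. This does not suffice. Those observations relate $\Nt_A(F^o)$ to $\Nt_A(F)$, but what you actually need is a bound on $N_\phi(F^o)(m)$ itself in terms of $L_A^D(F)(m)$ in the regime $t > t_A(F)$, where (depending on $t_{A}(F^o)$) the event $\E(m)$ may give a bound of the tracking form $(1 \pm g)\Nt$, or of the post-tracking form $(\log n)^{\Delta(\cdot,\cdot,\cdot)} \Nt_H(\cdot)$. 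The paper encapsulates this case analysis in Lemma~\ref{lem:bal:Fo}, which shows $N_\phi(F^o)(m) \le \Yt(m) L_A^D(F)(m)/\log n$ using the balance of $(F,A)$ and the definitions of $\Delta(F,A)$ and $L^D$; you need something equivalent, and it is not a one-line consequence of the cited observations.

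Second, your estimate for the second conclusion does not close. You bound $\Ex[\hat D_\phi(F)(m)] \le (1+o(1))\cdot 8 o(F) t/n^{3/2} \cdot L_A^D(F)(m)$, and then need to add $|\Delta L_A^D(F)(m)| \le (2o(F)+\eps)\frac{t}{n^{3/2}} L_A^D(F)(m)$ and $\frac{r}{4n^3} L_A^D(F)(m) \le \frac{t}{4n^{3/2}} L_A^D(F)(m)$; the sum is roughly $\big(10 o(F) + \tfrac14\big)\frac{t}{n^{3/2}} L_A^D(F)(m)$, which exceeds the claimed $\frac{8 o(F) t}{n^{3/2}} L_A^D(F)(m)$. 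The paper records $\Ex[C + \hat D] \le (4+\eps)\frac{o(F)t}{n^{3/2}} L_A^D(F)(m)$, which makes the total fit under $8o(F)$; with your bound of $8o(F)$ it does not. Since the downstream application only needs some absolute constant, this is not a fatal gap, but as written your arithmetic contradicts the stated inequality.

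Your lower bound on $\Ex[\hat D_\phi(F)(m)]$ — the case analysis according to whether $f$ is incident to $\phi(A)$, and the formulation via the graph structure pair $(F^\sharp, A^\sharp)$ — is a more explicit (and correct) version of the paper's one-line justification, and is fine.
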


In the proof of Lemma~\ref{lem:ExDeltaMF} we shall need the following bound on $N_\phi(F^o)(m)$. 

\begin{lemma}\label{lem:bal:Fo}
Let $(F,A)$ be a balanced graph structure pair with $t_A(F) > 0$, let 
$$t_A(F) \cdot n^{3/2} \, < \, m \, \le \, m^*,$$ 
and suppose that $\phi \colon A \to V(G_m)$ is faithful at time~$t$. If $(\log n)^{\Delta(F-v,A)} \le n^{v_A(F)}$ and the event $\E(m)$ holds, then 
$$N_\phi(F^o)(m) \, \le \, \frac{\Yt(m) \cdot L_A^D(F)(m)}{\log n}$$
for every $F^o \in \F^o_F$. 
\end{lemma}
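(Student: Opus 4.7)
The plan is to apply Theorem~\ref{EEthm} to the pair $(F^o,A)$ and verify the desired bound in each of the two regimes it provides. Write $uv$ for the $F$-vulnerable edge of $F^o$, so $e(F^o) = e(F)-1$, $o(F^o) = o(F)+1$, and $v_A(F^o) = v_A(F)$. Two key identities drive the argument. By Observation~\ref{obs:NtF-},
$$\Nt_A(F^o)(m) \, = \, \frac{\Yt(m)}{8t^2}\, \Nt_A(F)(m),$$
and since $(F,A)$ is balanced with $t_A^*(F) = t_A(F) > 0$, a direct computation yields
$$\Nt_A(F)(m) \, = \, (2t)^{e(F)} \, e^{-4o(F)(t^2 - t_A(F)^2)}.$$
I shall also use the inequality $L_A^D(F)(m) \ge e^{-o(F)(t^2-t_A(F)^2)}(\log n)^{\Delta(F,A)}$. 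By Observation~\ref{obs:zeroomega}, $t > t_A(F) > \omega$, and by Observation~\ref{obs:F-}, $t_A(F^o) \ge t_A(F)$, so I split the proof into the cases $t \le t_A(F^o)$ and $t > t_A(F^o)$.

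In the first case, Theorem~\ref{EEthm}(b) gives $N_\phi(F^o)(m) \le 2\Nt_A(F^o)(m)$. Substituting the two identities above, dividing by $\Yt(m)/\log n$, and using the lower bound on $L_A^D(F)(m)$, the desired inequality reduces to
$$(2t)^{e(F)} \cdot \log n \, \le \, 4t^2 \cdot e^{3o(F)(t^2-t_A(F)^2)} \cdot (\log n)^{\Delta(F,A)},$$
which is immediate from $2t < \sqrt{\log n}$, $t > \omega$, and $\Delta(F,A) \ge 2e(F) + 1$.

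In the second case, Theorem~\ref{EEthm}(c) yields
$$N_\phi(F^o)(m) \, \le \, (\log n)^{\Delta(F^o,H^o,A)} \, \Nt_{H^o}(F^o)(m),$$
where $H^o$ is the minimal substructure of $F^o$ containing $A$ with $t < t_{H^o}(F^o)$. Set $H := F[V(H^o)]$, the substructure of $F$ on the same vertex set. A direct computation, distinguishing the sub-cases $\{u,v\} \subseteq V(H^o)$ and $\{u,v\} \not\subseteq V(H^o)$, yields $\Nt_{H^o}(F^o)(m) \le \frac{\Yt(m)}{8t^2} \cdot \Nt_H(F)(m)$ (with equality in the second sub-case, and using $\Yt(m)/(8t^2) \ge 1$ for $t > \omega$ to cover the first sub-case, where $\Nt_{H^o}(F^o) = \Nt_H(F)$ holds directly). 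Since $(F,A)$ is balanced, Lemma~\ref{lem:tell:NHFsmall} applied to $(F,A)$---whose building sequence has final time $t_A(F) \le t$---yields $\Nt_H(F)(m) \le (2t)^{e(F)-e(H)}$, bounding the whole expression by a polylogarithmic factor times $\Yt(m)$.

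The main obstacle is the polylogarithmic bookkeeping in the second case: after the reductions above, the required inequality takes the form $\Delta(F^o,H^o,A) + O(e(F)) \le \Delta(F-v,A) + O(1)$ when one uses $L_A^D(F)(m) \ge (\log n)^{\Delta(F-v,A)}$. Under the lemma's hypothesis $(\log n)^{\Delta(F-v,A)} \le n^{v_A(F)}$ (without which the conclusion follows trivially from the bound $N_\phi(F^o) \le n^{v_A(F)}$), a computation in the spirit of Lemma~\ref{obs:deltaF'A'Fv}---using that this hypothesis forces $\delta(F,A)$, and hence all of $\delta(F^o,H^o)$, $\delta(H^o,A)$, to be comparable to $(v_A(F) \log n/\log\log n)^{1/C}$---shows $\Delta(F^o,H^o,A) - \Delta(F-v,A) = o(\log n/\log\log n)$, and the slack factor $\Yt(m) \ge n^\eps$ for $t > \omega$ absorbs any remaining polylog discrepancy.
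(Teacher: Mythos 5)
Your overall decomposition (split on whether $t \le t_A(F^o)$ or $t > t_A(F^o)$, then use Theorem~\ref{EEthm}(b) or (c) respectively) matches the paper's, and your two key identities and the use of Lemma~\ref{lem:tell:NHFsmall} are exactly the right tools. However, there are two problems, one cosmetic and one a genuine gap.

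The cosmetic issue is in Case~1: Theorem~\ref{EEthm}(b) gives $N_\phi(F^o) \le \big(1 + g_{F^o,A}(t)\big)\Nt_A(F^o)$, and $g_{F^o,A}(t)$ can be as large as $(\log n)^{\gamma(F^o,A)}$ for $t$ near $t_A(F^o)$, so the claimed ``$\le 2\Nt_A(F^o)$'' is not justified. The conclusion still holds, because $\gamma(F^o,A) = \Delta(F^o,A) - e(F) - 1$ and $\Delta(F^o,A) < \Delta(F,A)$, so the extra polylog factor is swallowed by the $(\log n)^{\Delta(F,A) - e(F)}$ margin; but the step should be rewritten with the correct bound.

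The substantive gap is in Case~2. You claim $\Nt_{H^o}(F^o) \le \frac{\Yt(m)}{8t^2}\Nt_H(F)$, with \emph{equality} in the sub-case $\{u,v\} \not\subseteq V(H^o)$, and you plan to absorb the polylog bookkeeping error $\Delta(F^o,H^o,A) - \Delta(F-v,A) = o(\log n/\log\log n)$ using the slack factor $\Yt(m)/(8t^2) \ge n^\eps$. But in the equality sub-case that factor cancels exactly against the $\Yt(m)$ on the right-hand side of the target inequality, leaving you with no slack at all; you then need the much sharper inequality $\Delta(F^o,H^o,A) + e(F) + O(1) \le \Delta(F-v,A)$ on the nose, which $o(\log n/\log\log n)$ does not give. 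The paper's proof supplies exactly this via convexity of $x \mapsto x^C$: in the sub-case $\{u,v\} \not\subseteq V(H^o)$ one necessarily has $A \ne H^o \ne F^o$, so both $v_A(H^o) \ge 1$ and $v_{H^o}(F^o) \ge 1$, whence each of $\delta(H^o,A), \delta(F^o,H^o) \ge C^3$; this forces $\Delta(F^o,H^o,A) = \delta(F^o,H^o)^C + \delta(H^o,A)^C$ to be smaller than $\Delta(F-v,A)$ by an amount $\gg e(F) + C^{3C}$, which is what the bookkeeping needs. Your heuristic that ``all of $\delta(F^o,H^o), \delta(H^o,A)$ are comparable to $\delta(F,A)$'' is incorrect (their sum is at most $\delta(F,A)-1$, so at most one of them can be close to $\delta(F,A)$); the correct observation is that neither can be \emph{too small}, and that is what makes the convexity bite. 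The sub-case where the $\Yt(m)$ slack \emph{is} genuinely available is $H^o = F^o$, where Theorem~\ref{EEthm}(c) gives $N_\phi(F^o) \le (\log n)^{\Delta(F^o,A)}$ and no convexity is needed; the paper handles exactly these two sub-cases separately, and your proof needs to do the same.
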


\begin{proof}
Observe first that, since $(F,A)$ is balanced and $t_A(F) < t^*$, we have $t_A^*(F) = t_A(F)$ and therefore $e^{4o(F)t_A(F)^2} = n^{v_A(F) - e(F)/2}$. Similarly, we have $t_A^*(H) \ge t_A(F)$, and hence $e^{4o(H)t_A(F)^2} \le n^{v_A(H) - e(H)/2}$, for every $A \subseteq H \subseteq F$. It follows that
$$\Nt_A(F)(m) \, = \, (2t)^{e(F)} e^{-4o(F)( t^2 - t_A(F)^2)} \, \le \, \frac{L_A^D(F)(m)}{(\log n)^{\Delta(F,A) - e(F)}}$$
and hence\footnote{Here we use Observation~\ref{obs:NtF-} and the fact that $2t e^{4t^2} \le \sqrt{n}$ for all $t \le t^*$.} 
\begin{equation}\label{eq:bal:Fo:NAH}
\Nt_A(H^o)(m) \, \ge \, \Nt_A(H)(m) \, \ge \, (2t)^{e(H)} e^{-4o(H)( t^2 - t_A(F)^2)} \, \ge \, (2t)^{e(H) - e(F)} \Nt_A(F)(m)
\end{equation}
for every $A \subseteq H \subseteq F$, every $H^o \in \F^o_H$, and every $t_A(F) \le t \le t^*$. 

Next, recall that $8t^2 \cdot \Nt_A(F^o) = \Yt(m) \cdot \Nt_A(F)$, by Observation~\ref{obs:NtF-}, and that $t_A(F) > \omega$, by Observation~\ref{obs:zeroomega}. Hence, if $t \le t_A(F^o)$ then, since the event $\E(m)$ holds, we have
\begin{align*}
N_\phi(F^o)(m) & \, \le \, \big( 1 + g_{F^o,A}(t) \big) \Nt_A(F^o)(m) \, = \, \big( 1 + g_{F^o,A}(t) \big) \cdot \frac{\Yt(m) \cdot \Nt_A(F)(m)}{8t^2} \\
& \, \le \, \frac{1 + g_{F^o,A}(t)}{(\log n)^{\Delta(F,A) - e(F)}} \cdot \frac{\Yt(m) \cdot L_A^D(F)(m)}{8t^2} \, \le \, \frac{\Yt(m) \cdot L_A^D(F)(m)}{\log n}
\end{align*}
as claimed. On the other hand, if $t > t_A(F^o)$ then let $A \subsetneq H \subseteq F^o$ be minimal such that $t < t_H(F)$, and suppose first that $H \neq F^o$. Then, using~\eqref{eq:bal:Fo:NAH}, we have
\begin{align*}
 N_\phi(F^o)(m) & \, \le \, (\log n)^{\Delta(F^o,H,A)} \Nt_H(F^o)(m) \, = \, (\log n)^{\Delta(F^o,H,A)} \cdot \frac{\Yt(m) \cdot \Nt_A(F)(m)}{8t^2 \cdot \Nt_A(H)(m)} \\
& \, \le \, \Yt(m) \cdot (\log n)^{\Delta(F^o,H,A)} \cdot (2t)^{e(F) - e(H)} \, \le \, \frac{\Yt(m) \cdot L_A^D(F)(m)}{\log n}
\end{align*}
since $A \neq H \neq F^o$, and hence $L_A^D(F)(m) \ge (\log n)^{\Delta(F-v,A)} \gg (\log n)^{\Delta(F^o,H,A) + e(F) + 3}$, using the convexity of the function $x \mapsto x^C$. %, as in Observation~\ref{obs:deltaF'H'A'deltaFA}.  
Finally, if $t > t_A(F^o)$ and $H = F^o$ then we have
$$N_\phi(F^o)(m) \, \le \, (\log n)^{\Delta(F^o,A)} \, \le \, \frac{\Yt(m) \cdot L_A^D(F)(m)}{n^{\eps/2}},$$
since $\Yt(m) \ge n^{\eps}$ and $(\log n)^{\Delta(F-v,A)} \le n^{v_A(F)}$, by assumption, which implies that
$$L_A^D(F)(m) \, \ge \, (\log n)^{\Delta(F-v,A)} \, \ge \,  \frac{(\log n)^{\Delta(F,A)}}{n^{\eps/2}}  \, \ge \,  \frac{(\log n)^{\Delta(F^o,A)}}{n^{\eps/2}},$$ 
by Lemma~\ref{obs:deltaF'A'Fv}. This completes the proof of the lemma.
\end{proof}

We can now prove Lemma~\ref{lem:ExDeltaMF}.

\begin{proof}[Proof of Lemma~\ref{lem:ExDeltaMF}]
We begin by recalling~\eqref{eq:deltaLoD}, and that
\begin{equation}\label{eq:ExDeltaMF:basic}
\Delta M_\phi^{F}(m) \, = \, C_\phi(F)(m) - \hat{D}_\phi(F)(m) + \big| \Delta L_A^D(F)(m) \big| + \frac{r}{4n^3} \cdot L_A^D(F)(m).
\end{equation}
It follows that our task is to bound $\Ex\big[ C_\phi(F)(m) \big]$ and $\Ex\big[ \hat{D}_\phi(F)(m) \big]$ in terms of $L_A^D(F)(m)$. In order to do so, we shall essentially repeat the proof of Lemma~\ref{selfN}, except ignoring copies of $F$ which are destroyed by edges which intersect $\phi(A)$. 

Indeed, recall first, from~\eqref{eq:number:created}, that the expected number of such copies of $F$ created in a single step is exactly 
$$\sum_{F^o \in \F^o_F} \frac{N_{\phi}(F^o)}{Q(m)}.$$
Next, similarly as in~\eqref{eq:selfNapprox}, and using the event $\Z(m)$, we claim that the expected number of copies of $F$ rooted at $\phi(A)$ which are destroyed by the addition of an edge disjoint from $\phi(A)$ in step $m+1$ is at least
$$\frac{1}{Q(m)} \sum_{F^* \in N_\phi(F)} \bigg( \sum_{f \in O(F^*)} \bigg( \frac{1 - \eps}{2} \bigg) \Yt(m) \,-\, o(F)^2 (\log n)^2 \bigg).$$
To see this, simply note that each open edge $f \in O(F)$ has at most one endpoint in $A$, and hence there are at least $\big( 1/2 + o(1) \big)\Yt(m)$ open edges which close $f$ and are disjoint from~$\phi(A)$, since the event $\E(m)$ holds. 

Noting that %$o(F) \ge 1$  and that 
$\Yt(m) \gg o(F)^2 (\log n)^2$, we obtain
$$\Ex\big[ C_\phi(F)(m) - \hat{D}_\phi(F)(m) \big] \, \le \, \frac{1}{Q(m)} \bigg( \sum_{F^o \in \F^o_F} N_{\phi}(F^o) \,-\, \left( \frac{1}{2} - \eps \right) o(F) \cdot \Yt(m) \cdot N_\phi(F) \bigg),$$
and hence, by Lemma~\ref{lem:bal:Fo}, 
$$\Ex\big[ C_\phi(F)(m) - \hat{D}_\phi(F)(m) \big] \, \le \, \frac{\Yt(m) \cdot L_A^D(F)(m)}{Q(m)} \bigg( \frac{e(F)}{\log n} \,- \frac{o(F)}{3} \bigg),$$
since the event $\LL_\phi^F(m)$ implies that $N_\phi(F)(m) \ge \big( \frac{3}{4} - \eps \big) L_A^D(F)(m)$. 

%where we used the event $\LL_\phi^F(m)$ and the fact that $|\Delta N_\phi(F)(r)| \ll L_A^D(F)(r)$, by Lemma~\ref{lem:DeltaMF}, to deduce that $N_\phi(F)(m) \ge \big( \frac{3}{4} - \eps \big) L_A^D(F)(m)$. 

Finally, recalling that $o(F) > 0$ (since $0 < t_A(F) < t^*$) and that $e(F) \ll \log n$, observe that
$$\frac{\Yt(m) \cdot L_A^D(F)(m)}{Q(m)} \bigg( \frac{e(F)}{\log n} \,- \frac{o(F)}{3} \bigg)  \, \le \, - \, \frac{5o(F)}{2} \cdot \frac{t}{n^{3/2}} \cdot L_A^D(F)(m),$$
since the event $\Q(m)$ holds and $8/3 > 5/2$. %, and
%$$\big| \Delta L_A^D(F)(m) \big| \, \le \, \big( 2o(F) + o(1) \big) \cdot \frac{t}{n^{3/2}} \cdot L_A^D(F)(m).$$ 
By~\eqref{eq:deltaLoD} and~\eqref{eq:ExDeltaMF:basic}, and since $m \ge r$, it follows that
$$\Ex\big[ \Delta M_\phi^{F}(m) \big] \, \le \, \bigg( - \frac{5o(F)}{2} \cdot \frac{t}{n^{3/2}} + \big( 2o(F) + \eps \big) \cdot \frac{t}{n^{3/2}} + \frac{r}{4n^3} \bigg) \cdot L_A^D(F)(m) \, < \, 0,$$
as required. 

The proof of the claimed bound on $\Ex\big[ | \Delta M_\phi^{F}(m) | \big]$ is almost identical. Indeed, since the event $\LL_\phi^F(m)$ implies that $N_\phi(F)(m) \le L_A^D(F)(m)$, repeating the calculation above gives
\begin{multline*}
\Ex\big[ | \Delta M_\phi^{F}(m) | \big] \, \le \, \Ex\big[ C_\phi(F)(m) + \hat{D}_\phi(F)(m) \big] + \big| \Delta L_A^D(F)(m) \big| + \frac{r}{4n^3} \cdot L_A^D(F)(m)\\
 \, \le \, \bigg( \Big( (4+\eps) + (2 +\eps) \Big) \cdot \frac{o(F) \cdot t}{n^{3/2}} + \frac{r}{4n^3} \bigg) \cdot L_A^D(F)(m) \, \le \, \frac{8 \cdot o(F) \cdot t}{n^{3/2}} \cdot L_A^D(F)(m)
\end{multline*}
as required. 
\end{proof}

Finally, we are ready to prove Proposition~\ref{prop:balanced:late}. 

%\begin{prop}\label{prop:balanced:late}
%Let $(F,A)$ be a balanced graph structure pair, and let $t > t_A(F)$. With probability at least $1 - n^{- \log n}$, either $\big( \E(m) \cap \Z(m) \cap \Q(m) \big)^c$ holds, or
%\begin{equation}\label{eq:balanced:late}
%N_\phi(F)(m) \, \le \, \max\Big\{ e^{- o(F)(t^2 - t_A(F)^2)}, ( \log n )^{- \sqrt{\Delta(F,A)} } \Big\} \cdot (\log n)^{\Delta(F,A)}
%\end{equation}
%for every $\phi \colon A \to V(G_m)$ which is faithful at time~$t$. 
%\end{prop}  

\begin{proof}[Proof of Proposition~\ref{prop:balanced:late}]
We are required to prove that, for every $t_A(F) \cdot n^{3/2} \le m \le m^*$,
\begin{equation}\label{eq:prop:bal:restatement}
\Pr\Big( \M(m)^c \cap \E(m) \cap \Z(m) \cap \Q(m) \Big) \, \le \, n^{-3\log n}.
\end{equation}
Recall that the event $\M(m)^c \cap \E(m)$ implies that the event $\LL_\phi^F(r,s)$ holds for some pair $(r,s)$. We claim that, for each faithful $\phi \colon A \to V(G_m)$ and each triple $(m',r,s) \in \N^3$ with $t_A(F) \cdot n^{3/2} \le r \le r+s \le m' \le m^*$, we have 
\begin{equation}\label{eq:proof:Prop:bal}
\Pr\Big( \LL_\phi^F(r,s) \cap \E(m') \cap \Z(m') \cap \Q(m') \Big) \, \le \, n^{-(\log n)^2}.
\end{equation}
By the union bound, this implies~\eqref{eq:prop:bal:restatement}, and so will be sufficient to prove the proposition.

In order to prove~\eqref{eq:proof:Prop:bal}, we shall apply Lemma~\ref{mart} to $M_\phi^F$. Recall that the event $\LL_\phi^F(r,s)$ implies that $M_\phi^{F}(r+s_0) \ge \frac{1}{4o(F)} \cdot L_A^D(F)(r+s_0)$, by Lemma~\ref{lem:LLimpliesMF}, and set 
$$\alpha = (\log n)^{\Delta(F-v,A) - C} \qquad \text{and} \qquad \beta = \frac{C}{s_0} \cdot L_A^D(F)(r).$$
Set $\K(m) := \ds\bigcap_{r \le m' \le m} \LL_\phi^F(m') \cap \E(m) \cap \Z(m) \cap \Q(m)$, and recall that if $\K(m)$ holds then 
\begin{itemize}
\item[$(a)$] $| \Delta M_\phi^{F}(m) | \le \alpha$, by Lemma~\ref{lem:DeltaMF}, 
\item[$(b)$] $\Ex\big[ | \Delta M_\phi^{F}(m) | \big] \le \beta$, by Lemma~\ref{lem:ExDeltaMF}, since $L_A^D(F)$ is decreasing and $s_0 \le \frac{n^{3/2}}{t \cdot o(F)}$,
\item[$(c)$] $L_A^D(F)(r+s_0) \le \beta s_0$, since $L_A^D(F)$ is decreasing and $o(F) > 0$. 
\end{itemize}
Moreover, $M_\phi^F$ is a super-martingale on $[r,r+s_0]$, by Lemma~\ref{lem:ExDeltaMF}, and
$$\alpha \cdot \beta \cdot s_0 \, \le \, C^2 \cdot (\log n)^{\Delta(F-v,A) - C} \cdot L_A^D(F)(r+s_0),$$
since $L_A^D(F)(r) \le C \cdot L_A^D(F)(r+s_0)$, as noted earlier. Hence, by Lemma~\ref{mart}, we have
$$\Pr\bigg( \Big( M_\phi^F(r+s_0) \ge \frac{1}{4o(F)} \cdot L_A^D(F)(r+s_0) \Big) \cap \K(m-1) \bigg) \, \le \, \exp\Big( - (\log n)^3 \Big),$$
which implies~\eqref{eq:proof:Prop:bal}. Finally, summing over choices\footnote{Note that, since $|A| \le (\log n)^{1/5}$, we have at most $n$ choices for $\phi$.} of $\phi$ and $(r,s)$, the proposition follows.
\end{proof}

\subsection{Bounding the maximum change in $N^*_\phi(F)$}\label{deltaNsec}

%In order to apply our martingale technique to the self-correcting variables $N_\phi(F)$, we shall also need to bound the maximum step size for each triple $(F,A,\phi)$. We shall do so by showing that each copy of $F$ rooted at $\phi(A)$ which is created or destroyed in step $m+1$ corresponds to another graph structure $F'$ in $G_m$. We shall thus be able to bound the number of such copies of $F$ using the event $\E(m)$.
 
We now return to the range $\omega < t \le t_A(F)$, and use the event $\M(m)$ (see Definition~\ref{def:event:M}), together with the tools developed in Section~\ref{createsec}, in order to bound the maximum possible single-step change of the function $N_\phi^*(F)$. The aim of this subsection is to prove the following lemma. %The proof uses the techniques developed in previous two 

\begin{lemma}\label{maxalphabeta}
Let $(F,A)$ be a graph structure pair, let $\omega < t \le t_A(F)$, and suppose that $(\log n)^{\gamma(F,A)} \le n^{v_A(F)+e(F)+1}$. If $\E(m) \cap \M(m)$ holds, then
%$$\big| \Delta N_\phi^*(F)(m) \big| \, \le \, \max\left\{ n^{-1/6}, \, (\log n)^{- \sqrt{\Delta(F,A)}} \right\} \cdot \frac{g_{F,A}(t)}{1 + g_{F,A}(t)}$$
$$\big| \Delta N_\phi^*(F)(m) \big| \, \le \,  (\log n)^{- \sqrt{\Delta(F,A)}} \cdot \frac{g_{F,A}(t)}{1 + g_{F,A}(t)}$$
for every $\phi \colon A \to V(G_m)$ which is faithful at time $t$.
\end{lemma}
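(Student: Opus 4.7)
The plan is to combine Lemma~\ref{lem:chainstar} with Lemma~\ref{lem:F-} and the events $\E(m)$, $\M(m)$. First, Lemma~\ref{lem:chainstar} (whose two hypotheses on $\Delta \Nt_A(F)$ and $\Delta(g_{F,A} \Nt_A(F))$ are verified by direct differentiation, as in~\eqref{eq:deltagNt}) reduces the task to bounding
$$\frac{|\Delta N_\phi(F)(m)|}{g_{F,A}(t)\Nt_A(F)(m)}$$
up to the additive error $\frac{1+g_{F,A}(t)}{g_{F,A}(t)} \cdot \frac{\log n}{n^{3/2}}$. This additive error is negligible compared to the target: since $g_{F,A}(t) \ge n^{-1/4}$ for $t > \omega$, and since our hypothesis $(\log n)^{\gamma(F,A)} \le n^{v_A(F)+e(F)+1}$ together with the standing assumption $v_A(F)+e(F)+o(F) \le (\log n)^{1/5}$ forces $\Delta(F,A) = O((\log n)^{6/5}/\log\log n)$, the factor $(\log n)^{\sqrt{\Delta(F,A)}}$ is subpolynomial.

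Next, Lemma~\ref{lem:F-} together with Observation~\ref{obs:F+inF-} yields
$$|\Delta N_\phi(F)(m)| \, \le \, \sum_{(F',A') \in \F^-_{F,A}} \max_{\phi' \colon A' \to V(G_m)} N_{\phi'}(F')(m),$$
the maximum taken over faithful $\phi'$. Since $|\F^-_{F,A}| \le 5 v_A(F)^2$ is easily swallowed by the target, it suffices to bound each summand by a corresponding fraction of the target. For each pair $(F',A')$, the event $\E(m)$ provides either $N_{\phi'}(F')(m) \le (1+g_{F',A'}(t)) \Nt_{A'}(F')(m)$ if $t \le t_{A'}(F')$, or, if $t > t_{A'}(F')$, a bound of the form $(\log n)^{\Delta(F',H',A')} \Nt_{H'}(F')(m^+)$ for the minimal $A' \subsetneq H' \subseteq F'$ with $t < t_{H'}(F')$. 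In the balanced sub-case with $t > t_{A'}(F')$, the event $\M(m)$ supplies the sharper estimate of Proposition~\ref{prop:balanced:late}, namely $(\log n)^{\Delta(F'-v,A')}$ or $e^{-o(F')(t^2 - t_{A'}(F')^2)}(\log n)^{\Delta(F',A')}$.

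The decisive savings come from splitting on whether $A' \cap F = A$. If $A' \cap F = A$ (cases (a) and (c) of Definition~\ref{def:F-}), then by Observation~\ref{obs:AcapF} we have $\Nt_{A'}(F')(m) = (2t/\sqrt{n}) \Nt_A(F)(m)$; combined with $g_{F,A}(t)^2 \ge n^{-1/2}(\log n)^{2\gamma(F,A)}$ and the fact that $\gamma(F,A) = \Delta(F,A) - e(F) - 2 \sim \Delta(F,A) \gg \sqrt{\Delta(F,A)}$ (as $\Delta(F,A) = \delta(F,A)^C \ge (C^3 v_A(F)^2)^C$ while $e(F) \le (\log n)^{1/5}$), the polynomial factor $2t/\sqrt{n}$ already dwarfs the required target, with Lemma~\ref{lem:gF'A'gFA} controlling $g_{F',A'}(t)$. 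If instead $A' \cap F \neq A$, then $v_{A'}(F') < v_A(F)$ (again by Observation~\ref{obs:AcapF}), and Lemma~\ref{obs:gF'A'gFA} gives $g_{F',A'}(t) \le (\log n)^{\Delta(F-v,A)}$; here the crucial input is Observation~\ref{obs:deltaF'H'A'deltaFA}, which supplies the uniform bound
$$\max\bigl\{\Delta(F',H',A'),\,\Delta(F-v,A)\bigr\} \, \le \, \Delta(F,A) - 3\sqrt{\Delta(F,A)},$$
valid for $A' \subsetneq H' \subsetneq F'$ and for $H' = F'$ with $A' \cap F \neq A$. This gap of $3\sqrt{\Delta(F,A)}$ is exactly what produces the target $(\log n)^{-\sqrt{\Delta(F,A)}}$ factor. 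The final step is to compare $\Nt_{A'}(F')$ and $\Nt_{H'}(F')$ to $\Nt_A(F)$, which is handled by Observation~\ref{obs:NH'F'NHF} together with the inventory of $v,e,o$ changes in Table~4.1, while Lemmas~\ref{obs:deltaF'A'Fv} and~\ref{obs:deltaFdeltaFv} control the various polylog corrections to $(\log n)^{\gamma(F,A)}$ and $(\log n)^{\Delta(F,A)}$.

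The main obstacle is the rather delicate book-keeping required to juggle the six cases of Definition~\ref{def:F-} in combination with the sub-cases $t \le t_{A'}(F')$ versus $t > t_{A'}(F')$, and in particular to invoke $\M(m)$ (rather than only $\E(m)$) precisely when $(F',A')$ is balanced but beyond its tracking time, where only the sharper bound of Proposition~\ref{prop:balanced:late} suffices. The uniform $3\sqrt{\Delta(F,A)}$ gap provided by Observation~\ref{obs:deltaF'H'A'deltaFA} is the structural miracle that makes the whole argument go through with a single clean inequality.
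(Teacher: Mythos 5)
Your high-level plan matches the paper's: reduce via Lemma~\ref{lem:chainstar}, expand $|\Delta N_\phi(F)(m)|$ over $\F_{F,A}^-$ by Lemma~\ref{lem:F-}, and bound each $N_{\phi'}(F')(m)$ by $(\log n)^{-2\sqrt{\Delta(F,A)}} \cdot g_{F,A}(t)^2(1+g_{F,A}(t))^{-1}\cdot \Nt_A(F)(m)$. The paper isolates this last estimate as Lemma~\ref{lem:F3}; you inline its proof, which is only an organizational difference. But your sketch of that inlined proof has a genuine gap in what is in fact the hardest case.

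You claim that the gap $\max\{\Delta(F',H',A'),\Delta(F-v,A)\} \le \Delta(F,A) - 3\sqrt{\Delta(F,A)}$ from Observation~\ref{obs:deltaF'H'A'deltaFA} is ``the structural miracle that makes the whole argument go through with a single clean inequality.'' It is not: that observation only provides the gap when $A' \subsetneq H' \subsetneq F'$, or when $H' = F'$ with $A' \cap F \neq A$. In the remaining case --- a \emph{balanced} pair $(F',A')$ with $A' \cap F = A$ (hence $v_{A'}(F') = v_A(F)$), $H'=F'$, and $t > t_{A'}(F')$ --- the observation gives nothing, and Observation~\ref{obs:deltaaddlittle} yields only $\Delta(F',A') \le (1+\eps)\Delta(F,A)$, i.e.\ no gap at all. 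Here the $\M(m)$ bound you quote, $e^{-o(F')(t^2 - t_{A'}(F')^2)}(\log n)^{\Delta(F',A')}$, must be traded against the target, and the trade succeeds only because of a discreteness property of the tracking times that you never invoke: since both $8o(F)t^*_A(F)^2/\log n$ and $8o(F')t^*_{A'}(F')^2/\log n$ are integers (by~\eqref{eq:tstardef}), and $o(F)=o(F')$ (by Table~4.1, as $A'\cap F = A$), and $t_{A'}(F') < t^*_A(F)$, one gets $t^*_A(F)^2 - t_{A'}(F')^2 \ge \log n/(8o(F))$. The paper then still needs to split further on whether $o(F)\big(t_A(F)^2 - t_{A'}(F')^2\big)$ is $\gtrless \eps\log n$, in the latter sub-case extracting a lower bound $\Nt_A(F)(m) \ge n^{1/2 - 4\eps}$. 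None of this is captured by ``delicate book-keeping''; it is a separate mechanism, and without it this branch of the case analysis would remain open.
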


We shall need one more straightforward lemma. 

\begin{lemma}\label{lem:F4}
Let $(F,A)$ be a graph structure pair. If $A \subsetneq H \subseteq F$ and $\omega < t \le t_A(F)$, then 
$$\Nt_A(H)(m) \, \ge \, \left( \frac{(\log n)^{\gamma(F,A)}}{g_{F,A}(t)} \right)^2.$$
\end{lemma}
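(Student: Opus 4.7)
The first step I would take is to unpack both sides. Writing $c = c(F,A)$ and using~\eqref{def:gfat},
$$\left(\frac{(\log n)^{\gamma(F,A)}}{g_{F,A}(t)}\right)^2 \,=\, n^{1/2}\,e^{-2ct^2},$$
so the inequality reduces to proving
$$f(t) \,:=\, \Nt_A(H)(m) \cdot e^{2ct^2} \,\geq\, n^{1/2}.$$
Expanding $\Nt_A(H)$ via~\eqref{def:NtF}, I would write
$$f(t) \,=\, (2t)^{e(H)} \exp\big( (2c - 4o(H))t^2 \big)\, n^{v_A(H) - e(H)/2}.$$
Since $t_A(F)>0$ and $A \subsetneq H \subseteq F$, we have $t_A^*(H) > 0$, so $2v_A(H)-e(H) \geq 1$, i.e., $v_A(H) - e(H)/2 \geq 1/2$.

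I would then split on the sign of $2c - 4o(H)$. In the easy case $c \geq 2o(H)$, the exponential factor is at least~$1$ and the bound reduces to $(2t)^{e(H)} n^{v_A(H)-e(H)/2} \geq n^{1/2}$: this is immediate when $v_A(H)-e(H)/2 \geq 1$, and in the boundary subcase $v_A(H)-e(H)/2 = 1/2$ one has $e(H) = 2v_A(H)-1 \geq 1$, so $(2t)^{e(H)} \geq 2\omega \gg 1$ supplies the missing factor.

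The harder case is $c < 2o(H)$. Setting $a := 4o(H) - 2c > 0$, the function $f(t) = (2t)^{e(H)} e^{-at^2} n^{v_A(H)-e(H)/2}$ is log-concave, so its minimum on $(\omega, t_A(F)]$ is attained at one of the endpoints. At the right endpoint I would combine the two defining facts $\Nt_A(H)(m) \geq (2t)^{e(H)}$ (valid since $t_A(F) \leq t_A^*(H)$) and $e^{c\,t_A(F)^2} \geq n^{1/4}$ (from~\eqref{def:cFA}) to obtain $f(t_A(F)) \geq n^{1/2}$ directly.

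The main obstacle is the left endpoint, where the potentially tiny factor $e^{-a\omega^2}$ must be controlled and the polynomial reserve in $n$ must be at least $n^{1/2}$. This is handled by the key observation that the troublesome boundary $v_A(H) - e(H)/2 = 1/2$ cannot arise in this case: if $2v_A(H) - e(H) = 1$, then~\eqref{def:cFA} forces $c \geq 2o(H)/1 = 2o(H)$, contradicting the case assumption. Hence $v_A(H) - e(H)/2 \geq 1$ automatically, and since $a \leq 4o(H) \leq (\log n)^{1/5}$ while $\omega = \omega(n)\to \infty$ grows sufficiently slowly (e.g.\ $\omega = \log\log\log n$), we have $a\omega^2 \ll \log n$, whence $e^{-a\omega^2} = n^{-o(1)}$ and $f(\omega) \geq n^{1-o(1)} \gg n^{1/2}$. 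Taken together, $f(t) \geq n^{1/2}$ throughout $(\omega, t_A(F)]$, which is exactly what is required.
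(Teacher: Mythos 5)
Your proposal departs from the paper's argument in an interesting way: you reduce to $\Nt_A(H)(m)\,e^{2ct^2}\geq n^{1/2}$, split on the sign of $2c(F,A)-4o(H)$, and use log-concavity to restrict to the endpoints of $(\omega,t_A(F)]$. The paper instead proves the bound uniformly for all $t$ in the interval via the chain $\Nt_A(H)(m)\geq\big(n^{1/4}e^{-ct^2}\big)^{2a}\geq\big(n^{1/4}e^{-ct^2}\big)^2$, where $a=2v_A(H)-e(H)\geq 1$; the first inequality uses $ac\geq 2o(H)$ (from~\eqref{def:cFA}), and the second uses $n^{1/4}e^{-ct^2}\geq 1$, i.e.\ $e^{ct^2}\leq n^{1/4}$ for $t\leq t_A(F)$.

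The problem is at the right endpoint of your second case, where you invoke ``$e^{c\,t_A(F)^2}\geq n^{1/4}$ (from~\eqref{def:cFA}).'' This is the wrong direction. The correct inequality -- and the one the paper uses -- is $e^{c\,t_A(F)^2}\leq n^{1/4}$: the footnote after~\eqref{def:cFA} says $e^{ct^2}=n^{1/4}$ at $t=t_A(F)$ \emph{only if $t_A(F)<t^*$}, and when $t_A(F)=t^*$ one has $c\,(t^*)^2<\tfrac{1}{4}\log n$ strictly, because $c\geq 2$ and $(t^*)^2=(\tfrac{1}{2\sqrt{2}}-\eps)^2\log n<\tfrac{1}{8}\log n$. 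The case $t_A(F)=t^*$ does occur in your second case: for example, if $H$ has $v_A(H)=2$, $e(H)=0$, $o(H)=2$, then $c(F,A)=2<4=2o(H)$, $t_A(F)=t^*$, and your right-endpoint bound gives only $(2t^*)^{0}\,e^{2c(t^*)^2}=e^{4(t^*)^2}<n^{1/2}$ -- not enough. (The claim $f(t^*)\geq n^{1/2}$ is still \emph{true} here -- in fact $f(t^*)\approx n^{3/2}$ -- but your argument does not establish it.)

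The fix is to combine $ac\geq 2o(H)$ with the correct inequality $e^{ct^2}\leq n^{1/4}$, exactly as the paper does: writing $a=2v_A(H)-e(H)\geq 1$, one gets $\Nt_A(H)(m)\geq n^{a/2}e^{-4o(H)t^2}\geq n^{a/2}e^{-2act^2}=\big(n^{1/4}e^{-ct^2}\big)^{2a}$, and since $n^{1/4}e^{-ct^2}\geq 1$ this is $\geq\big(n^{1/4}e^{-ct^2}\big)^2=n^{1/2}e^{-2ct^2}$. This holds for every $\omega<t\leq t_A(F)$ in one stroke, making the case split and the log-concavity reduction unnecessary. Your case~1 and your left-endpoint argument (using $o(H)\leq(\log n)^{1/5}$ and $\omega=\log\log\log n$ to conclude $e^{-(4o(H)-2c)\omega^2}=n^{-o(1)}$, together with the correct observation that $a\geq 2$ in case~2) are fine, but the right endpoint needs this repair.
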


\begin{proof}
Set $a = 2v_A(H) - e(H)$, $b = o(H)$ and $c = c(F,A)$. Note that 
$$ac \, \ge \, 2b$$
by the definition~\eqref{def:cFA} of $c(F,A)$, that $a \ge 1$, since $t_A(F) > 0$ and $H \neq A$, and that
$$\Nt_A(H)(m) \, = \, (2t)^{e(H)} n^{a/2} e^{-4 b t^2},$$
by~\eqref{def:NtF}. Since $g_{F,A}(t) = n^{-1/4} e^{ct^2} (\log n)^{\gamma(F,A)}$ and $n^{-1/4} e^{c t_A(F)^2} \le 1$, we obtain
$$\Nt_A(H)(m) \, \ge \, \big( n^{1/4} e^{-ct^2} \big)^{2a}  \, \ge \, \big( n^{1/4} e^{-ct^2} \big)^{2}  \, = \, \left( \frac{(\log n)^{\gamma(F,A)}}{g_{F,A}(t)} \right)^{2}$$
for every $\omega < t \le t_A(F)$, as required. 
\end{proof}

\begin{comment}
%%%%%%%%%%%%%%%%
\begin{proof}
Set $t_0 = t_A(F) \le t_A(H)$ and $m_0 = t_0 \cdot n^{3/2}$, and observe that, by~\eqref{def:t*} and~\eqref{def:NtF}, 
$$\Nt_A(H)(m_0) \, = \, (2t_0)^{e(H)} n^{a/2} e^{-4 b t_0^2} \, \ge \, (2t_0)^{e(H)}$$
for some $a,b \in \ZZ$. Set $c = c(F,A)$, recall that (by definition) either $e^{ct_0^2} = n^{1/4}$, or $c = 2$ and $a \ge b$, and observe that therefore
$$ac \, \ge \, 2b.$$
Thus, since $g_{F,A}(t) = n^{-1/4} e^{ct^2} (\log n)^{\gamma(F,A)}$ and $a \ge 1$ (since $t_A(F) > 0$ and $H \neq A$), this implies that 
$$\Nt_A(H)(m) \, \ge \, \big( n^{1/4} e^{-ct^2} \big)^{2a}  \, \ge \, \big( n^{1/4} e^{-ct^2} \big)^{2}  \, = \, \left( \frac{(\log n)^{\gamma(F,A)}}{g_{F,A}(t)} \right)^{2}$$
for every $\omega < t \le t_A(F)$, as required. 
\end{proof}
%%%%%%%%%%%%%%%%%
\end{comment}

We are now ready for a key calculation.

\begin{lemma}\label{lem:F3}
Let $(F,A)$ be a graph structure pair, let $\omega < t \le t_A(F)$, and suppose that $(\log n)^{\gamma(F,A)} \le n^{v_A(F)+e(F)+1}$. If $\E(m) \cap \M(m)$ holds, then
\begin{equation}\label{eq:lem:F3}
%N_{\phi'}(F')(m) \, \le \, \max\left\{ n^{-1/5}, \, (\log n)^{- 2\sqrt{\Delta(F,A)}} \right\} \cdot \frac{g_{F,A}(t)^2}{1 + g_{F,A}(t)} \cdot \Nt_A(F)(m).
N_{\phi'}(F')(m) \, \le \, (\log n)^{- 2\sqrt{\Delta(F,A)}} \cdot \frac{g_{F,A}(t)^2}{1 + g_{F,A}(t)} \cdot \Nt_A(F)(m).
\end{equation}
for every $(F',A') \in \F_{F,A}^-$ and every $\phi' \colon A' \to V(G_m)$ which is faithful at time~$t$.
\end{lemma}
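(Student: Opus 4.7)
The plan proceeds by case analysis on the construction yielding $(F',A') \in \F_{F,A}^-$ in Definition~\ref{def:F-}. I would first split $\F_{F,A}^-$ into two groups: Group~I, comprising cases~(a) and~(c) (where $A' \cap V(F) = A$), and Group~II, comprising cases~(b), (d), (e), (f) (where $A' \cap V(F) \supsetneq A$, and in particular $v_{A'}(F') < v_A(F)$). Within each group, I would split further by whether $t \le t_{A'}(F')$ or $t > t_{A'}(F')$. In the tracking regime, $\E(m)$ supplies $N_{\phi'}(F')(m) \le (1+g_{F',A'}(t))\Nt_{A'}(F')(m)$; in the non-tracking regime, $\E(m)$ supplies the part~(c) bound $N_{\phi'}(F')(m) \le (\log n)^{\Delta(F',H',A')}\Nt_{H'}(F')(m)$ for the minimal $H'$ with $t < t_{H'}(F')$, and $\M(m)$ sharpens this estimate on balanced pieces of the building sequence of $(F',A')$ via Proposition~\ref{prop:balanced:late}.

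For Group~I, Observation~\ref{obs:AcapF}(a) gives the clean identity $\Nt_{A'}(F')(m) = (2t/\sqrt{n}) \cdot \Nt_A(F)(m)$, turning the target inequality into
$$(1+g_{F',A'}(t)) \cdot \frac{2t}{\sqrt{n}} \, \le \, (\log n)^{-2\sqrt{\Delta(F,A)}} \cdot \frac{g_{F,A}(t)^2}{1+g_{F,A}(t)}$$
in the tracking regime, with an analogous comparison via Theorem~\ref{EEthm}(c) in the other regime. Unpacking $g_{F,A}(t) = n^{-1/4} e^{c(F,A) t^2} (\log n)^{\gamma(F,A)}$ and using Lemma~\ref{lem:gF'A'gFA} to bound $g_{F',A'}(t) \le n^{1/4+\eps} g_{F,A}(t)$, this reduces to an exponent comparison: the exponential factor $e^{2c(F,A) t^2}$, which reaches $n^{1/2}$ at $t = t_A(F)$ because $c(F,A) \ge 2$, finances the loss of $\sqrt{n}$, while the polylog slack $\gamma(F,A) \gg \sqrt{\Delta(F,A)}$ (which follows from $\gamma(F,A) = \Delta(F,A) - e(F) - 2$ and the fact that $\Delta(F,A)$ is a large power of the constant $C$) absorbs the $(\log n)^{2\sqrt{\Delta(F,A)}}$ loss.

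For Group~II, the dominant saving comes from the drop $v_{A'}(F') \le v_A(F) - 1$, combined with Lemma~\ref{obs:gF'A'gFA} (which gives $g_{F',A'}(t) \le (\log n)^{\Delta(F-v,A)}$) and the exponent estimate $\Delta(F-v,A) \le \Delta(F,A) - 3\sqrt{\Delta(F,A)}$ from Observation~\ref{obs:deltaF'H'A'deltaFA}. Together, these ensure that whatever polylog factor arises from $g_{F',A'}$, $\Delta(F',H',A')$, or the ratio $\Nt_{A'}(F')/\Nt_A(F)$ is strictly smaller than $(\log n)^{2\gamma(F,A) - 2\sqrt{\Delta(F,A)}}$, yielding the required inequality with room to spare. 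Lemma~\ref{lem:F4} plays the corresponding role in the part~(c) regime, by lower-bounding $\Nt_{A'}(H')$ in the ratio $\Nt_{H'}(F') = \Nt_{A'}(F')/\Nt_{A'}(H')$. The main obstacle lies in Group~I near $t = t_A(F)$: there the $g_{F,A}(t)^2$ saving must simultaneously pay for the $\sqrt{n}/(2t)$ edge-density loss and the $n^{1/4+\eps}$ penalty from $g_{F',A'}/g_{F,A}$, leaving only a narrow polylog margin. This margin is supplied exactly by the definition of $c(F,A)$, which is chosen so that $e^{c(F,A) t_A(F)^2} = n^{1/4}$ whenever $t_A(F) < t^*$; it is precisely this tight calibration that makes the budget balance.
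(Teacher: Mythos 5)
Your decomposition (Group~I for cases~(a),(c) of Definition~\ref{def:F-}, Group~II for cases~(b),(d),(e),(f), each crossed with tracking versus non-tracking) reproduces the paper's case split, and you have correctly located most of the ingredients: Observation~\ref{obs:AcapF}, Lemma~\ref{lem:gF'A'gFA}, Lemma~\ref{obs:gF'A'gFA}, Observation~\ref{obs:deltaF'H'A'deltaFA}, Lemma~\ref{lem:F4}, and the calibration $e^{c(F,A)t_A(F)^2}=n^{1/4}$. Two points, though, need attention.

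First, a small imprecision: in the non-tracking regime you propose to apply Lemma~\ref{lem:F4} to lower-bound $\Nt_{A'}(H')$, but the hypothesis of that lemma is $t\le t_{A'}(F')$, which is precisely what fails in that regime. The paper instead routes through $H := H'\cap F$: Observation~\ref{obs:NH'F'NHF} gives $\Nt_{H'}(F')\le \Nt_H(F)=\Nt_A(F)/\Nt_A(H)$, and Observation~\ref{obs:AcapF}(b) ensures $H\supsetneq A$ whenever $H'\supsetneq A'$, so Lemma~\ref{lem:F4} is applied to the pair $(F,A)$ with $A\subsetneq H\subseteq F$, using the hypothesis $t\le t_A(F)$ of the lemma being proved. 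This takes care of every non-tracking sub-case except the one with $H'=F'$ and $A'\cap F=A$.

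Second, and more substantively, that remaining sub-case is where most of the real work lies, and ``$\M(m)$ sharpens this estimate'' is not yet an argument. For a balanced pair $(F',A')$ the $\M(m)$ bound is $e^{-o(F')(t^2-t_{A'}(F')^2)}(\log n)^{\Delta(F',A')}$, and one must show this is at most $(\log n)^{-2\sqrt{\Delta(F,A)}} g_{F,A}(t)^2 \Nt_A(F)/(1+g_{F,A}(t))$, where $g_{F,A}(t)$ itself carries a factor $e^{c(F,A)(t^2-t_A(F)^2)}$. The exponent comparison only closes because of the separation $t^*_A(F)^2-t_{A'}(F')^2\ge \log n/8o(F)$ (the paper's~\eqref{eq:case3:tminust}), which rests on the observation --- via~\eqref{eq:tstardef} --- that $8o(F)t^*_A(F)^2/\log n$ and $8o(F')t_{A'}(F')^2/\log n$ are distinct integers; and one also needs to split on whether $o(F)\big(t_A(F)^2-t_{A'}(F')^2\big)\ge \eps\log n$, falling back to the lower bound $\Nt_A(F)\ge n^{1/2-4\eps}$ when it is not. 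Without this arithmetic rigidity of the tracking times, the budget near $t=t_A(F)$ that you correctly identify as the tight spot does not balance; your plan would need to supply this ingredient to become a proof.
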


\begin{proof}
We split the proof into three cases. 

\medskip
\noindent{\textbf{Case 1:}} $\omega < t \le t_{A'}(F')$ and $A' \cap F = A$.
\medskip

The condition $A' \cap F = A$ implies that $\sqrt{n} \cdot \Nt_{A'}(F') \le 2t \cdot \Nt_A(F)$, by Observation~\ref{obs:AcapF}. Suppose first that $g_{F,A}(t) \le 1$. Then, since $\omega < t \le t_{A'}(F')$, by the event $\E(m)$ we have
\begin{multline*}
N_{\phi'}(F') \, \le \, \big( 1 + g_{F',A'}(t) \big) \cdot \Nt_{A'}(F')  \, \le \, (\log n)^{\gamma(F',A') + 1} \cdot \frac{2t}{\sqrt{n}} \cdot \Nt_{A}(F)\\
\, \le \, (\log n)^{(1 + 2\eps)\gamma(F,A)} \cdot \frac{g_{F,A}(t)^2}{(\log n)^{2\gamma(F,A)}} \cdot \Nt_{A}(F) \, \le \, (\log n)^{-2 \sqrt{\Delta(F,A)}} \cdot \frac{g_{F,A}(t)^2}{1 + g_{F,A}(t)} \cdot \Nt_A(F).
\end{multline*}
Note that we used Observation~\ref{obs:deltaaddlittle} and the fact that $g_{F,A}(t)^2 \sqrt{n} \ge (\log n)^{2\gamma(F,A)}$ in the third step, and our assumption that $g_{F,A}(t) \le 1$ in the fourth.

On the other hand, if $g_{F,A}(t) \ge 1$ then, since $(\log n)^{\gamma(F,A)} \le n^{v_A(F)+e(F)+1}$ and $\omega < t \le \min\big\{ t_A(F), t_{A'}(F') \big\}$, we have
$$N_{\phi'}(F') \, \le \, \big( 1 + g_{F',A'}(t) \big) \cdot \Nt_{A'}(F')  \, \le \, n^{1/4 + 2\eps} \cdot g_{F,A}(t) \cdot \frac{2t}{\sqrt{n}} \cdot \Nt_{A}(F),$$
by Lemma~\ref{lem:gF'A'gFA} and the event $\E(m)$. Noting that $(\log n)^{\sqrt{\Delta(F,A)}} \le n^\eps$, by Lemma~\ref{obs:deltaFdeltaFv}, it follows that
$$N_{\phi'}(F')  \, \le \, n^{-1/4 + 3\eps} \cdot g_{F,A}(t) \cdot \Nt_{A}(F)  \, \le \, (\log n)^{-2\sqrt{\Delta(F,A)}}  \cdot \frac{g_{F,A}(t)^2}{1 + g_{F,A}(t)} \cdot \Nt_A(F)$$
since $g_{F,A}(t) \ge 1$, as required.

\bigskip
\noindent{\textbf{Case 2:}} $\omega < t \le t_{A'}(F')$ and $A' \cap F \neq A$.
\medskip

By Observation~\ref{obs:NH'F'NHF} and Lemma~\ref{lem:F4}, we have  
\begin{equation}\label{eq:F3:case21}
\Nt_{A'}(F') \, \le \, \Nt_{A' \cap F}(F) \, = \, \frac{\Nt_A(F)}{\Nt_A(A' \cap F)} \, \le \, \left( \frac{g_{F,A}(t)}{(\log n)^{\gamma(F,A)}} \right)^{2} \cdot \Nt_A(F),
\end{equation}
since $A' \cap F \neq A$. Moreover, $g_{F',A'}(t) \le (\log n)^{\Delta(F,A) - 3\sqrt{\Delta(F,A)}}$, by Lemma~\ref{obs:gF'A'gFA} and Observation~\ref{obs:deltaF'H'A'deltaFA}. Using the event $\E(m)$ and the fact that $\omega < t \le t_{A'}(F')$, it follows that
\begin{equation}\label{eq:F3:case22}
N_{\phi'}(F') \, \le \, \big( 1 + g_{F',A'}(t) \big) \cdot \Nt_{A'}(F')  \, \le \, \frac{g_{F,A}(t)^2}{1 + g_{F,A}(t)} \cdot (\log n)^{-2 \sqrt{\Delta(F,A)}} \cdot \Nt_A(F),
\end{equation}
as required, since $1 + g_{F,A}(t) \le (\log n)^{\gamma(F,A) + 1}$ and $\gamma(F,A) + \sqrt{\Delta(F,A)} > \Delta(F,A) + 1$. 

\bigskip
\noindent{\textbf{Case 3:}} $t > t_{A'}(F')$.
\medskip

Let $A' \subsetneq H' \subseteq F'$ be minimal such that $t < t_{H'}(F')$, set $H = H' \cap F$, and suppose first that either $H' \neq F'$ or $A' \cap F \neq A$. Then, since $\E(m)$ holds and $t > t_{A'}(F')$, we have
$$N_{\phi'}(F') \, \le \, (\log n)^{\Delta(F',H',A')} \Nt_{H'}(F') \, \le \, (\log n)^{\Delta(F,A) - 3\sqrt{\Delta(F,A)}} \Nt_H(F).$$
where the second inequality follows from by Observations~\ref{obs:NH'F'NHF} and~\ref{obs:deltaF'H'A'deltaFA}. Since $H \neq A$, by Observation~\ref{obs:AcapF}, and $\Nt_A(F) = \Nt_A(H) \cdot \Nt_H(F)$, we may apply Lemma~\ref{lem:F4} to $\Nt_A(H)$, as in~\eqref{eq:F3:case21}, and hence obtain
$$N_{\phi'}(F')(m) \, \le \, \frac{g_{F,A}(t)^2}{1 + g_{F,A}(t)} \cdot (\log n)^{- 2 \sqrt{\Delta(F,A)}} \cdot \Nt_A(F)(m),$$
exactly as in~\eqref{eq:F3:case22}.

So suppose now that $H' = F'$ and $A' \cap F = A$, let $A' \subseteq H_0 \subsetneq \dots \subsetneq H_\ell = F'$ be the building sequence of $(F',A')$, and note that since $H' = F'$, we have $t \ge t_\ell$, by Lemma~\ref{lem:Hj}.  If $(F',A')$ is unbalanced then, by Lemma~\ref{lem:unbalanced}, Observation~\ref{obs:deltaF'H'A'deltaFA} and the event $\E(m)$, 
$$N_{\phi'}(F')(m) \, \le \, (\log n)^{\Delta(F',H_{\ell-1},A')} \, \le \, (\log n)^{\Delta(F,A) - 3\sqrt{\Delta(F,A)}},$$
which again implies~\eqref{eq:lem:F3}, using Lemma~\ref{lem:F4} to bound $\Nt_A(F)$. On the other hand, if $(F',A')$ is balanced then, since the event $\M(m)$ holds, we have
$$N_{\phi'}(F')(m) \, \le \, \max\Big\{ e^{- o(F')(t^2 - t_{A'}(F')^2)} (\log n)^{\Delta(F',A')}, (\log n)^{\Delta(F'-v,A')} \Big\}.$$
By Observation~\ref{obs:deltaF'vA'deltaFA}, if
$$N_{\phi'}(F')(m) \, \le \, (\log n)^{\Delta(F'-v,A')} \, \le \, (\log n)^{\Delta(F,A) - 3\sqrt{\Delta(F,A)}}$$
then we are done as before, so let's assume that $t_{A'}(F') > 0$, and that
\begin{equation}\label{eq:case3:balancedbound}
N_{\phi'}(F')(m) \, \le \, e^{- o(F')(t^2 - t_{A'}(F')^2)} (\log n)^{\Delta(F',A')}.
\end{equation}
Note that moreover $o(F) = o(F') > 0$, since $A' \cap F = A$ (see Definition~\ref{def:F-}).

Next, we claim that if either $g_{F,A}(t) \le 1$ or $c(F,A) = 2$, then %$t_A(F) = t^*$, then 
\begin{equation}\label{eq:gsmallortAFiststar}
N_{\phi'}(F')(m) \, \le \, (\log n)^{\Delta(F',A')} \, \le \, \frac{(\log n)^{2 \gamma(F,A) - 2\sqrt{\Delta(F,A)} } }{1 + g_{F,A}(t)},
\end{equation}
by Observation~\ref{obs:deltaaddlittle} and the event $\E(m)$. Indeed, if $g_{F,A}(t) \le 1$ then this is immediate, and otherwise we have
%whereas if $g_{F,A}(t) > 1$ and $t_A(F) = t^*$ then we have
$$1 \, \le \, g_{F,A}(t) \, \le \, n^{-\eps} (\log n)^{\gamma(F,A)} \, \le \, n^{-\eps/2} (\log n)^{\gamma(F,A) - 3\sqrt{\Delta(F,A)}},$$
where the second inequality follows (for every $t \le t_A(F)$) since $c(F,A) = 2$, and the third holds because $(\log n)^{\sqrt{\Delta(F,A)}} \le n^{\eps/6}$, by Lemma~\ref{obs:deltaFdeltaFv}. Using Lemma~\ref{obs:deltaFdeltaFv} once again, it follows that
$$(\log n)^{\Delta(F',A')} \, \le \, n^{\eps/2} (\log n)^{\Delta(F,A)} \, \le \, \frac{(\log n)^{2 \gamma(F,A) - 2\sqrt{\Delta(F,A)} } }{1 + g_{F,A}(t)},$$
as claimed. The bound~\eqref{eq:lem:F3} now follows immediately from~\eqref{eq:gsmallortAFiststar}, using Lemma~\ref{lem:F4}.

\smallskip 

It remains to deal with the case in which~\eqref{eq:case3:balancedbound} holds, $(F',A')$ is balanced and\footnote{Recall that $e^{c(F,A)t_A(F)^2} \le n^{1/4}$.}
\begin{equation}\label{eq:gFAt:alternative}
1 \, \le \, g_{F,A}(t) \, \le \, e^{c(F,A) (t^2 - t_A(F)^2)} (\log n)^{\gamma(F,A)}.
\end{equation}
where $0 < t_{A'}(F') < t \le t_A(F) \le t_A^*(F)$ and $c(F,A) > 2$. We claim that
\begin{equation}\label{eq:case3:tminust}
 t_A^*(F)^2 - t_{A'}(F')^2 \, \ge \, \frac{\log n}{8 \cdot o(F)}.
\end{equation}
To see this, recall first that $t^*_{A'}(F') = t_{A'}(F')$, since $(F',A')$ is balanced and $t_{A'}(F') < t^*$, and that $o(F) = o(F')$. Recall also from~\eqref{eq:tstardef} that $8o(F)t_A^*(F)^2 / \log n$ and $8o(F')t^*_{A'}(F')^2 / \log n$ are both integers. Since $t_{A'}(F') < t_A^*(F)$, it follows that these integers are distinct, and hence~\eqref{eq:case3:tminust} holds.

Suppose first that $o(F) \big( t_A(F)^2 - t_{A'}(F')^2 \big) \ge \eps \cdot \log n$, and recall from Observation~\ref{obs:zeroomega} that $c(F,A) \ge o(F) / v_A(F)$. It follows that 
\begin{equation}\label{eq:case3:oplusc}
o(F) \big( t^2 - t_{A'}(F')^2 \big) + c(F,A) \big( t_A(F)^2 - t^2 \big) \, \ge \, \frac{o(F)}{v_A(F)} \cdot \big( t_A(F)^2 -  t_{A'}(F')^2 \big) \, \ge \, \frac{\eps \cdot \log n}{v_A(F)},
\end{equation}
and by Lemma~\ref{obs:deltaFdeltaFv} we have
\begin{equation}\label{eq:case3:nepsovervAF}
n^{\eps / v_A(F)} \, \ge \, (\log n)^{\Delta(F',A') - \Delta(F,A) + 3 \sqrt{\Delta(F,A)} }.
\end{equation}
Now, by~\eqref{eq:case3:balancedbound} and~\eqref{eq:gFAt:alternative} we have
\begin{align*}
N_{\phi'}(F')(m) & \, \le \, e^{- o(F)(t^2 - t_{A'}(F')^2)} (\log n)^{\Delta(F',A')} \\
&  \, \le \, e^{- o(F)(t^2 - t_{A'}(F')^2) - c(F,A) ( t_A(F)^2 - t^2 )} \cdot \frac{(\log n)^{\Delta(F',A') + \gamma(F,A)}}{g_{F,A}(t)}\end{align*}
and hence, by~\eqref{eq:case3:oplusc},~\eqref{eq:case3:nepsovervAF} and Lemma~\ref{lem:F4}, and since $g_{F,A}(t) \ge 1$, 
$$N_{\phi'}(F')(m) \, \le \, \frac{(\log n)^{2\gamma(F,A) - 2 \sqrt{\Delta(F,A)} } }{1 + g_{F,A}(t)} \, \le \, (\log n)^{- 2\sqrt{\Delta(F,A)}} \cdot \frac{g_{F,A}(t)^2}{1 + g_{F,A}(t)} \cdot \Nt_A(F)(m),$$
as required.

Finally, suppose that $o(F) \big( t_A(F)^2 - t_{A'}(F')^2 \big) \le \eps \cdot \log n$. It follows from~\eqref{eq:case3:tminust} that
$$o(F) \big( t_A^*(F)^2 - t_A(F)^2 \big) \, \ge \, \left( \frac{1}{8} - \eps \right) \log n,$$
and hence, since $8 o(F) t_A^*(F)^2 = \big( 2 v_A(F) - e(F) \big) \log n$, by~\eqref{eq:tstardef}\footnote{Note that $0 < t_A^*(F) < \infty$, since $t_A(F) > 0$ and $o(F) > 0$.}, we have %the definition of $t_A^*(F)$ (see
$$\Nt_A(F)(m) \, = \, (2t)^{e(F)} \cdot \exp\Big( 4 o(F) \big( t_A^*(F)^2 - t^2 \big) \Big) \, \ge \, n^{1/2 - 4\eps}.$$
Since $g_{F,A}(t) \ge n^{-1/4} (\log n)^{\gamma(F,A)}$, it follows, using~\eqref{eq:case3:balancedbound},~\eqref{eq:case3:nepsovervAF} and Lemma~\ref{obs:deltaFdeltaFv}, that %$g_{F,A}(t) \ge 1$ and
\begin{align*}
N_{\phi'}(F')(m) & \, \le \, (\log n)^{\Delta(F',A')} \, \le \, n^\eps \cdot (\log n)^{\gamma(F,A)} \, \le \, n^{1/4+\eps} \cdot g_{F,A}(t) \\
&  \, \le \, n^{-1/4+5\eps} \cdot g_{F,A}(t)  \cdot \Nt_A(F)(m) \, \le \,  (\log n)^{- 2\sqrt{\Delta(F,A)}} \cdot \frac{g_{F,A}(t)^2}{1 + g_{F,A}(t)} \cdot \Nt_A(F)(m),
\end{align*}
since $g_{F,A}(t) \ge 1$ and $(\log n)^{\sqrt{\Delta(F,A)}} \le n^\eps$, as required.
\end{proof}

Using Lemma~\ref{lem:F3}, we can now easily bound $|\Delta N_\phi^*(F)(m)|$.

\begin{proof}[Proof of Lemma~\ref{maxalphabeta}]
By Lemmas~\ref{lem:F+}, ~\ref{lem:F-} and~\ref{lem:F3}, the maximum number of copies of $F$ rooted at $\phi(A)$ which can be either created or destroyed by the addition of a single edge is at most
%$$|\F_{F,A}^- \cup \F_{F,A}^+| \cdot \max\left\{ n^{-1/5}, \, (\log n)^{- 2\sqrt{\Delta(F,A)}} \right\} \cdot \frac{g_{F,A}(t)^2}{1 + g_{F,A}(t)} \cdot \Nt_A(F)(m).$$
$$|\F_{F,A}^- \cup \F_{F,A}^+| \cdot (\log n)^{- 2\sqrt{\Delta(F,A)}} \cdot \frac{g_{F,A}(t)^2}{1 + g_{F,A}(t)} \cdot \Nt_A(F)(m).$$
Moreover, by Lemma~\ref{lem:chainstar}, we have
$$| \Delta N_\phi^*(F)(m) |  \, \le \, 2 \cdot \left( \frac{ | \Delta N_\phi(F)(m) | }{g_{F,A}(t) \Nt_A(F)(m)} \,+\, \frac{1 + g_{F,A}(t)}{g_{F,A}(t)} \cdot \frac{\log n}{n^{3/2}} \right).$$
Since $g_{F,A}(t) \ge n^{-1/4}$ and $v(F) \ll \log n$, it follows that
%\begin{align*}
%| \Delta N_\phi^*(F)(m) | & \, \le \, \left( \frac{C \cdot v(F)^2}{g_{F,A}(t)} \right) \cdot \max\left\{ n^{-1/5}, \, (\log n)^{- 2\sqrt{\Delta(F,A)}} \right\} \cdot \frac{g_{F,A}(t)^2}{1 + g_{F,A}(t)}\\
%& \, \le \, \max\left\{ n^{-1/6}, \, (\log n)^{- \sqrt{\Delta(F,A)}} \right\} \cdot \frac{g_{F,A}(t)}{1 + g_{F,A}(t)}
%\end{align*}
\begin{align*}
| \Delta N_\phi^*(F)(m) | & \, \le \, \left( \frac{C \cdot v(F)^2}{g_{F,A}(t)} \right) \cdot (\log n)^{- 2\sqrt{\Delta(F,A)}} \cdot \frac{g_{F,A}(t)^2}{1 + g_{F,A}(t)}\\
& \, \le \, (\log n)^{- \sqrt{\Delta(F,A)}} \cdot \frac{g_{F,A}(t)}{1 + g_{F,A}(t)}
\end{align*}as claimed.
\end{proof}

\subsection{The land before time $t = \omega$}\label{landbeforetimeSec}

When $t$ is bounded, the variables $N_\phi(F)$ are not self-correcting, and so we cannot use the martingale technique introduced in Section~\ref{MartSec}. Fortunately for us, however, the faster-growing bounds given by the method of Bohman~\cite{Boh} suffice for our purposes. In this subsection we shall state the bounds we obtain in the case $0 < t \le \omega < t_A(F)$, and give an extended sketch of their proof. Since the ideas used in this section are not new, %(indeed, they are contained in~\cite{Boh}), % either~\cite{Boh} or~\cite{JOR}), 
we postpone the details to the Appendix. 

Recall from~\eqref{def:ffat} that
$$f_{F,A}(t) \, = \, e^{C(o(F)+1)(t^2 + 1)} n^{-1/4} (\log n)^{\Delta(F,A) - \sqrt{\Delta(F,A)}}$$
for each graph structure pair $(F,A)$, and that $\K^\E(m) = \Y(m) \cap \Z(m) \cap \Q(m)$. The following proposition shows that Theorem~\ref{EEthm}$(a)$ is unlikely to be the first of our constraints to fail.

\begin{prop}\label{NFa}
Let $(F,A)$ be a graph structure pair, and let $0 < t \le \omega < t_A(F)$. Then, with probability at least $1 - n^{- 3\log n}$, either $\big( \E(m - 1) \cap \M(m - 1) \cap \K^\E(m-1) \big)^c$ holds, or
\begin{equation}\label{eq:propNFa}
N_\phi(F)(m) \, \in \, \Nt_A(F)(m) \,\pm\, f_{F,A}(t) \cdot \Nt_A(F)(n^{3/2})
\end{equation}
for every $\phi \colon A \to V(G_m)$ which is faithful at time~$t$. 
\end{prop}

The proof of Proposition~\ref{NFa} relies heavily on the fact that the event $\Y(m)$ gives us stronger bounds (in the range $t \le \omega$) on the variables $Y_e$ than those given by the event $\E(m)$. Recall from~\eqref{def:fy} that 
$$f_y(t) = e^{Ct^2} n^{-1/4} (\log n)^{5/2} \qquad \text{and} \qquad f_x(t) = e^{-4t^2} f_y(t).$$
The following proposition is essentially due to Bohman~\cite{Boh}, although he stated only a slightly weaker version of it; for completeness, we give a proof in the Appendix~\cite{App}. 

\begin{prop}[Bohman~\cite{Boh}]\label{lem:landbeforetime}
Let $m \le \omega \cdot n^{3/2}$. With probability at least $1 - n^{- 4\log n}$, either $\big( \Z(m-1) \cap \Q(m-1) \big)^c$ holds, or we have
$$X_e(m) \in \Xt(m) \pm f_x(t) \Xt(n^{3/2}) \qquad \text{and} \qquad Y_e(m) \in \Yt(m) \pm f_y(t) \Yt(n^{3/2})$$
for every $e \in O(G_m)$.
\end{prop}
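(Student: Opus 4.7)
The plan is to adapt Bohman's martingale approach from~\cite{Boh} and apply Lemma~\ref{Bohmart} to track $X_e$ and $Y_e$ simultaneously by induction on $m$, for all open edges $e$ at once. Because $t \le \omega$ is bounded on the relevant scale, no self-correction is needed; it suffices to bound the absolute error by an envelope that grows with $t$ fast enough to absorb the accumulated imprecision. Concretely, I would introduce the trajectories
$$\Yt^{\pm}(m) \, = \, \Yt(m) \pm f_y(t)\Yt(n^{3/2}), \qquad \Xt^{\pm}(m) \, = \, \Xt(m) \pm f_x(t)\Xt(n^{3/2}),$$
and consider the four candidate super-martingales $M_e^{Y\pm}(m) = \pm\big( Y_e(m) - \Yt^{\pm}(m) \big)$ and $M_e^{X\pm}(m) = \pm\big( X_e(m) - \Xt^{\pm}(m) \big)$.

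The first step is to derive the evolution equations. The equation~\eqref{eq:Yeq}, together with the argument of Lemma~\ref{selfN} applied to the triangle rooted at $e$ (using $\Z(m-1)$ to absorb the double-count correction), gives
$$\Ex\big[\Delta Y_e(m)\big] \, = \, \frac{X_e(m) - \sum_{f \in Y_e(m)} Y_f(m)}{Q(m)} \, \pm \, O\bigg(\frac{(\log n)^2}{Q(m)}\bigg),$$
while~\eqref{eq:Xeq} gives the analogous expression for $\Ex[\Delta X_e(m)]$. Differentiating $\Yt(m) = 4te^{-4t^2}\sqrt n$ and $\Xt(m) = 2e^{-8t^2}n$ with respect to $m$ and using $Q(m) \in (1 \pm o(1))\Qt(m)$, one verifies that the main terms of the expected and deterministic changes match. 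The derivative of the envelope term $f_y(t) = e^{Ct^2}n^{-1/4}(\log n)^{5/2}$ contributes an extra drift of order $Ct \cdot f_y(t)\Yt(n^{3/2})/n^{3/2}$, whose sign (by the choice of $\pm$) is favourable in each of the four cases; taking $C$ sufficiently large absorbs the imprecision inherited from the inductive hypothesis on the $Y_f$ inside the sum and on $X_e$. A parallel computation handles $X_e$. The upshot is that each of $M_e^{Y\pm}, M_e^{X\pm}$ is a super-martingale as long as the inductive hypothesis, $\Z(m-1)$ and $\Q(m-1)$ all hold.

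What remains is to control step sizes and apply Lemma~\ref{Bohmart}. The event $\Z(m-1)$ yields $|\Delta Y_e(m)| \le O((\log n)^2)$, since any single-edge change to $Y_e$ requires a common neighbour at an endpoint of $e$, which $\Z$ bounds by $(\log n)^2$. For $|\Delta X_e(m)|$, we use the same reduction as in Lemma~\ref{Xe_alpha} to the count of $W$-structures through $e$ and the added edge; since the full event $\E$ is not yet available at this stage of the argument, this count must be tracked as a separate auxiliary variable in the same induction, yielding a bound $|\Delta X_e(m)| \le O(\sqrt n \cdot \mathrm{polylog}\,n)$ with $\Ex[|\Delta X_e(m)|] = O(1/\sqrt n)$. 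With deviation thresholds $f_y(t)\Yt(n^{3/2})$ and $f_x(t)\Xt(n^{3/2})$ of order at least $n^{1/4}(\log n)^{5/2}$ and $n^{3/4}(\log n)^{5/2}$ respectively, Lemma~\ref{Bohmart} gives a failure probability per edge and per error type of at most $n^{-5\log n}$. A union bound over the $O(n^2)$ open edges and the four error types delivers the stated $n^{-4\log n}$ bound.

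The main obstacle I anticipate is handling the initial range $t \le 1$, where $\Yt(m) \to 0$ and the expected-change formula develops an apparent $1/t$ pole from the derivative of $\Yt$ (and analogously the relative slack in $\Xt$ becomes degenerate). For this initial segment I would run a separate, and easier, deterministic first-moment argument: $Y_e(m)$ and $X_e(m)$ are bounded above by the number of length-two paths through the endpoints of $e$, which is controlled by $Z_e(m) \le (\log n)^2$ and the sparsity of $G_m$ for very small $m$. The initial offsets $f_y(t)\Yt(n^{3/2})$ and $f_x(t)\Xt(n^{3/2})$ already exceed any such crude bound by a polylog factor, so the envelope argument above takes over seamlessly from some $t_0 \le 1$ onwards.
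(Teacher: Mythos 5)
Your overall strategy --- Bohman's martingale method (Lemma~\ref{Bohmart}), an envelope built around a large constant $C$, and a union bound over open edges --- is the right one and does match the paper's. But the specific martingale construction $M_e^{Y\pm}(m) = \pm\big(Y_e(m) - \Yt^{\pm}(m)\big)$, in which the error budget $f_y(t)\Yt(n^{3/2})$ is built into a deterministic trajectory and its \emph{derivative} supplies the favourable drift, has a gap at small $t$. The drift $\Delta\big(f_y(t)\Yt(n^{3/2})\big) \approx 2Ct\cdot f_y(t)\Yt(n^{3/2})\,n^{-3/2}$ vanishes linearly as $t \to 0$, while the per-step error inherited from the inductive bounds on $X_e$ and on $Q$ (of order $f_x(t)\Xt(n^{3/2})/Q(m) \approx (\log n)^{5/2}\,n^{-5/4}$, constant in $t$ near zero) does not. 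Hence for $t \lesssim 1/C$ the super-martingale inequality fails on the good event, and increasing $C$ merely moves the threshold. The standard repair --- and what the paper's analogous argument for general structures does in the sketch proof of Proposition~\ref{NFa} in Section~\ref{landbeforetimeSec}, via the $M_C^{\pm}$, $M_D^{\pm}$ definitions --- is to subtract the allowed per-step error $f_y(t)\Yt(n^{3/2})/n^{3/2}$ \emph{additively at each step}, rather than differentiating the envelope. Then the super/sub-martingale property holds uniformly down to $t = 0$, and the cumulative subtraction $\Yt(n^{3/2})\int_0^{t_0} f_y(t)\,dt$ is bounded by roughly $f_y(t_0)\Yt(n^{3/2})/\sqrt{C}$, which stays inside the stated envelope once $C$ is large.

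Your proposed patch for the initial segment is also incorrect. The claim that $Y_e(m)$ is ``controlled by $Z_e(m) \le (\log n)^2$'' confuses two different codegree counts: $Z_e$ counts vertices adjacent to \emph{both} endpoints of $e$ via $G_m$-edges, while $Y_e$ counts vertices sending one edge and one \emph{open} edge to the endpoints of $e$, and is typically of order $t\sqrt{n}$, far from polylogarithmic. The only crude bound available without $\E$ or $\Y$ is the sparsity bound $Y_e(m) \le m$, which keeps you inside the envelope $\approx n^{1/4}(\log n)^{5/2}$ only for $m \lesssim n^{1/4}(\log n)^{5/2}$, i.e.\ $t \lesssim n^{-5/4}(\log n)^{5/2}$, orders of magnitude short of the $t \approx 1/C$ at which your super-martingale begins to work. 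So the handoff is not ``seamless''; a genuine concentration estimate is needed on the intermediate range, and the per-step-error formulation is precisely what supplies it. On the positive side, you correctly identified that bounding $|\Delta X_e|$ without $\E(m)$ requires tracking an auxiliary count (the ``$W$''-structures from Lemma~\ref{Xe_alpha}) in the same induction --- that is indeed the main extra bookkeeping the appendix proof has to carry.
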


Finally, let us note that $\Y(m-1) \Rightarrow \Q(m)$ in the range $m \le \omega \cdot n^{3/2}$.

\begin{prop}\label{prop:YimpliesQ}
For every $m \le \omega \cdot n^{3/2}$, if $\Y(m-1)$ holds then
$$Q(m) \, \in \, e^{-4t^2} {n \choose 2} \, \pm \, \eps \cdot f_y(t) {n \choose 2}.$$
\end{prop}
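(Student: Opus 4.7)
My approach is a direct deterministic computation conditional on $\Y(m-1)$; no martingale concentration is needed. The starting point is the elementary recurrence: when the edge $e_{j+1}$ is chosen at step $j+1$, the open edges which become closed are precisely $\{e_{j+1}\} \cup Y_{e_{j+1}}(j)$, so $\Delta Q(j) = -1 - Y_{e_{j+1}}(j)$, and telescoping yields
\[
Q(m) \,=\, \binom{n}{2} \,-\, m \,-\, \sum_{j=0}^{m-1} Y_{e_{j+1}}(j).
\]
Since $e_{j+1} \in O(G_j)$ and $\Y(m-1)$ holds with $t_j \le t \le \omega$, the definition of $\Y(m-1)$ gives $Y_{e_{j+1}}(j) = \Yt(j) \pm f_y(t_j)\Yt(n^{3/2})$, reducing the problem to evaluating the main term $\sum_{j=0}^{m-1}\Yt(j)$ and controlling the error sum $\sum_{j=0}^{m-1} f_y(t_j)\Yt(n^{3/2})$.

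For the main term I would use a Riemann sum approximation: since $\Yt(j) = 4 t_j e^{-4 t_j^2}\sqrt{n}$ with $t_j = j/n^{3/2}$,
\[
\sum_{j=0}^{m-1}\Yt(j) \,=\, n^{3/2}\!\int_0^t 4s e^{-4s^2}\sqrt{n}\,ds \,+\, O(\sqrt{n}) \,=\, \bigl(1 - e^{-4t^2}\bigr)\binom{n}{2} \,+\, O(\sqrt{n}),
\]
the Riemann error $O(\sqrt{n})$ arising from the bounded total variation of the integrand. Substituting back, $\binom{n}{2} - m - \sum \Yt(j) = \Qt(m) - m + O(\sqrt{n})$, and the contribution $|m| \le \omega n^{3/2}$ is negligible compared with the target error $\eps f_y(t)\binom{n}{2} \ge \eps \cdot n^{7/4}(\log n)^{5/2}$ because $\omega$ grows very slowly.

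The main obstacle is establishing
\[
\sum_{j=0}^{m-1} f_y(t_j)\Yt(n^{3/2}) \,\le\, \tfrac{\eps}{2}\, f_y(t)\binom{n}{2},
\]
because the naive bound $\sum \le m\cdot f_y(t)\Yt(n^{3/2})$ loses a factor of order $\omega/\eps$ and is useless. Pulling out $\Yt(n^{3/2}) = 4 e^{-4}\sqrt{n}$ and approximating by an integral reduces the task to a sharp estimate of $\int_0^t e^{Cs^2}\,ds$. I would split into two regimes. For $t \ge 1/\sqrt{C}$, repeated integration by parts gives $\int_0^t e^{Cs^2}\,ds = O\bigl(e^{Ct^2}/(Ct)\bigr)$, so the ratio of the error sum to the target bound is $O\bigl(1/(Ct\eps)\bigr) \le O\bigl(1/(\sqrt{C}\,\eps)\bigr)$. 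For $t \le 1/\sqrt{C}$, one has $e^{Cs^2} \le e$ and hence $\int_0^t e^{Cs^2}\,ds \le e t$, producing a ratio $O(t/\eps) \le O\bigl(1/(\sqrt{C}\,\eps)\bigr)$. In both regimes the ratio is at most $\eps/4$ once $C = C(\eps)$ is chosen sufficiently large (roughly $C \gg \eps^{-4}$ suffices), which is permitted by the setup of the paper.

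Combining the main term, the Riemann error, the $-m$ contribution, and the error-sum bound yields $Q(m) \in \Qt(m) \pm \eps f_y(t)\binom{n}{2}$, as required. The only nontrivial quantitative work is the integral estimate above, which crucially exploits the asymptotic gap between the slowly-growing function $\omega$ and the large constant $C = C(\eps)$.
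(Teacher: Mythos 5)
Your proof is correct and follows essentially the same route as the paper: telescope $\Delta Q(j) = -Y_{e_{j+1}}(j) - 1$, substitute the $\Y(m-1)$ bound $Y_{e_{j+1}}(j) \in \Yt(j) \pm f_y(t_j)\Yt(n^{3/2})$, approximate $\sum_{j<m}\Yt(j)$ by an integral, and control $\sum_{j<m} f_y(t_j)$ via the estimate $\int_0^t e^{Cs^2}\,ds = O\big(e^{Ct^2}/\sqrt{C}\big)$, with $C = C(\eps)$ chosen large enough to absorb the constants. The paper states these steps very compactly (and its intermediate display $\sum_{m<m'} f_y(t) \le \tfrac{1}{C} f_y(t')$ appears to be missing the factor $n^{3/2}$ from the Riemann-sum normalization, though the conclusion is unaffected); your version spells out the same integral estimate carefully, including the correct split at $t = 1/\sqrt{C}$.
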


\begin{proof}
Recall that $\Delta Q(m) = - Y_e(m) - 1$, where $e$ is the edge chosen in step $m+1$ of the triangle-free process. Noting that, for every $m' \le \omega \cdot n^{3/2}$, we have
$$\sum_{m=0}^{m'-1} \Yt(m) \, \in \, \big( 1 - e^{-4t'^2} \big) {n \choose 2} \pm n \qquad \textup{and} \qquad \sum_{m=0}^{m'-1} f_y(t) \, \le \, \frac{1}{C} \cdot f_y(t').$$ 
It follows that if $\Y(m'-1)$ holds, then
$$Q(m') \, \in \, {n \choose 2} \,-\, \sum_{m=0}^{m'-1} \Big( \Yt(m) \pm f_y(t) \Yt(n^{3/2}) \Big) \, \subseteq \,  \Qt(m') \, \pm \, \eps \cdot f_y(t') {n \choose 2},$$
as claimed.
\end{proof}

We shall next use Bohman's method to control the variables $N_\phi(F)$ in the range $t \le \omega$. Let us fix a graph structure triple $(F,A,\phi)$ with $t_A(F) > 0$. The first step  is to break up $N_\phi(F)$ as follows: 
\begin{equation}\label{eq:N=C+D}
N_\phi(F)(m') \, = \, \sum_{m = 0}^{m'-1} \Big( C_\phi(F)(m) - D_\phi(F)(m) \Big),
\end{equation}
where $C_\phi(F)(m)$ denotes the number of copies of $F$ rooted at $\phi(A)$ which are created at step $m+1$ of the triangle-free process, and $D_\phi(F)(m)$ denotes the number of such copies which are destroyed in that step. We shall need bounds on the expected and maximum possible single-step changes in $C_\phi(F)(m)$ and $D_\phi(F)(m)$. Since the proofs of these bounds are straightforward, and somewhat technical, we defer the details to the Appendix~\cite{App}. 

\begin{lemma}\label{Cmart}
Let $(F,A,\phi)$ be a graph structure triple, and suppose that $0 < t \le \omega < t_A(F)$, and that $\phi$ is faithful at time~$t$. If $\E(m) \cap \Q(m)$ holds, then
$$\Ex\big[ C_\phi(F)(m) \,|\, G_m \big] - \frac{e(F) \cdot \Nt_A(F)(m)}{t \cdot n^{3/2}} \, \in \, \pm \, \frac{f_{F,A}(t) \cdot \Nt_A(F)(n^{3/2})}{n^{3/2}}.$$
\end{lemma}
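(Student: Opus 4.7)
The starting point is the formula~\eqref{eq:number:created}, which gives
$$\Ex\big[ C_\phi(F)(m) \,|\, G_m \big] \, = \, \sum_{F^o \in \F^o_F} \frac{N_\phi(F^o)(m)}{Q(m)}.$$
The plan is to replace each occurrence of $N_\phi(F^o)(m)$ and $Q(m)$ in this sum by its expected value $\Nt_A(F^o)(m)$ and $\Qt(m)$ respectively, show that the resulting deterministic expression coincides (up to a negligible rounding) with the main term $e(F) \Nt_A(F)(m)/(t n^{3/2})$, and finally verify that the total error is absorbed by $f_{F,A}(t) \Nt_A(F)(n^{3/2})/n^{3/2}$.

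First I would compute the main term. By Observation~\ref{obs:NtF-} we have $2t e^{4t^2} \Nt_A(F^o)(m) = \sqrt{n}\cdot \Nt_A(F)(m)$ for every $F^o \in \F^o_F$, and $\Qt(m) = e^{-4t^2} \binom{n}{2}$. Since $|\F^o_F| = e(F)$, these identities give
$$\sum_{F^o \in \F^o_F} \frac{\Nt_A(F^o)(m)}{\Qt(m)} \, = \, e(F) \cdot \frac{\sqrt{n}/(2t e^{4t^2})}{e^{-4t^2} n^2/2} \cdot \Nt_A(F)(m) \cdot \big( 1 + O(1/n) \big) \, = \, \frac{e(F) \Nt_A(F)(m)}{t n^{3/2}} \pm O\bigg( \frac{\Nt_A(F)(m)}{n^{5/2}} \bigg),$$
which matches the main term on the left-hand side of the lemma up to an error which is easily absorbed by $f_{F,A}(t) \Nt_A(F)(n^{3/2})/n^{3/2}$.

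Next I would bound the error introduced by the replacements. Since $t \le \omega < t_A(F) \le t_A(F^o)$, by Observation~\ref{obs:F-}, the event $\E(m)$ gives (in its $t \le \omega$ form)
$$N_\phi(F^o)(m) \, \in \, \Nt_A(F^o)(m) \pm f_{F^o,A}(t) \Nt_A(F^o)(n^{3/2}),$$
while the event $\Q(m)$ yields $Q(m) \in \Qt(m) \pm \eps f_y(t) \Qt(n^{3/2})$. A routine expansion of $N_\phi(F^o)/Q(m)$ around the deterministic value $\Nt_A(F^o)/\Qt$ then produces, for each $F^o \in \F^o_F$, an error of order
$$\frac{f_{F^o,A}(t) \Nt_A(F^o)(n^{3/2})}{\Qt(m)} \,+\, \frac{\eps f_y(t) \Qt(n^{3/2})}{\Qt(m)} \cdot \frac{\Nt_A(F^o)(m)}{\Qt(m)}.$$
Using Observation~\ref{obs:NtF-} once more to convert $\Nt_A(F^o)$ to $\Nt_A(F)$, and the explicit formula $\Qt(m) = e^{-4t^2} \binom{n}{2}$, both of these terms can be written as a factor (depending on $t$ and explicit parameters of $F$) times $\Nt_A(F)(n^{3/2})/n^{3/2}$.

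The main obstacle I expect is the last step: verifying that
$$e(F) \cdot e^{4t^2} \cdot f_{F^o,A}(t) \,+\, e(F) \cdot e^{4t^2} \cdot \eps f_y(t) \, \le \, f_{F,A}(t)$$
for all $0 < t \le \omega$. For the first summand, $o(F^o) = o(F)+1$ and $\Delta(F^o,A) \le \Delta(F,A)$, so the ratio $e^{4t^2} f_{F^o,A}(t)/f_{F,A}(t)$ is at most $e^{(4+C)(t^2+1)} (\log n)^{-c\sqrt{\Delta(F,A)}}$ for some positive $c$, which is much smaller than $1/e(F)$ when $n$ is large, since $t \le \omega$ grows only mildly and the polylog saving dominates. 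For the second summand, $f_y(t) = e^{Ct^2} n^{-1/4} (\log n)^{5/2}$ is dwarfed by $f_{F,A}(t)$ because the exponent of $\log n$ in $f_{F,A}(t)$ is $\Delta(F,A) - \sqrt{\Delta(F,A)} \ge 5/2$ for $C$ large enough. Combining these estimates and summing over $F^o \in \F^o_F$ gives the claimed bound.
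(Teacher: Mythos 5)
The proposal is correct and follows the natural route that the paper's deferred Appendix proof also takes: start from the exact formula $\Ex\big[ C_\phi(F)(m) \,|\, G_m \big] = \sum_{F^o \in \F^o_F} N_\phi(F^o)(m)/Q(m)$, substitute the deterministic approximations $\Nt_A(F^o)(m)$ and $\Qt(m)$ using the events $\E(m)$ and $\Q(m)$ (in their $t \le \omega$ form), compute the main term via Observation~\ref{obs:NtF-}, and verify that the error is absorbed. The key checks are all present: $\delta(F^o,A) = \delta(F,A) - 1$ so $\Delta(F^o,A) < \Delta(F,A)$ by a margin comfortably exceeding $\sqrt{\Delta(F,A)}$; the exponential blow-up factors are all $e^{O((o(F)+1)\omega^2)} = (\log n)^{o(1)}$ when $\omega = \log\log\log n$; and the exponent of $\log n$ in $f_{F,A}$ exceeds both that of $f_{F^o,A}$ (by $\Omega(\sqrt{\Delta(F,A)})$) and the constant $5/2$ in $f_y$, so the polylog saving dominates. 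The exact form of your final displayed inequality is slightly off (the factor multiplying $\eps f_y(t)$ is $e^{4(t^2-1)(1-o(F))}t^{e(F)-1}$ rather than $e^{4t^2}$, and there is also a rounding term of order $\Nt_A(F)(m)/n^{5/2}$ from $\binom{n}{2}$ vs.\ $n^2/2$), but these do not affect the conclusion since all such factors are $(\log n)^{o(1)}$ for $t \le \omega$.
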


\begin{lemma}\label{Dmart}
Let $(F,A,\phi)$ be a graph structure triple, and suppose that $0 < t \le \omega < t_A(F)$, and that $\phi$ is faithful at time~$t$. If $\E(m) \cap \Y(m) \cap \Z(m) \cap \Q(m)$ holds, then
$$\Ex\big[ D_\phi(F)(m) \,|\, G_m \big] - \frac{8t \cdot o(F) \cdot \Nt_A(F)(m)}{n^{3/2}} \, \in \, \pm \, \frac{C \cdot o(F) \cdot ( t + 1 )}{n^{3/2}} \cdot  f_{F,A}(t) \Nt_A(F)(n^{3/2}).$$
\end{lemma}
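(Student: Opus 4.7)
The plan is to expand $\Ex[D_\phi(F)(m) \mid G_m]$ directly using the reasoning from the proof of Lemma~\ref{selfN}. A copy $F^* \in N_\phi(F)$ is destroyed exactly when the edge chosen at step $m+1$ closes one of the open edges of $F^*$, which happens with probability $\big|\bigcup_{f \in O(F^*)} Y_f(m)\big|/Q(m)$; the event $\Z(m)$ controls the pairwise intersections $|Y_f(m) \cap Y_{f'}(m)|$ by $(\log n)^2$, giving
$$\Ex\big[D_\phi(F)(m) \mid G_m\big] \in \frac{1}{Q(m)} \sum_{F^* \in N_\phi(F)} \bigg( \sum_{f \in O(F^*)} Y_f(m) \,\pm\, o(F)^2 (\log n)^2 \bigg).$$
I would then substitute $Y_f(m) \in \Yt(m) \pm f_y(t)\Yt(n^{3/2})$ using $\Y(m)$, $Q(m) \in \Qt(m) \pm \eps f_y(t) \Qt(n^{3/2})$ using $\Q(m)$, and $N_\phi(F)(m) \in \Nt_A(F)(m) \pm f_{F,A}(t) \Nt_A(F)(n^{3/2})$ using $\E(m)$ (in the range $t \le \omega < t_A(F)$, $\E(m)$ provides Theorem~\ref{EEthm}$(a)$). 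Since $\Yt(m)/\Qt(m) = 8t/n^{3/2} + O(1/n^{5/2})$, the main term collapses cleanly to $8t \cdot o(F) \cdot \Nt_A(F)(m)/n^{3/2}$, which matches the target.

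The remaining task is accounting for three error sources. First, the propagation of the $N_\phi(F)$-error contributes roughly $\frac{8t \cdot o(F)}{n^{3/2}} \cdot f_{F,A}(t) \Nt_A(F)(n^{3/2})$, which already has the required shape. Second, the $Y_f$-error contributes approximately
$$\frac{o(F) \cdot N_\phi(F)(m) \cdot f_y(t) \Yt(n^{3/2})}{\Qt(m)} \, \approx \, \frac{o(F) \cdot f_y(t) \cdot e^{4t^2} \cdot \Nt_A(F)(m)}{n^{3/2}},$$
using $\Yt(n^{3/2})/\Qt(m) = \Theta(e^{4t^2}/n^{3/2})$. Combined with the identity $\Nt_A(F)(m) = e^{-4o(F)(t^2-1)} t^{e(F)} \Nt_A(F)(n^{3/2})$, which is immediate from~\eqref{def:NtF}, this reduces the bound to the inequality
$$f_y(t) \cdot t^{e(F)} \cdot e^{-4(o(F)-1)t^2} \, \ll \, C (t+1) \cdot f_{F,A}(t),$$
which holds comfortably on $t \le \omega$ because $f_{F,A}(t)/f_y(t)$ carries an extra factor of $e^{C(o(F)+1)(t^2+1) - Ct^2}(\log n)^{\Delta(F,A) - \sqrt{\Delta(F,A)} - 5/2}$, and $C$ is chosen large. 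Third, the $\Z(m)$-overlap error $\frac{o(F)^2 (\log n)^2 N_\phi(F)(m)}{Q(m)}$ is handled analogously; it becomes subleading once $(\log n)^2$ is compared with the polylog margin built into $f_{F,A}$.

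The main obstacle is therefore not conceptual but organizational: one has to keep careful track of which polynomial factors in $t$, which $e^{\pm 4 o(F) t^2}$ factors, and which polylog factors arise from each of $\Y$, $\Q$, $\Z$ and $\E$, and verify that they all fit inside the single stated bound $\frac{C \cdot o(F)(t+1)}{n^{3/2}} f_{F,A}(t) \Nt_A(F)(n^{3/2})$. The definitions of $f_y$ and $f_{F,A}$ (with the latter enjoying a healthy exponential and polylog margin over the former) were designed precisely to accommodate this comparison, so no step should present any analytic difficulty beyond routine bookkeeping — which is why the paper relegates the detailed calculation to the appendix.
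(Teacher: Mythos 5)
Your approach is correct and is essentially the one the paper intends (the paper defers the calculation to its Appendix, but the outline you give — expand $\Ex[D_\phi(F)(m) \mid G_m]$ as in Lemma~\ref{selfN}, substitute the additive-error bounds from $\Y(m)$, $\Q(m)$ and Theorem~\ref{EEthm}$(a)$, and then verify each error term against the stated margin — is exactly the intended route). One small slip: in the displayed comparison you wrote $f_y(t)\,t^{e(F)} e^{-4(o(F)-1)t^2}$, but the identity $\Nt_A(F)(m) = e^{-4o(F)(t^2-1)}\,t^{e(F)}\,\Nt_A(F)(n^{3/2})$ produces an additional $e^{4o(F)}$ factor, so the left side should read $f_y(t)\,t^{e(F)}\,e^{-4(o(F)-1)t^2 + 4o(F)}$. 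This is harmless: the exponential gap in $f_{F,A}(t)/f_y(t)$ is $e^{Co(F)t^2 + C(o(F)+1)}$, and since $C$ is chosen large (and $(\log n)^{\Delta(F,A) - \sqrt{\Delta(F,A)} - 5/2}$ dominates $\omega^{e(F)}$ because $\Delta(F,A) = (C^3v_A(F)^2 + 2e(F) + o(F))^C$), the $e^{4o(F)}$ is absorbed. The remaining bookkeeping for the $\E$-error and the $\Z$-overlap error is handled correctly.
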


In order to apply Lemma~\ref{Bohmart}, we shall also need bounds on $C_\phi(F)(m)$ and $D_\phi(F)(m)$ which hold deterministically for all $0 < t \le \omega$. 

\begin{lemma}\label{Cbound}
Let $(F,A,\phi)$ be a graph structure triple, and suppose that $0 < t \le \omega < t_A(F)$, and that $\phi$ is faithful at time~$t$. If $\E(m) \cap \M(m)$ holds, then 
$$0 \, \le \, C_\phi(F)(m) \, \le \, \min\Big\{ n^\eps, \, (\log n)^{\Delta(F,A)/2} \Big\} \cdot \frac{(\log n)^{\Delta(F,A) - 2\sqrt{\Delta(F,A)}}}{\sqrt{n}} \cdot \Nt_A(F)(n^{3/2}).$$
Moreover, the same bounds also hold for $D_\phi(F)(m)$.
\end{lemma}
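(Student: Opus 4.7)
The plan is to reduce both $C_\phi(F)(m)$ and $D_\phi(F)(m)$ to a single controllable sum over $\F^-_{F,A}$. By Lemma~\ref{lem:F-}, together with Observation~\ref{obs:F+inF-} (which gives $\F^+_{F,A} \subseteq \F^-_{F,A}$) and Lemma~\ref{lem:F+},
$$C_\phi(F)(m),\ D_\phi(F)(m) \,\le\, \sum_{(F',A') \in \F^-_{F,A}} \max_{\phi' \colon A' \to V(G_m)} N_{\phi'}(F')(m),$$
where $\phi'$ ranges over faithful maps. Since $|\F^-_{F,A}| \le 5 v_A(F)^2 \le (\log n)^{2/5}$, this multiplicative loss is swallowed by the $(\log n)$-factors we are aiming for, so it suffices to bound a single term.

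The key geometric input is Table~4.1: for every $(F',A') \in \F^-_{F,A}$ one has
$$\Nt_{A'}(F')(m) \,\le\, \frac{2t}{\sqrt{n}} \cdot \Nt_A(F)(m),$$
with extremal case (a)/(c) of Definition~\ref{def:F-}, where $v_{A'}(F') = v_A(F)$, $e(F') = e(F)+1$, and $o(F') = o(F)$; in every other case one gains an additional factor of at least $\sqrt{n}/(2t)$ from dropping vertices outside $A'$. Since $t \le \omega$, the ratio $\Nt_A(F)(m)/\Nt_A(F)(n^{3/2})$ is itself at most a negligible polylog factor of the form $(\log n)^{O(e(F)+o(F))}$, which delivers the essential $1/\sqrt{n}$ savings in the target bound. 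Next, invoke $\E(m) \cap \M(m)$ to replace $\Nt_{A'}(F')$ by $N_{\phi'}(F')$: in the typical regime $t \le \omega < t_{A'}(F')$, Theorem~\ref{EEthm}(a) gives
$$N_{\phi'}(F')(m) \,\le\, \bigl( 1 + f_{F',A'}(t) \bigr) \Nt_{A'}(F')(m);$$
when instead $t \ge t_{A'}(F')$, use Theorem~\ref{EEthm}(c) and split according to whether $(F',A')$ is balanced (apply Proposition~\ref{prop:balanced:late} or~\ref{prop:balanced:late:tAFzero}, both guaranteed by $\M(m)$) or unbalanced (apply Lemma~\ref{lem:unbalanced}). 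In every branch the resulting bound takes the shape $\Nt_{A'}(F')(m) \cdot (\log n)^{\le \Delta(F',A')}$ multiplied by the factor $e^{C(o(F')+1)(\omega^2+1)}$ inherited from $f_{F',A'}$.

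The hard part is the polylog bookkeeping that produces the precise exponent $\Delta(F,A) - 2\sqrt{\Delta(F,A)}$. The $-2\sqrt{\Delta(F,A)}$ savings is harvested from the $-\sqrt{\Delta(F',A')}$ term built into $f_{F',A'}$: combining Observation~\ref{obs:deltaaddlittle} ($\Delta(F',A') \le (1+\eps)\Delta(F,A)$) with the sharper Lemma~\ref{obs:deltaFdeltaFv} (valid whenever $(\log n)^{\gamma(F,A)} \le n^{v_A(F)+e(F)+1}$), one concludes $\Delta(F',A') - \sqrt{\Delta(F',A')} \,\le\, \Delta(F,A) - 2\sqrt{\Delta(F,A)} + o(1)$, using that $\sqrt{\Delta(F',A')} \ge (1-o(1))\sqrt{\Delta(F,A)}$. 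The residual factor $e^{C(o(F')+1)(\omega^2+1)}$ is absorbed into $n^\eps$, because $o(F') \le (\log n)^{1/5}$ and $\omega$ grows slowly enough that $\omega^2 \cdot o(F') = o(\log n)$. Finally, the two branches of $\min\{n^\eps,(\log n)^{\Delta(F,A)/2}\}$ are produced by a case split on the size of $\Delta(F,A)$: when $\Delta(F,A)$ is small, the factor $f_{F',A'}(t)$ is already a polylog and one reads off the $(\log n)^{\Delta(F,A)/2}$ branch directly; when $\Delta(F,A)$ is large (or when the hypothesis $(\log n)^{\gamma(F,A)} \le n^{v_A(F)+e(F)+1}$ fails, so Lemma~\ref{obs:deltaFdeltaFv} is unavailable), the claimed right-hand side already dominates the trivial bound $n^{v_A(F)}$ on $N_{\phi'}(F')$, and nothing is to prove. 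The identical argument bounds $D_\phi(F)(m)$, completing the proof; the main obstacle throughout is this tight bookkeeping of the polylog exponents, since the naive bound via Theorem~\ref{EEthm} alone loses a factor of $(\log n)^{\eps \Delta(F,A)}$ that only Lemma~\ref{obs:deltaFdeltaFv} can remove.
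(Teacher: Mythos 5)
The overall strategy is sound and, since the paper defers the proof of this lemma entirely to its Appendix, I cannot compare line-by-line; but the reduction you describe is exactly the right one. Bounding $C_\phi(F)(m)$ and $D_\phi(F)(m)$ by $\sum_{(F',A')\in\F^-_{F,A}}\max_{\phi'}N_{\phi'}(F')(m)$ via Lemmas~\ref{lem:F+},~\ref{lem:F-} and Observation~\ref{obs:F+inF-}, then estimating each summand via $\E(m)\cap\M(m)$ (Theorem~\ref{EEthm}(a) when $t\le\omega<t_{A'}(F')$; Theorem~\ref{EEthm}(c) together with $\M(m)$ and Lemma~\ref{lem:unbalanced} when $t_{A'}(F')=0$), and using Observations~\ref{obs:AcapF},~\ref{obs:NH'F'NHF} and Table~4.1 to relate $\Nt_{A'}(F')$ to $\Nt_A(F)$, is the expected route. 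Your disposal of the degenerate case $(\log n)^{\gamma(F,A)}>n^{v_A(F)+e(F)+1}$ (where the trivial bound $n^{v_A(F)}$ suffices) is also correct.

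There is, however, a genuine slip in the polylog bookkeeping. The claim ``$\Delta(F',A')-\sqrt{\Delta(F',A')}\le\Delta(F,A)-2\sqrt{\Delta(F,A)}+o(1)$'' is false, and the stated justification ``$\sqrt{\Delta(F',A')}\ge(1-o(1))\sqrt{\Delta(F,A)}$'' does not yield it: in the extremal case (a)/(c) one has $\Delta(F',A')\ge\Delta(F,A)$, and a direct expansion gives
$$\big(\Delta(F',A')-\sqrt{\Delta(F',A')}\big)-\big(\Delta(F,A)-2\sqrt{\Delta(F,A)}\big)\;\ge\;\sqrt{\Delta(F,A)}-o\big(\sqrt{\Delta(F,A)}\big),$$
which is of order $\delta(F,A)^{C/2}\ge C^{3C/2}$, certainly not $o(1)$. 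What actually makes the argument close is precisely the estimate you cite but do not fully exploit: Lemma~\ref{obs:deltaFdeltaFv} bounds \emph{both} $\Delta(F',A')-\Delta(F,A)$ \emph{and} $\sqrt{\Delta(F,A)}$ by $\frac{\eps^2}{v_A(F)}\frac{\log n}{\log\log n}$ (when $(\log n)^{\gamma(F,A)}\le n^{v_A(F)+e(F)+1}$), so the exponent gap is at most $\frac{3\eps^2}{v_A(F)}\frac{\log n}{\log\log n}$, and hence the surplus polylog factor is at most $n^{3\eps^2}$, which is absorbed by the $n^\eps$ branch of the $\min$ (together with the $n^{-1/4}$ from $f_{F',A'}$). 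Separately, when $(\log n)^{\Delta(F,A)/2}\le n^\eps$ one does not need Lemma~\ref{obs:deltaFdeltaFv} at all, since $\Delta(F',A')-\Delta(F,A)+2\sqrt{\Delta(F,A)}\le\eps\Delta(F,A)+2\sqrt{\Delta(F,A)}<\Delta(F,A)/2$ by Observation~\ref{obs:deltaaddlittle} and $\Delta(F,A)\ge C^{3C}$. You have the right ingredients, but the $o(1)$ assertion would, if taken literally, let you drop the $\min\{n^\eps,(\log n)^{\Delta(F,A)/2}\}$ factor altogether, which cannot be correct; the actual comparison needs the $\sqrt{\Delta(F,A)}\le\frac{\eps^2}{v_A(F)}\frac{\log n}{\log\log n}$ part of Lemma~\ref{obs:deltaFdeltaFv}, not just the $\Delta(F',A')-\Delta(F,A)$ part.
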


We can now apply Lemma~\ref{Bohmart} to the variables $C_\phi(F)$ and $D_\phi(F)$; we again refer the reader to the Appendix for the full details. 

\begin{proof}[Sketch proof of Proposition~\ref{NFa}]
For each $m \in [m^*]$, set $\K(m) = \E(m) \cap \M(m) \cap \K^\E(m)$. We shall bound, for each $m_0 \le \omega \cdot n^{3/2}$, the probability that $m_0$ is the minimal $m \in \N$ such that $\K(m-1)$ holds, and
$$N_\phi(F)(m) \, \not\in \, \Nt_A(F)(m) \,\pm\, f_{F,A}(t) \cdot \Nt_A(F)(n^{3/2})$$
for some $\phi$ which is faithful at time~$t = m \cdot n^{-3/2}$. Note that the event in the statement of the proposition implies that this event holds for some $m \le \omega \cdot n^{3/2}$.

Fix $m_0 \le \omega \cdot n^{3/2}$, and for each $m' \le m_0$, define random variables
$$M_C^\pm(m') = \sum_{m=0}^{m'-1} \bigg[ C_\phi(F)(m) - \frac{e(F) \cdot \Nt_A(F)(m)}{t \cdot n^{3/2}} \pm \, \frac{f_{F,A}(t) \cdot  \Nt_A(F)(n^{3/2})}{n^{3/2}} \bigg]$$
and
$$M_D^\pm(m') = \sum_{m=0}^{m'-1} \bigg[ D_\phi(F)(m) \,-\, \frac{8t \cdot o(F) \cdot \Nt_A(F)(m)}{n^{3/2}} \, \pm \, \frac{C \cdot o(F) \cdot ( t + 1 )}{n^{3/2}} \,  f_{F,A}(t) \Nt_A(F)(n^{3/2}) \bigg].$$
It follows from Lemmas~\ref{Cmart} and~\ref{Dmart} that, while the event $\E(m) \cap \Y(m) \cap \Z(m) \cap \Q(m)$ holds, $M_C^\pm$ and $M_D^\pm$ are both super-/sub-martingale pairs. Now, set 
$$\alpha \, = \, \bigg(  \min\big\{ n^\eps, \, (\log n)^{\Delta(F,A)/2} \big\} \cdot \frac{(\log n)^{\Delta(F,A) - 2\sqrt{\Delta(F,A)}}}{\sqrt{n} } + \frac{f_{F,A}(\omega)}{m_0} \bigg) \cdot \Nt_A(F)(n^{3/2})$$
and
$$\beta \, = \, \bigg( \frac{(\log n)^{e(F) + o(F)}}{n^{3/2}} + \frac{f_{F,A}(\omega)}{m_0} \bigg) \cdot \Nt_A(F)(n^{3/2}).$$
By Lemma~\ref{Cbound}, we have %and~\ref{Dbound}, we have
$$- \beta \, \le \, \Delta M^\pm_C(m) + \Delta M^\pm_D(m) \, \le \, \alpha$$
while $\E(m) \cap \M(m)$ holds. Moreover, since $f_{F,A}(t_0) \ge n^{-1/4} (\log n)^{\Delta(F,A) - \sqrt{\Delta(F,A)}}$, and we may assume that $m_0 \ge n^\eps$, we have
$$\frac{\alpha \cdot \beta \cdot m_0}{\Nt_A(F)(n^{3/2})^2} \, \le \, \frac{f_{F,A}(t_0)^2}{(\log n)^4}.$$
Hence, by Lemma~\ref{Bohmart}, we obtain 
%and moreover, noting that $e(F) < 2v_A(F)$ (since $t_A(F) > 0$) and $(\log n)^{\Delta(F,A)} \le n^{2e(F) + 2}$ (since otherwise the proposition is trivial), we have
%$$- \, \frac{1}{n^{3/2 - \eps}} \cdot  \Nt_A(F)(n^{3/2}) \, \le \, \Delta M^\pm_C(m) + \Delta M^\pm_D(m) \, \le \, \frac{(\log n)^{\Delta(F,A)} }{n^{1/2 - 2\eps}} \cdot \Nt_A(F)(n^{3/2}).$$
%We shall use the first bounds when $\Delta(F,A)$ is not too big, and the second bounds otherwise. Indeed, suppose first that $(\log n)^{3\gamma(F,A)} \le \sqrt{n}$, and observe that
%$$m_0 \, \ge \, n^{5/4} (\log n)^{\frac{3 \gamma(F,A)}{4} } \, \gg \, \frac{f_{F,A}(t) \cdot n^{3/2}}{(\log n)^{e(F) + o(F)}},$$
%and noting that $f_{F,A}(t)^2 \sqrt{n} \ge (\log n)^{\frac{3}{2} \gamma(F,A)}$. By Lemma~\ref{Bohmart}, it follows that
$$\Pr\bigg( \bigg( M_C^-(m_0) > \frac{1}{4} f_{F,A}(t_0) \Nt_A(F)(n^{3/2}) \bigg) \cap \K(m_0-1) \bigg) \, \le \, e^{-(\log n)^3},$$
and similarly for $M_C^+$, $M_D^-$ and $M_D^+$.

%On the other hand, if $(\log n)^{3\gamma(F,A)} \ge \sqrt{n}$ then we apply Lemma~\ref{Bohmart} with
%$$\alpha \, = \, \frac{(\log n)^{\Delta(F,A)} }{n^{1/2 - 2\eps}} \cdot \Nt_A(F)(n^{3/2}) \qquad \textup{and} \qquad \beta \, = \, \frac{f_{F,A}(t) \Nt_A(F)(n^{3/2})}{n^{11/8}},$$
%since we have $m_0 \ge n^{11/8}$ and $\beta \gg n^{- 3/2 + \eps} \cdot \Nt_A(F)(n^{3/2})$. Since $f_{F,A}(t)^2 \sqrt{n} \ge (\log n)^{\frac{3}{2} \gamma(F,A)} \gg n^{4\eps} (\log n)^{\Delta(F,A)}$. Thus, by , 
%$$\Pr\bigg( M_C^-(m) > \frac{1}{4} f_{F,A}(t) \Nt_A(F)(n^{3/2}) \bigg) \, \le \, \exp\left( - \frac{f_{F,A}(t)^2 n^{1/2-3\eps}}{(\log n)^{\Delta(F,A)}} \right) \, \le \, e^{-n^{\eps}},$$
%and similarly for $M_C^+$, $M_D^-$ and $M_D^+$. 

Note that the number of choices for $\phi$ is negligible, since $|A| \le (\log n)^{1/5}$. Via a straightforward calculation it follows that, with probability at most $n^{-C\log n}$, 
$$N_\phi(F)(m_0) \, = \, \sum_{m = 0}^{m_0-1} \Big( C_\phi(F)(m) - D_\phi(F)(m) \Big) \, \not\in \, \Nt_A(F)(m_0) \,\pm\, f_{F,A}(t_0) \Nt_A(F)(n^{3/2}),$$
for some faithful $\phi \colon A \to V(G_m)$, as required.
\end{proof}

\subsection{Proof of Theorem~\ref{EEthm}}\label{EEproofSec} 

Finally, let us put the pieces together and prove Theorem~\ref{EEthm} using the martingale method introduced in Section~\ref{MartSec}. We shall break up the event that the theorem fails to hold into four sub-events, depending on whether the first\footnote{More precisely, a first, since there could be several which go astray at the same step.} graph structure triple which goes astray is tracking and/or balanced. Recall from~\eqref{def:event:KE}, and from Definitions~\ref{def:events:XYQ},~\ref{def:events:E} and~\ref{def:event:M}, the definitions of the events $\E(m)$, $\M(m)$ and $\K^\E(m)$. 

%once again that $\K^\E(m) =  \Y(m) \cap \Z(m) \cap \Q(m)$.

\begin{defn}
For each $m \in [m^*]$, we define events $\E_1(m)$, $\E_2(m)$, $\E_3(m)$ and $\E_4(m)$ as follows:
\begin{itemize}
\item[$(a)$] $\E_1(m)$ denotes the event that $\E(m - 1) \cap \M(m - 1) \cap \K^\E(m - 1)$ holds, and that there exists a graph structure triple $(F,A,\phi)$ with $0 < t \le \omega < t_A(F)$ such that
$$N_\phi(F)(m) \, \not\in \, \Nt_A(F)(m) \pm f_{F,A}(t) \Nt_A(F)(n^{3/2}),$$
and $\phi$ is faithful at time $t$.
\item[$(b)$] $\E_2(m)$ denotes the event that $\E(m - 1) \cap \M(m - 1) \cap \K^\E(m - 1)$ holds, and that there exists a graph structure triple $(F,A,\phi)$ with $\omega < t \le t_A(F)$ such that
$$N_\phi(F)(m) \, \not\in \, \big( 1 \pm g_{F,A}(t) \big) \Nt_A(F)(m),$$
and $\phi$ is faithful at time $t$.
\item[$(c)$] $\E_3(m)$ denotes the event that $\E(m - 1) \cap \K^\E(m - 1)$ holds, and that there exists a balanced graph structure triple $(F,A,\phi)$ with $t > t_A(F)$, such that either
$$N_\phi(F)(m) \, > \, \max\Big\{ e^{- o(F)(t^2 - t_A(F)^2)} (\log n)^{\Delta(F,A)}, (\log n)^{\Delta(F-v,A)} \Big\}$$
and $\phi$ is faithful at time $t$, or $t_A(F) = 0$,
$$N_\phi(F)(m) \, > \, (\log n)^{\Delta(F-v,A)}$$
and $\phi$ is faithful at time $t$.
\item[$(d)$] $\E_4(m)$ denotes the event that none of the events $\E_1(m)$, $\E_2(m)$ and $\E_3(m)$ holds, but the event $\E(m - 1) \cap \M(m - 1) \cap \K^\E(m-1)$ holds, and there exists an unbalanced graph structure triple $(F,A,\phi)$ with $t > t_A(F)$, such that\footnote{Here, as in the statement of Theorem~\ref{EEthm}, we set $m^+ = \max\{m,n^{3/2}\}$.}
$$N_\phi(F)(m) \, > \, (\log n)^{\Delta(F,H,A)} \Nt_H(F)(m^+),$$
where $A \subsetneq H \subseteq F$ is minimal such that $t < t_H(F)$, and $\phi$ is faithful at time $t$. 
\end{itemize}
\end{defn}

It is easy to see\footnote{Indeed, simply consider the minimum $m$ such that $\E(m)^c \cap \K^\E(m - 1)$ holds, and observe that if $\M(m - 1)^c$ holds then $\E_3(m - 1)$ does too.} that 
$$\bigcup_{m = 1}^{m^*} \E(m)^c \cap \K^\E(m - 1) \, \subseteq \, \bigcup_{m = 1}^{m^*} \E_1(m) \cup \E_2(m) \cup \E_3(m) \cup \E_4(m),$$
i.e., if the conclusion of Theorem~\ref{EEthm} fails, then one of $\E_1(m)$, $\E_2(m)$, $\E_3(m)$ and $\E_4(m)$ must hold for some $m \in [m^*]$. It will therefore suffice to bound the probabilities of these events. The next three lemmas do so; the first follows immediately from Propositions~\ref{prop:balanced:late} and~\ref{NFa}.

\begin{lemma}\label{lem:Eam}
For every $m \in [m^*]$,
$$\Pr\big( \E_1(m) \big) + \Pr\big( \E_3(m) \big) \, \le \, 2 \cdot n^{-3\log n}.$$
\end{lemma}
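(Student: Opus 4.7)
The plan is to fix a graph structure pair $(F,A)$ for each of the two events, apply the appropriate earlier proposition, and then take a union bound over such pairs. As noted in the discussion following Theorem~\ref{EEthm}, we may restrict attention to pairs with $v_A(F)+e(F)+o(F) \le (\log n)^{1/5}$, since outside this range the conclusion of Theorem~\ref{EEthm} is trivial, and such pairs therefore contribute nothing to $\E_1(m)$ or $\E_3(m)$. The number of such pairs is at most $\exp\bigl(O((\log n)^{2/5}\log\log n)\bigr)$, which is sub-polynomial in $n$ and will be comfortably absorbed by the probability bounds below.

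For $\E_1(m)$, I would apply Proposition~\ref{NFa} to each pair $(F,A)$ with $\omega < t_A(F)$: it says that, for this fixed pair, with probability at least $1-n^{-3\log n}$ either $\bigl(\E(m-1) \cap \M(m-1) \cap \K^\E(m-1)\bigr)^c$ holds or the additive tracking estimate $N_\phi(F)(m) \in \Nt_A(F)(m) \pm f_{F,A}(t)\Nt_A(F)(n^{3/2})$ holds for every faithful $\phi \colon A \to V(G_m)$ (the maximum over $\phi$ is already built into the proposition's conclusion). The complement of this event is exactly the restriction of $\E_1(m)$ to this particular $(F,A)$, so a union bound over admissible pairs gives $\Pr(\E_1(m)) \le n^{-3\log n}$, since the sub-polynomial combinatorial factor is swamped by the per-pair savings.

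For $\E_3(m)$, I would split into two sub-cases depending on whether $t_A(F) > 0$ or $t_A(F) = 0$. For a balanced pair with $t_A(F) > 0$, Proposition~\ref{prop:balanced:late} delivers the required upper bound on $N_\phi(F)(m)$ with failure probability $n^{-3\log n}$; for a balanced pair with $t_A(F) = 0$, Proposition~\ref{prop:balanced:late:tAFzero} gives the stronger bound $(\log n)^{\Delta(F-v,A)}$ with the same probability. The conditioning $\E(m) \cap \Z(m) \cap \Q(m)$ in these propositions is compatible with the conditioning $\E(m-1) \cap \K^\E(m-1)$ in $\E_3(m)$: indeed, on $\E_3(m)$ the event $\E(m-1) \cap \Z(m-1) \cap \Q(m-1)$ holds, and the one-step events $\Z(m), \Q(m)$ are implied by $\K^\E(m-1)$ together with the deterministic one-step control built into these events. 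Union-bounding over balanced pairs then yields $\Pr(\E_3(m)) \le n^{-3\log n}$, and combining with the previous bound gives the claimed $2 \cdot n^{-3\log n}$.

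There is no real conceptual obstacle here: the three propositions were explicitly designed to deliver exactly the per-pair probability estimates required, and the only thing to verify is that the combinatorial union-bound factor $\exp\bigl(O((\log n)^{2/5}\log\log n)\bigr)$ is dwarfed by $n^{-3\log n} = \exp(-3(\log n)^2)$, which it is.
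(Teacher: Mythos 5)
Your proof follows the same route as the paper's: the paper dispatches this lemma by observing that it follows immediately from Propositions~\ref{NFa} and~\ref{prop:balanced:late} (the latter incorporating the $t_A(F)=0$ sub-case via Proposition~\ref{prop:balanced:late:tAFzero}), and these are exactly the ingredients you invoke. Your extra commentary on the union bound over graph structure pairs and on the sub-polynomial combinatorial factor being absorbed by the actual (much stronger) internal bounds in those propositions is a correct filling-in of what the paper leaves implicit.
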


In the next lemma, we use the martingale argument from Section~\ref{MartSec} to control $N_\phi(F)$ in the range $\omega < t \le t_A(F)$.

\begin{lemma}\label{lem:Ebm}
For every $m \in [m^*]$,
\begin{equation}\label{eq:lem:Ebm}
\Pr\big( \E_2(m) \big) \, \le \, n^{-C\log n}.
\end{equation}
\end{lemma}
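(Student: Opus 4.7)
The plan is to apply the general martingale machinery of Lemma~\ref{lem:self:mart} to each graph structure triple $(F,A,\phi)$ separately, and then take a union bound. Fix such a triple, and let $\E_2^{(F,A,\phi)}(m)$ be the event in the definition of $\E_2(m)$ restricted to this triple; it suffices to bound $\Pr\big( \E_2^{(F,A,\phi)}(m') \big)$ for each $m' \in [m^*]$. We may assume $(\log n)^{\gamma(F,A)} \le n^{v_A(F) + e(F) + 1}$: otherwise, the interval $\big( 1 \pm g_{F,A}(t) \big) \Nt_A(F)(m)$ already contains every non-negative integer of size at most $n^{v_A(F)} \ge N_\phi(F)(m)$, so the event is vacuous.

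First I would apply Lemma~\ref{lem:self:mart} to the random variable $A = N_\phi(F)$, with $\At = \Nt_A(F)$, $g = g_{F,A}$, $h(t) = 2t/n^{3/2}$, interval $I = [\omega \cdot n^{3/2}, t_A(F) \cdot n^{3/2}]$, and the nested sequence
$$\K(m) \, = \, \E(m) \cap \M(m) \cap \K^\E(m) \cap \big\{ \phi \textup{ is faithful at time } m/n^{3/2} \big\},$$
which is monotone because faithfulness is monotone decreasing in $m$. I would take
$$\alpha(t) \, = \, (\log n)^{-\sqrt{\Delta(F,A)}} \cdot \frac{g_{F,A}(t)}{1 + g_{F,A}(t)} \,+\, \frac{\eps t}{n^{3/2}} \quad \text{and} \quad \beta(t) \, = \, \frac{C \log n}{n^{3/2}} \cdot \frac{1 + g_{F,A}(t)}{g_{F,A}(t)}.$$
Self-correction (with constant $c(F,A) + e(F)/(2t^2) \ge 2$, hence $h(t) = 2t/n^{3/2}$) comes directly from Lemma~\ref{selfN*}; the bound $|\Delta N^*_\phi(F)(m)| \le \alpha(t)$ comes from Lemma~\ref{maxalphabeta} (using the event $\E(m) \cap \M(m)$ which is part of $\K(m)$, together with our assumption that $(\log n)^{\gamma(F,A)} \le n^{v_A(F)+e(F)+1}$); and $\Ex\big[ |\Delta N^*_\phi(F)(m)| \big] \le \beta(t)$ comes from Lemma~\ref{gammaNF}.

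Next I would verify that $(\lambda, \eps; g_{F,A}, h; \alpha, \beta; \K, I)$ is a reasonable collection in the sense of Definition~\ref{def:reasonable}. Slowness of $\alpha$ and $\beta$ is immediate, since each is a product of a power of $t$, an exponential in $t^2$, and a polylogarithmic factor in $n$. The lower bound $\min\{\alpha(t), \beta(t), h(t)\} \ge \eps t / n^{3/2}$ is built in by construction (note $g_{F,A}(t) \ge n^{-1/4}$, so the $\beta$ bound easily holds), and $\alpha(t) \le (\log n)^{-\sqrt{\Delta(F,A)}} + o(1) < \eps^2$ because $\Delta(F,A) \ge (C^3)^C$ is large. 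The initial condition $|N^*_\phi(F)(\omega \cdot n^{3/2})| < 1/2$ follows from part $(a)$ of Theorem~\ref{EEthm} at time $t = \omega$ (which is implied by $\E(\omega \cdot n^{3/2} - 1)$), since
$$f_{F,A}(\omega) \Nt_A(F)(n^{3/2}) \, \ll \, g_{F,A}(\omega) \Nt_A(F)(\omega \cdot n^{3/2}),$$
an inequality that holds because $\omega = \omega(n) \to \infty$ sufficiently slowly (so the exponential $e^{C(o(F)+1)(\omega^2+1)}$ is sub-polylogarithmic) and because $\sqrt{\Delta(F,A)} \ge e(F) + 2$, which is guaranteed by $C$ being sufficiently large.

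Finally, the key numerical calculation: the factors $\frac{1 + g_{F,A}(t)}{g_{F,A}(t)}$ and $\frac{g_{F,A}(t)}{1 + g_{F,A}(t)}$ cancel, giving
$$\alpha(t) \beta(t) \, \le \, \frac{C \log n}{n^{3/2}} \cdot (\log n)^{-\sqrt{\Delta(F,A)}} \,+\, O\bigg( \frac{\log n}{n^{3/2}} \cdot \frac{\eps t}{n^{3/2}} \cdot \frac{1}{g_{F,A}(t)} \bigg),$$
where the second term is negligible. Lemma~\ref{lem:self:mart} then yields
$$\Pr\Big( \E_2^{(F,A,\phi)}(m') \Big) \, \le \, n^4 \exp\bigg( - \delta' \cdot \frac{(\log n)^{\sqrt{\Delta(F,A)}}}{C \log n} \bigg),$$
which, since $\sqrt{\Delta(F,A)} \ge (C^3)^{C/2}$ is enormous, is at most $n^{-(\log n)^2}$ with room to spare. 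Union bounding over $(F,A,\phi)$: the number of labelled pairs $(F,A)$ with $v(F) + e(F) + o(F) \le (\log n)^{1/5}$ is $\exp\big( O((\log n)^{2/5}) \big)$, and the number of injections $\phi$ is at most $n^{|A|} \le \exp\big( (\log n)^{6/5} \big)$; both factors are easily absorbed, giving $\Pr\big( \E_2(m) \big) \le n^{-C \log n}$, as required. The main obstacle (and essentially the only non-mechanical step) is ensuring that the algebraic gain $(\log n)^{-\sqrt{\Delta(F,A)}}$ from Lemma~\ref{maxalphabeta} is strong enough to dominate the union-bound cost — but the choice of $\Delta(F,A) = \delta(F,A)^C$ with $C$ large makes this comfortable.
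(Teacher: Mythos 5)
Your proposal is correct and follows essentially the same route as the paper's proof: reduce to the non-trivial case $(\log n)^{\gamma(F,A)} \le n^{v_A(F)+e(F)+1}$, apply Lemma~\ref{lem:self:mart} to $N_\phi(F)$ on $[\omega\cdot n^{3/2}, t_A(F)\cdot n^{3/2}]$ with $\K(m) \supseteq \E(m) \cap \M(m) \cap \K^\E(m)$, self-correction from Lemma~\ref{selfN*}, the same $\alpha,\beta$ from Lemmas~\ref{maxalphabeta} and~\ref{gammaNF}, the initial condition from Theorem~\ref{EEthm}$(a)$, then union bound. The two small deviations are purely cosmetic: you pad $\alpha(t)$ by $\eps t/n^{3/2}$ to enforce its lower bound directly (the paper instead argues $\alpha(t) \ge n^{-\eps}g_{F,A}(t) \ge n^{-1/4-\eps}$ via Lemma~\ref{obs:deltaFdeltaFv} and~\eqref{gamma:assumption}), and you set $h(t)=2t/n^{3/2}$ which slightly overshoots the drift Lemma~\ref{selfN*} actually guarantees (namely $(1-2\eps)\cdot 2t/n^{3/2}$); the paper's conservative choice $h(t)=t/n^{3/2}$ avoids the issue and still satisfies $h(t)\ge\eps t/n^{3/2}$.
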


\begin{proof}
Noting that $\Pr\big( \E_2(m) \big) = 0$ if $t \le \omega$, let $\omega < t' \le t^*$, set $m' = t' \cdot n^{3/2}$ and suppose that $\E_2(m')$ holds. Let $(F,A,\phi)$ be the graph structure triple for which $|N^*_\phi(F)(m')| > 1$, and for which $\omega < t' \le t_A(F)$ and $\phi$ is faithful at time $t'$. Moreover, observe that
\begin{equation}\label{gamma:assumption}
(\log n)^{\gamma(F,A)} \le n^{v_A(F)+e(F)+1},
\end{equation}
since otherwise the bound in Theorem~\ref{EEthm}$(b)$ (and hence $\E_2(m')$) holds trivially, and that the event $\E(m' - 1) \cap \M(m' - 1) \cap \K^\E(m' - 1)$ holds. We shall apply the martingale method introduced in Section~\ref{MartSec} to the self-correcting variable $N_\phi(F)$, and then sum over choices of $(F,A,\phi)$ and $m'$. 

We begin by choosing a family of parameters as in Definition~\ref{def:reasonable}. Set $\K(m) = \E(m) \cap \M(m) \cap \Y(m) \cap \Z(m) \cap \Q(m)$ and $I = [a,b] = [\omega \cdot n^{3/2}, t_A(F) \cdot n^{3/2} ]$, and let
$$\alpha(t) \, = \, (\log n)^{- \sqrt{\Delta(F,A)}} \cdot \frac{g_{F,A}(t)}{1 + g_{F,A}(t)}$$
and
$$\beta(t) \, = \, \frac{C \cdot \log n}{n^{3/2}} \cdot \frac{1 + g_{F,A}(t)}{g_{F,A}(t)}.$$
Moreover, set $\lambda = C \cdot c(F,A)$, $\delta = \eps$ and $h(t) = t \cdot n^{-3/2}$, and note that $\alpha$ and $\beta$ are $\lambda$-slow on $[a,b]$, and that $\alpha(t) \le \eps$ and\footnote{Here we use Lemma~\ref{obs:deltaFdeltaFv} together with~\eqref{gamma:assumption} to show that $\alpha(t) \ge n^{-\eps} g_{F,A}(t) \ge n^{-1/4 - \eps}$.}
$$\min\big\{ \alpha(t), \, \beta(t), \, h(t) \big\} \, \ge \, \ds\frac{\eps t}{n^{3/2}}$$ 
for every $\omega < t \le t_A(F)$. Hence $(\lambda,\delta;g_{F,A},h;\alpha,\beta;\K)$ is a reasonable collection.

We claim that $N_\phi(F)$ satisfies the conditions of Lemma~\ref{lem:self:mart} as long as $\phi$ is faithful. Indeed, Lemma~\ref{selfN*} implies that $N_\phi(F)$ is $(g_{F,A},h;\K)$-self-correcting, since $c(F,A) \ge 2$, and Lemmas~\ref{gammaNF} and~\ref{maxalphabeta} imply that  
$$|\Delta N^*_\phi(F)(m)| \le \alpha(t) \qquad  \text{and} \qquad \Ex\big[ |\Delta N^*_\phi(F)(m)| \big] \le \beta(t)$$
for every $\omega < t \le t_A(F)$ for which $\K(m)$ holds. Moreover, the bound $|N^*_\phi(F)(a)| < 1/2$ follows from the event $\E(a)$, since $f_{F,A}(\omega) \Nt_A(F)(n^{3/2}) \ll g_{F,A}(\omega) \Nt_A(F)(a)$.

Finally, observe that, since $e(F) + o(F) \ll \log n$, we have %recall that $c(F,A) \le 2o(F) + 2 \ll \log n$, by Observation~\ref{obs:zeroomega}
$$\alpha(t) \beta(t) n^{3/2} \, \le \, (\log n)^2 \cdot (\log n)^{- \sqrt{\Delta(F,A)}} \, \le \, \frac{1}{(\log n)^5}$$
for every $\omega < t \le t_A(F)$. By Lemma~\ref{lem:self:mart}, it follows that
$$\Pr\Big( \big( |N^*_\phi(F)(m')| > 1 \big) \cap \K(m'-1) \Big) \, \le \, n^4 \cdot e^{- (\log n)^4} \, \le \, n^{-(\log n)^2}.$$
Summing over our choices\footnote{Once again, by our assumed bound on $v(F) + e(F) + o(F)$, the number of choices is negligible.} for $(F,A,\phi)$ and $m'$, we obtain~\eqref{eq:lem:Ebm}, as required.
\end{proof}

Finally, let's use the building sequences introduced in Section~\ref{BS} to show that an unbalanced non-tracking graph structure cannot be the first to go astray.

\begin{lemma}\label{lem:Edm}
For every $m \in [m^*]$,
$$\Pr\big( \E_4(m) \big) = 0.$$
\end{lemma}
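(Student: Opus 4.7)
My plan is to show that $\E_4(m)$ is in fact the empty event: whenever $\E_1(m)$, $\E_2(m)$ and $\E_3(m)$ all fail to hold, the required upper bound on $N_\phi(F)(m)$ for an unbalanced triple $(F,A,\phi)$ with $t > t_A(F)$ follows \emph{deterministically} from the bounds coming from the failure of $\E_1$, $\E_2$ and $\E_3$ for balanced sub-pairs. The engine for this is the building sequence $A \subseteq H_0 \subsetneq H_1 \subsetneq \cdots \subsetneq H_\ell = F$, which has $\ell \ge 2$ since $(F,A)$ is unbalanced, and Lemma~\ref{lem:HHbalanced}, which says that $(H_0,A)$ and each consecutive pair $(H_{i+1},H_i)$ is balanced. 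By Lemma~\ref{lem:Hj}, the substructure $H$ appearing in the definition of $\E_4(m)$ equals $H_j$, where $j$ is determined by $t_j \le t < t_{j+1}$.

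The first step is to apply Lemma~\ref{lem:crucial} iteratively along the building sequence to get
$$N_\phi(F)(m) \, \le \, N_\phi(H_0)(m) \cdot \prod_{i=0}^{\ell-1} \max_{\phi_i} N_{\phi_i}(H_{i+1})(m),$$
where each maximum is over faithful $\phi_i \colon H_i \to V(G_m)$. I then estimate each factor using the non-occurrence of $\E_1$, $\E_2$, $\E_3$ for the corresponding \emph{balanced} pair. For $(H_0,A)$, which has $t_A(H_0) = 0 < t$, the failure of $\E_3(m)$ gives $N_\phi(H_0)(m) \le (\log n)^{\Delta(H_0-v,A)}$. For $(H_{i+1},H_i)$ with $i < j$, we have $t > t_{i+1} = t_{H_i}(H_{i+1})$, so the failure of $\E_3(m)$ gives $N_{\phi_i}(H_{i+1})(m) \le (\log n)^{\Delta(H_{i+1},H_i)}$ (the exponential decay factor can be dropped for an upper bound, and $(\log n)^{\Delta(\cdot -v,\cdot)} \le (\log n)^{\Delta(\cdot,\cdot)}$). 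For $(H_{i+1},H_i)$ with $i \ge j$, we have $t < t_{H_i}(H_{i+1})$, and the failure of $\E_1(m)$ or $\E_2(m)$ (depending on whether $t \le \omega$) gives $N_{\phi_i}(H_{i+1})(m) \le 2 (\log n)^{\gamma(H_{i+1},H_i)} \Nt_{H_i}(H_{i+1})(m^+)$.

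Multiplying these estimates and using the telescoping identity $\prod_{i=j}^{\ell-1} \Nt_{H_i}(H_{i+1})(m) = \Nt_{H_j}(F)(m)$ from Observation~\ref{obs:Ntchain}, I obtain
$$N_\phi(F)(m) \, \le \, 2^\ell \cdot (\log n)^{S_1 + S_2} \cdot \Nt_{H_j}(F)(m^+),$$
where $S_1 = \Delta(H_0 - v,A) + \sum_{i<j}\Delta(H_{i+1},H_i)$ and $S_2 = \sum_{i \ge j}\gamma(H_{i+1},H_i) \le \sum_{i \ge j}\Delta(H_{i+1},H_i)$. The final step is to show that $2^\ell \cdot (\log n)^{S_1+S_2} \le (\log n)^{\Delta(F,H_j,A)} = (\log n)^{\Delta(F,H_j) + \Delta(H_j,A)}$, contradicting the inequality defining $\E_4(m)$. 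For this I use the super-additivity of $\Delta$: since $v_A(H_j) = \sum_{i \le j} v_{H_{i-1}}(H_i)$ (with $H_{-1} = A$) and the edge/open-edge counts add along the chain, we have $\delta(H_j,A) \ge \delta(H_0,A) + \sum_{i<j}\delta(H_{i+1},H_i) + C^3(j^2-\text{cross correction})$, and hence, raising to the $C$th power,
$$\Delta(H_j,A) - S_1 \, \ge \, \Omega\big( C^4 \cdot j^2 \big),$$
with an analogous bound $\Delta(F,H_j) - \sum_{i \ge j}\Delta(H_{i+1},H_i) \ge \Omega(C^4(\ell-j)^2)$ for the second chain.

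The main obstacle is this final bookkeeping step: verifying that the gap produced by super-additivity of $\Delta$ (combined with the large constant $C$) is sufficient to absorb both the multiplicative factor $2^\ell$ and the slack between $\gamma$ and $\Delta$. Concretely, one needs $(\Delta(F,H_j,A) - S_1 - S_2)\log\log n \ge \ell \log 2$. Since $\max(j,\ell-j) \ge \ell/2$, the combined gap is at least $\Omega(C^4 \ell^2)$, and the required inequality $C^4 \ell^2 \log\log n \gg \ell$ holds comfortably for any $\ell \ge 1$. Assembling these estimates yields $N_\phi(F)(m) \le (\log n)^{\Delta(F,H_j,A)} \Nt_{H_j}(F)(m^+)$ deterministically on the complement of $\E_1(m) \cup \E_2(m) \cup \E_3(m)$, so $\E_4(m) = \emptyset$ and $\Pr(\E_4(m)) = 0$.
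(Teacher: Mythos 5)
Your proof follows the same core strategy as the paper (building sequence, Lemma~\ref{lem:crucial}, super-additivity of $\Delta$, and the deterministic bounds supplied by $\E_1^c$, $\E_2^c$, $\E_3^c$), but you decompose more finely than the paper does. The paper applies Lemma~\ref{lem:crucial} with the parameter $j$ in that lemma set equal to the index identified by Lemma~\ref{lem:Hj}, so the last factor is $\max_{\phi_j} N_{\phi_j}(F)$, a \emph{single} tracking variable bounded in one shot via $\E_1^c$ or $\E_2^c$; super-additivity of $\Delta$ is then invoked only once, for the initial chain $A \to H_0 \to \cdots \to H_j$. You instead push the decomposition all the way to $F$, bound each consecutive factor $N_{\phi_i}(H_{i+1})$ for $i \ge j$ via the tracking events, telescope the $\Nt$'s, and invoke super-additivity a second time on the tail chain $H_j \to \cdots \to F$. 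This works, and your observation that the gap in the quadratic $v^2$ term of $\delta$ is large enough (after raising to the $C$th power and multiplying by $\log\log n$) to absorb the accumulated bounded prefactors is sound. What the paper's coarser decomposition buys is that it never needs to track a product of $\ell - j$ such constants: only one application of the tracking bounds and one use of convexity of $x \mapsto x^C$.

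Two small remarks. First, your assertion that unbalanced implies $\ell \ge 2$ is not quite right: by Definition~\ref{def:balanced}, a pair with $\ell = 1$ and $A \ne H_0$ is also unbalanced. This has no effect on your argument, since you never actually use $\ell \ge 2$. Second, your estimate of the $i \ge j$ factors as $2(\log n)^{\gamma(H_{i+1},H_i)} \Nt_{H_i}(H_{i+1})(m^+)$ is optimistic when $t \le \omega$: the $\E_1^c$ bound gives an additive term $f(t)\Nt(n^{3/2})$, and converting to a multiple of $\Nt(m^+)$ costs a factor of order $e^{4\omega^2 o(H_{i+1}/H_i)} + 1$ rather than $2$ (cf.\ the footnote in the paper's proof). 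Fortunately these factors telescope along the open-edge counts to at most $2^{\ell - j} e^{4\omega^2 o(F)}$, the same order the paper absorbs, so the $\sqrt{\Delta}$-type slack still handles it; but you should state the correct prefactor so the reader can see why.
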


\begin{proof}
Let $m \in [m^*]$, and let $(F,A)$ be an unbalanced graph structure pair with $t > t_A(F)$. We claim that if $\big( \E_1(m) \cup \E_2(m) \cup \E_3(m) \big)^c$ and $\E(m - 1) \cap \M(m-1) \cap \K^\E(m-1)$ both hold, and if $\phi \colon A \to V(G_m)$ is faithful at time $t$, then
\begin{equation}\label{eq:theorem41c}
N_\phi(F)(m) \, \le \, (\log n)^{\Delta(F,H,A)} \Nt_H(F)(m^+),
\end{equation}
where $A \subsetneq H \subseteq F$ is minimal such that $t < t_H(F)$, and $m^+ = \max\{m,n^{3/2}\}$. Since $(F,A,\phi)$ was arbitrary, this implies immediately that $\Pr\big( \E_4(m) \big) = 0$, as required.

In order to prove~\eqref{eq:theorem41c}, let the building sequence of $(F,A)$ be
$$A \subseteq H_0 \subsetneq \dots \subsetneq H_\ell = F,$$
and recall that $t_i = t^*_{H_{i-1}}(H_{i})$ for each $i \in [\ell]$, and that $0 = t_0 < t_1 < \cdots < t_\ell \le \infty$, by Lemma~\ref{lem:crashtimes}. Suppose that $t_j \le t < t_{j+1}$, and recall that $H_j$ is the minimal $A \subseteq H \subseteq F$ such that $t < t_H(F)$, by Lemma~\ref{lem:Hj}. Recall also that each pair $(H_0,A)$ and $(H_{i+1},H_i)$ is balanced, by Lemma~\ref{lem:HHbalanced}. Thus, since $\E(m - 1) \cap \K^\E(m-1) \cap \E_3(m)^c$ holds, we have
$$N_{\phi}(H_0)(m) \, \le \, (\log n)^{\Delta(H_0,A)} \qquad \text{and} \qquad N_{\phi_i}(H_{i+1})(m) \, \le \, (\log n)^{\Delta(H_{i+1},H_i)}$$
for every $0 \le i \le j-1$, and every map $\phi_i : \, H_i \to V(G_m)$ which is faithful at time $t$. Since 
$$N_\phi(F) \, \le \, N_{\phi}(H_0) \cdot \bigg( \prod_{i = 0}^{j-1} \max_{\phi_i : \, H_i \to V(G_m)} N_{\phi_i}(H_{i+1}) \bigg) \cdot \max_{\phi_j : \, H_j \to V(G_m)} N_{\phi_j}(F)$$
by Lemma~\ref{lem:crucial}, where the maxima are over faithful maps $\phi_i$, and
$$\Delta(H_0,A) + \sum_{i = 0}^{j-1} \Delta(H_{i+1},H_i) \, \le \, \Delta(H_j,A),$$
by the definition~\eqref{def:del} of $\Delta(F,A)$ and the convexity of the function $x \mapsto x^C$, it follows that
\begin{equation}\label{eq:thm41c:nontracking}
N_\phi(F)(m) \, \le \, (\log n)^{\Delta(H_j,A)} \cdot \max_{\phi_j : \, H_j \to V(G_m)} N_{\phi_j}(F)(m).
\end{equation}
Finally, we claim that, 
\begin{equation}\label{eq:thm41c:tracking}
N_{\phi_j}(F)(m) \, \le \, (\log n)^{\Delta(F,H_j)} \Nt_{H_j}(F)(m^+).
\end{equation}
To prove~\eqref{eq:thm41c:tracking}, suppose first that $t \le \omega$. Then, since $\E(m - 1) \cap \K^\E(m-1) \cap \E_1(m)^c$ holds and $t \le t_{j+1} = t_{H_j}^*(F)$, we have\footnote{To see this, simply note that $\Nt_{H_j}(F)(m) + \Nt_{H_j}(F)(n^{3/2}) \le \big( e^{4 \omega^2 o(F)} + 1 \big) \Nt_{H_j}(F)(m^+)$ for every $t \le \omega$.}
$$N_{\phi_j}(F)(m) \, \le \, \Nt_{H_j}(F)(m) + f_{F,H_j}(t) \Nt_{H_j}(F)(n^{3/2}) \, \le \, (\log n)^{\Delta(F,H_j) - \sqrt{\Delta(F,H_j)}} \Nt_{H_j}(F)(m^+).$$
On the other hand, if $t > \omega$ then, since $\E(m - 1) \cap \M(m - 1) \cap \K^\E(m-1) \cap \E_2(m)^c$ holds and $t \le t_{j+1} = t_{H_j}^*(F)$, we have
$$N_{\phi_j}(F)(m) \, \le \, \big( 1 + g_{F,H_j}(t) \big) \Nt_{H_j}(F)(m) \, \le \, (\log n)^{\Delta(F,H_j)} \Nt_{H_j}(F)(m^+),$$
as claimed. Combining~\eqref{eq:thm41c:nontracking} and~\eqref{eq:thm41c:tracking}, we obtain 
$$N_\phi(F)(m) \, \le \, (\log n)^{\Delta(H_j,A) + \Delta(F,H_j)} \Nt_{H_j}(F)(m^+) \, = \, (\log n)^{\Delta(F,H_j,A)} \Nt_{H_j}(F)(m^+),$$
as required.
\end{proof}

\begin{proof}[Proof of Theorem~\ref{EEthm}]
Since, as noted above, we have
$$\bigcup_{m = 1}^{m^*} \E(m)^c \cap \K^\E(m-1) \, \subseteq \, \bigcup_{m = 1}^{m^*} \E_1(m) \cup \E_2(m) \cup \E_3(m) \cup \E_4(m),$$
the theorem follows immediately from Lemmas~\ref{lem:Eam},~\ref{lem:Ebm} and~\ref{lem:Edm}.
\end{proof}

\section{Tracking $Y_e$, and mixing in the $Y$-graph}\label{Ysec}

In this section we shall show how to use the events $\E(m)$ and $\X(m)$ (see Definitions~\ref{def:events:XYQ} and~\ref{def:events:E}) to track $Y_e$ for each open edge $e$. Set $a = \omega \cdot n^{3/2}$ and 
$$\K^\Y(m) \, = \, \E(m) \cap \X(m) \cap \Y(a) \cap \Z(m) \cap \Q(m)$$
for each $m \in [m^*]$, and recall that $\Yt(m) = 4t \sqrt{n} e^{-4t^2}$, that $g_y(t) \, = \, e^{2t^2} n^{-1/4} (\log n)^4$ and that $n \ge n_0(\eps,C,\omega)$ is chosen sufficiently large. The main aim of this section is to prove the following key proposition, cf.~Theorem~\ref{Ythm}.

\begin{prop}\label{Yprop}
Let $\omega \cdot n^{3/2} < m \le m^*$. With probability at least $1 - n^{-C\log n}$, either the event $\K^\Y(m-1)^c$ holds, or 
\begin{equation}\label{eq:Yprop}
Y_e(m) \, \in \, \big( 1 \pm g_y(t) \big) \Yt(m)
\end{equation}
for every open edge $e \in O(G_m)$. 
\end{prop}

We begin by defining a collection of variables $V_e^\sigma$ which are averages of the variables $Y_f$ over various multi-sets of edges. Given two open edges $e,f \in O(G_m)$, recall that $e$ and $f$ are said to be \emph{$Y$-neighbours} in $G_m$ if $f \in Y_e(m)$ and (equivalently) $e \in Y_f(m)$. 

\begin{defn}[The $Y$-graph]
Given the graph $G_m$ at time $t$, define the \emph{$Y$-graph} of $G_m$ to be the graph with vertex set $O(G_m)$, whose edges are pairs of $Y$-neighbours in $G_m$.
\end{defn}

Note that when we take a step in the $Y$-graph, one of our endpoints remains the same, and the other changes. We shall therefore imagine ourselves walking through the $Y$-graph, with a foot on each endpoint of the currently occupied open edge, and a step being taken either with the left or the right foot. Let us call the sequence of left / rights the \emph{type} of a walk; we shall need to differentiate between paths in the $Y$-graph of different types.  

Let $k := \lceil 3 / \eps \rceil$ be a constant fixed throughout the proof. We say that a sequence $\sigma = (\sigma_1,\ldots,\sigma_\ell)$ of lefts and rights\footnote{We shall typically use the letter $\ell$ to denote the length $|\sigma|$ of such a sequence $\sigma$.} is \emph{$k$-short} if every string of consecutive lefts or rights has length at most $k$, and there are at most $k$ `changes of foot', i.e., 
$$\sigma_{i+1} = \ldots = \sigma_{i+j} \;\Rightarrow \; j \le k \qquad \text{and} \qquad s(\sigma) :=  \big| \big\{ i \in [\ell-1] : \sigma_i \neq \sigma_{i+1} \big\} \big | \le k.$$ 
Note that if $\sigma$ is $k$-short then $|\sigma| \le k(k+1)$. The idea behind this definition is that if we take more than $k$ steps in a row with the same foot, then we will be very well `mixed' in the open neighbourhood of our planted foot, while if we alternate feet more than $k$ times, we will be well mixed in the whole graph. 

Before continuing, let us make an \textbf{important remark}. In this section, we shall write $e$ to denote an open edge of $G_m$ \emph{with an orientation}; that is, we assign the symbols $L$ and $R$ to the endpoints of $e$. This orientation\footnote{In particular, we emphasize that the same edge may have different orientations at different points in the proof, and even in the same $\sigma$-walk. In other words, we denote by $e$ an (edge, orientation)-pair.} will be inherited by the $Y$-neighbours of $e$ in the obvious way: if we write $f \in Y_e(m)$, then the label of the vertex $v \in e \cap f$ (as an endpoint of $f$) is the same as its label as an endpoint of $e$. Moreover, let us write 
$$Y^L_e(m) \, = \, \Big\{ f \in Y_e(m) \,:\, \textup{the vertex $v \in e \setminus f$ has label $L$} \Big\},$$ 
and set $Y^R_e(m)  = Y_e(m) \setminus Y^L_e(m)$, and define $X_e^L(m)$ and $X_e^R(m)$ similarly.\footnote{So, for example, $X^L_e(m) \, = \, \big\{ f \in X_e(m) : \textup{the vertex $v \in e \setminus f$ has label $L$} \big\}$. As noted above, if $f \in X_e(m) \cup Y_e(m)$ then the vertex $v \in e \setminus f$ has label $L$ if and only if the vertex $v' \in f \setminus e$ has label $L$.}

Given open edges $e_1,e_2 \in O(G_m)$ (with orientations) and a sequence $\sigma \in \{L,R\}^*$, where $\{L,R\}^*$ denotes the set of finite strings of lefts and rights, we shall say that a sequence $W$ of open edges in $G_m$ is a \emph{$\sigma$-walk} from $e_1$ to $e_2$ if the following hold:
$$W = \big(f_0, f_1,\ldots,f_{|\sigma|}\big), \quad f_0 = e_1, \quad f_{|\sigma|} = e_2 \quad \text{and} \quad f_{i} \in Y^{\sigma_i}_{f_{i-1}}(m) \text{ for every } 1 \le i \le |\sigma|.$$
That is, $f_i$ and $f_{i-1}$ are $Y$-neighbours in $G_m$, and the vertex $v \in f_i \setminus f_{i-1}$ has label $\sigma_i$.

We can now define the random variables which we shall need to track.

\begin{defn}\label{def:UandV}
Let $\sigma \in \{L,R\}^*$, let $m \in [m^*]$ and let $e \in O(G_m)$. We shall write%\footnote{More precisely, we should add $\sum_{f_0 = e}$ to the front of each sequence of sums.} 
$$U^{\sigma}_e(m) \,=\, \sum_{f_1 \in Y^{\sigma_1}_e(m)} \sum_{f_2 \in Y^{\sigma_2}_{f_1}(m)} \dots \sum_{f_\ell \in Y^{\sigma_\ell}_{f_{\ell-1}}(m)} 1,$$
to denote the number\footnote{Later, we shall also write $U_e^\sigma(m)$ to denote the multi-set of open edges reached via a $\sigma$-walk from $e$.} of $\sigma$-walks starting from $e$, and\footnote{As usual, if $e \not\in O(G_m)$ then we set $U_e^\sigma(m) = U_e^\sigma(m-1)$ and $V_e^\sigma(m) = V_e^\sigma(m-1)$.}
$$V_e^\sigma(m) \, = \, \frac{1}{U_e^\sigma(m)} \sum_{f_1 \in Y^{\sigma_1}_e(m)} \sum_{f_2 \in Y^{\sigma_2}_{f_1}(m)} \dots \sum_{f_\ell \in Y^{\sigma_\ell}_{f_{\ell-1}}(m)} Y_{f_\ell}(m),$$
for the average of the $Y$-values reached via such walks. 
\end{defn}

In particular, we note that if $|\sigma| = 0$ then the unique $\sigma$-path starting from $e$ is $W = (e)$, and hence  $U^{\sigma}_e(m) = 1$ and $V^\sigma_e(m) = Y_e(m)$. For each $\sigma \in \{L,R\}^*$, set
$$g_\sigma(t) =\eps^{|\sigma|} g_y(t).$$
We emphasize that this error bound improves exponentially as $|\sigma|$ increases; this property will be crucial in the proof below.

We shall prove the following generalization of Proposition~\ref{Yprop}. It says that, for any $k$-short sequence $\sigma$, with (very) high probability $V^\sigma_e$ is tracking up to time $t^*$.

\begin{prop}\label{Vprop}
Let $\omega \cdot n^{3/2} < m \le m^*$. Then, with probability at least $1 - n^{-C \log n}$, either $\K^\Y(m-1)^c$ holds, or  
\begin{equation}\label{eq:Vprop}
V^\sigma_e(m) \, \in \, \big( 1 \pm g_\sigma(t) \big) \Yt(m) % \cdot 4t e^{-4t^2} \sqrt{n} 
\end{equation}
for every $e \in O(G_m)$, and every $k$-short sequence $\sigma \in \{L,R\}^*$.
\end{prop}

In order to prove Proposition~\ref{Vprop}, we shall need the following bounds on $U_e^\sigma(m)$, which may easily be proved using the method of Section~\ref{SecX}, see the Appendix for the details.
%\footnote{We remark that we could have proved substantially stronger bounds on $U^\sigma_e(m)$}

\begin{prop}\label{Uprop}
Let $\omega \cdot n^{3/2} < m \le m^*$. Then, with probability at least $1 - n^{-C\log n}$, either $\K^\Y(m-1)^c$ holds, or 
\begin{equation}\label{eq:YL}
Y^L_e(m) \, \in \, \big( 1 \pm g_x(t) \big) \cdot \big( 2t e^{-4t^2} \sqrt{n}  \big)
\end{equation}
for every $e \in O(G_m)$, and hence 
\begin{equation}\label{eq:Uprop}
U^\sigma_e(m) \, \in \, \big( 1 \pm g_x(t) \big)^{|\sigma|} \cdot \big( 2t e^{-4t^2} \sqrt{n} \big)^{|\sigma|}
\end{equation}
for every $e \in O(G_m)$, and every sequence $\sigma \in \{L,R\}^*$.
\end{prop}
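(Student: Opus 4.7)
The plan is to prove the proposition in two stages: first establish \eqref{eq:YL} on $Y_e^L(m)$ (the bound on $Y_e^R$ following by a symmetric argument with the roles of the two endpoints of $e$ exchanged), then deduce \eqref{eq:Uprop} for $U_e^\sigma(m)$ by induction on $|\sigma|$.

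For \eqref{eq:YL}, I would mirror the proof of Proposition~\ref{Xprop}, i.e., apply the self-correcting martingale machinery of Section~\ref{MartSec} to $Y_e^L$. The governing equation — the $L$-half of \eqref{eq:Yeq} — is
$$\Ex\big[\Delta Y_e^L(m)\big] \,=\, \frac{1}{Q(m)}\bigg( X_e^L(m) \,-\, \sum_{f \in Y_e^L(m)} Y_f(m) \bigg) \,+\, O(\text{lower order}),$$
where $X_e^L$ is the $L$-half of $X_e$ (those $X$-triangles whose vertex of $e$ outside the third edge is the $L$-endpoint). Setting $\Yt^L(m) = 2te^{-4t^2}\sqrt{n}$ and $Y_e^{L*}(m) = (Y_e^L - \Yt^L)/(g_x(t)\Yt^L)$, the normalized error $Y_e^{L*}$ is self-correcting with rate $\sim 8t/n^{3/2}$ (analogue of Lemma~\ref{selfX}), the single-step change $|\Delta Y_e^L(m)|$ is bounded by $\max\{4te^{-8t^2}\sqrt{n},(\log n)^\omega\}$ via the $W$-structure controlled by $\E(m)$ (analogue of Lemma~\ref{Xe_alpha}), and Lemma~\ref{lem:self:mart} closes the argument, producing the desired $n^{-C\log n}$ failure probability after a union bound over $e \in O(G_m)$. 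To make the drift sharp enough at the $g_x(t)$ level, I would first track $X_e^L$ (and symmetrically $X_e^R$) in $(1 \pm g_x(t)) \cdot e^{-8t^2} n$ by an auxiliary self-correcting martingale of exactly the same form as for $X_e$ in Lemmas~\ref{selfX}--\ref{Xe_gamma}, whose drift is a sum of $Y_f$-values over the two open edges in each open triangle.

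For \eqref{eq:Uprop}, I would induct on $|\sigma|$. The base $|\sigma|=0$ is immediate, since $U_e^\sigma(m) = 1$. For the inductive step, writing $\sigma' = (\sigma_2,\ldots,\sigma_{|\sigma|})$, use
$$U_e^\sigma(m) \,=\, \sum_{f_1 \in Y_e^{\sigma_1}(m)} U_{f_1}^{\sigma'}(m);$$
applying \eqref{eq:YL} uniformly in $e$ gives $|Y_e^{\sigma_1}(m)| \in (1 \pm g_x(t)) \cdot 2te^{-4t^2}\sqrt{n}$, applying the inductive hypothesis uniformly in $f_1 \in Y_e^{\sigma_1}(m)$ gives the analogous bound on $U_{f_1}^{\sigma'}(m)$, and multiplying yields the claimed two-sided bound $(1 \pm g_x(t))^{|\sigma|} \cdot (2te^{-4t^2}\sqrt{n})^{|\sigma|}$. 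The main obstacle lies entirely in the first stage: closing the self-correction for $Y_e^L$ at the $g_x$ level requires sharp control on both $X_e^L$ and on $\sum_{f \in Y_e^L} Y_f$, but the ambient event $\K^\Y(m-1)$ only supplies $\X(m-1)$ (giving sharp $X_e$, not $X_e^L$) together with $\E(m-1)$ (which gives merely polylog-level bounds on individual $Y_f$); the resolution is to run the $X_e^L$-martingale in lockstep with that for $Y_e^L$ and to exploit the factor-$C$ gap between $g_y(t)$ and $g_x(t) = Cg_y(t)$ — exactly the device used in Section~\ref{SecX} — to absorb the remaining $Y_f$-uncertainty into the self-correcting drift.
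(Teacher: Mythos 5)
Your two-stage plan (a self-correcting martingale for $Y_e^L$ followed by a deterministic induction for $U_e^\sigma$) matches the paper's framework, and the induction step is fine: the identity $U_e^\sigma(m) = \sum_{f\in Y_e^{\sigma_1}(m)} U_f^{\sigma^-}(m)$ reduces \eqref{eq:Uprop} to \eqref{eq:YL} with no further probabilistic input needed. But the first stage has two problems, one cosmetic and one substantive.

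The cosmetic one: the proposed auxiliary martingale for $X_e^L$ is unnecessary and a distraction. Each open triangle on $e$ contributes exactly one open edge to $X_e^L$ and one to $X_e^R$, so $X_e^L(m) = X_e^R(m) = X_e(m)/2$ holds \emph{deterministically}, and $\X(m-1)\subset\K^\Y(m-1)$ already gives sharp control on $X_e^L$ for free. (Your drift equation should in fact read $X_e^R$ where you wrote $X_e^L$, compare Observation~\ref{obs:Ygrows}, though by the symmetry just noted this is immaterial.) The substantive gap: your self-correction argument for $Y_e^L$ does not close. The drift involves $\sum_{f\in Y_e^L(m)}Y_f(m) = Y_e^L(m)\cdot V_e^L(m)$, and pushing $(Y_e^L)^*$ back toward zero at rate $h(t)\sim t/n^{3/2}$ requires controlling $V_e^L(m)$ to relative error $\ll g_x(t)$. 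The factor-$C$ gap between $g_y$ and $g_x$ does absorb a $g_y$-level error in $V_e^L$, but only if the good event actually supplies it; in the model argument of Proposition~\ref{Xprop}, the good event $\K^\X(m)$ explicitly contains $\Y(m)$ for exactly this reason. Here, $\K^\Y(m-1)$ contains only $\Y(a)$ (useful for $t\le\omega$) and $\E(m-1)$, which gives an error on $V_e^L$ that is a large power of $\log n$ bigger than $g_x(t)$ --- and your $X_e^L$-martingale supplies no $Y_f$-control whatsoever. The fix, which you have not articulated, is to include $\U(m)$ and $\V(m)$ themselves in the good event $\K(m)$ --- exactly as the paper does when proving Proposition~\ref{Vprop}, where $\K(m) = \E(m)\cap\U(m)\cap\V(m)\cap\X(m)\cap\Y(a)\cap\Z(m)\cap\Q(m)$ --- so that $\V(m)$, via the case $\sigma=(L)$, gives $V_e^L(m)\in(1\pm\eps g_y(t))\Yt(m)$, more than sufficient. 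Propositions~\ref{Uprop} and~\ref{Vprop} are thus established together by the first-variable-to-go-astray device, not strictly in the sequence your proposal suggests.
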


As in Section~\ref{EEsec}, we'll often need to assume that the bounds above hold at all earlier times, so for each $0 \le m' \le m^*$, define events $\U(m')$ and $\V(m')$ as follows.

\begin{defn}\label{def:events:UV}
$\U(m')$ and $\V(m')$ are the events that that the inequalities~\eqref{eq:Uprop} (resp.~\eqref{eq:Vprop}) hold for every $\omega \cdot n^{3/2} < m \le m'$, every $e \in O(G_m)$ and every $k$-short sequence $\sigma \in \{L,R\}^*$. 
\end{defn}

Recall that $V_e^\emptyset(m) = Y_e(m) = U_e^L(m) + U_e^R(m)$, and note that therefore Proposition~\ref{Vprop} immediately implies Proposition~\ref{Yprop}. It will also be convenient in this section to write $\Yh(m')$ for the event that~\eqref{eq:Yprop} holds for every $\omega \cdot n^{3/2} < m \le m'$; this is because the event $\V(m)$ implies $\Yh(m)$ (but \emph{not} $\Y(m)$, since it does not control $Y_e(m)$ when $t \le \omega$), and we shall need to assume that $\Yh(m)$ holds in several of the lemmas below. 

The proof of Proposition~\ref{Vprop} is roughly as follows. First we show that by taking $k$ steps with the left foot (say), we `mix well' in the open neighbourhood of our right foot. More precisely, writing $Q_u(m)$ for the collection of open edges incident to vertex $u$, and assuming that our right foot is fixed at $u$, we shall prove that $V_e^{L^k}(m)$ (that is, $V_e^{\sigma}(m)$ for the sequence $\sigma = (L,\ldots,L)$ of length $k$) is within a factor of $1 \pm o\big( g_y(t) \big)$ of the average of $Y_f(m)$ over $f \in Q_u(m)$. We will also prove a similar bound on $V_e^{\sigma}(m)$ for any (bounded length) sequence $\sigma$ which `changes foot' at least $k$ times. The only difference is that in this case we mix well in the entire $Y$-graph, i.e., we will prove that $V_e^{\sigma}(m)$ is within a factor of $1 \pm o\big( g_y(t) \big)$ of $\Yb(m)$. Both mixing results follow from the event $\E(m)$ (see Theorem~\ref{EEthm}), applied to the graph structures which correspond to $\sigma$-walks from $e$ to $f$.  

We next control the one-step changes in $V_e^\sigma$ for each $k$-short sequence $\sigma$, and show that the variable $V_e^\sigma$ is self-correcting, assuming that none of the other variables have yet gone astray. The mixing results above are a crucial tool in this calculation, since the rate of change of $V_e^\sigma$ depends on $V_e^{\sigma L}$ and $V_e^{\sigma R}$, and these longer sequences\footnote{If $\sigma = (\sigma_1,\ldots,\sigma_\ell)$ then $\sigma L = (\sigma_1,\ldots,\sigma_\ell,L)$ and $\sigma R = (\sigma_1,\ldots,\sigma_\ell,R)$.} may not be $k$-short. Finally, we apply the method of Section~\ref{MartSec} to bound, for each $k$-short sequence $\sigma \in \{L,R\}^*$, the probability that $V_e^\sigma$ is the first variable to cross its Line of Death.

\subsection{Mixing inside open neighbourhoods}\label{MixSec1}

Our aim over the next few subsections is to show that $V_e^\sigma$ is self-correcting; we do so by writing $\Ex\big[ \Delta V_e^\sigma(m) \big]$ in terms of $V_e^{\sigma L}(m)$ and $V_e^{\sigma R}(m)$, which we can control as long as both $\sigma L$ and $\sigma R$ are also $k$-short. In the next two subsections we shall deal with the other case. 

The key lemma of this subsection is as follows; it says (roughly) that a walk of length $k$ in the open neighbourhood of a vertex is well-mixed.

\begin{lemma}\label{Vmix}
Let $\omega \cdot n^{3/2} < m \le m^*$. If $\E(m) \cap \Yh(m)$ holds, then  
\begin{equation}\label{eq:Vmix} 
\big| V_e^{L^k}(m) - V_e^{L^{k+1}}(m) \big| \, = \, o\big( g_y(t) \Yt(m) \big)
\end{equation}
for every open edge $e \in O(G_m)$.  
\end{lemma}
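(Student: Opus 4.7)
Observe first that every $L^k$-walk starting from $e$ preserves the $R$-endpoint $u$ of $e$, so every open edge visited lies in $Q_u(m) := \{f \in O(G_m) : u \in f\}$. Consequently $V_e^{L^k}(m)$ is a weighted average of $Y_f(m)$ over $f \in Q_u(m)$ with weights $\pi_k(f) := n_e^k(f) / U_e^{L^k}(m)$, where $n_e^k(f)$ is the number of $L^k$-walks from $e$ to $f$. My plan is to show that both $V_e^{L^k}(m)$ and $V_e^{L^{k+1}}(m)$ lie within $o(g_y(t) \Yt(m))$ of the uniform average
$$\overline{Y}_u(m) \, := \, \frac{1}{|Q_u(m)|} \sum_{f \in Q_u(m)} Y_f(m);$$
the lemma then follows by subtracting. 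Since the coefficients $\pi_k(f) - |Q_u(m)|^{-1}$ sum to zero and $Y_f(m) \in (1 \pm g_y(t)) \Yt(m)$ by $\Yh(m)$, this reduces to showing $\|\pi_k - \mathrm{uniform}\|_1 = o(1)$, and similarly for $\pi_{k+1}$.

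To bound this $\ell^1$ distance I would interpret $n_e^k(f)$ (up to a negligible contribution from walks with repeated intermediate vertices, controlled by the event $\E(m)$ applied to the appropriate smaller sub-structures and $\Z(m)$ for common neighbourhoods) as the number of copies of a \emph{ladder} graph structure $F_k$ in $G_m$: vertex set $\{u, v_0, v_1, \ldots, v_k\}$, spokes $\{uv_i : 0 \le i \le k\}$ as open edges, rungs $\{v_{i-1} v_i : 1 \le i \le k\}$ as edges, rooted at $A = \{u, v_0, w\}$ (for $f = \{u, w\}$) with $\phi$ identifying $v_0$ with the $L$-endpoint of $e$ and $v_k$ with $w$. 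After removing the open edges $uv_0$ and $uw$ that lie inside $A$ (so that the modified structure $\hat{F}_k^A$ has independent root), the expected count $\Nt_A(F_k)(m)$ becomes independent of $w$. Theorem~\ref{EEthm}$(b)$ then gives
$$n_e^k(f) \, = \, \bigl(1 \pm g_{F_k,A}(t)\bigr) \, \Nt_A(F_k)(m)$$
uniformly over faithful $\phi$, and summing over $f \in Q_u(m)$ yields the matching expansion for $U_e^{L^k}(m)$; taking the ratio gives $\pi_k(f) = (1 + O(g_{F_k,A}(t)))/|Q_u(m)|$ on the set of faithful $f$, which covers all but $o(|Q_u(m)|)$ elements (bounded via $\E(m)$).

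The reason for the choice $k = \lceil 3 / \eps \rceil$ is now apparent: a direct calculation from~\eqref{def:cFA} gives $c(F_k,A) = 2(k-1)/(k-2) = 2 + O(1/k)$, and an analogous computation verifies $t^*_H(F_k) \ge t^*$ for every $A \subsetneq H \subseteq F_k$, so Theorem~\ref{EEthm}$(b)$ indeed applies throughout $\omega < t \le t^*$. Combined with the $\eps$-slack in the definition of $t^*$, this yields
$$g_{F_k, A}(t^*) \, = \, e^{c(F_k, A)(t^*)^2} n^{-1/4} (\log n)^{\gamma(F_k, A)} \, = \, n^{-\Omega(\eps)} (\log n)^{\gamma(F_k, A)} \, = \, o(1),$$
so $\|\pi_k - \mathrm{uniform}\|_1 = o(1)$. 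The main obstacle is the parameter-tracking required to verify $c(F_k, A) = 2(k-1)/(k-2)$ over \emph{every} substructure $H$ (not merely the full $F_k$), and to confirm that the walks with repeated intermediate vertices, together with the $f \in Q_u(m)$ on which $\phi$ is not faithful, truly form $o(|Q_u(m)|)$ exceptions; choosing $k \ge 3/\eps$ is what ultimately makes the polynomial gain $n^{-\Omega(\eps)}$ from the $\eps$-slack dominate the polylogarithmic loss $(\log n)^{\gamma(F_k,A)}$.
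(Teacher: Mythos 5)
Your argument is correct and follows essentially the same route as the paper: reduce to comparing both $V_e^{L^k}$ and $V_e^{L^{k+1}}$ with the uniform average over $Q_u(m)$ (the paper's Lemma~\ref{Vmix2}), identify an $L^k$-path with a ``ladder'' graph structure $F$ with $v_A(F)=k-1$, $e(F)=k$, $o(F)=k-1$, verify $t_A(F)=t^*$ via the bound $2v_A(H)-e(H)\ge$ (most of) $o(H)$ for each $A\subsetneq H\subseteq F$, and invoke Theorem~\ref{NFthm}/\ref{EEthm}$(b)$ together with the choice $k\ge 3/\eps$ so that $c(F,A)=2(k-1)/(k-2)$ stays within $O(\eps)$ of $2$ and the $\eps$-slack in $t^*$ makes $g_{F,A}(t)=o(1)$. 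The two technical points you flag as remaining (showing non-path walks contribute negligibly, and that the degenerate $f\in\{e\}\cup$~(non-faithful cases) are a vanishing fraction) are exactly the content of the paper's~\eqref{eq:UP} and the separate $\sigma$-cycle case; your outline for handling them is the right one.
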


Note that the $o(\cdot)$-term in the lemma indicates that the left-hand side of~\eqref{eq:Vmix} divided by $g_y(t) \Yt(m)$ is (uniformly over times $\omega < t \le t^*$) at most some function of $n$ which tends to zero as $n \to \infty$. In particular, this implies that if $n \ge n_0(\eps,C,\omega)$, then the left-hand side of~\eqref{eq:Vmix} is smaller than $g_\sigma(t) \Yt(m)$ for every $k$-short sequence $\sigma$. 

We shall deduce Lemma~\ref{Vmix} from the following lemma. Recall that $Q_u(m)$ denotes the open neighbourhood of the vertex $u$ in $G_m$, i.e., $Q_u(m) = \{ e \in O(G_m) : u \in e \}$. 

\begin{lemma}\label{Vmix2}
Let $\omega \cdot n^{3/2} < m \le m^*$. If $\E(m) \cap \Yh(m)$ holds, then  
$$V_e^{L^k}(m) \, \in \, \frac{1 \pm o\big( g_y(t) \big)}{Q_u(m)} \sum_{f \in Q_u(m)} Y_f(m)$$
for every open edge $e \in Q_u(m)$. 
\end{lemma}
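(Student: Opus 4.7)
The plan is to identify $L^k$-walks with copies of a specific graph structure and then apply Theorem~\ref{EEthm}. Fix $u \in V(G_m)$ with $e \in Q_u(m)$, and for each $f \in Q_u(m)$ write $N_{e,f}^{L^k}(m)$ for the number of $L^k$-walks from $e$ to $f$. Consider the graph structure $F$ with vertex set $\{u,v_0,v_1,\ldots,v_k\}$, edges $E(F) = \{v_{i-1}v_i : 1 \le i \le k\}$, and open edges $O(F) = \{uv_i : 0 \le i \le k\}$. Setting $A = \{u,v_0,v_k\}$ and letting $\phi \colon A \to V(G_m)$ send the triple $(u,v_0,v_k)$ to the common vertex $u$ of $e,f$ and to the other endpoints of $e$ and $f$ respectively, we have $N_{e,f}^{L^k}(m) = N_\phi(F)(m) + R_{e,f}(m)$, where $R_{e,f}(m)$ counts walks in which some intermediate vertex $\psi(v_i)$ coincides with another $\psi(v_j)$ or with a vertex of $\phi(A)$. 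The remainder $R_{e,f}(m)$ splits into counts of smaller substructures obtained by identifying vertices, each controlled by Theorem~\ref{EEthm} and contributing a negligible fraction of $\Nt_A(F)(m)$.

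The crucial parameter check is that Theorem~\ref{EEthm}$(b)$ applies uniformly for $\omega < t \le t^*$. For every $A \subsetneq H \subseteq F$, the quantity $t_A^*(H)$ is minimised at $H = \hat{F}^A$ (the full structure with edges inside $A$ removed), giving
\[ t_A^*(\hat{F}^A) \, = \, \sqrt{ \frac{k-2}{8(k-1)} \log n }. \]
Since $k = \lceil 3/\eps \rceil$ is large enough that $(k-2)/(8(k-1)) > (1/(2\sqrt{2}) - \eps)^2$, this exceeds $t^*$, and hence $t_A(F) = t^*$. Moreover $c(F,A) = 2 + 2/(k-2) \le 2 + \eps$, so a direct calculation (pushing $t = t^*$ and using $2(t^*)^2 = (1/4 - \sqrt{2}\eps + O(\eps^2))\log n$) shows that
\[ g_{F,A}(t) \, = \, e^{c(F,A)t^2} n^{-1/4} (\log n)^{\gamma(F,A)} \, \le \, n^{-\Omega(\eps)} (\log n)^{O(1)} \, = \, o(1) \]
uniformly in $\omega < t \le t^*$. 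Hence Theorem~\ref{EEthm}$(b)$ yields $N_\phi(F)(m) \in (1 \pm g_{F,A}(t)) \Nt_A(F)(m)$, and, crucially, $\Nt_A(F)(m)$ does not depend on $\phi$.

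Combining, $N_{e,f}^{L^k}(m) = (1 \pm o(1)) \Nt_A(F)(m)$ uniformly over $f \in Q_u(m)$, and summing over $f$ gives $U_e^{L^k}(m) = (1 \pm o(1)) |Q_u(m)| \Nt_A(F)(m)$. Writing $\bar Y_u := \frac{1}{|Q_u(m)|} \sum_{f \in Q_u(m)} Y_f(m)$ and exploiting that both $N_{e,f}^{L^k}(m)/U_e^{L^k}(m)$ and $1/|Q_u(m)|$ sum to $1$ over $f$, we obtain
\[ V_e^{L^k}(m) - \bar Y_u \, = \, \sum_{f \in Q_u(m)} \bigg( \frac{N_{e,f}^{L^k}(m)}{U_e^{L^k}(m)} - \frac{1}{|Q_u(m)|} \bigg) \big( Y_f(m) - \Yt(m) \big). \]
The event $\Yh(m)$ gives $|Y_f(m) - \Yt(m)| \le g_y(t) \Yt(m)$, and the coefficients are bounded in absolute value by $O(g_{F,A}(t))/|Q_u(m)|$, so the right-hand side is at most $O(g_{F,A}(t) g_y(t)) \Yt(m) = o(g_y(t)) \bar Y_u$, as required.

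The main obstacle is the parameter verification: $k$ must be chosen large enough (quantitatively, $k \ge 3/\eps$ suffices) so that simultaneously $t_A(F) = t^*$ and $g_{F,A}(t) = o(1)$ uniformly for $t \le t^*$. Both conditions tighten as $k$ grows, and together they explain the choice $k = \lceil 3/\eps \rceil$; this is in some sense the crux of the whole mixing argument. The secondary technical issues, namely bounding the non-simple walks via smaller substructures and showing that $|Q_u(m)|$ is close to $e^{-4t^2}n$, are more routine applications of Theorem~\ref{EEthm} to simpler graph structures.
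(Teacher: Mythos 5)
Your proposal is correct in its essentials and follows the same route as the paper: identify $L^k$-walks from $e$ to $f$ with copies of a graph structure (a ``fan'' rooted at $u$, $v_0$, $v_k$), verify via the parameter computation $t_A^*(\hat F^A) = \sqrt{\tfrac{k-2}{8(k-1)}\log n} > t^*$ and $c(F,A) = 2 + \tfrac{2}{k-2}$ that $t_A(F) = t^*$ and $g_{F,A}(t) = o(1)$ uniformly on $(\omega,t^*]$, show the remainder from non-simple walks is negligible, and then average. The explicit identity
\[
V_e^{L^k}(m) - \bar Y_u \, = \, \sum_{f \in Q_u(m)} \Big( \tfrac{N_{e,f}^{L^k}(m)}{U_e^{L^k}(m)} - \tfrac{1}{|Q_u(m)|} \Big)\big( Y_f(m) - \Yt(m) \big)
\]
is a clean repackaging of the two averaging observations (Observations~\ref{avobs1} and~\ref{avobs2}) the paper uses.

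One point you pass over: when $f = e$, your map $\phi$ sends $v_0$ and $v_k$ to the same vertex and is therefore not injective, so $N_{e,e}^{L^k}(m)$ cannot be written as $N_\phi(F)(m) + R_{e,e}(m)$ for \emph{that} $(F,A)$. The paper handles this by introducing a separate graph structure $(F',A')$ — the ``$\sigma$-cycle'' obtained by identifying $v_0$ with $v_k$ — and observing that $\Nt_{A'}(F') = \Nt_A(F)$, so the count is again $(1\pm o(1))\Nt_A(F)(m)$. This is a one-line patch and does not affect the structure of your argument, but it should be stated explicitly for the contribution of the single term $f = e$ to be justified.
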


In order to prove Lemma~\ref{Vmix2}, we shall use the following simple facts.\footnote{In both observations, all variables are assumed to be positive real numbers.} 

\begin{obs}\label{avobs1}
Let $a_1,\ldots,a_{r+s} \in (1 \pm \gamma)\tilde{a}$, and suppose that $s \le \delta r$. Then
$$\frac{r+s}{r} \sum_{j = 1}^r a_j \, \in \, \big( 1 \pm O(\gamma \delta) \big) \sum_{j=1}^{r+s} a_j.$$
\end{obs}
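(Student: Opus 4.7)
The plan is straightforward: write each $a_j = (1 + \gamma_j)\tilde{a}$ with $|\gamma_j| \le \gamma$, and compute the difference $\frac{r+s}{r} \sum_{j=1}^r a_j - \sum_{j=1}^{r+s} a_j$ directly. Splitting into the `main' and `error' parts, the main term $(r+s)\tilde{a}$ cancels on both sides, and we are left with
$$\frac{r+s}{r}\tilde{a}\sum_{j=1}^r \gamma_j \; - \; \tilde{a}\sum_{j=1}^{r+s} \gamma_j \; = \; \tilde{a} \cdot \frac{s}{r}\sum_{j=1}^r \gamma_j \; - \; \tilde{a}\sum_{j=r+1}^{r+s} \gamma_j.$$
Using $|\gamma_j| \le \gamma$ and $s \le \delta r$, the first term is bounded in absolute value by $s\gamma\tilde{a} \le \delta r \gamma \tilde{a}$ and the second by $s\gamma\tilde{a}$, giving total additive error at most $2s\gamma\tilde{a}$.

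To convert this additive bound into the claimed relative bound, I would divide by $\sum_{j=1}^{r+s} a_j$, which is at least $(r+s)(1-\gamma)\tilde{a} \ge (r+s)\tilde{a}/2$ (we may assume $\gamma \le 1/2$, as otherwise the conclusion is trivial). This yields a relative error of at most
$$\frac{2s\gamma\tilde{a}}{(r+s)\tilde{a}/2} \; = \; \frac{4s\gamma}{r+s} \; \le \; 4\gamma\delta,$$
which is the required $O(\gamma\delta)$. There is no substantive obstacle here — the statement is purely a book-keeping identity, and the only minor point to be careful about is to bound the denominator from below (using $\gamma$ not too large) so that the additive error translates into a multiplicative one.
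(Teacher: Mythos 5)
Your proof is correct, and it is essentially the only natural way to prove this: write $a_j = (1+\gamma_j)\tilde{a}$, observe that the leading $(r+s)\tilde{a}$ terms cancel, bound the leftover $\tilde a\bigl(\tfrac{s}{r}\sum_{j\le r}\gamma_j - \sum_{j>r}\gamma_j\bigr)$ by $2s\gamma\tilde a$, and divide by the lower bound $(r+s)(1-\gamma)\tilde a$ for $\sum_{j=1}^{r+s}a_j$. The paper does not spell out a proof of this observation in the main text (it is presented as an immediate fact, with details relegated elsewhere), so there is nothing to contrast against; your bookkeeping matches what one would expect.

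One small quibble with the phrasing only: for $\gamma \ge 1/2$ the statement is not so much ``trivial'' as it requires $\gamma$ to stay bounded away from $1$ for the $O(\gamma\delta)$ relative bound to hold with an absolute implied constant (the denominator $(r+s)(1-\gamma)\tilde a$ degenerates as $\gamma\to 1$). Since in every application in the paper $\gamma$ is a vanishing error term, this is harmless, but it would be cleaner to say the observation is meant for $\gamma$ bounded away from $1$ (equivalently, under the paper's standing convention that all quantities $(1\pm\gamma)\tilde a$ are positive) rather than to dismiss the large-$\gamma$ case as trivial.
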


\begin{obs}\label{avobs2}
Let $a_1,\ldots,a_r \in (1 \pm \gamma)\tilde{a}$ and $b_1,\ldots,b_r \in (1 \pm \delta)\tilde{b}$. Then
$$r \cdot \sum_{j = 1}^r a_j b_j \, \in \, \big( 1 \pm O(\gamma \delta ) \big) \bigg( \sum_{j=1}^r a_j \bigg) \bigg( \sum_{j=1}^r b_j \bigg).$$
\end{obs}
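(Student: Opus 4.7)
The plan is to verify Observation~\ref{avobs2} by a direct expansion in the multiplicative errors. Writing $a_j = \tilde{a}(1+\alpha_j)$ with $|\alpha_j| \le \gamma$ and $b_j = \tilde{b}(1+\beta_j)$ with $|\beta_j| \le \delta$, the quantities on both sides become polynomials in the $\alpha_j$ and $\beta_j$, and all that needs to be shown is that the discrepancy is governed by the ``cross'' terms $\alpha_j\beta_j$, which are bounded by $\gamma\delta$.

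Expanding, one obtains
\begin{align*}
r \cdot \sum_{j=1}^r a_j b_j &\,=\, \tilde{a}\tilde{b}\bigg( r^2 + r\sum_{j=1}^r \alpha_j + r\sum_{j=1}^r \beta_j + r\sum_{j=1}^r \alpha_j \beta_j \bigg),\\
\bigg(\sum_{j=1}^r a_j\bigg)\bigg(\sum_{j=1}^r b_j\bigg) &\,=\, \tilde{a}\tilde{b}\bigg( r^2 + r\sum_{j=1}^r \alpha_j + r\sum_{j=1}^r \beta_j + \Big(\sum_{j=1}^r \alpha_j\Big)\Big(\sum_{j=1}^r \beta_j\Big) \bigg).
\end{align*}
Subtracting, the first three terms cancel, and we are left with the identity
$$r \cdot \sum_{j=1}^r a_j b_j \,-\, \bigg(\sum_{j=1}^r a_j\bigg)\bigg(\sum_{j=1}^r b_j\bigg) \,=\, \tilde{a}\tilde{b}\bigg( r\sum_{j=1}^r \alpha_j \beta_j - \Big(\sum_{j=1}^r \alpha_j\Big)\Big(\sum_{j=1}^r \beta_j\Big) \bigg).$$

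Each of the two terms on the right is bounded in absolute value by $r^2 \gamma \delta$, so the difference is at most $2 r^2 \gamma \delta \,\tilde{a}\tilde{b}$ in absolute value. On the other hand, $\big(\sum a_j\big)\big(\sum b_j\big) \ge r^2 (1-\gamma)(1-\delta) \tilde{a}\tilde{b}$, and since we may clearly assume $\gamma, \delta \le 1/2$ (otherwise $O(\gamma\delta)$ is vacuous), this is at least $r^2 \tilde{a}\tilde{b}/4$. Dividing gives the claimed multiplicative error $O(\gamma\delta)$.

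There is essentially no obstacle here: the observation is a purely algebraic ``covariance'' identity, of exactly the same flavour as Observation~\ref{avobs1}, and the main point is simply the cancellation of the linear-in-$\alpha_j,\beta_j$ terms between $r \sum a_j b_j$ and $(\sum a_j)(\sum b_j)$, which leaves only a quadratic remainder.
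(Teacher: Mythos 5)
Your proof is correct, and it is the natural argument: expand multiplicatively, observe that the linear terms cancel, and bound the remaining quadratic ``covariance'' term $r\sum\alpha_j\beta_j - (\sum\alpha_j)(\sum\beta_j)$ by $2r^2\gamma\delta$. The paper states Observation~\ref{avobs2} without proof (it is an elementary fact, with positivity of all quantities assumed in the footnote), so there is no alternative argument to compare against; your write-up supplies precisely the cancellation the authors had in mind.
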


We shall also need a little extra terminology. 

\begin{defn}[$\sigma$-paths and $\sigma$-cycles]
Let $\sigma \in \{L,R\}^*$ and $e,f \in O(G_m)$. A $\sigma$-walk from $e$ to $f$ which leaves each endpoint of $e$ at most once, and arrives at every other vertex at most once, is called a \emph{$\sigma$-path}. If moreover $e = f$, then it is also called a \emph{$\sigma$-cycle}. 
\end{defn}

Let us write $P_e^\sigma(m)$ for the number of $\sigma$-paths starting at an open edge $e$, and recall that $U_e^\sigma(m)$ denotes the number of $\sigma$-walks starting at $e$.  We shall use the event $\E(m)$ to bound the number of $\sigma$-paths (or cycles) between two edges $e,f \in Q_u(m)$. 

Finally, we make an observation which will be useful several times in this section. 

\begin{obs}\label{obs:easyUbounds}
Let $\omega \cdot n^{3/2} < m \le m^*$. If $\E(m)$ holds, then
$$\frac{1}{3^\omega} \cdot \Yt(m)^{|\sigma|} \, \le \, U_e^\sigma(m) \, \le \, \Yt(m)^{|\sigma|}$$
for every $\sigma \in \{L,R\}^*$ with $|\sigma| \le \omega$.
\end{obs}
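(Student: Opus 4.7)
The plan is to proceed by induction on $|\sigma|$, using the event $\E(m)$ (that is, Theorem~\ref{EEthm}) to control the single-step factors $|Y_f^{\sigma_i}(m)|$.

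The key input is that for every open edge $f \in O(G_m)$ and every $\omega < t \le t^*$,
$$|Y_f^L(m)|, \; |Y_f^R(m)| \, \in \, \bigl(1 \pm o(1)\bigr) \cdot \Yt(m)/2.$$
Indeed, $|Y_f^L(m)|$ is exactly $N_\phi(F_L)(m)$, where $F_L$ is the three-vertex graph structure with vertex set $A \cup \{w\}$, $A = \{L,R\}$ mapped by $\phi$ to the oriented endpoints of $f$, edge $\{L,w\}$, and open edge $\{R,w\}$; note that $\phi$ is automatically faithful since $f \in O(G_m)$, as the induced structure contains no triangle. A direct computation gives $v_A(F_L) = e(F_L) = o(F_L) = 1$ and hence $\Nt_A(F_L)(m) = e^{-4t^2} \cdot (2t/\sqrt{n}) \cdot n = \Yt(m)/2$, while the unique sub-structure check yields $t_A(F_L) = t_A^*(F_L) = \sqrt{(\log n)/8} > t^*$. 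Since $c(F_L,A) = 2$, the relative error $g_{F_L,A}(t) = e^{2t^2} n^{-1/4} (\log n)^{O(1)}$ is bounded by $n^{-\Omega(\eps)}(\log n)^{O(1)} = o(1)$ uniformly on $\omega < t \le t^*$, so Theorem~\ref{EEthm}$(b)$ delivers the claim. The bound for $|Y_f^R(m)|$ is symmetric.

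We then prove by induction on $|\sigma|$ the strengthened statement
$$U_e^\sigma(m) \, \in \, \bigl(1 \pm o(1)\bigr)^{|\sigma|} \cdot \bigl(\Yt(m)/2\bigr)^{|\sigma|},$$
uniformly over $e \in O(G_m)$ and $|\sigma| \le \omega$. The base case $|\sigma| = 0$ is immediate. For the inductive step, use the identity
$$U_e^\sigma(m) \, = \, \sum_{f \in Y_e^{\sigma_1}(m)} U_f^{\sigma'}(m), \qquad \sigma' \, = \, (\sigma_2,\ldots,\sigma_{|\sigma|}),$$
and combine the bound above on $|Y_e^{\sigma_1}(m)|$ with the inductive bound on $U_f^{\sigma'}(m)$, which (by the uniformity of the inductive hypothesis) holds for every $f \in O(G_m)$. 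Bounding the sum above by $|Y_e^{\sigma_1}(m)| \cdot \max_f U_f^{\sigma'}(m)$ and below by $|Y_e^{\sigma_1}(m)| \cdot \min_f U_f^{\sigma'}(m)$ yields the inductive step.

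The Observation follows from the strengthened claim. Since $\omega$ grows much more slowly than $g_{F_L,A}(t)^{-1}$, we have $(1 \pm o(1))^{|\sigma|} = 1 \pm o(1)$ uniformly for $|\sigma| \le \omega$. The upper bound $U_e^\sigma(m) \le \Yt(m)^{|\sigma|}$ then follows from $(1+o(1))(\Yt(m)/2)^{|\sigma|} \le \Yt(m)^{|\sigma|}$ for $|\sigma| \ge 1$ (with $|\sigma| = 0$ trivial), and the lower bound $U_e^\sigma(m) \ge \Yt(m)^{|\sigma|}/3^\omega$ follows from $(1-o(1))/2^{|\sigma|} \ge 1/3^\omega$, which holds since $(3/2)^\omega \to \infty$. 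The only non-routine step is the single tracking-time calculation $t_A(F_L) > t^*$; everything else is a clean induction, with no need to account for $\sigma$-walks that revisit vertices, since the induction passes all the way through $O(G_m)$ at every step.
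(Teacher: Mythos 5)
Your proof is correct and takes the same approach as the paper: use the event $\E(m)$ to obtain the bounds $\Yt(m)/3 \le Y_f^L(m), Y_f^R(m) \le \Yt(m)$ (your $(1 \pm o(1))\Yt(m)/2$ is a sharper version of the same thing), and then take the product over the $|\sigma|$ steps of the walk. The paper states this in one sentence, leaving the application of Theorem~\ref{NFthm} to the two small graph structures implicit; you simply fill in those details. One minor slip: since $t_A^*(F_L) = \frac{1}{2\sqrt{2}}\sqrt{\log n} > t^*$, the definition gives $t_A(F_L) = t^*$, not $t_A(F_L) = t_A^*(F_L)$, but this does not affect the conclusion that tracking holds for all $\omega < t \le t^*$.
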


\begin{proof}
Simply note that, since the event $\E(m)$ holds, the variables $Y_f^L(m)$ and $Y_f^R(m)$ are each at most $\Yt(m)$, and at least $\Yt(m)/3$, for every $f \in O(G_m)$. 
\end{proof}

\begin{proof}[Proof of Lemma~\ref{Vmix2}]
Let $\omega \cdot n^{3/2} < m \le m^*$, recall that $k = \lceil 3 / \eps \rceil$, and set $\sigma = L^k$. Moreover, let $u \in V(G_m)$ and $e \in Q_u(m)$. We claim that 
\begin{itemize}
\item[$(a)$] the number of $\sigma$-paths from $e$ to $f$ is the same, up to a factor of $1 \pm o(1)$, for each open edge $f \in Q_u(m)$, and 
\item[$(b)$] almost all $\sigma$-walks starting at $e$ are in fact $\sigma$-paths. 
\end{itemize}
Using Observations~\ref{avobs1} and~\ref{avobs2}, the lemma will follow easily from $(a)$ and $(b)$.

Suppose first that $e \neq f$, and consider the graph structure triple $(F,A,\phi)$ which represents a $\sigma$-path from $e$ to $f$, see Figure~5.1$(a)$. Note that $v(F) = k + 2$, $|A| = |e \cup f| = 3$, $e(F) = k$ and $o(F) = k-1$. We claim that $t_A(F) = t^*$. 

\begin{figure}[h]
\includegraphics[scale=1]{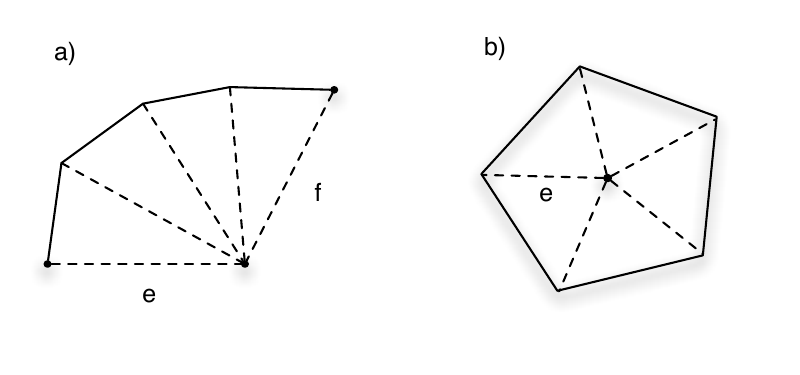}
\caption{$L^k$-paths from $e$ to $f$.}
\end{figure}

To prove the claim, recall from Definition~\ref{def:tAF} and~\eqref{eq:tstardef} that if $t_A(F) < t^*$, then there exists an induced sub-structure $A \subsetneq H \subseteq F$ such that either $e(H) \ge 2v_A(H)$ or 
\begin{equation}\label{eq:tstardefSec5}
t_A^*(H) \, = \, \left( \frac{2v_A(H) - e(H)}{8o(H)} \right)^{1/2} \sqrt{\log n} \, < \, \left( \frac{1}{2\sqrt{2}} - \eps \right) \sqrt{\log n} \, = \, t^*.
\end{equation}
Note that $e(H) \le v_A(H) + \one_{H = F}$ and $o(H) = v_A(H)$. Since $v_A(F) = k - 1 > 1/\eps > 1$, it follows that $e(H) < 2v_A(H)$ and
$$\frac{2v_A(H) - e(H)}{8o(H)} \, \ge \, \frac{1}{8} \left( 1 - \frac{1}{k - 1} \right) \, > \, \left( \frac{1}{2\sqrt{2}} - \eps \right)^2,$$
and so $t_A(F) = t^*$, as claimed. Hence, assuming $\E(m)$ holds, and noting that $\phi$ is faithful, and that $\gamma(F,A) \le \omega$ (since $v_A(F) \le k = O(1)$, see~\eqref{def:del} and~\eqref{def:gam}), we have  
\begin{equation}\label{eq:mixfan}
N_\phi(F) \, \in \, \big( 1 \pm o(1) \big) \Nt_A(F),
\end{equation}
and so $g_{F,A}(t) \le n^{-\eps}$ for every $\omega < t \le t^*$. 

On the other hand, if $e = f$ then consider the graph structure triple $(F',A',\phi')$ which represents a $\sigma$-cycle from $e$ to itself, see Figure~5.1$(b)$. Since $k > 2$, the structure $(F',A')$ is obtained from $(F,A)$ by identifying two vertices of $A$ with no common neighbours, and it follows that $\Nt_{A'}(F') = \Nt_A(F)$, and that 
$$N_{\phi'}(F') \, \in \, \big( 1 \pm o(1) \big) \Nt_{A'}(F')  \, \in \, \big( 1 \pm o(1) \big) \Nt_A(F),$$ 
exactly as above. 

It follows that the number of $\sigma$-paths from $e$ to $f$ is within a factor of $1 \pm o(1)$ of $\Nt_A(F)$ for every $f \in Q_u(m)$. It remains to observe that, assuming $\E(m)$ holds, 
\begin{equation}\label{eq:UP}
U_e^\sigma(m) - P_e^\sigma(m) \, < \, \omega \cdot \Yt(m)^{k-1} \, \ll \, \frac{\Yt(m)^k}{3^{\omega}} \, \le \, U_e^\sigma(m).
\end{equation}
To see the first inequality, simply sum over the (less than) $k^2 \le \omega^2$ choices for the two steps $1 \le i < j \le k$ which use the same vertex, and note that, since the event $\E(m)$ holds, we have at most $\big( 1 + o(1) \big) \frac{\Yt(m)}{2} < \Yt(m)$ choices\footnote{We shall use this bound, $Y_e^L(m) \le \Yt(m)$, several times below without further comment.} for each of the steps except $j$, and at most one choice for step $j$ (given step $i$). The second inequality holds since $\omega^2 \cdot 3^\omega \ll n^\eps \le \Yt(m)$, and the third holds by Observation~\ref{obs:easyUbounds}. 

Hence, writing $\hat{V}_e^{L^k}(m)$ for the average of $Y_f(m)$ over the multi-set of open edges $f$ reached by $\sigma$-paths from $e$, we claim that 
$$V_e^{L^k}(m) \, \in \, \big( 1 \pm o(g_y(t)) \big) \cdot \hat{V}_e^{L^k}(m) \, \subseteq \, \frac{1 \pm o\big( g_y(t) \big)}{Q_u(m)} \sum_{f \in Q_u(m)} Y_f(m),$$
as required. Indeed, the first inclusion follows from Observation~\ref{avobs1}, using~\eqref{eq:UP} and the event $\Yh(m)$, by enumerating the $\sigma$-walks from $e$, and setting $a_j = Y_f(m)$ if the $j^{th}$ walk ends at edge $f$. The second inclusion follows by Observation~\ref{avobs2} and the event $\Yh(m)$, by setting (for each $f \in Q_u(m)$) $a_f$ equal to the number of $\sigma$-walks from $e$ to $f$, and $b_f = Y_f(m)$. 
\end{proof}

Noting that Lemma~\ref{Vmix2} also holds for $\sigma = L^{k+1}$, it is now easy to deduce Lemma~\ref{Vmix}.

\begin{proof}[Proof of Lemma~\ref{Vmix}]
By Lemma~\ref{Vmix2} we have 
$$\big| V_e^{L^k}(m) - V_e^{L^{k+1}}(m) \big| \, \le \, \frac{o\big( g_y(t) \big)}{Q_u(m)} \sum_{f \in Q_u(m)} Y_f(m).$$
Now $Y_f(m) \in \big( 1 \pm o(1) \big) \Yt(m)$ for every $f \in Q_u(m)$, by $\Yh(m)$, so the lemma follows.
\end{proof}

\subsection{Mixing in the whole $Y$-graph}\label{MixSec2}

The other way in which $\sigma L$ or $\sigma R$ might not be $k$-short is if $\sigma$ changes foot more than $k$ times. The following lemma follows from the fact that such a $\sigma$-walk `mixes fast' in the entire $Y$-graph. Recall that we write $s(\sigma) = |\{ i : \sigma_i \neq \sigma_{i+1}\}|$ for the number of `changes of foot' during a $\sigma$-walk. 

\begin{lemma}\label{VmixQ}
%There exists $k_0(\eps) \in \N$ such that if $s(\sigma) \ge k$ and $\E(m) \cap \Y(m)$ holds, then  
Let $\omega \cdot n^{3/2} < m \le m^*$, and let $\sigma \in \{L,R\}^*$ be a sequence with $|\sigma| \le \omega$ and $s(\sigma) \ge k$. If $\E(m) \cap \Yh(m)$ holds, then  
$$\big| V_e^{\sigma}(m) - V_e^{\sigma L}(m) \big| \, = \, o\big( g_y(t) \Yt(m) \big)$$
for every open edge $e \in O(G_m)$. % and every $\omega \cdot n^{3/2} < m \le m^*$. 
\end{lemma}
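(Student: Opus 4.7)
The plan is to mirror the strategy of Lemma~\ref{Vmix2}, replacing the open neighbourhood $Q_u(m)$ with the entire vertex set $O(G_m)$ of the $Y$-graph. Specifically, I would prove the stronger statement that
\begin{equation*}
V_e^{\sigma'}(m) \,\in\, \Big(1 \pm o\big(g_y(t)\big)\Big) \cdot \Yb(m)
\end{equation*}
simultaneously for $\sigma' = \sigma$ and $\sigma' = \sigma L$; since $|\sigma L| \le \omega + 1$ and $s(\sigma L) \ge s(\sigma) \ge k$, the two arguments are identical. Subtracting the two bounds (and using $\Yb(m) \in (1 \pm g_y(t))\Yt(m)$, which is immediate from $\Yh(m)$) yields the claim.

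The argument parallels Lemma~\ref{Vmix2}. First, I would show that almost all $\sigma$-walks from $e$ are $\sigma$-paths: following~\eqref{eq:UP}, the number of walks that revisit a vertex is at most $|\sigma|^2 \cdot \Yt(m)^{|\sigma|-1}$, which is negligible compared to $U_e^\sigma(m) \ge 3^{-\omega}\Yt(m)^{|\sigma|}$ (by Observation~\ref{obs:easyUbounds}), since $|\sigma| \le \omega$ and $\Yt(m) \ge n^{\eps}$. Second, for each $f \in O(G_m)$ vertex-disjoint from $e$, I would count the $\sigma$-paths from $e$ to $f$ using Theorem~\ref{EEthm}. Such a path corresponds to a graph structure triple $(F, A, \phi)$ with $A = e \cup f$; since each of the $|\sigma|$ steps of the walk contributes one new vertex, one (closed) $G_m$-edge and one new open edge, we have $v_A(F) = |\sigma| - 2$, $e(F) = |\sigma|$ and $o(\hat{F}^A) = |\sigma| - 1$. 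The contribution of pairs with $e \cap f \ne \emptyset$ is suppressed by a factor of $\sqrt n$ and hence negligible, as in the proof of Lemma~\ref{Vmix2}.

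The crux is the structural claim that $t_A(F) = t^*$, i.e., that $t_A^*(H) \ge t^*$ for every induced sub-structure $A \subsetneq H \subseteq F$. For $H = F$, this reduces to
\begin{equation*}
\frac{2v_A(F) - e(F)}{8\, o(F)} \,=\, \frac{|\sigma| - 4}{8(|\sigma| - 1)} \,\ge\, \bigg( \frac{1}{2\sqrt 2} - \eps \bigg)^2,
\end{equation*}
which holds because $|\sigma| \ge s(\sigma) \ge k = \lceil 3/\eps \rceil$, exactly as in the analysis following~\eqref{eq:mixfan}. For proper sub-structures, $F$ decomposes naturally into fan-like blocks corresponding to maximal runs of the same foot in $\sigma$, joined in series at a single vertex apiece; within any single block the ratio $(2v_A(H) - e(H))/(8o(H))$ is at worst the fan ratio already analysed in Lemma~\ref{Vmix2}, while crossing into an adjacent block through a change of foot adds one intermediate vertex together with at most one closed edge and one new open edge, which does not worsen the ratio. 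I expect the main obstacle to be carrying out this case analysis rigorously, in particular treating the degenerate configurations in which a step's closed edge has both endpoints in $A$, or in which two ``blocks'' meet at the same $A$-vertex.

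Given the claim, the event $\E(m)$ and Theorem~\ref{EEthm}$(b)$ give that the number of $\sigma$-paths from $e$ to $f$ lies in $\big(1 \pm g_{F,A}(t)\big) \Nt_A(F)(m)$ for every faithful $\phi$. Since $c(F,A) = 2$ and $\gamma(F,A) \le |\sigma|^{O(C)} \le (\log n)^{o(1)}$, we have $g_{F,A}(t) = n^{-\eps + o(1)} = o(1)$, so the number of $\sigma$-paths from $e$ to $f$ is the same for every $f$ up to a relative error of $O(g_{F,A}(t))$. Writing $c_f = \#\{\sigma\text{-paths from $e$ to $f$}\}$ and $c_f/U_e^\sigma(m) = 1/|O(G_m)| \pm O(g_{F,A}(t)/|O(G_m)|)$, and using $\sum_f (c_f/U_e^\sigma - 1/|O|) = 0$ together with $Y_f(m) \in (1 \pm g_y(t))\Yt(m)$ from $\Yh(m)$, the same manipulation as at the end of the proof of Lemma~\ref{Vmix2} (via Observations~\ref{avobs1} and~\ref{avobs2}) yields
\begin{equation*}
\big| V_e^\sigma(m) - \Yb(m) \big| \,=\, O\big( g_{F,A}(t) \cdot g_y(t) \cdot \Yt(m) \big) \,=\, o\big( g_y(t) \Yt(m) \big),
\end{equation*}
together with the identical bound for $V_e^{\sigma L}$, from which Lemma~\ref{VmixQ} follows.
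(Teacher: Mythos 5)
Your overall strategy matches the paper's: deduce the lemma from the well-mixing statement $V_e^\sigma(m) \approx \Yb(m)$ (the paper's Lemma~\ref{VmixQ2}), and prove the latter by counting $\sigma$-paths from $e$ to $f$ using Theorem~\ref{EEthm}, after first noting via~\eqref{eq:UP} that almost all $\sigma$-walks are $\sigma$-paths. The parameter bookkeeping for the graph structure pair ($v_A(F) = |\sigma| - 2$, $e(F) = |\sigma|$, $o(\hat{F}^A) = |\sigma|-1$) is correct, and the final reduction via Observations~\ref{avobs1} and~\ref{avobs2} is as in the paper.

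The gap is in your verification of the structural claim $t_A(F) = t^*$, and it is significant. You handle $H = F$ by direct computation, but for proper substructures $A \subsetneq H \subsetneq F$ your proposed ``block-by-block'' argument does not work as stated. The difficulty is that the ratio $(2v_A(H) - e(H))/(8o(H))$ does not decompose additively into block contributions, and more importantly the ``fan ratio'' $\tfrac{1}{8}\left(1 - \tfrac{1}{v_A}\right)$ quoted from Lemma~\ref{Vmix2} is good \emph{only} because the fan there has $v_A = k-1 \geq 1/\eps$. A single block of $\sigma$ can be very short --- if $\sigma$ alternates then every block has length one --- so the per-block ratio guarantee degenerates. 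What is actually needed, and what the paper supplies, is a \emph{global} lower bound on $v_A(H)$: if $e$ and $f$ lie in the same component of $H$, then each consecutive pair of pivot vertices of $F$ (the vertices where the walk changes foot) forms a cut-set separating $e$ from $f$, so $H$ must contain at least one of each, whence $v_A(H) \geq (s(\sigma)-3)/2 \geq k/3$. Combined with $e(H), o(H) \leq v_A(H) + 2$ (each vertex sends at most one edge and one open edge backwards along the walk) this gives the ratio bound; and if $e$ and $f$ are in different components, the stronger bounds $e(H), o(H) \leq v_A(H)$ hold component-wise and make that case trivially good. Your block decomposition captures neither the connected/disconnected dichotomy nor the cut-vertex lower bound on $v_A(H)$, and the degenerate configurations you flag as the ``main obstacle'' are not the real obstacle --- the missing cut-set observation is.
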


We shall deduce Lemma~\ref{VmixQ} from the following lemma. 

\begin{lemma}\label{VmixQ2}
%There exists $k_0(\eps) \in \N$ such that if $s(\sigma) \ge k$ and $\E(m) \cap \Y(m)$ holds, then  
Let $\omega \cdot n^{3/2} < m \le m^*$, and let $\sigma \in \{L,R\}^*$ be a sequence with $|\sigma| \le \omega$ and $s(\sigma) \ge k$. If $\E(m) \cap \Yh(m)$ holds, then  
$$V_e^{\sigma}(m) \, \in \, \big( 1 \pm o\big( g_y(t) \big) \big) \Yb(m)$$
for every open edge $e \in O(G_m)$. % and every $\omega \cdot n^{3/2} < m \le m^*$. 
\end{lemma}

\begin{proof}%[Proof of Lemma~\ref{VmixQ2}]
The proof is similar to that of Lemma~\ref{Vmix2}, above. Indeed, let $\omega \cdot n^{3/2} < m \le m^*$, let $e \in O(G_m)$, and let $\sigma \in \{L,R\}^*$ be a sequence with $|\sigma| \le \omega$ and $s(\sigma) \ge k$. We claim that 
\begin{itemize}
\item[$(a)$] the number of $\sigma$-paths from $e$ to $f$ is the same, up to a factor of $1 \pm o(1)$, for each open edge $f \in O(G_m)$, and 
\item[$(b)$] almost all $\sigma$-walks starting at $e$ are in fact $\sigma$-paths. 
\end{itemize}
The second claim again follows by~\eqref{eq:UP}, so we shall need to prove only the first.

Suppose first that $e$ and $f$ are disjoint, and consider the graph structure triple $(F,A,\phi)$ which corresponds to a $\sigma$-path from $e$ to $f$, see Figure~5.2$(a)$. We claim that $t_A(F) = t^*$. 

\begin{figure}[h]
\includegraphics[scale=0.8] {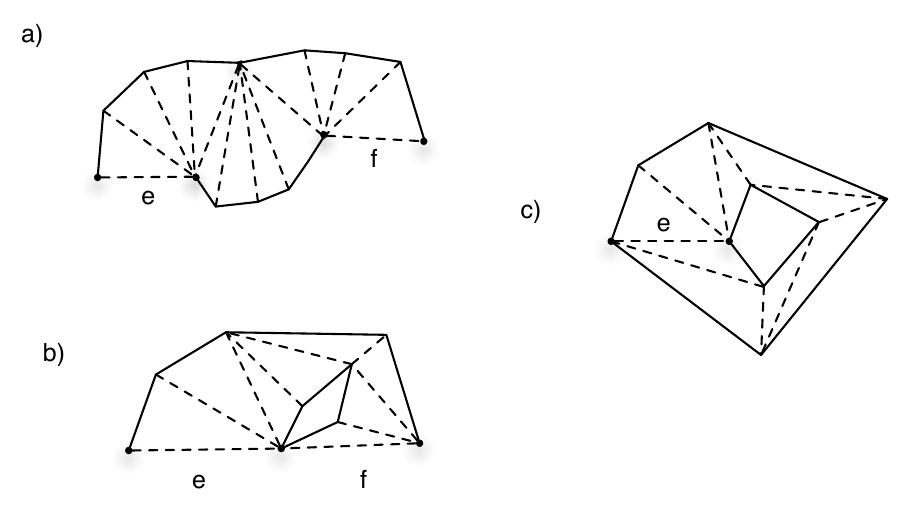}
\caption{$\sigma$-paths from $e$ to $f$.}
\end{figure}

To prove the claim, recall from the proof of Lemma~\ref{Vmix2} that if $t_A(F) < t^*$, then there exists an induced sub-structure $A \subsetneq H \subseteq F$ such that either $e(H) \ge 2v_A(H)$ or~\eqref{eq:tstardefSec5} holds. We claim first that if $e$ and $f$ are in the same component of $H$, i.e., that there is a path in $E(H) \cup O(H)$ from an endpoint of $e$ to an endpoint of $f$, then
$$v_A(H) \ge \frac{s(\sigma) - 3}{2} \ge \frac{k}{3}, \qquad e(H) \le v_A(H) + 2 \qquad \text{and} \qquad o(H) \le v_A(H) + 2.$$
To prove the bound on $v_A(H)$, simply note that each consecutive pair of `pivot' vertices in $F$ form a cut-set which divides $e$ from $f$, and so $H$ must contain at least one of each such pair. To prove the bounds on $e(H)$ and $o(H)$, consider the vertices of $H$ according to the order in which they are reached by the $\sigma$-walk. Each vertex sends at most one edge and one open edge backwards, and thus $e(H) \le v_A(H) + 2$ and $o(H) \le v_A(H) + 2$, as claimed\footnote{Note that the $+2$ term corresponds to the two endpoints of $f$.}. It follows that $e(H) < 2v_A(H)$, since $v_A(H) \ge k/3 > 2$, and moreover
\begin{equation}\label{eq:VMixQ:tstarH}
\frac{2v_A(H) - e(H)}{8o(H)} \, \ge \, \frac{1}{8} \left( \frac{v_A(H) - 2}{v_A(H) + 2} \right) \, \ge \, \frac{1}{8} \left( 1 - \frac{12}{k + 6} \right) \, > \, \left( \frac{1}{2\sqrt{2}} - \eps \right)^2
\end{equation}
for all such $H$, since $k + 6 > 3 / \eps$. 

On the other hand, suppose that $e$ and $f$ are in different components of $H$. Then we may apply exactly the same argument to each component, and obtain the stronger bounds $e(H) \le v_A(H)$ and $o(H) \le v_A(H)$ (component-wise, and thus also in $H$). Thus $2v_A(H) - e(H) \ge o(H)$, and so~\eqref{eq:VMixQ:tstarH} still holds, i.e., $t_A^*(H) > t^*$. Hence $t_A(F) = t^*$, as claimed. 

Now, since $v_A(F) \le |\sigma| + 2 \le \omega + 2$, it follows from the definition~\eqref{def:gam} (and the fact that $\omega \to \infty$ sufficiently slowly) that $\gamma(F,A) \le \log \log n$ (say), and so $g_{F,A}(t) \le n^{-\eps}$ for every $\omega < t \le t^*$. Since $\E(m)$ holds and $\phi$ is faithful at time $t$, it follows that  
\begin{equation}\label{eq:mixfan}
N_\phi(F)(m) \, \in \, \big( 1 \pm o(1) \big) \Nt_A(F)(m).
\end{equation}
The proof when $e$ and $f$ are not disjoint is similar. Indeed, consider the graph structure triples $(F',A',\phi')$ and $(F'',A'',\phi'')$ which represents a $\sigma$-path from $e$ to $f$ when $|e \cap f| = 1$ and when $e = f$, respectively, see Figure~5.2$(b)$ and~$(c)$. We claim that $t_{A'}(F') = t_{A''}(F'') = t^*$. Indeed, since $s(\sigma) > 2$, it follows that $(F',A')$ and $F'',A'')$ are obtained from $(F,A)$ by identifying vertices of $A$ with no common neighbours. Thus $\Nt_{A'}(F') = \Nt_{A''}(F'') = \Nt_A(F)$, and hence we obtain 
$$N_{\phi'}(F') \in \big( 1 \pm o(1) \big) \Nt_A(F) \qquad \text{and} \qquad N_{\phi''}(F'')  \in \big( 1 \pm o(1) \big) \Nt_A(F),$$
exactly as above. 

We have proved that the number of $\sigma$-paths from $e$ to $f$ is within a factor of $1 \pm o(1)$ of $\Nt_A(F)$ for every $f \in O(G_m)$.  Hence, writing $\hat{V}_e^{\sigma}(m)$ for the average of $Y_f(m)$ over the multi-set of open edges $f$ reached by $\sigma$-paths from $e$, we claim that 
$$V_e^{\sigma}(m) \, \in \, \big( 1 \pm o\big( g_y(t) \big) \big) \hat{V}_e^{\sigma}(m) \, \subseteq \,  \big( 1 \pm o\big( g_y(t) \big) \big)  \Yb(m),$$ 
as required. Indeed, the first inclusion again follows from Observation~\ref{avobs1}, using~\eqref{eq:UP} and the event $\Yh(m)$, and the second inclusion follows by Observation~\ref{avobs2} and the event $\Yh(m)$, exactly as before. 
\end{proof}

It is now easy to deduce Lemma~\ref{VmixQ}.

\begin{proof}[Proof of Lemma~\ref{VmixQ}]
By Lemma~\ref{VmixQ2}, we have 
$$\big| V_e^{\sigma}(m) - V_e^{\sigma L}(m) \big| \, \le \, o\big( g_y(t) \Yb(m) \big),$$
and since $\Yh(m)$ holds, we have $\Yb(m) \in \big( 1 \pm o(1) \big) \Yt(m)$. The lemma follows immediately.
\end{proof}

\subsection{Creating and destroying $\sigma$-walks}\label{VcountSec}

In this subsection we shall bound the maximum possible size of $| \Delta V_e^\sigma(m) |$; not only will we need this bound in the martingale calculation, but the walk-counting lemmas below will be useful in proving self-correction. 

The main result of the subsection is as follows.

\begin{lemma}\label{deltaV}
Let $\omega \cdot n^{3/2} < m \le m^*$. If $\E(m) \cap \Z(m)$ holds, then
$$\big| \Delta V_e^\sigma(m) \big| \, \le \, (\log n)^4$$
for every $e \in O(G_m)$ and every $\sigma \in \{L,R\}^*$ with $|\sigma| \le \omega$.
\end{lemma}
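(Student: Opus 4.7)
The plan is to write $V_e^\sigma(m) = W_e^\sigma(m)/U_e^\sigma(m)$, where
$$W_e^\sigma(m) \,=\, \sum_{f_1 \in Y^{\sigma_1}_e(m)} \cdots \sum_{f_\ell \in Y^{\sigma_\ell}_{f_{\ell-1}}(m)} Y_{f_\ell}(m),$$
and to control the step using the quotient identity
$$\Delta V_e^\sigma \,=\, \frac{\Delta W_e^\sigma - V_e^\sigma(m)\,\Delta U_e^\sigma}{U_e^\sigma(m+1)}.$$
Observation~\ref{obs:easyUbounds} gives $U_e^\sigma(m) \ge \Yt(m)^\ell/3^\omega$, and $\E(m)$ gives $Y_f(m) \le (1+\eps)\Yt(m)$ for every open edge $f$, hence $V_e^\sigma(m) \le (1+\eps)\Yt(m)$. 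After verifying that $|\Delta U_e^\sigma| \le U_e^\sigma(m)/2$ (which will follow from the bounds below since $3^\omega\omega\sqrt{\log n}/\sqrt{n} \to 0$), the denominator is at least $\Yt(m)^\ell/(2\cdot 3^\omega)$.

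Let $e^*$ be the edge added in step $m+1$. The change $\Delta U_e^\sigma$ decomposes as \emph{created} minus \emph{destroyed} walks. A walk is destroyed exactly when some $f_i\in Y_{e^*}(m)$ for some $i\in[\ell]$, and created exactly when some step $f_{i-1}\to f_i$ acquires its triangle `third edge' only in $G_{m+1}$, which forces that third edge to be $e^*$. In either case the count reduces to enumerating copies of a permissible graph structure rooted at the (at most four) endpoints of $e\cup e^*$: for destruction at position $i$, we embed a $\sigma_1\ldots\sigma_i$-walk from $e$ whose $i^{\text{th}}$ open edge is a $Y$-neighbour of $e^*$, and multiply by the at most $\Yt(m)^{\ell-i}$ extensions by $f_{i+1},\ldots,f_\ell$; for creation, we fix $e^*$ as the triangle edge at step~$i$. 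Since $|\sigma|\le\omega$, each such structure has $v_A(F),e(F),o(F) = O(\omega)$, so applying Theorem~\ref{EEthm} (via $\E(m)$) to each of the $O(\omega)$ structures and summing yields the scaling
$$\text{(created)}+\text{(destroyed)} \,\le\, (\log n)^{O(1)}\cdot \omega\cdot \Yt(m)^\ell \cdot \frac{t}{\sqrt{n}},$$
the extra factor $t/\sqrt{n}$ arising because the additional constraint that an intermediate open edge meet $e^*$ costs one `vertex of freedom' in the random-graph heuristic $\Nt_A(F)$.

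The change $\Delta W_e^\sigma$ has three contributions: destroyed and created walks (each carrying a factor $Y_{f_\ell}\le(1+\eps)\Yt(m)$), and, along each surviving walk, the change $\Delta Y_{f_\ell}$. For the last, a direct case analysis (using $\Z(m)$ to bound the common-neighbour count of any vertex pair by $(\log n)^2$) shows that for any open edge $f$ that remains open, $|\Delta Y_f(m)| \le (\log n)^2 + O(1)$: the edge $e^*$ creates at most one new $Y$-neighbour of $f$ (necessarily through a triangle sharing a vertex with both $f$ and $e^*$), and the $Y$-neighbours lost are contained in $Y_f(m)\cap Y_{e^*}(m)$, which $\Z(m)$ bounds by $O((\log n)^2)$. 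Combining,
$$|\Delta W_e^\sigma| \,\le\, 3\Yt(m)\cdot\big(\text{created}+\text{destroyed}\big) \,+\, O((\log n)^2)\cdot U_e^\sigma(m).$$
Plugging everything into the quotient identity, and using $\Yt(m)\cdot t/\sqrt{n} = 4t^2 e^{-4t^2} \le \log n$, we obtain $|\Delta V_e^\sigma(m)| \le (\log n)^3$, well within the claimed bound $(\log n)^4$.

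The main obstacle is identifying the correct graph structures for each type of event and verifying that the tracking-or-polylog bound supplied by $\E(m)$ produces the scaling $t/\sqrt{n}$ claimed above. The bookkeeping is somewhat delicate because we must track orientations of the intermediate open edges and because some structures rooted at $e\cup e^*$ may have $t_A(F) < t^*$ (so that Theorem~\ref{EEthm}(c) rather than (b) applies); however, since $|\sigma|\le\omega$, the relevant $(\log n)^{\Delta(F,H,A)}$ factors are polylogarithmic in $n$ and are easily absorbed into the slack between $(\log n)^3$ and the target $(\log n)^4$.
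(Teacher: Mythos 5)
Your overall decomposition—writing $V_e^\sigma = W_e^\sigma / U_e^\sigma$, applying the quotient rule, lower-bounding the denominator via Observation~\ref{obs:easyUbounds}, and bounding the created/destroyed walks—is exactly the paper's structure (Lemmas~\ref{lem:countpaths}, \ref{deltaU} and the proof of Lemma~\ref{deltaV}). The problem lies in the specific bound you claim for $|\Delta U_e^\sigma|$.

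You assert $\text{(created)}+\text{(destroyed)} \le (\log n)^{O(1)}\,\omega\,\Yt(m)^\ell\,(t/\sqrt{n})$, motivated by an $\Nt_A(F)$ heuristic. This is not a correct deterministic bound. Take $i=1$: the walks destroyed at position~1 are in bijection with $Y_e(m)\cap Y_{e^*}(m)$ extended by $\ell-1$ further steps, and when $e^*$ shares a vertex with $e$, the common $Y$-neighbourhood can have size as large as $(\log n)^2$ (this is all $\Z(m)$ guarantees, via Observation~\ref{obs:twostep}). This single term already contributes $(\log n)^2\Yt(m)^{\ell-1}$ walks, and since $\Yt(m)^{\ell-1} = \Yt(m)^\ell / (4te^{-4t^2}\sqrt{n})$, your claimed bound would require $(\log n)^2 e^{4t^2} \lesssim (\log n)^{O(1)} t^2$, which fails badly for $t$ near $t^*$ (where $e^{4t^2}\approx n^{1/2}$). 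The heuristic itself is also miscalibrated: "one vertex of freedom plus one extra edge" costs $n^{-1}\cdot 2t/\sqrt{n}=2t/n^{3/2}$, not $t/\sqrt{n}$; but more to the point, $\Delta U_e^\sigma$ must be bounded for the adversarial edge $e^*$ actually added, not in expectation, so the random-graph count is the wrong quantity to invoke. A further hidden difficulty you acknowledge but do not resolve: long walks constrained to reach $e^*$ give structures with $t_A(F)\ll t^*$, and Theorem~\ref{EEthm}(c) then bounds $N_\phi(F)$ by $(\log n)^\Delta\Nt_H(F)(m^+)$ for the relevant substructure $H$, not by $(\log n)^{O(1)}\Nt_A(F)$; the correction is not just a polylog absorbed in the slack.

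The paper avoids all of this by using the crude but deterministic Lemma~\ref{lem:countpaths} (built only from the degree bound in $\E(m)$ and the common-neighbour bound from $\Z(m)$), yielding $|\Delta U_e^\sigma(m)|\le |\sigma|(\log n)^3\Yt(m)^{|\sigma|-1}$. This larger bound still suffices: plugging it into your quotient identity, and using $U_e^\sigma(m)\ge \Yt(m)^{\ell}/3^\omega$, $V_e^\sigma(m)\le 2\Yt(m)$ and $|\Delta Y_f(m)|\le 2(\log n)^2$, gives $|\Delta V_e^\sigma(m)|\lesssim 3^\omega\,\omega\,(\log n)^3 \le (\log n)^4$. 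So the correct conclusion does follow from your framework, but the bound you wrote on the walk count is false, and your derivation of it is the gap.
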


We begin with a straightforward observation, which will allow us to control $\Delta U_e^\sigma(m)$. %, which uses the event $\Z(m)$ to bound the number of walks of length two in the $Y$-graph between given endpoints, will allow us to control $\Delta U_e^\sigma(m)$.

\begin{obs}\label{obs:twostep}
Let $m \le m^*$. If $\Z(m)$ holds then, for every $e,f \in O(G_m)$ with $e \neq f$, there are at most $(\log n)^2$  walks of length two in the $Y$-graph from $e$ to $f$.
\end{obs}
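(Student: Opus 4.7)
The plan is to bound the number of walks of length two by a direct case analysis on the middle edge $g$. Recall that a length-two walk from $e$ to $f$ in the $Y$-graph is determined by the choice of $g \in Y_e(m) \cap Y_f(m)$, so the task reduces to bounding the number of such $g$. By Definition~\ref{Ydef}, the condition $g \in Y_e(m)$ forces $|g \cap e| = 1$, say $g = \{p,q\}$ with $p \in e$ and $q \notin e$, together with the requirement that $\{q, v\} \in E(G_m)$ where $v$ is the other endpoint of $e$; and analogously for $g \in Y_f(m)$.

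First, if $e \cap f = \emptyset$, then $g$ must contain exactly one vertex of $e$ and exactly one vertex of $f$, giving at most $2 \cdot 2 = 4$ candidates for $g$ (a bound that ignores the edge conditions).

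Second, if $e \cap f = \{x\}$, write $e = \{x,y\}$ and $f = \{x,z\}$. The intermediate $g$ must contain one vertex of $e$ and one of $f$. If $x \notin g$, then $g = \{y,z\}$, and the $Y$-neighbour condition for $e$ forces $\{x,z\} = f$ to lie in $E(G_m)$, contradicting $f \in O(G_m)$. Hence $x \in g$, so $g = \{x,a\}$ for some $a \notin \{x,y,z\}$, and the two $Y$-neighbour conditions say precisely that $\{y,a\} \in E(G_m)$ and $\{z,a\} \in E(G_m)$. Thus $a$ lies in $N_{G_m}(y) \cap N_{G_m}(z) = Z_{\{y,z\}}(m)$, and by the event $\Z(m)$ there are at most $(\log n)^2$ such vertices $a$.

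Combining the two cases, the number of valid $g$ is at most $\max\{4,(\log n)^2\} \le (\log n)^2$ for $n$ sufficiently large. No obstacle arises here — the argument is a short elementary case-check — the only subtlety is noticing that in the second case one cannot have $x \notin g$, which is where the triangle-freeness of the graph structure (through $f$ being open) is used.
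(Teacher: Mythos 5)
Your proof is correct and follows essentially the same approach as the paper: split on whether $e$ and $f$ are disjoint or share a vertex, use the trivial bound of $4$ in the disjoint case, and in the intersecting case identify the middle edge with a common neighbour of the two non-shared endpoints and invoke the event $\Z(m)$. The one small thing you do beyond the paper's write-up is to explicitly rule out the candidate middle edge $\{y,z\}$ (by noting it would force $f \in E(G_m)$, contradicting $f \in O(G_m)$); the paper states without comment that the middle edge must contain the common vertex, and your justification is the correct one.
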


\begin{proof}
If $e$ and $f$ are disjoint, then there are at most four walks of length two from $e$ to $f$, so assume that  $e = \{u,v\}$ and $f = \{u,w\}$. Then the first step must take us to an open edge $h = \{u,x\}$ with $\{v,x\} \in E(G_m)$ (since $h \in Y_e(m)$) and $\{w,x\} \in E(G_m)$ (since $h \in Y_f(m)$), i.e., $x$ is a common neighbour of $v$ and $w$. Since event $\Z(m)$ holds, there are at most $(\log n)^2$ such vertices, and so there are at most $(\log n)^2$ such walks, as claimed. 
\end{proof}

We can now bound the number of $\sigma$-walks between any two open edges of $G_m$.

\begin{lemma}\label{lem:countpaths}
Let $\omega \cdot n^{3/2} < m \le m^*$, let $e,f \in O(G_m)$ with $e \neq f$, and let $\sigma \in \{L,R\}^*$ with $|\sigma| \ge 2$. If $\E(m) \cap \Z(m)$ holds, then there are at most
$$2 \cdot (\log n)^2 \cdot \Yt(m)^{|\sigma| - 2}$$
$\sigma$-walks from $e$ to $f$.
\end{lemma}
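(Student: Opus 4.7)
The plan is to prove Lemma~\ref{lem:countpaths} by induction on $\ell := |\sigma| \ge 2$. The base case $\ell = 2$ follows immediately from Observation~\ref{obs:twostep}: any $\sigma$-walk of length two from $e$ to $f$ is in particular a walk of length two in the $Y$-graph between two distinct open edges, and the event $\Z(m)$ bounds the number of such walks by $(\log n)^2 \le 2(\log n)^2 \Yt(m)^0$.

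For the inductive step $\ell \ge 3$, I would enumerate $\sigma$-walks $f_0 = e, f_1, \ldots, f_\ell = f$ by splitting according to whether the antepenultimate edge $f_{\ell-2}$ equals $f$. In the generic case $f_{\ell-2} \neq f$, first choose the prefix $f_1, \ldots, f_{\ell-2}$ step by step: each $f_i$ lies in $Y^{\sigma_i}_{f_{i-1}}(m)$, whose size is bounded by $\hat{Y}_{f_{i-1}}(m) \le (1 + \eps)\Yt(m)$ under the event $\E(m)$, so the number of prefixes is at most $\big((1+\eps)\Yt(m)\big)^{\ell-2}$. Once the prefix is fixed with $f_{\ell-2} \neq f$, Observation~\ref{obs:twostep} restricts $f_{\ell-1}$ to at most $(\log n)^2$ choices, for a total contribution from this case of at most $(1+\eps)^{\ell-2}(\log n)^2 \Yt(m)^{\ell-2}$.

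In the complementary case $f_{\ell-2} = f$, the initial segment $f_0, f_1, \ldots, f_{\ell-2}$ is itself a $(\sigma_1, \ldots, \sigma_{\ell-2})$-walk from $e$ to $f$ with distinct endpoints. For $\ell \ge 4$, the inductive hypothesis gives at most $2(\log n)^2 \Yt(m)^{\ell-4}$ such segments, and $f_{\ell-1}$ may be any $Y$-neighbor of $f$, contributing at most $(1+\eps)\Yt(m)$ additional choices. For $\ell = 3$ the segment is uniquely $e, f$ (existing only if $f \in Y_e(m)$), and $f_2$ ranges over $Y$-neighbors of $f$. In either sub-case the contribution is at most $O\big((\log n)^2 \Yt(m)^{\ell-3}\big)$, which is a lower-order correction to Case~1 since $\Yt(m) \to \infty$. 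Summing the two cases, and absorbing the $(1+\eps)^{\ell-2}$ factor (which is a constant in the intended applications, where $|\sigma| \le k(k+1)$ is bounded) into the constant $2$, completes the induction.

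The main obstacle, and the reason the argument must proceed by induction rather than by a single application of Observation~\ref{obs:twostep} to the last two steps, is precisely the edge case $f_{\ell-2} = f$: the observation requires the endpoints of the length-two walk to be distinct, so $\sigma$-walks that revisit $f$ two steps before the end cannot be controlled directly and must be counted by recursing on the strictly shorter walk from $e$ to $f$ of length $\ell - 2$.
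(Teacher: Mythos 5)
Your approach is essentially the paper's: split the $\sigma$-walks according to whether $f_{\ell-2} = f$, and apply Observation~\ref{obs:twostep} (via $\Z(m)$) to the final two steps in the case $f_{\ell-2} \neq f$. But there is one genuine quantitative slip, and one unnecessary complication.

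The slip is the per-step bound $(1+\eps)\Yt(m)$ and the resulting factor $(1+\eps)^{\ell-2}$, which you propose to absorb into the constant $2$ on the grounds that $|\sigma| \le k(k+1)$. That is not the actual range of applicability: Lemma~\ref{deltaU} invokes Lemma~\ref{lem:countpaths} for walks of length up to $|\sigma|+1$ with $|\sigma|$ as large as $\omega$, and $\omega(n) \to \infty$, so $(1+\eps)^{\omega}$ is unbounded. The fix is free: a step of a $\sigma$-walk has its direction ($L$ or $R$) prescribed, so the number of choices for $f_i$ is $Y^{\sigma_i}_{f_{i-1}}(m)$, not $Y_{f_{i-1}}(m)$, and the event $\E(m)$ gives $Y^{\sigma_i}_{f_{i-1}}(m) \le \big(1 + o(1)\big)\Yt(m)/2 < \Yt(m)$ (this is the bound used in the footnote following \eqref{eq:UP}). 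With $\Yt(m)$ per step the factor becomes $1$ and the bound holds for every $\sigma$.

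The unnecessary complication is the induction. The case $f_{\ell-2} = f$ needs no recursion: both step $\ell - 2$ and step $\ell$ are forced to land on $f$, leaving exactly $\ell - 2$ free steps with at most $\Yt(m)$ choices each, hence at most $\Yt(m)^{\ell-2}$ walks in this case. (Your inductive treatment also misstates the $\ell=3$ sub-case bound as $O((\log n)^2)$ when it is $\Yt(m)$, though this does not break the estimate.) Adding the two cases gives $\big(1 + (\log n)^2\big)\Yt(m)^{\ell-2} \le 2(\log n)^2 \Yt(m)^{\ell-2}$ directly, with no induction and no constant to chase.
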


\begin{proof}
Since $\E(m)$ holds, it follows that $Y_e^L(m) \le \Yt(m)$ for every $e \in O(G_m)$, and hence we have at most $\Yt(m)$ choices for each step of a $\sigma$-walk on $G_m$. Let $h$ be the open edge we have reached after $|\sigma| - 2$ steps, and consider two cases: either $h = f$, or $h \neq f$.

If $h = f$, then the bound is easy: we have only one choice for steps $|\sigma| - 2$ and $|\sigma|$ (since both must land on $f$), and so there are at most $\Yt(m)^{|\sigma| - 2}$ such walks. On the other hand, if $h \neq f$ then by Observation~\ref{obs:twostep} there are at most $(\log n)^2$ choices for the final two steps, since $\Z(m)$ holds. Hence there are at most $(\log n)^2 \cdot \Yt(m)^{|\sigma| - 2}$ such walks, as required.
\end{proof}

We can now easily bound $\big| \Delta U_e^\sigma(m) \big|$. 

\begin{lemma}\label{deltaU}
Let $\omega \cdot n^{3/2} < m \le m^*$. If $\E(m) \cap \Z(m)$ holds, then 
$$\big| \Delta U_e^\sigma(m) \big| \, \le \, |\sigma| \cdot (\log n)^3 \cdot \Yt(m)^{|\sigma| - 1}$$
for every $e \in O(G_m)$ and every $\sigma \in \{L,R\}^*$. 
\end{lemma}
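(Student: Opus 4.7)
The plan is to decompose $\Delta U_e^\sigma(m)$ as $\text{Created}(m) - \text{Destroyed}(m)$, where Created counts $\sigma$-walks from $e$ present in the $Y$-graph of $G_{m+1}$ but not $G_m$, and Destroyed counts the reverse. Writing $e_{m+1}$ for the edge added at step $m+1$, I may assume $e \in O(G_m) \cap O(G_{m+1})$ and $e \neq e_{m+1}$, since otherwise $\Delta U_e^\sigma(m) = 0$ by convention. A short check shows that under these assumptions $e \notin \{e_{m+1}\} \cup Y_{e_{m+1}}(m)$: if $e \in Y_{e_{m+1}}(m)$ then adding $e_{m+1}$ would close $e$, contradicting $e \in O(G_{m+1})$. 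This will let me apply Lemma~\ref{lem:countpaths} (which needs distinct endpoints) freely in both halves of the argument.

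For Destroyed, the bad set is $B = \{e_{m+1}\} \cup Y_{e_{m+1}}(m)$, with $|B| \le 1 + (1+\eps)\Yt(m)$ by the bound on $\hat Y_{e_{m+1}}(m)$ contained in $\E(m)$. A destroyed walk passes through some $f^* \in B$ at some position $i \in [|\sigma|]$, so
\[
|\text{Destroyed}| \,\le\, \sum_{i=1}^{|\sigma|} \sum_{f^* \in B} U_{e \to f^*}^{\sigma[1..i]}(m) \cdot U_{f^*}^{\sigma[i+1..|\sigma|]}(m),
\]
where $U_{e \to f^*}^\tau(m)$ denotes the number of $\tau$-walks from $e$ to $f^*$. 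For $i \ge 2$, Lemma~\ref{lem:countpaths} gives $U_{e \to f^*}^{\sigma[1..i]}(m) \le 2(\log n)^2 \Yt(m)^{i-2}$, while $U_{f^*}^{\sigma[i+1..|\sigma|]}(m) \le \Yt(m)^{|\sigma|-i}$ follows from $Y^L_f(m) \le \Yt(m)$ (a consequence of $\E(m)$); summing yields a contribution $\le 4 |\sigma| (\log n)^2 \Yt(m)^{|\sigma|-1}$. For $i = 1$ one has $U_{e \to f^*}^{\sigma_1}(m) = \mathbf{1}[f^* \in Y_e^{\sigma_1}(m)]$, and Observation~\ref{obs:twostep} combined with $\Z(m)$ bounds $|B \cap Y_e(m)| \le 1 + (\log n)^2$, since every $f^* \in Y_{e_{m+1}}(m) \cap Y_e(m)$ supplies a length-two $Y$-walk from $e$ to $e_{m+1}$. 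This contributes $\le 2 (\log n)^2 \Yt(m)^{|\sigma|-1}$.

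For Created, the new edges of the $Y$-graph are the ordered pairs $(g,g')$ with $g \cup g' \cup e_{m+1}$ a triangle, say $g = \{u,a\}$, $g' = \{u,b\}$, $e_{m+1} = \{a,b\}$; the number of such ordered pairs is $X_{e_{m+1}}(m)$, and applying $\E(m)$ to the structure with $v_A = 1$, $e = 0$, $o = 2$ yields $X_{e_{m+1}}(m) \le 2\Xt(m) = \Yt(m)^2/(4t^2)$. Each $g \in O(G_{m+1})$ has at most one new $Y$-neighbour (one per endpoint of $e_{m+1}$ lying in $g$), so every $Y^L$-degree in $G_{m+1}$ is at most $\Yt(m) + 1$, whence walks from any $g'$ of length $|\sigma|-i$ in $G_{m+1}$ number at most $(2\Yt(m))^{|\sigma|-i} \le (\log n)\Yt(m)^{|\sigma|-i}$ for $|\sigma| \le \omega$ (since $2^\omega \le \log n$). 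Union-bounding over $i$ and over ordered new pairs, the case $i \ge 3$ uses Lemma~\ref{lem:countpaths} to bound walks $e \to g$ by $2(\log n)^2 \Yt(m)^{i-3}$; summing over the $O(\Xt(m))$ choices of $g$ gains a factor $\Xt(m)/\Yt(m)^2 = 1/(8t^2)$, producing $\le C(\log n)^3 \Yt(m)^{|\sigma|-1}$ per value of $i$.

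The boundary cases $i = 1,2$ in Created are the main obstacle, since Lemma~\ref{lem:countpaths} requires length $\ge 2$ and a naive bound would use the full $X_{e_{m+1}}(m)$. For $i = 1$ we need $g = e$ to lie in a new pair, which forces $e \cap e_{m+1} \neq \emptyset$, yielding at most $2$ pairs and contribution $\le (\log n)\Yt(m)^{|\sigma|-1}$. For $i = 2$ we bound the number of $g \in Y_e^{\sigma_1}(m)$ lying in a new pair by $|Y_e(m)| \le (1+\eps)\Yt(m)$, and multiply by $(2\Yt(m))^{|\sigma|-2} \le (\log n)\Yt(m)^{|\sigma|-2}$, again obtaining $\le (\log n)\Yt(m)^{|\sigma|-1}$. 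Summing the contributions from Destroyed and Created over all positions $i$ and absorbing constants into the polylogarithmic factor gives $|\Delta U_e^\sigma(m)| \le |\sigma|(\log n)^3 \Yt(m)^{|\sigma|-1}$, as required.
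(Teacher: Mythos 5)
Your decomposition into Created minus Destroyed and your treatment of Destroyed are essentially the same as the paper's (the paper also uses Lemma~\ref{lem:countpaths} after fixing the first position $r$ at which the walk reaches a $Y$-neighbour of $e_{m+1}$, and also notes that $r \ge 1$ because $e \notin Y_{e_{m+1}}(m)$). Where you diverge substantially is in Created. The paper's argument here is much shorter: a created walk must traverse, in two consecutive steps, both members of a pair of $X$-neighbours of $e_{m+1}$, and the crucial point is that \emph{one of these two steps is forced} --- once you fix the position $i$ and the walk up to step $i-1$, arriving at some $g = \{u,a\}$ with $e_{m+1} = \{a,b\}$, the next open edge must be $g' = \{u,b\}$, with no further choice. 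With $|\sigma|$ choices for the position and at most $\Yt(m)$ choices for each of the remaining $|\sigma|-1$ steps, this immediately gives $|\sigma| \cdot \Yt(m)^{|\sigma|-1}$ created walks, with no logarithmic loss and no appeal to Lemma~\ref{lem:countpaths} or to bounds on $X_{e_{m+1}}(m)$ at all.

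Your route --- summing over ordered pairs $(g,g')$ of $X$-neighbours, invoking Lemma~\ref{lem:countpaths} to bound walks $e \to g$, and then paying for the number of such pairs with the factor $\Xt(m)/\Yt(m)^2 = 1/(8t^2)$ --- does work numerically, but it is doing real extra labour that the ``pre-chosen step'' observation makes unnecessary: you have to separately handle the boundary positions $i \in \{1,2\}$ (which the paper's argument does not distinguish), and you need the quantitative smallness of $\Xt(m)/\Yt(m)^2$ for $t > \omega$. A further consequence is the appearance of the factor $(2\Yt(m))^{|\sigma|-i}$, which you control by $2^{|\sigma|} \le \log n$ and hence your argument as written only covers $|\sigma| \le \omega$, whereas the lemma is stated for all $\sigma \in \{L,R\}^*$. (This is harmless in context since every application has $|\sigma| \le \omega$, but the paper's version avoids the issue entirely: $Y^L_f(m+1) \le Y^L_f(m)+1 \le \Yt(m)$, so no factor of $2$ enters.) In short, your proof is correct for the range of $\sigma$ actually used, but the Created half reconstructs via walk-counting and density estimates what the paper obtains in one line from the determinacy of the new $Y$-edge.
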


\begin{proof}
Note first that if $|\sigma| = 0$ then the lemma is trivial, since $\Delta U_e^\sigma(m) = 0$. Moreover, if $|\sigma| = 1$ then we have $U_e^\sigma = Y_e^L$ or $U_e^\sigma = Y_e^R$, and it is easy to see (cf. Section~\ref{createsec}) that $- (\log n)^2 \le \Delta Y_e^L(m) \le 1$, since $\Z(m)$ holds, and so the lemma follows in this case also. Therefore, let us assume from now on that $|\sigma| \ge 2$.

Let $f$ denote the edge added in step $m+1$ of the triangle-free process, and note that if either $e = f$ or $f \in Y_e(m)$ then we are done, since\footnote{Recall from Definition~\ref{def:UandV} that if $e \not\in O(G_{m+1})$, then $U^{\sigma}_e(m+1) = U^{\sigma}_e(m)$.} in that case $\Delta U_e^\sigma(m) = 0$. So assume that $e \neq f$ and $e \not\in Y_f(m)$, and observe that a $\sigma$-walk is destroyed by the addition of $f$ only if it passes through a $Y$-neighbour of $f$, and is created only if it passes (in consecutive steps) through \emph{both} of a pair of $X$-neighbours of $f$. 

We claim that at most $|\sigma| \cdot \Yt(m)^{|\sigma| - 1}$ $\sigma$-walks from $e$ are created in step $m+1$, and that at most
\begin{equation}\label{eq:destroyedwalks}
\sum_{r = 1}^{|\sigma|} \Big( 4 \cdot (\log n)^2 \cdot \Yt(m)^{r - 1} \cdot \Yt(m)^{|\sigma| - r} \Big) \, \le \, |\sigma| \cdot (\log n)^3 \cdot \Yt(m)^{|\sigma| - 1}
\end{equation}
such walks are destroyed in the same step. To prove the former bound, simply note that one of the steps (the one between the $X$-neighbours of $f$) is pre-chosen and that, since $\E(m)$ holds, we have at most $\Yt(m)$ choices for each of the other steps. To prove that~\eqref{eq:destroyedwalks} is an upper bound on the number of destroyed walks, let $r$ denote the number of steps after which we first reach a $Y$-neighbour of $f$, and note that $r \ge 1$ since $e \not\in Y_f(m)$. Now, by Lemma~\ref{lem:countpaths}, since $e \neq f$ and $\E(m) \cap \Z(m)$ holds, we have at most $4 \cdot (\log n)^2 \cdot \Yt(m)^{r - 1}$ choices for the walk\footnote{Note that such walks, of length $r$,  are in bijection with the walks from $e$ to $f$ of length $r+1$.} from $e$ to some $Y$-neighbour of $f$. 

This proves the claimed bounds on the number of $\sigma$-walks created or destroyed in a single step. The lemma follows by taking the maximum of the two bounds.
\end{proof}

Putting these lemmas together, we obtain Lemma~\ref{deltaV}. To simplify the notation, we shall write $U_e^\sigma(m)$ to denote the multi-set of open edges reached via a $\sigma$-walk from $e$, as well as for the size of this multi-set. Note that, in this notation, if $|\sigma| = 0$ then $U_e^\sigma(m) = \{e\}$. 

\begin{proof}[Proof of Lemma~\ref{deltaV}]
Fix $\omega \cdot n^{3/2} < m \le m^*$, an open edge $e \in O(G_m)$ and a sequence $\sigma \in \{L,R\}^*$, and assume that $\E(m) \cap \Z(m)$ holds. The lemma holds if $V_e^\sigma = Y_e$, as noted above, so let us assume that $|\sigma| > 0$. We begin by bounding
$$\Big| \Delta \Big( V_e^\sigma(m) \cdot U_e^\sigma(m) \Big) \Big| \, = \, \bigg| \Delta \bigg( \sum_{f \in U_e^\sigma(m)} Y_f(m) \bigg) \bigg| \, \le \, 2 \cdot \Yt(m) \cdot \big| \Delta U_e^\sigma(m) \big| + \sum_{f \in U_e^\sigma(m)} \big| \Delta Y_f(m) \big|,$$
where the inequality holds since $Y_f(m) \le 2 \cdot \Yt(m)$, which follows from the event $\E(m)$. We claim that 
$$2 \cdot \Yt(m) \cdot \big| \Delta U_e^\sigma(m) \big| + \sum_{f \in U_e^\sigma(m)} \big| \Delta Y_f(m) \big| \, \le \, 3 \cdot |\sigma| \cdot (\log n)^3 \cdot \Yt(m)^{|\sigma|}.$$
Indeed, this follows by Lemma~\ref{deltaU}, and the facts that $U_e^\sigma(m) \le \Yt(m)^{|\sigma|}$ (since $\E(m)$ holds) and that $| \Delta Y_f(m) | \le 2 \cdot (\log n)^2$ (since $\Z(m)$ holds), as above. 

Now, by the product rule, we have
$$\Delta \Big( V_e^\sigma(m) \cdot U_e^\sigma(m) \Big) \, = \, U_e^\sigma(m) \Delta \big( V_e^\sigma(m) \big) + V_e^\sigma(m) \Delta \big( U_e^\sigma(m) \big) + \Delta \big( V_e^\sigma(m) \big) \Delta \big( U_e^\sigma(m) \big).$$
Moreover, we have $V_e^\sigma(m) \le 2\Yt(m)$ (since $\E(m)$ holds), and since $\Yt(m) \ge n^\eps \gg |\sigma| \cdot (\log n)^3$ for every $m \le m^*$, it follows from Lemma~\ref{deltaU} that $| \Delta U_e^\sigma(m) | \ll U_e^\sigma(m)$. Hence, by the triangle inequality and Lemma~\ref{deltaU} (again), we have 
\begin{align*}
\big| \Delta V_e^\sigma(m) \big| & \, \le \, \frac{4}{U_e^\sigma(m)} \Big( |\sigma| \cdot (\log n)^3 \cdot \Yt(m)^{|\sigma|} \,+\, \Yt(m) \cdot | \Delta U_e^\sigma(m) | \Big) \\
& \, \le \, \frac{8}{U_e^\sigma(m)} \Big( |\sigma| \cdot (\log n)^3 \cdot \Yt(m)^{|\sigma|} \Big) \, \le \, (\log n)^4,
\end{align*}
as required.
\end{proof}

\subsection{Self-correction}\label{YselfSec}

In this subsection we shall prove, using the results above, that the variables $V_e^\sigma$ exhibit a kind of self-correction. The calculation is somewhat lengthy, and requires careful counting of the $\sigma$-paths created and destroyed in a typical step of the triangle-free process. For each $m \in [m^*]$, each $k$-short sequence $\sigma \in \{ L,R \}^*$ and each open edge $e \in O(G_m)$, 
define
$$(V_e^\sigma)^*(m) \, = \, \frac{V_e^\sigma(m) - \Yt(m)}{g_\sigma(t) \cdot \Yt(m)}.$$
Recall that $\V(m')$ denotes the event that the variables~$V_e^\sigma(m)$ are tracking up to step $m'$, i.e., that~$|(V_e^\sigma)^*(m)| \le 1$ for every $\omega \cdot n^{3/2} < m \le m'$, every $e \in O(G_m)$ and every $k$-short sequence $\sigma$.  

Recall that $k = \lceil 3 / \eps \rceil$ and $n \ge n_0(\eps,C,\omega)$. The aim of this subsection is to prove the following key lemma, which implies that the variables $V_e^\sigma$ are self-correcting. 

\begin{lemma}\label{Vself}
Let $\omega \cdot n^{3/2} < m \le m^*$. If $\E(m) \cap \U(m) \cap \V(m) \cap \X(m) \cap \Z(m) \cap \Q(m)$ holds, then 
\begin{equation}\label{eq:Vself}
\Ex\big[ \Delta (V^{\sigma}_e)^*(m) \big] \, \in \, \big( 8 \pm 5 \big) \cdot \frac{t}{n^{3/2}} \Big( - (V_e^{\sigma})^*(m) \pm O(\eps) \Big)
\end{equation}
for every $e \in O(G_m)$ and every $k$-short sequence $\sigma \in \{ L,R \}^*$.  
\end{lemma}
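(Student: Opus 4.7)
The starting point will be the combinatorial identity
\[
U_e^\sigma(m)\, V_e^\sigma(m) \;=\; U_e^{\sigma L}(m) + U_e^{\sigma R}(m),
\]
which follows immediately from splitting each $Y_{f_\ell(W)}$ in Definition~\ref{def:UandV} into its $L$- and $R$-parts (extending a $\sigma$-walk by one step, with either foot, produces a $\sigma L$- or $\sigma R$-walk). Thus $V_e^\sigma = (U_e^{\sigma L} + U_e^{\sigma R})/U_e^\sigma$, and the quotient form of Lemma~\ref{chain} (with cross-term $\Ex[\Delta V_e^\sigma\,\Delta U_e^\sigma]$ negligible by Lemmas~\ref{deltaV} and~\ref{deltaU}) reduces the problem to computing $\Ex[\Delta U_e^\tau(m)]$ for $\tau \in \{\sigma,\, \sigma L,\, \sigma R\}$.

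For each $\tau$-walk $W = (f_0,\ldots,f_{|\tau|})$, the walk is destroyed when the added edge $g$ closes some $f_i$ with $i \ge 1$ (closures at $i = 0$ freeze $U_e^\tau$ by Definition~\ref{def:UandV}; multi-closures are lower order by $\Z(m)$ and Observation~\ref{obs:twostep}), and is created when $g$ completes a triangle upgrading a consecutive $X$-neighbour pair to a $Y$-neighbour pair. Summing destruction probabilities and decomposing each $W$ into a $\tau_{1:i}$-prefix (the first $i$ letters) and a $\tau_{i+1:|\tau|}$-suffix, a mixing argument in the spirit of Lemma~\ref{Vmix2} (Theorem~\ref{EEthm} applied to the graph structure of a $\tau$-walk, showing that $U_{f_i}^{\tau_{i+1:|\tau|}}$ is approximately constant over the $f_i$ actually reached from $e$ via $\tau_{1:i}$-walks) yields the key asymptotic $\sum_W Y_{f_i(W)}(m) \approx U_e^\tau(m)\cdot V_e^{\tau_{1:i}}(m)$, and hence
\[
\Ex[\Delta U_e^\tau(m)] \;\approx\; -\frac{U_e^\tau(m)}{Q(m)}\sum_{i=1}^{|\tau|} V_e^{\tau_{1:i}}(m) \;+\; \frac{C_\tau(m)}{Q(m)},
\]
where $C_\tau/Q$ is the $X$-type creation term, analogous to the $X_e/Q$ term in $\Ex[\Delta Y_e]$. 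Writing $S_\sigma := \sum_{i=1}^{|\sigma|} V_e^{\sigma_{1:i}}$, the prefix sums for $\sigma L$ and $\sigma R$ are $S_\sigma + V_e^{\sigma L}$ and $S_\sigma + V_e^{\sigma R}$ respectively, so the $S_\sigma$-contributions cancel in the quotient rule and one arrives at
\[
\Ex[\Delta V_e^\sigma(m)] - \Delta\Yt(m) \;\approx\; -\frac{\Yt(m)}{Q(m)}\bigl(V_e^\sigma(m) - \Yt(m)\bigr) \;-\; \frac{\delta_{\sigma L}(m)\,U_e^{\sigma L}(m) + \delta_{\sigma R}(m)\,U_e^{\sigma R}(m)}{Q(m)\,U_e^\sigma(m)},
\]
where $\delta_{\sigma L} := V_e^{\sigma L} - \Yt$ and $\delta_{\sigma R} := V_e^{\sigma R} - \Yt$. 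The $\Delta\Yt$ on the left-hand side is supplied by combining $\tilde C := C_{\sigma L} + C_{\sigma R} - V_e^\sigma C_\sigma$ (satisfying $\tilde C(m)/U_e^\sigma(m) \approx \Xt(m)/Q(m)$ at tracking) with the leading $-\Yt V_e^\sigma/Q$ piece, using the identity $\Xt/Q = \Yt^2/Q + \Delta\Yt$ that consistency with the $\sigma = \emptyset$ case forces.

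The first term above is the desired self-correcting drift, with coefficient $\Yt(m)/Q(m) = (8 + o(1))\,t/n^{3/2}$ sitting inside the stated $8 \pm 5$. The additive error is controlled as follows. When both $\sigma L$ and $\sigma R$ are $k$-short, $\V(m)$ gives $|\delta_{\sigma L}|, |\delta_{\sigma R}| \le g_{\sigma L}(t)\Yt = \eps\, g_\sigma(t)\Yt$ and $\U(m)$ gives $U_e^{\sigma L}/U_e^\sigma, U_e^{\sigma R}/U_e^\sigma \le (1+o(1))\Yt$, so after dividing by $g_\sigma(t)\Yt(m)$ the additive term contributes at most $O(\eps)\cdot t/n^{3/2}$, exactly the $\pm O(\eps)$ slack in~(\ref{eq:Vself}). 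The geometric shrinkage $g_{\sigma L} = \eps\cdot g_\sigma$ is essential here: were the error bound only polynomial in $|\sigma|$, the additive term would overwhelm the signal. When $\sigma L$ (resp.\ $\sigma R$) fails to be $k$-short, either $\sigma$ ends with $L^k$, in which case Lemma~\ref{Vmix} gives $V_e^{\sigma L} = V_e^\sigma + o(g_y(t)\Yt(m))$ so that the $\delta_{\sigma L}$ contribution \emph{reinforces} the self-correcting drift (same sign), or $\sigma L$ has more than $k$ changes of foot, in which case Lemma~\ref{VmixQ} together with $\Q(m)$ gives $\delta_{\sigma L} = o(g_y(t)\Yt(m))$; in both cases the error stays within the $O(\eps)$ slack. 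Normalising via $(V_e^\sigma)^* = (V_e^\sigma - \Yt)/(g_\sigma\Yt)$ and absorbing the $\Delta(g_\sigma\Yt)$ correction exactly as in the proof of Lemma~\ref{selfN*} then yields~(\ref{eq:Vself}).

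The main obstacle will be establishing the destruction identity rigorously: it requires a mixing statement saying that the weighted sum $\sum_{f_i} (\#\,\tau_{1:i}\text{-walks }e \to f_i)\cdot U_{f_i}^{\tau_{i+1:|\tau|}}(m)\cdot Y_{f_i}(m)$ factorises as the product of the corresponding averages, up to an error of size $O(\eps)\cdot g_\sigma(t)\Yt(m)$. This does not follow from $\U(m)$ alone, which only bounds the individual $U_{f_i}^{\tau_{i+1:|\tau|}}$; it is exactly the kind of structured counting that Theorem~\ref{EEthm} provides, via the graph structure encoding of a $\tau$-walk used in Lemma~\ref{Vmix2}. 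A secondary but more delicate issue is verifying the creation-term identity $\tilde C(m)/U_e^\sigma(m) \approx \Xt(m)/Q(m)$ \emph{off tracking} up to error $O(\eps)\cdot g_\sigma(t)\Yt(m)\cdot t/n^{3/2}$; the structure of $C_\tau$ in terms of $X$-extensions at each position of a walk makes this plausible, but the bookkeeping is analogous to (and slightly more intricate than) the $\sigma = \emptyset$ check that $X_e/Q$ supplies exactly $\Delta\Yt + \Yt^2/Q$.
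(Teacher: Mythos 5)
Your proposal is correct and follows essentially the same route as the paper: the quotient-rule decomposition $V_e^\sigma = (U_e^{\sigma L}+U_e^{\sigma R})/U_e^\sigma$, the telescoping of the prefix sums $S_\sigma$ in $\Ex[\Delta U_e^\tau]$, the identity $\Xt/\Qt - \Yt^2/\Qt = \Delta\Yt$, the case split on whether $\sigma L,\sigma R$ remain $k$-short with Lemmas~\ref{Vmix} and~\ref{VmixQ} supplying the mixing, and the final normalisation via Lemma~\ref{chain} are exactly the steps carried out in Lemmas~\ref{Ueq}, \ref{lem:Veq} and the proof of Lemma~\ref{Vself}. One clarifying remark: the weighted factorisation you flag as the ``main obstacle'' does in fact follow from $\U(m)$ alone, via the covariance Observation~\ref{avobs2} applied in Lemma~\ref{Ucov2}, so Theorem~\ref{EEthm} is not needed for that sub-step; and in the sub-case $s(\sigma L)>k$ the smallness of $\delta_{\sigma L}$ uses that $s(\sigma)=k$ and Lemma~\ref{VmixQ2} together with $\Q(m)$ (to replace $\Yb$ by $\Yt$) in addition to Lemma~\ref{VmixQ} --- though, as in the paper's (\ref{eq:Vselfmix}), the cleaner route is simply $|V_e^{\sigma L}-V_e^\sigma|=o(g_y\Yt)$ in both mixing cases, which yields the reinforcement-of-drift picture you describe.
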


%We remark that, whilst proving Lemma~\ref{Vself}, we shall also prove a corresponding `self-correction lemma' for the variables $U_e^\sigma$, see Lemma~\ref{Uself}, below. 

Our main task will be to prove the following bounds on $\Ex\big[ \Delta V^{\sigma}_e(m) \big]$. We shall then deduce Lemma~\ref{Vself} using the mixing results from Sections~\ref{MixSec1} and~\ref{MixSec2}.

\begin{lemma}\label{lem:Veq}
Let $\omega \cdot n^{3/2} < m \le m^*$. If $\E(m) \cap \U(m) \cap \X(m) \cap \Z(m)$ holds, then
$$\Ex\big[ \Delta V^{\sigma}_e(m) \big] \, \in \, - \frac{1}{Q(m)} \bigg( \frac{ U_e^{\sigma L}(m) V^{\sigma L}_e(m) + U_e^{\sigma R}(m) V^{\sigma R}_e(m)}{U_e^{\sigma}(m)} \bigg) \,+\, \big( 1 \pm C g_x(t) \big) \frac{\Xt(m)}{Q(m)}$$
for every $e \in O(G_m)$, and every $\sigma \in \{L,R\}^*$ with $|\sigma| \le \omega$.
%$k$-short sequence $\sigma \in \{L,R\}^*$.
\end{lemma}

We begin by controlling $\Ex\big[ \Delta U_e^\sigma(m) \big]$. For each $\sigma \in \{L,R\}^*$, define $\sigma(j)$ to be the sequence formed by the first $j$ elements of $\sigma$.\footnote{Thus if $\sigma = (\sigma_1,\ldots,\sigma_\ell)$, then $\sigma(j) = (\sigma_1,\ldots,\sigma_j)$.} Lemma~\ref{lem:Veq} is a straightforward consequence of the following bounds.

%Both Lemma~\ref{lem:Veq} and Lemma~\ref{Uself}, our self-correction result for $U^{\sigma}_e(m)$, are straightforward consequences of the following bounds.

\begin{lemma}\label{Ueq}
Let $\omega \cdot n^{3/2} < m \le m^*$. If $\E(m) \cap \U(m) \cap \X(m) \cap \Z(m)$ holds, then
$$\Ex\big[ \Delta U^{\sigma}_e(m) \big] \, \in \, - \frac{1}{Q(m)} \sum_{j=1}^{|\sigma|} U_e^{\sigma}(m) V_e^{\sigma(j)}(m) \,+\, |\sigma| \left( \frac{1}{2^{|\sigma|}} \pm g_x(t) \right) \frac{\Xt(m) \Yt(m)^{|\sigma|-1}}{Q(m)},$$
for every $e \in O(G_m)$, and every $\sigma \in \{L,R\}^*$ with $|\sigma| \le \omega$.
\end{lemma}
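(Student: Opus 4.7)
The plan is to decompose $\Delta U^\sigma_e(m) = C(m) - D(m)$ into walks created and destroyed in step $m+1$, condition on the edge $f \in O(G_m)$ added at each step (each with probability $1/Q(m)$), and track each walk individually.  Since $U^\sigma_e$ is frozen by convention when $e$ leaves $O(G_{m+1})$, it suffices to analyse the change for $f \notin \{e\} \cup Y_e(m)$; the contribution from the excluded edges is easily bounded via the event $\Z(m)$ to be $O(\Yt(m)(\log n)^2/Q(m))$, which is much smaller, by a polylog factor, than the stated creation-scale error in the range $t \le t^*$.  A walk $(f_0=e,f_1,\ldots,f_{|\sigma|})$ is destroyed by $f$ precisely when some $f_j$ with $j\ge 1$ becomes non-open, i.e.\ $f=f_j$ or $f\in Y_{f_j}(m)$ (existing $Y$-neighbour pairs cannot break for any other reason, since their certifying edge lies in $E(G_m)\subseteq E(G_{m+1})$), and is created precisely when some transition $(f_{j-1},f_j)$ is a new $Y$-pair, which happens iff $(f_{j-1},f_j,f)$ is a triangle of three open edges at time~$m$, i.e.\ $f_{j-1}$ and $f_j$ are $X$-neighbours via~$f$.

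For the destruction contribution at position $j$, swapping the order of summation gives $\sum_f \#(\text{destroyed at position }j) = \sum_{\text{$\sigma$-walks}}(Y_{f_j}(m)+1)$, and the inner sum can be rewritten by grouping walks according to the value of $f_j$:
\[
\sum_{\text{walks}} Y_{f_j}(m) \,=\, \sum_{f_j}\bigl(\#\,\sigma(j)\text{-walks }e\to f_j\bigr)\,Y_{f_j}(m)\,U^\tau_{f_j}(m),
\]
where $\tau=(\sigma_{j+1},\ldots,\sigma_{|\sigma|})$ denotes the suffix of $\sigma$.  Under $\U(m)$ the suffix-count $U^\tau_{f_j}(m)$ is within a factor $(1\pm g_x(t))^{|\sigma|-j}$ of a constant independent of~$f_j$; pulling this essentially-constant factor out, and using the identity $U^\sigma_e(m)=\sum_{f_j}(\#\,\sigma(j)\text{-walks})\,U^\tau_{f_j}(m)$, converts the sum into $U^\sigma_e(m)V^{\sigma(j)}_e(m)$ up to a relative error $O(|\sigma|g_x(t))$ stemming from the non-constancy of $U^\tau_{f_j}$.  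This residual error, multiplied by $U^\sigma_e(m)V^{\sigma(j)}_e(m)/Q(m)\sim\Yt(m)^{|\sigma|+1}/(2^{|\sigma|}Q(m))$ and summed over $j$, is absorbed by the stated error $|\sigma|g_x(t)\Xt(m)\Yt(m)^{|\sigma|-1}/Q(m)$: the ratio $\Yt(m)^2/\Xt(m)=8t^2 \le 2\log n$ is dominated by the polylog factor $(\log n)^4$ built into $g_x(t)$.  The additive $+1$ contribution totals $|\sigma|U^\sigma_e(m)/Q(m)$, dominated in the same way.

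For creation, parametrise each new walk by a triple $(h,h',f)$ of open edges forming a triangle at time~$m$ with $h=f_{j-1}$, $h'=f_j$, together with the position $j$ and the orientation constraint that the vertex in $h'\setminus h$ carries label $\sigma_j$.  Fixing $j$ and $h$ and summing $\sum_f\sum_{h'}$ with these constraints gives exactly $X^{\sigma_j}_h(m)$, the labelled analogue of $X_h(m)$; by a direct variant of the proof of Proposition~\ref{Xprop} applied separately to $X^L_h$ and $X^R_h$ (or as a particular case of Theorem~\ref{EEthm} applied to the corresponding permissible structure) we have $X^{\sigma_j}_h(m)\in(1\pm g_x(t))\Xt(m)/2$ under $\X(m)$.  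Multiplying by the number of head-walks $e\to h$ of type $\sigma(j-1)$ and tail-walks from $h'$ of type $\tau$, replacing both by their typical values $(\Yt(m)/2)^{j-1}$ and $(\Yt(m)/2)^{|\sigma|-j}$ with multiplicative error $(1\pm g_x(t))^{|\sigma|-1}$ using $\U(m)$, summing over the $|\sigma|$ choices of $j$, and dividing by $Q(m)$ yields
\[
\Ex\bigl[C(m)\bigr] \,\in\, |\sigma|\bigg(\frac{1}{2^{|\sigma|}}\pm O(|\sigma|g_x(t))\bigg)\frac{\Xt(m)\Yt(m)^{|\sigma|-1}}{Q(m)},
\]
and since $|\sigma|\le\omega\ll\sqrt{\log n}$ the extraneous factor $|\sigma|$ is comfortably absorbed into the error term.

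The main technical obstacle will be controlling the inclusion--exclusion corrections for walks destroyed at two or more distinct positions, or created using two simultaneously-new $Y$-pairs, without spoiling the error bound.  For destruction at positions $j<j'$, the contribution is bounded by $\sum_f\#\{\text{walks with }f_j,f_{j'}\in Y_f(m)\cup\{f\}\}$; using the bound $|Y_f(m)\cap Y_{f'}(m)|\le(\log n)^2$ when $f,f'$ share a vertex and $\le O(1)$ when disjoint (guaranteed by $\Z(m)$ exactly as in the proof of Proposition~\ref{Zprop}), together with Lemma~\ref{lem:countpaths} to control the density of walks passing through two prescribed open edges, this contribution gains a factor of $(\log n)^2/\Yt(m)\ll g_x(t)$ relative to the leading destruction term.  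An analogous argument handles walks that use two new $Y$-pairs on the creation side, where a second new triangle forces an additional $X$-configuration of open edges whose count is bounded by Theorem~\ref{EEthm} and saves a further factor of order $\Xt(m)/\Yt(m)^2$.  Finally, degenerate walks (where two of the $f_i$ coincide) contribute a sub-leading factor by Observation~\ref{obs:easyUbounds}, and the edge-case where $f\in\{e\}\cup Y_e(m)$ was bounded at the outset.  Assembling these pieces yields the claimed identity.
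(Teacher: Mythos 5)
Your approach is a genuinely different route from the paper's: you decompose $\Delta U_e^\sigma(m)$ directly according to which position of the walk is created or destroyed by the added edge, whereas the paper argues by induction on $|\sigma|$, peeling off the first step via Lemma~\ref{Ueq2} and controlling the resulting correlation via Lemma~\ref{Ucov2}. Your parametrisation of created and destroyed walks is correct, and the leading terms are identified correctly.

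The error analysis of the destruction contribution, however, has a genuine gap. You replace the suffix count $U^\tau_{f_j}(m)$ by a near-constant approximation, a \emph{first-order} step that yields a relative error $O(|\sigma|g_x(t))$ on the destruction sum. But the destruction sum itself is of size $\sum_j U_e^\sigma V_e^{\sigma(j)}/Q \approx |\sigma|\,\Yt^{|\sigma|+1}/(2^{|\sigma|}Q) = |\sigma|\cdot 8t^2\cdot\Xt\Yt^{|\sigma|-1}/(2^{|\sigma|}Q)$, i.e.\ a factor $8t^2 \approx \log n$ larger than the creation-scale quantity $\Xt\Yt^{|\sigma|-1}/Q$ that sets the allowed error $|\sigma|g_x(t)\Xt\Yt^{|\sigma|-1}/Q$. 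So your destruction error exceeds the budget by roughly a factor of $\log n$. The justification you offer --- that the ratio $8t^2 \le 2\log n$ is ``dominated by the polylog factor $(\log n)^4$ built into $g_x(t)$'' --- is not valid: $g_x(t)$ appears with the same power on both sides of the comparison, so its internal $(\log n)^4$ factor cancels and buys no slack. The correct fix is the sharper \emph{second-order} covariance bound the paper uses: replacing $U^\tau_{f_j}$ by its mean incurs a relative error bounded by the \emph{product} of the two relative deviations (of order $\omega g_x(t)^2$), not their sum; this is Observation~\ref{avobs2} in the form of Lemma~\ref{Ucov2}. With that bound, even after the $8t^2$ enhancement one has $\omega g_x(t)^2\cdot 8t^2 \ll g_x(t)$ since $g_x(t)\log n\to 0$ throughout $\omega < t \le t^*$. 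A milder version of the same issue appears in your creation estimate, where you obtain an error $O(|\sigma|^2 g_x(t))$ rather than $|\sigma|g_x(t)$; the assertion that the extra factor $|\sigma|\le\omega$ is ``comfortably absorbed'' is likewise unjustified, since $\omega\to\infty$ and the stated bound leaves no remaining slack. Your inclusion--exclusion corrections for doubly-created or doubly-destroyed walks are plausible in spirit, but they too need the same second-order accounting to close cleanly.
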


In order to prove Lemma~\ref{Ueq}, we shall first need to prove a series of simpler lemmas. We begin with two easy observations.

\begin{obs}\label{obs:Ygrows}
Let $m \le m^*$ and $e \in O(G_m)$. Then $\big| Y^L_e(m+1) \,\setminus\, Y^L_e(m) \big| \le 1$, and moreover
$$\Pr\Big( f \in Y^L_e(m+1) \,\setminus\, Y^L_e(m) \,\big|\, G_m \Big) \, = \, \frac{1}{Q(m)},$$ 
for each $f \in X^L_e(m)$. If $f \not\in X^L_e(m)$ then the probability is zero.
\end{obs}

\begin{proof}
Let $h$ be the edge selected in step $m+1$, and suppose that $f \in Y^L_e(m+1) \,\setminus\, Y^L_e(m)$. Then $f \in X^L_e(m)$ and $h \in X^R_e(m)$ form an open triangle with $e$ in $G_m$, and moreover $Y^L_e(m+1) \,\setminus\, Y^L_e(m) = \{f\}$, as required.
\end{proof}

Recall that if $e \not\in O(G_{m+1})$ then (by convention) we set $Y^L_e(m+1) = Y^L_e(m)$. 

\begin{obs}\label{obs:Yshrinks}
Let $m \le m^*$ and $e \in O(G_m)$. Then
$$\Pr\Big( f \in Y^L_e(m) \,\setminus\, Y^L_e(m+1)  \,\big|\, G_m \Big) \, = \, \frac{Y_f(m) - 1}{Q(m)},$$ 
for each $f \in Y^L_e(m)$. If $f \not\in Y^L_e(m)$ then the probability is zero. 
\end{obs}

\begin{proof}
Let $h$ be the edge selected in step $m+1$, and suppose that $f \in Y^L_e(m) \,\setminus\, Y^L_e(m+1)$. Then either $e$ or $f$ was closed by $h$. But if $e \not\in O(G_{m+1})$ then $Y^L_e(m+1) = Y^L_e(m)$, so we must have $h \in Y_f(m) \setminus \{e\}$. Any such $h$ will suffice, and hence we have $f \in Y^L_e(m) \,\setminus\, Y^L_e(m+1)$ with probability exactly $\big( Y_f(m) - 1 \big) / Q(m)$, as claimed.
\end{proof}

The observations above imply the following two identities.

\begin{lemma}\label{lem:ExsumUf}
Let $m \le m^*$, $e \in O(G_m)$ and $\sigma \in \{L,R\}^*$. Then
\begin{equation}\label{eq1:sumUf}
\Ex \bigg[ \sum_{f \in Y^L_e(m+1) \,\setminus\, Y^L_e(m)} U^{\sigma}_f(m) \,\Big|\, G_m \bigg] \, = \, \frac{1}{Q(m)} \sum_{f \in X^L_e(m)} U_f^{\sigma}(m),
\end{equation}
and 
\begin{equation}\label{eq2:sumUf}
\Ex \bigg[ \sum_{f \in Y^L_e(m) \,\setminus\, Y^L_e(m+1)} U^{\sigma}_f(m) \,\Big|\, G_m \bigg] \, = \, \frac{1}{Q(m)} \sum_{f \in Y^L_e(m)} \big( Y_f(m) - 1 \big) U_f^{\sigma}(m).
\end{equation}
\end{lemma}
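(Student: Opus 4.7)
\medskip

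Both identities are immediate consequences of the preceding Observations~\ref{obs:Ygrows} and~\ref{obs:Yshrinks}, combined with linearity of expectation. The plan is to rewrite each sum over a random set as a sum of indicators over a deterministic set, and then invoke the one-step probabilities already computed.

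For the first identity, I would begin by noting that $U_f^\sigma(m)$ is $G_m$-measurable, so
\[
\Ex\!\bigg[ \sum_{f \in Y_e^L(m+1) \setminus Y_e^L(m)} U_f^\sigma(m) \,\Big|\, G_m \bigg] \, = \, \sum_{f \in O(G_m)} U_f^\sigma(m) \cdot \Pr\Big( f \in Y_e^L(m+1) \setminus Y_e^L(m) \,\Big|\, G_m \Big).
\]
By Observation~\ref{obs:Ygrows}, the conditional probability above equals $1/Q(m)$ when $f \in X_e^L(m)$ and vanishes otherwise, so the right-hand side collapses to $\frac{1}{Q(m)} \sum_{f \in X_e^L(m)} U_f^\sigma(m)$, as required.

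For the second identity, I would proceed analogously: since every $f \in Y_e^L(m) \setminus Y_e^L(m+1)$ lies in $Y_e^L(m)$, I rewrite the inner sum as $\sum_{f \in Y_e^L(m)} U_f^\sigma(m) \cdot \mathbf{1}\big[ f \notin Y_e^L(m+1) \big]$, pull $U_f^\sigma(m)$ out of the conditional expectation of the indicator (it being $G_m$-measurable), and then apply Observation~\ref{obs:Yshrinks}, which supplies the conditional probability $\big( Y_f(m) - 1 \big) / Q(m)$ for each $f \in Y_e^L(m)$. Summing yields the right-hand side of~\eqref{eq2:sumUf}.

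There is no genuine obstacle: the only subtle point is bookkeeping, namely that the event $\{ f \in Y_e^L(m+1) \setminus Y_e^L(m) \}$ is empty unless $f \in X_e^L(m) \subseteq O(G_m)$, and that in the second identity we are free to extend the sum to all $f \in O(G_m)$ without changing its value, since the indicator forces $f \in Y_e^L(m)$. The fact that at most one element is added to $Y_e^L$ in a single step (Observation~\ref{obs:Ygrows}) is not needed for the calculation; it only guarantees that no double counting occurs on the left-hand side, which is automatic once we interpret the sum via indicators.
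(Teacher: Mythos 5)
Your proposal is correct and follows essentially the same route as the paper: rewrite each sum via indicators, apply linearity of expectation together with the $G_m$-measurability of $U_f^\sigma(m)$, and then plug in the one-step transition probabilities from Observations~\ref{obs:Ygrows} and~\ref{obs:Yshrinks}. The only difference is that you spell out the indicator bookkeeping that the paper leaves implicit.
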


\begin{proof}
The first identity follows immediately from Observation~\ref{obs:Ygrows}, since the sum on the left is equal to $U_f^{\sigma}(m)$ with probability $1 / Q(m)$ for each $f \in X^L_e(m)$, and zero otherwise. The second identity follows immediately from Observation~\ref{obs:Yshrinks}, since the sum on the left contains $f$ with probability $( Y_f(m) - 1 ) / Q(m)$ for each $f \in Y^L_e(m)$.
\end{proof}

%Recall that if $f \not\in O(G_{m+1})$, then $U^{\sigma}_f(m+1) = U^{\sigma}_f(m)$, by definition.

Combining Observation~\ref{obs:Ygrows} with Lemma~\ref{deltaU}, we also obtain the following bounds.

\begin{lemma}\label{Ueq3}
For every $\omega \cdot n^{3/2} < m \le m^*$, if $\E(m) \cap \Z(m)$ holds, then
$$\Ex\bigg[ \sum_{f \in Y^{L}_e(m+1)} \Delta U^{\sigma}_f(m) \,- \sum_{f \in Y^{L}_e(m)} \Delta U^{\sigma}_f(m) \,\Big|\, G_m \bigg] \, \in \, \pm \, (\log n)^4 \cdot  \frac{\Xt(m) \Yt(m)^{|\sigma|-1}}{Q(m)}$$
for every $e \in O(G_m)$, and every $\sigma \in \{L,R\}^*$ with $|\sigma| \le \omega$.
\end{lemma}

\begin{proof}
Observe first that, by linearity of expectation, 
$$\Ex\bigg[ \sum_{f \in Y^{L}_e(m+1)} \Delta U^{\sigma}_f(m) \,\Big|\, G_m \bigg] -  \sum_{f \in Y^{L}_e(m)} \Ex\big[ \Delta U^{\sigma}_f(m) \big] \, = \, \Ex\bigg[ \sum_{f \in Y^{L}_e(m+1) \,\setminus\, Y^{L}_e(m)} \Delta U^{\sigma}_f(m) \,\Big|\, G_m \bigg],$$
since if $f \in Y^{L}_e(m) \setminus Y^{L}_e(m+1)$ then $f \not\in O(G_{m+1})$, and thus $\Delta U^{\sigma}_f(m) = 0$.

Now, since $\E(m) \cap \Z(m)$ holds, by Lemma~\ref{deltaU} we have
$$\big| \Delta U_f^\sigma(m) \big| \, \le \, |\sigma| \cdot (\log n)^3 \cdot \Yt(m)^{|\sigma| - 1}.$$
Moreover, by Observation~\ref{obs:Ygrows}, we have $f \in Y^L_e(m+1) \,\setminus\, Y^L_e(m)$ with probability $1 / Q(m)$ for each $f \in X^L_e(m)$. Hence, using the event $\E(m)$ to bound $X_e(m)$, we have
$$\Ex\bigg[ \sum_{f \in Y^{L}_e(m+1) \,\setminus\, Y^{L}_e(m)} \Delta U^{\sigma}_f(m) \,\Big|\, G_m \bigg] \, \in \, \pm \, (\log n)^4 \cdot \Yt(m)^{|\sigma| - 1} \cdot \frac{\Xt(m)}{Q(m)},$$
as required.
\end{proof}

We can now prove our first bounds on $\Ex\big[ \Delta U^{\sigma}_e(m) \big]$. For each $\sigma \in \{L,R\}^*$, let us write $\sigma^-$ for the sequence obtained by removing the \emph{first} entry of $\sigma$.\footnote{Thus if $\sigma = (\sigma_1,\ldots,\sigma_\ell)$, then $\sigma^- = (\sigma_2,\ldots,\sigma_\ell)$.} 

\begin{lemma}\label{Ueq2}
Let $\omega \cdot n^{3/2} < m \le m^*$. If $\E(m) \cap \U(m) \cap \X(m) \cap \Z(m)$ holds, then
$$
\Ex\big[ \Delta U^{\sigma}_e(m) \big] \in \sum_{f \in Y^{\sigma_1}_e(m)} \bigg( \Ex\big[ \Delta U^{\sigma^-}_f(m) \big] - \frac{Y_f(m) U_f^{\sigma^-}(m)}{Q(m)} \bigg) + \left( \frac{1}{2^{|\sigma|}} \pm g_x(t) \right) \frac{\Xt(m) \Yt(m)^{|\sigma|-1}}{Q(m)}
$$
for every $e \in O(G_m)$, and every $\sigma \in \{L,R\}^*$ with $0 < |\sigma| \le \omega$.
\end{lemma}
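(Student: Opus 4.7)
The plan is to exploit the recursive identity $U_e^\sigma(m) = \sum_{f \in Y_e^{\sigma_1}(m)} U_f^{\sigma^-}(m)$, which expresses a $\sigma$-walk from $e$ as a first step into some $f \in Y_e^{\sigma_1}(m)$ followed by a $\sigma^-$-walk from $f$. Adding and subtracting $\sum_{f \in Y_e^{\sigma_1}(m+1)} U_f^{\sigma^-}(m)$ inside $U_e^\sigma(m+1) - U_e^\sigma(m)$ yields the deterministic decomposition
\begin{align*}
\Delta U_e^\sigma(m) \,&=\, \sum_{f \in Y_e^{\sigma_1}(m+1)} \Delta U_f^{\sigma^-}(m) \,+\, \sum_{f \in Y_e^{\sigma_1}(m+1) \setminus Y_e^{\sigma_1}(m)} U_f^{\sigma^-}(m) \\
&\qquad \,-\, \sum_{f \in Y_e^{\sigma_1}(m) \setminus Y_e^{\sigma_1}(m+1)} U_f^{\sigma^-}(m),
\end{align*}
which separates the contributions of first-step $Y$-neighbours that persist between step $m$ and step $m+1$ from those that are newly created or newly destroyed.

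I would then take conditional expectations on $G_m$ and apply the three lemmas already in hand. By Lemma~\ref{Ueq3}, the expectation of the first sum equals $\sum_{f \in Y_e^{\sigma_1}(m)} \Ex[\Delta U_f^{\sigma^-}(m)]$ up to an additive error of order $(\log n)^4 \Xt(m)\Yt(m)^{|\sigma|-1}/Q(m)$. By the two identities of Lemma~\ref{lem:ExsumUf}, the expectation of the creation sum equals $\frac{1}{Q(m)}\sum_{f \in X_e^{\sigma_1}(m)} U_f^{\sigma^-}(m)$, and the expectation of the destruction sum equals $\frac{1}{Q(m)}\sum_{f \in Y_e^{\sigma_1}(m)} (Y_f(m)-1) U_f^{\sigma^-}(m)$. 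Combining the first and destruction sums, and rewriting $Y_f(m)-1$ as $Y_f(m)$ at the cost of a correction $+U_e^\sigma(m)/Q(m)$, produces precisely the recursive main term displayed in the statement.

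It then remains to verify that the creation sum contributes the claimed $\bigl(\frac{1}{2^{|\sigma|}} \pm g_x(t)\bigr) \Xt(m) \Yt(m)^{|\sigma|-1}/Q(m)$. The event $\U(m)$ (equation~\eqref{eq:Uprop} applied to $\sigma^-$) gives $U_f^{\sigma^-}(m) \in (1 \pm g_x(t))^{|\sigma|-1} (\Yt(m)/2)^{|\sigma|-1}$ uniformly in $f \in O(G_m)$. To estimate $|X_e^{\sigma_1}(m)|$ I would invoke the event $\E(m)$ on the three-vertex ``bow-tie'' graph-structure pair $(F,A)$ with $A$ the two (oriented) endpoints of $e$, $V(F)\setminus A = \{w\}$, and $O(F)$ consisting of the two open edges from $w$ to the endpoints of $e$; this yields $|X_e^{\sigma_1}(m)| = (1 + o(1)) \Xt(m)/2$. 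Multiplying these estimates and using that $|\sigma| \le \omega$, so that the compounding $(1 \pm g_x(t))^{|\sigma|}$ contributes only a factor $1 + O(g_x(t))$, delivers the asserted main term. The leftover correction $U_e^\sigma(m)/Q(m) \approx \Yt(m)^{|\sigma|}/(2^{|\sigma|} Q(m))$ is smaller than it by a factor $\Yt(m)/\Xt(m) = 2t e^{4t^2}/\sqrt{n} = o(g_x(t))$ at $t \le t^*$, and is absorbed into the error.

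The main delicate point is the error bookkeeping at this last step: both the $(\log n)^4$ absolute error from Lemma~\ref{Ueq3} and the polylog error produced by invoking Theorem~\ref{EEthm} on the bow-tie structure must be fitted inside the stated $\pm g_x(t) \Xt(m) \Yt(m)^{|\sigma|-1}/Q(m)$ tolerance. This relies on $|\sigma| \le \omega \ll \log n$ to keep the $|\sigma|$-dependent losses negligible and on the freedom to take the constant $C$ in $g_x = C g_y$ sufficiently large to swallow the polylog factors $(\log n)^{\gamma(F,A)}$ arising from the bow-tie pair.
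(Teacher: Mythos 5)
Your overall decomposition of $\Delta U_e^\sigma(m)$ into the three sums (persisting first steps, creations, destructions) is exactly the paper's decomposition, and your applications of Lemmas~\ref{lem:ExsumUf} and~\ref{Ueq3} to the first and last pieces, together with the $Y_f - 1 \mapsto Y_f$ rewriting at a cost of $U_e^\sigma(m)/Q(m)$, are also as in the paper. The issue is entirely in your treatment of the creation term, specifically in how you estimate $|X_e^{\sigma_1}(m)|$.

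You propose to invoke the event $\E(m)$ on the bow-tie pair $(F,A)$ with $v_A(F)=1$, $e(F)=0$, $o(F)=2$, and to swallow the resulting polylog factor by taking $C$ large in $g_x = C g_y$. This does not work, for a structural reason. The relative error that Theorem~\ref{NFthm} (i.e.\ the event $\E(m)$) provides for this pair is $g_{F,A}(t) = e^{2t^2} n^{-1/4} (\log n)^{\gamma(F,A)}$ with $\gamma(F,A) = \Delta(F,A) - 2 = (C^3 + 2)^C - 2$, which vastly exceeds the exponent $4$ in $g_x(t) = C e^{2t^2} n^{-1/4} (\log n)^4$. Increasing $C$ does not help: $\gamma(F,A)$ grows super-polynomially in $C$, so the gap only widens. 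The hypothesis of the lemma includes $\X(m)$ precisely so that one can avoid this detour: $\X(m)$ directly gives $X_e(m) \in (1 \pm g_x(t))\Xt(m)$, and since $X_e^L(m) = X_e^R(m) = X_e(m)/2$ (each open triangle on $e$ contributes one edge to each side), one obtains $X_e^{\sigma_1}(m) \in (1 \pm g_x(t))\Xt(m)/2$, which is the sharper bound the argument actually needs. Indeed, the paper flags this point explicitly in Section~\ref{SecX}, noting that Proposition~\ref{Xprop} is a strengthening of Theorem~\ref{NFthm} needed in Section~\ref{Ysec}.

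There is also a smaller bookkeeping slip: when you apply Lemma~\ref{Ueq3} with $\sigma^-$ in place of $\sigma$, the additive error is $(\log n)^4 \Xt(m) \Yt(m)^{|\sigma^-|-1}/Q(m) = (\log n)^4 \Xt(m) \Yt(m)^{|\sigma|-2}/Q(m)$, one power of $\Yt(m)$ smaller than the $\Yt(m)^{|\sigma|-1}$ you wrote. This matters: the paper's error absorbs because $g_x(t)\Yt(m) \gg (\log n)^4$, whereas the bound as you stated it would require $g_x(t) \gg (\log n)^4$, which is false since $g_x(t) \le n^{-\eps}$.
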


\begin{proof}
Since
$$
\Delta U^{\sigma}_e(m) \, = \, U^{\sigma}_e(m+1) - U^{\sigma}_e(m) \, = \, \sum_{f \in Y^{\sigma_1}_e(m+1)} U^{\sigma^-}_f(m+1) -  \sum_{f \in Y^{\sigma_1}_e(m)} U^{\sigma^-}_f(m),
$$
it follows that
\begin{multline*}
\Ex\big[ \Delta U^{\sigma}_e(m) \big] \, = \,  \Ex\bigg[ \sum_{f \in Y^{\sigma_1}_e(m+1)} U^{\sigma^-}_f(m+1) -  \sum_{f \in Y^{\sigma_1}_e(m)} U^{\sigma^-}_f(m)  \,\Big|\, G_m \bigg] \\ 
\, = \, \sum_{f \in Y^{\sigma_1}_e(m+1)} \Ex\big[  \Delta U^{\sigma^-}_f(m) \big] \,+\, \Ex \bigg[ \sum_{f \in Y^{\sigma_1}_e(m+1)} U^{\sigma^-}_f(m) -  \sum_{f \in Y^{\sigma_1}_e(m)} U^{\sigma^-}_f(m) \,\Big|\, G_m \bigg].
\end{multline*}
Now, by Lemma~\ref{lem:ExsumUf}, and using the event $\U(m) \cap \X(m)$, we have 
$$
\Ex \bigg[ \sum_{f \in Y^{\sigma_1}_e(m+1) \,\setminus\, Y^{\sigma_1}_e(m)} U^{\sigma^-}_f(m) \,\Big|\, G_m \bigg] \, = \, \sum_{f \in X^{\sigma_1}_e(m)} \frac{U_f^{\sigma^-}(m)}{Q(m)} \, \in \, \big( 1 \pm g_x(t) \big)^{|\sigma|} \cdot \frac{\Xt(m) \Yt(m)^{|\sigma|-1}}{2^{|\sigma|} \cdot Q(m)},
$$
since $|\sigma^-| = |\sigma| - 1$ and $X_e^L(m) = X_e^R(m)$. % and $g_x(t) = C \cdot g_y(t)$. 
Similarly, we have
$$
\Ex \bigg[ \sum_{f \in Y^{\sigma_1}_e(m) \,\setminus\, Y^{\sigma_1}_e(m+1)} U^{\sigma^-}_f(m) \,\Big|\, G_m \bigg] \, \in \, \sum_{f \in Y^{\sigma_1}_e(m)} \frac{Y_f(m) U_f^{\sigma^-}(m)}{Q(m)} \,\pm\, \frac{\Yt(m)^{|\sigma|}}{Q(m)},
$$
and by Lemma~\ref{Ueq3}, 
$$
\Ex \bigg[ \sum_{f \in Y^{\sigma_1}_e(m+1)} \Delta U^{\sigma^-}_f(m) -  \sum_{f \in Y^{\sigma_1}_e(m)}  \Delta U^{\sigma^-}_f(m)  \,\Big|\, G_m \bigg] \, \in \, \pm \, (\log n)^4 \cdot \frac{\Xt(m) \Yt(m)^{|\sigma|-2}}{Q(m)}.
$$
Combining the last four displayed equations, and noting that $g_x(t) \Xt(m) \gg \Yt(m) \gg (\log n)^4$, we obtain
$$
\Ex\big[ \Delta U^{\sigma}_e(m) \big] \in \sum_{f \in Y^{\sigma_1}_e(m)} \bigg( \Ex\big[  \Delta U^{\sigma^-}_f(m) \big]  \, - \, \frac{Y_f(m) U_f^{\sigma^-}(m)}{Q(m)} \bigg) + \left( \frac{1}{2^{|\sigma|}} \pm g_x(t) \right) \frac{\Xt(m) \Yt(m)^{|\sigma|-1}}{Q(m)},
$$
as required.
\end{proof}

Recall that $\sigma(j)$ denotes the sequence formed by the first $j$ elements of $\sigma$ (so if $\sigma = (\sigma_1,\ldots,\sigma_\ell)$, then $\sigma(j) = (\sigma_1,\ldots,\sigma_j)$). In order to prove Lemma~\ref{Ueq}, we shall need the following bound on the covariance of $U_f^{\sigma^-}(m)$ and $V_f^{\sigma^-(j)}(m)$ over the set $Y_e^{\sigma_1}(m)$.

\begin{lemma}\label{Ucov2}
Let $\omega \cdot n^{3/2} < m \le m^*$. If $\U(m)$ holds, then 
\begin{equation}\label{eq:Ucov2}
\sum_{f \in Y^{\sigma_1}_e(m)} U_f^{\sigma^-}(m) V_f^{\sigma^-(j)}(m) \, \in \, U_e^{\sigma}(m) V_e^{\sigma(j+1)}(m) \,\pm\, \eps \cdot g_x(t) \cdot \Xt(m) \Yt(m)^{|\sigma| - 1}
\end{equation}
for every $e \in O(G_m)$ and every $\sigma \in \{L,R\}^*$ with $0 \le j < |\sigma| \le \omega$.
\end{lemma}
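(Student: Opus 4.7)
The plan is to write $\mathrm{LHS} - \mathrm{RHS}$ as a symmetrized double sum whose two oscillating factors can each be controlled by $g_x(t)$, giving the needed $g_x(t)^2$-type saving. The main obstacle is precisely this factor of two savings: a naive bound on $\mathrm{LHS}$ already has size $\asymp \Yt(m)^{|\sigma|+1}/2^{|\sigma|}$, while the target error $\eps \cdot g_x(t) \Xt(m) \Yt(m)^{|\sigma|-1} \asymp g_x(t) \Yt(m)^{|\sigma|+1}/t^2$ is smaller by a factor of $t^2 \to \infty$ (in the large-$t$ regime). So a direct multiplicative error of $O(|\sigma| g_x(t))$ is not enough, and the symmetrization is essential.

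For the identity, set $x_f := U_f^{\sigma^-}(m)$, $y_f := U_f^{\sigma^-(j)}(m)$ and $z_f := V_f^{\sigma^-(j)}(m)$ for each $f \in Y_e^{\sigma_1}(m)$. Decomposing $\sigma$- and $\sigma(j+1)$-walks from $e$ into their initial step and remainder yields $U_e^\sigma(m) = \sum_f x_f$, $U_e^{\sigma(j+1)}(m) = \sum_f y_f$, and $U_e^{\sigma(j+1)}(m) V_e^{\sigma(j+1)}(m) = \sum_f y_f z_f$. Swapping $f$ and $f'$ in the resulting double sum and averaging then gives
\begin{equation*}
\mathrm{LHS} - \mathrm{RHS} \, = \, \sum_f x_f z_f \,-\, \frac{\sum_f x_f}{\sum_f y_f} \sum_f y_f z_f \, = \, \frac{1}{2 \sum_f y_f} \sum_{f,f'} y_f y_{f'} \Big( \frac{x_f}{y_f} - \frac{x_{f'}}{y_{f'}} \Big)(z_f - z_{f'}).
\end{equation*}

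Next I control the two symmetrized factors using $\U(m)$. From $U_f^{\sigma^-} \in (1 \pm g_x(t))^{|\sigma|-1}(\Yt(m)/2)^{|\sigma|-1}$ and $U_f^{\sigma^-(j)} \in (1 \pm g_x(t))^{j}(\Yt(m)/2)^{j}$, I obtain $\big| x_f/y_f - x_{f'}/y_{f'} \big| \le O(|\sigma| g_x(t))(\Yt(m)/2)^{|\sigma|-1-j}$. For the $z$-factor, the key observation is that $V_f^{\sigma^-(j)}(m)$ is an average, over the endpoints $g$ of $\sigma^-(j)$-walks from $f$, of $Y_g(m) = U_g^L(m) + U_g^R(m)$; applying $\U(m)$ to length-one sequences gives $Y_g \in (1 \pm g_x(t))\Yt(m)$, so $z_f, z_{f'} \in (1 \pm g_x(t))\Yt(m)$ and $|z_f - z_{f'}| \le 2 g_x(t) \Yt(m)$. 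This is the place where I would emphasize that, although $\U(m)$ is only stated about $U$-variables, it automatically controls the $V$-variables through the endpoints of the walk.

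Finally, I combine everything. Using $\sum_{f'} y_{f'} = U_e^{\sigma(j+1)}(m) \in (1 \pm O(|\sigma| g_x(t)))(\Yt(m)/2)^{j+1}$ and the two bounds above, the symmetrized sum satisfies
\begin{equation*}
|\mathrm{LHS} - \mathrm{RHS}| \, \le \, O\bigg( \frac{|\sigma| \, g_x(t)^2}{2^{|\sigma|}} \bigg) \Yt(m)^{|\sigma|+1}.
\end{equation*}
The final step is to convert this into the target form via the identity $\Yt(m)^2 = 8 t^2 \Xt(m)$, which turns the right-hand side into $O(|\sigma| t^2 g_x(t)/2^{|\sigma|}) \cdot g_x(t) \Xt(m) \Yt(m)^{|\sigma|-1}$. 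Since $|\sigma|/2^{|\sigma|}$ is bounded and $t^2 g_x(t) \le t^{*2} g_x(t^*) = o(1)$ (because $g_x(t^*) \le n^{-\eps}(\log n)^{O(1)}$), the prefactor is $o(1) \ll \eps$ for large $n$, yielding the claimed bound.
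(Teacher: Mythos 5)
Your proof is correct, and it rests on the same underlying idea as the paper: a Chebyshev-type covariance estimate, which exploits the fact that both factors in the cross-term fluctuate only at relative scale $g_x(t)$, giving the $g_x(t)^2$ saving that—after the conversion $\Yt(m)^2 = 8t^2\Xt(m)$—beats the $t^2$ loss you correctly flag at the outset. The implementations differ in packaging. The paper invokes the uniform-measure covariance estimate of Observation~\ref{avobs2} twice: once directly on the pair $(V_f^{\sigma^-(j)}, U_f^{\sigma^-})$, and once more inside Observation~\ref{avobs4} to relate $\sum_f V_f^{\sigma^-(j)}$ to $V_e^{\sigma(j+1)}$. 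You instead expand the difference $\sum_f x_f z_f - U_e^\sigma V_e^{\sigma(j+1)}$ in a single step as a covariance with respect to the probability weights proportional to $U_f^{\sigma^-(j)}$, via the standard symmetrization identity. Your route is slightly more self-contained—it makes transparent that $V_e^{\sigma(j+1)}$ is exactly the $U_f^{\sigma^-(j)}$-weighted average of the $V_f^{\sigma^-(j)}$ and avoids the detour through Observation~\ref{avobs4}—at the cost of re-deriving the covariance inequality rather than citing Observation~\ref{avobs2}; the resulting error bounds are the same. Your remark that $\U(m)$ alone controls $z_f = V_f^{\sigma^-(j)}$ through $Y_g = U_g^L + U_g^R$ is exactly what the paper uses when asserting $a_f \in (1 \pm g_x(t))\Yt(m)$.
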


In order to prove Lemma~\ref{Ucov2}, we shall use the following corollary of Observation~\ref{avobs2}. 

\begin{obs}\label{avobs4}
Let $\omega \cdot n^{3/2} < m \le m^*$. If $\U(m)$ holds, then 
$$\frac{1}{Y^{\sigma_1}_e(m)} \sum_{f \in Y^{\sigma_1}_e(m)} V_f^{\sigma^-}(m) \, \in \, \Big( 1 \pm \omega^2 g_x(t)^2 \Big) \cdot V_e^{\sigma}(m)$$
for every $e \in O(G_m)$ and every $\sigma \in \{L,R\}^*$ with $0 < |\sigma| \le \omega$.
\end{obs}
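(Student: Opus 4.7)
My plan is to prove the identity
\[
U_e^{\sigma_1}(m) \cdot U_e^\sigma(m) \cdot V_e^\sigma(m) \,=\, U_e^{\sigma_1}(m) \sum_{f \in Y_e^{\sigma_1}(m)} V_f^{\sigma^-}(m) \, U_f^{\sigma^-}(m)
\]
and then apply Observation~\ref{avobs2} to swap the product inside the sum for the product of the two marginal sums. The identity itself is immediate from the definitions in Definition~\ref{def:UandV}: enumerating a $\sigma$-walk from $e$ as a first step $f \in Y_e^{\sigma_1}(m)$ followed by a $\sigma^-$-walk from $f$ gives $\sum_{f \in Y_e^{\sigma_1}(m)} U_f^{\sigma^-}(m) = U_e^{\sigma}(m)$, and weighting each such walk by its terminal $Y$-value yields $\sum_{f \in Y_e^{\sigma_1}(m)} V_f^{\sigma^-}(m) U_f^{\sigma^-}(m) = U_e^\sigma(m) V_e^\sigma(m)$. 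Also, $|Y_e^{\sigma_1}(m)| = U_e^{\sigma_1}(m)$.

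To apply Observation~\ref{avobs2}, I need to check that, under the event $\U(m)$, both $U_f^{\sigma^-}(m)$ and $V_f^{\sigma^-}(m)$ are approximately constant over $f \in Y_e^{\sigma_1}(m)$. For $U_f^{\sigma^-}(m)$, the event $\U(m)$ directly gives
\[
U_f^{\sigma^-}(m) \,\in\, \big(1 \pm g_x(t)\big)^{|\sigma^-|} \big(2t e^{-4t^2} \sqrt n\big)^{|\sigma^-|} \,\subseteq\, \big(1 \pm 2\omega g_x(t) \big)\, \tilde U
\]
for a suitable $\tilde U$, using $|\sigma^-| \le \omega$. For $V_f^{\sigma^-}(m)$, note that every $Y$-value appearing in its definition has the form $Y_g(m) = Y_g^L(m) + Y_g^R(m)$ for some $g \in O(G_m)$; by~\eqref{eq:YL} (the event $\U(m)$) each such $Y_g(m)$ lies in $(1 \pm g_x(t)) \cdot 4t e^{-4t^2} \sqrt n$, and so the same is true of any weighted average of them, in particular of $V_f^{\sigma^-}(m)$.

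Now Observation~\ref{avobs2}, applied with $a_f = V_f^{\sigma^-}(m)$ (which has $\gamma = g_x(t)$) and $b_f = U_f^{\sigma^-}(m)$ (which has $\delta = 2\omega g_x(t)$), gives
\[
U_e^{\sigma_1}(m) \sum_{f \in Y_e^{\sigma_1}(m)} V_f^{\sigma^-}(m) U_f^{\sigma^-}(m) \,\in\, \big(1 \pm O(\omega g_x(t)^2)\big) \bigg(\sum_f V_f^{\sigma^-}(m)\bigg)\bigg(\sum_f U_f^{\sigma^-}(m)\bigg).
\]
Substituting the identity from the first paragraph and $\sum_f U_f^{\sigma^-}(m) = U_e^\sigma(m)$, then dividing both sides by $U_e^{\sigma_1}(m) \cdot U_e^\sigma(m)$, I obtain
\[
V_e^\sigma(m) \,\in\, \big(1 \pm O(\omega g_x(t)^2)\big) \cdot \frac{1}{U_e^{\sigma_1}(m)} \sum_{f \in Y_e^{\sigma_1}(m)} V_f^{\sigma^-}(m),
\]
which rearranges to the claimed bound with constant absorbed into the factor $\omega^2$. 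No step looks delicate: the entire argument is just the $L^1$-to-product swap lemma, with the two $\U(m)$-inputs providing both $\gamma$ and $\delta$. The only point to watch is remembering that $\U(m)$ alone (via the sum decomposition $Y_g = Y_g^L + Y_g^R$) already controls $V_f^{\sigma^-}$, so we never need to invoke the much stronger event $\V(m)$.
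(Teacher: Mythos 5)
Your proof is correct and takes essentially the same route as the paper: both apply Observation~\ref{avobs2} with $a_f = V_f^{\sigma^-}(m)$ and $b_f = U_f^{\sigma^-}(m)$, controlling both via the event $\U(m)$ (the former by writing each $Y_g$ as $Y_g^L + Y_g^R$), and both rest on the identity $U_e^\sigma(m) V_e^\sigma(m) = \sum_{f \in Y_e^{\sigma_1}(m)} V_f^{\sigma^-}(m) U_f^{\sigma^-}(m)$. The cosmetic difference of multiplying through by $U_e^{\sigma_1}(m)$ before invoking the observation, rather than writing $V_e^\sigma(m)$ directly as a ratio of sums, is immaterial.
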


\begin{proof}
Set $a_f = V_f^{\sigma^-}(m)$ and $b_f = U_f^{\sigma^-}(m)$ for each $f \in Y^{\sigma_1}_e(m)$. By the event $\U(m)$, we have
$$a_f \in \big( 1 \pm g_x(t) \big) \Yt(m) \qquad \text{and} \qquad b_f \in \big( 1 \pm g_x(t) \big)^\omega \bigg( \frac{\Yt(m)}{2} \bigg)^{|\sigma|-1},$$
and so, by Observation~\ref{avobs2},
$$V_e^{\sigma}(m) \, = \, \frac{\sum_{f \in Y^{\sigma_1}_e(m)} V_f^{\sigma^-}(m) U_f^{\sigma^-}(m)}{\sum_{f \in Y^{\sigma_1}_e(m)} U_f^{\sigma^-}(m)} \, \in \, \frac{1 \pm \omega^2 g_x(t)^2}{Y^{\sigma_1}_e(m)} \sum_{f \in Y^{\sigma_1}_e(m)} V_f^{\sigma^-}(m),$$
as required.
\end{proof}

We can now easily deduce Lemma~\ref{Ucov2} from Observations~\ref{avobs2} and~\ref{avobs4}. 

\begin{proof}[Proof of Lemma~\ref{Ucov2}]
Set $a_f = V_f^{\sigma^-(j)}(m)$ and $b_f = U^{\sigma^-}_f(m)$ for each $f \in Y^{\sigma_1}_e(m)$. Noting that,
$$a_f \in \big( 1 \pm g_x(t) \big) \Yt(m) \qquad \text{and} \qquad b_f \in \big( 1 \pm g_x(t) \big)^\omega \bigg( \frac{\Yt(m)}{2} \bigg)^{|\sigma|-1},$$
which both follow from the event $\U(m)$, we obtain
$$Y^{\sigma_1}_e(m) \sum_{f \in Y^{\sigma_1}_e(m)} U_f^{\sigma^-}(m) V_f^{\sigma^-(j)}(m) \, \in \,  \big( 1 \pm \omega^2 g_x(t)^2 \big) \sum_{f \in Y^{\sigma_1}_e(m)} U_f^{\sigma^-}(m) \sum_{f \in Y^{\sigma_1}_e(m)} V_f^{\sigma^-(j)}(m),$$
by Observation~\ref{avobs2}. Applying Observation~\ref{avobs4} to the sequence $\sigma(j+1)$, it follows that
$$\sum_{f \in Y^{\sigma_1}_e(m)} U_f^{\sigma^-}(m) V_f^{\sigma^-(j)}(m) \, \in \, U_e^{\sigma}(m) V_e^{\sigma(j+1)}(m) \,\pm\, \omega^2 g_x(t)^2 \Yt(m)^{|\sigma| + 1},$$
which implies~\eqref{eq:Ucov2}, since $\Xt(m) \gg \omega^2 \cdot g_x(t) \cdot \Yt(m)^2$ for every $m \le m^*$. 
\end{proof}

We are now ready to prove the claimed bounds on $\Ex\big[ \Delta U^{\sigma}_e(m) \big]$. 

\begin{proof}[Proof of Lemma~\ref{Ueq}]
Let $\omega \cdot n^{3/2} < m \le m^*$, and assume that $\E(m) \cap \U(m) \cap \X(m) \cap \Z(m)$ holds. We shall prove that
\begin{equation}\label{eq:UIH}
\Ex\big[ \Delta U^{\sigma}_e(m) \big] \, \in \, - \frac{1}{Q(m)} \sum_{j=1}^{|\sigma|} U_e^{\sigma}(m) V_e^{\sigma(j)}(m) \,+\, |\sigma| \left( \frac{1}{2^{|\sigma|}} \pm g_x(t) \right) \frac{\Xt(m) \Yt(m)^{|\sigma|-1}}{Q(m)},
\end{equation}
for every $e \in O(G_m)$ and every $\sigma \in \{L,R\}^*$ with $|\sigma| \le \omega$, by induction on $\ell = |\sigma|$. The result is trivial if $\ell = 0$, since in that case $U_e^\sigma(m) = 1$ for every $e \in O(G_m)$ and every $m \le m^*$. For $\ell = 1$, assume for simplicity that $\sigma = L$, and (recalling that $U_e^L = Y_e^L$) observe that 
\begin{equation}\label{eq:deltaYL}
\Ex\big[ \Delta Y^L_e(m) \big] \, = \, - \frac{1}{Q(m)} \sum_{f \in Y_e^L(m)} \big( Y_f(m) - 1 \big) \,+\, \frac{X^R_e(m)}{Q(m)}.
\end{equation}
The bounds in~\eqref{eq:UIH} now follow, since $U_e^L(m) V_e^L(m) = \sum_{f \in Y_e^L(m)} Y_f(m)$, by Definition~\ref{def:UandV}, and $\U(m) \cap \X(m)$ implies that $Y^L_e(m) + X^R_e(m) \in \big( \frac{1}{2} \pm g_x(t) \big) \Xt(m)$, since $X_e^L(m) = X^R_e(m)$.

So let $\ell \ge 2$ and assume that~\eqref{eq:UIH} holds for every open edge in $G_m$ and every sequence of length $\ell - 1$. Let $e \in O(G_m)$, and let $\sigma \in \{L,R\}^\ell$. By Lemma~\ref{Ueq2}, we have
$$
\Ex\big[ \Delta U^{\sigma}_e(m) \big] \in \sum_{f \in Y^{\sigma_1}_e(m)} \bigg( \Ex\big[ \Delta U^{\sigma^-}_f(m) \big] - \frac{Y_f(m) U_f^{\sigma^-}(m)}{Q(m)} \bigg) \, + \, \left( \frac{1}{2^{|\sigma|}} \pm g_x(t) \right) \frac{\Xt(m) \Yt(m)^{|\sigma|-1}}{Q(m)},
$$
and by the induction hypothesis, applied to the pair $(f,\sigma^-)$, we have
$$
\Ex\big[ \Delta U^{\sigma^-}_f(m) \big] \in - \frac{1}{Q(m)} \sum_{j=1}^{|\sigma|-1} U_f^{\sigma^-}(m) V_f^{\sigma^-(j)}(m) \,+\, \big( |\sigma| - 1 \big) \left( \frac{1}{2^{|\sigma|-1}} \pm g_x(t) \right) \frac{\Xt(m) \Yt(m)^{|\sigma|-2}}{Q(m)}
$$
for each $f \in Y^{\sigma_1}_e(m)$. Combining these bounds, we obtain\footnote{Note that the term $Y_f(m) U_f^{\sigma^-}(m)$ is absorbed into the sum as the term $j = 0$.}
\begin{align*}
\Ex\big[ \Delta U^{\sigma}_e(m) \big] & \, \in \, - \, \frac{1}{Q(m)}  \sum_{j=0}^{|\sigma|-1} \sum_{f \in Y^{\sigma_1}_e(m)} U_f^{\sigma^-}(m) V_f^{\sigma^-(j)} \\
& \, + \, \frac{\Xt(m) \Yt(m)^{|\sigma|-1}}{Q(m)}  \bigg( \frac{1}{2^{|\sigma|}} \pm g_x(t) + \big( |\sigma| - 1 \big) \left( \frac{1}{2^{|\sigma|-1}} \pm g_x(t) \right) \frac{Y_e^{\sigma_1}(m)}{\Yt(m)} \bigg).
\end{align*}
Now, by Lemma~\ref{Ucov2}, we have
$$
\sum_{j=0}^{|\sigma|-1} \sum_{f \in Y^{\sigma_1}_e(m)} U_f^{\sigma^-}(m) V_f^{\sigma^-(j)} \, \in \, \sum_{j=1}^{|\sigma|} U_e^{\sigma}(m) V_e^{\sigma(j)}(m) \, \pm \, \eps \cdot |\sigma| \cdot g_x(t) \Xt(m) \Yt(m)^{|\sigma|-1},
$$
and by the event $\U(m)$, we have $Y_e^{\sigma_1}(m) \in \big( 1 \pm g_x(t) \big) \Yt(m) / 2$. Since $\ell \ge 2$, we obtain\footnote{This follows since $\frac{1}{2} \big( 1 \pm g_x(t) \big) \big( 2^{-\ell+1} \pm g_x(t) \big) \in \big( 2^{-\ell} \pm g_x(t) \big)$.}
$$\Ex\big[ \Delta U^{\sigma}_e(m) \big] \, \in \, - \frac{1}{Q(m)} \sum_{j=1}^{|\sigma|} U_e^{\sigma}(m) V_e^{\sigma(j)}(m) \,+\, |\sigma| \left( \frac{1}{2^{|\sigma|}} \pm g_x(t) \right) \frac{\Xt(m) \Yt(m)^{|\sigma|-1}}{Q(m)},$$
as required.
\end{proof}

\begin{comment}
%%%%%%%%%%%%%%%%%%%%%%%%%
Before continuing with the proof of Lemma~\ref{Vself}, let us note the following easy corollary of Lemma~\ref{Ueq}, which implies that the variables $U_e^\sigma$ are self-correcting. Since the lemma follows via a straightforward calculation, very similar to that in the proof of Lemma~\ref{selfN*}, we defer the proof to the Appendix.

\begin{lemma}\label{Uself}
Let $\omega \cdot n^{3/2} < m \le m^*$. If $\E(m) \cap \U(m) \cap \V(m) \cap \X(m) \cap \Z(m) \cap \Q(m)$ holds, then 
$$\Ex\big[ \Delta (U^{\sigma}_e)^*(m) \big] \, \in \, \frac{4t}{n^{3/2}} \Big( - (U_e^\sigma)^*(m) \pm O(\eps) \Big)$$
for every $e \in O(G_m)$ and every $k$-short sequence $\sigma \in \{ L,R \}^*$.  
\end{lemma}

Returning to the proof of Lemma~\ref{Vself}, 
%%%%%%%%%%%%%%%%%%%%%%%%%%%
\end{comment}

We note the following easy identity, which is proved in the Appendix~\cite{App}.

\begin{lemma}\label{A/B}
$$\Ex \bigg[ \Delta \left( \frac{A(m)}{B(m)} \right) \bigg] \, = \, \frac{\Ex \big[ \Delta A(m) \big]}{B(m)} \, - \, \frac{1}{B(m)}\Ex\Bigg[\frac{A(m+1) \Delta B(m)}{B(m+1)} \,\Big| \, G_m \Bigg].$$
\end{lemma}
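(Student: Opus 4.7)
The plan is to prove this by direct algebraic manipulation of $\Delta(A/B)(m) = A(m+1)/B(m+1) - A(m)/B(m)$, and then to take the conditional expectation given $G_m$. Since $B(m)$ is $G_m$-measurable (it is a deterministic function of $G_m$), it factors out of the conditional expectation cleanly.

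First I would rewrite the difference by inserting $\pm A(m+1)/B(m)$:
\begin{align*}
\Delta \! \left( \frac{A(m)}{B(m)} \right) & \, = \, \frac{A(m+1)}{B(m+1)} - \frac{A(m+1)}{B(m)} + \frac{A(m+1) - A(m)}{B(m)} \\
& \, = \, \frac{A(m+1)\big( B(m) - B(m+1) \big)}{B(m) B(m+1)} + \frac{\Delta A(m)}{B(m)} \\
& \, = \, \frac{\Delta A(m)}{B(m)} - \frac{1}{B(m)} \cdot \frac{A(m+1) \Delta B(m)}{B(m+1)}.
\end{align*}
Taking conditional expectations given $G_m$, and pulling out the $G_m$-measurable factor $1/B(m)$ from both terms, yields the stated identity.

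There is no real obstacle here: the only subtlety is the choice to add and subtract $A(m+1)/B(m)$ (rather than, say, $A(m)/B(m+1)$), which is what produces the asymmetric form of the right-hand side in which the numerator of the ``correction'' term involves $A(m+1)$ rather than $A(m)$. This form is the useful one in applications because it pairs $\Delta A(m)$ (where the variable $A$ is being updated) with the \emph{updated} denominator via $A(m+1)$, which typically simplifies later variance/covariance-type calculations. We implicitly assume $B(m), B(m+1) \neq 0$, which will always hold in our applications.
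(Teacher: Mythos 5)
Your proof is correct and is essentially the same argument as the paper's: both rewrite $A(m+1)/B(m+1) - A(m)/B(m)$ by splitting off $\Delta A(m)/B(m)$ and collecting the remainder as $-A(m+1)\Delta B(m)/(B(m)B(m+1))$, then take conditional expectation using the $G_m$-measurability of $B(m)$. Your intermediate step of inserting $\pm A(m+1)/B(m)$ is just a slightly different presentation of the same algebraic decomposition the paper performs on the common-denominator numerator.
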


\begin{comment}
%%%%%%%%%%%%%%%%%%%%
\begin{proof}[Proof of Lemma~\ref{A/B}]
We have
\begin{multline*}
\Ex\Bigg[ \frac{A(m+1)}{B(m+1)} - \frac{A(m)}{B(m)} \,\Big| \, G_m \Bigg] = \Ex\Bigg[\frac{B(m)A(m+1)-B(m+1)A(m)}{B(m)(B(m+1)} \,\Big| \, G_m \Bigg]\\
 = \frac{1}{B(m)} \Ex\Bigg[\frac{B(m+1)\big(A(m+1)-A(m)\big) + \big(B(m)-B(m+1)\big)A(m+1)}{B(m+1)} \,\Big| \, G_m \Bigg]\\
 = \frac{\Ex\big[ A(m+1)-A(m)\big]}{B(m)} \, + \, \frac{1}{B(m)}\Ex\Bigg[\frac{\big(B(m)-B(m+1)\big ) A(m+1)}{B(m+1)} \,\Big| \, G_m \Bigg],
\end{multline*}
as claimed.
\end{proof}
%%%%%%%%%%%%%%%%%%%%
\end{comment}

Our bound on $\Ex\big[ \Delta V_e^\sigma(m) \big]$ now follows via a straightforward calculation.

\begin{proof}[Proof of Lemma~\ref{lem:Veq}]
Let $\sigma \in \{L,R\}^*$ be a sequence with $|\sigma| \le \omega$, let $e \in O(G_m)$ for some $\omega \cdot n^{3/2} < m \le m^*$, and suppose that $\E(m) \cap \U(m) \cap \X(m) \cap \Z(m)$ holds. Since
$$V_e^\sigma(m)  \, = \, \frac{U_e^{\sigma L}(m) + U_e^{\sigma R}(m)}{U_e^{\sigma}(m)},$$
it follows, by Lemma~\ref{A/B}, that
\begin{equation}\label{eq:Vdelta}
\Ex\big[ \Delta V^{\sigma}_e(m) \big] \, = \, \frac{1}{U_e^\sigma(m)} \bigg( \Ex\Big[ \Delta \big( U_e^{\sigma L}(m) + U_e^{\sigma R}(m) \big) \Big] \,-\, \Ex\Big[ V_e^\sigma(m+1) \cdot \Delta U_e^\sigma(m) \,\big| \, G_m \Big] \bigg).
\end{equation}
Now, by Lemma~\ref{deltaV}, and since $\E(m) \cap \Z(m)$ holds, we have
$$V_e^\sigma(m+1) \, \in \, V_e^\sigma(m) \pm (\log n)^4,$$
and since $\U(m)$ holds, $U_e^{\sigma}(m) V_e^{\sigma(j)}(m) \le 2 \cdot \Yt(m)^{|\sigma|}$. Moreover, note that $g_x(t)\Xt(m) \gg (\log n)^4 \cdot \Yt(m)$. By Lemma~\ref{Ueq}, it follows that 
\begin{multline} \label{eq:Vdelta2}
\Ex\Big[ V_e^\sigma(m+1) \cdot \Delta U_e^\sigma(m) \,\big| \, G_m \Big] \, \in \, - \frac{1}{Q(m)} \sum_{j=1}^{|\sigma|} U_e^{\sigma}(m) V_e^{\sigma(j)}(m) V_e^\sigma(m)  \\
\,+\, |\sigma| \left( \frac{1}{2^{|\sigma|}} \pm 2g_x(t) \right) \frac{\Xt(m) \Yt(m)^{|\sigma|}}{Q(m)}.
\end{multline}
Moreover, applying Lemma~\ref{Ueq} to $U_e^{\sigma L}(m)$ and $U_e^{\sigma R}(m)$, we obtain
\begin{multline*}
\Ex\Big[ \Delta \big( U_e^{\sigma L}(m) + U_e^{\sigma R}(m) \big) \Big] \, \in \, - \, \frac{1}{Q(m)} \sum_{j=1}^{|\sigma|} \Big( U_e^{\sigma L}(m) + U_e^{\sigma R}(m) \Big) V_e^{\sigma(j)}(m) \\
- \frac{1}{Q(m)} \Big( U_e^{\sigma L}(m) V^{\sigma L}_e(m) + U_e^{\sigma R}(m) V^{\sigma R}_e(m) \Big) \,+\, \big( |\sigma| + 1 \big) \left( \frac{1}{2^{|\sigma|}} \pm 2g_x(t) \right) \frac{\Xt(m) \Yt(m)^{|\sigma|}}{Q(m)}.
\end{multline*}
Now, combining this with~\eqref{eq:Vdelta} and~\eqref{eq:Vdelta2}, and recalling that $U_e^{\sigma L}(m) + U_e^{\sigma R}(m) = U_e^{\sigma}(m) V_e^{\sigma}(m)$ and $2^{|\sigma|} U_e^\sigma(m) \in \big( 1 \pm g_x(t) \big)^{|\sigma|} \Yt(m)^{|\sigma|}$, %and since $g_x(t) = C g_y(t) \gg g_q(t)$, 
it follows that
$$\Ex\big[ \Delta V^{\sigma}_e(m) \big] \, \in \, - \frac{1}{Q(m)} \bigg( \frac{ U_e^{\sigma L}(m) V^{\sigma L}_e(m) + U_e^{\sigma R}(m) V^{\sigma R}_e(m)}{U_e^{\sigma}(m)} \bigg) + \big( 1 \pm C g_x(t) \big) \frac{\Xt(m)}{Q(m)},$$
as required.
\end{proof}

Combining Lemma~\ref{lem:Veq} with mixing results from Sections~\ref{MixSec1} and~\ref{MixSec2}, we can now prove that the variables $(V_e^\sigma)^*(m)$ are self-correcting. 

\begin{proof}[Proof of Lemma~\ref{Vself}]
Let $\omega \cdot n^{3/2} < m \le m^*$, let $\sigma \in \{L,R\}^*$ be a $k$-short sequence, and suppose that the event $\E(m) \cap \U(m) \cap \V(m) \cap \X(m) \cap \Z(m) \cap \Q(m)$ holds. We claim first that, if exactly $r \in \{0,1,2\}$ of the sequences $\sigma L$ and $\sigma R$ are not $k$-short, then\footnote{Here, and throughout this proof, we write $g_{\sigma + 1}(t)$ to denote the function $g_{\sigma L}(t) = g_{\sigma R}(t)$.}  
\begin{multline}\label{eq:Vself1}
W_e^\sigma(m) \, := \, \frac{U_e^{\sigma L}(m) V^{\sigma L}_e(m) + U_e^{\sigma R}(m) V^{\sigma R}_e(m)}{U_e^{\sigma}(m)} \, \in \,  \big( 1 \pm g_{\sigma + 1}(t) \big) \Yt(m) V_e^\sigma(m) \\
\, + \,\big( 1 \pm g_x(t) \big) \cdot  \frac{r}{2} \cdot g_\sigma(t) \Yt(m)^2 \cdot (V_e^\sigma)^*(m).
\end{multline}
for every $e \in O(G_m)$. 

To prove~\eqref{eq:Vself1}, suppose first that the sequences $\sigma L$ and $\sigma R$ are both $k$-short, i.e., $r = 0$. Then, since the event $\V(m)$ holds, we have
\begin{align*}
\frac{ U_e^{\sigma L}(m) V^{\sigma L}_e(m) + U_e^{\sigma R}(m) V^{\sigma R}_e(m)}{U_e^{\sigma}(m)} & \, \in \, \big( 1 \pm g_{\sigma + 1}(t) \big) \Yt(m) \cdot \frac{ U_e^{\sigma L}(m) + U_e^{\sigma R}(m)}{U_e^{\sigma}(m)} \\
& \, = \, \big( 1 \pm g_{\sigma + 1}(t) \big) \Yt(m) V_e^\sigma(m),
\end{align*}
as required. Next, suppose that $r = 1$, i.e., that $\sigma L$ is not $k$-short (say), but $\sigma R$ is $k$-short. Then either $s(\sigma) \ge k$, or $\sigma$ ends with $k$ consecutive $L$s, so (recalling that $\V(m) \Rightarrow \Yh(m)$) we can apply either Lemma~\ref{VmixQ} (in the former case) or Lemma~\ref{Vmix} (in the latter case) to obtain
\begin{equation}\label{eq:Vselfmix}
\big| V_e^\sigma(m) - V_e^{\sigma L}(m) \big| \, = \, o\big( g_y(t) \Yt(m) \big) \, \le \, g_{\sigma + 1}(t) \Yt(m).
\end{equation}
Hence, since the event $\U(m) \cap \V(m)$ holds, 
\begin{multline*}
W_e^\sigma(m) \, \in \, \frac{U_e^{\sigma L}(m) V^{\sigma}_e(m) + U_e^{\sigma R}(m) \Yt(m)}{U_e^{\sigma}(m)} \pm g_{\sigma + 1}(t) \Yt(m) V_e^\sigma(m)\\
 \, = \, \big( 1 \pm g_{\sigma + 1}(t) \big)\Yt(m) V_e^\sigma(m) \,+\, \frac{U_e^{\sigma L}(m)}{U_e^{\sigma}(m)} \big( V_e^\sigma(m) - \Yt(m) \big) \hspace{1cm} \\
 \, \subseteq \, \big( 1 \pm g_{\sigma + 1}(t) \big)\Yt(m) V_e^\sigma(m) + \frac{\big( 1 \pm g_x(t) \big) \Yt(m)}{2} \cdot g_\sigma(t) (V_e^\sigma)^*(m) \Yt(m),
\end{multline*}
as claimed. Finally, if $r = 2$, i.e., neither $\sigma L$ nor $\sigma R$ is $k$-short, then by Lemmas~\ref{Vmix} and~\ref{VmixQ} it follows that~\eqref{eq:Vselfmix} holds for $V_e^{\sigma L}(m)$, and also for $V_e^{\sigma R}(m)$,  and so
\begin{multline*}
W_e^\sigma(m) \, \in \, \frac{U_e^{\sigma L}(m) V^{\sigma}_e(m) + U_e^{\sigma R}(m) V^{\sigma}_e(m)}{U_e^{\sigma}(m)} \pm g_{\sigma + 1}(t) \Yt(m) V_e^\sigma(m)\\
 \, = \, \big( 1 \pm g_{\sigma + 1}(t) \big)\Yt(m) V_e^\sigma(m) \,+\, \frac{U_e^{\sigma L}(m) + U_e^{\sigma R}(m)}{U_e^{\sigma}(m)} \cdot \big( V_e^\sigma(m) - \Yt(m) \big) \\
 \, \subseteq \, \big( 1 \pm g_{\sigma + 1}(t) \big)\Yt(m) V_e^\sigma(m) + \big( 1 \pm g_x(t) \big) \Yt(m) \cdot g_\sigma(t) (V_e^\sigma)^*(m) \Yt(m),
\end{multline*}
which proves~\eqref{eq:Vself1}.

Next, combining Lemma~\ref{lem:Veq} with~\eqref{eq:Vself1}, we shall prove that
\begin{equation}\label{eq:Vself2}
\frac{1}{g_\sigma(t)} \bigg( \frac{\Ex\big[ \Delta V^{\sigma}_e(m) \big]}{\Yt(m)} + \frac{8t^2 - 1}{t \cdot n^{3/2}} \bigg) \, \in \, \big( 1 \pm 2g_x(t) \big) \big( 8 + 4r \big) \cdot \frac{t}{n^{3/2}} \Big( - (V_e^\sigma)^*(m) \pm O(\eps) \Big)
\end{equation}
for every $e \in O(G_m)$. 

To obtain~\eqref{eq:Vself2}, recall first that, by Lemma~\ref{lem:Veq} and the event $\Q(m)$, we have
\begin{equation}\label{eq:Veqrecall}
\Ex\big[ \Delta V^{\sigma}_e(m) \big] \, \in \, - \frac{W_e^\sigma(m)}{Q(m)} + \big( 1 \pm C g_x(t) \big) \frac{\Xt(m)}{\Qt(m)}.
\end{equation}
Next, note that, using the event $\Q(m)$, the final term in~\eqref{eq:Vself1} can be simplified as follows:
$$\frac{\big( 1 \pm g_x(t) \big) \cdot  \frac{r}{2} \cdot g_\sigma(t) \Yt(m)^2 \cdot (V_e^\sigma)^*(m)}{g_\sigma(t) \Yt(m) Q(m)} \, \in \, \big( 1 \pm 2g_x(t) \big) \cdot \frac{4rt}{n^{3/2}} \cdot (V_e^\sigma)^*(m).$$
Moreover, observe that
$$\frac{\Xt(m)}{\Yt(m) \Qt(m)} \, = \, \frac{1}{t \cdot n^{3/2}}.$$
Thus, using~\eqref{eq:Vself1}, it follows that the left-hand side of~\eqref{eq:Vself2} is contained in
\begin{equation}\label{eq:Vself3}
\frac{1}{g_\sigma(t)} \bigg( - \frac{\big( 1 \pm g_{\sigma + 1}(t) \big) V_e^\sigma(m)}{Q(m)} +  \frac{8t}{n^{3/2}} \pm \frac{Cg_x(t)}{t \cdot n^{3/2}} \bigg) - \big( 1 \pm 2g_x(t) \big) \cdot \frac{4rt}{n^{3/2}} \cdot (V_e^\sigma)^*(m).
\end{equation}
Now, observe that\footnote{To simplify the calculation slightly, we ignore in this step a multiplicative error of order $1 + O(1/n)$, which in any case is swallowed by the (much larger) error term in the next step.}  
$$\frac{1}{g_\sigma(t)} \bigg( \frac{V_e^\sigma(m)}{\Qt(m)} - \frac{8t}{n^{3/2}} \bigg) \, = \, \frac{8t}{n^{3/2}} \bigg( \frac{V_e^\sigma(m) - \Yt(m)}{g_\sigma(t) \Yt(m)} \bigg) \, = \, \frac{8t}{n^{3/2}} \cdot (V_e^\sigma)^*(m).$$
It follows that~\eqref{eq:Vself3} is contained in %using the event $\V(m) \cap \Q(m)$, 
\begin{align*}
& \, -\, \frac{8t}{n^{3/2}} \cdot (V_e^\sigma)^*(m) \pm \frac{2\eps \cdot \Yt(m)}{\Qt(m)} - \big( 1 \pm 2g_x(t) \big) \cdot \frac{4rt}{n^{3/2}} \cdot (V_e^\sigma)^*(m) \pm \frac{\omega}{t \cdot n^{3/2}} \\
& \hspace{5cm} \, \subseteq \, \big( 1 \pm 2g_x(t) \big) \big( 8 + 4r \big) \cdot \frac{t}{n^{3/2}} \Big( - (V_e^\sigma)^*(m) \pm O(\eps) \Big)
\end{align*}
as claimed, since %\footnote{Note that here we require $C(\eps) \ge \eps^{-k(k+1)} = \eps^{-\Theta(\eps^{-2})}$.} 
$g_{\sigma + 1}(t) = \eps \cdot g_\sigma(t)$, $t \ge \omega$ and $g_x(t) \ll \omega \cdot g_\sigma(t)$.

Using~\eqref{eq:Vself2}, we can now easily prove~\eqref{eq:Vself}. Indeed, by Lemma~\ref{A/B} we have 
$$\Ex\big[ \Delta (V^{\sigma}_e)^*(m) \big] = \frac{\Ex\big[ \Delta V_e^{\sigma}(m) \big] - \Delta \Yt(m) - \Ex\big[ (V_e^\sigma)^*(m+1) \cdot \Delta \big( g_\sigma(t) \Yt(m) \big) \,\big| \, G_m \big]}{g_\sigma(t) \Yt(m)},$$
and differentiating gives
$$\Delta \Yt(m) \approx - \, \frac{1}{n^{3/2}} \bigg( \frac{8t^2 - 1}{t} \bigg) \Yt(m) \quad \text{and} \quad \Delta \big( g_\sigma(t) \Yt(m) \big) \approx -\,\frac{1}{n^{3/2}} \bigg( \frac{4t^2 - 1}{t} \bigg) g_\sigma(t) \Yt(m).$$
Moreover, by Lemma~\ref{deltaV}, together with Lemma~\ref{lem:chainstar}, we have 
\begin{equation}\label{eq:deltaVstar}
\big| \Delta (V^{\sigma}_e)^*(m) \big| \, \le \, \frac{3}{g_\sigma(t)} \cdot \left( \frac{| \Delta V^{\sigma}_e(m) |}{\Yt(m)} \,+\, \frac{\log n}{n^{3/2}} \right) \, = \, o(1).
\end{equation}
Combining these bounds, it follows that
$$\Ex\big[ \Delta (V^{\sigma}_e)^*(m) \big] \in \frac{1}{g_\sigma(t)} \bigg( \frac{\Ex\big[ \Delta V_e^\sigma(m) \big]}{\Yt(m)} \,+\, \frac{8t^2 - 1}{t \cdot n^{3/2}} \bigg) + \big( (V_e^\sigma)^*(m) \pm o(1) \big) \cdot \frac{1}{n^{3/2}} \bigg( \frac{4t^2 - 1}{t} \bigg),$$
and hence, by~\eqref{eq:Vself2}, %and since $|(V^{\sigma}_e)^*(m)| \le 1$ (as $\V(m)$ holds), 
$$\Ex\big[ \Delta (V^{\sigma}_e)^*(m) \big] \in \big( 4 + 4r \pm \eps \big) \cdot \frac{t}{n^{3/2}} \Big( - (V_e^\sigma)^*(m) \pm O(\eps) \Big),$$
as required.
\end{proof}

\subsection{The Lines of Peril and Death}

In order to apply the method of Section~\ref{MartSec} to the variables $V^\sigma_e$, we need one more lemma, which bounds the maximum and expected absolute single-step changes in $(V^\sigma_e)^*$.

\begin{lemma}\label{UVgamma}
Let $\omega \cdot n^{3/2} < m \le m^*$. If $\E(m) \cap \U(m) \cap \V(m) \cap \X(m) \cap \Z(m) \cap \Q(m)$ holds, then
$$|\Delta (V^\sigma_e)^*(m)| \, \le \, \ds\frac{C \cdot (\log n)^4}{g_\sigma(t) \Yt(m)} \qquad \textup{and} \qquad \Ex \big[ |\Delta (V^\sigma_e)^*(m)| \big] \, \le \, \ds\frac{C \cdot \log n}{g_\sigma(t) \cdot n^{3/2}}$$
for every $e \in O(G_m)$ and every $k$-short sequence $\sigma \in \{ L,R \}^*$.  
\end{lemma}

Lemma~\ref{UVgamma} is an easy consequence of the following simple observation, combined with the results above.

\begin{lemma}\label{lem:deltaUtrivialbound}
Let $\omega \cdot n^{3/2} < m \le m^*$. If $\E(m) \cap \U(m) \cap \X(m) \cap \Z(m) \cap \Q(m)$ holds, then
$$\Ex\big[ | \Delta U_e^{\sigma}(m) | \big] \, \le \, \frac{C \cdot \Yt(m)^{|\sigma| + 1}}{\Qt(m)}$$
for every $e \in O(G_m)$ and every $k$-short sequence $\sigma \in \{ L,R \}^*$.  
\end{lemma}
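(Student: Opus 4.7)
\medskip

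\noindent\textbf{Plan.} The strategy is to split the single-step change into creations and destructions of $\sigma$-walks and bound each expectation separately. Let $C_e^\sigma(m)$ and $D_e^\sigma(m)$ denote, respectively, the number of $\sigma$-walks from $e$ that are created and destroyed in step $m+1$ of the triangle-free process, so that $|\Delta U_e^\sigma(m)| \le C_e^\sigma(m) + D_e^\sigma(m)$. The case $|\sigma| = 0$ is trivial since $\Delta U_e^\emptyset(m) = 0$, so we may assume $|\sigma| \ge 1$; note also that, since $\sigma$ is $k$-short, $|\sigma| \le k(k+1) = O(1)$, so this factor will be absorbed into $C$ throughout.

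For the destructions, observe that a given $\sigma$-walk $W = (e = f_0,f_1,\ldots,f_{|\sigma|})$ fails to survive step $m+1$ if and only if at least one of its $|\sigma|$ constituent $Y$-edges $(f_{i-1},f_i)$ leaves the $Y$-graph of $G_{m+1}$. By Observation~\ref{obs:Yshrinks}, the probability that a fixed $Y$-edge $(f,f')$ is destroyed equals $(Y_{f'}(m)-1)/Q(m)$, and the event $\E(m)$ gives $Y_{f'}(m) \le 2\Yt(m)$. A union bound over the $|\sigma|$ positions along $W$ and over the $U_e^\sigma(m)$ choices of walk yields
$$\Ex\big[D_e^\sigma(m)\big] \,\le\, |\sigma| \cdot U_e^\sigma(m) \cdot \frac{2\Yt(m)}{Q(m)} \,\le\, C \cdot \frac{\Yt(m)^{|\sigma|+1}}{\Qt(m)},$$
where the final inequality uses $\U(m)$ to bound $U_e^\sigma(m) \le 2\Yt(m)^{|\sigma|}$ and $\Q(m)$ to replace $Q(m)$ with $\Qt(m)$ up to a constant factor.

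For the creations, a new $\sigma$-walk from $e$ must use at least one newly-formed $Y$-edge. Choosing the position $i \in [|\sigma|]$ of this new $Y$-edge, summing over the $U_e^{\sigma(i-1)}(m)$ choices of prefix $\sigma(i-1)$-walk from $e$ ending at some $f = f_{i-1}$, using Observation~\ref{obs:Ygrows} to see that each $X$-neighbour $f' \in X_f^{\sigma_i}(m)$ becomes a new $Y$-neighbour of $f$ with probability $1/Q(m)$, and summing over the $U_{f'}^{\sigma^{(i)}}(m)$ suffix walks of length $|\sigma|-i$ from $f'$, we obtain (using $\E(m)$, $\U(m)$ and $\X(m)$)
$$\Ex\big[C_e^\sigma(m)\big] \,\le\, C \cdot \frac{|\sigma| \cdot \Xt(m) \cdot \Yt(m)^{|\sigma|-1}}{Q(m)} \,\le\, C \cdot \frac{\Yt(m)^{|\sigma|+1}}{\Qt(m)},$$
where the second inequality uses the identity $\Xt(m) = \Yt(m)^2/(8t^2) \le \Yt(m)^2$, valid since $t \ge \omega$. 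Combining the two bounds gives the lemma. The argument is essentially the non-cancellative version of the calculation that produced Lemma~\ref{Ueq}; indeed, one could alternatively derive the result by noting that the negative and positive terms in the conclusion of Lemma~\ref{Ueq} are (up to negligible error) exactly $\Ex[D_e^\sigma(m)]$ and $\Ex[C_e^\sigma(m)]$, so no serious obstacle arises.
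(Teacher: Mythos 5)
Your decomposition into created and destroyed $\sigma$-walks, and your union-bound on $\Ex\big[D_e^\sigma(m)\big]$ over the $|\sigma|$ constituent $Y$-edges, match the paper's. For the creations you take a genuinely different route: the paper never bounds $\Ex\big[C_e^\sigma(m)\big]$ directly, but instead notes, via Lemma~\ref{Ueq} and the fact that $\Yt(m)^2 \gg \Xt(m)$ for $t \ge \omega$, that $\Ex\big[\Delta U_e^\sigma(m)\big] < 0$, hence $\Ex\big[C_e^\sigma(m)\big] \le \Ex\big[D_e^\sigma(m)\big]$, so only destructions need to be counted (whence the factor $2$). Your direct bound on $\Ex\big[C_e^\sigma(m)\big]$ --- choosing the position $i$ of a new $Y$-edge, counting prefixes, $X$-neighbours and suffixes --- is a valid, more self-contained alternative that uses Observation~\ref{obs:Ygrows} rather than the full cancellative structure of Lemma~\ref{Ueq}; the trade-off is exactly as you describe, since your creation count reproduces, up to constants, the positive term in Lemma~\ref{Ueq}.

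One small wrinkle to tidy: for a created walk, the suffix $(f_i,\ldots,f_{|\sigma|})$ lives in $G_{m+1}$ rather than $G_m$, since the edge added at step $m+1$ may create several new $Y$-edges, one of which the suffix could use. To make the prefix count correct you should take $i$ to be the \emph{minimal} position at which the walk uses a new $Y$-edge, and the suffix count should be in $G_{m+1}$, i.e.\ at most $\max_{f'} U_{f'}^{\sigma^{(i)}}(m+1)$. This is harmless: Observation~\ref{obs:Ygrows} gives $|Y^L_e(m+1)| \le |Y^L_e(m)| + 1$, so together with $\E(m)$ and the fact that $|\sigma| = O(1)$ for $k$-short $\sigma$, each suffix count in $G_{m+1}$ exceeds that in $G_m$ by only an $O(1)$ factor (or one can cite Lemma~\ref{deltaU} directly). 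With this small patch your argument goes through.
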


\begin{proof}
If $|\sigma| = 0$ then the claimed bound holds trivially, so assume that $|\sigma| \ge 1$. To prove the lemma, observe that a $\sigma$-walk $W$ is destroyed in step $m+1$ of the triangle-free process only if the edge $h$ added in that step is a $Y$-neighbour of an edge of $W$. 
%, and is created only if $h$ forms a triangle with two consecutive  edges of $W$.  
Moreover, since the event $\E(m) \cap \U(m) \cap \X(m) \cap \Z(m)$ holds, and since $\Yt(m)^2 \gg \Xt(m)$ for $t \ge \omega$, it follows from Lemma~\ref{Ueq} that $\Ex\big[ \Delta U_e^{\sigma}(m) \big] < 0$, and hence that the expected number of $\sigma$-walks from $e$ created in step $m+1$ is fewer than the expected number destroyed. Thus
$$\Ex\big[ | \Delta U_e^{\sigma}(m) | \big] \, \le \, \frac{2|\sigma| \cdot U^\sigma_e(m) \cdot \max_f Y_f(m)}{Q(m)} \, \le \, \frac{C \cdot \Yt(m)^{|\sigma| + 1}}{\Qt(m)},$$
by the event $\U(m) \cap \Q(m)$, as required. 
\end{proof}

\begin{proof}[Proof of Lemma~\ref{UVgamma}]
The first inequality follows easily from Lemmas~\ref{lem:chainstar} and~\ref{deltaV} (cf. the proof of~\eqref{eq:deltaVstar}). Indeed, since $\E(m) \cap \V(m) \cap \Z(m)$ holds, we have 
\begin{equation}\label{eq:deltaVstar4gamma}
\big| \Delta (V^{\sigma}_e)^*(m) \big| \, \le \, \frac{3}{g_\sigma(t)} \cdot \left( \frac{| \Delta V^{\sigma}_e(m) |}{\Yt(m)} \,+\, \frac{\log n}{n^{3/2}} \right) \, \le \, \frac{C \cdot (\log n)^4}{g_\sigma(t) \Yt(m)},
\end{equation}
by Lemmas~\ref{lem:chainstar} and~\ref{deltaV}, and since $\sigma$ is $k$-short, as claimed. 

To bound $\Ex \big[ |\Delta (V^\sigma_e)^*(m)| \big]$, observe that (cf. Lemma~\ref{A/B}), for any positive functions $A$ and $B$, 
$$\bigg| \Delta \left( \frac{A(m)}{B(m)} \right) \bigg| \, \le \, \frac{\big| \Delta A(m) \big|}{B(m)} \, + \, \frac{A(m+1) |\Delta B(m)|}{B(m) \cdot B(m+1)}.$$
Applying this to $V_e^\sigma(m) = \big( U_e^{\sigma L}(m) + U_e^{\sigma R}(m) \big) / U_e^{\sigma}(m)$, we obtain
$$\big| \Delta V^{\sigma}_e(m) \big| \, \le \, \frac{\big| \Delta U_e^{\sigma L}(m) \big| + \big| \Delta U_e^{\sigma R}(m) \big| + V_e^\sigma(m+1) \cdot \big| \Delta U_e^\sigma(m) \big|}{U_e^\sigma(m)}.$$
Combining this bound with Lemma~\ref{lem:deltaUtrivialbound}, and using the event $\U(m)$, we obtain
$$\Ex\big[ | \Delta V^{\sigma}_e(m) | \big] \, \le \, \frac{C^2 \cdot \Yt(m)^2}{\Qt(m)},$$
Hence, by~\eqref{eq:deltaVstar4gamma}, we have
$$\Ex\big[ | \Delta (V^{\sigma}_e)^*(m) | \big] \, \le \, \frac{2}{g_\sigma(t)} \cdot \left( \frac{\Ex\big[ | \Delta V^{\sigma}_e(m) | \big]}{\Yt(m)} \,+\, \frac{\log n}{n^{3/2}} \right) \, \le \, \frac{C \cdot \log n}{g_\sigma(t) \cdot n^{3/2}},$$ 
as required.
\end{proof}

We are now ready to prove Proposition~\ref{Vprop}. 

\begin{proof}[Proof of Proposition~\ref{Vprop}]
We shall bound the probabilities of the event 
\begin{equation}\label{eq:UVfailingfirst}
\U(m-1) \cap \V(m)^c \cap \K^\Y(m-1)
\end{equation}
for each $\omega \cdot n^{3/2} < m \le m^*$. In order to do so, we shall apply our usual martingale method to the variables $V_e^\sigma$ for each edge $e$ and each $k$-short sequence $\sigma \in \{L,R\}^*$.  

As usual, we begin by choosing a family of parameters as in Definition~\ref{def:reasonable}. Set $\K(m) = \E(m) \cap \U(m) \cap \V(m) \cap \X(m) \cap \Y(a) \cap \Z(m) \cap \Q(m)$ and $I = [a,b] = [\omega \cdot n^{3/2},m^*]$, let 
$$\alpha(t) \, = \, \frac{C \cdot (\log n)^4}{g_\sigma(t) \Yt(m)} \qquad \text{and} \qquad \beta(t) \, = \, \frac{C \cdot \log n}{g_\sigma(t) \cdot n^{3/2}},$$
and set $\lambda = C$ and $\delta = \eps$ and $h(t) = t \cdot n^{-3/2}$. We claim that $(\lambda,\delta;g_\sigma,h;\alpha,\beta;\K)$ is a reasonable collection, and that $V_e^\sigma$ satisfy the conditions of Lemma~\ref{lem:self:mart}.

The first statement follows easily, since $\alpha$ and $\beta$ are clearly $\lambda$-slow, and the bounds $\min\big\{ \alpha(t), \, \beta(t), \, h(t) \big\} \ge \frac{\delta t}{n^{3/2}}$ and $\alpha(t) \le \eps$ follow from the fact that $n^{-1/4} \le g_\sigma(t) \le 1$ for every $t \le t^*$. To prove the second, we need to show that the variables $V_e^\sigma$ are $(g_\sigma,h;\K)$-self-correcting, which follows from Lemma~\ref{Vself}, and that, for every $\omega \cdot n^{3/2} < m \le m^*$, if $\K(m)$ holds then 
$$|\Delta (V^\sigma_e)^*(m)| \le \alpha(t) \qquad  \text{and} \qquad \Ex\big[ |\Delta (V^\sigma_e)^*(m)| \big] \le \beta(t),$$
which follows from Lemma~\ref{UVgamma}. Moreover, the bound $|(V^\sigma_e)^*(a)| < 1/2$ follows %\footnote{Actually, in order to deduce the claimed bound on $|(U^\sigma_e)^*(a)|$, we need to use the slightly stronger version of Proposition~\ref{lem:landbeforetime} proved in the Appendix.} 
from the event $\Y(a)$, since $f_y(\omega) \Yt(n^{3/2}) \ll g_y(\omega) \Yt(a)$ if $\omega(n) \to \infty$ sufficiently slowly.

Finally, observe that
$$\alpha(t) \beta(t) n^{3/2} \, \le \, \frac{C \cdot (\log n)^4}{g_\sigma(t) \Yt(m)} \cdot \frac{C \cdot \log n}{g_\sigma(t)} \, \le \, \frac{1}{(\log n)^3}$$
for every $\omega < t \le t^*$, since $g_\sigma(t)^2 \Yt(m) \ge \eps^{3k^2} \cdot \omega \cdot (\log n)^8$. By Lemma~\ref{lem:self:mart}, it follows that
$$\Pr\Big( \V(m)^c \cap \K(m-1) \text{ for some $m \in [a,b]$} \Big) \, \le \, n^7 \exp\Big( - \delta' (\log n)^3 \Big) \, \le \, n^{-2C \log n},$$
where we summed over edges $e \in E(K_n)$ and $k$-short sequences $\sigma \in \{L,R\}^*$ the probability that $e \in O(G_m)$ and $(V_e^\sigma)^*(m) > 1$. This gives us a bound on the probability that one of the events in~\eqref{eq:UVfailingfirst} occurs for the first time at step $m'$ of the triangle-free process; summing over choices of $m' \le m$, this gives a bound on the probability of the event 
$$\U(m-1) \cap \V(m)^c \cap \K^\Y(m-1).$$ 
Using Proposition~\ref{Uprop} to bound the probability of the event $\U(m-1)^c \cap \K^\Y(m-1)$, we obtain the claimed bound  on the probability of the event $\V(m)^c \cap \K^\Y(m-1)$, as required.
\end{proof}

Finally, note that Proposition~\ref{Yprop} follows immediately from Proposition~\ref{Vprop}.

\pagebreak

\section{Whirlpools and Lyapunov functions}\label{XYQsec}

In this section we shall prove the following theorem, which together with the results of the previous three sections, implies Theorems~\ref{Qthm} and~\ref{XbYbthm} and hence the lower bound in Theorem~\ref{triangle}. Recall that the events $\X(m)$, $\Y(m)$ and $\Q(m)$ were defined in Definition~\ref{def:events:XYQ}. 

\begin{thm}\label{XYQthm}
For every $\omega \cdot n^{3/2} < m \le m^*$, with probability at least $1 - n^{- 2\log n}$ either $\big( \X(m-1) \cap \Y(m-1) \cap \Q(m-1) \big)^c$ or the following holds:
\begin{equation}\label{eq:XYQthm}
(a) \;\; \frac{Q(m)}{\Qt(m)} \in 1 \pm g_q(t), \quad (b) \;\; \frac{\Xb(m)}{\Xt(m)} \in 1 \pm g_q(t), \quad (c) \;\; \frac{\Yb(m)}{\Yt(m)} \in 1 \pm g_q(t).
\end{equation}
\end{thm}

The proof of Theorem~\ref{XYQthm} is roughly as follows. We shall first show how the normalized errors of $\Xb$, $\Yb$ and $Q$ depend on one another; in particular, we shall show that $\Xb$ is self-correcting (given bounds on $\Yb$ and $Q$), whereas $\Yb$ and $Q$ have a more complicated two-dimensional interaction which resembles a whirlpool, see Figure~6.1.  

Since the eigenvalues of this interaction turn out to be negative (in fact, both are equal to $-1$), we will be able to define a  Lyapunov function, $\Lambda(m)$, which is self-correcting and is bounded if and only if the normalized errors of both $\Yb$ and $Q$ are bounded. The required bounds on $\Xb$, $\Yb$ and $Q$ then follow easily by our usual Line of Peril / Line of Death argument.

\subsection{Whirlpools}\label{WhirlSec}

The first step in the proof of Theorem~\ref{XYQthm} is the following lemma, which shows how the normalized errors of $\Xb(m)$, $\Yb(m)$ and $Q(m)$ depend on one another. %Since the associated matrix has negative eigenvalues, we shall be able to show, using a suitably chosen Lyapunov function, that the system is stable. 
Set 
$$\Xs(m) \, = \,  \frac{\Xb(m) - \tilde{X}(m)}{g_q(t) \tilde{X}(m)}, \quad \Ys(m) \, = \,  \frac{\Yb(m) - \Yt(m)}{g_q(t) \Yt(m)} \quad \text{and} \quad \Qs(m) \, = \,  \frac{Q(m) - \Qt(m)}{g_q(t) \Qt(m)}$$
for each $m \in \N$.

\begin{lemma}\label{whirlpool}
Let $\omega \cdot n^{3/2} < m \le m^*$. If $\X(m) \cap \Y(m) \cap \Q(m)$ holds, then
\begin{itemize}
\item[$(a)$] $\Ex \big[ \Delta \Qs(m) \big] \, \in \, \ds\frac{4t}{n^{3/2}} \Big( - 2 \Ys(m) + \Qs(m) \pm o(1) \Big)$.
\item[$(b)$] $\Ex \big[ \Delta \Ys(m) \big] \, \in \, \ds\frac{4t}{n^{3/2}} \Big( - 3 \Ys(m) + 2\Qs(m) \pm o(1) \Big)$.
\item[$(c)$] $\Ex \big[ \Delta \Xs(m) \big] \, \in \, \ds\frac{4t}{n^{3/2}} \bigg( - \Xs(m) - 4\Ys(m) + 4\Qs(m) \pm o(1) \Big)$.
\end{itemize}
\end{lemma}

We shall only sketch the proof of Lemma~\ref{whirlpool} here, and defer the (lengthy, but fairly straightforward) details to the Appendix. We first recall the following lemmas, which were already stated in Section~\ref{sketchSec}. 

\begin{lemma}\label{Qeq}
For every $m \in \N$, 
$$\Ex \big[ \Delta Q(m) \big] \, = \, - \, \Yb(m) - 1.$$
\end{lemma}

\begin{lemma}\label{Ybeq}
Let $\omega \cdot n^{3/2} < m \le m^*$. If $\X(m) \cap \Y(m) \cap \Q(m)$ holds, then
$$\Ex \big[ \Delta \Yb(m) \big] \, \in \,  \frac{1}{Q(m)} \Big( - \Yb(m)^2 + \Xb(m) - 2 \cdot \Var\big(Y_e(m)\big) \pm  O\big( \Yt(m) \big) \Big).$$
\end{lemma}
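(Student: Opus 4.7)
The plan is to express $\Yb(m) = S(m)/Q(m)$, where $S(m) := \sum_{e \in O(G_m)} Y_e(m)$ (equivalently, twice the number of edges of the $Y$-graph on $O(G_m)$), and to compute $\Ex[\Delta \Yb(m)]$ via the Product Rule (Lemma~\ref{chain}) applied to $S = Q \cdot \Yb$. Solving the Product Rule identity for $\Ex[\Delta \Yb(m)]$ gives
$$Q(m)\,\Ex[\Delta \Yb(m)] \, = \, \Ex[\Delta S(m)] - \Yb(m)\,\Ex[\Delta Q(m)] - \Ex\bigl[\Delta Q(m)\,\Delta \Yb(m)\bigr].$$
Lemma~\ref{Qeq} gives $\Ex[\Delta Q(m)] = -\Yb(m) - 1$, so $-\Yb(m)\Ex[\Delta Q(m)] = \Yb(m)^2 + \Yb(m)$. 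The final cross term is controlled by the deterministic bounds $|\Delta Q(m)| = Y_h(m) + 1 = O(\Yt(m))$ and $|\Delta \Yb(m)| = O(\Yt(m)^2/Q(m))$, both of which follow from $\X(m), \Y(m), \Z(m)$; together these show this correction contributes $o(\Yt(m))$ to the numerator. The substantive task is thus to compute $\Ex[\Delta S(m)]$.

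For this I track the expected creation and destruction of edges of the $Y$-graph as the random edge $h \in O(G_m)$ is added. A pair $\{e,f\} \subseteq O(G_m)$ becomes a new $Y$-pair precisely when $\{h,e,f\}$ was an open triangle in $G_m$. For $h = \{u,v\}$ and an $X$-neighbour $e = \{u,x\}$ of $h$, the third edge of any such triangle is forced to be $\{v,x\}$, so each $X$-neighbour of $h$ is the leg of a unique open triangle through $h$; it follows that the number of open triangles through $h$ equals $X_h(m)/2$ exactly, and the expected number of new $Y$-pairs is $\frac{1}{2Q(m)}\sum_{h \in O(G_m)} X_h(m) = \Xb(m)/2$. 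Each new $Y$-pair contributes $+2$ to $S$, for a total contribution of $+\Xb(m)$ to $\Ex[\Delta S(m)]$.

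For destruction, a $Y$-pair $\{e,f\}$ is destroyed iff $h \in \{e\} \cup Y_e(m) \cup \{f\} \cup Y_f(m)$. The key simplification is that if $\{e,f\}$ is a $Y$-pair with $e = \{u,v\}$, $f = \{u,w\}$ and $\{v,w\} \in E(G_m)$, then $Y_e(m) \cap Y_f(m) = \emptyset$: a case analysis on which vertex a common $Y$-neighbour $g$ shares with $e$ and $f$ forces either a triangle $\{v,w,x\} \subseteq E(G_m)$ (excluded by triangle-freeness of $G_m$) or $g \in \{e,f\}$ (excluded since $e \notin Y_e$). Combined with the facts $e \in Y_f(m)$ and $f \in Y_e(m)$, inclusion–exclusion gives conditional destruction probability exactly $(Y_e(m) + Y_f(m))/Q(m)$. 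Summing over unordered $Y$-pairs and using
$$\sum_{\{e,f\}\,Y\text{-pair}} \bigl( Y_e(m) + Y_f(m) \bigr) \, = \, \sum_{e \in O(G_m)} Y_e(m)^2 \, = \, Q(m)\bigl( \Yb(m)^2 + \Var(Y_e(m)) \bigr),$$
the expected number of destroyed $Y$-pairs is $\Yb(m)^2 + \Var(Y_e(m))$, contributing $-2\Yb(m)^2 - 2\Var(Y_e(m))$ to $\Ex[\Delta S(m)]$. Assembling all pieces,
$$Q(m)\,\Ex[\Delta \Yb(m)] \, = \, \Xb(m) - 2\Yb(m)^2 - 2\Var(Y_e(m)) + \Yb(m)^2 + \Yb(m) \pm O(\Yt(m)),$$
which rearranges to the stated bound.

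The main obstacle is the careful bookkeeping of the small error terms I have suppressed above: the Product Rule cross correction (traded against deterministic step-size bounds on $|\Delta Q|$ and $|\Delta S|$), the boundary effects where $h$ itself or an edge in a newly created open triangle through $h$ is also closed by $h$ (a configuration essentially ruled out by triangle-freeness plus a short counting argument), and the contribution to $S(m+1)$ coming from the summands indexed by edges that leave $O(G_m)$ in step $m+1$ (handled cleanly by working at the pair level as above). All these corrections collect into at most $O(\Yt(m))$ in the numerator, using $\Z(m)$ (in particular the bound $Z_e \le (\log n)^2$) to rule out the relevant vertex-coincidences.
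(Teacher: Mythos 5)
Your proof is correct and follows essentially the same route as the paper: you write $\Yb = S/Q$ with $S = \YY = \sum_{e \in O(G_m)} Y_e$, compute $\Ex[\Delta S]$ by tracking creation ($X_h/2$ new open triangles become $Y$-pairs) and destruction (inclusion--exclusion using triangle-freeness of the $Y$-graph to get $|Y_e \cup Y_f| = Y_e + Y_f$ for $Y$-pairs), and then pass from $\Ex[\Delta\YY]$ to $\Ex[\Delta\Yb]$ via the product rule, with the cross term absorbed into $O(\Yt(m))$ using $\Yt^3/Q \ll \Yt$. The only tiny imprecision is in your case analysis for $Y_e(m) \cap Y_f(m) = \emptyset$: the case in which the putative common $Y$-neighbour $g$ would be $\{v,w\}$ itself is excluded not because $g \in \{e,f\}$ or a triangle in $E(G_m)$ arises, but because $\{v,w\} \in E(G_m)$ and hence $g \notin O(G_m)$; this is the triangle-free-$Y$-graph property the paper invokes, and your argument establishes it correctly even if the two listed exclusion reasons don't quite cover that case.
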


\begin{lemma}\label{Xbeq}
Let $\omega \cdot n^{3/2} < m \le m^*$. If $\X(m) \cap \Y(m) \cap \Q(m)$ holds, then
$$\Ex \big[ \Delta \Xb(m) \big] \, \in \, \frac{1}{Q(m)} \bigg( - 2 \cdot \Xb(m) \Yb(m) - 3 \cdot \Cov(X,Y) \pm O\big(  \Yt(m)^2 \big) \bigg).$$
\end{lemma}

Lemma~\ref{Qeq} is trivial, since if edge $e$ is chosen in step $m+1$, then $\Delta Q(m) = - Y_e(m) - 1$. In order to prove Lemmas~\ref{Ybeq} and~\ref{Xbeq}, we use the variables
$$\YY(m) = \sum_{e \in Q(m)} Y_e(m) \qquad \text{and} \qquad \XX(m) = \sum_{e \in Q(m)} X_e(m),$$
which are exactly twice the number of edges in the $Y$-graph, and six times the number of open triangles in $G_m$, respectively. Using the fact that the $Y$-graph is triangle-free (which follows since $G_m$ is triangle-free), it is not hard to show that if edge $e$ is added at step $m+1$, then
\begin{equation}\label{eq:deltaYY}
\Delta \YY(m) \, = \, X_e(m) - 2 \sum_{f \in Y_e(m)} Y_f(m),
\end{equation}
and moreover
\begin{equation}\label{eq:deltaXX}
\Delta \XX(m) \, \in \, - \, 3 \sum_{f \in Y_e(m)} X_f(m) \, \pm \, O\big(  \Yt(m)^2 \big)
\end{equation}
assuming $\X(m) \cap \Y(m) \cap \Q(m)$ holds. Summing over open edges $e \in Q(m)$ gives 
$$\Ex\big[ \Delta \YY(m) \big] \, = \, \frac{\XX(m)}{Q(m)} \, - \, \frac{2}{Q(m)}\sum_{e \in Q(m)} Y_e(m)^2$$
and
$$\Ex\big[ \Delta \XX(m) \big] \, \in \, - \frac{3}{Q(m)} \sum_{f \in Q(m)} X_f(m) \cdot Y_f(m) \, \pm \, O\big(  \Yt(m)^2 \big).$$
Finally, a straightforward calculation shows that if $\X(m) \cap \Y(m) \cap \Q(m)$, then 
$$\Ex \big[ \Delta \Yb(m) \big] \, \in \, \frac{\Ex \big[ \Delta \YY(m) \big]}{Q(m)} \, + \, \frac{\Yb(m)^2 \pm O\big( \Yt(m) \big)}{Q(m)}$$
and 
$$\Ex \big[ \Delta \Xb(m) \big] \, \in \, \frac{\Ex \big[ \Delta \XX(m) \big]}{Q(m)} \, + \, \frac{\Xb(m)\Yb(m) \pm O\big( \Xt(m) \big)}{Q(m)}.$$
Lemmas~\ref{Ybeq} and~\ref{Xbeq} now follow easily, see the Appendix~\cite{App} for details.

To deduce Lemma~\ref{whirlpool}, we simply use Lemma~\ref{A/B} to differentiate $\Xs$, $\Ys$ and $\Qs$, and observe that $\Xt(m) \ll \Yt(m)^2$, since $t \ge \omega$, and that 
$$\Var\big( Y(m) \big) \ll g_q(t) \Yt(m)^2 \quad \text{and} \quad \Cov\big( X(m), Y(m) \big) \ll g_q(t) \Xt(m) \Yt(m),$$ 
since $\X(m)$ and $\Y(m)$ hold. The lemma now follows via a straightforward calculation. %, see the Appendix.

\begin{figure}[h]
\includegraphics[scale=0.75]{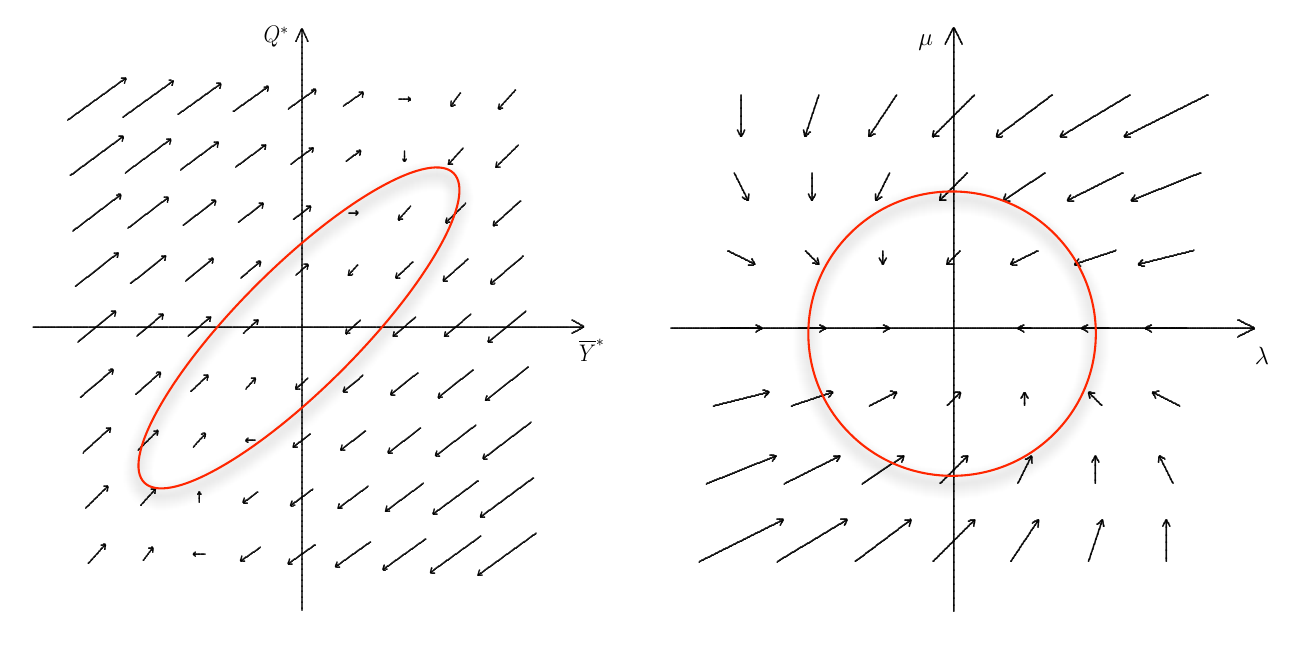}
\caption{The whirlpools of $(\Ys,\Qs)$ and $(\lambda,\mu)$.}
\end{figure}

\subsection{Lyapunov functions}

Note that, by Lemma~\ref{whirlpool}, although $\Xs$ and $\Ys$ are self-correcting, $\Qs$ is not. We shall therefore have to define a slightly more complicated martingale than in the previous sections. Fortunately, the classical work of Lyapunov~\cite{Lyap} provides us with exactly the function of $\Ys$ and $\Qs$ which we need. Indeed, let us rewrite the vector $\big( \Ys, \Qs \big)$ using the change of basis
\begin{equation}\label{eq:def:lambda}
\left( \begin{array}{c}
\Ys \\
\Qs
\end{array} \right) \, = \, 
\eps \left( \begin{array}{cc}
4 & 5 \\
4 & 3
\end{array} \right)
\left( \begin{array}{c}
\lambda \\
\mu
\end{array} \right),
\end{equation}
%where obviously $\lambda$ and $\mu$ are functions of $m$. 
and define a new parameter 
$$\Lambda(m) \, = \, \lambda(m)^2 + \mu(m)^2.$$
The following result will imply the bounds on $\Yb(m)$ and $Q(m)$ in Theorem~\ref{XYQthm}.

\begin{prop}\label{Lambprop}
For each $\omega \cdot n^{3/2} < m \le m^*$, set $\K(m) = \X(m) \cap \Y(m) \cap \Q(m)$. Then
$$\Pr\Big( \big( \Lambda(m) > 1 \big) \cap \K(m-1) \textup{ for some $\omega \cdot n^{3/2} < m \le m^*$} \Big) \, \le \, n^{-C \log n}.$$
\end{prop}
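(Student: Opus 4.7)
The change of basis in (6.1) is engineered precisely so that, after substitution of Lemma~\ref{whirlpool}(a),(b), the vector $(\lambda,\mu)$ satisfies
\[
\Ex\begin{pmatrix} \Delta \lambda(m) \\ \Delta \mu(m) \end{pmatrix} \in \frac{4t}{n^{3/2}}\left[\begin{pmatrix} -1 & -1 \\ 0 & -1 \end{pmatrix}\begin{pmatrix} \lambda(m) \\ \mu(m) \end{pmatrix} \pm O(\eps)\right],
\]
as one verifies by computing $P^{-1} M P$ with $P = \bigl(\begin{smallmatrix} 4 & 5 \\ 4 & 3\end{smallmatrix}\bigr)$ and $M = \bigl(\begin{smallmatrix} -3 & 2 \\ -2 & 1\end{smallmatrix}\bigr)$ the coefficient matrix coming from parts $(a)$ and $(b)$ of Lemma~\ref{whirlpool}. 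Thus in the new coordinates the whirlpool is Jordan with a repeated eigenvalue $-1$, and the plan is to use $\Lambda = \lambda^2 + \mu^2$ as a Lyapunov function capturing the drift of the entire two-dimensional system. So the first step is to derive the above expression for $\Ex[\Delta\lambda]$ and $\Ex[\Delta\mu]$ directly from Lemma~\ref{whirlpool}, keeping careful track of the $\pm\eps$ terms.

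The heart of the argument is a single-line computation with the Product Rule (Lemma~\ref{chain}):
\[
\Ex[\Delta \Lambda(m)] \,=\, 2\lambda \,\Ex[\Delta \lambda] + 2\mu\,\Ex[\Delta \mu] \,+\, \Ex\bigl[(\Delta\lambda)^2 + (\Delta\mu)^2\bigr].
\]
Substituting the Jordan-form dynamics, the first two terms contribute
\[
\frac{8t}{n^{3/2}}\bigl(-\lambda^2 - \lambda\mu - \mu^2\bigr) \;=\; -\,\frac{8t}{n^{3/2}}\bigl(\Lambda + \lambda\mu\bigr) \;\le\; -\,\frac{4t}{n^{3/2}}\,\Lambda,
\]
where the inequality uses $|\lambda\mu| \le (\lambda^2+\mu^2)/2 = \Lambda/2$. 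The $O(\eps)$ error terms contribute at most $O(\eps \cdot t/n^{3/2})$ (since $|\lambda|,|\mu| = O(1)$ whenever $\Lambda \le 2$), and the quadratic variation will be shown to be of strictly lower order below. Hence, provided $\eps$ is chosen sufficiently small and $\Lambda(m) \ge 1/2$, we obtain genuine self-correction
\[
\Ex\bigl[\Delta\Lambda(m)\bigr] \,\le\, -\,\frac{t}{n^{3/2}}.
\]

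The remaining step is to bound the increments. Since, when $\Y(m)\cap\Q(m)$ holds, $|\Delta Q(m)| \le 2\Yt(m)+1$ and a similar (easy) bound holds for $|\Delta \Yb(m)|$ using the triangle-free-process mechanism behind~\eqref{eq:deltaYY}, one checks via Lemma~\ref{lem:chainstar} that $|\Delta \Ys|, |\Delta \Qs| \le C t / \bigl(g_q(t)\,n^{3/2}\bigr)$, and hence (applying $P^{-1}/\eps$) the same bound holds for $|\Delta\lambda|,|\Delta\mu|$. Therefore
\[
|\Delta \Lambda(m)| \,\le\, 4\bigl(|\Delta\lambda| + |\Delta\mu|\bigr) + O\bigl((\Delta\lambda)^2 + (\Delta\mu)^2\bigr) \,\le\, \frac{C\,t}{g_q(t)\,n^{3/2}} \;=:\;\alpha(t),
\]
and a similar calculation using the $\Ex\bigl[|\Delta \Ys|\bigr], \Ex\bigl[|\Delta \Qs|\bigr] \le C\log n/\bigl(g_q(t) n^{3/2}\bigr)$ bounds (which follow by the same counting as in Sections~\ref{SecX} and~\ref{MixSec2}) gives $\Ex\bigl[|\Delta \Lambda(m)|\bigr] \le C\log n/\bigl(g_q(t) n^{3/2}\bigr) =: \beta(t)$. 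In particular $\alpha(t)\beta(t) n^{3/2} \le 1/(\log n)^3$, which justifies ignoring the quadratic variation above and sets us up for the martingale bound.

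Finally, we apply the Line of Peril / Line of Death machinery of Section~\ref{MartSec} to the (non-negative) variable $\Lambda$, with Line of Peril $\{\Lambda = 1/2\}$ and Line of Death $\{\Lambda = 1\}$. For each pair $(r,s)$ we consider the event that $\Lambda$ crosses the Line of Death at step $r+s$ having last crossed the Line of Peril at step $r+1$, and define on $[r,r+s_0]$ (with $s_0 = \min\{s, n^3/r\}$) the function $M(m) = \Lambda(m) + \frac{\delta r(m-r)}{n^3}$. The drift computed above shows that $M$ is a super-martingale while $\K(m) \cap \{\Lambda(m) \ge 1/2\}$ holds; exactly as in the proof of Lemma~\ref{lem:self:mart}, Lemma~\ref{mart} applied to $M$ with parameters $\alpha,\beta$ yields probability at most $\exp(-(\log n)^3)$ for each pair. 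Summing over the at most $n^4$ pairs $(r,s)$ and using $\alpha\beta n^{3/2} \le (\log n)^{-3}$ gives the bound $n^{-C\log n}$. The only real obstacle is bookkeeping: verifying that the various $\pm\eps$ and $\pm O(\eps)$ terms in Lemma~\ref{whirlpool} collapse into a clean bound after the change of basis — i.e.\ that the drift of $\Lambda$ really does beat all the error terms by a multiplicative constant whenever $\Lambda \ge 1/2$ — but this is a finite constant check, and the Jordan structure makes it painless.
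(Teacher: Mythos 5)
Your proposal is correct and follows essentially the same route as the paper. You've identified the same change of basis with $P = \bigl(\begin{smallmatrix}4&5\\4&3\end{smallmatrix}\bigr)$, verified that $P^{-1}MP = \bigl(\begin{smallmatrix}-1&-1\\0&-1\end{smallmatrix}\bigr)$ is a Jordan block with repeated eigenvalue $-1$ (the paper arrives at the same dynamics by solving the two linear equations $4\Ex[\Delta\lambda]+5\Ex[\Delta\mu]=\eps^{-1}\Ex[\Delta\Ys]$ and $4\Ex[\Delta\lambda]+3\Ex[\Delta\mu]=\eps^{-1}\Ex[\Delta\Qs]$ rather than by multiplying matrices, but the result is identical), and you then make the same Product-Rule computation for $\Ex[\Delta\Lambda]$ and the same algebraic bound $-\lambda^2-\lambda\mu-\mu^2\le-\Lambda/2$ (the paper writes it as $-\tfrac12(\lambda^2+\mu^2+(\lambda+\mu)^2)$). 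Your step-size bounds and the application of the Line of Peril / Line of Death machinery are all in the same spirit as the paper's appeal to Lemma~\ref{lem:self:mart} with $\alpha(t)=\beta(t)=(\log n)^4/(g_q(t)n^{3/2})$.

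The one thing you have glossed over is the initial condition: the Peril/Death bookkeeping (and Lemma~\ref{lem:self:mart}) requires knowing that $\Lambda$ starts \emph{below} the Line of Peril at time $a=\omega\cdot n^{3/2}$, i.e.\ $\Lambda(a)<1/2$, so that any later excursion above the Line of Death must pass through a crossing event $\LL(r,s)$. This is not automatic and needs to be checked: the paper verifies it by noting that the event $\K(a)\supseteq\Y(a)\cap\Q(a)$ forces $|\Ys(a)|+|\Qs(a)|\le\eps^2$ (since $f_y(\omega)\Yt(n^{3/2})\ll g_q(\omega)\Yt(a)$ and $f_y(\omega)\Qt(n^{3/2})\ll g_q(\omega)\Qt(a)$), and then reads off $\Lambda(a)=\lambda(a)^2+\mu(a)^2\le O(\eps^{-2})\bigl(|\Ys(a)|+|\Qs(a)|\bigr)^2<1/2$ from the inverse change of basis. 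You should add this check; without it the reduction to crossing events in your final paragraph is incomplete. Apart from this omission — and the hand-waving about $\Ex[|\Delta\Ys|],\Ex[|\Delta\Qs|]$ following ``by the same counting'' as elsewhere, which the paper handles with the single bound of Lemma~\ref{XYQalpha} — the argument is sound.
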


We will prove Proposition~\ref{Lambprop} using the our usual martingale method; as always, we shall need to bound the maximum possible and expected single-step changes.

\begin{lemma}\label{XYQalpha}
Let $\omega \cdot n^{3/2} < m \le m^*$, and suppose that $\X(m) \cap \Y(m) \cap \Q(m)$ holds. Then
\begin{equation}\label{eq:XYQstar1}
|\Delta \Xs(m)| + |\Delta \Ys(m)| + | \Delta \Qs(m)| \, \le \, \frac{(\log n)^3}{g_q(t) \cdot n^{3/2}},
\end{equation}
and hence
\begin{equation}\label{eq:XYQstar2}
|\Delta \Lambda(m) | \, \le \, \frac{(\log n)^4}{g_q(t) \cdot n^{3/2}} \qquad \text{and} \qquad  \Ex\big[ | \Delta \Lambda(m) | \big] \, \le \, \frac{(\log n)^4}{g_q(t) \cdot n^{3/2}}.
\end{equation}
\end{lemma}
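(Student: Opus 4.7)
The lemma is essentially a bookkeeping exercise: starting from the explicit formulas for the absolute single-step changes of the un-normalized variables $Q$, $\YY=\sum_{e\in Q(m)} Y_e(m)$ and $\XX=\sum_{e\in Q(m)} X_e(m)$ (derived earlier via Lemma~\ref{Qeq} and equations~\eqref{eq:deltaYY} and~\eqref{eq:deltaXX}), I would push these bounds through the quotient rule, the normalization of Lemma~\ref{lem:chainstar}, and finally the expansion of $\Lambda=\lambda^2+\mu^2$.

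\medskip

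\noindent\textbf{Step 1: raw bounds.} If $e$ is the edge added at step $m+1$, then Lemma~\ref{Qeq} gives $|\Delta Q(m)|=Y_e(m)+1$, and the events $\Y(m) \cap \Q(m)$ force this to be $O(\Yt(m))$. From~\eqref{eq:deltaYY}, $|\Delta \YY(m)| \le X_e(m)+2\sum_{f\in Y_e(m)} Y_f(m)$, which under $\X(m) \cap \Y(m)$ is bounded by $O(\Yt(m)^2)$ (since $t\ge\omega$ makes $\Xt(m)\ll\Yt(m)^2$ false — actually $\Xt\gg\Yt^2$ for $t$ small, but the bound $O(\Xt(m)+\Yt(m)^2)$ suffices for our purposes). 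Similarly, from~\eqref{eq:deltaXX} we get $|\Delta \XX(m)|=O(\Xt(m)\Yt(m))$. All these bounds hold pointwise in the choice of~$e$.

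\medskip

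\noindent\textbf{Step 2: from raw changes to normalized errors.} Writing $\Yb(m)=\YY(m)/Q(m)$ and $\Xb(m)=\XX(m)/Q(m)$, the elementary inequality
$$\big|\Delta (A/B)\big| \,\le\, \frac{|\Delta A|}{B} + \frac{A(m{+}1)\,|\Delta B|}{B\cdot B(m{+}1)}$$
combined with the event $\Q(m)$ to control denominators, converts the Step~1 bounds into $|\Delta \Yb(m)| = O(\Yt(m)^2/\Qt(m))$ and $|\Delta \Xb(m)| = O(\Xt(m)\Yt(m)/\Qt(m))$. A direct computation shows that both expressions simplify to $O(t\cdot \Yt(m)/n^{3/2})$ and $O(t\cdot \Xt(m)/n^{3/2})$ respectively. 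Now apply Lemma~\ref{lem:chainstar} (whose hypotheses on $\Xt,\Yt,\Qt,g_q$ are routine to verify from their definitions) to obtain
$$|\Delta \Xs(m)|,\ |\Delta \Ys(m)|,\ |\Delta \Qs(m)| \,=\, O\!\left(\frac{t}{g_q(t)\,n^{3/2}}\right) + O\!\left(\frac{\log n}{g_q(t)\,n^{3/2}}\right),$$
which is comfortably at most $(\log n)^3/(g_q(t)\,n^{3/2})$, establishing~\eqref{eq:XYQstar1}.

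\medskip

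\noindent\textbf{Step 3: the Lyapunov function.} Expanding $\Lambda = \lambda^2+\mu^2$,
$$\Delta \Lambda(m) \,=\, 2\lambda\,\Delta \lambda \,+\, 2\mu\,\Delta \mu \,+\, (\Delta \lambda)^2 \,+\, (\Delta \mu)^2.$$
From the change of basis~\eqref{eq:def:lambda}, $|\lambda|$ and $|\mu|$ are each bounded by $O(|\Ys|+|\Qs|)$. The event $\Q(m)$ yields $|\Qs(m)|\le 1$, and the event $\Y(m)$ yields $|\Ys(m)| \le g_y(t)/g_q(t) = O(\log n)$, so $|\lambda|,|\mu|=O(\log n)$. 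Similarly, the change of basis gives $|\Delta\lambda|,|\Delta\mu| = O(|\Delta\Ys|+|\Delta\Qs|)$, which by Step~2 is $O((\log n)^3/(g_q(t)\,n^{3/2}))$. The cross terms therefore contribute at most $O(\log n)\cdot O((\log n)^3/(g_q(t)\,n^{3/2})) = O((\log n)^4/(g_q(t)\,n^{3/2}))$, while the squared terms are of order $(\log n)^6/(g_q(t)^2\,n^3)$ and hence negligible. This gives the deterministic bound on $|\Delta \Lambda(m)|$; the expected-value bound is an immediate consequence, since the pointwise bound holds uniformly in the choice of~$e\in O(G_m)$.

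\medskip

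\noindent\textbf{Expected obstacle.} There is no conceptual difficulty here — the proof is entirely mechanical. The one point to be careful about is constantly verifying that the various polylog factors actually fit inside the $(\log n)^3$ and $(\log n)^4$ slack; in particular, absorbing the factor $t\le\sqrt{\log n}$ coming from the derivative terms, and the factor $\log n$ coming from the ratio $g_y(t)/g_q(t)$ that controls $|\Ys|$, accounts for essentially all of the loss.
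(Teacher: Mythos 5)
Your proposal is correct and follows essentially the same route as the paper's (unpublished) sketch: derive raw bounds on $\Delta Q$, $\Delta\YY$, $\Delta\XX$ from Lemma~\ref{Qeq},~\eqref{eq:deltaYY} and~\eqref{eq:deltaXX}, convert to bounds on $|\Delta \Qs|$, $|\Delta \Ys|$, $|\Delta \Xs|$ via the quotient rule and Lemma~\ref{lem:chainstar}, then expand $\Lambda = \lambda^2+\mu^2$ and bound the cross and squared terms. Two small cosmetic remarks. First, the parenthetical ``$t\ge\omega$ makes $\Xt(m)\ll\Yt(m)^2$ false'' has the inequality backwards: since $\Xt(m)/\Yt(m)^2 = 1/(8t^2)$, the relation $\Xt(m) \ll \Yt(m)^2$ \emph{holds} once $t\ge\omega$; the $O(\Xt(m)+\Yt(m)^2)$ fallback you quote of course also works and coincides with $O(\Yt(m)^2)$ in this regime, so the conclusion is unaffected. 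Second, in Step~3 there is no need to route through $\Y(m)$ and the ratio $g_y(t)/g_q(t)=\log n$: the event $\Q(m)$, as defined on the range $\omega\cdot n^{3/2} < m \le m^*$, already gives $|\Ys(m)|\le 1$, $|\Qs(m)|\le 1$ directly, so $|\lambda(m)|,|\mu(m)| = O(1/\eps) = O(1)$; this is what the paper's sketch means by ``use the event $\Q(m)$ to bound the various terms,'' and it saves you a factor of $\log n$. Either way, both cross and squared terms fit comfortably inside the claimed $(\log n)^4/(g_q(t) n^{3/2})$, and the expectation bound follows immediately from the deterministic one, as you say.
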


Once again, we only sketch the proof, and postpone the (easy) details to the Appendix~\cite{App}. 

\begin{proof}[Sketch of proof]
The inequality~\eqref{eq:XYQstar1} follows easily from~\eqref{eq:deltaYY} and~\eqref{eq:deltaXX}, together with bounds given by Lemma~\ref{lem:chainstar} (cf.~\eqref{eq:deltaVstar}) for $\Xb$, $\Yb$ and $Q$, via a straightforward calculation (see the Appendix). To deduce~\eqref{eq:XYQstar2}, simply observe that
$$|\Delta \Lambda(m) | \, \le \, 2 \Big( \big| \lambda(m) \cdot \Delta \lambda(m) \big| + \big| \mu(m) \cdot \Delta \mu(m) \big| \Big) + |\Delta \lambda(m)|^2 + |\Delta \mu(m)|^2,$$
and use~\eqref{eq:XYQstar1} and the event $\Q(m)$ to bound the various terms. 
\end{proof}

We also need to show that $\Lambda(m)$ is self-correcting.

\begin{lemma}\label{Lambself}
If $\X(m) \cap \Y(m) \cap \Q(m)$ holds, then % and $\Lambda(m) \le 1$, then
\begin{equation}\label{eq:lamb}
\Ex\big[ \Delta \Lambda(m) \big] \, \le \, \frac{4t}{n^{3/2}} \Big( - \Lambda(m) + \eps \Big).
\end{equation}
\end{lemma}

\begin{proof}
Note first that 
$$\frac{1}{\eps} \left( \begin{array}{c}
-3\Ys + 2\Qs \\
-2\Ys + \Qs
\end{array} \right)  \, = \, 
\left( \begin{array}{cc}
-3 & 2 \\
-2 & 1
\end{array} \right)
\left( \begin{array}{cc}
 4 & 5 \\
4 & 3
\end{array} \right) 
\left( \begin{array}{c}
\lambda \\
\mu
\end{array} \right) \, = \, 
\left( \begin{array}{c}
-4\lambda - 9\mu \\
-4\lambda - 7\mu
\end{array} \right),$$
and hence, by~\eqref{eq:def:lambda} and Lemma~\ref{whirlpool},
\begin{align*}
4 \cdot \Ex\big[ \Delta \lambda(m) \big] + 5 \cdot \Ex\big[ \Delta \mu(m) \big] & = \, \frac{1}{\eps} \cdot \Ex \big[ \Delta \Ys(m) \big] \, \in \, \ds\frac{4t}{n^{3/2}} \Big( - 4 \lambda(m) - 9\mu(m) \pm o(1) \Big)\\
4 \cdot\Ex\big[ \Delta \lambda(m) \big] + 3 \cdot \Ex\big[ \Delta \mu(m) \big] & = \, \frac{1}{\eps} \cdot \Ex \big[ \Delta \Qs(m) \big] \, = \, \ds\frac{4t}{n^{3/2}} \Big( - 4 \lambda(m) - 7\mu(m) + o(1) \Big).
\end{align*}
Note that $|\lambda| + |\mu| = O(1/\eps)$, since the event $\Q(m)$ holds. It follows that
$$\frac{n^{3/2}}{4t} \cdot \Ex\big[ \Delta \lambda(m) \big] \, \in \, - \lambda(m) - \mu(m) \pm o(1) \quad \text{and} \quad \frac{n^{3/2}}{4t} \cdot  \Ex\big[ \Delta \mu(m) \big] \, \in \, - \mu(m) \pm o(1),$$
and hence
\begin{multline*}
 \frac{n^{3/2}}{2t} \cdot \left( \begin{array}{c}
\Ex\big[ \Delta \lambda(m) \big] \\[+0.5ex]
\Ex\big[ \Delta \mu(m) \big] 
\end{array} \right) \cdot
\left( \begin{array}{c}
\lambda(m) \\
\mu(m) 
\end{array} \right)
 \, \le \, - 2\lambda(m)^2 - 2\lambda(m) \mu(m) - 2\mu(m)^2 + o(1) \\
 \, = \, - \Big( \lambda(m)^2 + \mu(m)^2 + \big(\lambda(m) + \mu(m) \big)^2 \Big) + o(1) \, \le \, - \Lambda(m) + o(1).
\end{multline*}
Noting that
$$\Delta \Lambda(m) \, = \, \big( \Delta \lambda(m) \big)^2 + \big( \Delta \mu(m) \big)^2 + 2 \Big( \lambda(m) \cdot \Delta \lambda(m) + \mu(m) \cdot \Delta \mu(m) \Big),$$
and using~\eqref{eq:XYQstar1} to bound $\Delta \lambda(m)$ and $\Delta \mu(m)$, the bound~\eqref{eq:lamb} follows.
\end{proof}

We can now prove Proposition~\ref{Lambprop}, using the method of Section~\ref{MartSec}. To be precise, we shall show that $\Lambda$ has all of the properties required of $A^*$ in the statement of Lemma~\ref{lem:self:mart}, and deduce that therefore the conclusion of the lemma holds with $A^* = \Lambda$. 

\begin{proof}[Proof of Proposition~\ref{Lambprop}]
Let $\K(m) = \X(m) \cap \Y(m) \cap \Q(m)$ for each $m \in [m^*]$, set $I = [a,b] = [\omega \cdot n^{3/2},m^*]$ and
$$\alpha(t) \, = \, \beta(t) \, = \, \frac{(\log n)^4}{g_q(t) \cdot n^{3/2}},$$
and observe that $\alpha$ and $\beta$ are $C$-slow and satisfy $\frac{\eps t}{n^{3/2}} \le \alpha(t) = \beta(t) \le \eps^2$. By Lemma~\ref{Lambself}, if $\K(m)$ holds and $\Lambda(m) \ge 1/2$ then  
$$\Ex\big[ \Delta \Lambda(m) \big] \, \le \, -\,\frac{t}{n^{3/2}},$$
which (since $\Lambda(m) \ge 0$) is exactly the required self-correction condition. Moreover, by Lemma~\ref{XYQalpha}, we have 
$$|\Delta \Lambda(m)| \le \alpha(t) \qquad  \text{and} \qquad \Ex\big[ |\Delta \Lambda(m)| \big] \le \beta(t)$$
for every $m \in [a,b]$ for which $\K(m)$ holds. It now simply remains to observe that
\begin{equation}
\left( \begin{array}{c}
\lambda \\
\mu
\end{array} \right) \, = \, 
\frac{1}{8\eps} \left( \begin{array}{rr}
-3 & 5 \\
4 & -4
\end{array} \right)
\left( \begin{array}{c}
\Ys\\
\Qs
\end{array} \right),
\end{equation}
and that if $\K(a)$ holds then $|\Ys(a)| + |\Qs(a)| \le \eps^2$, since $f_y(\omega) \Yt(n^{3/2}) \ll g_q(\omega) \Yt(a)$, and similarly $f_y(\omega) \Qt(n^{3/2}) \ll g_q(\omega) \Qt(a)$. Hence, 
$$\Lambda(a) \, = \, \lambda(a)^2 + \mu(a)^2  \, \le \, \frac{O(1)}{\eps^2} \cdot \Big( | \Ys(a) | + | \Qs(a) | \Big)^2 < \, \frac{1}{2},$$
as required. Finally, observe that
$$\alpha(t) \beta(t) n^{3/2} \, = \, \frac{(\log n)^8}{g_q(t)^2 \cdot n^{3/2}} \, \le \, \frac{(\log n)^2}{n}$$
for every $\omega < t \le t^*$. By Lemma~\ref{lem:self:mart}, it follows that
$$\Pr\Big( \big( \Lambda(m) > 1 \big) \cap \K(m-1) \text{ for some $m \in [a,b]$} \Big) \, \le \, n^4 \exp\bigg( - \frac{\delta' n}{(\log n)^2} \bigg) \, \le \, n^{-C \log n},$$
as required.
\end{proof}

We are finally ready to prove Theorem~\ref{XYQthm}.

\begin{proof}[Proof of Theorem~\ref{XYQthm}]
The bounds on $\Yb(m)$ and $Q(m)$ follow easily from Proposition~\ref{Lambprop}, since if $\Lambda(m) \le 1$ then $|\Ys(m)| + |\Qs(m)| \le 20\eps$. Set
$$\K(m) \, = \, \X(m) \cap \Y(m) \cap \Q(m) \cap \Big( |\Ys(m)| + |\Qs(m)| \le 20 \eps \Big)$$
for each $m \in [m^*]$, and let $I = [a,b] = [\omega \cdot n^{3/2},m^*]$. We claim that
$$\Pr\Big( \big( \Xs(m) > 1 \big) \cap \K(m-1) \text{ for some $m \in [a,b]$} \Big) \, \le \, n^{-C \log n}.$$
As usual, we shall apply the method of Section~\ref{MartSec}. Indeed, set $h(t) = t \cdot n^{-3/2}$ and
$$\alpha(t) \, = \, \beta(t) \, = \, \frac{(\log n)^3}{g_q(t) \cdot n^{3/2}},$$
and observe that $\alpha$ and $\beta$ are $C$-slow and satisfy $\frac{\eps t}{n^{3/2}} \le \alpha(t) = \beta(t) \le \eps^2$. Observe that $\Xb$ is $(g_q,h;\K)$-self-correcting on $[a,b]$ since if $\K(m)$ holds then 
$$\Ex \big[ \Delta \Xs(m) \big] \, \in \, \ds\frac{4t}{n^{3/2}} \Big( - \Xs(m) \pm \eps \Big).$$
by Lemma~\ref{whirlpool}, and moreover
$$|\Delta \Xs(m)| \le \alpha(t) \qquad  \text{and} \qquad \Ex\big[ |\Delta \Xs(m)| \big] \le \beta(t)$$
by Lemma~\ref{XYQalpha}. Note also that  if $\K(a)$ holds then $|\Xs(a)| < 1/2$, since $f_x(\omega) \Xt(n^{3/2}) \ll g_q(\omega) \Xt(a)$ if $\omega(n) \to \infty$ sufficiently slowly. Finally, observe that
$$\alpha(t) \beta(t) n^{3/2} \, \le \, \frac{(\log n)^6}{g_q(t)^2 \cdot n^{3/2}} \, \ll \, \frac{1}{n}$$
for every $\omega < t \le t^*$. By Lemma~\ref{lem:self:mart}, it follows that
$$\Pr\Big( \big( \Xs(m) > 1 \big) \cap \K(m-1) \text{ for some $m \in [a,b]$} \Big) \, \le \, n^4 e^{-n} \, \le \, n^{-C \log n},$$
as claimed. Combining this bound with Proposition~\ref{Lambprop}, the theorem follows. 
%\footnote{Indeed, consider the minimum $m$ such that $\X(m-1) \cap \Y(m-1) \cap \Q(m)^c$ holds, and note that either $\K(m-1)$ holds or $\Lambda(m) > 1$.}
\end{proof}

\subsection{The proof of Theorems~\ref{Qthm},~\ref{XbYbthm},~\ref{Ythm},~\ref{Vthm} and~\ref{NFthm}}

We end this section by deducing the main results of Section~\ref{sketchSec}.

\begin{thm}\label{finalthm}
With high probability, the events $\E(m)$, $\V(m)$, $\X(m)$, $\Y(m)$, $\Z(m)$ and $\Q(m)$ all hold for every $m \le m^*$. Or, more precisely,
\begin{equation}\label{eq:finalthm}
\Pr\Big( \E(m^*) \cap \V(m^*) \cap \X(m^*) \cap \Y(m^*) \cap \Z(m^*) \cap \Q(m^*) \Big) \, \ge \, 1 - n^{-\log n}
\end{equation}
for all sufficiently large $n \in \N$.
\end{thm}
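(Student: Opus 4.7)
The plan is a straightforward union bound over $m \in [m^*]$ and over the six families of events, exploiting the fact that each proposition proved in Sections~\ref{MartSec}--\ref{XYQsec} bounds the probability that a given event fails at step~$m$ \emph{conditional on all the other events having tracked up to step $m-1$}. Writing
$$\J(m) \, := \, \E(m) \cap \V(m) \cap \X(m) \cap \Y(m) \cap \Z(m) \cap \Q(m),$$
the key observation is that
$$\J(m^*)^c \; \subseteq \; \bigcup_{m=1}^{m^*} \Big[ \big( \E(m)^c \cup \V(m)^c \cup \X(m)^c \cup \Y(m)^c \cup \Z(m)^c \cup \Q(m)^c \big) \cap \J(m-1) \Big],$$
since if $\J(m^*)^c$ holds then there is a minimal step $m$ at which one of the six events first fails, and at step $m-1$ all of them still hold.

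First I would dispense with the initial range $m \le a := \omega \cdot n^{3/2}$. In this range, $\V(m)$ is vacuous (the event $\V$ only constrains steps $m > a$); the events $\X(m)$ and $\Y(m)$ follow from Bohman's bounds (Proposition~\ref{lem:landbeforetime}), the event $\Q(m)$ follows from $\Y(m-1)$ via Proposition~\ref{prop:YimpliesQ}, the event $\E(m)$ follows from Proposition~\ref{NFa} (case $0 < t \le \omega$ of Theorem~\ref{EEthm}), and $\Z(m)$ follows from Proposition~\ref{Zprop}. Each of these is proved with failure probability at most $n^{-C\log n}$ per step, and $a \le n^{3/2}$, so the total contribution from this range to $\Pr(\J(m^*)^c)$ is at most $n^{-\log n}/2$.

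For $a < m \le m^*$, I would apply the relevant bounds directly:
$$\Pr\big( \E(m)^c \cap \J(m-1) \big) \, \le \, \Pr\big( \E(m)^c \cap \K^\E(m-1) \big) \, \le \, n^{-2\log n}$$
by Theorem~\ref{EEthm}, since $\J(m-1) \subseteq \K^\E(m-1) = \Y(m-1) \cap \Z(m-1) \cap \Q(m-1)$; similarly, Proposition~\ref{Xprop} gives $\Pr(\X(m)^c \cap \J(m-1)) \le n^{-C\log n}$, Proposition~\ref{Yprop} (equivalently, Proposition~\ref{Vprop}) gives $\Pr(\Y(m)^c \cap \J(m-1)) \le n^{-C\log n}$ and $\Pr(\V(m)^c \cap \J(m-1)) \le n^{-C\log n}$, Proposition~\ref{Zprop} gives $\Pr(\Z(m)^c \cap \J(m-1)) \le n^{-C\log n}$, and Theorem~\ref{XYQthm} gives $\Pr(\Q(m)^c \cap \J(m-1)) \le n^{-2\log n}$. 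In each case the hypothesis of the relevant proposition — some subset of $\{\E(m-1),\X(m-1),\Y(a),\Y(m-1),\Z(m-1),\Q(m-1)\}$ — is implied by $\J(m-1)$.

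The main obstacle is essentially bookkeeping rather than substance: one must check that the various hypotheses $\K^\E$, $\K^\X$, $\K^\Y$ appearing in the propositions really are all implied by $\J(m-1)$, and that the two ranges $m \le a$ and $m > a$ in the definitions of $\X$, $\Y$ and $\Q$ splice together correctly at $t = \omega$ (this is where the condition $f_y(\omega)\Yt(n^{3/2}) \ll g_y(\omega)\Yt(a)$ used throughout Sections~\ref{MartSec}--\ref{XYQsec} is invoked one last time). Once this is verified, the union bound gives
$$\Pr\big( \J(m^*)^c \big) \, \le \, 6 \cdot m^* \cdot n^{-2\log n} \, \le \, n^{-\log n}$$
for $n$ sufficiently large, completing the proof.
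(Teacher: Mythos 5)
Your proposal is correct and follows essentially the same route as the paper: identify the minimal step at which any event first fails (so that all six events hold at step $m-1$), split into $t \le \omega$ and $t > \omega$, and apply the relevant propositions from Sections~\ref{MartSec}--\ref{XYQsec} followed by a union bound over $m$. The paper's own proof is a compressed version of exactly this argument, listing the six cases without the explicit containment $\J(m^*)^c \subseteq \bigcup_m [(\cdots)^c \cap \J(m-1)]$, but the hypotheses you check (that $\J(m-1)$ implies each of $\K^\E(m-1)$, $\K^\X(m-1)$, $\K^\Y(m-1)$, etc., and that the two ranges splice at $t=\omega$) are precisely the bookkeeping the paper leaves implicit.
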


\begin{proof}
Consider the\footnote{Or, more precisely, \emph{a} first event, since more than one might fail in the same step.} first of the events to go astray, and suppose that it does so in step $m$ of the triangle-free process. The probability of this event is controlled by:
\begin{itemize}
\item[--] Theorem~\ref{EEthm} if $\E(m)^c$ holds, see Definition~\ref{def:events:E};\smallskip
\item[--] Proposition~\ref{lem:landbeforetime} if either $\X(m)^c$ or $\Y(m)^c$ holds and $t \le \omega$, see Definition~\ref{def:events:XYQ};\smallskip
\item[--] Proposition~\ref{prop:YimpliesQ} if $\Q(m)^c$ holds and $t \le \omega$, see Definition~\ref{def:events:XYQ};\smallskip
\item[--] Proposition~\ref{Xprop} if $\X(m)^c$ holds and $t > \omega$, see Definition~\ref{def:events:XYQ};\smallskip
\item[--] Proposition~\ref{Yprop} $\Y(m)^c$ holds and $t > \omega$, see Definition~\ref{def:events:XYQ}; \smallskip
\item[--] Proposition~\ref{Vprop} $\V(m)^c$ holds and $t > \omega$, see Definition~\ref{def:events:UV}; \smallskip
\item[--] Proposition~\ref{Zprop} if $\Z(m)^c$ holds; and, finally, \smallskip
\item[--] Theorem~\ref{XYQthm} if $\Q(m)^c$ holds and $t > \omega$, see Definition~\ref{def:events:XYQ}.
\end{itemize} 
Summing the probabilities in these statements, we obtain~\eqref{eq:finalthm}, as required.
\end{proof}

Theorems~\ref{Qthm},~\ref{XbYbthm},~\ref{Ythm},~\ref{Vthm} and~\ref{NFthm} all follow immediately from Theorem~\ref{finalthm}.

\section{Independent sets and maximum degrees in $G_{n,\triangle}$}\label{indepSec}

In this section we shall control the maximum degree and the independence number of the graph $G_{n,\triangle}$. Recall that $\eps > 0$ is an arbitrary, sufficiently small constant, set $\gamma = 10 \sqrt{\eps}$, and choose $\delta = \delta(\eps) > 0$ sufficiently small.\footnote{Since $C = C(\eps)$ was arbitrary, we may assume that $C = C(\eps,\delta)$ is sufficiently large.} Moreover, let $n_1(\eps,\delta,C,\omega) \in \N$ be sufficiently large.

We shall prove the following propositions. Together with Theorem~\ref{finalthm}, they imply Theorem~\ref{thm:indepsets}, and hence complete the proofs of Theorems~\ref{triangle} and~\ref{R3k}. 

\begin{prop}\label{prop:maxdeg}
If $n \ge n_1(\eps,\delta,C,\omega)$, then with probability at least $1 - e^{-\sqrt{n}}$, either   
$$\Delta\big( G_{n,\triangle} \big) \, \le \, \bigg( \frac{1}{\sqrt{2}} + \gamma \bigg) \sqrt{ n \log n },$$
or $\E(m) \cap \Y(m) \cap \Z(m) \cap \Q(m)$ fails to hold for some $m \le m^*$.
\end{prop}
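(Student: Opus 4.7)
The plan is to separately control the degree of each vertex at time $m^*$ and the additional degree gained between steps $m^*$ and the end of the process. For the first part, applying Theorem~\ref{EEthm} (through the event $\E(m^*)$) to the graph structure triple with $V(F) = \{u,u'\}$, $E(F) = \{uu'\}$, $O(F) = \emptyset$ and $A = \{u\}$, we would obtain
$$\deg_{G_{m^*}}(v) \, = \, N_\phi(F)(m^*) \, \in \, \big(1 \pm g_{F,A}(t^*)\big) \cdot 2t^*\sqrt{n} \, \le \, \bigg(\frac{1}{\sqrt 2} - 2\eps + o(1)\bigg)\sqrt{n\log n}$$
uniformly in $v \in V(G_{m^*})$. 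Replacing the closed edge of $F$ by an open edge yields the companion bound
$$|Q_v(m^*)| \, \le \, \big(1 + o(1)\big) \cdot e^{-4(t^*)^2} n \, \le \, n^{1/2+O(\eps)}.$$
Since $|Q_v(m)|$ is monotonically non-increasing in $m$, every edge at $v$ added after~$m^*$ must lie in $Q_v(m^*)$; it therefore suffices to prove that, with probability at least $1 - e^{-\sqrt{n}}$, no vertex~$v$ has $k := \lceil (\gamma + 2\eps)\sqrt{n\log n}\rceil$ or more of its $m^*$-open edges eventually added.

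I propose to bound this latter probability by a union bound: for each $v$ and each $k$-subset $S \subseteq Q_v(m^*)$, the goal is to show
\begin{equation}\label{eq:Sprobbound}
\Pr\Big( S \subseteq E(G_{n,\triangle}) \,\Big|\, G_{m^*} \Big) \, \le \, \bigg(\frac{2}{\Yt(m^*)}\bigg)^{|S|}.
\end{equation}
The heuristic motivating \eqref{eq:Sprobbound} is to couple the remainder of the triangle-free process with a uniformly random ordering $\sigma$ of $O(G_{m^*})$, under which each subsequent step selects the earliest edge of $\sigma$ that is still open. Then an edge $e \in Q_v(m^*)$ is added to $G_{n,\triangle}$ only if it appears in $\sigma$ before any of its $Y$-neighbours in $G_{m^*}$; on the event $\Y(m^*)$, $e$ already has $(1 - o(1))\Yt(m^*)$ such competitors, so the chance of $e$ winning this race is at most $(1 + o(1))/\Yt(m^*)$. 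A negative-correlation argument over the $|S|$ simultaneous races would then deliver \eqref{eq:Sprobbound}. Granting this, the total failure probability becomes
$$n \cdot \binom{n^{1/2 + O(\eps)}}{k} \bigg(\frac{2}{\Yt(m^*)}\bigg)^k \, \le \, n \cdot \bigg(\frac{e \cdot n^{O(\eps)}}{\gamma \log n}\bigg)^k,$$
using $\Yt(m^*) \ge \sqrt{2\log n} \cdot n^{-O(\eps)}$ from $\Y(m^*)$. Since $k = \lceil (\gamma + 2\eps)\sqrt{n\log n}\rceil$, this quantity is $\exp\big(-\Omega(\sqrt{n\log n} \log\log n)\big)$, comfortably smaller than $e^{-\sqrt n}$, once $\eps$ is taken sufficiently small compared to $\gamma = 10\sqrt{\eps}$.

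The main obstacle is the rigorous justification of \eqref{eq:Sprobbound}. The subtlety is that $Y_e(m)$ can evolve after $m^*$, so the set of edges that ever close $e$ is potentially strictly larger than $Y_e(m^*)$, and the $|S|$ individual ``races'' are not independent because the $Y$-neighbourhoods of different $e \in S$ share the vertex $v$. Both difficulties should be handled by restricting attention to the $Y$-neighbourhoods \emph{as they exist at time $m^*$}: the lower bound $Y_e(m^*) \ge (1 - o(1))\Yt(m^*)$ from $\Y(m^*)$ already furnishes enough competitors for each $e \in S$, so one does not need to track later-created $Y$-neighbours, and overlaps between $Y_e(m^*)$ for distinct $e \in S$ can be controlled by the bound $Z_f(m^*) \le (\log n)^2$ supplied by $\Z(m^*)$, leaving only a negligible double-counting correction. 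A Harris--FKG-type inequality applied to first-arrival events among disjoint subfamilies of $\sigma$ then completes the argument.
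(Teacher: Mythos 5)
Your decomposition -- bounding $\deg_{G_{m^*}}(v)$ by the event $\E(m^*)$ and then separately bounding the degree gained after step $m^*$ -- is a genuinely different route from the paper's. The paper instead works directly with the hypothetical final neighbourhood $S = N_{G_{n,\triangle}}(v)$: it observes that $S$ must already be independent in $G_{m^*}$, with all of $S$ open-or-closed to $v$ and $N_{G_{m^*}}(v) \subseteq S$, and then bounds the probability that such a configuration arises by summing over the events $\A'(S,\delta)$, $\B'(S,\delta)$, $\C'(S,\delta)$, $\D(S,\delta)$ (Lemmas~\ref{maxdeg:a}, \ref{maxdeg:b}, \ref{maxdeg:c}, \ref{indep:d}). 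Your first step (the bound on $\deg_{G_{m^*}}(v)$ and on $|Q_v(m^*)|$ from $\E(m^*)$) is correct, as is the random-permutation coupling for the remainder of the process. However, the argument for the key estimate~\eqref{eq:Sprobbound} has a genuine gap.

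The gap is the claim that ``an edge $e\in Q_v(m^*)$ is added to $G_{n,\triangle}$ only if it appears in $\sigma$ before any of its $Y$-neighbours in $G_{m^*}$.'' This is false. If $f\in Y_e(m^*)$ appears before $e$ in $\sigma$ but $f$ is itself closed (by some even earlier pair) before its turn, then $f$ is rejected and does \emph{not} close $e$, so $e$ may still be added even though $f$ beat $e$ in $\sigma$. Consequently ``$e$ added'' implies only that every $f\in Y_e(m^*)$ preceding $e$ in $\sigma$ is \emph{rejected}, which is a much weaker condition than ``comes after $e$.'' A toy example shows the pointwise bound $\Pr(e\textup{ added})\le 1/(|Y_e(m^*)|+1)$ can fail: take three open pairs $e,f,g$ with the $Y$-graph a path $e\!-\!f\!-\!g$; then $\Pr(e\textup{ added})=2/3$ even though $|Y_e|+1=2$. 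In the actual process the $Y$-graph is nearly regular, but your argument does not use that regularity, so as written the bound does not follow. A second, smaller issue is that the proposed FKG/negative-correlation step over the races is not rigorous as stated; and the total overlap $\big|Y_e(m^*)\cap\bigcup_{e'\neq e}Y_{e'}(m^*)\big|$ for $e'\in S$ is not obviously negligible (the pairwise bound from $\Z(m^*)$ controls only pairwise overlaps, and there are $|S|\approx\gamma\sqrt{n\log n}\gg\Yt(m^*)$ of them). This particular issue is fixable -- the overlaps are all edges through $v$, so one can pass to the disjoint subsets $T_i=\{f\in Y_{e_i}(m^*):v\notin f\}$, each still of size $\approx\Yt(m^*)/2$ -- but the primary gap, the false ``race'' claim, would require an essentially new argument to repair.
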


\begin{prop}\label{prop:indep}
If $n \ge n_1(\eps,\delta,C,\omega)$, then with probability at least $1 - e^{-\sqrt{n}}$ either     
$$\alpha\big( G_{n,\triangle} \big) \, \le \, \big( \sqrt{2} + \gamma \big) \sqrt{ n \log n },$$
or $\E(m) \cap \Y(m) \cap \Z(m) \cap \Q(m)$ fails to hold for some $m \le m^*$.
\end{prop}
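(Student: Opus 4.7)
Set $s := \lceil (\sqrt{2}+\gamma)\sqrt{n\log n} \rceil$. The plan is to bound the expected number of independent sets of size $s$ in $G_{m^*}$, and to transfer this to $G_{n,\triangle}$ via the trivial monotonicity $\alpha(G_{n,\triangle}) \le \alpha(G_{m^*})$ (which holds since $G_{m^*} \subseteq G_{n,\triangle}$). For a fixed $S \in \binom{[n]}{s}$, let $Q_S(m)$ denote the number of open edges of $G_m$ lying inside $S$. Since $S$ survives as an independent set through step $m+1$ precisely when the chosen edge does not lie in $\binom{S}{2}$,
\begin{equation*}
\Pr\bigl( S \textup{ is independent in } G_{m^*} \bigr) \, = \, \Ex\bigg[ \prod_{m = 0}^{m^*-1} \bigg( 1 - \frac{Q_S(m)}{Q(m)} \bigg) \bigg].
\end{equation*}
Guided by the heuristic that $G_m$ resembles $G(n,m)$, one expects $Q_S(m) \approx e^{-4t^2}\binom{s}{2}$, so the ratios $Q_S(m)/Q(m)$ should concentrate near $\binom{s}{2}/\binom{n}{2}$, giving
\begin{equation*}
\Pr\bigl( S \textup{ indep.}\bigr) \, \lesssim \, \exp\bigg( -\bigl(1-o(1)\bigr) \frac{s^2 \, t^*}{\sqrt{n}} \bigg) \, = \, \exp\bigg( -\bigl(1-o(1)\bigr) \frac{(\sqrt{2}+\gamma)^2}{2\sqrt{2}}\sqrt{n}(\log n)^{3/2}\bigg).
\end{equation*}
Since $\binom{n}{s} \le \exp\bigl((\tfrac12 + o(1))\sqrt{n}(\log n)^{3/2}\bigr)$, the union bound yields an expected count at most $\exp\bigl( -\Omega(\gamma)\sqrt{n}(\log n)^{3/2} \bigr)$, well below $e^{-\sqrt{n}}$ (the comparison is $\tfrac{(\sqrt{2}+\gamma)^2}{2\sqrt{2}} > \tfrac12$, i.e., $\sqrt{2}+\gamma > \sqrt{2}$).

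To rigorously justify this heuristic, the natural approach is to track $Q_S(m)$ against the trajectory $\Qt_S(m) := e^{-4t^2}\binom{s}{2}$ using the martingale method of Section~\ref{MartSec}. A calculation in the spirit of Section~\ref{XYQsec}, exploiting the event $\Y(m)$, gives
\begin{equation*}
\Ex\bigl[ \Delta Q_S(m) \bigr] \, \in \, -\bigl(1 \pm o(1)\bigr) \frac{8t \, Q_S(m)}{n^{3/2}},
\end{equation*}
which matches the derivative of $\Qt_S$ and shows that the normalized error $Q_S^*(m) := (Q_S(m)-\Qt_S(m))/(g(t)\Qt_S(m))$ is self-correcting for a suitable slowly-growing $g(t)$. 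Lemma~\ref{lem:self:mart} then produces a tail bound of the form $n^{-\omega(\log n)}$ on $\{Q_S^* \textup{ leaves } [-1,1]\}$, and on the complementary event a direct estimate of the product $\prod_m (1-Q_S(m)/Q(m))$ delivers the heuristic upper bound on $\Pr(S \textup{ indep.})$ uniformly in $S$.

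The main obstacle is that Lemma~\ref{lem:self:mart} requires control on the single-step change $|\Delta Q_S(m)|$: when an edge $f = \{x,y\}$ is added, it closes every open edge $\{y,z\}, \{x,z\} \in Q_S(m)$ whose third triangle-side lies in $E(G_m)$, so
\begin{equation*}
|\Delta Q_S(m)| \, \le \, |N_{G_m}(x) \cap S| \, + \, |N_{G_m}(y) \cap S| \, + \, 1.
\end{equation*}
If many vertices $v \in [n]$ have large intersection with $S$ in $G_{m^*}$, this quantity is too large to feed directly into the martingale. Following the outline of Section~\ref{sketchSec}, the argument therefore splits into two cases: call $S$ \emph{tame} if at most $n^{1-\delta}$ vertices $v \in [n]$ satisfy $|N_{G_{m^*}}(v) \cap S| > n^\delta$, and \emph{wild} otherwise. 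For tame $S$, removing from the martingale the negligible fraction of steps at which the chosen edge is incident to a heavy vertex yields an effective step-size bound of $O(n^\delta)$, and the argument above carries through.

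The wild case is the crux of the proof, and where the direct tracking of $Q_S$ has to be abandoned. The idea is that wildness forces $S$ to contain a rigid bipartite-like substructure: a dyadic pigeonhole extracts a set $T \subseteq [n] \setminus S$ and a set $S' \subseteq S$, neither too small, such that each $v \in T$ has at least $n^{\delta}/\log n$ neighbours of $G_{m^*}$ in $S'$. Applying Theorem~\ref{EEthm}$(b)$ to the graph-structure pair obtained by treating $T \cup S'$ as a rooted edge-skeleton gives a sharp upper bound on the expected number of such structures in $G_{m^*}$. A union bound over the wild sets, each controlled via the rarity of its associated bipartite witness, then completes the estimate. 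The delicate point, and the place where the bulk of the technical work in Section~\ref{indepSec} goes, is to balance the constants in this second case so that the bipartite witnesses are genuinely rarer than $\binom{n}{s}^{-1}e^{-\sqrt{n}}$, without spoiling the sharp threshold $(\sqrt{2}+\gamma)\sqrt{n\log n}$.
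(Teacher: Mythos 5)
Your high-level plan — track the number of open edges inside $S$ by the martingale method, then handle sets with many heavy intersections separately by counting dense witness structures — is the right instinct and broadly matches the paper's architecture. However, the proposal has several genuine gaps in exactly the places where the technical work lives.

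\textbf{The tame/wild criterion is wrong.} You declare $S$ tame when at most $n^{1-\delta}$ vertices have $|N_{G_{m^*}}(v)\cap S| > n^\delta$. But even under this condition, $\sum_J a_j^2$ could be as large as $n^{1-\delta}\cdot s^2 \gg s^2$, so excluding the heavy neighbourhoods destroys most of $\binom{S}{2}$ and the remaining quantity you would track has order far below $e^{-4t^2}\binom{s}{2}$; the final estimate $\Pr(S\text{ indep.}) \lesssim e^{-(1-o(1))s^2 t^*/\sqrt n}$ would then not follow. The paper's dichotomy is the different condition $\sum_J a_j^2 < \delta s^2$ versus $\sum_J a_j^2 \ge \delta s^2$ (event $\B$ vs.\ $\C$), and this is necessary precisely so that the truncated quantity $o_\n(S,m)$ starts at $(1-O(\delta))\binom{s}{2}$ and the tracking still yields a bound of the required strength. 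Moreover, the paper modifies the \emph{tracked quantity} (ignoring open edges inside any $A_j = N_{G_{m^*}}(v_j)\cap S$), not the \emph{steps}; ``removing steps incident to a heavy vertex'' is the wrong surgery, since it also deletes the decrease of the process and complicates the supermartingale structure. Furthermore, the union bound over the families $\n = (A_1,\dots,A_k)$ — which are revealed only at time $m^*$ — is only affordable under $\sum_J a_j < n^{1/2+2\delta}$, which is exactly the paper's event $\D(S,\delta)^c$; this is why the paper needs a fourth event $\D$ and cannot fold it into the martingale case.

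\textbf{The wild case via Theorem~\ref{EEthm}$(b)$ does not work.} If you root the bipartite structure at $A = T\cup S'$ (i.e., $A = V(F)$), then $N_\phi(F)\in\{0,1\}$ and $\Nt_A(F)\ll 1$, so part~$(b)$ contains no probability estimate — it just asserts $N_\phi(F)$ is near a number less than $1$, which is trivially satisfied by $N_\phi(F)=0$ and says nothing when $N_\phi(F)=1$. Worse, a dense bipartite structure has $t_A(F)=0$, so you are outside the tracking regime of part~$(b)$ entirely; the relevant case is part~$(c)$, which gives only a polylog bound, not an exponentially small probability. The paper's Lemmas~\ref{indep:c} and~\ref{indep:d} do something genuinely different: they fix the witness $H$ together with the sequence $\m$ of steps at which its edges are added, bound the probability of that conditioned event directly via $\prod_f 1/Q(m(f))$ (Lemma~\ref{lem:Qprod}) together with a lower bound on the number of forbidden open edges at each intermediate step (via $Y_H(m)$ and Lemma~\ref{lem:bigcupY}), and only then take a union bound over witnesses $H$ (not over sets $S$, which would be too many in the $\D$ case). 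This coupling of edge-choice probabilities with forbidden open-edge counts is the real content and is absent from your sketch.

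\textbf{A minor numeric issue.} Your heuristic bound $\log\binom{n}{s} \le (\tfrac12+o(1))\sqrt n(\log n)^{3/2}$ drops a factor: with $s=(\sqrt 2+\gamma)\sqrt{n\log n}$, $\log\binom{n}{s}\approx \tfrac{\sqrt2+\gamma}{2}\sqrt n(\log n)^{3/2}$, so the correct comparison is $\tfrac{(\sqrt2+\gamma)^2}{2\sqrt2} > \tfrac{\sqrt2+\gamma}{2}$, i.e.\ $\gamma>0$. As written your comparison would (wrongly) suggest the theorem holds already at $\gamma=0$.
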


Both propositions will follow by essentially the same argument, with a few key changes. We begin by outlining the main ideas of the proof.

\subsection{A sketch of the proof}

The basic idea behind Proposition~\ref{prop:indep} is that a \emph{typical} set of vertices $S$ will contain roughly ${|S| \choose 2} e^{-4t^2}$ open edges at time $t$. If this were to hold for every set $S$, then the proof would be easy, since the probability of choosing an edge inside $S$ would be (roughly) $|S|^2 / n^2$ in each step, and thus the expected number of independent sets of size $s$ in $G_{m^*}$ would be roughly
$${n \choose s} \left( 1 - \frac{s^2}{n^2} \right)^{m^*} \, \approx \, \bigg( \frac{n}{s} \cdot e^{-s m^* / n^2} \bigg)^s,$$ 
which tends to zero if $s > \big( \sqrt{2} + \gamma \big) \sqrt{ n \log n }$. Indeed, an easy application of our usual martingale method (see Lemma~\ref{indep:d}, below) will allow us to make this calculation rigorous for sets $S$ which contain at most $n^\delta$ elements of each neighbourhood in $G_{m^*}$. On the other hand, for those sets $S$ which intersect some neighbourhood in at least this many vertices, we shall have to do something quite different, see below.

For Proposition~\ref{prop:maxdeg} it is quite tricky even to come up with the right heuristic. One natural approach is to note that if the event $\E(m^*)$ holds, then every vertex $v$  in $G_{m^*}$ has degree roughly $2m^* / n$, and open degree roughly $n e^{-4(t^*)^2} \approx n^{1/2 + \eps}$. Since a set of size $n^{1/2 + \eps}$ is unlikely to contain an independent set of size $\gamma \sqrt{n \log n}$, and there are only $n$ choices for $v$, we should be done. However, the events involved in this calculation are not independent, and we have not succeeded in making this argument rigorous.  

Instead we shall use the fact that, again by the event $\E(m^*)$, every vertex $v$ has degree roughly $2m / n$ in $G_{m}$ for \emph{every} $m \le m^*$. It follows that, if $S$ is the neighbourhood of a vertex $v$ in $G_{n,\triangle}$, then we would expect $S$ to contain roughly $\big( {|S| \choose 2} - \frac{2m^2}{n^2} \big) e^{-4t^2}$ open edges at time~$t$, since roughly $2m^2/n^2$ edges of $S$ will have been closed by $v$ by this time. Moreover, we can approximate the probability that $S \subseteq N(v)$ by summing over the sequence of steps at which the edges are added; the probability that a particular open edge $e$ is added in step $m$ is exactly $1 / Q(m)$. Since we have about $(m^*)^{2m^*/n}$ choices for this sequence, we should obtain an upper bound on the probability that $N(v) = S$ of roughly
$$(m^*)^{2m^*/n} \bigg( \prod_{j = 1}^{2m^*/n} \frac{1}{Q(jn/2)} \bigg) \prod_{m=1}^{m^*} \left( 1 - \frac{|S|^2}{n^2} + \frac{4m^2}{n^4} \right) \, \approx \, \left( \frac{m^*}{n^{11/6}} \exp\left( \frac{2(t^*)^2}{3} - \frac{|S|^2}{2n} \right) \right)^{2m^*/n}$$
where we used Lemma~\ref{lem:Qprod} to estimate the first product. Summing over choices of $v$ and $S$, we would thus obtain an upper bound on the probability that there exists a vertex of degree at least $s = (2 + \gamma) m^* / n$ in $G_{n,\triangle}$ of
$$\left( \frac{n}{s} \cdot \frac{m^*}{n^{11/6}} \cdot e^{2(t^*)^2/3 - s^2/2n} \right)^{2m^*/n} \, \approx \, n^{-\gamma s}$$
as required, since $e^{2(t^*)^2} \approx n^{1/4 - \eps}$ and $s^2 / 2n \approx \big( \frac{1}{4} + \gamma \big) \log n$. 

Once again, the outline above can only be made rigorous if there are no other vertices which send many edges into $S$. In the next subsection we shall describe how we deal with the other cases.

\subsection{Partitioning the bad events}

As the reader will have noticed from the discussion in the previous subsection, it is not true that the number of open edges in $S$ is well-behaved for every set $S$ of size $\Theta(m^*/n)$; indeed, those sets which happen to have a large intersection with the neighbourhood(s) of some (or many) vertices will have fewer open edges than expected. This motivates the following definition.\footnote{We remark that if $e(G_{n,\triangle}) < m^*$, i.e., if the triangle-free process ends before step $m^*$, then we may define $G_{m^*} = G_{n,\triangle}$. Since in that case the event $\Q(m^*)^c$ holds, this choice does not affect the validity of either Proposition~\ref{prop:maxdeg} or~\ref{prop:indep}.} 

\begin{defn}\label{def:aJ}
Given a set $S \subseteq V(G_{n,\triangle})$ and $\delta > 0$, define $J = J(S,\delta) = \{v_1,\ldots,v_{|J|}\}$ and $\a = \a(S,\delta) = (a_1,\ldots,a_{|J|})$ to be the following random variables:
$$J(S,\delta) \, = \, \Big\{ v \in V(G_{n,\triangle}) \,:\, |N_{G_{m^*}}(v) \cap S| \ge n^\delta \Big\},$$
and $a_j = |N_{G_{m^*}}(v_j) \cap S|$, where the labels are chosen so that $a_1 \ge \dots \ge a_{|J|}$.
\end{defn}

Given a set $S \subseteq V(G_{n,\triangle})$ and $m \in \N$, let $\I(S,m)$ denote the event that $S$ is an independent set in $G_m$. It is easy to see that the following events form a cover of $\I(S,m^*)$.

\begin{defn}\label{def:ABCD}
Given a set $S \subseteq V(G_{n,\triangle})$ and $\delta > 0$, define:
\begin{itemize}
\item[$(i)$] $\A(S,\delta) \, = \, \I(S,m^*) \cap \Big\{ o(G_m[S]) \ge (1 - \eps) {|S| \choose 2} e^{-4t^2} \text{ for every } m \le m^* \Big\}$.
\item[$(ii)$] $\B(S,\delta) \, = \, \Big\{ \sum_J a_j^2 < \delta |S|^2 \Big\} \cap \Big\{ o(G_m[S]) < (1 - \eps) {|S| \choose 2} e^{-4t^2} \text{ for some } m \le m^* \Big\}$.
\item[$(iii)$] $\C(S,\delta) \, = \, \I(S,m^*) \cap \Big\{  \sum_J a_j^2 \ge \delta |S|^2  \Big\} \cap \Big\{ \sum_J a_j < n^{1/2 + 2\delta} \Big\}$.
\item[$(iv)$] $\D(S,\delta) = \, \I(S,m^*) \cap \Big\{ \sum_J a_j \ge n^{1/2 + 2\delta} \Big\}$.
\end{itemize}
\end{defn}

We shall bound from above the probability that both $\E(m^*) \cap \Y(m^*) \cap \Z(m^*) \cap \Q(m^*)$ and the event  
\begin{equation}\label{IinABCD}
\bigcup_{S \subseteq V(G_{n,\triangle}) \,:\, |S| = s} \A(S,\delta) \cup \B(S,\delta) \cup \C(S,\delta) \cup \D(S,\delta)  \, \supseteq \, \bigcup_{S \subseteq V(G_{n,\triangle}) \,:\, |S| = s} \I(S,m^*)
\end{equation}
hold, where $s = \big( \sqrt{2} + \gamma \big) \sqrt{ n \log n }$. The easiest of the probabilities to bound is that of $\A(S,\delta) \cap \Q(m^*)$ (see Lemma~\ref{indep:a}), which follows from a simple calculation, as outlined above. Bounding the probability of $ \B(S,\delta) \cap \Y(m^*) \cap \Q(m^*) \cap \D(S,\delta)^c$ is also relatively straightforward (see Lemma~\ref{indep:b}): indeed, we simply apply our usual martingale method, using the fact that $\sum_J a_j^2 < \delta s^2$ to control the maximum possible size of a single step. Dealing with the events $\C(S,\delta)$ and $\bigcup_{|S| = s} \D(S,\delta)$ is significantly harder, and we postpone a discussion of the ideas involved to later in the section.

Next, let us turn our attention to the event that $G_{n,\triangle}$ contains a vertex whose degree is significantly larger than $2m^*/n$. Given a set $S \subseteq V(G_{n,\triangle})$ and a vertex $v \in V(G_{n,\triangle})$, observe first that if $N(v) = S$ in $G_{n,\triangle}$ then
\begin{itemize}
\item[$(a)$] $S$ is independent in $G_{m^*}$.\smallskip
\item[$(b)$] $\{u,v\} \in O(G_{m^*}) \cup E(G_{m^*})$ for every $u \in S$. \smallskip
\item[$(c)$] $N(v) \subseteq S$ in $G_{m^*}$.
\end{itemize}
It follows that the event $\I(S,m^*) \cap \W(S,v)$ must hold, where
\begin{equation}\label{def:W}
\W(S,v) \, = \, \Big\{ \{u,v\} \in O(G_{m^*}) \cup E(G_{m^*}) \text{ for every } u \in S \Big\} \cap \Big\{ N_{G_{m^*}}(v) \subseteq S \Big\}.
\end{equation}
Note also that the event $\E(m^*)$ implies that 
\begin{equation}\label{eq:degreeboundsgivenbyE}
d_{G_m}(v) \, \in \, \frac{2m}{n} \pm \sqrt{n}
\end{equation}
for every $m \le m^*$. Motivated by this, we define the following events, which are slight modifications of those above.

\begin{defn}\label{def:ABCprime}
Given a set $S \subseteq V(G_{n,\triangle})$ and $\delta > 0$, define:
\begin{itemize}
\item[$(i)$] $\A'(S,\delta) = \I(S,m^*) \cap \Big\{ o\big( G_m[S] \big) \ge (1 - \eps) \big( {|S| \choose 2} - \frac{2m^2}{n^2} \big) e^{-4t^2} \text{ for every } m \le m^* \Big\}$.
\item[$(ii)$] $\B'(S,\delta) = \Big\{ \sum_{j=2}^{|J|} a_j^2 < \delta s^2 \Big\} \cap {\ds \bigcup_{m=1}^{m^*}} \Big\{ o\big( G_m[S] \big) < (1 - \eps) \big( {|S| \choose 2} - \frac{2m^2}{n^2} \big) e^{-4t^2} \Big\}$.
\item[$(iii)$] $\C'(S,\delta) \, = \, \Big\{ \sum_{j=2}^{|J|} a_j^2 \ge \delta s^2 \Big\} \cap \Big\{ \sum_J a_j < n^{1/2 + 2\delta} \Big\}$.
\end{itemize}
\end{defn}

\noindent Let $\T(S,v)$ denote the event that $N_{G_{n,\triangle}}(v) = S$. We claim that
\begin{equation}\label{eq:UinABCD}
\bigcup_{|S| \ge s} \T(S,v) \cap \E(m^*)  \, \subseteq \, \bigcup_{|S| = s} \Big( \A'(S,\delta) \cup \B'(S,\delta) \cup \C'(S,\delta) \cup \D(S,\delta) \Big) \cap \W(S,v)
\end{equation}
for every $s \ge 2m^*/n + \sqrt{n}$. Indeed, we have already observed that $\T(S,v)$ implies $\W(S,v)$, and by~\eqref{def:W} and~\eqref{eq:degreeboundsgivenbyE}, the event $\W(S,v) \cap \E(m^*)$ implies that $\W(S',v)$ holds for some $S' \subseteq S$ with $|S'| = s$. The other implications now follow exactly as before, since the neighbourhood of a vertex in $G_{n,\triangle}$ is an independent set in $G_{m^*}$.

We can now bound the probability that the various events hold for some $S$ with $|S| = s$ as before, the main differences being (as noted above) that bounding $\Pr\big( \A'(S,\delta) \cap \W(S,v) \cap \E(m^*) \cap \Q(m^*) \big)$ is slightly more technical than bounding $\Pr\big( \A(S,\delta) \cap \Q(m^*) \big)$, and that dealing with the event  $\C'(S,\delta)$ is relatively easy, since we shall be able to show that 
$$\C'(S,\delta) \cap \W(S,v) \subseteq \big( \Y(m^*) \cap \Z(m^*) \big)^c,$$
which implies that the bad event corresponding to $\C'(S,\delta)$ is in fact impossible.

\subsection{The events $\A(S,\delta)$ and $\A'(S,\delta)$}

We begin with the easiest part of the proof, which requires only some straightforward counting. 

\begin{lemma} \label{indep:a}
If $s \ge \big( \sqrt{2} + \gamma \big) \sqrt{n \log n}$, then
$$\sum_{S \,:\, |S| = s} \Pr\Big( \A(S,\delta) \cap \Q(m^*) \Big) \, \le \, n^{ - \delta s}.$$
\end{lemma}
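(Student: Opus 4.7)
The plan is to bound $\Pr\big(\A(S,\delta) \cap \Q(m^*)\big)$ via a straightforward iteration: on the relevant event, at each step the conditional probability of adding an edge of ${S \choose 2}$ is bounded below by roughly $s^2/n^2$, so the probability of surviving all $m^*$ steps decays like $\exp(- s^2 m^* / n^2)$. A union bound over the $\binom{n}{s}$ choices of $S$, together with a short arithmetic check, then gives the claimed bound $n^{-\delta s}$.

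Write $e_m$ for the edge added at step $m$ of the triangle-free process, and let $J_m$ denote the event that $o(G_m[S]) \ge (1-\eps){s \choose 2} e^{-4t^2}$ and $Q(m) \le \big(1 + g_q(t)\big) e^{-4t^2} {n \choose 2}$; the upper bound on $Q(m)$ is supplied by $\Q(m^*)$, so $\A(S,\delta) \cap \Q(m^*) \subseteq \bigcap_{m \le m^*} J_m$. Define $F_m = \I(S,m) \cap \bigcap_{m' \le m} J_{m'}$; thus $F_m$ is $G_m$-measurable, $F_m \subseteq F_{m-1}$, and $\A(S,\delta) \cap \Q(m^*) \subseteq F_{m^*}$. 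Because $J_{m-1}$ holds on $F_{m-1}$,
$$\Pr\Big( e_m \in {\textstyle {S \choose 2}} \,\Big|\, G_{m-1} \Big) \, = \, \frac{o(G_{m-1}[S])}{Q(m-1)} \, \ge \, \frac{(1-\eps){s \choose 2} e^{-4t^2}}{\big(1+g_q(t)\big) e^{-4t^2} {n \choose 2}} \, \ge \, \frac{(1-2\eps) \, s^2}{n^2}$$
on $F_{m-1}$. Since $F_m \subseteq F_{m-1} \cap \{ e_m \notin {S \choose 2} \}$, this gives $\Pr(F_m \mid F_{m-1}) \le 1 - (1-2\eps)s^2/n^2$, and telescoping yields
$$\Pr\big(\A(S,\delta) \cap \Q(m^*)\big) \, \le \, \Pr(F_{m^*}) \, \le \, \exp\!\left( -\frac{(1-2\eps) \, s^2 \, m^*}{n^2} \right).$$

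To conclude, sum over $\binom{n}{s} \le (en/s)^s$ sets $S$ of size $s$. Since $s = (\sqrt{2} + \gamma)\sqrt{n \log n}$ and $m^* = \big(\frac{1}{2\sqrt{2}} - \eps\big) n^{3/2} \sqrt{\log n}$, we have $s^2 m^* / n^2 = s\big(\frac{1}{2} + \frac{\gamma}{2\sqrt{2}} - O(\eps)\big) \log n$ and $\log(en/s) \le \frac{1}{2} \log n + O(\log \log n)$. Hence the logarithm of the resulting bound is at most
$$s \log n \left[ \frac{1}{2} - (1-2\eps)\Big( \frac{1}{2} + \frac{\gamma}{2\sqrt{2}} - O(\eps) \Big) \right] + O(s \log \log n) \, \le \, -\, \frac{\gamma}{3\sqrt{2}} \cdot s \log n$$
for $n$ large enough. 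Because $\gamma = 10\sqrt{\eps}$ comfortably dominates the residual $O(\eps)$ error, and $\delta \ll \eps$, this is at most $- \delta s \log n$, giving the claimed bound $n^{-\delta s}$. There is no real obstacle here: the per-step estimate is clean and uniform in $m$, and the arithmetic just records the fact that the margin $\gamma$ of order $\sqrt{\eps}$ was precisely chosen to overwhelm the entropy loss $(en/s)^s$ in the union bound.
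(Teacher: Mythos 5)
Your proof is correct and follows exactly the approach of the paper: bound the conditional probability of hitting an open edge in $S$ from below by $(1-2\eps)s^2/n^2$ on the event $\O(S,m^*)\cap\Q(m^*)$, telescope over the $m^*$ steps, and close with a union bound over $\binom{n}{s}$ sets. The only cosmetic difference is that you make the conditioning structure explicit via the nested events $F_m$ rather than just writing the product bound directly, and you carry the arithmetic slightly further symbolically; the paper also observes the cleaner intermediate inequality $sm^* > (\frac{1}{2}+2\eps)n^2\log n$, but the two calculations are equivalent.
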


For each $0 \le m' \le m^*$, let $\O(S,m')$ denote the event that 
\begin{equation}\label{def:event:O}
o(G_m[S]) \ge (1 - \eps){|S| \choose 2} e^{-4t^2}
\end{equation}
for every $0 \le m \le m'$, and note that $\A(S,\delta) = \I(S,m^*)\cap \O(S,m^*)$. 

\begin{proof}[Proof of Lemma~\ref{indep:a}]
The lemma is an easy consequence of the following observation: the probability that an open edge inside $S$ is chosen in step $m+1$ is exactly $o(G_m[S]) / Q(m)$. Hence 
$$\Pr\Big( \I(S,m^*) \cap \O(S,m^*) \cap \Q(m^*) \Big)  \, \le \, \max_{\O(S,m^*) \cap \Q(m^*)} \prod_{m=0}^{m^*-1} \left( 1 - \frac{o(G_m[S])}{Q(m)} \right),$$
where the maximum is over all realizations of the triangle-free process for which both $\O(S,m^*)$ and $\Q(m^*)$ hold. Since $\O(S,m^*) \cap \Q(m^*)$ implies that 
$$\frac{o(G_m[S])}{Q(m)} \, \ge \, \frac{(1 - 2\eps)|S|^2}{n^2}$$ 
for every $0 \le m \le m^*$, and $s m^* \ge \big( \sqrt{2} + \gamma \big) \big( \frac{1}{2\sqrt{2}} - \eps \big) n^2 \log n > \big( 1/2 + 2\eps) n^2 \log n$, it follows that
$$\sum_{S \,:\, |S| = s} \Pr\big( \A(S,\delta) \cap \Q(m^*) \big)  \, \le \, {n \choose s} \exp\left( - \frac{(1 - 2\eps)s^2 m^*}{n^2} \right) \, \le \, \left( \frac{en}{s} \cdot \frac{1}{n^{1/2 + \delta}} \right)^s \, \le \, n^{- \delta s},$$
as claimed.
\end{proof}

Next, let's make precise the calculation sketched earlier for $\A'(S,\delta)$. For each edge $f \in E(G_{m^*})$, let $m(f) \in [m^*]$ denote the step of the triangle-free process at which it was added, i.e., such that $f \in E(G_{m(f)}) \setminus E(G_{m(f)-1})$. We shall need the following simple lemma.

\begin{lemma}\label{lem:Qprod}
Let $v \in V(G_{n,\triangle})$, set $d := d_{G_{m^*}}(v)$, and let $f_1,\ldots,f_d$ be the edges of $G_{m^*}$ that are incident to $v$. If $\E(m^*) \cap \Q(m^*)$ holds, then
$$\prod_{j = 1}^d Q\big( m(f_j) \big) \, \ge \, n^{11d/6}.$$
\end{lemma}
 
\begin{proof}
Without loss of generality, let us assume that $m(f_1) < \cdots < m(f_d)$. Recall from~\eqref{eq:degreeboundsgivenbyE} that $d_{G_m}(v) \in \frac{2m}{n} \pm \sqrt{n}$ for every $0 \le m \le m^*$ (since $\E(m^*)$ holds). Thus, in particular,
$$d \, \le \, \frac{2m^*}{n} + \sqrt{n} \, = \, \big( 2t^* + 1 \big) \sqrt{n}\qquad \text{ and } \qquad t(f_j) \, \le \, \frac{j}{2\sqrt{n}} + 1$$
for every $j \in [d]$, where $t(f_j) = m(f_j) \cdot n^{-3/2}$. Moreover, since $\E(m^*) \cap \Q(m^*)$ holds, we have $d \gg \sqrt{n}$ and $Q\big( m(f_j) \big) \ge e^{-4t_j^2} n^2 / 4$ for each $j \in [d]$. It follows that
\begin{multline*}
n^{-2d} \prod_{j = 1}^d Q\big( m(f_j) \big) \, \ge \, 4^{-d} \exp\bigg( - 4\sum_{j=1}^d t(f_j)^2 \bigg) \, \ge \,  \exp\bigg( - \sum_{j=1}^d \bigg( \frac{j^2}{n} + \frac{4j}{\sqrt{n}} + 6 \bigg) \bigg) \\
 \, \ge \,  \exp \bigg( - \frac{d^3}{3n} - \frac{3d^2}{\sqrt{n}} \bigg) \, \ge \, \exp \bigg( - \bigg( \frac{4}{3} + \delta \bigg)  (t^*)^2 d  \bigg) \, \ge \, \exp\bigg( - \frac{d \log n}{6} \bigg),
\end{multline*}
as claimed. The final step holds since $8(t^*)^2 \le (1 - \eps) \log n$.
\end{proof}

For each $0 \le m' \le m^*$, let $\O'(S,m')$ denote the event that the inequality
\begin{equation}\label{def:Oprime}
o(G_m[S]) \, \ge \, \big( 1 - \eps \big) \left( {|S| \choose 2} - \frac{2m^2}{n^2} \right) e^{-4t^2}
\end{equation}
holds for every $0 \le m \le m'$. Recall also from~\eqref{def:W} the definition of the event $\W(S,v)$.

\begin{lemma} \label{maxdeg:a}
If $s \ge  \big( \frac{1}{\sqrt{2}} + \gamma \big) \sqrt{n \log n}$ and $v \in V(G_{n,\triangle})$, then
\begin{equation}\label{eq:maxdeg:a}
\sum_{S \,:\, |S| = s} \Pr\Big( \A'(S,\delta) \cap \W(S,v) \cap \E(m^*) \cap \Q(m^*) \Big) \, \le \, n^{-\delta s}.
\end{equation}
\end{lemma}

\begin{proof}
Recall first that $\A'(S,\delta) = \I(S,m^*) \cap \O'(S,m^*)$, and note that if $\O'(S,m^*) \cap \Q(m^*)$ holds then
\begin{equation}\label{eq:maxdeg:a:ooverQ}
\frac{o(G_m[S])}{Q(m)} \, \ge \, \big( 1 - 2\eps \big) \left(  \frac{|S|^2}{n^2} - \frac{4m^2}{n^4} \right)
\end{equation}
for every $0 \le m \le m^*$. Let $f_1,\ldots,f_d$ be the edges of $G_{m^*}$ which are incident to $v$, and recall from~\eqref{def:W} that the event $\W(S,v)$ implies that $N_{G_{m^*}}(v) \subseteq S$. Note also that
$$d \, \in \, D \, := \, \bigg( \frac{1}{\sqrt{2}} \pm 3\eps \bigg) \sqrt{n \log n},$$ 
since $\E(m^*)$ holds. Given edges $f_1,\ldots,f_d$ and steps $m(f_1),\ldots,m(f_d) \in [m^*]$ such that $d_{G_m}(v)$ satisfies~\eqref{eq:degreeboundsgivenbyE} for every $m \in [m^*]$, we shall bound the probability that the edge $f_j$ is chosen\footnote{Note that we do not lose anything by assuming that $f_j$ is open after $m(f_j) - 1$ steps, since this follows automatically if $N_{G_{m^*}}(v) \subseteq S$ and $S$ is an independent set.} in step $m(f_j)$ for each $j \in [d]$, and that at every other step, we do not choose an open edge in $S$. Having done so, it will suffice to sum over the at most $2^s (m^*)^d$ choices for the edges $f_j$ and steps $m(f_j)$. 

Note first that, by Lemma~\ref{lem:Qprod}, and since the event $\E(m^*) \cap \Q(m^*)$ holds, the probability that the edge $f_j$ is chosen in step $m(f_j)$ for each $j \in [d]$ is
 \begin{equation}\label{eq:maxdeg:a:Qprod}
\prod_{j = 1}^d \frac{1}{Q\big( m(f_j) \big)} \, \le \, n^{-11d/6}.
\end{equation}
Moreover, by~\eqref{eq:maxdeg:a:ooverQ}, the probability that at every other step we do not choose an open edge in $S$ is at most
 \begin{equation}\label{eq:maxdeg:a:openprod}
\max_{\O'(S,m^*) \cap \Q(m^*)} \prod_{m \in M} \left( 1 - \frac{o(G_m[S])}{Q(m)} \right) \, \le \, \exp\bigg( - \big( 1 - 3\eps \big) \left( \frac{s^2 m^*}{n^2} - \frac{4(m^*)^3}{3n^4} \right) \bigg),
\end{equation}
where $M = [m^*] \setminus \{ m(f_1),\ldots,m(f_d)\}$. Note that $s^2 = d^2 + (s - d)(s + d)$, and that
$$\frac{m^* d}{n^2} \ge \left( \frac{1}{4} - 3 \eps \right) \log n, \quad \frac{\big( s + d \big) m^*}{n^2} \ge \bigg( \frac{2 + \gamma}{4} \bigg) \log n \quad \text{and} \quad  \frac{(m^*)^3}{n^4} \le \frac{d \log n}{16}.$$ 
for every $d \in D$, and hence
$$\frac{s^2 m^*}{n^2} - \frac{4(m^*)^3}{3n^4} \, \ge \, \left( \frac{1}{4} - 3 \eps \right) d \log n + \bigg( \frac{2 + \gamma}{4} \bigg) (s - d) \log n - \frac{d \log n}{12}.$$
It follows that the right-hand side of~\eqref{eq:maxdeg:a:openprod} is at most
\begin{equation}\label{eq:maxdeg:a:openprod2}
\exp\bigg( - \left( \frac{1}{6} - 4 \eps \right) d \log n \,-\,  \left( \frac{3 + \gamma}{6} \right) \big( s - d \big) \log n \bigg)
\end{equation}

Hence, combining~\eqref{eq:maxdeg:a:Qprod} and~\eqref{eq:maxdeg:a:openprod2}, and summing over sets $S$, integers $d \in D$, edges $f_1,\ldots,f_d$ and steps $m(f_1),\ldots,m(f_d)$, and noting that $s - d \ge \gamma d$, we obtain an upper bound on the left-hand side of~\eqref{eq:maxdeg:a} of 
\begin{multline*}
{n \choose s} \sum_{d \in D} \frac{2^s (m^*)^d}{n^{11d/6}} \cdot \exp\bigg( - \, \left( \frac{1}{6} - 4 \eps \right) d \log n \,-\,  \left( \frac{3 + \gamma}{6} \right) \big( s - d \big) \log n \bigg)\\
\, \le \, \sum_{d \in D} \left( \frac{2en}{s} \cdot \frac{m^*}{n^{11/6}} \cdot \frac{n^{4\eps}}{n^{1/6}} \right)^d \left( \frac{2en}{s} \cdot \frac{1}{n^{1/2 + \gamma/6}} \right)^{s - d} \, \le \, \sum_{d \in D} n^{5\eps d} \cdot n^{-\gamma(s-d)/6} \, \le \, n^{-\delta s}, 
\end{multline*}
as required. 
\end{proof}

\subsection{The events $\B(S,\delta) \cap \D(S,\delta)^c$ and $\B'(S,\delta) \cap \D(S,\delta)^c$}\label{SubSecB}

We shall next apply our usual martingale method in order to show that $o\big( G_m[S] \big)$ is (with very high probability) well-behaved, as long as $\sum_J a_j^2 < \delta s^2$ and $\sum_J a_j < n^{1/2 + 2\delta}$. (The latter condition will be necessary in order to bound the number of choices for the neighbourhoods in $S$ of the vertices of $J$.) For convenience, we remind the reader of the following notation: 
\begin{itemize}
\item $\B(S,\delta) = \big\{ \sum_{j=1}^{|J|} a_j^2 < \delta |S|^2 \big\} \cap \O(S,m^*)^c$, where we recall from~\eqref{def:event:O} that 
$$\O(S,m^*) \, = \, \bigg\{ o(G_m[S]) \ge (1 - \eps){|S| \choose 2} e^{-4t^2} \textup{ for every $m \in [m^*]$} \bigg\}.$$
\item $\B'(S,\delta) = \big\{ \sum_{j=2}^{|J|} a_j^2 < \delta |S|^2 \big\} \cap \O'(S,m^*)^c$, where we recall from~\eqref{def:Oprime} that 
$$\O'(S,m^*) \, = \, \bigg\{ o(G_m[S]) \ge (1 - \eps)\bigg( {|S| \choose 2} - \frac{2m^2}{n^2} \bigg) e^{-4t^2} \textup{ for every $m \in [m^*]$} \bigg\}.$$ 
\end{itemize}
We shall prove the following two lemmas. 

\begin{lemma}\label{indep:b}
If $3 \sqrt{n} \le s \le n^{1/2 + \eps}$, then
$$\Pr\Big( \B(S,\delta) \cap \Y(m^*) \cap \Q(m^*) \cap \D(S,\delta)^c \Big) \, \le \, e^{-s n^\delta}$$ 
for every $S \subseteq V(G_{n,\triangle})$ with $|S| = s$. 
\end{lemma}

As in the previous subsection, in the maximum degree setting we shall use the event $\E(m^*)$ to control the number of $G_m$-neighbours of the vertex $v_1$ in $S$, for each $m \in [m^*]$. 

\begin{lemma}\label{maxdeg:b}
If $\big( \frac{1}{\sqrt{2}} + \gamma \big) \sqrt{n \log n} \le s \le n^{1/2+ \eps}$, then
$$\Pr\Big( \B'(S,\delta) \cap \E(m^*) \cap \Y(m^*) \cap \Q(m^*)  \cap \D(S,\delta)^c \Big) \, \le \, e^{-s n^\delta}$$
for every $S \subseteq V(G_{n,\triangle})$ with $|S| = s$. 
\end{lemma} 

We shall use the following notation in the proofs of Lemmas~\ref{indep:b} and~\ref{maxdeg:b}. Given $m \in \N$, a set $S \subseteq V(G_{n,\triangle})$ and a collection $\n = (A_1,\ldots,A_k)$ of subsets of $S$, set
$$O_\n(S,m) \, = \, O\big( G_m[S] \big) \setminus \bigcup_{j=1}^k O\big( G_m[A_j] \big),$$
and set $o_\n(S,m) = |O_\n(S,m)|$. Set $g_o(t) = n^{3\delta} g_x(t) = C e^{2t^2} n^{-1/4 + 3\delta}(\log n)^4$, and note that $g_o(t) \ll 1$ for every $0 < t \le t^*$, since $e^{4(t^*)^2} \le n^{1/2 - \eps}$ and $\delta = \delta(\eps)$ was chosen sufficiently small. Define the normalized error to be 
$$o_\n^*(S,m) \, = \, \frac{o_\n(S,m) - e^{-4t^2} o_\n(S,0)}{g_o(t) e^{-4t^2} o_\n(S,0)},$$
and write $\O_\n(S,m')$ for the event that $|o_\n^*(S,m)| \le 1$ for every $m \le m'$. Observe that if $\O_\n(S,m)$ holds and $\sum_{j=1}^k |A_j|^2 \le \delta |S|^2$, then $\O(S,m)$ also holds.

In a slight abuse of notation, given a collection $\n$ as above, we define $\tilde{\n}(S,m)$ to be the event\footnote{We shall show that this event holds for some collection $\n$, see Observations~\ref{Bobs} and~\ref{Bprimeobs}, and then apply the union bound, using the event $\D(S,\delta)^c$ in order to bound the number of choices for $\n$.} that  $N_{G_{m+1}}(v) \cap S \subseteq A_j$ for some $j \in [k]$, for every vertex $v \in V(G_{n,\triangle})$ with $|N_{G_m}(v) \cap S| \ge n^\delta$. That is, 
$$\tilde{\n}(S,m) = \bigcap_{v \in V(G_{n,\triangle})} \bigg( \bigcup_{j=1}^k \Big\{ N_{G_{m+1}}(v) \cap S \subseteq A_j \Big\} \cup \Big\{ |N_{G_m}(v) \cap S| \le n^\delta \Big\} \bigg).$$
Observe that if two edges $e_1,e_2 \in O(G_m[S])$ are closed by the addition (in step $m+1$) of edge $f$, then $e_1$, $e_2$ and $f$ share a common vertex, and $e_1,e_2 \in Y_f(m)$. Moreover, $e_1$ and $e_2$ are both contained in the neighbourhood in $G_{m+1}$ of one of the endpoints of $f$. It follows immediately that if $\tilde{\n}(S,m)$ holds, then $|\Delta o_\n(S,m)| \le n^\delta$.

We will deduce Lemmas~\ref{indep:b} and~\ref{maxdeg:b} from the following result, which follows by the martingale method of Section~\ref{MartSec}. Since the proof is similar to several of those above (cf. in particular the proof of Proposition~\ref{Xprop} in Section~\ref{SecX}), we shall omit some of the details. 

\begin{lemma}\label{lemma:b}
Let $S \subseteq V(G_{n,\triangle})$, and let $\n = (A_1,\ldots,A_k)$ be a collection of subsets of $S$. If $|S| \sqrt{n} \le o_\n(S,0) \le n^{5/4}$, then 
$$\Pr\Big( \O_\n(S,m)^c \cap \tilde{\n}(S,m) \cap \Y(m) \cap \Q(m) \Big) \, \le \, e^{- |S| n^{4\delta}}$$
for every $m \in [m^*]$. 
\end{lemma}

We need to deal separately with the case $m \le \omega \cdot n^{3/2}$; since we again use Bohman's method from~\cite{Boh} in this case, we postpone the details to the Appendix~\cite{App}. 

\begin{lemma}\label{lemma:b:landbeforetime}
Let $S \subseteq V(G_{n,\triangle})$, and let $\n = (A_1,\ldots,A_k)$ be a collection of subsets of $S$. If $|S| \sqrt{n} \le o_\n(S,0) \le n^{5/4}$, then 
$$\Pr\Big( \big\{ |o^*_\n(S,m)| > 1 \big\} \cap \tilde{\n}(S,m) \cap \Y(m) \cap \Q(m) \Big) \, \le \, n^{- |S| n^{4\delta}}$$
for every $m \le \omega \cdot n^{3/2}$. 
\end{lemma}

\begin{proof}[Proof of Lemma~\ref{lemma:b}]
We shall apply the martingale method of Section~\ref{MartSec}. %; since the proof is similar to several earlier applications of this method, we shall omit some of the details. 
Indeed, we have
\begin{equation}\label{eq:ExDeltaOpenEdges}
\Ex\big[ \Delta o_\n(S,m) \big] \, = \, - \frac{1}{Q(m)} \sum_{f \in O_\n(S,m)} \Big( Y_f(m) + 1 \Big),
\end{equation}
from which it follows easily (see Lemma~A.2.1 of the Appendix~\cite{App}) that, for each $\omega \cdot n^{3/2} < m \le m^*$, if $\O_\n(S,m) \cap \Y(m) \cap \Q(m)$ holds then
$$\Ex\big[ \Delta o^*_\n(S,m) \big] \, \in \, \frac{4t}{n^{3/2}} \Big( -\, o^*_\n(S,m) \pm \eps \Big).$$
Hence, setting $\K(m) = \O_\n(S,m) \cap \tilde{\n}(S,m) \cap \Y(m) \cap \Q(m) \cap \big\{ |o^*_\n(S,a)| < 1/2 \big\}$ and $I = [a,b] = [\omega \cdot n^{3/2},m^*]$, it follows that $o_\n(S,m)$ is $(g_o,h;\K)$-self-correcting on $[a,b]$, where $h(t) = t \cdot n^{-3/2}$. Moreover, we claim that if $\K(m)$ holds then
$$|\Delta o_\n(S,m) | \, \le \, n^\delta \qquad \text{and} \qquad \Ex\big[ | \Delta o_\n(S,m) | \big] \, \le \, \frac{C \cdot t}{n^{3/2}} \cdot e^{-4t^2} o_\n(S,0).$$ 
Indeed, the first inequality follows from the event $\tilde{\n}(S,m)$, as noted above, and the second from~\eqref{eq:ExDeltaOpenEdges}, combined with $\O_\n(S,m) \cap \Y(m) \cap \Q(m)$, since $o_\n(S,m)$ is decreasing. Hence, by Lemma~\ref{lem:chainstar}, we obtain
$$|\Delta o^*_\n(S,m) | \, \le \, \frac{e^{4t^2} n^\delta}{g_o(t) o_\n(S,0)} =: \alpha(t) \qquad \text{and} \qquad \Ex\big[ | \Delta o^*_\n(S,m) | \big] \, \le \, \frac{C \cdot t}{g_o(t) n^{3/2}} =: \beta(t).$$ 
Since $\alpha(t)$ and $\beta(t)$ are $C$-slow on $[a,b]$, and $\min\big\{ \alpha(t), \, \beta(t), \, h(t) \big\} \ge \frac{\eps t}{n^{3/2}}$ and $\alpha(t) \le \eps$ for every $\omega < t \le t^*$ (since $\sqrt{n} \le o_\n(S,0) \le n^{5/4}$), it follows that $(C,\eps;g_o,h;\alpha,\beta;\K)$ is a reasonable collection, and that $o^*_\n(S,m)$ satisfies the conditions of Lemma~\ref{lem:self:mart}.

Finally, observe that
$$\alpha(t) \beta(t) n^{3/2} \, = \, \frac{C \cdot t e^{4t^2} n^\delta}{g_o(t)^2 o_\n(S,0)} \, \le \, \frac{1}{n^{5\delta} |S|}$$
for every $\omega < t \le t^*$, since $o_\n(S,0) \ge |S| \sqrt{n}$ and $g_o(t) = n^{3\delta} g_x(t)$. By Lemma~\ref{lem:self:mart}, it follows that
$$\Pr\Big( \O_\n(S,m')^c \cap \K(m'-1) \text{ for some $m' \in [a,b]$} \Big) \, \le \, n^4 \exp\Big( - \delta' n^{5\delta} |S| \Big) \, \le \, n^{-n^{4\delta} |S|}.$$
Combining this bound with Lemma~\ref{lemma:b:landbeforetime}, and summing over choices of $m' \le m$, we obtain the claimed bound on the probability of the event $\O_\n(S,m)^c \cap \tilde{\n}(S,m) \cap \Y(m) \cap \Q(m)$.
\end{proof}

Lemmas~\ref{indep:b} and~\ref{maxdeg:b} follow by applying Lemma~\ref{lemma:b} to all possible collections $\n$ of sets $\big\{ N_{G_{m^*}}(v) \cap S : v \in J \big\}$. We shall need the following two simple observations. 

\begin{obs}\label{Bobs}
Let $S \subseteq V(G_{n,\triangle})$, and suppose that $\B(S,\delta) \cap \O(S,m)^c$ holds. Then there exists a collection $\n = (A_1,\ldots,A_k)$, with 
$$\sum_{j=1}^k |A_j|^2 < \delta |S|^2,$$
such that $\O_\n(S,m)^c \cap \tilde{\n}(S,m)$ holds.
\end{obs}

\begin{proof}
Recall that the event $\B(S,\delta)$ implies that $\sum_{j=1}^k a_j^2 < \delta |S|^2$. Set $A_j = N_{G_{m+1}}(v_j) \cap S$ for each $j \in [k]$, where $J(S,\delta) = \{ v_1,\ldots,v_k \}$, and note that therefore $\sum_{j=1}^k |A_j|^2 < \delta |S|^2$. As noted above, this implies that $\O_\n(S,m) \subseteq \O(S,m)$. Finally, $N_{G_{m+1}}(v) \cap S \subseteq A_j$ for every $v \in J$, and $|N_{G_m}(v) \cap S| \le n^\delta$ for every $v \not\in J$, so $\tilde{\n}(S,m)$ holds, as claimed.
\end{proof}

\begin{obs}\label{Bprimeobs}
Let $S \subseteq V(G_{n,\triangle})$ be a set with $|S| \ge \big( \frac{1}{\sqrt{2}} + \gamma \big) \sqrt{n \log n}$, and suppose that $m \in [m^*]$ is minimal such that $\B'(S,\delta) \cap \E(m^*) \cap \O'(S,m)^c$ holds. Then there exists a collection $\n = (A_1,\ldots,A_k)$, with 
$$|A_1| \le \frac{\big( 2 + \delta \big) m}{n} \quad \text{and} \quad \sum_{j=2}^k |A_j|^2 < \delta |S|^2,$$
such that $\O_\n(S,m)^c \cap \tilde{\n}(S,m)$ holds.
\end{obs}

\begin{proof}
Recall that the event $\E(m^*)$ implies that $d_{G_m}(v) \le (2 + \delta) m/ n$ for every $m \in [m^*]$, and that $\B'(S,\delta)$ implies that $\sum_{j=2}^k a_j^2 < \delta |S|^2$. Thus, setting $A_j = N_{G_{m+1}}(v_j) \cap S$ for each $j \in [k]$, where $J(S,\delta) = \{v_1,\ldots,v_k \}$, we have
$$\sum_{j=1}^k {|A_j| \choose 2} \le {(2 + \delta) m /n \choose 2} + \delta |S|^2 \le \frac{2m^2}{n^2} +  \frac{3\delta}{\gamma} \bigg( {|S| \choose 2} - \frac{2m^2}{n^2} \bigg),$$
where the final inequality follows since ${|S| \choose 2} - \frac{2m^2}{n^2} \ge \gamma \cdot \big( {|S| \choose 2} + \frac{2m^2}{n^2} \big)$. Hence
$$o_\n(S,0) \ge {|S| \choose 2} - \sum_{j=1}^k {|A_j| \choose 2} \ge \bigg( 1 - \frac{3\delta}{\gamma} \bigg) \left( {|S| \choose 2} - \frac{2m^2}{n^2} \right).$$
Now, observe that  
$$o\big( G_m[S] \big) \, < \, \big( 1 - \eps \big) \left( {|S| \choose 2} - \frac{2m^2}{n^2} \right) e^{-4t^2},$$
since $m \in [m^*]$ is minimal such that $\O'(S,m)^c$ holds. Hence, since $\delta = \delta(\eps,\gamma)$ was chosen sufficiently small, 
$$o_\n(S,m) \, \le \, o\big( G_m[S] \big) \, < \, \big( 1 - \eps \big) \left( {|S| \choose 2} - \frac{2m^2}{n^2} \right) e^{-4t^2} \, \le \, \big( 1 - \delta \big) e^{-4t^2} o_\n(S,0),$$
which implies $\O_\n(S,m)^c$. Finally, as in the previous proof, $N_{G_{m+1}}(v) \cap S \subseteq A_j$ for every $v \in J$ and $|N_{G_m}(v) \cap S| \le n^\delta$ for every $v \not\in J$, so $\tilde{\n}(S,m)$ holds, as required.
\end{proof}

Finally, note that if $\D(S,\delta)^c$ holds, then there are at most $n^{2n^{1/2+2\delta}}$ different possible collections $\n$ given by the observations above, since there are at most $n^{1/2+2\delta}$ edges between $S$ and $J$. If moreover $|S| \ge 2\sqrt{n}$, then this is at most $n^{|S| n^{2\delta}}$. 

We can now deduce Lemmas~\ref{indep:b} and~\ref{maxdeg:b}.

\begin{proof}[Proof of Lemma~\ref{indep:b}]
Suppose that $\B(S,\delta) \cap \Y(m^*) \cap \Q(m^*) \cap \D(S,\delta)^c$ holds, and let $m \in [m^*]$ be minimal such that $\O(S,m)^c$ holds. By Observation~\ref{Bobs}, there exists a collection $\n = (A_1,\ldots,A_k)$, with $\sum_{j=1}^k |A_j|^2 < \delta |S|^2$, such that $\O_\n(S,m)^c \cap \tilde{\n}(S,m)$ holds. We shall apply Lemma~\ref{lemma:b} for each such collection, and use the union bound. 

Since $3 \sqrt{n} \le s \le n^{1/2 + \eps}$ and $\sum_{j=1}^k |A_j|^2 < \delta |S|^2$, we have $|S| \sqrt{n} \le o_\n(S,0) \le n^{5/4}$. Hence, by Lemma~\ref{lemma:b}, we obtain
$$\sum_{\n,m} \Pr\Big( \O_\n(S,m)^c \cap \tilde{\n}(S,m) \cap \Y(m) \cap \Q(m) \Big) \, \le \, m^* \cdot n^{|S| n^{2\delta}} \cdot e^{-|S| n^{4\delta}} \, \le \, e^{-s n^\delta},$$
where the sum is over $m \in [m^*]$ and families $\n$ as described above, which satisfy $\D(S,\delta)^c$. It follows immediately that
$$\Pr\Big( \B(S,\delta) \cap \Y(m^*) \cap \Q(m^*) \cap \D(S,\delta)^c \Big) \, \le \, e^{-s n^\delta},$$
as required.
\end{proof}

%The proof of Lemma~\ref{maxdeg:b} is similar; the only extra ingredient we need is that, since the event $\E(m^*)$ holds, each vertex of $G_m$ has degree at most $(2 + o(1)) m/n$, for every $m \in [m^*]$. 

The proof of Lemma~\ref{maxdeg:b} is similar, using Observation~\ref{Bprimeobs}. 

\begin{proof}[Proof of Lemma~\ref{maxdeg:b}]
Suppose that $\B'(S,\delta) \cap \E(m^*) \cap \Y(m^*) \cap \Q(m^*) \cap \D(S,\delta)^c$ holds, and let $m \in [m^*]$ be minimal such that $\O'(S,m)^c$ holds. By Observation~\ref{Bprimeobs}, there exists a collection $\n = (A_1,\ldots,A_k)$, with $|A_1| \le ( 2 + \delta ) m / n$ and $\sum_{j=2}^k |A_j|^2 < \delta |S|^2$, such that $\O_\n(S,m)^c \cap \tilde{\n}(S,m)$ holds. We shall again apply Lemma~\ref{lemma:b}, and use the union bound. 

Since $\big( \frac{1}{\sqrt{2}} + \gamma \big) \sqrt{n \log n} \le s \le n^{1/2+\eps}$ and 
$$o_\n(S,0) \,\ge\, {|S| \choose 2} - \sum_{j=1}^k {|A_j| \choose 2} \, \ge \, \frac{1}{2} \left( {|S| \choose 2} - \frac{2m^2}{n^2} \right) \, \ge \, \frac{\gamma |S|}{4} \cdot \sqrt{n \log n},$$
as in the proof of Observation~\ref{Bprimeobs}, it follows that $|S| \sqrt{n} \le o_\n(S,0) \le n^{5/4}$. Hence, by Lemma~\ref{lemma:b}, we obtain
$$\sum_{\n,m} \Pr\Big( \O_\n(S,m)^c \cap \tilde{\n}(S,m) \cap \Y(m) \cap \Q(m) \Big) \, \le \, m^* \cdot n^{|S| n^{2\delta}} \cdot e^{-|S| n^{4\delta}} \, \le \, e^{-s n^\delta},$$
where the sum is over $m \in [m^*]$ and families $\n$ as described above, which satisfy $\D(S,\delta)^c$. It follows immediately that
$$\Pr\Big( \B'(S,\delta) \cap \E(m^*) \cap \Y(m^*) \cap \Q(m^*) \cap \D(S,\delta)^c \Big) \, \le \, e^{-s n^\delta},$$
as required.
\end{proof}

\subsection{The events $\C(S,\delta)$ and $\C'(S,\delta)$}\label{SubSecC}

Recall that $\C(S,\delta)$ denotes the event that $\a(S,\delta) = (a_1,\ldots, a_{|J|})$ satisfies $\sum_J a_j^2 \ge \delta |S|^2$ and $\sum_J a_j < n^{1/2 + 2\delta}$, and that $S$ is an independent set in $G_{m^*}$. The main result of this subsection is the following lemma.

\begin{lemma}\label{indep:c}
If $s \ge \big( \sqrt{2} + \gamma \big) \sqrt{n \log n}$, then
$$\sum_{S \,:\, |S| = s} \Pr\Big( \C(S,\delta) \cap \E(m^*) \cap \Y(m^*) \cap \Z(m^*) \cap \Q(m^*) \Big) \, \le \, n^{-\delta s }.$$
\end{lemma}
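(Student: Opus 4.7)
Our plan is to condition on the heavy structure $(J, \a, (A_j))$ at time $m^*$, apply Lemma \ref{lemma:b} with $\n = (A_j)$ to control the ``free'' open edges $o_\n(S, m) = |O(G_m[S]) \setminus \bigcup_j O(G_m[A_j])|$, and then bound the probability that $S$ is independent by the argument of Lemma \ref{indep:a}. For each configuration with $\sum a_j^2 \ge \delta s^2$ and $\sum a_j < n^{1/2 + 2\delta}$, let $\P_J$ denote the event that the heavy structure in $G_{m^*}$ is exactly this. As in Observation \ref{Bobs}, $\P_J$ forces the event $\tilde{\n}(S, m)$ for every $m \le m^*$, since $G_m \subseteq G_{m^*}$ means that any vertex heavy at time $m$ lies in $J$ with $N_{G_{m+1}}(v_j) \cap S \subseteq A_j$.

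Provided that $|S|\sqrt{n} \le o_\n(S, 0) \le n^{5/4}$, Lemma \ref{lemma:b} yields $o_\n(S, m) \ge (1 - \eps) e^{-4t^2} o_\n(S, 0)$ for every $m \le m^*$ with probability at least $1 - e^{-|S| n^{4\delta}}$, conditional on $\P_J \cap \Y(m^*) \cap \Q(m^*)$. Since $o(G_m[S]) \ge o_\n(S, m)$, the calculation of Lemma \ref{indep:a} then gives
\[
\Pr\Big( \P_J \cap \I(S, m^*) \cap \O_\n(S, m^*) \cap \Q(m^*) \Big) \, \le \, \exp\left( - (1 - 4\eps) \cdot \frac{2 \, o_\n(S, 0) \, m^*}{n^2} \right).
\]
Summing over the at most $n^{O(n^{1/2 + 2\delta})}$ heavy configurations and the $\binom{n}{s} \le n^{(1/2 + o(1)) s}$ sets $S$, this delivers the claimed $n^{-\delta s}$ provided $o_\n(S, 0) \ge (1 - C\gamma) \binom{s}{2}$ for an appropriate constant $C = C(\eps) > 0$; indeed, such a lower bound forces $2 o_\n(S, 0) m^*/n^2 \ge \big( \tfrac{1}{2} + \tfrac{\gamma}{4} \big) s \log n$, which beats the $\binom{n}{s}$-factor with room to absorb $\delta s \log n$ (recall $\gamma = 10\sqrt{\eps}$ and $\delta = \delta(\eps)$ is chosen sufficiently small).

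The hard part, then, is establishing the lower bound on $o_\n(S,0)$: we must rule out the possibility that the heavy neighbourhoods $\{A_j\}$ cover a non-negligible fraction of $\binom{S}{2}$. The plan is to combine the degree bound from $\E(m^*)$, which forces $\max_j a_j \le (1/\sqrt{2} + o(1)) \sqrt{n \log n}$ so that no single $A_j$ contains more than about half of $S$, with the codegree bound $|A_i \cap A_j| \le (\log n)^2$ supplied by $\Z(m^*)$, so that distinct heavy neighbourhoods overlap only marginally; combined with the constraint $\sum a_j < n^{1/2 + 2\delta}$, these should force $\sum_j \binom{a_j}{2}$ to be only a small fraction of $\binom{s}{2}$. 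In the residual regime where $o_\n(S, 0)$ is nevertheless too small for Lemma \ref{lemma:b} to be directly useful, the required heavy configuration itself is very unlikely, and we expect to dispose of this case by bounding $\Pr[\P_J]$ directly via Lemma \ref{lem:Qprod}, since the $\sum a_j$ edges between $J$ and $S$ contribute a factor $(m^*/n^{11/6})^{\sum a_j}$ that easily overwhelms the combinatorial cost of enumerating such configurations and sets $S$.
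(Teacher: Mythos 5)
Your two-regime plan misses the correct mechanism, and neither regime is tight enough to close even the simplest extreme case. Take $k = 1$ with a single very heavy vertex $v_1$ whose $G_{m^*}$-neighbourhood in $S$ has size $a_1 \approx (1/\sqrt{2})\sqrt{n\log n} \approx s/2$. Then $\sum_j\binom{a_j}{2} \approx \tfrac{1}{4}\binom{s}{2}$, so $o_\n(S,0) \approx \tfrac{3}{4}\binom{s}{2}$: this is \emph{not} within $(1 - C\gamma)$ of $\binom{s}{2}$ for small $\gamma$, and the free-open-pairs calculation only gives roughly $\tfrac{3}{4}\cdot\big(\tfrac{1}{2}+\tfrac{\gamma}{2\sqrt{2}}\big)s\log n < \tfrac{1}{2}s\log n$, which does not beat $\binom{n}{s}$. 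But your residual regime also fails here: $e(H) = a_1 \approx s/2$, and even with the per-edge $Q$-product gain of $n^{-11/6}$ (together with the $(m^*)^{e(H)}$ choices of steps and the $\binom{s}{a_1}$ choices of endpoints), the total from the heavy structure is only around $n^{-s/6 + o(s)}$, which cannot overcome $\binom{n}{s} \approx n^{s/2}$. The underlying error in your heuristic is the claim that the degree and codegree bounds from $\E(m^*)$ and $\Z(m^*)$ ``should force $\sum_j \binom{a_j}{2}$ to be only a small fraction of $\binom{s}{2}$'': the event $\C(S,\delta)$ supplies no \emph{upper} bound on $\sum_J a_j^2$ (only the lower bound $\ge \delta s^2$), and the codegree bound controls only the multiplicity of coverage, not its extent, so $\big|\bigcup_j\binom{A_j}{2}\big|$ can legitimately be a $\Theta(1)$-fraction of $\binom{s}{2}$.

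The paper's proof does not attempt to show $o_\n(S,0)$ is large, nor does it bound $\Pr[\P_J]$ in isolation; it couples the two probabilities. After fixing the very-heavy bipartite graph $H = G_{m^*}[J',S]$ and the steps $\m$, the paper observes that \emph{every} not-yet-chosen edge $f \in T(m) \subseteq E(H)$ contributes a set $Y_f(m)$ of forbidden open edges, and that $Y_H(m) = \bigcup_{f\in T(m)}Y_f(m)$ is \emph{disjoint} from the free open edges $O_\n(S,m)$ inside $S$. The forbidding therefore accumulates: the loss $\exp\big(\tfrac{m^*}{n^2}\sum a_j^2\big)$ incurred by removing $\sum_j\binom{a_j}{2}$ open pairs from the $o(G_m[S])$ count is offset, edge by edge, by the extra forbidding from $Y_H(m)$ (this is the content of Claim~2 and the $\prod_j(a_j/n^{4-4\delta})^{a_j/2}$ factor in Claim~3). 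The critical balance is the per-vertex factor $\tfrac{n^{3\delta}}{\sqrt{a_j}}\exp\big(\tfrac{m^*a_j}{n^2}\big) \le 1$, which holds precisely because $a_j \le (1/\sqrt{2}+o(1))\sqrt{n\log n}$, i.e., the $\E(m^*)$ degree bound. Decoupling these two effects, as your plan does, loses exactly this compensation.
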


We shall also prove the following easy lemma which shows that if $\E(m^*) \cap \Z(m^*)$ holds, then the event $\C'(S,\delta) \cap \W(S,v)$ does not hold for any pair $(S,v)$ with $|S| \ge \sqrt{n}$.  

\begin{lemma}\label{maxdeg:c}
For every $S \subseteq V(G_{n,\triangle})$ with $|S| \ge \sqrt{n}$, and every $v \in V(G_{n,\triangle})$,
$$\C'(S,\delta) \cap \W(S,v) \subseteq \big( \E(m^*) \cap \Z(m^*) \big)^c.$$
\end{lemma}

Recall that $a_1 \ge \dots \ge a_{|J|}$, and let $k = k(S,\delta)$ be minimal such that $\sum_{j > k} a_j^2 < \delta |S|^2$. We begin with an easy but key observation which will be used in both proofs.

\begin{obs}\label{obs:C}
Let $S \subseteq V(G_{n,\triangle})$ with $|S| \ge \sqrt{n}$. Then 
$$\C(S,\delta) \cup \C'(S,\delta) \, \subseteq \, \Big\{ a_k \ge n^{1/2-3\delta} \Big\} \, \cap \, \Big\{  k \le n^{5\delta} \Big\}.$$
\end{obs}

\begin{proof}
Noting that the event $\C(S,\delta) \cup \C'(S,\delta)$ implies that $\sum_J a_j < n^{1/2 + 2\delta}$, we obtain
$$n^{1 - \delta} \, \le \, \delta |S|^2 \, \le \, \sum_{j \ge k} a_j^2 \, \le \, a_k \sum_{j \ge k} a_j \, \le \, a_k n^{1/2 + 2\delta},$$
by the definition of $k$. The claimed bounds on $a_k$ and $k$ now follow immediately.
\end{proof}

\begin{proof}[Proof of Lemma~\ref{maxdeg:c}]
Fix $S \subseteq V(G_{n,\triangle})$ with $|S| \ge \sqrt{n}$ and $v \in V(G_{n,\triangle})$, and suppose that the event $\C'(S,\delta) \cap \W(S,v) \cap \E(m^*) \cap \Z(m^*)$ holds; we shall show that this is impossible. Recall first that the event $\W(S,v)$ implies that $\{u,v\} \in O(G_{m^*}) \cup E(G_{m^*})$ for every $u \in S$. We shall show, using the event $\E(m^*) \cap \Z(m^*)$, that every vertex $w \in V(G_{n,\triangle})$ other than $v$ has at most 
\begin{equation}\label{eq:maxdeg:c:nbrsofw}
n^{4\eps} + (\log n)^2
\end{equation}
$G_{m^*}$-neighbours in $S$. Indeed, fix $w \in V(G_{n,\triangle})$, and note that (as in the proof of Proposition~\ref{Zprop}), since the event $\E(m^*)$ holds, there are at most 
$$\big( 1 + o(1) \big) \cdot 4t^* e^{-4(t^*)^2} \sqrt{n} \, \le \, n^{4\eps}$$ 
vertices $u \in S$ such that $\{u,v\} \in O(G_{m^*})$ and $\{u,w\} \in E(G_{m^*})$. Similarly, since the event $\Z(m^*)$ holds, there are at most $(\log n)^2$ vertices $u \in S$ such that $\{u,v\}, \{u,w\} \in E(G_{m^*})$. 

It follows from~\eqref{eq:maxdeg:c:nbrsofw} that $a_2 \le n^{4\eps} + (\log n)^2$. But $\sum_{j=2}^{|J|} a_j^2 \ge \delta s^2$, since $\C'(S,\delta)$ holds, and so $k(S,\delta) \ge 2$. Hence, by Observation~\ref{obs:C}, we have $a_2 \ge n^{1/2-3\delta}$, which is a contradiction, as required.
\end{proof}

The proof of Lemma~\ref{indep:c} is considerably harder, and so we shall give a brief sketch before plunging into the details. We are again motivated by Observation~\ref{obs:C}, but since we no longer assume that the event $\W(S,v)$ holds (and so very many vertices can have very high degree into $S$) we shall need some extra ideas. We will ignore the `high degree vertices' in $J$ with between $n^\delta$ and $n^{1/2 - 3\delta}$ neighbours in $S$, and focus on the set $J' \subseteq J$ of `very high degree vertices', which form an unusually dense bipartite graph $H = G_{m^*}[J',S]$. Moreover, there is a trade-off in choosing the graph $H$: the more edges it has, and the earlier the edges of $H$ are chosen, the less likely it is to occur, but the easier it is to keep the set $S$ independent. We shall need to keep track of each of these competing influences.

In order to show that such a structure (a bipartite graph $H$ as described above, sitting on an independent set $S$) is unlikely to exist in $G_{m^*}$, we partition the space according to the sets $S$ and $J'$, the graph $H$, and the collection $\m = \big( m(f) : f \in E(H) \big)$ of steps of the triangle-free process at which the edges of $H$ were chosen. The probability that the edge $f$ is chosen in step $m(f)$ is $1 / Q\big( m(f) \big) \approx 2 e^{4t^2} / n^2$, since the event $\Q(m^*)$ holds; the hard part will be to bound the number of `forbidden' open edges at each step. 

The forbidden open edges come in two types: those inside $S$, and those which would close not-yet-chosen edges of $H$, i.e., in the set 
\begin{equation}\label{def:YHm:T}
Y_H(m) \, := \, \bigcup_{f \in T(m)} Y_f(m), \quad \text{where} \quad T(m) \,:=\, \big\{ f \in E(H) \,:\, m(f) > m \big\}.
\end{equation}
In order to keep these sets disjoint, we shall ignore open edges in $S$ which are closed by vertices in $J'$, i.e., we shall consider the sets $O_\n(S,m)$, where $\n = (A_1, \ldots, A_k)$ encodes the neighbourhoods $A_j = N_{G_{m^*}}(v_j) \cap S$ of the vertices of $J'$, cf. Section~\ref{SubSecB}. Observe that if $f \in O_\n(S,m) \cap Y_h(m)$ for some $h \in E(H)$ with $m(h) > m$, then $f \subseteq N_{G_{m^*}}(v) \cap S$, where $v$ is the endpoint of $f$ in $J'$, which contradicts the assumption that $f \in O_\n(S,m)$. It follows that the two sets of open edges we consider are indeed disjoint, as claimed.

Finally, note that we can bound the probability that $o_\n(S,m)$ is smaller than expected using Lemma~\ref{lemma:b}; thus, all that remains is to bound from below the size of the set $Y_H(m)$. Our key tool in doing so will be the following lemma. Given a graph $G$ and an oriented edge $f \in V(G)^2$, let $d_G^L(f)$ and $d_G^R(f)$ denote the degrees of the (left and right, respectively) endpoints of $f$ in $G$, and let
$$\Xi(G) \, := \, \bigcup_{\{u,v\} \in {V(G) \choose 2}} {N_G(u) \cap N_G(v) \choose 2}$$
denote the set of edges which are `double-covered' by $G$, that is, those which are contained in the neighbourhood of at least two vertices of $G$. Recall from Section~\ref{Ysec} that $Y_e^L(m) \subseteq Y_e(m)$ denotes the collection of $Y$-neighbours $f$ of $e$ such that the vertex $v \in e \setminus f$ has label $L$. 

\begin{lemma}\label{lem:bigcupY}
Let $m \in [m^*]$, let $H$ be a graph with $E(H) \subseteq E(G_{m^*}) \cap O(G_m)$, and give an orientation to each edge of $E(H)$. If $\Z(m^*)$ holds then
$$\bigg| \bigcup_{f \in E(H)} Y^L_f(m) \bigg| \, \ge \, \sum_{f \in E(H)} Y^L_f(m) \, - \, (\log n)^2 \bigg( \big| \Xi\big( G_{m^*}[V(H)] \big) \big| + \sum_{f \in E(H)} d^R_H(f) \bigg).$$
\end{lemma}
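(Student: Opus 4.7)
The plan is to apply a one-step Bonferroni inequality,
$$\bigg|\bigcup_{f \in E(H)} Y^L_f(m)\bigg| \,\ge\, \sum_{f \in E(H)} |Y^L_f(m)| \,-\, \sum_{\{f,f'\} \in \binom{E(H)}{2}} |Y^L_f(m) \cap Y^L_{f'}(m)|,$$
and then to bound the sum of pairwise intersections by the stated error term using $\Z(m^*)$. Unpacking the definitions, with $f = (u_L, u_R)$ and $f' = (v_L, v_R)$ (oriented), every $g \in Y^L_f(m) \cap Y^L_{f'}(m)$ is an open edge that shares its $R$-endpoint with $u_R$ and with $v_R$, and whose remaining endpoint lies in $N_{G_m}(u_L)$ (resp.\ $N_{G_m}(v_L)$). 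I split into two cases according to whether $u_R = v_R$.

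In the case $u_R = v_R$, the element $g = \{u_R, w\}$ of the intersection is parameterised by $w \in N_{G_m}(u_L) \cap N_{G_m}(v_L)$, so $|Y^L_f \cap Y^L_{f'}| \le (\log n)^2$ by $\Z(m^*)$; summing over the $\sum_{v \in V(H)} \binom{r_H(v)}{2} \le \tfrac{1}{2}\sum_{f \in E(H)} d^R_H(f)$ such pairs (where $r_H(v)$ is the number of oriented edges of $H$ with right endpoint $v$, and we use $r_H(v) \le d_H(v)$) accounts for the $(\log n)^2 \sum_{f} d^R_H(f)$ contribution to the bound.

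The main case is $u_R \neq v_R$. The two sharing conditions now force $g = \{u_R, v_R\}$ to be the unique candidate in the intersection, and non-emptiness additionally requires $\{u_L, v_R\}, \{v_L, u_R\} \in E(G_m)$. Together with $\{u_L, u_R\}, \{v_L, v_R\} \in E(H) \subseteq E(G_{m^*})$, this produces a $4$-cycle on $\{u_L, u_R, v_L, v_R\} \subseteq V(H)$ in $G_{m^*}$; moreover $u_L \neq v_L$, since otherwise $\{u_L, u_R\}$ would lie in both $E(H)$ and $E(G_m)$, contradicting $E(H) \subseteq O(G_m)$. Consequently $\{u_L, v_L\} \subseteq N_{G_{m^*}[V(H)]}(u_R) \cap N_{G_{m^*}[V(H)]}(v_R)$, so $\{u_L, v_L\} \in \Xi(G_{m^*}[V(H)])$. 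To count these configurations, I parameterise by this $\Xi$-element: given $\{p, q\} = \{u_L, v_L\}$, the feet $u_R$ and $v_R$ must lie in $N_{G_{m^*}}(p) \cap N_{G_{m^*}}(q)$, which has size at most $(\log n)^2$ by $\Z(m^*)$, and the edge-type constraint partitions this common neighbourhood into two disjoint subsets (disjointness follows from $E(H) \cap E(G_m) = \emptyset$) with $u_R$ in one and $v_R$ in the other, yielding a polylogarithmic number of valid configurations per element of $\Xi$, each contributing at most one element to the corresponding intersection.

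The main obstacle will be the Case 2 bookkeeping: carefully using the disjointness of the two edge-type classes of common neighbours (together with the bound $(\log n)^2$ on the common neighbourhood itself) and the two-to-one redundancy between ordered choices of $(u_R, v_R)$ and unordered pairs $\{f, f'\}$ to squeeze the per-$\Xi$-element count into the $(\log n)^2$ prefactor stated in the lemma, after which combining with the Case 1 estimate yields the claim.
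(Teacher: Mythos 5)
Your inclusion-exclusion setup and case split are correct, as are the observations that in the case $u_R \neq v_R$ both $\{u_L,v_L\}$ and $g = \{u_R,v_R\}$ lie in $\Xi\big(G_{m^*}[V(H)]\big)$. But the deferred ``squeeze'' cannot be carried out: for a fixed $\{p,q\} = \{u_L,v_L\} \in \Xi$, the feet $u_R$ and $v_R$ range over two disjoint subsets of $N_{G_{m^*}}(p) \cap N_{G_{m^*}}(q)$, whose sizes $k_1,k_2$ satisfy only $k_1 + k_2 \le (\log n)^2$, so the number of contributing pairs $\{f,f'\}$ is $k_1 k_2$, which can be as large as $(\log n)^4/4$. (Parameterizing instead by $g = \{u_R,v_R\}$ gives the same order, since the two ``left feet'' again range over disjoint subsets of the common $G_{m^*}$-neighbourhood of the endpoints of $g$.) The disjointness observation and the two-to-one redundancy buy at most constant factors, and $\Z$ imposes only the uniform bound $k_1 + k_2 \le (\log n)^2$, which allows the product to hit the worst case; so your Case 2 term is genuinely $\Theta\big((\log n)^4\big)|\Xi|$, not $(\log n)^2|\Xi|$, and the stated inequality does not follow from this route.

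The paper sidesteps this by never estimating the Case 2 pairwise intersections at all. Since in that case $Y^L_f(m) \cap Y^L_{f'}(m)$ is a single edge lying in $\Xi$, one removes $\Xi$ from the universe before applying inclusion-exclusion:
$$\bigg| \bigcup_{f} Y^L_f(m) \bigg| \;\ge\; \bigg| \bigcup_{f} \big( Y^L_f(m) \setminus \Xi \big) \bigg| \;\ge\; \sum_{f} \big| Y^L_f(m) \setminus \Xi \big| \;-\; \sum_{f \neq h} \big| Y^L_f(m) \cap Y^L_h(m) \setminus \Xi \big|.$$
The Case 2 intersections now contribute nothing, and the final sum is controlled by $(\log n)^2 \sum_f d^R_H(f)$ using only your Case 1 reasoning together with the empty-intersection cases. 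The cost of subtracting $\Xi$ is the term $\sum_f |Y^L_f(m) \cap \Xi|$, but this is bounded by $(\log n)^2 |\Xi|$ via a much simpler single-edge count: given $e \in \Xi$ and a choice of which endpoint of $e$ plays the role of $u_R^f$, the other foot $u_L^f$ must lie in the common $G_{m^*}$-neighbourhood of the two endpoints of $e$, which has size at most $(\log n)^2$ by $\Z(m^*)$. This ``remove $\Xi$ first'' step is precisely the idea your proposal is missing.
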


We remark that we shall in fact apply Lemma~\ref{lem:bigcupY} to two different subgraphs, $T(m)$ and $T'(m) \subseteq T(m)$ (defined below), of the graph $H = G_{m^*}[J',S]$.\footnote{We shall also use it in Section~\ref{SubSecD}, below.}

\begin{proof}[Proof of Lemma~\ref{lem:bigcupY}]
The lemma follows by inclusion-exclusion, together with the observation that 
\begin{equation}\label{eq:inex}
\sum_{h \in E(H) \setminus \{f\}} \big| Y^L_f(m) \cap Y^L_h(m) \setminus \Xi\big( G_{m^*}[V(H)] \big) \big| \, \le \, d^R_H(f) (\log n)^2.
\end{equation}
To see~\eqref{eq:inex}, let $f,h \in E(H)$ and suppose first that $f$ and $h$ are disjoint. Then (clearly) $|Y^L_f(m) \cap Y^L_h(m)| \in \{0,1\}$, and moreover we claim that 
$$Y^L_f(m) \cap Y^L_h(m) \subseteq \Xi\big( G_{m^*}[V(H)] \big).$$ 
Indeed, to see this simply note that $f,h \in E(H) \subseteq E(G_{m^*})$, so if $e \in Y^L_f(m) \cap Y^L_h(m)$ then it is double-covered by $G_{m^*}[V(H)]$. Note also that if $f \neq h$, but $f$ and $h$ intersect in the `left' endpoint of either $f$ or $h$, then $Y^L_f(m) \cap Y^L_h(m) = \emptyset$.  

Now, since $\Z(m)$ holds, we have $|Y^L_f(m) \cap Y^L_h(m)| \le (\log n)^2$ for every pair of edges $f,h \in O(G_m)$, by Observation~\ref{obs:twostep}. Moreover, there are at most $d^R_H(f)$ edges $h \in E(H) \setminus \{f\}$ which contain the `right' endpoint of $f$, and so~\eqref{eq:inex} follows.

To deduce the lemma from~\eqref{eq:inex}, simply note that (for any set $A \subseteq E(K_n)$)
$$\bigg| \bigcup_{f \in E(H)} Y^L_f(m) \setminus A \bigg| \, \ge \, \sum_{f \in E(H)} \big| Y^L_f(m) \setminus A \big| \, - \, \sum_{\substack{f,h \in E(H)\\ f \neq h}} \big| Y^L_f(m) \cap Y^L_h(m) \setminus A \big|,$$
set $A = \Xi\big( G_{m^*}[V(H)] \big)$, and observe that, since $\Z(m^*)$ holds and $H \subseteq G_{m^*}$, each edge\footnote{In fact, this holds for every edge, not just those in $\Xi\big( G_{m^*}[V(H)] \big)$.} of $\Xi\big( G_{m^*}[V(H)] \big)$ appears in $Y^L_f(m)$ for at most $(\log n)^2$ edges $f \in E(H)$. 
\end{proof}

We are now ready to prove Lemma~\ref{indep:c}.

\begin{proof}[Proof of Lemma~\ref{indep:c}]
Let $\big( \sqrt{2} + \gamma \big) \sqrt{n \log n} \le s \le n^{1/2 + \eps}$; we begin the proof by breaking up  the event whose probability we wish to bound into more manageable pieces. Indeed, for each $S \subseteq V(G_{n,\triangle})$ with $|S| = s$, each $k \le n^{5\delta}$, each $J' \subseteq V(G_{n,\triangle}) \setminus S$ with $|J'| = k$, each bipartite graph $H$ on $S \cup J'$ with $d_H(v) \ge n^{1/2 - 3\delta}$ for each $v \in J'$, and each collection $\m = \big( m(f) : f \in E(H) \big) \in [m^*]^{e(H)}$, let $C(H,\m)$ denote the event that the following all hold:
\begin{itemize}
\item[$(a)$] $\I(S,m^*) \cap \E(m^*) \cap \Y(m^*) \cap \Z(m^*) \cap \Q(m^*)$.\smallskip
\item[$(b)$] $\big\{ G_{m^*}[J',S] = H \big\} \cap \big\{ \sum_{J \setminus J'} a_j^2 < \delta |S|^2 \big\} \cap \big\{ \sum_J a_j < n^{1/2 + 2\delta} \big\}$. \smallskip
\item[$(c)$] For each $f \in E(H)$, the edge $f$ was added in step $m(f)$ of the triangle-free process.
\end{itemize}
Note that we suppress the dependence of $C(H,\m)$ on $S$ and $J'$, by encoding both sets in the graph $H$. By Observation~\ref{obs:C}, we have
\begin{equation}\label{eq:partitionoftheeventC}
\sum_{S \,:\, |S| = s} \Pr\Big( \C(S,\delta) \cap \E(m^*) \cap \Y(m^*) \cap \Z(m^*) \cap \Q(m^*) \Big) \, \le \, \sum_{H,\, \m} \Pr\big( C(H,\m) \big),
\end{equation}
where the second sum is over all graphs $H$ as described above, and sequences $\m \in [m^*]^{e(H)}$. 

As noted in the sketch above, our bound on the probability of $C(H,\m)$ will have two parts: a bound on the probability that the edges of $H$ are chosen at the steps corresponding to $\m$, and a bound on the number of `forbidden' open edges at each step. Recall that an open edge is forbidden if it is in the set $S$, or if it is a $Y$-neighbour of a still-open edge of $H$. We begin by controlling the number of open edges inside $S$. 

Let $J' = \{v_1,\ldots,v_k\}$ and $\n = (A_1,\ldots,A_k)$, where $A_j = N_H(v_j) \cap S$ for each $j \in [k]$. 

\medskip
\noindent \textbf{Claim 1:} With probability at least $1 - e^{-s n^{3\delta}}$, either $C(H,\m)^c$ holds, or
\begin{equation}\label{eq:indep:c:claim1}
o_\n(S,m) \, \ge \, \bigg( (1 - 2\delta){s \choose 2} - \sum_{j=1}^k {a_j \choose 2} \bigg) e^{-4t^2}
\end{equation}
for every $m \in [m^*]$.

\begin{proof}[Proof of claim]
The claim follows by applying Lemma~\ref{lemma:b} to a large family of collections $\n' \supseteq \n$, cf. Section~\ref{SubSecB}. Indeed, we claim that if $C(H,\m)$ holds, but~\eqref{eq:indep:c:claim1} fails to hold, then there exists $m \in [m^*]$ and $\n' = (A_1,\ldots,A_\ell)$, with $\sum_{j=k+1}^\ell |A_j|^2 < \delta |S|^2$, such that $\O_{\n'}(S,m)^c \cap \tilde{\n}'(S,m)$ holds. To see this, set $A_j = N_{G_{m^*}}(v_j) \cap S$ for each $k < j \le \ell$, where $J(S,\delta) = \{ v_1,\ldots,v_\ell \}$, and observe that the event $C(H,\m)$ implies that 
$$\sum_{j=k+1}^\ell |A_j|^2 \, = \, \sum_{v_j \in J \setminus J'} a_j^2 \, < \, \delta |S|^2.$$
It follows that
\begin{align*}
o_{\n'}(S,m) & \, \le \, o_\n(S,m) \, < \, \bigg( (1 - 2\delta){s \choose 2} - \sum_{j=1}^k {a_j \choose 2} \bigg) e^{-4t^2} \\
& \, \le \, \bigg( (1 - \delta){s \choose 2} - \sum_{j=1}^\ell {a_j \choose 2} \bigg) e^{-4t^2} \, < \, (1 - \delta) e^{-4t^2} o_{\n'}(S,0),
\end{align*}
assuming that~\eqref{eq:indep:c:claim1} fails to hold for $m$, and so we have $\O_{\n'}(S,m)^c$, as claimed. Moreover, note that $N_{G_{m+1}}(v_j) \cap S \subseteq A_j$ for every $j \in [\ell]$ and $|N_{G_m}(v) \cap S| \le n^\delta$ for every $v \not\in J$, and so the event $\tilde{\n}'(S,m)$ also holds, as required. 

We shall now apply Lemma~\ref{lemma:b} to each possible such collection $\n'$, and use the union bound. Observe that the claim is trivial if the bound in~\eqref{eq:indep:c:claim1} is negative, and so we may assume that
$$\sum_{j=1}^\ell {a_j \choose 2} \, \le \, \sum_{j=1}^k {a_j \choose 2} + \delta {s \choose 2} \, \le \, (1 - \delta){s \choose 2},$$
and hence, since $\sqrt{n} \ll s \le n^{1/2+\eps}$, that $|S| \sqrt{n} \le o_\n(S,0) \le n^{5/4}$. Recall also that, since $\sum_J a_j < n^{1/2 + 2\delta}$ and $|S| \ge 2\sqrt{n}$, there are at most $n^{|S| n^{2\delta}}$ possible collections $\n'$ as described above. By Lemma~\ref{lemma:b}, and summing over $m \in [m^*]$ and collections $\n'$ as described above, it follows that
$$\sum_{\n',m} \Pr\Big( \O_{\n'}(S,m)^c \cap \tilde{\n}'(S,m) \cap \Y(m) \cap \Q(m) \Big) \, \le \, m^* \cdot n^{|S| n^{2\delta}} \cdot e^{-|S| n^{4\delta}} \, \le \, e^{-s n^{3\delta}},$$
as required.
\end{proof}

Next we turn to the set $Y_H(m)$, i.e., to the forbidden open edges which are $Y$-neighbours of some not-yet-chosen edge of $H$. Recall from~\eqref{def:YHm:T} the definition of 
$$T(m) \, = \, \big\{ f \in E(H) \,:\, m(f) > m \big\},$$
and set $a(f) = a_j$ for each edge $f \in E(H)$, where $v_j$ is the endpoint of $f$ in $J'$. The following claim is a straightforward consequence of Lemma~\ref{lem:bigcupY}.  

\medskip
\noindent \textbf{Claim 2:} Suppose that $C(H,\m)$ holds. Then, for every $\omega \cdot n^{3/2} < m \le m^*$, 
$$\bigg| \bigcup_{f \in T(m)} Y_f(m) \bigg| \, \ge \, \left( \frac{1}{2} - \delta^2 \right) \Yt(m) |T(m)| \,+\, \sum_{f \in T(m)} \max\bigg\{  \left( \frac{1}{2} - \delta^2 \right) \Yt(m) \, - \, a(f) (\log n)^2 , \, 0\bigg\}.$$

\begin{proof}[Proof of claim]
Let us give an orientation to each edge $f \in T(m)$, by saying that its right foot is in $S$. The claim follows by applying Lemma~\ref{lem:bigcupY} twice, first to $T(m)$, and then to a suitably-chosen subset $T' \subseteq T$, with reversed orientations. 

Let us begin by observing that since $\I(S,m^*) \cap \Z(m^*)$ holds, we have
\begin{equation}\label{eq:boundingDGH}
\big| \Xi\big( G_{m^*}[V(H)] \big) \big| \, \le \, {|J'| \choose 2} + (\log n)^4 {|J'| \choose 2} \, \le \, n^{O(\delta)} \, \ll \, n^\eps \, \le \, \frac{\Yt(m^*)}{(\log n)^2}
\end{equation}
Indeed, since $S$ is an independent set in $G_{m^*}$, every edge which is double-covered by edges of $H$ is either inside $J'$, or is in the common $G_{m^*}$-neighbourhood of two vertices of $J'$. The first inequality now follows from the event $\Z(m^*)$, since each pair of vertices of $J'$ has at most $(\log n)^4$ edges in their common neighbourhood. The other inequalities follow since $|J'| \le n^{5\delta}$, since $\delta = \delta(\eps) \ll \eps$, and by the definitions of $\Yt$ and $m^*$.

We next claim that, since $\I(S,m^*) \cap \E(m^*) \cap \Z(m^*)$ holds, we have
\begin{equation}\label{eq:claimtwo:firstapp}
\bigg| \bigcup_{f \in T(m)} Y^L_f(m) \bigg| \, \ge \, \left( \frac{1}{2} - \delta^2 \right) \Yt(m) |T(m)|.
\end{equation}
To prove~\eqref{eq:claimtwo:firstapp}, apply Lemma~\ref{lem:bigcupY} to $T(m)$, and use~\eqref{eq:boundingDGH} to bound $\big| \Xi\big( G_{m^*}[V(H)] \big) \big|$. Since we have $Y^L_f(m) \in \big( \frac{1}{2} + o(1) \big) \Yt(m)$, by $\E(m^*)$, and $d^R_H(f) \le |J'| \le n^{5\delta} \ll \Yt(m) / (\log n)^2$ for every $f \in E(H)$, we obtain~\eqref{eq:claimtwo:firstapp}, as claimed.
 
Finally, recall that $a(f) = d_H^L(f)$ for each edge $f \in E(H)$, and define 
$$T'(m) \, = \, \bigg\{ f \in T(m) \,:\, \left( \frac{1}{2} - \delta^2 \right) \Yt(m) \ge a(f) (\log n)^2 \bigg\}.$$
We claim that
\begin{equation}\label{eq:claimtwo:secondapp}
\bigg| \bigcup_{f \in T'(m)} Y^R_f(m) \bigg| \, \ge \, \sum_{f \in T(m)} \max\bigg\{  \left( \frac{1}{2} - \delta^2 \right) \Yt(m) \, - \, a(f) (\log n)^2 , \, 0\bigg\}.
\end{equation}
This holds trivially if $|T'(m)| = 0$, so assume not and apply Lemma~\ref{lem:bigcupY} to $T'(m)$. Using the event $\E(m^*)$ and~\eqref{eq:boundingDGH}, we obtain 
$$\bigg| \bigcup_{f \in T'(m)} Y^R_f(m) \bigg| \, \ge \, \left( \frac{1}{2} - \delta^2 \right) \Yt(m) |T'(m)| \, - \, \sum_{f \in T'(m)} a(f) (\log n)^2,$$
which immediately implies~\eqref{eq:claimtwo:secondapp}. Combining~\eqref{eq:claimtwo:firstapp} and~\eqref{eq:claimtwo:secondapp}, and noting that $Y_f^L(m) \cap Y_h^R(m) = \emptyset$ for every $f,h \in E(H)$, since $S$ is independent, the claim follows.
\end{proof}

We are ready to prove our desired bound on the probability of the event $C(H,\m)$. 

\medskip
\noindent \textbf{Claim 3:} For every bipartite graph $H$ on $S \cup J'$, with $|S| = s$, $|J'| = k \le n^{5\delta}$ and $d_H(v) \ge n^{1/2 - 3\delta}$ for each $v \in J'$, and each collection $\m = \big( m(f) : f \in E(H) \big) \in [m^*]^{e(H)}$, 
$$\Pr\big( C(H,\m) \big) \, \le \, \ds\exp\bigg( \frac{m^*}{n^2} \sum_{j=1}^k a_j^2 - \left( \frac{1}{2} + \gamma^2 \right) s \log n \bigg) \prod_{j=1}^k \bigg( \frac{a_j}{n^{4-4\delta}} \bigg)^{a_j/2} +  e^{-s n^{3\delta}}.$$

\begin{proof}[Proof of claim]
If $C(H,\m)$ occurs, then at each non-$\m$ step of the triangle-free process we do not choose a forbidden open edge, and at step $m(f)$ we choose edge $f$, for each $f \in E(H)$. By Claim~1, the probability that $C(H,\m)$ occurs and~\eqref{eq:indep:c:claim1} fails to hold is at most $ e^{-s n^{3\delta}}$, so let us assume from now on that 
$$o_\n(S,m) \, \ge \, \bigg( (1 - 2\delta){s \choose 2} - \sum_{j=1}^k {a_j \choose 2} \bigg) e^{-4t^2}$$
for every $m \in [m^*]$. Moreover, by Claim~2, we have 
$$|Y_H(m)| \, \ge \, \left( \frac{1}{2} - \delta^2 \right) \Yt(m) |T(m)| \,+\, \sum_{f \in T(m)} \max\bigg\{  \left( \frac{1}{2} - \delta^2 \right) \Yt(m) \, - \, a(f) (\log n)^2 , \, 0\bigg\}$$
for every $\omega \cdot n^{3/2} < m \le m^*$, where $Y_H(m)$ was defined in~\eqref{def:YHm:T}. Moreover, as noted earlier, $O_\n(S,m)$ and $Y_H(m)$ are \emph{disjoint} sets of forbidden open edges, since if $f \in O_\n(S,m) \cap Y_H(m)$, then $f$ is contained in the $G_{m^*}$-neighbourhood (in $S$) of some vertex of $J'$ (that is, in one of the sets $A_j \in \n$), which contradicts the definition of~$O_\n(S,m)$.

It follows that the number of forbidden open edges at step $m+1 > \omega \cdot n^{3/2}$ is 
\begin{multline}\label{eq:forbiddenopenedges}
o_\n(S,m) + |Y_H(m)| \, \ge \, e^{-4t^2} \max\bigg\{ (1 - 2\delta){s \choose 2} - \sum_{j=1}^k {a_j \choose 2}, \, 0 \bigg\}  \\
\,+\, \left( \frac{1}{2} - \delta^2 \right) \Yt(m) |T(m)| \,+ \sum_{f \in T(m)} \max\bigg\{  \left( \frac{1}{2} - \delta^2 \right) \Yt(m) \, - \, a(f) (\log n)^2, \, 0\bigg\},
\end{multline}
and hence the probability of choosing a forbidden open edge\footnote{Note that, if this probability is $p$, then the probability of not choosing such an edge is at most $e^{-p}$, and the choices at each step are independent.} is at least the right-hand side of~\eqref{eq:forbiddenopenedges} divided by $Q(m)$. Let us take the three terms on the right-hand side of~\eqref{eq:forbiddenopenedges} one at a time. Indeed, since $\Q(m^*)$ holds, and setting $m_0 = \lceil \omega \cdot n^{3/2} \rceil$, 
we have
$$\sum_{m = m_0}^{m^*} \frac{e^{-4t^2}}{Q(m)} \max\bigg\{ (1 - 2\delta){s \choose 2} - \sum_{j=1}^k {a_j \choose 2}, \, 0 \bigg\}  \, \ge \, (1 - 3\delta) \frac{m^* s^2}{n^2} \,-\, \frac{m^*}{n^2} \sum_{j=1}^k a_j^2$$
and, recalling that $t(f) = m(f) \cdot n^{-3/2}$,
$$\sum_{m = 1}^{m^*} \frac{\Yt(m)|T(m)|}{Q(m)} \ge \sum_{m = 1}^{m^*} \sum_{f \in T(m)} \frac{(8-\delta)m}{n^3}  = \sum_{f \in E(H)} \sum_{m = 1}^{m(f)-1} \frac{(8-\delta)m}{n^3} \, \ge \, (4 - \delta) \sum_{f \in E(H)} t(f)^2.$$
In order to bound the final term in~\eqref{eq:forbiddenopenedges}, let us write $\hat{t}(f)$ for the time $\omega < t \le t^*$ at which $\Yt(m) = C \cdot a(f) (\log n)^2$, if such a time exists, and set $\hat{t}(f) = 0$ or $\hat{t}(f) = t^*$ otherwise,\footnote{Of course, if $\Yt(\omega \cdot n^{3/2}) < C \cdot a(f) (\log n)^2$ then set $\hat{t}(f) = 0$, and if $\Yt(m^*) > C \cdot a(f) (\log n)^2$  then set $\hat{t}(f) = t^*$. Note for future reference that we have $e^{-4\hat{t}(f)^2} \le a(f) \cdot e^{-4(t^*)^2}$, since $e^{4(t^*)^2} \le n^{1/2 - \eps}$.} in the obvious way, and set $\hat{m}(f) = \hat{t}(f) \cdot n^{3/2}$. We obtain
\begin{multline*}
\sum_{m=1}^{m^*} \frac{1}{Q(m)} \sum_{f \in T(m)} \max\bigg\{  \left( \frac{1}{2} - \delta^2 \right) \Yt(m) \, - \, a(f) (\log n)^2 , \, 0 \bigg\} \\
 \, \ge \sum_{f \in E(H)} \sum_{m=1}^{\min\{ \hat{m}(f), m(f)\} }  \left( \frac{1}{2} - 2\delta^2 \right) \frac{8m}{n^3} \, \ge \, \big( 2 - \delta \big) \sum_{f \in E(H)}  \min\big\{ \hat{t}(f), \, t(f) \big\}^2.
\end{multline*}
Combining the last several inequalities, it follows that the probability that we avoid choosing a forbidden open edge at every (non-$\m$) step of the triangle-free process is at most\footnote{Here we use the fact that $\sum_{m = 1}^{m_0} \frac{\Yt(m)|T(m)|}{Q(m)} \le  O(\omega^2) \cdot e(H)$, which is swallowed by the error term.}
\begin{equation}\label{eq:probavoidforbiddenopenedges}
\exp\Bigg[ \frac{m^*}{n^2} \sum_{j=1}^k a_j^2 - \big( 1 - 3\delta \big) \bigg( \frac{m^* s^2}{n^2} + 2 \sum_{f \in E(H)} \Big( t(f)^2 + \min\big\{ \hat{t}(f), \, t(f) \big\}^2 \Big) \bigg) \Bigg].
\end{equation}
Finally, note that, since $\Q(m^*)$ holds, the probability that we choose the edge $f$ at step $m(f)$ for each edge $f \in E(H)$ is
\begin{equation}\label{eq:Qprod:c}
\prod_{f \in E(H)} \frac{1}{Q\big( m(f) \big)} \, \le \, \bigg( \frac{4}{n^2} \bigg)^{e(H)} \exp\bigg( 4 \sum_{f \in E(H)} t(f)^2 \bigg).
\end{equation}
It follows that the probability of the event $C(H,\m)$ is at most the product of~\eqref{eq:probavoidforbiddenopenedges} and the right-hand side of~\eqref{eq:Qprod:c}. The remainder of the proof is a straightforward calculation. Indeed, note first that
\begin{equation}\label{eq:increasingts}
\varphi(H,\m) \, := \, \big( 1 + 3\delta \big)  \sum_{f \in E(H)} t(f)^2  - \big( 1 - 3\delta \big) \sum_{f \in E(H)} \min\big\{ \hat{t}(f), \, t(f) \big\}^2
\end{equation}
is increasing in $t(f)$ for every $f \in E(H)$. Moreover, recalling that $\hat{t}(f)$ depends only on $a(f) = d_H\big( v(f) \big)$, where $v(f)$ is the endpoint of $f$ in $J'$, we may define $\hat{t}(v_j) = \hat{t}(f)$ for any edge $f \in E(H)$ which is incident to $v_j$, for each $j \in [k]$. It follows that
$$\sum_{f \in E(H)} \hat{t}(f)^2 \, = \, \sum_{j = 1}^{k} \hat{t}(v_j)^2 a_j,$$
since each vertex $v_j$ is counted exactly $d_H(v_j) = a_j$ times in the sum on the left. Since $t(f) \le t^*$ and $\hat{t}(f) \le t^*$ for every $f \in E(H)$, and $e(H) = \sum_{j=1}^k a_j$, it follows that 
$$\varphi(H,\m) \, \le \; \sum_{j = 1}^{k} a_j \Big( (1 + 6\delta) (t^*)^2 - \hat{t}(v_j)^2 \Big).$$
Recalling that $e^{-4\hat{t}(v_j)^2} \le a_j \cdot e^{-4(t^*)^2}$, and noting that $e^{24\delta(t^*)^2} \le n^{3\delta}$, it follows that
\begin{equation}\label{eq:boundon:phiHm}
e^{4 \varphi(H,\m)} \, \le \, \prod_{j = 1}^{k} \big( n^{3\delta} \cdot a_j \big)^{a_j}.
\end{equation}
Now, multiplying~\eqref{eq:probavoidforbiddenopenedges} by the right-hand side of~\eqref{eq:Qprod:c}, and using~\eqref{eq:boundon:phiHm}, we obtain
$$\Pr\big( C(H,\m) \big) \, \le \, \bigg( \frac{4}{n^2} \bigg)^{e(H)} \Bigg[ \prod_{j = 1}^{k} \big( n^{3\delta} \cdot a_j \big)^{a_j/2} \Bigg] \exp\Bigg[ \frac{m^*}{n^2} \sum_{j=1}^k a_j^2 - \big( 1 - \eps \big) \frac{m^* s^2}{n^2} \Bigg]  +  e^{-s n^{3\delta}}.$$
Finally, observe that
$$\exp\bigg( - \big( 1 - \eps \big) \frac{m^* s^2}{n^2} \bigg) \, \le \, \exp\bigg( - \left( \frac{1}{2} + \gamma^2 \right) s \log n \bigg),$$
since $s \ge \big( \sqrt{2} + \gamma \big) \sqrt{n \log n}$. It follows that %Since $e(H) = \sum_{j=1}^k a_j$,
\begin{equation}\label{eq:finalboundonCHm}
\Pr\big( C(H,\m) \big) \, \le \, \prod_{j=1}^k \bigg( \frac{a_j}{n^{4-4\delta}} \bigg)^{a_j/2} \exp\Bigg[ \frac{m^*}{n^2} \sum_{j=1}^k a_j^2 - \left( \frac{1}{2} + \gamma^2 \right) s \log n \Bigg]  +  e^{-s n^{3\delta}},
\end{equation}
as claimed.
\end{proof}

We are finally ready to sum the probability of $C(H,\m)$ over $H$ and $\m$. To simplify the counting, let us first fix $k$ and $\a$. Note that we have ${n \choose s}$ choices for $S$, at most $n^k$ choices for $J'$ (given $k$), at most $\prod_{j=1}^k {s \choose a_j}$ choices for $H$ (given $S$, $J'$ and $\a$), and at most $(m^*)^{e(H)}$ choices for $\m$. Note that, since $e(H) \le n^{1/2 + 2\delta}$, we may disregard the final term in~\eqref{eq:finalboundonCHm}. 

Since $e(H) = \sum_{j=1}^k a_j$, we have
$$(m^*)^{e(H)} \prod_{j=1}^k {s \choose a_j} \bigg( \frac{a_j}{n^{4-4\delta}} \bigg)^{a_j/2} \exp\bigg( \frac{m^*}{n^2} \sum_{j=1}^k a_j^2 \bigg) \le \; \prod_{j=1}^k \bigg[ \frac{es}{a_j} \cdot \frac{m^* \sqrt{a_j}}{n^{2-2\delta}} \cdot \exp\bigg(  \frac{m^* a_j}{n^2} \bigg) \bigg]^{a_j},$$
and hence, since $s m^* \le n^2 \log n$, it follows that\footnote{Here the sum is over those $H$ and $\m$ with the given values of $k$ and $\a$.}
$$\sum_{H,\, \m} \Pr\big( C(H,\m) \big) \, \le \, n^k \cdot {n \choose s} \cdot \exp\bigg( - \left( \frac{1}{2} + \gamma^2 \right) s \log n \bigg) \cdot \prod_{j=1}^k \bigg[ \frac{n^{3\delta}}{\sqrt{a_j}} \cdot \exp\bigg(  \frac{m^* a_j}{n^2} \bigg) \bigg]^{a_j}.$$
But since $\E(m^*)$ holds, we have $a_j \le \big( \frac{1}{\sqrt{2}} + \delta \big) \sqrt{n \log n}$ for each $j \in [k]$, and thus
$$\frac{n^{3\delta}}{\sqrt{a_j}} \cdot \exp\bigg( \frac{m^* a_j}{n^2} \bigg)  \, \le \, \frac{n^{3\delta}}{n^{1/4}} \exp \bigg( \left( \frac{1}{4} - \eps^2 \right) \log n \bigg) \, \ll \, 1.$$ 
Hence, summing over $k \le n^{5\delta}$ and sequences $\a = (a_1,\ldots,a_k)$, we obtain
\begin{align*}
\sum_{H,\, \m} \Pr\big( C(H,\m) \big) & \, \le \, {n \choose s} \sum_{k = 1}^{n^{5\delta}} s^k \cdot n^k \cdot \exp\bigg( - \left( \frac{1}{2} + \gamma^2 \right) s \log n \bigg)\\
& \, \le \,  \sum_{k = 1}^{n^{5\delta}} n^{2k} \bigg( \frac{en}{s} \cdot n^{-1/2 - \gamma^2} \bigg)^s \, \le \,  \sum_{k = 1}^{n^{5\delta}} n^{2k - \gamma^2 s} \, \le \, n^{-\delta s},
 \end{align*}
since $s \gg \sqrt{n}$, as required. By~\eqref{eq:partitionoftheeventC}, the lemma follows.
\end{proof}

\subsection{The event $\D(S,\delta)$}\label{SubSecD}

Recall that $\D(S,\delta)$ denotes the event that $S$ is an independent set in $G_{m^*}$ and $\sum_J a_j \ge n^{1/2 + 2\delta}$. In this subsection we shall bound the probability that the event $\D(S,\delta)$ occurs for some $S \subseteq V(G_{n,\triangle})$ with $|S| \le \sqrt{n} \log n$. Unlike in the previous three subsections, it will not suffice to use the union bound over sets $S$; nevertheless, we shall prove the following bound.

\begin{lemma}\label{indep:d}
If $s \le n^{1/2 + \delta^3}$, then
\begin{equation}\label{eq:lemma:d}
\Pr\bigg( \bigcup_{S \,:\, |S| = s} \D(S,\delta) \cap \E(m^*) \cap \Z(m^*) \cap \Q(m^*) \bigg) \, \le \, n^{-\sqrt{n}}.
\end{equation}
\end{lemma}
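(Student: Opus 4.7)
The plan is to adapt the strategy of Lemma~\ref{indep:c}, partitioning the bad event by configurations $(S, J', H, \m)$ where $J' \subseteq J(S,\delta)$, the graph $H = G_{m^*}[J',S]$ is a witness bipartite subgraph with $d_H(v) \ge n^\delta$ for $v \in J'$, and $\m = (m(f) : f \in E(H))$ records the steps at which edges of $H$ are added. The defining property of $\D(S,\delta)$ forces $E := e(H) \ge n^{1/2+2\delta}$, which is the opposite sign from the $\le n^{1/2+2\delta}$ constraint exploited in Lemma~\ref{indep:c}; this asymmetry must be turned into a savings.

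For each fixed configuration $(S, H, \m)$, I would mirror Claims~1--3 from the proof of Lemma~\ref{indep:c}. Setting $A_j = N_H(v_j)$ and $\n = (A_1,\dots,A_k)$, an analog of Claim~1 would apply Lemma~\ref{lemma:b} to show that with probability at least $1 - e^{-sn^{3\delta}}$, either $C(H,\m)$ fails or $o_\n(S,m) \ge ((1-2\delta)\binom{s}{2} - \sum \binom{a_j}{2})e^{-4t^2}$ throughout. An analog of Claim~2 would apply Lemma~\ref{lem:bigcupY} twice (to $T(m)$ and to its subset where $\frac12 \Yt(m) \ge a(f)(\log n)^2$) to lower-bound $|Y_H(m)|$, the set of forbidden $Y$-neighbors of still-open $H$-edges. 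Combining these with $\prod 1/Q(m(f)) \le (4/n^2)^E e^{4\sum t(f)^2}$ (valid under $\Q(m^*)$) should yield
\[\Pr\bigl(C(H,\m)\bigr) \,\le\, \prod_{j=1}^k \bigg(\frac{a_j}{n^{4-4\delta}}\bigg)^{a_j/2} \exp\!\bigg[\frac{m^*}{n^2}\sum a_j^2 \,-\, (1-\eps)\frac{m^* s^2}{n^2}\bigg] \,+\, e^{-sn^{3\delta}},\]
exactly as in Claim~3 of Lemma~\ref{indep:c}.

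The summation over configurations is where the small-$s$ hypothesis finally pays off. Since $a_j \le s \le n^{1/2+\delta^3}$, the factor $\prod (a_j/n^{4-4\delta})^{a_j/2}$ is at most $n^{-(7/4 - O(\delta))E}$. The count of configurations with parameters $(s, k, \a, E)$ is at most $\binom{n}{s}\cdot n^k\cdot\prod\binom{s}{a_j}\cdot (m^*)^E \le n^s \cdot n^{(5/2 + O(\delta))E}$. Combining these gives a per-configuration contribution of at most $n^{s - (5/4 - O(\delta))E}$. Using $E \ge n^{1/2+2\delta}$ versus $s \le n^{1/2+\delta^3}$ (and $2\delta > \delta^3$), the $E$ term dominates, producing an overall bound of $n^{-\Omega(n^{1/2+2\delta})} \le n^{-\sqrt n}$ after summing over $k$, $\a$, $E$, $S$, and $\m$.

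The main obstacle is the potentially positive contribution from the exponential factor $\exp[(m^*/n^2)\sum a_j^2]$. Because $E \gg s$, the natural bound $\sum a_j^2 \le s\cdot E$ gives $(m^*/n^2)\sum a_j^2 \le (t^* n^{\delta^3}/\sqrt{\log n})E\log n$, which can exceed the negative $(1-\eps)m^* s^2/n^2$ term and may even dominate the $n^{-(7/4)E}$ decay for large $n$. I therefore expect the proof to split on $a^* := \max_j a_j$: when $a^* \le n^{1/2 - \delta}$, we have $\sum a_j^2 \le a^* E \le (1-\eps)s^2$, so the exponential factor is harmless and the analysis above goes through cleanly; when $a^* > n^{1/2 - \delta}$, a single vertex $v_1$ has an unusually large intersection with $S$, and I would handle this case by a separate counting argument inspired by the proof of Lemma~\ref{maxdeg:a}, exploiting the $\E(m^*)$ degree bound $a^* \le (1/\sqrt{2}+\eps)\sqrt{n\log n}$ together with the fact that the edges $\{v_1 u\}$ for $u \in N_H(v_1)$ provide an extra factor of $\prod 1/Q(m(f_{v_1 u}))$ of comparable strength to that used in Lemma~\ref{maxdeg:a}.
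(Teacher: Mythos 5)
Your proposal misses the two structural ideas that make the paper's proof work, and the issues you flag (but don't resolve) are symptoms of a deeper mismatch. First, the paper does not try to reuse the open-edges-in-$S$ control at all: as the preamble to the proof says, in the $\D$ regime ``we cannot control the number of open edges inside $S$, and so the only forbidden open edges are those in $Y_H(m)$.'' The reason is that when $\sum_J a_j \ge n^{1/2+2\delta}$ the quantity $\sum_j \binom{a_j}{2}$ can easily exceed $\binom{s}{2}$, making the lower bound in your Claim-1 analog vacuous; moreover the count of admissible collections $\n'$ used inside the proof of Claim~1 in Lemma~\ref{indep:c} depends on the upper bound $\sum_J a_j < n^{1/2+2\delta}$, which is precisely what $\D$ negates. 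Similarly, your Claim-2 analog would apply Lemma~\ref{lem:bigcupY} as in Lemma~\ref{indep:c}, but the error term there requires $\big|\Xi(G_{m^*}[V(H)])\big|$ to be small, which was guaranteed by $|J'|\le n^{5\delta}$ via Observation~\ref{obs:C}; in the $\D$ regime $|J'|$ (or $|J^*|$) can be as large as $\Theta(e(H)/n^{\delta/2})$, so this bound fails. The paper replaces Lemma~\ref{lem:bigcupY} in this case by a bespoke inclusion--exclusion built on the restricted sets $\hat{Y}^H_f(m)$ (omitting $Y$-steps that use an edge of $H$), which are pairwise disjoint for disjoint $f,h\in T(m)$ because $H$ is the \emph{induced} bipartite graph on $S^*\cup J^*$, so the $\Xi$-term disappears entirely. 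Your proposal does not identify this.

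Second, the final summation cannot be run the way you suggest. You retain the $\binom{n}{s}$ union-over-$S$ factor and rely on the per-configuration decay to absorb it, but you also still have the $\exp[(m^*/n^2)\sum a_j^2]$ term, and your fix --- split on $a^* = \max_j a_j$ and claim $\sum a_j^2 \le a^*E \le (1-\eps)s^2$ when $a^*\le n^{1/2-\delta}$ --- is arithmetically wrong: with $E\ge n^{1/2+2\delta}$ and $s\le n^{1/2+\delta^3}$ we have $s^2/E \le n^{1/2+2\delta^3-2\delta}\ll n^{1/2-\delta}$, so $a^*E$ can far exceed $s^2$. The paper avoids all of this by never invoking the term $\exp[(m^*/n^2)\sum a_j^2]$: it first applies Lemma~\ref{lem:choosingHforD} to extract a near-biregular dense subgraph $H=G_{m^*}[S^*,J^*]$ with $e(H)\ge n^{1/2+\delta}$ and both sides degree-regular up to a constant; this near-regularity then does two jobs at once. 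It lets one bound $\prod_f d_H(f)\le(Ce(H)/|J^*|)^{e(H)}$ via AM--GM (your unstructured $H$ gives no such bound), and it guarantees $e(H)\gg(|S^*|+|J^*|)\log n$, so the union over $(S^*,J^*,H)$ costs only $\binom{n}{|S^*|}\binom{n}{|J^*|}\binom{|S^*||J^*|}{e(H)}\le(3|S^*||J^*|/e(H))^{e(H)}$, i.e.\ is amortized against $e(H)$ rather than paid as a separate $\binom{n}{s}$ factor. Without this regularization step the bookkeeping in your last paragraph does not close, and the ``separate counting argument inspired by Lemma~\ref{maxdeg:a}'' you gesture at for the large-$a^*$ case is not fleshed out and would face the same $\Xi$-term obstruction.
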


The idea of the proof is as follows: if $\sum_J a_j \ge n^{1/2 + 2\delta}$, then the bipartite graph $G_{m^*}[S,J]$ contains a subgraph $H = G_{m^*}[S^*,J^*]$ whose appearance in $G_{m^*}$ is highly unlikely. This subgraph will have the following two properties: 
$$(a) \;\; e(H) \ge n^{1/2 + \delta} \qquad \text{and} \qquad (b) \; \; H \text{ is `close to regular on both sides',}$$ 
i.e., $d_H(u) / d_H(v) = \Theta(1)$ for every $u,v \in S^*$, and also for every $u,v \in J^*$. (This regularity property will be useful to us twice: in bounding the number of forbidden open edges at each step, and in the final calculation.) The calculation required to bound the probability that such a graph $H$ occurs in $G_{m^*}$ is similar to (but somewhat simpler than) that in the previous section; the main difference is that we cannot control the number of open edges inside $S$, and so the only forbidden open edges are those in $Y_H(m)$. %, cf. the proof of Lemma~\ref{indep:c}. 

We begin with a straightforward (and probably well-known) lemma. 

\begin{lemma}\label{lem:choosingHforD}
Let $G$ be a bipartite graph on vertex set $A \cup B$ with $e(G) \ge v(G)$, and let $\nu \in (0,1)$. There exist sets $A^* \subseteq A$ and $B^* \subseteq B$ such that the induced bipartite subgraph $H = G[A^*,B^*]$ has the following properties:
\begin{equation}\label{eq:lem:choosingHforD}
e(H) \ge \frac{e(G)^{1 - \nu}}{4}, \qquad  \Delta_H(A^*) \, \le \, 2^{7/\nu} \delta_H(A^*) \qquad \text{and} \qquad \Delta_H(B^*) \, \le \, 2^{7/\nu}  \delta_H(B^*),
\end{equation}
where $\Delta_H(S) = \max\{ d_H(u) : u \in S\}$ and $\delta_H(S) = \min\{ d_H(u) : u \in S \}$. 
\end{lemma}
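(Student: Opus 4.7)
Let $n = v(G)$, $m = e(G) \ge n$, $C = 2^{7/\nu}$, and $I = \lceil \log_2 n \rceil + 1$. The plan is to find $H$ by iterated dyadic refinement of vertex sets, followed by a final cleaning pass.

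First, I would perform a one-shot two-sided dyadic partition. Partition $A$ and $B$ by $G$-degree, setting $A_i = \{u \in A : 2^{i-1} \le d_G(u) < 2^i\}$ and $B_j$ analogously, so that there are at most $I$ non-empty classes on each side. Since the edges of $G$ are partitioned by the pairs $(i,j)$, by the pigeonhole principle there exist $i^*, j^*$ with
$$e\bigl(G[A_{i^*}, B_{j^*}]\bigr) \,\ge\, \frac{m}{I^2}.$$
Let $H_0 = G[A_{i^*}, B_{j^*}]$. In $H_0$ every vertex on the $A$-side has degree at most $2^{i^*}$, and similarly $2^{j^*}$ on the $B$-side, and the $A$- and $B$-sizes satisfy $|A_{i^*}| \le 2m/2^{i^*}$ and $|B_{j^*}| \le 2m/2^{j^*}$ (since $\sum_{u \in A_{i^*}} d_G(u) \ge |A_{i^*}|\cdot 2^{i^*-1}$, and analogously).

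Next, I would iterate a refinement step: alternating between the sides, partition the current graph $H_k$ by dyadic \emph{$H_k$-degree} on one side, and pass to the densest class; this gives $H_{k+1}$ with $e(H_{k+1}) \ge e(H_k)/I$. I would run this loop for $K = \lceil 7/\nu \rceil$ steps. Finally, I would apply one cleaning pass: on each side, delete any remaining vertex whose $H$-degree is below $1/C$ times the maximum $H$-degree on that side, and iterate this until stable. The resulting induced subgraph $H$ satisfies $\Delta_H(A^*) \le C\,\delta_H(A^*)$ and the analogous inequality on the $B$-side by construction.

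The main obstacle is to control the edge loss while maintaining two-sided $C$-regularity, because each $B$-refinement can shrink $A$-degrees, so one cannot preserve regularity on both sides simultaneously through a single pass. The key bookkeeping is the following monotonicity: once we have restricted $A$ to a dyadic class with upper bound $2^{i_k}$, the $A$-degrees in every subsequent $H_{k'}$ remain bounded above by $2^{i_k}$, while the average $A$-degree in $H_{k'}$ is at least $e(H_{k'})/|A_k| \ge 2^{i_k-1}/I^{k'-k}$ (combining $e(H_{k'}) \ge e(H_k)/I^{k'-k}$ with the bound on $|A_k|$ above). After $K$ iterations this gives ratio at most $2I^K$ on each side. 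The final cleaning pass with threshold $\max/C$ removes at most $(|A^*| + |B^*|)\cdot(\max/C) \le 4m/C$ edges in total, by exactly the same counting as for $|A_{i^*}|$ above, and hence at most $e(H_K)/2$ edges provided $C$ is large compared to $I^K$; since $\nu$ is fixed and $n$ is large, the condition $C = 2^{7/\nu} \gg I^{K+2}$ fails in general, so I would instead iterate the refinement-plus-cleaning loop $O(1/\nu)$ times, each round multiplicatively shrinking the ratio by a constant factor at the cost of one more $\log n$ factor in edge count.

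The final edge-count verification uses $m \ge n$: the total loss is bounded by $I^{O(1/\nu)} = (\log n)^{O(1/\nu)}$, and since $\nu$ is a fixed constant, for $n$ large we have $(\log n)^{O(1/\nu)} \le 4m^{\nu}$, which yields $e(H) \ge m^{1-\nu}/4 = e(G)^{1-\nu}/4$, as required.
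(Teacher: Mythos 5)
Your approach is iterated dyadic pigeonholing, and it does not close. The crux is the obstacle you yourself flag: after an $A$-refinement the $A$-degrees lie in a dyadic window $[2^{a-1},2^a)$, but the next $B$-refinement removes a large fraction of $B$ and can push the $A$-degrees of surviving vertices anywhere in $[0,2^a)$. What you actually control is the max/\emph{average} ratio, and this is $\Theta(I)$ with $I=\log_2 n$, on whichever side was refined longest ago; alternating refinements merely shuttle the $\Theta(I)$ blow-up back and forth between the two sides, and no number of rounds reduces it below $\Theta(I)$. The final cleaning pass with fixed threshold $C=2^{7/\nu}$ then fails: if the max/average ratio on a side is $R$, the number of surviving vertices is up to $R\,e(H)/\Delta$, so deleting those below $\Delta/C$ can remove up to $R\,e(H)/C$ edges, which exceeds $e(H)$ once $R\gg C$ — i.e.\ for all large $n$, since $R=\Theta(\log n)$ while $C$ must be an absolute constant. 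Your sketchy remedy of ``iterate the refinement-plus-cleaning loop $O(1/\nu)$ times'' is not spelled out and hits the same wall each round. Your closing paragraph argues $(\log n)^{O(1/\nu)}\le 4m^\nu$, but that only rescues the \emph{edge count}; the lemma also demands the degree ratio be bounded by the $n$-independent constant $2^{7/\nu}$, and your construction never delivers that.

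The paper's proof avoids this entirely by an extremal, rather than pigeonhole, argument. It first reduces (via a trivial min-degree peeling, Observation~\ref{obs:mindegree}) to bounding max by average rather than max by min. Then, with $\ell=2/\nu$, it picks the pair $(A',B')$ maximizing $w(A',B')=e(G[A',B'])^\ell/(|A'||B'|)$. Maximality at $(A,B)$ gives $e(H')\ge e(G)/v(G)^{2/\ell}\ge e(G)^{1-\nu}$ (here $e(G)\ge v(G)$ is used). Removing the ``high-degree'' vertices $A''$ with $d_{H'}\ge 4^\ell\,\ol d$ shrinks $|A''|$ by a factor $4^\ell$; comparing $w(A'',B')$ and $w(A',B'')$ to $w(A',B')$ and combining with Jensen's inequality shows the removed edges total at most $e(H')/2$. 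This is what yields a max/min ratio bounded by the fixed constant $2^{7/\nu}$, with no $\log n$ factors ever entering the ratio. That the maximizer of a well-chosen potential is automatically nearly regular on both sides is the idea your proposal is missing.
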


In the proof of Lemma~\ref{lem:choosingHforD} we will use the following simple (and standard) observation.

\begin{obs}\label{obs:mindegree}
Let $G$ be a bipartite graph  on vertex set $A \cup B$. There exist sets $A^* \subseteq A$ and $B^* \subseteq B$ such that the induced bipartite subgraph $H = G[A^*,B^*]$ has the following properties:
$$e(H) \ge \frac{e(G)}{2}, \qquad  \delta_H(A^*) \, \ge \, \frac{\ol{d}_G(A)}{4} \qquad \text{and} \qquad \delta_H(B^*) \, \ge \, \frac{\ol{d}_G(B)}{4},$$
where $\ol{d}_G(A)$ (resp. $\ol{d}_G(B)$) denotes the average degree of a vertex of $A$ (resp. $B$) in $G$.
\end{obs}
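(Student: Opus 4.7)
The plan is to apply a standard iterative cleaning argument. Set thresholds $\alpha := \ol{d}_G(A)/4$ and $\beta := \ol{d}_G(B)/4$, initialise $A^* = A$ and $B^* = B$, and repeatedly delete any vertex $u \in A^*$ with $d_H(u) < \alpha$ or any vertex $v \in B^*$ with $d_H(v) < \beta$, where at each stage $H := G[A^*,B^*]$. Since deletions only decrease degrees elsewhere, once a vertex drops below its threshold it never rises again, so the order of removals is irrelevant and the process must terminate (as $|A| + |B|$ is finite). By construction, the induced subgraph $H$ produced at termination satisfies $\delta_H(A^*) \ge \alpha$ and $\delta_H(B^*) \ge \beta$ automatically.

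The only remaining task is to bound the number of edges lost, and here I would simply sum over deletions: removing a vertex from $A^*$ costs strictly fewer than $\alpha$ edges and can occur at most $|A|$ times, and similarly from the $B^*$-side one loses fewer than $\beta$ edges on each of at most $|B|$ occasions. Thus the total number of deleted edges is strictly less than
$$|A|\alpha + |B|\beta \, = \, \frac{|A| \cdot \ol{d}_G(A)}{4} + \frac{|B| \cdot \ol{d}_G(B)}{4} \, = \, \frac{e(G)}{2},$$
where both identities use the bipartite handshake relation $|A| \cdot \ol{d}_G(A) = e(G) = |B| \cdot \ol{d}_G(B)$. In particular $e(H) > e(G)/2$, which also guarantees $A^*, B^* \ne \emptyset$. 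There is no genuine obstacle here — this is the folklore ``prune low-degree vertices'' lemma, and the only point worth checking is the arithmetic that matches the thresholds $\alpha, \beta$ to the claimed edge-loss bound.
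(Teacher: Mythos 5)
Your proposal is correct and is essentially the same iterative pruning argument the paper uses; the only cosmetic difference is that you delete vertices with degree strictly below the threshold (giving $\delta_H \ge \ol{d}_G/4$ directly and a strict edge bound when $e(G)>0$), while the paper deletes those with degree at most the threshold (giving $\delta_H > \ol{d}_G/4$ and the non-strict edge bound), and both versions yield the stated conclusion.
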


\begin{proof}
Simply remove, one by one, vertices in $A$ with degree at most $\ol{d}_G(A)/4$ in the remaining graph, and vertices in $B$ with degree at most $\ol{d}_G(B)/4$. Since we clearly remove at most $e(G)/2$ edges in total, this process ends when we reach the desired subgraph $H$.
\end{proof}

We can now prove Lemma~\ref{lem:choosingHforD}.

\begin{proof}[Proof of Lemma~\ref{lem:choosingHforD}]
Note first that, by Observation~\ref{obs:mindegree}, it will suffice to find an induced subgraph $H = G[A^*,B^*]$ with 
\begin{equation}\label{eq:lem:choosingHforD2}
e(H) \ge \frac{e(G)^{1 - \nu}}{2}, \qquad  \Delta_H(A^*) \, \le \, 2^{5/\nu} \ol{d}_H(A^*) \qquad \text{and} \qquad \Delta_H(B^*) \, \le \, 2^{5/\nu} \ol{d}_H(B^*).
\end{equation}
We shall therefore aim to satisfy the properties~\eqref{eq:lem:choosingHforD2} instead of~\eqref{eq:lem:choosingHforD}. 

Let $A^* = A' \setminus A''$ and $B^* = B' \setminus B''$, where $A'$, $B'$, $A''$ and $B''$ are defined as follows. Set $\ell = 2/\nu$, and let $A' \subseteq A$ and $B' \subseteq B$ be a pair of subsets which maximizes 
$$w\big( A', B' \big) \, = \, \frac{e\big( G[A',B'] \big)^\ell}{|A'| |B'|}.$$
Now let $A'' \subseteq A'$ and $B'' \subseteq B'$ denote the high degree vertices of $H' = G[A',B']$, i.e.,
$$A'' \, = \, \big\{ u \in A' \,:\, d_{H'}(u) \ge 4^\ell \cdot \ol{d}_{H'}(A') \big\} \quad \text{and} \quad B'' \, = \, \big\{ u \in B' \,:\, d_{H'}(u) \ge 4^\ell \cdot \ol{d}_{H'}(B') \big\}.$$

In order to see that the subgraph $H = G[A^*,B^*]$ satisfies the properties~\eqref{eq:lem:choosingHforD2}, observe first that, by the maximality of the pair $(A',B')$, we have
$$w \big(A',B' \big) \ge w \big(A,B \big).$$
We claim that therefore $e(H') \ge e(G)^{1-\nu}$. Indeed, we have
$$e(H') \, = \, e\big( G[A',B'] \big) \, \ge \, \bigg( \frac{|A'| \cdot |B'|}{|A| \cdot |B|} \bigg)^{1/\ell} e(G) \, \ge \, \frac{e(G)}{v(G)^{2/\ell}} \, \ge \, e(G)^{1-\nu},$$
since $v(G) \le e(G)$ and $\ell = 2 / \nu$. Moreover, note that we have $\Delta_H(A^*) \le 4^\ell \cdot \ol{d}_{H'}(A')$ and $\Delta_H(B^*) \le 4^\ell \cdot \ol{d}_{H'}(B')$, by construction. 

We claim that $e(H) \ge e(H')/2$, which, together with the bounds proved above, implies the inequalities~\eqref{eq:lem:choosingHforD2}. Indeed, by counting edges we have
$$4^\ell \cdot \ol{d}_{H'}(A') \cdot |A''| \, \le \, e(A'',B') \, \le \, e(A',B') \, = \, \ol{d}_{H'}(A') \cdot |A'|,$$ 
and so $|A''| \le 4^{-\ell} |A'|$ and similarly $|B''| \le 4^{-\ell} |B'|$. It follows that 
$$2 \cdot w \big(A',B' \big) \, \ge \, w\big( A'',B' \big) + w\big( A',B'' \big) \, \ge \, \frac{4^\ell}{|A'| |B'|} \Big( e\big( A'',B' \big)^\ell + e\big( A',B'' \big)^\ell \Big),$$ 
by the maximality of $(A',B')$. Using Jensen's inequality, we obtain
$$e\big( A'',B' \big) + e\big( A',B'' \big) \, \le \, 2^{1 - 1/\ell} \cdot \Big( e\big( A'',B' \big)^\ell + e\big( A',B'' \big)^\ell \Big)^{1/\ell} \, \le \, \frac{e\big( A',B' \big)}{2},$$
and hence $e(H) \ge e(H') / 2$, as required.
\end{proof}

We are now ready to prove Lemma~\ref{indep:d}.

\begin{proof}[Proof of Lemma~\ref{indep:d}]
Let $S \subseteq V(G_{n,\triangle})$ be a set of size $s \le n^{1/2 + \delta^3}$, and recall that we denote by $J = J(S,\delta) = \{v_1,\dots,v_k\}$ the set of vertices with at least $n^\delta$ $G_{m^*}$-neighbours in $S$, and set $a_j = |N_{G_m^*}(v_j) \cap S|$ for each $j \in [k]$. Suppose that the event $\sum_{J} a_j \ge n^{1/2 + 2\delta}$ occurs for $S$; we make the following deterministic claim.

\medskip
\noindent \textbf{Claim~1:} There exist sets $S^* \subseteq S$ and $J^* \subseteq J$ such that the induced bipartite subgraph $H = G_{m^*}[S^*,J^*]$ has the following properties:
\begin{equation}\label{eq:d:claim1}
e(H) \ge n^{1/2 + \delta}, \quad  n^{\delta/2} \le \Delta_H(S^*) \le C \cdot \delta_H(S^*) \quad \text{and} \quad n^{\delta/2} \le \Delta_H(J^*) \le C \cdot \delta_H(J^*).
\end{equation}

\begin{proof}[Proof of claim]
Choose a minimal subset $J_0 \subset J$ such that $\sum_{j \in J_0} a_j \ge n^{1/2 + 2\delta}$, and apply Lemma~\ref{lem:choosingHforD} to the graph $G_{m^*}[S,J_0]$, with $\nu = \delta / 2$. The upper bounds on $\Delta_H(S^*)$ and $\Delta_H(J^*)$ follow immediately, since $C = C(\eps,\delta)$ was chosen to be sufficiently large as a function of $\delta$, and it also follows that $e(H) \gg n^{1/2 + 3\delta/2}$. Since $|S|,|J_0| \le n^{1/2 + \delta}$ (the latter by minimality), we also obtain the claimed lower bounds.
\end{proof}

Now, using Claim~1, let us break up the bad event in~\eqref{eq:lemma:d} into more manageable pieces, as follows. For each $S^*,J^* \subseteq V(G_{n,\triangle})$, each bipartite graph $H$ on $S^* \cup J^*$ satisfying~\eqref{eq:d:claim1}, and each collection $\m = \big( m(f) : f \in E(H) \big) \in [m^*]^{e(H)}$, let $D(H,\m)$ denote the event that the following all hold:
\begin{itemize}
\item[$(a)$] $\I(S,m^*) \cap \E(m^*) \cap \Z(m^*) \cap \Q(m^*)$.\smallskip
\item[$(b)$] $\big\{ G_{m^*}[S^*,J^*] = H \big\}$. \smallskip
\item[$(c)$] For each $f \in E(H)$, the edge $f$ was added in step $m(f)$ of the triangle-free process.
\end{itemize}
As in the previous subsection we suppress the dependence of $D(H,\m)$ on $S^*$ and $J^*$, by encoding both sets in the graph $H$. By Claim~1, we have
\begin{equation}\label{eq:partitionoftheeventD}
\Pr\bigg( \bigcup_{S \,:\, |S| = s} \D(S,\delta) \cap \E(m^*) \cap \Z(m^*) \cap \Q(m^*) \bigg) \, \le \, \sum_{H,\, \m} \Pr\big( D(H,\m) \big),
\end{equation}
where the sum is over all graphs $H$ as described above, and sequences $\m \in [m^*]^{e(H)}$. 

Our bound on the probability of $D(H,\m)$ is similar to that of $C(H,\m)$ in the previous subsection. It again consists of two parts: a bound on the probability that the edges of $H$ are chosen at the steps corresponding to $\m$, and a bound on the number of `forbidden' open edges at each step. This time, however, an open edge is forbidden only if it is in $Y_H(m)$, i.e., if it is a $Y$-neighbour of a still-open edge of $H$. 

In order to bound the number of forbidden edges, we shall need the following claim, which follows by modifying the proof\footnote{Observe that our upper bound on $|J^*|$ is too weak to control the size of $\Xi\big( G_{m^*}[V(H)] \big)$, and so it is not sufficient to simply apply Lemma~\ref{lem:bigcupY}.} of Lemma~\ref{lem:bigcupY}. We again define 
$$T(m) \, = \, \big\{ f \in E(H) \,:\, m(f) > m \big\},$$
and let $d_H(f) = d_H^L(f) + d_H^R(f)$ for each edge $f \in E(H)$. 

\medskip
\noindent \textbf{Claim~2:} Suppose that $D(H,\m)$ holds. Then, for every $\omega \cdot n^{3/2} < m \le m^*$,  
$$\bigg| \bigcup_{f \in T(m)} Y_f(m) \bigg| \, \ge \, \sum_{f \in T(m)} \max\Big\{  \big( 1 - \delta^3 \big) \Yt(m) - d_H(f) (\log n)^3, \, 0 \Big\}.$$

\begin{proof}[Proof of claim]
We consider, for each open edge $f \in T(m)$, the following subset of the $Y$-neighbours of $f$:
$$\hat{Y}^H_f(m) \, := \, \Big\{ h \in Y_f(m) \,:\, \{u,v\} \not\in E(H), \textup{ where } u = f \setminus h \textup{ and } v = h \setminus f \Big\}.$$
In words, we consider only those $Y$-neighbours of $f$ in $G_m$ such that the step in the $Y$-graph (from $f$ to $h$, say) does not use an edge of $H$. The motivation for this definition is that $\hat{Y}^H_f(m) \ge Y_f(m) - d_H(f)$, and if $f$ and $h$ are disjoint edges of $T(m)$, then 
\begin{equation}\label{eq:emptyintersectionofhats}
\hat{Y}^H_f(m) \cap \hat{Y}^H_h(m) = \emptyset.
\end{equation}
To see~\eqref{eq:emptyintersectionofhats}, let $e \in \hat{Y}^H_f(m) \cap \hat{Y}^H_h(m)$, and note that either $e$ has an endpoint in each of $S^*$ and $J^*$, which is impossible because $S$ is independent, or $e$ is contained in either $S^*$ or $J^*$, which is impossible because $H = G_{m^*}[S^*,J^*]$ is the bipartite graph induced by $S^* \cup J^*$, and so the edge linking $e$ and $f$ in the $Y$-graph must be an edge of $H$. 

It follows from~\eqref{eq:emptyintersectionofhats}, together with the proof of~\eqref{eq:inex} in the previous subsection, that
\begin{equation}\label{eq:intersection:D}
\sum_{h \in T(m) \setminus \{f\}} \big| \hat{Y}^H_f(m) \cap \hat{Y}^H_h(m) \big| \, \le \, d_H(f) (\log n)^2.
\end{equation}
Indeed, since $\Z(m)$ holds we have $|\hat{Y}^H_f(m) \cap \hat{Y}^H_h(m)| \le (\log n)^2$ for every pair of edges $f,h \in O(G_m)$, by Observation~\ref{obs:twostep}, and there are at most $d_H(f)$ edges $h \in T(m) \setminus \{f\}$ which intersect $f$, so~\eqref{eq:intersection:D} follows.

The claim now follows by inclusion-exclusion. Indeed, let us define
$$T'(m) \, = \, \bigg\{ f \in T(m) \,:\, \big( 1 - \delta^3 \big) \Yt(m) \ge d_H(f) (\log n)^3 \bigg\},$$
and observe that, by~\eqref{eq:intersection:D}, and since the event $\E(m^*)$ holds,
\begin{align*}
\bigg| \bigcup_{f \in T(m)} Y_f(m) \bigg| & \, \ge \, \bigg| \bigcup_{f \in T'(m)} \hat{Y}^H_f(m) \bigg| \, \ge \, \sum_{f \in T'(m)} \big| \hat{Y}^H_f(m) \big| \, - \, \sum_{\substack{f,h \in T(m')\\ f \neq h}} \big| \hat{Y}^H_f(m) \cap \hat{Y}^H_h(m) \big|\\
& \, \ge \, \sum_{f \in T'(m)} \Big( \big( 1 - \delta^3 \big) \Yt(m) - d_H(f) \Big) \, - \, \sum_{f \in T'(m)} d_H(f) (\log n)^2\\
& \, \ge \, \sum_{f \in T(m)} \max\Big\{  \big( 1 - \delta^3 \big) \Yt(m) - d_H(f) (\log n)^3, \, 0 \Big\}
\end{align*}
for every $\omega \cdot n^{3/2} < m \le m^*$, as required.
\end{proof}

We are ready to prove our desired bound on the probability of the event $D(H,\m)$. 

\medskip
\noindent \textbf{Claim 3:} For every bipartite graph $H$ satisfying~\eqref{eq:d:claim1}, and each collection $\m \in [m^*]^{e(H)}$, 
\begin{equation}\label{eq:d:claim3}
\Pr\big( D(H,\m) \big) \, \le \, \bigg( \frac{1}{n^{2 + \delta^2}} \bigg)^{e(H)} \prod_{f \in E(H)} d_H(f).
\end{equation}

\begin{proof}[Proof of claim]
If $D(H,\m)$ occurs, then at each non-$\m$ step of the triangle-free process we do not choose a forbidden open edge, and at step $m(f)$ we choose edge $f$, for each $f \in E(H)$. By Claim~2, there are at least
$$\sum_{f \in T(m)} \max\Big\{  \big( 1 - \delta^3 \big) \Yt(m) - d_H(f) (\log n)^3, \, 0 \Big\}$$
forbidden open edges in $G_m$, for every $\omega \cdot n^{3/2} < m \le m^*$.

Similarly (but not identically) as before, let us write $\hat{t}(f)$ for the time $t > \omega$ at which $\Yt(m) = C \cdot d_H(f) (\log n)^3$, if such a time exists, and set $\hat{t}(f) = 0$ otherwise. Note that the degree $d_H(f)$ of an edge is the same up to a factor of $C$ for all edges of $H$, by~\eqref{eq:d:claim1}. Since the event $\Q(m^*)$ holds, we have
\begin{multline*}
\sum_{m=1}^{m^*} \frac{1}{Q(m)} \sum_{f \in T(m)} \max\Big\{  \big( 1 - \delta^3 \big) \Yt(m) - d_H(f) (\log n)^3, \, 0 \Big\} \\
 \, \ge \sum_{f \in E(H)} \sum_{m=1}^{\min\{ \hat{m}(f), m(f)\} }  \Big( 8 - O\big( \delta^3 \big) \Big) \frac{m}{n^3} \, \ge \, \Big( 4 - O\big( \delta^3 \big) \Big) \sum_{f \in E(H)}  \min\big\{ \hat{t}(f), \, t(f) \big\}^2,
\end{multline*}
where $\hat{m}(f) = \hat{t}(f) \cdot n^{3/2}$, and hence the probability that we avoid choosing a forbidden open edge at every (non-$\m$) step of the triangle-free process is at most
\begin{equation}\label{eq:probavoidforbiddenopenedges:D}
\exp\bigg( - \big( 4 - \delta^2 \big) \sum_{f \in E(H)} \Big( \min\big\{ \hat{t}(f), \, t(f) \big\}^2  - \omega^3 \Big) \bigg),
\end{equation}
since $\sum_{m = 1}^{\omega \cdot n^{3/2}} \sum_{f \in T(m)} \frac{\Yt(m)}{Q(m)} \le  \omega^3 \cdot e(H)$. Next, note that since $\Q(m^*)$ holds, the probability that we choose the edge $f$ at step $m(f)$ for each edge $f \in E(H)$ is
\begin{equation}\label{eq:Qprod:d}
\prod_{f \in E(H)} \frac{1}{Q\big( m(f) \big)} \, \le \, \bigg( \frac{4}{n^2} \bigg)^{e(H)} \exp\bigg( 4 \sum_{f \in E(H)} t(f)^2 \bigg).
\end{equation}
It follows that the probability of the event $D(H,\m)$ is at most the product of~\eqref{eq:probavoidforbiddenopenedges:D} and the right-hand side of~\eqref{eq:Qprod:d}. Note that this product is increasing in $t(f)$ for each $f \in E(H)$, so we may set $t(f) = t^*$. Since $e^{4(t^*)^2} \le n^{1/2 - 2\eps}$, we obtain\footnote{Note that $e^{4\omega^3} \ll n^\delta$, which is easily swallowed by the error term.} 
\begin{equation}\label{eq:claim3:almost}
\Pr\big( D(H,\m) \big) \, \le \, \bigg( \frac{1}{n^{3/2 + \eps}} \bigg)^{e(H)} \exp\bigg( - \big( 4 - \delta^2 \big) \sum_{f \in E(H)} \min\big\{ \hat{t}(f), \, t^* \big\}^2 \bigg).
\end{equation}
Suppose first that $\hat{t}(f) \le t^*$ for every $f \in E(H)$. Then~\eqref{eq:d:claim3} follows easily from~\eqref{eq:claim3:almost}, since we have $e^{-4\hat{t}(f)^2} \le d_H(f) \cdot n^{-1/2 + \delta}$ for every $f \in E(H)$, and hence
$$\Pr\big( D(H,\m) \big) \, \le \, \bigg( \frac{1}{n^{3/2 + \eps}} \bigg)^{e(H)} \prod_{f \in E(H)} \Big( d_H(f) \cdot n^{-1/2 + \delta} \Big)^{1-\eps}  \, \le \, \bigg( \frac{1}{n^{2 + \delta}} \bigg)^{e(H)} \prod_{f \in E(H)} d_H(f),$$
as claimed. On the other hand, if $\hat{t}(f) > t^*$ for some $f \in E(H)$, then $\hat{t}(h)^2 > (t^*)^2 - C$ for every $h \in E(H)$, by~\eqref{eq:d:claim1}. It follows from~\eqref{eq:probavoidforbiddenopenedges:D} and~\eqref{eq:Qprod:d} that
$$\Pr\big( D(H,\m) \big) \, \le \, \bigg( \frac{\log n}{n^2} \bigg)^{e(H)} \exp\bigg( \delta^2 \sum_{f \in E(H)} t(f)^2 \bigg) \, \le \, \bigg( \frac{1}{n^{2 + \delta^2}} \bigg)^{e(H)} \prod_{f \in E(H)} d_H(f),$$
since $t(f) \le t^*$ and $d_H(f) \ge \delta(H) \ge n^{\delta/2}$ for every $f \in E(H)$, as required.
\end{proof}

We are finally ready to sum the probability of $D(H,\m)$ over $H$ and $\m$. Fixing $|S^*|$, $|J^*|$ and $e(H)$ (we will sum over these at the very end), observe that we have at most 
$${n \choose |S^*|} {n \choose |J^*|} {|S^*||J^*| \choose e(H)} \, \le \, \bigg( \frac{3 |S^*||J^*|}{e(H)} \bigg)^{e(H)}$$ 
choices for $H$, since $e(H) \gg \big( |S^*| + |J^*| \big) \log n$, by the lower bounds on $\delta_H(S^*)$ and $\delta_H(J^*)$ given by Claim~1. Note also that, by the AM-GM inequality, 
$$\prod_{f \in E(H)} d_H(f) \, \le \, \bigg( \frac{1}{e(H)} \sum_{f \in E(H)} d_H(f) \bigg)^{e(H)} \, \le \, \bigg( \frac{C \cdot e(H)}{|J^*|} \bigg)^{e(H)},$$
where the second inequality follows since $H$ is almost regular. Hence, by Claim~3,
$$\sum_{H,\, \m} \Pr\big( D(H,\m) \big) \le \sum_{\substack{|S^*|,|J^*|\\ e(H)}} \bigg( \frac{3 |S^*||J^*|}{e(H)} \cdot m^* \cdot  \frac{1}{n^{2 + \delta^2}} \cdot \frac{C \cdot e(H)}{|J^*|} \bigg)^{e(H)} \le \sum_{e(H)} n^{-\delta^3 e(H)} \, \le \, n^{-\sqrt{n}},$$
since $|S^*| \le s \le n^{1/2 + \delta^3}$ and $e(H) \ge n^{1/2 + \delta}$, as required. By~\eqref{eq:partitionoftheeventD}, the lemma follows.
\end{proof}

\subsection{The proof of Propositions~\ref{prop:maxdeg} and~\ref{prop:indep}}

The two main propositions of this section follow easily from the lemmas above. Recall from~\eqref{def:W} and Definitions~\ref{def:ABCD} and~\ref{def:ABCprime} the definitions of the events $\W(S,v)$, $\A'(S,\delta)$, $\B'(S,\delta)$, $\C'(S,\delta)$ and $\D(S,\delta)$. 

\begin{proof}[Proof of Proposition~\ref{prop:maxdeg}]
Set $s =  \big( \frac{1}{\sqrt{2}} + \gamma \big) \sqrt{ n \log n }$, and observe that if $\Delta\big( G_{n,\triangle} \big) \ge s$, then the event 
$$\T(S,v) \, = \, \Big\{ N(v) = S \text{ in } G_{n,\triangle} \Big\},$$ 
holds for some $v \in V(G_{n,\triangle})$, and some $S \subseteq V(G_{n,\triangle})$ with $|S| \ge s$. Now, by~\eqref{eq:UinABCD}, we have
$$\bigcup_{|S| \ge s} \T(S,v) \cap \E(m^*)  \, \subseteq \, \bigcup_{|S| = s} \Big( \A'(S,\delta) \cup \B'(S,\delta) \cup \C'(S,\delta) \cup \D(S,\delta) \Big) \cap \W(S,v),$$
and thus, setting $\F'(S,\delta) = \A'(S,\delta) \cup \B'(S,\delta) \cup \C'(S,\delta) \cup \D(S,\delta)$, it will suffice to show that
\begin{equation}\label{maxdeg:finalbound}
\sum_{v \in V(G_{n,\triangle})} \Pr\bigg( \bigcup_{S \,:\, |S| = s} \F'(S,\delta) \cap \W(S,v) \cap \E(m^*) \cap \Y(m^*) \cap \Z(m^*) \cap \Q(m^*) \Big) \, \le \, e^{-\sqrt{n}}.
\end{equation}
To prove~\eqref{maxdeg:finalbound}, recall that by Lemma~\ref{maxdeg:a}, we have
$$\sum_{S \,:\, |S| = s} \Pr\Big( \A'(S,\delta) \cap W(S,v) \cap \E(m^*) \cap \Q(m^*) \Big) \, \le \, n^{-\delta s},$$
by Lemma~\ref{maxdeg:b}, we have
$$\sum_{S \,:\, |S| = s} \Pr\Big( \B'(S,\delta) \cap \E(m^*) \cap \Y(m^*) \cap \Q(m^*) \cap \D(S,\delta)^c \Big) \, \le \, e^{-s n^\delta},$$
by Lemma~\ref{maxdeg:c}, we have
$$\C'(S,\delta) \cap \W(S,v) \subseteq \big( \E(m^*) \cap \Z(m^*) \big)^c,$$
and by Lemma~\ref{indep:d}, we have
$$\Pr\bigg( \bigcup_{S \,:\, |S| = s} \D(S,\delta) \cap \E(m^*) \cap \Z(m^*) \cap \Q(m^*) \bigg) \, \le \, n^{-\sqrt{n}}.$$
This proves~\eqref{maxdeg:finalbound}, and thus completes the proof of Proposition~\ref{prop:maxdeg}, and hence of Theorem~\ref{triangle}.
\end{proof}

The deduction of Proposition~\ref{prop:indep} is similar. Recall from Definition~\ref{def:ABCD} the definitions of the events $\I(S,m)$, $\A(S,\delta)$, $\B(S,\delta)$, $\C(S,\delta)$ and $\D(S,\delta)$.

\begin{proof}[Proof of Proposition~\ref{prop:indep}]
Set $s =  \big( \sqrt{2} + \gamma \big) \sqrt{ n \log n }$, and recall that, by~\eqref{IinABCD}, 
$$\bigcup_{S \subseteq V(G_{n,\triangle}) \,:\, |S| = s} \A(S,\delta) \cup \B(S,\delta) \cup \C(S,\delta) \cup \D(S,\delta)  \, \supseteq \, \bigcup_{S \subseteq V(G_{n,\triangle}) \,:\, |S| = s} \I(S,m^*).$$ 
Thus, noting that $\alpha\big( G_{m^*} \big) \ge \alpha\big( G_{n,\triangle} \big)$, and writing $\F(S,\delta) = \A(S,\delta) \cup \B(S,\delta) \cup \C(S,\delta) \cup \D(S,\delta)$, it will suffice to prove that
$$\Pr\bigg( \bigcup_{S \subseteq V(G_{n,\triangle}) \,:\, |S| = s} \F(S,\delta) \cap \E(m^*)  \cap \Y(m^*) \cap \Z(m^*) \cap \Q(m^*) \bigg) \, \le \, e^{-\sqrt{n}}.$$
Now, by Lemma~\ref{indep:a}, we have
$$\sum_{S \,:\, |S| = s} \Pr\Big( \A(S,\delta) \cap \Q(m^*) \Big) \, \le \, n^{ - \delta s}.$$
by Lemma~\ref{indep:b}, we have
$$\sum_{S \,:\, |S| = s} \Pr\Big( \B(S,\delta) \cap \Y(m^*) \cap \Q(m^*)  \cap \D(S,\delta)^c \Big) \, \le \, e^{-s n^\delta}.$$
by Lemma~\ref{indep:c}, we have
$$\sum_{S \,:\, |S| = s} \Pr\Big( \C(S,\delta) \cap \E(m^*) \cap \Y(m^*) \cap \Z(m^*) \cap \Q(m^*) \Big) \, \le \, n^{-\delta s }.$$
and by Lemma~\ref{indep:d}, we have
$$\Pr\bigg( \bigcup_{S \,:\, |S| = s} \D(S,\delta) \cap \E(m^*) \cap \Z(m^*) \cap \Q(m^*) \bigg) \, \le \, n^{-\sqrt{n}}.$$
It follows that
$$\Pr\Big( \Big( \alpha\big( G_{n,\triangle} \big) \ge s \Big) \cap \E(m^*) \cap \Y(m^*) \cap \Z(m^*) \cap \Q(m^*) \Big) \, \le \, e^{-\sqrt{n}},$$
which completes the proof of Proposition~\ref{prop:indep}, and hence of Theorem~\ref{R3k}.
\end{proof}

\section*{Acknowledgements}

The authors would like to express their sincere gratitude to the referee for an extremely careful and thorough reading of the paper, and for numerous insightful and constructive comments on the presentation. They would also like to thank Roberto Imbuzeiro Oliveira for suggesting the problem to them, and for many interesting conversations.

\end{document}

% --- supplement: trianglefree_z_appendix.tex ---

\begin{abstract}
This file contains some of the straightforward but technical calculations from the paper ``The triangle-free process and the Ramsey number $R(3,k)$" which were omitted from the proof in order not to distract from the main argument. We have gathered these calculations here in order to save the interested reader the trouble of reproving them herself. 
\end{abstract}

\maketitle

\pagestyle{myheadings}
\markboth{}{}

\section{Introduction}

This short file is an Appendix to the paper~\cite{FGMO}. In that paper we follow the triangle-free process to its asymptotic end, and show that it is an excellent `Ramsey graph', in the sense that it gives very strong lower bounds on the Ramsey numbers $R(3,k)$. 

Recall from~\cite{FGMO} that we denote by $G_{n,\triangle}$ the (random) maximal triangle-free graph on $\{1,\ldots,n\}$ obtained via the triangle-free process. The main results of~\cite{FGMO} were as follows:\footnote{We remark that when we restate results from~\cite{FGMO} we shall use the numbering of that paper, whereas new statements will be given the prefix `A'.}

\setcounter{subsubsection}{1}
\setcounter{subsection}{1}
\setcounter{thm}{0}
\setcounter{thma}{0}

\begin{thm}\label{triangle}
$$e\big( G_{n,\triangle} \big) \,=\, \left( \frac{1}{2\sqrt{2}} + o(1) \right) n^{3/2} \sqrt{\log n },$$
with high probability as $n \to \infty$.
\end{thm}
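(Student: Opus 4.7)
The plan is to apply the differential equations method to follow the triangle-free process all the way to its end, extending Bohman's constant-fraction tracking~\cite{Boh} to cover the final $o(1)$ fraction of its lifespan. Introduce a scaled time $t = m/n^{3/2}$ (up to a normalising constant) and seek deterministic profiles $\hat Z(t)$ such that, with high probability, a carefully chosen collection of graph statistics $Z(m)$ remain within tight tolerance of $\hat Z(t(m))$ throughout the process. The terminal time $t^\star$ at which the profile of ``open'' pairs decays to essentially zero determines the total edge count; matching the leading asymptotic will yield the constant $1/(2\sqrt{2})$.

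The central statistics to track are: (i) $Q(m)$, the number of open pairs (non-edges $\{u,v\}$ with $N(u)\cap N(v) = \emptyset$); (ii) for each vertex $v$, the degree $d_v(m)$ and the number $Y_v(m)$ of open pairs incident to $v$; and (iii) finer local statistics (co-degrees, ``cherry'' counts, and more generally counts of small ``extension'' subgraphs) that appear in the expected one-step drifts of $Q$ and the $Y_v$. Matching these drifts to the derivatives of candidate profiles yields a coupled system of ODEs; at leading order the profile $\hat q := 2\hat Q/n^2$ is a Gaussian-type function of $t$ and the degree $\hat d_v$ grows essentially linearly in $t$. The ODE analysis then pins down the terminal time, and hence the leading constant in $e(G_{n,\triangle})$.

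Concentration of the tracked statistics around their profiles is obtained by martingale methods. For each $Z$ we consider the discrepancy $Z(m) - \hat Z(t(m))$, subtract off the predictable drift, bound the resulting near-martingale increments, and apply Freedman's inequality to deduce that the deviation exceeds a prescribed tolerance with probability $n^{-\omega(1)}$. A union bound over the polynomial family of tracked statistics then yields a single ``good event'' holding with probability $1 - n^{-\log n}$. Because each tracked statistic enters the drift of many others, the estimates must be bootstrapped jointly: at each step we use the concentration established up to $m-1$ to estimate drifts at step $m$, which in turn gives concentration at $m$ with slightly degraded but still controlled tolerance.

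The main obstacle is the end-game, $t \to t^\star$. While $\hat q$ is bounded below by a positive constant, tolerances of the type Bohman uses are easily absorbed by the leading terms; as $t \to t^\star$, however, $\hat Q$ shrinks to at most $n^{o(1)}$, and a multiplicative error in any higher-order statistic can overwhelm the leading contribution to $\mathbb{E}[\Delta Q]$. The remedy is to enlarge the hierarchy of tracked statistics and let their tolerances shrink in proportion to $\hat Q(t)/n^2$, effectively carrying pseudo-randomness down to whatever scale is relevant at time $t$. The chief technical effort will be verifying that this hierarchy closes under the joint induction through the entire final phase. Once the process is tracked to within $o(n^{3/2}\sqrt{\log n})$ steps of its end, integrating the edge rate to the terminal time of the ODE gives $e(G_{n,\triangle}) = (1/(2\sqrt{2}) + o(1))\,n^{3/2}\sqrt{\log n}$, completing the proof of Theorem~\ref{triangle}.
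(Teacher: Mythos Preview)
Your plan captures the differential-equations framework but has two genuine gaps.

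First, tracking alone yields only the lower bound. Showing $Q(m)>0$ for all $m\le m^*=(\tfrac{1}{2\sqrt 2}-\eps)n^{3/2}\sqrt{\log n}$ proves the process survives that long, but says nothing about when it terminates; you cannot ``track to within $o(n^{3/2}\sqrt{\log n})$ steps of its end'' without already knowing where the end is, and the ODE profile $e^{-4t^2}\binom{n}{2}$ has no terminal time in the relevant range. The paper obtains the upper bound by a completely separate argument (Section~\ref{indepSec}): it proves $\Delta(G_{n,\triangle})\le(\tfrac{1}{\sqrt 2}+o(1))\sqrt{n\log n}$ with high probability, whence $e(G_{n,\triangle})\le\tfrac{n}{2}\Delta(G_{n,\triangle})$ gives the matching upper bound. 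This requires controlling the process \emph{after} $m^*$, where tracking has been abandoned, via a union bound over potential large neighbourhoods together with careful handling of sets that meet many neighbourhoods in $\ge n^\delta$ vertices.

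Second, ``let the tolerances shrink in proportion to $\hat Q(t)/n^2$'' is not something you can simply decree; in Bohman's analysis the errors \emph{grow}, which is precisely why he tracks only a constant fraction of the process. The paper's absolute errors decrease because it exploits \emph{self-correction}, and extracting this requires machinery well beyond ``enlarge the hierarchy of extension counts''. The drift of the edge-variable $Y_e$ depends on the average of $Y_f$ over $f\in Y_e(m)$, not on $Y_e$ itself, so self-correction only becomes visible after introducing, for each $k$, variables $V_e^{(k)}$ that average $Y$ over length-$k$ walks in the $Y$-graph, proving these walks mix in constant time (which in turn forces one to track general graph structures, Section~\ref{EEsec}), and arranging that the allowed error on $V_e^{(k)}$ shrinks geometrically in $k$. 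Moreover $Q$ is not self-correcting on its own; the $(\bar X,\bar Y,Q)$ system is handled via a Lyapunov function (Section~\ref{XYQsec}). None of this structure is present in your vertex-based statistics $d_v,Y_v$, and without it the bootstrapping you describe will not close with shrinking tolerances all the way to $t^*$.
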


The Ramsey number $R(3,k)$ is the smallest integer $n$ such that every red-blue colouring of the edges of the complete graph $K_n$ contains either a red $K_k$ or a blue triangle.

\begin{thm}\label{R3k}
$$R(3,k) \, \ge \, \left( \frac{1}{4} - o(1) \right) \frac{k^2}{\log k}$$
as $k \to \infty$.
\end{thm}

The basic heuristic behind Theorems~\ref{triangle} and~\ref{R3k} is that, with high probability, the graph $G_m$ obtained after $m$ steps of the triangle-free process approximates (in a certain sense) the Erd\H{o}s-R\'enyi random graph $G_{n,m}$, except in the fact that it contains no triangles. More precisely, there exists a (large) collection $\S$ of variables all of which take (approximately) the values one would expect in $G_{n,m}$, and all of whose derivatives at time $t = m \cdot n^{-3/2}$ may be bounded by functions which depend only on the values of variables in~$\S$ at time~$t$. We showed that moreover almost all of these variables exhibit a certain `self-correction', and were thus able to control their evolution with a fairly high degree of precision. 
 
%We refer the reader to~\cite{FGMO} for a more complete Introduction, and for the many definitions used but not stated below. 

In this file we shall give the details of various straightforward but technical calculations which were omitted from~\cite{FGMO} due to considerations of space and aesthetics. More precisely:
\begin{itemize}
\item In Section~\ref{Xsec} we shall give an extended version of~\cite[Section~3.4]{FGMO}, prove a generalized version of Lemma~\ref{selfX}, and prove Lemma~\ref{lem:chainstar} and Proposition~\ref{Uprop}. \smallskip
\item In Sections~\ref{PrelimSec} and~\ref{LBTsec} we shall do the same for~\cite[Section~4]{FGMO}; in particular, we shall prove Propositions~\ref{NFa} and~\ref{lem:landbeforetime} in Section~\ref{LBTsec}. \smallskip
\item In Section~\ref{XYQsec} we shall derive the equations which govern the `whirlpool' of~\cite[Section~6]{FGMO}, and prove Lemma~\ref{XYQalpha}. \smallskip
\item Finally, in Section~\ref{AppMartSec}, we shall (for completeness) adapt the proof (from~\cite{Colin}) of Lemma~\ref{mart} to our setting. 
\end{itemize}

%We end with a glossary of all the main terms and definitions from~\cite{FGMO}. 

%Finally, in Sections~\ref{JORsec} and~\ref{AppMartSec}, we shall (for completeness) adapt the proofs of Theorem~\ref{thm:JOR} (from~\cite{JOR}) and Lemma~\ref{mart} (from~\cite{Colin}) to our setting. We end with a glossary of all the main terms and definitions from~\cite{FGMO}. 

\section{Section~3.2: Tracking the variables $X_e$}\label{Xsec}

\setcounter{subsubsection}{3}
\setcounter{subsection}{1}
\setcounter{thm}{7}
\setcounter{thma}{0}

Recall that $C = C(\eps) > 0$ is chosen sufficiently large, and that 
$$g_y(t) = e^{2t^2} n^{-1/4} (\log n)^4 \qquad \text{and} \qquad g_x(t) = C g_y(t).$$
Set $a = \omega \cdot n^{3/2}$ and define, for each $m \in [m^*]$,  
$$\K^\X(m) = \E(m) \cap \X(a) \cap \Y(m) \cap \Q(m).$$
In this section we shall prove Lemmas~\ref{selfX} and~\ref{Xe_gamma} of~\cite{FGMO}, which were used in~\cite[Section~3]{FGMO} to prove the following proposition. 

\begin{prop}\label{Xprop}
Let $\omega \cdot n^{3/2} < m \le m^*$. With probability at least $1 - n^{-C\log n}$ either $\K^\X(m-1)^c$ holds, or
\begin{equation}\label{eq:Xprop}
X_e(m) \, \in \, \Big( 1 \pm C e^{2t^2} n^{-1/4} (\log n)^4 \Big) \cdot 2 e^{-8t^2} n \, = \, \big( 1 \pm g_x(t) \big) \Xt(m)
\end{equation}
for every open edge $e \in O(G_m)$. 
\end{prop}

Let $(W,A)$ denote the graph structure pair with $v(W) = 4$, $v_A(W) = 1$, $e(W) = 1$ and $o(W) = 2$. We shall use the following two immediate consequences of the event $\E(m)$: that 
$$X_e(m) \le 2 \cdot \Xt(m)$$
for every open edge $e \in O(G_m)$, and that
$$N_\phi(W)(m) \, \le \, \max\big\{ 4t e^{-8t^2} \sqrt{n}, (\log n)^\omega \big\}$$
for every $\omega \cdot n^{3/2} < m \le m^*$ and every faithful map $\phi$.

The first step is to show that the variables $X_e$ are self-correcting as long as the event $\K(m) = \K^X(m) \cap \X(m)$ holds. Define
$$X_e^*(m) \, = \, \frac{X_e(m) - \Xt(m)}{g_x(t) \Xt(m)},$$
the normalized error. We shall prove the following lemma.

\begin{lemma}\label{selfX}
Let $\omega \cdot n^{3/2} \le m \le m^*$. If $\E(m) \cap \X(m) \cap \Y(m) \cap \Q(m)$ holds, then 
$$\Ex \big[ \Delta X^*_e(m) \big] \, \in \, \frac{4t}{n^{3/2}} \Big( - X_e^*(m) \pm \eps \Big)$$
for every $e \in O(G_m)$.
\end{lemma}

It is easy to see that %the evolution of the variable $X_e$ is controlled by the following equation: %if $\Z(m)$ holds, then
\begin{equation}\label{eq:Xeq}
\Ex\big[ \Delta X_e(m) \big] \, = \, - \frac{2}{Q(m)} \sum_{f \in X_e(m)} \Big( Y_f(m) + 1 \Big), %\pm (\log n)^3 \Big).
\end{equation}
for every $e \in O(G_m)$. Indeed, for each open triangle $T$ in $G_m$ containing $e$, the probability that one of the open edges ($f$ and $h$, say) of $T$ other than $e$ is closed (or chosen) in step $m+1$ is equal to 
$$\frac{\big| Y_f(m) \cup Y_h(m) \big| + 2}{Q(m)} \, = \, \frac{Y_f(m) + Y_h(m) + 2}{Q(m)}.$$%\frac{1}{Q(m)} \Big( Y_f(m) + Y_h(m) - \big| Y_f(m) \cup Y_h(m) \big| \Big).$$
To see this, simply note that if $Y_f(m) \cap Y_h(m) \neq \emptyset$, then the endpoints of $e$ have a common neighbour in $G_m$, which means that $e \not\in O(G_m)$, a contradiction. Since $X_e$ decreases by two for each open triangle which is destroyed,~\eqref{eq:Xeq} follows. 

In order to deduce Lemma~\ref{selfX} from~\eqref{eq:Xeq}, we shall prove the following, more general statement. We also use this more general version in~\cite[Section~7.4]{FGMO}. 

\begin{lemmaa}\label{selfA}
Let $A(m)$ be a random variable which denotes both a collection of open edges of $G_m$, and the size of that collection. Suppose that 
\begin{equation}\label{eq:Aeq}
\Ex\big[ \Delta A(m) \big] \, \in \, - \frac{\ell \pm o(1)}{Q(m)} \sum_{f \in A(m)} Y_f(m)
\end{equation}
for some $\ell  \in \N$ and every $\omega \cdot n^{3/2} < m \le m^*$, and set $\At(m) = e^{-4\ell  t^2} A(0)$ and
$$A^*(m) \, = \, \frac{A(m) - \At(m)}{g(t) \At(m)},$$
for some $g \colon (0,t^*) \to \RR^+$ %with $g(t) = e^{2t^2} \lambda(n)$ for some function $\lambda(n) \ge n^{-1/4} (\log n)^4$. 
which satisfies $g(t) \ge g_x(t)$ and $g \sim g_x$.\footnote{We write $g \sim g_x$ to indicate that $g(t) = \lambda(n) \cdot g_x(t)$ for some function $\lambda(n)$. Since $g(t) \ge g_x(t)$, we have $\lambda(n) \ge 1$ for every $n \in \N$.} 
Then 
$$\Ex \big[ \Delta A^*(m) \big] \, \in \, \frac{4t}{n^{3/2}} \Big( - A^*(m) \pm \eps \Big).$$
for every $\omega \cdot n^{3/2} \le m \le m^*$ such that $\Y(m) \cap \Q(m)$ holds and $|A^*(m)| \le 1$.
\end{lemmaa}

We emphasize that $\ell$ is an absolute fixed constant; in fact, in our applications we shall need to consider only the cases $\ell  = 1$ and $\ell  = 2$. In the proof of Lemma~\ref{selfA}, we shall use the product rule, which was stated in~\cite[Section~4]{FGMO}

\begin{chain}
 For any random variables $a(m)$ and $b(m)$,  
$$\Ex \big[ \Delta \big( a(m) b(m) \big) \big] \, = \, a(m) \Ex \big[ \Delta b(m) \big] + b(m) \Ex \big[ \Delta a(m) \big] + \Ex\big[ \big( \Delta a(m) \big) \big( \Delta b(m) \big) \big].$$
In particular, if $a(m)$ is deterministic, then
$$\Ex \big[ \Delta \big( a(m) b(m) \big) \big] \, = \, a(m) \Ex \big[ \Delta b(m) \big] + \Delta a(m) \Big( b(m) + \Ex\big[ \Delta b(m) \big] \Big).$$
\end{chain}

%\begin{proof}
%Simply expand the right-hand side, and use linearity of expectation. Indeed, 
%\begin{align*}
%\Ex \big[ \Delta \big( f(G_m) g(G_m) \big) \big] & \, = \, \Ex \big[ f(G_{m+1}) g(G_{m+1}) -  f(G_m) g(G_m) \big]\\
%& \, = \, \Ex \big[ f(G_{m}) \big( g(G_{m+1}) - g(G_m) \big) \big] +  \Ex \big[ g(G_{m}) \big( f(G_{m+1}) - f(G_m) \big) \big]\\
%& \hspace{3cm} \, + \, \Ex \big[ \big( f(G_{m+1}) - f(G_m)  \big) \big( g(G_{m+1}) - g(G_m) \big) \big]\\
%& \, = \, f(G_m) \Ex \big[ \Delta g(G_m) \big] + g(G_m) \Ex \big[ \Delta f(G_m) \big] + \Ex\big[ \Delta f(G_m) \Delta g(G_m) \big],
%\end{align*}
%as required.
%\end{proof}

To simplify the calculations below, we shall write $a \approx b$ to denote that the inequalities $a/b \in 1 \pm O\big( 1/n \big)$ hold.

\begin{proof}[Proof of Lemma~\ref{selfA}]
Observe first that, differentiating with respect to $t$, we have
\begin{equation}\label{eq:deltaAt}
\Delta \At(m) \approx -\frac{8\ell t}{n^{3/2}} \cdot \At(m) \quad \text{and} \quad \Delta \big( g(t) \At(m) \big) \approx - \frac{(8\ell -4)t}{n^{3/2}} \cdot g(t) \At(m).
\end{equation}
We claim that if $\Y(m) \cap \Q(m)$ holds and $|A^*(m)| \le 1$, then
\begin{align}
\Ex\big[ \Delta A(m) \big] - \Delta \At(m) & \, \in \, - \frac{\ell \pm o(1)}{Q(m)} \sum_{f \in A(m)} Y_f(m) \,+\, \frac{8\ell t}{n^{3/2}} \cdot \At(m) \nonumber \\
& \, \in \, \bigg( \At(m) \,-\, \frac{1 \pm g_y(t)}{1 \pm g_q(t)} \cdot A(m) \bigg) \cdot \big(\ell \pm o(1) \big) \cdot \frac{\Yt(m)}{\Qt(m)} \nonumber \\
&  \, \subseteq \, \Big( 1 \,-\, \big( 1 \pm 2g_y(t) \big) \big( 1 + g(t) A^*(m) \big) \Big) \cdot \big(\ell \pm o(1) \big) \cdot \frac{\Yt(m) \cdot \At(m)}{\Qt(m)} \nonumber \\
& \, \subseteq \, \Big( - \ell  \cdot A^*(m) \pm \eps^2 \Big) \cdot \frac{g(t) \cdot \Yt(m) \cdot \At(m)}{\Qt(m)}. \nonumber
\end{align}
Indeed, this follows since $g_q(t) \ll g_y(t) \le \eps^3 g_x(t) \le \eps^3 g(t)$ and $\omega < t \le t^*$. Thus
\begin{equation}\label{eq:ExdeltaAAt}
\frac{\Ex\big[ \Delta A(m) \big] - \Delta \At(m)}{g(t) \At(m)} \, \in \, - \, \frac{8\ell t}{n^{3/2}} \cdot A^*(m) \,\pm\, \frac{\eps t}{n^{3/2}}.
\end{equation}
Now, since $A(m) - \At(m) = g(t)  \At(m) \cdot A^*(m)$, by the product rule we have
\begin{equation}\label{eq:Achain}
\frac{\Ex\big[ \Delta A(m) \big] - \Delta \At(m)}{g(t) \At(m)} \, = \, \Ex\big[ \Delta A^*(m) \big] \,+\, \frac{\Delta \big( g(t) \At(m) \big)}{g(t) \At(m)} \Big( A^*(m) \,+\,  \Ex\big[ \Delta A^*(m) \big] \Big).
\end{equation}
Combining~\eqref{eq:deltaAt},~\eqref{eq:ExdeltaAAt} and~\eqref{eq:Achain}, %and observing that the terms involving $8\ell t$ cancel one another, 
we obtain
\begin{align*}
\Ex\big[ \Delta A^*(m) \big] & \, \in \, - \, \frac{8\ell t}{n^{3/2}} \cdot A^*(m) + \frac{(8\ell -4)t}{n^{3/2}}  \cdot A^*(m) \,\pm\, \frac{2\eps t}{n^{3/2}}\\
& \, \subseteq \, \frac{4t}{n^{3/2}} \cdot \Big( - A^*(m) \pm \eps \Big),
\end{align*}
as required.
\end{proof}

%Following that brief detour, let us return to the variables $X_e$. 

Recall next the following bound on $|\Delta X_e(m)|$, which was proved in~\cite{FGMO}.

\setcounter{subsubsection}{3}
\setcounter{thm}{9}

\begin{lemma}\label{Xe_alpha}
Let $\omega \cdot n^{3/2} < m \le m^*$. If $\E(m)$ holds, then
$$|\Delta X_e(m)| \, \le \, 2 \cdot \max\big\{ 4t e^{-8t^2} \sqrt{n}, (\log n)^\omega \big\}$$
for every $e \in O(G_m)$.
\end{lemma}

Using Lemmas~\ref{selfX} and~\ref{Xe_alpha}, we can bound $|\Delta X^*_e(m)|$ and $\Ex\big[ |\Delta X_e^*(m)| \big]$.

\begin{lemma}\label{Xe_gamma}
Let $\omega \cdot n^{3/2} < m \le m^*$. If $\E(m) \cap \X(m) \cap \Y(m) \cap \Q(m)$ holds, then 
$$|\Delta X^*_e(m)| \, \le \, \frac{C}{g_x(t)} \cdot \frac{e^{8t^2}}{n} \max\big\{ t e^{-8t^2} \sqrt{n}, (\log n)^\omega \big\} \quad \text{and} \quad \Ex \big[ |\Delta X^*_e(m)| \big] \, \le \, \frac{C}{g_x(t)} \cdot \frac{\log n}{n^{3/2}}$$
for every $e \in O(G_m)$.
\end{lemma}

In order to deduce Lemma~\ref{Xe_gamma} from Lemma~\ref{Xe_alpha}, and also in Section~\ref{AwhirlSec}, below, we shall use the following result, which was stated (but not proved) in~\cite{FGMO}. 

\setcounter{subsubsection}{4}
\setcounter{thm}{22}

\begin{lemma}\label{lem:chainstar}
Let $A(m)$ be a random variable, let $\At(m)$ and $g(t)$ be functions, and set  
$$A^*(m) \, = \, \frac{A(m) - \At(m)}{g(t) \At(m)}.$$
If $|A(m)| \le \big( 1 + g(t) \big) \At(m)$,
\begin{equation}\label{eq:deltaAgA}
|\Delta \At(m) | \, \ll \, \frac{\log n}{n^{3/2}} \cdot \At(m)  \quad \text{and} \quad |\Delta \big( g(t) \At(m) \big) | \ll \frac{\log n}{n^{3/2}} \cdot g(t) \At(m),
\end{equation}
then
$$| \Delta A^*(m) | \, \le \, 2 \cdot \left( \frac{| \Delta A(m) |}{g(t) \At(m)} \,+\, \frac{1 + g(t)}{g(t)} \cdot \frac{\log n}{n^{3/2}} \right).$$
\end{lemma}

\begin{proof}
Observe first that, for arbitrary functions $a,b,c \colon \N \to \RR^+$, if $a^*(m) = \frac{a(m) - b(m)}{c(m)}$ then
$$\Delta a(m) - \Delta b(m) \, = \, \Delta\big( a^*(m) c(m) \big) \, = \, c(m+1) \Delta a^*(m) + a^*(m) \Delta c(m),$$
and hence
\begin{equation}\label{eq:chainstar}
\Delta a^*(m) \, \in \, \frac{\Delta a(m)}{c(m+1)} \,\pm\, \frac{|\Delta b(m)| \cdot c(m) + \big( a(m) + b(m) \big) |\Delta c(m)|}{c(m) \cdot c(m+1)}.
\end{equation}
Applying~\eqref{eq:chainstar} to the functions $A(m)$, $\At(m)$ and $g(t) \At(m)$, and using the assumptions that $A(m) \le \big( 1 + g(t) \big) \At(m)$ and
$$|\Delta \At(m) | \, \ll \, \frac{\log n}{n^{3/2}} \cdot \At(m)  \quad \text{and} \quad |\Delta \big( g(t) \At(m) \big) | \ll \frac{\log n}{n^{3/2}} \cdot g(t) \At(m),$$ 
we obtain
$$| \Delta A^*(m) | \, \le \, 2 \cdot \left( \frac{| \Delta A(m) |}{g(t) \At(m)} \,+\, \frac{1 + g(t)}{g(t)} \cdot \frac{\log n}{n^{3/2}} \right),$$
as claimed.
\end{proof}

Specializing to the case $X_e$, we obtain the following easy corollary.

\setcounter{subsubsection}{4}
\setcounter{thm}{21}

\begin{lemmaa}\label{lem:chainstarX}
For every $\omega \cdot n^{3/2} < m \le m^*$, if $\X(m)$ holds, then
$$| \Delta X_e^*(m) | \, \le \, \frac{3}{g_x(t)} \cdot \left( \frac{| \Delta X_e(m) |}{\Xt(m)} \,+\, \frac{\log n}{n^{3/2}} \right)$$
for every $e \in O(G_m)$.
\end{lemmaa}

\begin{proof}
We apply Lemma~\ref{lem:chainstar} with $A(m) = X_e(m)$ and $g(t) = g_x(t)$. Since $\Xt(m)$ is equal to $e^{-8t^2}$ times a function of $n$, and $g_x(t) \Xt(m)$ is equal to $e^{-6t^2}$ times a function of $n$, we have
$$\Delta \Xt(m) \in \frac{- 16t \pm o(1)}{n^{3/2}} \cdot  \Xt(m) \quad \text{and} \quad \Delta \big( g_x(t) \Xt(m) \big) \in  \frac{- 12t \pm o(1)}{n^{3/2}} \cdot g_x(t) \Xt(m),$$
and hence
$$|\Delta \Xt(m) | \, \ll \, \frac{\log n}{n^{3/2}} \cdot \Xt(m)  \quad \text{and} \quad |\Delta \big( g_x(t) \Xt(m) \big) | \ll \frac{\log n}{n^{3/2}} \cdot g_x(t) \Xt(m).$$ 
Moreover, the event $\X(m)$ implies that $X_e(m) \le \big( 1 + g_x(t) \big) \Xt(m)$ for every $e \in O(G_m)$, and so
$$| \Delta X_e^*(m) | \, \le \, \frac{3}{g_x(t)} \cdot \left( \frac{| \Delta X_e(m) |}{\Xt(m)} \,+\, \frac{\log n}{n^{3/2}} \right),$$
as claimed.
\end{proof}

We can now easily deduce Lemma~\ref{Xe_gamma}.

\begin{proof}[Proof of Lemma~\ref{Xe_gamma}]
By Lemmas~\ref{Xe_alpha} and~\ref{lem:chainstarX}, we have 
\begin{align*}
| \Delta X_e^*(m) | & \, \le \,  \frac{3}{g_x(t)} \cdot \left( \frac{| \Delta X_e(m) |}{\Xt(m)} \,+\, \frac{\log n}{n^{3/2}} \right)\\
& \, \le \, \frac{3}{g_x(t) \Xt(m)} \cdot \left( 2 \cdot \max\big\{ 4t e^{-8t^2} \sqrt{n}, (\log n)^\omega \big\} \,+\,  \frac{e^{-8t^2} \log n}{\sqrt{n}} \right)\\
& \, \le \, \frac{C}{g_x(t)} \cdot \frac{e^{8t^2}}{n} \max\big\{ t e^{-8t^2} \sqrt{n}, (\log n)^\omega \big\},
\end{align*}
as claimed. Moreover, observe that, by~\eqref{eq:Xeq}, and using the event $\X(m) \cap \Y(m) \cap \Q(m)$ and the fact that $X_e$ is decreasing, we have
$$\Ex\big[ | \Delta X_e(m) | \big] \, = \, \frac{2}{Q(m)} \sum_{f \in X_e(m)} \Big( Y_f(m) + 1 \Big) \, \le \, \frac{3 \cdot \Xt(m) \cdot \Yt(m)}{\Qt(m)} \, = \, \frac{12t}{n^{3/2}} \cdot \Xt(m).$$
Thus, by Lemma~\ref{lem:chainstarX}, we have 
\begin{align*}
\Ex\big[ | \Delta X_e^*(m) | \big] & \, \le \,  \frac{3}{g_x(t)} \cdot \left( \frac{\Ex\big[ | \Delta X_e(m) | \big]}{\Xt(m)} \,+\, \frac{\log n}{n^{3/2}} \right)\\
& \, \le \, \frac{3}{g_x(t)} \cdot \left( \frac{12t}{n^{3/2}} \,+\, \frac{\log n}{n^{3/2}} \right) \, \le \, \frac{C}{g_x(t)} \cdot \frac{\log n}{n^{3/2}},
\end{align*}
as required.
\end{proof}

\subsection{The proof of Proposition~\ref{Uprop}}

We take this opportunity to prove a similar proposition from~\cite[Section~5]{FGMO}. 

\setcounter{subsection}{1}
\setcounter{subsubsection}{5}
\setcounter{thm}{4}
\setcounter{thma}{2}

\begin{prop}\label{Uprop}
Let $\omega \cdot n^{3/2} < m \le m^*$. Then, with probability at least $1 - n^{-C\log n}$, either $\K^\Y(m-1)^c$ holds, or 
\begin{equation}\label{eq:YL}
Y^L_e(m) \, \in \, \big( 1 \pm g_x(t) \big) \cdot \big( 2t e^{-4t^2} \sqrt{n}  \big)
\end{equation}
for every $e \in O(G_m)$.
\end{prop}

The proof is almost identical to that of Proposition~\ref{Xprop}, above, so we shall skip some of the details. Recall first the following special case of~\cite[Lemma~\ref{Ueq}]{FGMO}, which is obtained from the version stated there by setting $\sigma = L$ and noting that $U_e^L(m) V_e^L(m) = \sum_{f \in Y_e^L(m)} Y_f(m)$. 

\setcounter{subsubsection}{5}
\setcounter{thm}{19}

\begin{lemma}\label{Ueq}
Let $\omega \cdot n^{3/2} < m \le m^*$. If $\E(m) \cap \U(m) \cap \X(m) \cap \Z(m)$ holds, then
$$\Ex\big[ \Delta Y^L_e(m) \big] \, \in \, - \frac{1}{Q(m)} \sum_{f \in Y_e^L(m)} Y_f(m) \,+\, \left( \frac{1}{2} \pm g_x(t) \right) \frac{\Xt(m)}{Q(m)},$$
for every $e \in O(G_m)$.
\end{lemma}

Define
$$(Y^L_e)^*(m) \, = \, \frac{2 \cdot Y^L_e(m) - \Yt(m)}{g_x(t) \Yt(m)},$$
the normalized error. Since $\Xt(m) \ll \Yt(m)^2$ for $t > \omega$, it follows from Lemmas~\ref{Ueq} and~\ref{selfA} that the variables $Y^L_e$ are self-correcting. 

% as long as the event $\K^\Y(m) \cap \U(m) = \E(m) \cap \X(m) \cap \Y(a) \cap \Z(m) \cap \Q(m) \cap \U(m)$ holds. 

\begin{lemmaa}\label{selfYL}
Let $\omega \cdot n^{3/2} \le m \le m^*$. If $\E(m) \cap \U(m) \cap \X(m) \cap \Z(m) \cap \Q(m)$ holds, then 
$$\Ex \big[ \Delta (Y^L_e)^*(m) \big] \, \in \, \frac{4t}{n^{3/2}} \Big( - (Y^L_e)^*(m) \pm \eps \Big)$$
for every $e \in O(G_m)$.
\end{lemmaa}

Next, recall the following lemma from~\cite[Section~5.3]{FGMO}.

\setcounter{thm}{16}

\begin{lemma}\label{deltaU}
Let $\omega \cdot n^{3/2} < m \le m^*$. If $\E(m) \cap \Z(m)$ holds, then 
$$\big| \Delta Y_e^L(m) \big| \, \le \, (\log n)^3$$
for every $e \in O(G_m)$.
\end{lemma}

Using Lemmas~\ref{deltaU} and~\ref{selfYL}, we can bound $|\Delta (Y^L_e)^*(m)|$ and $\Ex\big[ |\Delta (Y^L_e)^*(m)| \big]$.

\begin{lemmaa}\label{YeL_gamma}
Let $\omega \cdot n^{3/2} < m \le m^*$. If $\E(m) \cap \U(m) \cap \X(m) \cap \Z(m) \cap \Q(m)$ holds, then 
$$|\Delta (Y^L_e)^*(m)| \, \le \, \frac{C}{g_x(t)} \cdot \frac{\log n}{\sqrt{n}} \qquad \text{and} \qquad \Ex \big[ |\Delta (Y^L_e)^*(m)| \big] \, \le \, \frac{C}{g_x(t)} \cdot \frac{\log n}{n^{3/2}}$$
for every $e \in O(G_m)$.
\end{lemmaa}

In order to deduce Lemma~\ref{YeL_gamma} from Lemma~\ref{deltaU}, we shall use the following easy consequence of Lemma~\ref{lem:chainstar}.

\setcounter{subsubsection}{4}
\setcounter{thm}{21}

\begin{lemmaa}\label{lem:chainstarYL}
For every $\omega \cdot n^{3/2} < m \le m^*$, if $\U(m)$ holds, then
$$| \Delta (Y^L_e)^*(m) | \, \le \, \frac{4}{g_x(t)} \cdot \left( \frac{| \Delta Y^L_e(m) |}{\Yt(m)} \,+\, \frac{\log n}{n^{3/2}} \right)$$
for every $e \in O(G_m)$.
\end{lemmaa}

\begin{proof}
We apply Lemma~\ref{lem:chainstar} with $A(m) = Y^L_e(m)$ and $g(t) = g_x(t)$. Since $\Yt(m)$ is equal to $t \cdot e^{-4t^2}$ times a function of $n$, and $g_x(t) \Yt(m)$ is equal to $t \cdot e^{-2t^2}$ times a function of $n$, we have
$$\Delta \Yt(m) \in \frac{- 8t \pm o(1)}{n^{3/2}} \cdot \Yt(m) \quad \text{and} \quad \Delta \big( g_x(t) \Yt(m) \big) \in  \frac{- 4t \pm o(1)}{n^{3/2}} \cdot g_x(t) \Yt(m),$$
and hence
$$|\Delta \Yt(m) | \, \ll \, \frac{\log n}{n^{3/2}} \cdot \Xt(m)  \quad \text{and} \quad |\Delta \big( g_x(t) \Yt(m) \big) | \ll \frac{\log n}{n^{3/2}} \cdot g_x(t) \Xt(m).$$ 
Moreover, the event $\U(m)$ implies that $2 \cdot Y^L_e(m) \le \big( 1 + g_x(t) \big) \Yt(m)$ for every $e \in O(G_m)$, and so
$$| \Delta (Y^L_e)^*(m) | \, \le \, \frac{4}{g_x(t)} \cdot \left( \frac{| \Delta Y^L_e(m) |}{\Yt(m)} \,+\, \frac{\log n}{n^{3/2}} \right),$$
as claimed.
\end{proof}

We can now easily deduce Lemma~\ref{YeL_gamma}.

\begin{proof}[Proof of Lemma~\ref{YeL_gamma}]
By Lemmas~\ref{deltaU} and~\ref{lem:chainstarYL}, we have 
$$| \Delta (Y^L_e)^*(m) | \, \le \,  \frac{ 4 }{ g_x(t) } \cdot \left( \frac{ |\Delta Y^L_e(m)| }{ \Yt(m) } \,+\, \frac{ \log n }{ n^{3/2} } \right) \, \le \, \frac{ C }{ g_x(t) } \cdot \frac{ (\log n)^3 }{ \sqrt{n} },$$
as claimed. Moreover, since
$$\frac{\Ex\big[ | \Delta Y^L_e(m) | \big]}{\Yt(m)} \, \le \, \frac{\Yt(m)}{\Qt(m)} \, \le \, \frac{\log n}{n^{3/2}},$$
it follows from Lemma~\ref{lem:chainstarYL} that
$$\Ex\big[ | \Delta (Y^L_e)^*(m) | \big] \, \le \,  \frac{4}{g_x(t)} \cdot \left( \frac{\Ex\big[ | \Delta Y^L_e(m) | \big]}{\Yt(m)} \,+\, \frac{\log n}{n^{3/2}} \right) \, \le \, \frac{C}{g_x(t)} \cdot \frac{\log n}{n^{3/2}},$$
as required.
\end{proof}

Finally, let's deduce Proposition~\ref{Uprop}.

\begin{proof}[Proof of Proposition~\ref{Uprop}]
We begin by choosing a family of parameters as in~\cite[Definition~3.4]{FGMO}. Set $\K(m) = \K^\Y(m) \cap \U(m) \cap \big\{ |(Y^L_e)^*(a)| < 1/2 \big\}$ and $I = [a,b] = [\omega \cdot n^{3/2},m^*]$, and let
$$\alpha(t) \, = \, \frac{C}{g_x(t)} \cdot \frac{ (\log n)^3 }{ \sqrt{n} } \qquad \text{and} \qquad \beta(t) \, = \, \frac{C}{g_x(t)} \cdot \frac{\log n}{n^{3/2}}.$$
Moreover, set $\lambda = C$, $\delta = \eps$ and $h(t) = t \cdot n^{-3/2}$. %(We remark that we shall almost always choose these values for $\lambda$, $\delta$ and $h$.) 
We claim that $(\lambda,\delta;g_x,h;\alpha,\beta;\K)$ is a reasonable collection, and that $Y^L_e$ satisfies the conditions of~\cite[Lemma~3.2]{FGMO} if $e \in O(G_m)$.

To prove the first statement, we need to show that $\alpha$ and $\beta$ are $\lambda$-slow, and that 
$$\min\big\{ \alpha(t), \, \beta(t), \, h(t) \big\} \, \ge \, \ds\frac{\eps t}{n^{3/2}}$$ 
and $\alpha(t) \le \eps^2$ for every $\omega < t \le t^*$, each of which is obvious, since $g_x(t) \le 1$ for all $t \le t^*$. To prove the second, we need to show that $Y^L_e$ is $(g_x,h;\K)$-self-correcting, which follows from Lemma~\ref{selfYL}, that, for every $\omega \cdot n^{3/2} < m \le m^*$, if $\K(m)$ holds then 
$$|\Delta (Y^L_e)^*(m)| \le \alpha(t) \qquad  \text{and} \qquad \Ex\big[ |\Delta (Y^L_e)^*(m)| \big] \le \beta(t),$$
which follows from Lemma~\ref{YeL_gamma}, and that $|(Y^L_e)^*(a)| < 1/2$, which follows from $\K(m)$.

%To obtain the bound $|(Y^L_e)^*(a)| < 1/2$, we  follows from the event $\U(a)$, since $f_y(\omega) \Yt(n^{3/2}) \ll g_x(\omega) \Yt(a)$ if $\omega(n) \to \infty$ sufficiently slowly. 

Observe that
$$\alpha(t) \beta(t) n^{3/2} \, \le \, \frac{C^2}{g_x(t)^2} \cdot \frac{ (\log n)^4 }{ \sqrt{n} } \, \le \, \frac{1}{(\log n)^3}$$
for every $\omega < t \le t^*$. By~\cite[Lemma~3.1]{FGMO}, and summing over edges $e \in E(K_n)$ the probability that $e \in O(G_m)$ and $(Y^L_e)^*(m) > 1$, it follows that
$$\Pr\Big( \U(m)^c \cap \K(m-1) \text{ for some $m \in [a,b]$} \Big) \, \le \, n^6 \exp\Big( - \delta' (\log n)^3 \Big) \, \le \, n^{-C \log n}.$$
Finally, we remark that 
$$\Pr\Big( \K^\Y(a) \cap \big\{ |(Y^L_e)^*(a)| \ge 1/2 \big\} \Big) \, \le \, n^{-C \log n},$$
by Proposition~\ref{lem:landbeforetime} (see Section~\ref{LBTsec}, below), and so the proposition follows.
\end{proof}

\setcounter{subsubsection}{2}

\section{Section~4: Everything Else}\label{PrelimSec}

In this section we shall give the details omitted from~\cite[Section~4]{FGMO}, recall from some of the lemmas from that section which we shall need below, and prove some simple variants. Recall the following important definitions from~\cite{FGMO}.

\setcounter{subsubsection}{2}
\setcounter{subsection}{1}
\setcounter{thm}{9}
\setcounter{thma}{0}

\begin{defn}\label{def:tAF}
Define
\begin{equation}\label{def:t*}
t_A^*(F) \, = \, \inf\Big\{ t > 0 \,:\, \Nt_A(F)(m) \le (2t)^{e(F)} \Big\} \, \in \, [0,\infty]
\end{equation}
and
$$t_A(F) \, = \, \min\Big\{ \min \big\{ t_A^*(H) : A \subsetneq H \subseteq F \big\}, \, t^* \Big\}.$$
We call $t_A(F)$ the \emph{tracking time} of the pair $(F,A)$. 
\end{defn} 

Moreover, for each graph structure pair $(F,A)$ with $t_A(F) > 0$, define 
\begin{equation}\label{def:cFA}
c \, = \, c(F,A) \, := \, \max\bigg\{ \max_{A \subsetneq H \subseteq F} \bigg\{ \frac{2o(H)}{2v_A(H) - e(H)} \bigg\}, \, 2 \bigg\},
\end{equation}
so in particular $e^{ct^2} = n^{1/4}$ when $t = t_A(F)$ if $t_A(F) < t^*$. 

We begin with the following crucial remark. 

\setcounter{subsubsection}{4}
\setcounter{thm}{1}

\begin{rmk}\label{Eremark}
Let $(F,A)$ be a graph structure pair with $v(F) = n^{o(1)}$, and let $m \in [m^*]$. If the event $\E(m)$ holds, then $N_\phi(F)(m)$ satisfies the conclusion (i.e., either $(a)$, $(b)$ or $(c)$) of~\cite[Theorem~4.1]{FGMO}, for every faithful injection $\phi \colon A \to V(G_m)$.
\end{rmk}

\begin{proof}[Proof of Remark~\ref{Eremark}]
%Let $(F,A,\phi)$ be a graph structure triple with $v(F) = n^{o(1)}$, let $m \in [m^*]$, and suppose that~$\phi$ is faithful at time~$t$. 
Let us denote by~$(F',A',\phi')$ the graph structure obtained by removing the isolated vertices from $F$, so $A' = V(F') \cap A$ and $\phi' = \phi|_{A'}$. Note that $t_{A'}(F') = t_A(F)$, since $t^*_A(H' \cup A) = t^*_{A'}(H')$ for every $A' \subsetneq H' \subseteq F'$. Assume that the event $\E(m)$ holds and that~$\phi$ is faithful at time~$t$. We shall consider in turn the three cases corresponding to parts $(a)$, $(b)$ and $(c)$ of the theorem. %(Adding extra isolated vertices outside $A$ only causes $t^*_A(H)$ to increase.) 

Suppose first that $0 < t \le \omega < t_A(F)$, and note that 
$$N_{\phi'}(F')(m) \, \in \, \Nt_{A'}(F')(m) \pm f_{F',A'}(t) \Nt_{A'}(F')(n^{3/2}),$$ 
since the event $\E(m)$ holds, and $t_{A'}(F') = t_A(F)$. Now, each copy of $F'$  in $G_m$, rooted at $\phi'(A')$, extends to between $(n - v(F))^k$ and $n^k$ copies of $F$ rooted at $\phi(A)$. Set $k = v_A(F) - v_{A'}(F')$, and note that $\Nt_A(F) = n^k \cdot \Nt_{A'}(F')$ and that $\gamma(F,A) > \gamma(F',A')$ if $k \ne 0$. It follows that 
$$N_\phi(F)(m) \, \in \, \Nt_A(F)(m) \pm f_{F,A}(t) \Nt_A(F)(n^{3/2}),$$ 
as required.

The proof in case $(b)$ is almost identical, so we skip the details and proceed to case $(c)$. Here we need the observation that the minimal $A \subsetneq H \subseteq F$ such that $t < t_H(F)$ contains no isolated vertices. Indeed, this is obvious, since moving an isolated vertex from $H$ to $F$ can only increase $t_H(F)$, and $t > t_A(F)$. It follows that the minimal $A \subsetneq H \subseteq F$ such that $t < t_H(F)$, is equal to $A$ union the minimal $A' \subsetneq H' \subseteq F'$ such that $t < t_{H'}(F')$. Since $\Delta(F',H',A') < \Delta(F,H,A)$ unless $k = 0$, it follows that
$$N_\phi(F)(m) \, \le \, n^k \cdot ( \log n )^{\Delta(F',H',A')} \Nt_{H'}(F')(m^+)  \, \le \, ( \log n )^{\Delta(F,H,A)} \Nt_H(F)(m^+),$$
by the event $\E(m)$, as required.
\end{proof}

We remark that the same argument implies that the building sequences 
$$A \subseteq H_0 \subsetneq \dots \subsetneq H_\ell = F \qquad \textup{and} \qquad A' \subseteq H_0' \subsetneq \dots \subsetneq H_\ell' = F'$$ 
of $(F,A)$ and $(F',A')$ respectively are identical, in the sense that $H_j = H_j' \cup A$ for every $0 \le j \le \ell - 1$. That is, all of the isolated vertices of $F$ lie in $V(F) \setminus H_{\ell-1}$. 

We next recall three simple properties of the collection $\F^o_F$, see~\cite[Section~4.2]{FGMO}.

\setcounter{subsubsection}{4}
\setcounter{thm}{19}

\begin{obs}\label{obs:NtF-}
Let $(F,A)$ be a graph structure pair. Then $|\F^o_F| = e(F)$, and 
$$2te^{4t^2} \cdot \Nt_A(F^o)(m) \, = \, \sqrt{n} \cdot \Nt_A(F)(m)$$
for every $F^o \in \F_F^o$.
\end{obs}

\setcounter{subsubsection}{4}
\setcounter{thm}{23}

\begin{obs}\label{obs:F-}
If $(F,A)$ is a graph structure pair and $F^o \in \F_F^o$, then $t_A(F) \le t_A(F^o)$.
\end{obs}

\begin{proof}
If $ t_A(F^o) = t^*$ there is nothing to prove, so suppose that $A \subsetneq H^o \subseteq F^o$ satisfies $t^*_A(H^o) = t_A(F^o) < t^*$. Then either $H^o \subseteq F$, in which case $t_A(F) \le t^*_A(H^o) = t_A(F^o)$, as required, or $H^o \in \F_H^o$ for some $A \subsetneq H \subseteq F$. 

We claim that, in the latter case, we have $t_A(F) \le t^*_A(H) \le t^*_A(H^o) = t_A(F^o)$. Note that $e(H) = e(H^o) - 1$, 
%and so this holds immediately if $t^*_A(H^o) = 0$, since then $e(H) - 2v_A(H) = e(H^o) - 2v_A(H^o) + 1 > 0$. So assume that $t^*_A(H^o) > 0$.\footnote{Recall that therefore $t^*_A(H^o) \gg (\log n)^{1/4} > 1$, by Observation~\ref{obs:zeroomega}.} Now, by Observation~\ref{obs:NtF-} we have
and therefore, by Observation~\ref{obs:NtF-},
$$\Nt_A(H^o)(m) \, = \, \frac{\sqrt{n}}{2te^{4t^2}} \cdot \Nt_A(H)(m) \, \ge \, \frac{n^\eps}{2t} \cdot \Nt_A(H)(m) \, > \, (2t)^{e(H) - 1} \, = \, (2t)^{e(H^o)}$$
for every $t \le t^*_A(H) < t^*$. By~\eqref{def:t*}, it follows that $t^*_A(H) \le t^*_A(H^o)$, as required.
\end{proof}

%We also use the following relations between different error terms in~\cite{FGMO}. %$g_{F,A}(t)$s.

\begin{obs}\label{obs:gfat}
Let $(F,A)$ be a graph structure pair with $v_A(F) \ge 1$, and let $F^o \in \F^o_F$. Then 
$$o(F) g_y(t) \ll g_{F,A}(t) \qquad \text{and} \qquad g_{F^o,A}(t) \ll g_{F,A}(t)$$ 
as $n \to \infty$.
\end{obs}

\begin{proof}
Observe first that $c(F,A) \ge c(F^o,A) \ge 2$ for every $F^o \in \F^o_F$, by Observation~\ref{obs:F-}. Indeed, $c = c(F,A)$ was chosen (see~\eqref{def:cFA}) so that $e^{ct^2} = n^{1/4}$ at $t = t_A(F)$, and $t_A(F) \le t_A(F^o)$. Now, noting that $v_A(F) = v_A(F^o)$, $e(F) = e(F^o) + 1$ and $o(F) = o(F^o) - 1$, we have 
$$\gamma(F,A) \, \ge \, \gamma(F^o,A) + \sqrt{\gamma(F,A)},$$ 
by the definition of $\gamma(F,A)$. 
\end{proof}

Recall next that
\begin{equation}\label{def:fFAt}
f_{F,A}(t) \, = \, e^{C(o(F)+1)(t^2 + 1)} n^{-1/4} (\log n)^{\Delta(F,A) - \sqrt{\Delta(F,A)}}
\end{equation}
for each graph structure pair $(F,A)$, and that
$$f_y(t) \, = \, e^{Ct^2} n^{-1/4} (\log n)^{5/2} \qquad \text{and} \qquad f_x(t) \, = \, e^{-4t^2} f_y(t).$$
The following variant of~\cite[Observation~4.25]{FGMO} follows easily from the definitions. 

\begin{obsa}\label{obs:gfat}
Let $(F,A)$ be a graph structure pair with $v_A(F) \ge 1$, and let $F^o \in \F^o_F$. Then 
$$(\log n)^{e(F) + o(F)} \big( f_y(t) + f_{F^o,A}(t) \big) \ll f_{F,A}(t)$$ 
as $n \to \infty$.
\end{obsa}

\begin{proof}
Since $e(F^o) = e(F) - 1$ and $o(F^o) = o(F) + 1$, we have 
$$\Delta(F,A) \, = \, \big( \delta(F^o,A) + 1 \big)^C \, \ge \, \Delta(F^o,A) + \sqrt{\Delta(F,A)}.$$
The claimed bound now follows immediately.
\end{proof}

Our next few results give useful properties of the collection of graph structures $\F_{F,A}^-$  which was defined in~\cite[Section~4.3]{FGMO}. The first four were also proved there.

\setcounter{subsubsection}{4}
\setcounter{thm}{31}

\begin{obs}\label{obs:F+inF-}
$\F_{F,A}^+ \subseteq \F_{F,A}^-$ for every graph structure pair $(F,A)$. 
\end{obs}

Note that, by this observation, the following results all also hold for each $(F',A') \in \F_{F,A}^+$.

%\begin{obs}\label{obs:AcapF}
%Let $(F,A)$ be a graph structure pair, and let $(F',A') \in \F_{F,A}^-$.  
%\begin{itemize}
%\item[$(a)$] If $A' \cap F = A$ then $\Nt_{A'}(F') \le \frac{2t}{\sqrt{n}} \cdot \Nt_A(F)$.
%\item[$(b)$] If $A' \subsetneq H' \subseteq F'$ then $H' \cap F \neq A$.
%\end{itemize}
%\end{obs}

\setcounter{subsubsection}{4}
\setcounter{thm}{33}

\begin{obs}\label{obs:NH'F'NHF}
Let $(F,A)$ be a graph structure pair, and let $(F',A') \in \F_{F,A}^-$.  For every $A' \subseteq H' \subseteq F'$, we have $\Nt_{H'}(F') \le \Nt_{H' \cap F}(F)$.
\end{obs}

\setcounter{subsubsection}{4}
\setcounter{thm}{36}

\begin{obs}\label{obs:deltaF'H'A'deltaFA}
Let $(F,A)$ be a graph structure pair, and let $(F',A') \in \F_{F,A}^-$. Then 
$$\Delta(F',H',A') \, \le \, \Delta(F'-v,A) \, \le \, \Delta(F,A) - 3\sqrt{\Delta(F,A)}$$
for every $A' \subsetneq H' \subsetneq F'$. Moreover, the same bounds holds if $H' = F'$ and $A'  \cap F \neq A$.
\end{obs}

\setcounter{thm}{38}

\begin{obs}\label{obs:deltaaddlittle}
Let $(F,A)$ be a graph structure pair, and let $(F',A') \in \F_{F,A}^-$. Then 
$$\Delta(F',A') \, \le \, (1 + \eps) \Delta(F,A).$$ 
\end{obs}

\setcounter{thm}{40}

The following lemma was stated but not proved in~\cite{FGMO}.
 
\begin{lemma}\label{obs:deltaFdeltaFv}
Let $(F,A)$ be a graph structure pair with $t_A(F) > 0$, and let $(F',A') \in \F_{F,A}^-$. If $(\log n)^{\gamma(F,A)} \le n^{v_A(F)+e(F)+1}$, then
$$\max\Big\{ \Delta(F',A') - \Delta(F,A), \sqrt{\Delta(F,A)} \Big\} \, \le \, \frac{\eps^2}{v_A(F)} \cdot \frac{\log n}{\log\log n}.$$
\end{lemma}

\begin{proof}
We have, as in the proof above, 
\begin{align*} 
\big| \Delta(F',A') - \Delta(F,A) \big| + \sqrt{\Delta(F,A)} & \, \le \, \big( \delta(F,A) + 2 \big)^C - \delta(F,A)^C + \delta(F,A)^{C/2} \\[+0.5ex]
& \, \le \, 4C \cdot \delta(F,A)^{C - 1} \, \le \, \frac{4C \cdot \Delta(F,A)}{\delta(F,A)} \, \le \, \frac{4}{C} \cdot \frac{\Delta(F,A)}{v_A(F)^2}.
\end{align*}
Note that $e(F) \le 2 v_A(F) - 1$, since $t_A(F) > 0$. Since $(\log n)^{\gamma(F,A)} \le n^{v_A(F)+e(F)+1}$, it follows that 
$$v_A(F) \, \ge \, \frac{v_A(F) + e(F)+1}{3} \, \ge \, \frac{\gamma(F,A)}{3} \cdot \frac{\log\log n}{\log n}  \, \ge \, \frac{\Delta(F,A)}{4} \cdot \frac{\log\log n}{\log n}.$$ 
It follows that
$$\big| \Delta(F',A') - \Delta(F,A) \big|  + \sqrt{\Delta(F,A)} \, \le \, \frac{16}{C \cdot v_A(F)} \cdot \frac{\log n}{\log \log n} \, \le \, \frac{\eps}{v_A(F)} \cdot \frac{\log n}{\log\log n},$$
as claimed.
\end{proof}

We next prove three new lemmas about the collection $\F_{F,A}^-$. Recall Table~4.1.

\vskip0.5cm
\begin{center}
\begin{tabular}{c|c|c|c|c|c|c}
& $(a)$ & $(b)$ & $(c)$ & $(d)$ & $(e)$ & $(f)$ \\[+0.6ex] 
\hline &&&&& \\[-2.1ex]
$v_{A^-}(F^-) - v_A(F)$ \, & \; 0 \; & \; $-1$ \; & \; 0 \; & \; $-2$ \; & \; $-1$ \; & \; $-1$ \;  \\[+0.6ex] 
\hline &&&&&& \\[-2.1ex]
%$e(F^-) - e(F)$ \, & \; 1 \; & $\in \{0, -1\}$ & \; 1 \; & $\in [-3,0]$ & $\in \{0, 1\}$ \\[+0.6ex] 
$e(F^-) - e(F)$ \, & \; 1 \; & $\le 0$ & \; 1 \; & $\le 0$ & $\le 0$ & $\le 1$ \\[+0.6ex] 
\hline &&&&& \\[-2.1ex]
$o(F^-) - o(F)$ \, & $0$ %$\in \{0, -1\}$ 
& $\le 0$ & \; 0 \; & $\le 0$  & $\le 0$ & $\le 0$ 
\end{tabular}\\\
\end{center}
\begin{center} 
Table~4.1
\end{center}
\vskip0.2cm

\begin{lemmaa}\label{lem:fFAtdelta}
Let $(F,A)$ be a graph structure pair with $t_A(F) > 0$, and let $(F',A') \in \F_{F,A}^-$. If $(\log n)^{\gamma(F,A)} \le n^{v_A(F)+e(F)+1}$ and $t \le \omega$, then
$$f_{F',A'}(t) \, \le \, \min\Big\{ n^\eps, \, (\log n)^{\eps \Delta(F,A)} \Big\} \cdot (\log n)^{\Delta(F,A) - 3\sqrt{\Delta(F,A)}}.$$
\end{lemmaa}

\begin{proof}
This follows from the definition~\eqref{def:fFAt}, using Observation~\ref{obs:deltaaddlittle} and Lemma~\ref{obs:deltaFdeltaFv}.
\end{proof}

\pagebreak

\begin{lemmaa}\label{NA'F'NAF}
Let $(F,A)$ be a graph structure pair with $t_A(F) > 0$. If $(F',A') \in \F_{F,A}^-$, then
\begin{equation}\label{eq:nafnaf}
\Nt_{A'}(F')(n^{3/2}) \, \le \, \frac{2 \cdot e^{4o(F)}}{\sqrt{n}} \cdot \Nt_{A}(F)(n^{3/2}).
\end{equation}
\end{lemmaa}

\begin{proof}
Note that $\Nt_A(F)(n^{3/2}) = 2^{e(F)} e^{-4o(F)} n^{v_A(F) - e(F)/2}$ for every pair $(F,A)$, since $t = 1$ when $m = n^{3/2}$, and that $e(F') \le e(F) + 1$. We are thus required to prove that
\begin{equation}\label{eq:vvee}
v_{A'}(F') - v_A(F) \, \le \, \frac{e(F') - e(F) - 1}{2}.
\end{equation}
In cases $(a)$ and $(c)$, this follows immediately from Table~4.1. In cases $(b)$, $(d)$, $(e)$ and $(f)$ we need the following extra observation: $t_A(F) > 0$ implies that $t_A^*(A' \cap F) > 0$. This gives 
$$e(F) - e(F') \, \le \, e(A' \cap F) \, \le \, 2 v_A(A' \cap F) - 1 \, = \, 2 \big( v_{A}(F) - v_{A'}(F') \big) - 1,$$
as required.
\end{proof}

\begin{lemmaa}\label{lem:F3omega}
Let $(F,A,\phi)$ be a graph structure triple, and let $(F',A') \in \F_{F,A}^-$. Suppose that $t_{A'}(F') < t \le \omega < t_A(F)$, and that $\phi' \colon A' \to V(G_m)$ is faithful at time $t$. If $\E(m)$ holds and $(\log n)^{\Delta(F,A)} \le n^{2e(F) + 2}$, then
\begin{equation}\label{eq:F3omega:main}
N_{\phi'}(F')(m) \, \le \, \frac{(\log n)^{\Delta(F,A) - 2\sqrt{\Delta(F,A)}}}{\sqrt{n}} \cdot \Nt_A(F)(n^{3/2}).
\end{equation}
\end{lemmaa}

\begin{proof}%[Proof of Lemma~\ref{lem:F3omega}]
Note that since $t_{A'}(F') < t \le \omega$, it follows that $t_{A'}(F') = 0$. There are two cases: either $(F',A')$ is balanced, or it is not. In the former case, since $\M(m)$ holds we have 
$$N_{\phi'}(F')(m) \, \le \, (\log n)^{\Delta(F'-v,A')} \, \le \, (\log n)^{\Delta(F,A) - 3\sqrt{\Delta(F,A)}},$$
and so in this case we are done. In the latter case, since $\E(m)$ holds we have
\begin{equation}\label{eq:proof:F3omega}
N_{\phi'}(F')(m) \, \le \, (\log n)^{\Delta(F',H',A')} \Nt_{H'}(F')(m^+),
\end{equation}
where $A' \subsetneq H' \subsetneq F'$ is minimal such that $t < t_{H'}(F')$, and $m^+ = \max\{m,n^{3/2}\}$. Now
$$(\log n)^{\Delta(F',H',A')} \Nt_{H'}(F')(m^+) \, \le \, (\log n)^{\Delta(F,A) - 3\sqrt{\Delta(F,A)}} \Nt_H(F)(m^+),$$
where $H = H' \cap F$, by Observations~\ref{obs:NH'F'NHF} and~\ref{obs:deltaF'H'A'deltaFA}. % and~\ref{obs:deltaaddlittle} and Lemma~\ref{obs:deltaF'A'Fv}. 
Next, note that, since $t_A(F) > 0$, we have $t^*_A(H) > 0$, and thus $e(H) \le 2v_A(H) - 1$. Hence
$$\Nt_A(H)(n^{3/2}) \,\ge\, 2^{e(F)} e^{-4 o(H)} \sqrt{n}.$$
Since $0 < t \le \omega$, it follows that
$$\Nt_H(F)(m^+) \,\le \, \omega^{e(F)} e^{4 o(F)} \cdot \Nt_H(F)(n^{3/2}) \, \le \, \frac{\omega^{e(F)} e^{8 o(F)}}{\sqrt{n}} \cdot \Nt_A(F)(n^{3/2}),$$ 
and thus
$$N_{\phi'}(F')(m) \, \le \, \frac{(\log n)^{\Delta(F,A) - 2\sqrt{\Delta(F,A)}}}{\sqrt{n}} \cdot \Nt_A(F)(n^{3/2}),$$
as claimed.
%The proof of~\eqref{eq:F3omega:moreover} is identical, except the second inequality in~\eqref{eq:proof:F3omega} is replaced by 
%$$(\log n)^{\Delta(F',H',A')} \Nt_{H'}(F')(m^+) \, \le \, \frac{(\log n)^{\Delta(F,A)}}{n^\eps} \Nt_H(F)(m^+),$$
%which follows by Lemma~\ref{obs:deltaF'A'Fv}.
\end{proof}

\section{Section~4.6: the land before time $t = \omega$}\label{LBTsec}

In this section we shall use the method of Bohman~\cite{Boh} to control the variables $N_\phi(F)$ up to time $t = \omega$, assuming that $t_A(F) > 0$. Recall once again that
$$f_{F,A}(t) \, = \, e^{C(o(F)+1)(t^2 + 1)} n^{-1/4} (\log n)^{\Delta(F,A) - \sqrt{\Delta(F,A)}}$$
for each graph structure pair $(F,A)$, that
$$f_y(t) \, = \, e^{Ct^2} n^{-1/4} (\log n)^{5/2} \qquad \text{and} \qquad f_x(t) \, = \, e^{-4t^2} f_y(t),$$
and that $\K^\E(m) = \Y(m) \cap \Z(m) \cap \Q(m)$. We shall prove the following proposition. 

\setcounter{subsubsection}{4}
\setcounter{subsection}{1}
\setcounter{thm}{54}
\setcounter{thma}{0}

\begin{prop}\label{NFa}
Let $(F,A)$ be a graph structure pair, and let $0 < t \le \omega < t_A(F)$. Then, with probability at least $1 - n^{- 3\log n}$, either $\big( \E(m - 1) \cap \M(m - 1) \cap \K^\E(m-1) \big)^c$ holds, or
\begin{equation}\label{eq:propNFa}
N_\phi(F)(m) \, \in \, \Nt_A(F)(m) \,\pm\, f_{F,A}(t) \cdot \Nt_A(F)(n^{3/2})
\end{equation}
for every $\phi \colon A \to V(G_m)$ which is faithful at time~$t$. 
\end{prop}

We remark, for future reference, that~\eqref{eq:propNFa} holds trivially if $(\log n)^{\Delta(F,A)} > n^{e(F)+2}$, since then $f_{F,A}(t) \cdot \Nt_A(F)(n^{3/2}) > n^{v_A(F)}$, and that we have $e(F) < 2v_A(F)$, since $t_A(F) > 0$.

The proof of Proposition~\ref{NFa} relies heavily on the fact that the event $\Y(m)$ gives us stronger bounds (in the range $t \le \omega$) on the variables $Y_e$ than those given by the event $\E(m)$. The bounds we need are essentially due to Bohman~\cite{Boh}, although he stated a slightly weaker version of the following proposition. For completeness, we shall sketch the proof.

Recall from~\cite[Section~5]{FGMO} the definition of the variables $Y_e^L(m)$. We shall need the following slight strengthening of~\cite[Proposition~\ref{lem:landbeforetime}]{FGMO} in~\cite[Section~5]{FGMO} in order to show that the event $\U(a)$ holds, where $a = \omega \cdot n^{3/2}$.

\begin{prop}[Bohman~\cite{Boh}]\label{lem:landbeforetime}
Let $m \le \omega \cdot n^{3/2}$. With probability at least $1 - n^{- C\log n}$, either $\big( \Z(m-1) \cap \Q(m-1) \big)^c$ holds, or 
\begin{equation}\label{eq:XandYlandbeforetime:strengthenedversion}
X_e(m) \in \Xt(m) \pm f_x(t) \Xt(n^{3/2}) \qquad \text{and} \qquad 2 \cdot Y^L_e(m) \in \Yt(m) \pm f_y(t) \Yt(n^{3/2})
\end{equation}
for every $e \in O(G_m)$.
\end{prop}

Recall the following martingale lemma from~\cite[Section~3]{FGMO}.

\setcounter{subsubsection}{3}
\setcounter{thm}{1}

\begin{lemma}\label{Bohmart}
Let $M$ be a super-martingale, defined on $[0,s]$, such that 
$$- \beta \le \Delta M(m) \le \alpha$$ 
for every $m \in [0,s - 1]$. Then, for every $0 \le x \le \min\{ \alpha, \beta \} \cdot s$,
$$\Pr\big( M(s) > M(0) + x \big) \, \le \, \exp\left( - \frac{x^2}{4 \alpha \beta s} \right).$$
\end{lemma}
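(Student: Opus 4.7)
The plan is to prove this by the standard Chernoff--Markov exponential-moment argument for a sum of bounded martingale differences, adapted to the asymmetric step bounds $-\beta \le \Delta M(m) \le \alpha$. First, for any $\lambda > 0$,
\begin{equation*}
\Pr\bigl( M(s) - M(0) > x \bigr) \,\le\, e^{-\lambda x} \cdot \Ex\bigl[ e^{\lambda(M(s) - M(0))} \bigr].
\end{equation*}
Conditioning sequentially on $G_0,G_1,\ldots,G_{s-1}$ and pulling out one factor at a time, the moment generating function factorises as
\begin{equation*}
\Ex\bigl[ e^{\lambda(M(s) - M(0))} \bigr] \,\le\, \prod_{m=0}^{s-1} \, \sup_{G_m} \Ex\bigl[ e^{\lambda \Delta M(m)} \,\big|\, G_m \bigr],
\end{equation*}
so it suffices to bound each conditional factor individually.

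The crux is the per-step bound. Fix $m$ and write $Y = \Delta M(m)$ conditioned on $G_m$, so $-\beta \le Y \le \alpha$ and $\Ex[Y] \le 0$ (by the super-martingale hypothesis). Convexity of $y \mapsto e^{\lambda y}$ and linear interpolation at the endpoints of $[-\beta,\alpha]$ give
\begin{equation*}
e^{\lambda y} \,\le\, \frac{\alpha - y}{\alpha+\beta} \, e^{-\lambda \beta} \,+\, \frac{y + \beta}{\alpha+\beta} \, e^{\lambda \alpha} \qquad \text{for all } y \in [-\beta,\alpha].
\end{equation*}
Taking expectations, and observing that the coefficient of $y$ on the right is non-negative for $\lambda \ge 0$ so that the hypothesis $\Ex[Y] \le 0$ can only decrease the upper bound, yields
\begin{equation*}
\Ex\bigl[ e^{\lambda Y} \bigr] \,\le\, \frac{\alpha \, e^{-\lambda\beta} + \beta \, e^{\lambda\alpha}}{\alpha+\beta}.
\end{equation*}
The key analytic claim, which I would verify by matching Taylor expansions and controlling the remainder, is that provided $\lambda \cdot \max\{\alpha,\beta\} \le 1/2$, say, this right-hand side is bounded by $\exp\bigl(\lambda^2 \alpha \beta\bigr)$.

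Combining the three displays above gives
\begin{equation*}
\Pr\bigl( M(s) - M(0) > x \bigr) \,\le\, \exp\bigl( -\lambda x + \lambda^2 \alpha \beta s \bigr),
\end{equation*}
and the optimal choice $\lambda = x/(2 \alpha \beta s)$ delivers the claimed exponent $-x^2/(4\alpha\beta s)$. The hypothesis $x \le \min\{\alpha,\beta\}\, s$ is exactly what is required to ensure $\lambda \cdot \max\{\alpha,\beta\} \le 1/2$, so that the per-step exponential-moment bound is valid. The one place where care is genuinely needed is the analytic inequality bounding $(\alpha e^{-\lambda\beta} + \beta e^{\lambda\alpha})/(\alpha+\beta)$ by $e^{\lambda^2 \alpha\beta}$: a naive Hoeffding-type bound would replace $\alpha\beta$ with $(\alpha+\beta)^2/4$, which is strictly weaker whenever $\alpha \ne \beta$, so extracting the sharper asymmetric constant really requires exploiting $\Ex[Y] \le 0$ rather than merely the range bounds on $Y$.
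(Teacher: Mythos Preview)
The paper does not prove this lemma; it is simply stated as the martingale inequality from~\cite{Boh}. Your exponential-moment argument is the standard proof and is correct, including your identification of the hypothesis $x \le \min\{\alpha,\beta\}\,s$ as precisely what is needed to ensure $\lambda\max\{\alpha,\beta\}\le 1/2$ at the optimizing choice $\lambda = x/(2\alpha\beta s)$, so that the per-step bound $(\alpha e^{-\lambda\beta}+\beta e^{\lambda\alpha})/(\alpha+\beta)\le e^{\lambda^2\alpha\beta}$ applies.
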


\begin{proof}[Proof of Proposition~\ref{lem:landbeforetime}]
Consider the event that $m_0 \le \omega \cdot n^{3/2}$ is the minimal value of $m$ such that the event $\Z(m) \cap \Q(m)$ holds, and moreover either
$$\X(m)^c \textup{ holds,} \qquad \textup{or} \qquad 2 \cdot Y^L_e(m) \not\in \Yt(m) \pm f_y(t) \Yt(n^{3/2}).$$ 
We shall consider the two subcases: $\X(m_0)$ holds, or it does not, separately.
%Note first that if $\X(m_0)^c$ holds and $m_0 \le n^{11/8}$ then we are done by Corollary~\ref{cor:smalltimes:X}, while if $\X(m_0)$ holds and $m_0 \le \eps \cdot n^{5/4} (\log n)^{5/2}$, then we are done by Proposition~\ref{prop:smalltimes:Y}. Let us therefore assume that either $\X(m_0)^c$ holds and $m_0 > n^{11/8}$, or $\X(m_0)$ holds and $m_0 > \eps \cdot n^{5/4} (\log n)^{5/2}$.

Suppose first that $\X(m_0)^c$ holds, and note that $\Xt(m) = \Theta \big( e^{-8t^2} \cdot \Xt(n^{3/2}) \big)$ and $\Yt(m) =  \Theta\big( t \cdot e^{-4t^2} \cdot \Yt(n^{3/2}) \big)$. By~\eqref{eq:Xeq}, and using the event $\X(m) \cap \Y(m) \cap \Q(m)$, which holds for all $m < m_0$, we have
\begin{align*}
\Ex\big[ \Delta X_e(m) \big] & \, = \, - \frac{2}{Q(m)} \sum_{f \in X_e(m)} Y_f(m) \\
&  \, \in \, - \frac{2}{\Qt(m)} \Big( 1 \pm \eps \cdot f_y(t) e^{4t^2} \Big) \Big( \Xt(m) \pm f_x(t) \Xt(n^{3/2}) \Big) \Big( \Yt(m) \pm f_y(t) \Yt(n^{3/2}) \Big).
\end{align*}
Multiplying out the brackets, we obtain
$$\Xt(m) \Yt(m) \, \pm \, O \Big( \eps t e^{-8t^2} f_y(t) + t e^{-4t^2} f_x(t) + e^{-8t^2} f_y(t)  + f_x(t) f_y(t) \Big) \Xt(n^{3/2}) \Yt(n^{3/2}),$$
and hence, recalling that $f_y(t) = e^{4t^2} f_x(t) \ll e^{-8\omega^2}$, we obtain
$$\Ex\big[ \Delta X_e(m) \big] \, \in \, -  \frac{16t }{n^{3/2}} \cdot \Xt(m) \,\pm \, \sqrt{C} \cdot \bigg( \frac{t + 1}{n^{3/2}} \bigg) \cdot f_x(t) \Xt(n^{3/2}).$$
It follows that
$$M_{X_e}^\pm(m') \, = \sum_{m=0}^{m'-1} \bigg[ \Delta X_e(m) + \frac{16t }{n^{3/2}} \cdot \Xt(m) \, \pm \, \sqrt{C}  \cdot \bigg( \frac{t + 1}{n^{3/2}} \bigg) \cdot f_x(t) \Xt(n^{3/2}) \bigg].$$
is a super-/sub-martingale pair on $0 \le m' < m_0$. Moreover, 
%as in the proof of Lemma~\ref{Xe_alpha}, 
the number of open triangles which are destroyed in step $m+1$ of the triangle-free process is at most 
%$$\max_{\phi} N_\phi(W)(m) \, \le \, \max\big\{ 4t e^{-8t^2} \sqrt{n}, (\log n)^C \big\}$$
$$\max_{e \in O(G_m)} Y_e(m) \, \le \, \sqrt{n}$$
since $\Y(m)$ holds for every $m < m_0$. It follows that $-\sqrt{n} \le \Delta X_e(m) \le 0$, and hence
\begin{equation}\label{eq:MXbounds}
- 2 \sqrt{n} \, \le \, \Delta M^\pm_{X_e}(m) \, \le \, \frac{C}{\sqrt{n}},
\end{equation}
for every $e \in O(G_m)$, while $\Y(m)$ holds. Now, set
$$\alpha \, = \, \frac{C}{\sqrt{n}} + \frac{f_x(\omega) \Xt(n^{3/2})}{m_0} \qquad \textup{and} \qquad \beta \, = \, 2 \sqrt{n} + \frac{f_x(\omega) \Xt(n^{3/2})}{m_0},$$
and observe that, since $f_x(t) \Xt(n^{3/2}) \ge n^{3/4} (\log n)^{5/2}$ and we may assume\footnote{If $m_0 \le n^{1/4}$, then~\eqref{eq:MXbounds} implies that $X_e(m_0) > n - O(n^{3/4})$, and hence $\X(m_0) \cup \Y(m_0-1)^c$ holds.} that $m_0 \ge n^{1/4}$, 
$$\alpha \cdot \beta \cdot m_0 \, \le \, \frac{\big( f_x(t) \Xt(n^{3/2}) \big)^2}{(\log n)^4}.$$ %2C \cdot m_0 + 2\sqrt{n} \cdot f_x(t) \Xt(n^{3/2}) \, \le \, ,$$
Hence, applying Lemma~\ref{Bohmart}, we obtain
%Since $m_0 > n^{11/4} \gg \sqrt{n} \cdot f_x(t) \Xt(n^{3/2})$ and $f_x(t)^2 \sqrt{n} \ge (\log n)^4$, it follows by Lemma~\ref{Bohmart} that 
$$\Pr\bigg( \bigg( M_{X_e}^-(m_0) > \frac{1}{2} f_x(t) \Xt(n^{3/2}) \bigg) \cap \K(m-1) \bigg) \, \le \, e^{-(\log n)^3},$$
and similarly for $M_{X_e}^+$. Finally, noting that 
$$\frac{1}{n^{3/2}} \sum_{m=0}^{m'-1} 16t  \cdot \Xt(m) \in \big( 1 - e^{-8t'^2} \big) n \pm 1 \quad \textup{and} \quad \frac{1}{n^{3/2}} \sum_{m=0}^{m'-1} (t + 1) f_x(t) \le \frac{1}{C} \cdot f_x(t'),$$
it follows that, with probability at least $1 - n^{-\log n}$, we have
\begin{align*}
X_e(m') & \, \in \, n \,-\, \sum_{m=0}^{m'-1} \bigg[ \frac{16t }{n^{3/2}} \cdot \Xt(m) \pm \sqrt{C}  \cdot \bigg( \frac{t + 1}{n^{3/2}} \bigg) \cdot f_x(t) \Xt(n^{3/2}) \bigg] \,\pm \, \frac{1}{2} f_x(t') \Xt(n^{3/2})\\
& \, \in \, \Xt(m') \, \pm \, f_x(t') \Xt(n^{3/2})
\end{align*}
for every $e \in O(G_m)$, as required.

The proof in the case that $\X(m_0)$ holds is similar. The first step is to break $\Delta Y^L_e(m)$ into two pieces, $C_e^Y(m)$ and $D_e^Y(m)$, where $C_e^Y(m)$ denotes the number of edges of $Y^L_e(m+1)$ which were created in step $m+1$ of the triangle-free process, and $D_e^Y(m)$ denotes the number of edges of $Y^L_e(m)$ which were closed in that step. It follows immediately from the definition that
\begin{equation}\label{eq:N=C+D}
Y^L_e(m') \, = \, \sum_{m = 0}^{m'-1} \Big( C_e^Y(m) - D_e^Y(m) \Big).
\end{equation}
We claim first that
\begin{align*}
\Ex\big[ C^Y_e(m) \,|\, G_m \big] & \, = \, \frac{1}{2} \cdot \frac{X_e(m)}{Q(m)} \, \in \, \frac{\big( 1 \pm O(\eps) e^{4t^2} f_y(t)  \big) \big( \Xt(m) \pm f_x(t) \Xt(n^{3/2}) \big)}{2 \cdot \Qt(m)} \\
& \, \subseteq \, \frac{2e^{-4t^2}n \pm \big( e^{4t^2} f_x(t) + O(\eps) f_y(t) \big) \Xt(n^{3/2})}{n^2} \, \subseteq \, \frac{2e^{-4t^2} \pm f_y(t)}{n},
\end{align*}
where we used the event $\X(m) \cap \Q(m)$, and the facts that $f_y(t) = e^{4t^2} f_x(t)$, and that $\Xt(m) = \Theta \big( e^{-8t^2} \cdot \Xt(n^{3/2}) \big) \gg f_x(t) \Xt(n^{3/2})$ for $t \le \omega$. Similarly, we have
\begin{align*}
\Ex\big[ D^Y_e(m) \,|\, G_m \big] & \, = \, \frac{1}{Q(m)} \sum_{f \in Y^L_e(m)} Y_f(m) \, \in \, \frac{\big( 1 \pm O(\eps) e^{4t^2} f_y(t)  \big) \big( \Yt(m) \pm f_y(t) \Yt(n^{3/2}) \big)^2}{2 \cdot \Qt(m)} \\
& \, \subseteq \, \frac{\Yt(m)^2}{2 \cdot \Qt(m)} \, \pm \, O\Big( \eps \cdot t^2 e^{-4t^2} f_y(t) + t e^{-4t^2} f_y(t) + f_y(t)^2 \Big) \frac{\Yt(n^{3/2})^2}{\Qt(m)}\\
& \, \subseteq \, \frac{16t^2e^{-4t^2} \pm \big( t^2 + O(t) \big) f_y(t)}{n},
\end{align*}
using the event $\Y(m) \cap \Q(m)$, and the fact that $\Yt(m) =  \Theta\big( t \cdot e^{-4t^2} \cdot \Yt(n^{3/2}) \big)$. Hence
$$M^\pm_{C^Y_e}(m') \, = \sum_{m=0}^{m'-1} \bigg[ C^Y_e(m) - \frac{2e^{-4t^2} \pm f_y(t)}{n} \bigg].$$
and
$$M_{D^Y_e}^\pm(m') \, = \sum_{m=0}^{m'-1} \bigg[ D^Y_e(m)  - \frac{16t^2e^{-4t^2} \pm \big( t^2 + O(t) \big) f_y(t)}{n} \bigg].$$
are both super-/sub-martingale pairs while the event $\X(m) \cap \Y(m) \cap \Q(m)$ holds. %on $0 \le m' < m_0$. 

Next, observe that 
$$0 \le C_e^Y(m) \le 1 \qquad  \text{and} \qquad 0 \le D_e^Y(m) \le (\log n)^2,$$ 
while $\Z(m)$ holds, and hence % for every $m < m_0$, and hence 
$$- \frac{C}{n} \, \le \, \Delta M^\pm_{C^Y_e}(m) \, \le \, 2 \qquad \text{and} \qquad - \frac{C}{n} \, \le \, \Delta M^\pm_{D^Y_e}(m) \, \le \, 2(\log n)^2.$$
Now, set
$$\alpha \, = \, 2(\log n)^2 + \frac{f_y(\omega) \Yt(n^{3/2})}{m_0} \qquad \textup{and} \qquad \beta \, = \, \frac{C}{n} + \frac{f_y(\omega) \Yt(n^{3/2})}{m_0},$$
and observe that, since $f_y(t) \Yt(n^{3/2}) \ge n^{1/4} (\log n)^{5/2}$ and we may assume that $m_0 \ge n^{1/4}$, 
$$\alpha \cdot \beta \cdot m_0 \, \le \, \frac{\big( f_y(t) \Yt(n^{3/2}) \big)^2}{\omega \cdot (\log n)^2}.$$ 
Hence, applying Lemma~\ref{Bohmart}, we obtain
$$\Pr\bigg( \bigg( M_{D^Y_e}^-(m) > \frac{1}{4} f_y(t) \Yt(n^{3/2}) \bigg) \cap \K(m-1) \bigg) \, \le \, n^{-2 C \log n},$$
and similarly for $M_{C^Y_e}^+$, $M_{C^Y_e}^-$ and $M_{D^Y_e}^+$. 

%Since $m_0 > \eps \cdot n^{5/4} (\log n)^{5/2} > \frac{n}{C} \cdot f_y(t) \Yt(n^{3/2})$ and $f_y(t)^2 \sqrt{n} \ge (\log n)^5$,  it follows by Lemma~\ref{Bohmart} that
%$$\Pr\bigg( M_{D^Y_e}^-(m) > \frac{1}{4} f_y(t) \Yt(n^{3/2}) \bigg) \, \le \, \exp\left( - \frac{ f_y(t)^2 \sqrt{n} }{ C^2 \cdot \omega \cdot (\log n)^2 } \right) \, \le \, n^{-2 C \log n},$$
%and similarly for $M_{C^Y_e}^+$, $M_{C^Y_e}^-$ and $M_{D^Y_e}^+$. 

Finally, note that $\frac{\textup{d}}{\textup{d}t}\, \Yt(m) = \big( 4 - 32t^2 \big) e^{-4t^2} \sqrt{n}$ and $\frac{1}{n^{3/2}} \sum_{m=0}^{m'-1} (t^2+1) f_y(t) \le \frac{1}{C} \cdot f_y(t')$. Hence, with probability at least $1 - n^{- 2C\log n}$, we have
\begin{align*}
2 \cdot Y_e(m') &  \, = \, 2 \cdot \sum_{m = 0}^{m'-1} \Big( C_e^Y(m) - D_e^Y(m) \Big)\\
& \, \in \, \sum_{m=0}^{m'-1} \bigg[  \frac{(4 - 32t^2) e^{-4t^2}}{n} \pm \frac{O(t^2 + 1) f_y(t)}{n} \bigg] \,\pm \, \frac{1}{2} f_y(t') \Yt(n^{3/2})\\
& \, \in \, \Yt(m') \, \pm \, f_y(t') \Yt(n^{3/2}),
\end{align*}
as required. Putting the pieces together, it follows that 
$$\Pr\bigg( \bigcup_{t \le \omega} \big( \X(m) \cap \Y(m) \big)^c \cap \Z(m) \cap \Q(m) \bigg)  \, \le \, n^{- C \log n},$$ 
and moreover the same holds if we replace $\Y(m)$ by the event in~\eqref{eq:XandYlandbeforetime:strengthenedversion}.
\end{proof}

\setcounter{subsection}{0}

\subsection{Proof of Proposition~\ref{NFa}}

\setcounter{subsection}{1}

Returning to the proof of Proposition~\ref{NFa}, let us fix a graph structure triple $(F,A,\phi)$ with $t_A(F) > 0$. The first step in the proof is to break up $N_\phi(F)$ as follows: 
\begin{equation}\label{eq:N=C+D}
N_\phi(F)(m') \, = \, \sum_{m = 0}^{m'-1} \Big( C_\phi(F)(m) - D_\phi(F)(m) \Big),
\end{equation}
where $C_\phi(F)(m)$ denotes the number of copies of $F$ rooted at $\phi(A)$ which are created at step $m+1$ of the triangle-free process, and $D_\phi(F)(m)$ denotes the number of such copies which are destroyed in that step. Let's deal first with $C_\phi(F)$. It was proved in~\cite[Lemma~4.21]{FGMO} that 
\begin{equation}\label{ExdeltaC}
\Ex\big[ C_\phi(F)(m) \,|\, G_m \big] \, \in \, \frac{1}{Q(m)} \sum_{F^o \in \F^o_F} N_{\phi}(F^o)(m)
\end{equation}
for every $\phi$ which is faithful at time $t$. Using the events $\E(m)$ and $\Q(m)$, we easily obtain a super-/sub-martingale pair.

\setcounter{subsubsection}{4}
\setcounter{thm}{57}

\begin{lemma}\label{Cmart}
Let $(F,A,\phi)$ be a graph structure triple, %with $e(F) > 0$, 
and suppose that $\phi$ is faithful at time~$t$, where $0 < t \le \omega < t_A(F)$. If $\E(m) \cap \Z(m) \cap \Q(m)$ holds, then
\begin{equation}\label{Cmarteq}
\Ex\big[ C_\phi(F)(m) \,|\, G_m \big] - \frac{e(F) \cdot \Nt_A(F)(m)}{t \cdot n^{3/2}} \, \in \, \pm \, \frac{f_{F,A}(t) \cdot \Nt_A(F)(n^{3/2})}{n^{3/2}}.
\end{equation}
\end{lemma}

\begin{proof}
This follows from~\eqref{ExdeltaC} via a straightforward calculation. Note first that the lemma holds trivially if $e(F) = 0$, since then $C_\phi(F)(m) = 0$ for every $m \in \N$. So assume that $e(F) > 0$, and recall that the event $\Q(m)$ implies that $Q(m) \in \big( 1 \pm \eps \cdot e^{4t^2} f_y(t)  \big)\Qt(m)$, and that the event $\E(m)$ (and the fact that $t \le \omega < t_A(F) \le t_A(F^o)$, by Observation~\ref{obs:F-}) implies that 
$$N_{\phi}(F^o)(m) \, \in \, \Nt_A(F^o)(m) \,\pm\, f_{F^o,A}(t) \cdot \Nt_A(F^o)(n^{3/2}).$$ 
Moreover $2t e^{4t^2} \cdot \Nt_A(F^o)(m) = \sqrt{n} \cdot \Nt_A(F)(m)$ and $|\F^o_F| = e(F)$,  by Observation~\ref{obs:NtF-}, and hence, by~\eqref{ExdeltaC},
$$\Ex\big[ C_\phi(F)(m) \,|\, G_m \big] \in \frac{1 \pm e^{4t^2} f_y(t)}{\Qt(m)} \bigg( \Nt_A(F)(m) \frac{e(F) \sqrt{n}}{2t e^{4t^2}} \,\pm\, \sum_{F^o \in \F^o_F} f_{F^o,A}(t) \Nt_A(F)(n^{3/2}) \cdot \frac{\sqrt{n}}{2 e^{4}} \bigg).$$
Thus the left-hand side of~\eqref{Cmarteq} lies in the interval 
$$\pm \frac{1}{\Qt(m)} \bigg( \frac{e(F) \sqrt{n}}{2t} \cdot f_y(t) \Nt_A(F)(m) +  \sum_{F^o \in \F^o_F} f_{F^o,A}(t) \Nt_A(F)(n^{3/2}) \cdot \frac{\sqrt{n}}{e^{4}} \bigg).$$
Since $\Nt_A(F)(m) = t^{e(F)} e^{-4 o(F) (t^2 - 1)} \Nt_A(F)(n^{3/2})$, this is in turn contained in
$$\pm \frac{\Nt_A(F)(n^{3/2})}{n^{3/2}} \Big( e(F) \cdot t^{e(F)-1} e^{4 o(F) + 4t^2} f_y(t)  +  e^{4t^2} f_{F^o,A}(t) \Big) \, \subseteq \, \pm \frac{f_{F,A}(t) \cdot \Nt_A(F)(n^{3/2})}{n^{3/2}},$$
as required, where the final inequality follows since $e(F) > 0$ and 
%$$(\log n)^{e(F) + o(F)} f_y(t) \ll f_{F,A}(t) \qquad \text{and} \qquad f_{F^o,A}(t) \cdot \log n \ll f_{F,A}(t)$$ 
$$(\log n)^{e(F) + o(F)} \big( f_y(t) + f_{F^o,A}(t) \big) \ll f_{F,A}(t)$$ 
by Observation~\ref{obs:gfat}. 
%Now, observe that since $e(F) \ge 1$, we have 
%$$\Nt_A(F)(m) \, = \, t^{e(F)} e^{-4 o(F) (t^2 - 1)} \cdot \Nt_A(F)(n^{3/2}) \, < \, t \cdot e^{4(t + 1)(e(F) + o(F))} \cdot \Nt_A(F)(n^{3/2}),$$ 
%and note that 
%$$e(F) \cdot \big( e^{4(t + 1)(e(F) + o(F))} g_q(t) + e^{4t^2} f_{F^o,A}(t) \big) \ll f_{F,A}(t),$$
%since $e(F^o) = e(F) - 1$ and $o(F^o) = o(F) + 1$. The inequalities in~\eqref{Cmarteq} now follow easily. 
\end{proof}

We shall also need a corresponding lemma for the variables $D_\phi(F)$. It was proved in~\cite[Lemma~4.21]{FGMO} that, if $\Z(m)$ holds and $\phi$ is faithful at time $t$, then 
\begin{equation}\label{ExdeltaD}
\Ex\big[ D_\phi(F)(m) \,|\, G_m \big] \, \in \, \frac{1}{Q(m)} \bigg( \sum_{F^* \in N_\phi(F)} \sum_{f \in O(F^*)} Y_f(m) \, \pm \, o(F)^2 (\log n)^2 N_\phi(F) \bigg).
\end{equation}
We shall use the events $\E(m)$, $\Y(m)$ and $\Q(m)$ to obtain a super-/sub-martingale pair.

\begin{lemma}\label{Dmart}
Let $(F,A,\phi)$ be a graph structure triple, and suppose that $0 < t \le \omega < t_A(F)$, and that $\phi$ is faithful at time~$t$. If $\E(m) \cap \Y(m) \cap \Z(m) \cap \Q(m)$ holds, then
\begin{equation}\label{Dmarteq}
\Ex\big[ D_\phi(F)(m) \,|\, G_m \big] - \frac{8t \cdot o(F) \cdot \Nt_A(F)(m)}{n^{3/2}} \, \in \, \pm \, \frac{C \cdot o(F) \cdot ( t + 1 )}{n^{3/2}} \cdot  f_{F,A}(t) \Nt_A(F)(n^{3/2}).
\end{equation}
\end{lemma}

\begin{proof}
Note first that if $o(F) = 0$ then the result holds trivially, since in that case copies of $F$ cannot be destroyed, and so $D_\phi(F)(m) = 0$ for every $m \in \N$. So assume that $o(F) > 0$ and recall that, since $\E(m) \cap \Y(m) \cap \Q(m)$ holds and $t \le \omega < t_A(F)$, we have 
$$Q(m) \in \big( 1 \pm \eps \cdot e^{4t^2} f_y(t)  \big)\Qt(m), \qquad Y_f(m) \in \Yt(m) \pm f_y(t) \Yt(n^{3/2})$$ 
and
$$N_{\phi}(F)(m) \, \in \, \Nt_A(F)(m) \pm f_{F,A}(t) \cdot \Nt_A(F)(n^{3/2}).$$ 
By~\eqref{ExdeltaD}, it follows that $\Ex\big[ D_\phi(F)(m) \,|\, G_m \big]$ is contained in the interval
$$\frac{1 \pm \eps \cdot e^{4t^2} f_y(t)}{\Qt(m)} \cdot o(F) \cdot \Big(  \Nt_A(F)(m) \pm f_{F,A}(t) \Nt_A(F)(n^{3/2}) \Big) \Big( \Yt(m) \pm 2 f_y(t) \Yt(n^{3/2}) \Big),$$
where we used our assumption\footnote{Recall that Proposition~\ref{NFa} is trivial if $o(F) \ge (\log n)^{1/5}$.} that $o(F) \le n^{o(1)} \le n^{1/4} \le f_y(t) \Yt(n^{3/2})$ to absorb the final error term in~\eqref{ExdeltaD}. It follows that the left-hand side of~\eqref{Dmarteq} is contained in
$$\pm \, \frac{o(F) \cdot \Yt(n^{3/2})}{\Qt(m)} \bigg( \Big( t \cdot f_y(t) + 2 f_y(t) \Big) \Nt_A(F)(m) + \Big(  O(t) \cdot e^{-4t^2} + 2 f_y(t) \Big) f_{F,A}(t) \Nt_A(F)(n^{3/2}) \bigg),$$
since $\Yt(m) = O(t) \cdot e^{-4t^2} \cdot \Yt(n^{3/2})$. This in turn is a subset of
$$\pm \, \frac{C \cdot o(F)}{n^{3/2}} \cdot \Big( t^{e(F)} (t +1) e^{4 o(F)} f_y(t) \, + \, \big( t + e^{4t^2} f_y(t) \big) f_{F,A}(t)  \Big) \cdot \Nt_A(F)(n^{3/2}).$$
since $\Nt_A(F)(m) = \Nt_A(F)(n^{3/2}) \cdot t^{e(F)} e^{-4 o(F) (t^2 - 1)}$ and $o(F) > 0$. 

Finally, note that $e^{4t^2} f_y(t) \ll 1$ for every $t \le \omega$, and recall that $(\log n)^{e(F) + o(F)} f_y(t) \ll f_{F,A}(t)$, by Observation~\ref{obs:gfat}. It follows that
$$\Ex\big[ D_\phi(F)(m) \,|\, G_m \big] - \frac{8t \cdot o(F) \cdot \Nt_A(F)(m)}{n^{3/2}} \, \in \, \pm \, \frac{C \cdot o(F) \cdot ( t + 1 )}{n^{3/2}} \cdot  f_{F,A}(t) \Nt_A(F)(n^{3/2}),$$
as required.
\end{proof}

In order to use Lemma~\ref{Bohmart}, we shall need bounds on $C_\phi(F)(m)$ and $D_\phi(F)(m)$ which hold deterministically for all $0 < t \le \omega$. We shall prove the following bounds. 

\begin{lemma}\label{Cbound}
Let $(F,A,\phi)$ be a graph structure triple, and suppose that $0 < t \le \omega < t_A(F)$, and that $\phi$ is faithful at time~$t$. If $\E(m) \cap \M(m)$ holds, then 
\begin{equation}\label{Cboundeq}
0 \, \le \, C_\phi(F)(m) \, \le \, \min\Big\{ n^\eps, \, (\log n)^{\eps \Delta(F,A)} \Big\} \cdot \frac{(\log n)^{\Delta(F,A) - 2\sqrt{\Delta(F,A)}}}{\sqrt{n}} \cdot \Nt_A(F)(n^{3/2}).
\end{equation}
Moreover, the same bounds also hold for $D_\phi(F)(m)$.
\end{lemma}

The required bounds on $C_\phi(F)(m)$ follow easily from Lemmas~\ref{lem:fFAtdelta},~\ref{NA'F'NAF} and~\ref{lem:F3omega}.

\begin{proof}[Proof of Lemma~\ref{Cbound}]
%It is clear that if $v_A(F) = 1$ then at most one copy of $F$ rooted at $\phi(A)$ can be created in each step, so assume that $v_A(F) \ge 2$. 
Recall first that, by~\cite[Lemma~4.28]{FGMO}, we have
$$C_\phi(F)(m) \, \le \, \sum_{(F',A') \in \F_{F,A}^+} \max_{\phi' : \, A' \to V(G_m)} N_{\phi'}(F')(m).$$
There are two cases to consider: $t \le \omega < t_{A'}(F')$ and $t > t_{A'}(F') = 0$. Set 
$$\Upsilon(F,A) \, := \, \min\Big\{ n, \, (\log n)^{\Delta(F,A)} \Big\},$$
and recall that, by Lemmas~\ref{lem:fFAtdelta} and~\ref{NA'F'NAF}, we have 
$$f_{F',A'}(t) \le \Upsilon(F,A)^\eps \cdot (\log n)^{\Delta(F,A) - 3\sqrt{\Delta(F,A)}} \quad \textup{and} \quad \Nt_{A'}(F')(n^{3/2}) \le \frac{e^{4o(F)+1}}{\sqrt{n}} \cdot \Nt_{A}(F)(n^{3/2}).$$ 
Together with the event $\E(m)$, this implies that 
\begin{align*}
N_{\phi'}(F')(m) & \, \le \, \Nt_{A'}(F')(m) \, + \, f_{F',A'}(t) \cdot \Nt_{A'}(F')(n^{3/2}) \\
& \, \le \, \Upsilon(F,A)^\eps \cdot \frac{(\log n)^{\Delta(F,A) - 2\sqrt{\Delta(F,A)}}}{\sqrt{n}} \cdot \Nt_A(F)(n^{3/2}).
\end{align*}
%\begin{multline*}
%N_{\phi'}(F')(m) \, \le \, \Nt_{A'}(F')(m) \, + \, f_{F',A'}(t) \cdot \Nt_{A'}(F')(n^{3/2}) \\
%\, \le \, \Big( \omega^{e(F)} e^{4o(F)} + (\log n)^{(1-\eps)\Delta(F,A)} \Big) \cdot \Nt_{A'}(F')(n^{3/2}) \, \le \, \frac{(\log n)^{\Delta(F,A)}}{\sqrt{n}} \cdot \Nt_A(F)(n^{3/2}).
%\end{multline*}
In the latter case, Lemma~\ref{lem:F3omega} gives us
$$N_{\phi'}(F')(m) \, \le \, \frac{(\log n)^{\Delta(F,A) - 2\sqrt{\Delta(F,A)}}}{\sqrt{n}} \cdot \Nt_A(F)(n^{3/2}).$$
Since $|\F_{F,A}^+| \le v_A(F)^2 \le (\log n)^{2/5}$, the upper bound in~\eqref{Cboundeq} follows. The proof of the bounds on $D_\phi(F)(m)$ is identical, using~\cite[Lemma~4.30]{FGMO}.
\end{proof}

We can now apply Lemma~\ref{Bohmart} to the variables $C_\phi(F)$ and $D_\phi(F)$. 

\begin{proof}[Proof of Proposition~\ref{NFa}]
For each $m \in [m^*]$, set $\K(m) = \E(m) \cap \M(m) \cap \K^\E(m)$. We shall bound, for each $m_0 \le \omega \cdot n^{3/2}$, the probability that $m_0$ is the minimal $m \in \N$ such that $\K(m-1)$ holds, and
$$N_\phi(F)(m) \, \not\in \, \Nt_A(F)(m) \,\pm\, f_{F,A}(t) \cdot \Nt_A(F)(n^{3/2})$$
for some $\phi$ which is faithful at time~$t = m \cdot n^{-3/2}$. Note that the event in the statement of the proposition implies that this event holds for some $m \le \omega \cdot n^{3/2}$.

Fix $m_0 \le \omega \cdot n^{3/2}$, and for each $m' \le m_0$, define random variables
$$M_C^\pm(m') = \sum_{m=0}^{m'-1} \bigg[ C_\phi(F)(m) - \frac{e(F) \cdot \Nt_A(F)(m)}{t \cdot n^{3/2}} \pm \, \frac{f_{F,A}(t) \cdot  \Nt_A(F)(n^{3/2})}{n^{3/2}} \bigg]$$
and
$$M_D^\pm(m') = \sum_{m=0}^{m'-1} \bigg[ D_\phi(F)(m) \,-\, \frac{8t \cdot o(F) \cdot \Nt_A(F)(m)}{n^{3/2}} \, \pm \, \frac{C \cdot o(F) \cdot ( t + 1 )}{n^{3/2}} \,  f_{F,A}(t) \Nt_A(F)(n^{3/2}) \bigg].$$
It follows from Lemmas~\ref{Cmart} and~\ref{Dmart} that, while the event $\E(m) \cap \Y(m) \cap \Z(m) \cap \Q(m)$ holds, $M_C^\pm$ and $M_D^\pm$ are both super-/sub-martingale pairs. Now, set 
$$\alpha \, = \, \bigg( \Upsilon(F,A)^\eps \cdot \frac{(\log n)^{\Delta(F,A) - 2\sqrt{\Delta(F,A)}}}{\sqrt{n} } + \frac{f_{F,A}(\omega)}{m_0} \bigg) \cdot \Nt_A(F)(n^{3/2})$$
where $\Upsilon(F,A) = \min\big\{ n, \, (\log n)^{\Delta(F,A)} \big\}$, and
$$\beta \, = \, \bigg( \frac{(\log n)^{e(F) + o(F)}}{n^{3/2}} + \frac{f_{F,A}(\omega)}{m_0} \bigg) \cdot \Nt_A(F)(n^{3/2}).$$
By Lemma~\ref{Cbound}, we have
$$- \beta \, \le \, \Delta M^\pm_C(m) + \Delta M^\pm_D(m) \, \le \, \alpha$$
while $\E(m) \cap \M(m)$ holds. Moreover, since $f_{F,A}(t_0) \ge n^{-1/4} (\log n)^{\Delta(F,A) - \sqrt{\Delta(F,A)}}$, and we may assume that $m_0 \ge n^\eps$, we have
$$\frac{\alpha \cdot \beta \cdot m_0}{\Nt_A(F)(n^{3/2})^2} \, \le \, \frac{f_{F,A}(t_0)^2}{(\log n)^4}.$$
Hence, by Lemma~\ref{Bohmart}, we obtain 
$$\Pr\bigg( \bigg( M_C^-(m_0) > \frac{1}{4} f_{F,A}(t_0) \Nt_A(F)(n^{3/2}) \bigg) \cap \K(m_0-1) \bigg) \, \le \, e^{-(\log n)^3},$$
and similarly for $M_C^+$, $M_D^-$ and $M_D^+$.

To complete the proof, note that 
$$\sum_{m=0}^{m'-1} C( t + 1) (o(F) + 1) \cdot f_{F,A}(t) \, \le \, \frac{n^{3/2}}{C} \cdot f_{F,A}(t').$$
Therefore, if
$$\max\Big\{ \min\big\{ |M_C^+(m)|, |M_C^-(m)| \big\}, \min\big\{ |M_D^+(m)|, |M_D^-(m)| \big\} \Big\} \, \le \, \frac{1}{4} f_{F,A}(t) \Nt_A(F)(n^{3/2})$$
then 
$$\sum_{m = 0}^{m'-1} \Big( C_\phi(F)(m) - D_\phi(F)(m) \Big) \in \sum_{m=0}^{m'-1} \bigg( \frac{e(F)}{t} - 8t \cdot o(F)  \bigg) \frac{\Nt_A(F)(m)}{n^{3/2}} \pm f_{F,A}(t') \Nt_A(F)(n^{3/2}).$$
Finally, note that $\frac{\textup{d}}{\textup{d}t}\, \Nt_A(F)(m) = \big( \frac{e(F)}{t} - 8t \cdot o(F) \big) \Nt_A(F)(m)$, and that the number of choices for $\phi$ is negligible, since $|A| \le (\log n)^{1/5}$. Hence, with probability at least $1 - n^{-3\log n}$, we have
$$N_\phi(F)(m') \, = \, \sum_{m = 0}^{m'-1} \Big( C_\phi(F)(m) - D_\phi(F)(m) \Big) \, \in \, \Nt_A(F)(m') \,\pm\, f_{F,A}(t') \Nt_A(F)(n^{3/2}),$$
as required.
\end{proof}

\begin{comment}
%%%%%%%%%%%%%%%%%%%%%%

Note that the number of choices for $\phi$ is negligible, since $|A| \le (\log n)^{1/5}$, so we may fix $\phi \colon A \to V(G_m)$. We shall bound, for each $m_0 \le \omega \cdot n^{3/2}$, the probability that $m_0$ is minimal such that $\E(m_0 - 1) \cap \Y(m_0) \cap \Z(m_0) \cap \Q(m_0)$ holds, and that
$$N_\phi(F)(m_0) \, \not\in \, \Nt_A(F)(m_0) \,\pm\, f_{F,A}(t_0) \cdot \Nt_A(F)(n^{3/2})$$
for some $\phi$ which is faithful at time~$t_0 = m_0 \cdot n^{-3/2}$. Note that the event in the statement of the proposition implies that this event holds for some $m_0 \le \omega \cdot n^{3/2}$. Observe also that if $e(F) = 0$ and $m_0 \le n^{11/8}$ then we are done by Proposition~\ref{prop:veryverysmalltimes:noedges}, and that if $e(F) > 0$ and $m_0 \le n^{3/2 - 1/4e(F)} (\log n)^{3 \gamma(F,A) / 4 e(F) }$, then we are done by Corollary~\ref{cor:JOR}. Let us therefore assume that
$$m_0 \, \ge \, \min\bigg\{ n^{11/8}, \, \min_{\ell \ge 1} \Big\{ n^{3/2 - 1/4\ell} (\log n)^{\frac{3 \gamma(F,A)}{4 \ell} } \Big\} \bigg\}.$$

For each $m' \le m_0$, we define random variables
$$M_C^\pm(m') = \sum_{m=0}^{m'-1} \bigg[ C_\phi(F)(m) - \frac{e(F) \cdot \Nt_A(F)(m)}{t \cdot n^{3/2}} \pm \, \frac{f_{F,A}(t) \cdot  \Nt_A(F)(n^{3/2})}{n^{3/2}} \bigg]$$
and
$$M_D^\pm(m') = \sum_{m=0}^{m'-1} \bigg[ D_\phi(F)(m) \,-\, \frac{8t \cdot o(F) \cdot \Nt_A(F)(m)}{n^{3/2}} \, \pm \, \frac{C \cdot o(F) \cdot ( t + 1 )}{n^{3/2}} \,  f_{F,A}(t) \Nt_A(F)(n^{3/2}) \bigg].$$
Since the event $\E(m - 1) \cap \Y(m) \cap \Z(m) \cap \Q(m)$ holds for every $m < m_0$, it follows from Lemmas~\ref{Cmart} and~\ref{Dmart} that $M_C^\pm$ and $M_D^\pm$ are both super-/sub-martingale pairs on the interval~$[m_0]$. 

Now, by Lemmas~\ref{Cbound} and~\ref{Dbound}, we have
$$- \, \frac{(\log n)^{e(F) + o(F)}}{n^{3/2}} \cdot  \Nt_A(F)(n^{3/2}) \, \le \, \Delta M^\pm_C(m) + \Delta M^\pm_D(m) \, \le \, \frac{(\log n)^{(1 + 3\eps)\Delta(F,A)} }{\sqrt{n} } \cdot \Nt_A(F)(n^{3/2}),$$
and moreover, noting that $e(F) < 2v_A(F)$ (since $t_A(F) > 0$) and $(\log n)^{\Delta(F,A)} \le n^{2e(F) + 2}$ (since otherwise the proposition is trivial), we have
$$- \, \frac{1}{n^{3/2 - \eps}} \cdot  \Nt_A(F)(n^{3/2}) \, \le \, \Delta M^\pm_C(m) + \Delta M^\pm_D(m) \, \le \, \frac{(\log n)^{\Delta(F,A)} }{n^{1/2 - 2\eps}} \cdot \Nt_A(F)(n^{3/2}).$$
We shall use the first bounds when $\Delta(F,A)$ is not too big, and the second bounds otherwise. Indeed, suppose first that $(\log n)^{3\gamma(F,A)} \le \sqrt{n}$, and observe that
$$m_0 \, \ge \, n^{5/4} (\log n)^{\frac{3 \gamma(F,A)}{4} } \, \gg \, \frac{f_{F,A}(t) \cdot n^{3/2}}{(\log n)^{e(F) + o(F)}},$$
and that $f_{F,A}(t)^2 \sqrt{n} \ge (\log n)^{\frac{3}{2} \gamma(F,A)}$. By Lemma~\ref{Bohmart}, it follows that
$$\Pr\bigg( M_C^-(m) > \frac{1}{4} f_{F,A}(t) \Nt_A(F)(n^{3/2}) \bigg) \, \le \, \exp\left( - \frac{f_{F,A}(t)^2 \sqrt{n}}{(\log n)^{(1 + 4\eps)\Delta(F,A)}} \right) \, \le \, e^{-(\log n)^3},$$
and similarly for $M_C^+$, $M_D^-$ and $M_D^+$. On the other hand, if $(\log n)^{3\gamma(F,A)} \ge \sqrt{n}$ then
$$m_0 \, \ge \, n^{11/8} \, \gg \, f_{F,A}(t) \cdot n^{3/2-\eps},$$
and $f_{F,A}(t)^2 \sqrt{n} \ge (\log n)^{\frac{3}{2} \gamma(F,A)} \gg n^{4\eps} (\log n)^{\Delta(F,A)}$. Thus, by Lemma~\ref{Bohmart}, 
$$\Pr\bigg( M_C^-(m) > \frac{1}{4} f_{F,A}(t) \Nt_A(F)(n^{3/2}) \bigg) \, \le \, \exp\left( - \frac{f_{F,A}(t)^2 n^{1/2-3\eps}}{(\log n)^{\Delta(F,A)}} \right) \, \le \, e^{-n^{\eps}},$$
and similarly for $M_C^+$, $M_D^-$ and $M_D^+$. 

%%%%%%%%%%%%%%%%%%%%%%%%%%%%%%%
\end{comment}

\setcounter{subsection}{1}

\subsection{The number of open edges in a set before time $t = \omega$}

\setcounter{subsection}{1}

Let us finish this section by using Bohman's method to prove~\cite[Lemma~7.12]{FGMO}. The proof is almost identical to that of the bounds on $X_e(m)$ in Proposition~\ref{lem:landbeforetime}. 

\setcounter{subsubsection}{7}
\setcounter{thm}{11}

\begin{lemma}\label{lemma:b:landbeforetime}
Let $S \subseteq V(G_m)$, and let $\n = (A_1,\ldots,A_k)$ be a collection of subsets of $S$. If $|S| \sqrt{n} \le o_\n(S,0) \le n^{5/4}$, then 
$$\Pr\Big( \big\{ |o^*_\n(S,m)| > 1 \big\} \cap \tilde{\n}(S,m) \cap \Y(m) \cap \Q(m) \Big) \, \le \, n^{- |S| n^{4\delta}}$$
for every $m \le \omega \cdot n^{3/2}$. 
\end{lemma}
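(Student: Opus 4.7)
The plan is to apply Bohman's martingale method (Lemma~\ref{mart}) to the monotone-decreasing variable $o_\n(S,m)$, in the spirit of Proposition~\ref{lem:landbeforetime}, with a crucial step-size bound extracted from the event $\tilde{\n}(S,m)$. First I compute the expected one-step change: since each $e \in O_\n(S,m)$ is destroyed precisely when the selected edge $f$ lies in $Y_e(m) \cup \{e\}$,
\begin{equation*}
\Ex\big[\Delta o_\n(S,m) \mid G_m\big] \,=\, -\frac{1}{Q(m)} \sum_{e \in O_\n(S,m)} \big(Y_e(m)+1\big),
\end{equation*}
which under $\Y(m)\cap\Q(m)$ and inside the tracking zone $|o_\n^*(S,m)|\le 1$ equals $-\tfrac{8t}{n^{3/2}}\, o_\n(S,m) \pm O\!\big(f_y(t)\, \tilde o(m)/n^{3/2}\big)$, where $\tilde o(m) := e^{-4t^2} o_\n(S,0)$; this matches the logarithmic derivative of $\tilde o(m)$ up to an error controlled by $f_y(t)$.

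The central new input is the maximum-step bound. If the edge $f=\{u,v\}$ added at step $m+1$ destroys $e = \{u,w\} \in O_\n(S,m)$, then $w \in N_{G_m}(v)\cap S$; if $|N_{G_m}(v)\cap S| \ge n^\delta$, then $\tilde{\n}(S,m)$ forces $N_{G_{m+1}}(v)\cap S \subseteq A_j$ for some $j$, so $\{u,w\}\subseteq A_j$ and hence $e \in O(G_m[A_j])$, contradicting $e \in O_\n(S,m)$. Applying the same argument at $v$ yields $|\Delta o_\n(S,m)| \le 2n^\delta$ whenever $\tilde{\n}(S,m)$ holds.

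For each fixed $m_0 \le \omega \cdot n^{3/2}$ and $0 \le m_1 \le m_0$, I define
\begin{equation*}
M^\pm(m') \,=\, o_\n(S,m') - \tilde o(m') \,\mp\, \sum_{m=m_1}^{m'-1} \frac{C f_y(t)\, \tilde o(m)}{n^{3/2}}, \qquad m_1 \le m' \le m_0,
\end{equation*}
and take $m_1$ to be the most recent time before $m_0$ at which $o_\n$ was on the ``safe'' side of $\tilde o$; on $[m_1,m_0]$ the self-correction in the expected drift then acts in the favourable direction, so $M^+$ (resp.\ $M^-$) is a super- (resp.\ sub-)martingale while $\Y(m)\cap\Q(m)\cap\tilde{\n}(S,m)$ and the tracking event continue to hold. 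The accumulated buffer is bounded by $\omega \cdot Cf_y(\omega)\, o_\n(S,0)$, which is much smaller than $\epsilon(m_0) := g_o(t_0)\tilde o(m_0)$ provided $\omega$ tends to infinity slowly enough (the decisive inequality being $n^{3\delta} \gg e^{(C+2)\omega^2}(\log n)^{3/2}$, an easy consequence of $\omega = o(\sqrt{\log\log n})$).

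Applying Lemma~\ref{mart} with $\alpha = O(n^\delta)$ and $\beta = O(\omega \tilde o(m_0)/n^{3/2})$, and using $\tilde o(m_0) \ge |S|\sqrt{n}/e^{4\omega^2}$ together with the explicit form of $g_o$, the concentration exponent satisfies
\begin{equation*}
\frac{\epsilon(m_0)^2}{C\,\alpha\,\beta\, m_0} \,\ge\, \frac{n^{5\delta+o(1)}\,|S|}{\omega^2\, e^{4\omega^2}} \,\gg\, |S|\, n^{4\delta}\,\log n,
\end{equation*}
which gives the desired bound $n^{-|S|n^{4\delta}}$ after a polynomial union bound over $m_0$, $m_1$, and the signs $\pm$. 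The main obstacle is the maximum-step bound: without the event $\tilde{\n}(S,m)$ a single step could destroy as many as $\Theta(|S|)$ edges of $O_\n(S,m)$, weakening the Azuma-type estimate by a factor of $n^{1-O(\delta)}$ and falling well short of the required exponent.
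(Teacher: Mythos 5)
The key structural ideas are correct — the exact one-step expectation, the crucial observation that $\tilde{\n}(S,m)$ forces $|\Delta o_\n(S,m)| = O(n^\delta)$, and the overall shape of the exponent calculation — but the martingale you build has a genuine gap, and the parameters feeding the concentration inequality are off by factors that only disappear because $\omega$ is tiny. Let me flag the real issue.

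You set $M^\pm(m') = o_\n(S,m') - \tilde o(m') \mp \sum \frac{Cf_y(t)\tilde o(m)}{n^{3/2}}$ and claim that, on the interval after the last ``safe'' time $m_1$, the self-correcting drift makes $M^+$ a super-martingale. There are two problems. First, the compensating drift is too small: under $\Y(m)\cap\Q(m)$ the error in $\Ex[\Delta o_\n(m)] - \Delta\tilde o(m)$ is of order $f_y(t)\tilde Y(n^{3/2}) o_\n / Q(m) \asymp f_y(t) e^{4t^2} o_\n(S,0)/n^{3/2}$, whereas your drift is $f_y(t)\tilde o(m)/n^{3/2} = f_y(t) e^{-4t^2} o_\n(S,0)/n^{3/2}$, which is smaller by $e^{8t^2}$. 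Since $\omega\to\infty$, this is not a constant factor, so $M^+$ already fails to be a super-martingale for $t$ larger than a fixed constant. Second, even with the drift corrected, the Line-of-Peril step does not close: the normalising function $g_o(t)\tilde o(m) = C n^{3\delta-1/4} e^{-2t^2}(\log n)^4 o_\n(S,0)$ is \emph{decreasing} in $t$. So if $o_\n^*(S,m_1) \le 1/2$ and $o_\n^*(S,m_0) > 1$, the unnormalised deviation satisfies only $(o_\n-\tilde o)(m_0) - (o_\n-\tilde o)(m_1) \ge g_o(t_0)\tilde o(m_0) - \tfrac12 g_o(t_1)\tilde o(m_1)$, which can be negative as soon as $e^{2(t_0^2-t_1^2)} > 2$, i.e.\ for any interval of $t$-length of order one. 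Your super-martingale therefore need not increase over $[m_1,m_0]$, and the McDiarmid bound gives nothing.

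The clean fix is to rescale: set $\hat o(m) = e^{4t^2} o_\n(S,m)$, so that $o_\n^*(S,m) = (\hat o(m) - o_\n(S,0))/(g_o(t)\,o_\n(S,0))$ and the threshold $g_o(t)\,o_\n(S,0)$ is now \emph{increasing} in $t$, exactly as in the paper's Bohman-regime estimates for $Y_e$ and $N_\phi(F)$ where the bound is additive of the form $f(t)\,\tilde A(n^{3/2})$. A short calculation using the Product Rule shows $\Ex[\Delta\hat o(m)]$ is $\pm O\big(f_y(t) e^{8t^2} o_\n(S,0)/n^{3/2}\big)$ with no state-dependent leading term — the $-\tfrac{8t}{n^{3/2}} o_\n$ and the $+\tfrac{8t}{n^{3/2}} o_\n$ coming from differentiating $e^{4t^2}$ cancel. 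One can then run Lemma~\ref{Bohmart} directly on $\hat M^\pm(m') = \hat o(m') - o_\n(S,0) \mp \text{drift}$ from $m=0$ with no Line-of-Peril (and no $m_1$), using your step-size bound $|\Delta o_\n|\le 2n^\delta$ to get $|\Delta\hat o|\le O(e^{4\omega^2} n^\delta) + O(\omega e^{4\omega^2} o_\n(S,0)/n^{3/2})$; the exponent becomes $\gtrsim n^{5\delta}|S|/(\omega^2 e^{8\omega^2}(\log n)^{-8})$, which comfortably exceeds $|S|n^{4\delta}\log n$. This is the approach the paper signals in Section~\ref{landbeforetimeSec} (Bohman's method, super/sub-martingale pairs from $m=0$), and it sidesteps both of the issues above.
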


In fact we shall prove the following, slightly stronger statement. Set 
$$f_o(t) \, = \, n^{3\delta} \cdot f_y(t),$$ 
and note that we have $f_o(t) \ll g_o(t) e^{-4t^2}$ for every $m \le \omega \cdot n^{3/2}$, and so the following lemma trivially implies Lemma~\ref{lemma:b:landbeforetime}. 

\begin{lemmaa}\label{lem:openedges:landbeforetime}
Let $S \subseteq V(G_m)$, and let $\n = (A_1,\ldots,A_k)$ be a collection of subsets of $S$. Suppose that $|S| \sqrt{n} \le o_\n(S,0) \le n^{5/4}$, and let $m \le \omega \cdot n^{3/2}$. Then, with probability at least $1 - n^{- |S| n^{4\delta}}$ either $\big( \tilde{\n}(S,m) \cap \Y(m) \cap \Q(m) \big)^c$ holds, or 
\begin{equation}\label{eq:lemma:b:landbeforetime}
o_\n(S,m) \in e^{-4t^2} o_\n(S,0) \pm f_o(t) o_\n(S,0).
\end{equation}
\end{lemmaa}

\begin{proof}
Let $m_0 \le \omega \cdot n^{3/2}$ be minimal such that the event $\tilde{\n}(S,m_0) \cap \Y(m_0) \cap \Q(m_0)$ 
holds, and 
$$o_\n(S,m_0) \not\in e^{-4t_0^2} o_\n(S,0) \pm f_o(t_0) o_\n(S,0),$$
where $t_0 = m_0 \cdot n^{-3/2}$. It follows that, if $m < m_0$, then % we have (see~\cite[Section~7.4]{FGMO})
\begin{align*}
\Ex\big[ \Delta o_\n(S,m) \big] & \, = \, - \frac{1}{Q(m)} \sum_{f \in O_\n(S,m)} Y_f(m) \\
&  \, \in \, - \frac{e^{-4t^2} o_\n(S,0)}{\Qt(m)} \big( 1 \pm e^{4t^2} f_o(t)  \big)^2 \Big( \Yt(m) \pm f_y(t) \Yt(n^{3/2}) \Big)\\
&  \, \subset \, - \frac{8t}{n^{3/2}} \cdot e^{-4t^2} o_\n(S,0) \, \pm \, \sqrt{C} \cdot \bigg( \frac{t + 1}{n^{3/2}} \bigg) \cdot f_o(t) o_\n(S,0),
\end{align*}
since $\Yt(m) =  \Theta\big( t \cdot e^{-4t^2} \cdot \Yt(n^{3/2}) \big)$, and using the event $\Y(m) \cap \Q(m)$ and the bounds~\eqref{eq:lemma:b:landbeforetime}, which hold for all $m < m_0$. It follows that
$$M_{S,\n}^\pm(m') \, = \sum_{m=0}^{m'-1} \bigg[ \Delta o_\n(S,m) + \frac{8t}{n^{3/2}} \cdot e^{-4t^2} o_\n(S,0) \, \pm \, \sqrt{C} \cdot \bigg( \frac{t + 1}{n^{3/2}} \bigg) \cdot f_o(t) o_\n(S,0) \bigg].$$
is a super-/sub-martingale pair on $0 \le m' < m_0$. Moreover, we have
$$- \, n^\delta \, \le \, \Delta o_\n(S,m) \, \le \, 0$$
for every $m < m_0$, since $\tilde{\n}(S,m)$ holds (see~\cite[Section~7.4]{FGMO}), and so
$$- \, 2 \cdot n^\delta \, \le \, \Delta M^\pm_{S,\n}(m) \, \le \, \frac{C \cdot o_\n(S,0)}{n^{3/2}},$$
for every $m < m_0$. Since $m_0 \ge \ds\frac{f_o(t) o_\n(S,0)}{4 \cdot n^{\delta}} \gg \frac{f_o(t) o_\n(S,0)^2}{n^{3/2}}$ and $f_o(t)^2 o_\n(S,0) \ge n^{6\delta} |S|$, it follows by Lemma~\ref{Bohmart} that 
$$\Pr\bigg( M_{X_e}^-(m_0) > \frac{1}{2} f_o(t) o_\n(S,0) \bigg) \, \le \, \exp\left( - \frac{ f_o(t)^2 o_\n(S,0) }{ C^2 \cdot n^\delta } \right) \, \ll \, n^{-|S| n^{4\delta}},$$
and similarly for $M_{X_e}^+$. 
%Note that $\frac{1}{n^{3/2}} \sum_{m=0}^{m'-1} 8t  \cdot \Xt(m) \in \big( 1 - e^{-8t^2} \big) n \pm 1$ and that $\frac{1}{n^{3/2}} \sum_{m=0}^{m'-1} (t + 1) f_x(t) \le \frac{1}{C} \cdot f_x(t')$. 
It follows that, with probability at least $1 - n^{-|S| n^{4\delta}}$, we have
\begin{align*}
o_\n(S,m') & \in o_\n(S,0) - \sum_{m=0}^{m'-1} \bigg[ \frac{8t}{n^{3/2}} \cdot o_\n(S,m) \pm \sqrt{C} \bigg( \frac{t + 1}{n^{3/2}} \bigg) f_o(t) o_\n(S,0) \bigg] \pm \frac{1}{2} f_o(t') o_\n(S,0)\\
& \, \subset \, e^{-4t'^2} o_\n(S,0) \pm f_o(t') o_\n(S,0)
\end{align*}
as required.
\end{proof}

\setcounter{subsubsection}{5}

\section{Section~6: Whirlpools}\label{XYQsec}\label{AwhirlSec}

In this section, we shall prove that the variables $\Xb$, $\Yb$ and $Q$ follow (in expected value) a three-dimensional dynamical system which looks like a whirlpool. Set 
$$\Xs(m) \, = \,  \frac{\Xb(m) - \tilde{X}(m)}{g_q(t) \tilde{X}(m)}, \quad \Ys(m) \, = \,  \frac{\Yb(m) - \Yt(m)}{g_q(t) \Yt(m)} \quad \text{and} \quad \Qs(m) \, = \,  \frac{Q(m) - \Qt(m)}{g_q(t) \Qt(m)}$$
for each $m \in \N$. We shall prove the following lemma.

\setcounter{subsubsection}{6}
\setcounter{subsection}{1}
\setcounter{thm}{1}
\setcounter{thma}{0}

\begin{lemma}\label{whirlpool}
Let $\omega \cdot n^{3/2} < m \le m^*$, and suppose that $\X(m) \cap \Y(m) \cap \Q(m)$ holds. Then
\begin{itemize}
\item[$(a)$] $\Ex \big[ \Delta \Qs(m) \big] \, \in \, \ds\frac{4t}{n^{3/2}} \Big( - 2 \Ys(m) + \Qs(m) \pm o(1) \Big)$.
\item[$(b)$] $\Ex \big[ \Delta \Ys(m) \big] \, \in \, \ds\frac{4t}{n^{3/2}} \Big( - 3 \Ys(m) + 2\Qs(m) \pm o(1) \Big)$.
\item[$(c)$] $\Ex \big[ \Delta \Xs(m) \big] \, \in \, \ds\frac{4t}{n^{3/2}} \bigg( - \Xs(m) - 4\Ys(m) + 4\Qs(m) \pm o(1) \Big)$.
\end{itemize}
\end{lemma}

We begin by calculating the expected step-change in the variables $\Xb(m)$, $\Yb(m)$ and $Q(m)$. %In this section, it will be convenient to assume that $e \not\in Y_e(m)$ for each $e \in O(G_m)$. 

\begin{lemma}\label{Qeq}
For every $m \in \N$, 
$$\Ex \big[ \Delta Q(m) \big] \, = \, - \, \Yb(m) - 1.$$
\end{lemma}

\begin{proof}
This is trivial, since if edge $e$ is chosen in step $m+1$, then $\Delta Q(m) = - Y_e(m) - 1$. 
\end{proof}

\begin{lemma}\label{Ybeq}
Let $\omega \cdot n^{3/2} < m \le m^*$. If $\X(m) \cap \Y(m) \cap \Q(m)$ holds, then
$$\Ex \big[ \Delta \Yb(m) \big] \, \in \,  \frac{1}{Q(m)} \Big(  - \Yb(m)^2 + \Xb(m) - 2 \cdot \Var\big(Y_e(m)\big) \pm  O\big( \Yt(m) \big) \Big).$$
\end{lemma}

\begin{lemma}\label{Xbeq}
Let $\omega \cdot n^{3/2} < m \le m^*$. If $\X(m) \cap \Y(m) \cap \Q(m)$ holds, then
$$\Ex \big[ \Delta \Xb(m) \big] \, \in \, \frac{1}{Q(m)} \bigg( - 2 \cdot \Xb(m) \Yb(m) - 3 \cdot \Cov(X,Y) \pm O\Big( \Xt(m) + \Yt(m)^2 \Big) \bigg).$$
\end{lemma}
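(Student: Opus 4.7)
The plan is to follow the same template used for $\Ex[\Delta \Yb(m)]$ in Lemma~\ref{Ybeq}: write $\Xb(m) = S(m)/Q(m)$, where $S(m) = \sum_{e \in O(G_m)} X_e(m)$, and apply Lemma~\ref{A/B}:
$$\Ex\big[\Delta \Xb(m)\big] \,=\, \frac{\Ex\big[\Delta S(m)\big]}{Q(m)} \,-\, \frac{1}{Q(m)} \Ex\bigg[\frac{S(m+1)\, \Delta Q(m)}{Q(m+1)}\bigg].$$
The bulk of the work is to compute $\Ex[\Delta S(m)]$; the quotient-rule correction can then be handled using Lemma~\ref{Qeq}.

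For $\Ex[\Delta S(m)]$, let $h$ denote the edge added in step $m+1$, and decompose
$$\Delta S(m) \,=\, \sum_{e \in O(G_m)} \Delta X_e(m) \;-\; \sum_{e \in R} X_e(m), \qquad R \,=\, \{h\} \cup Y_h(m),$$
where $R$ is the set of open edges that leave $O$. The first sum is controlled by~\eqref{eq:Xeq}: summing $\Ex[\Delta X_e(m)] = -(2/Q)\sum_{f \in X_e}(Y_f+1)$ over $e$, and using the symmetry $f \in X_e \Leftrightarrow e \in X_f$ together with the identities $\sum_f X_f = Q\Xb$ and $\sum_f X_f Y_f = Q(\Xb\Yb + \Cov(X,Y))$, yields
$$\sum_{e \in O(G_m)} \Ex\big[\Delta X_e(m)\big] \,=\, -\,2\,\Cov(X,Y) \,-\, 2\,\Xb\,\Yb \,-\, 2\,\Xb.$$
The small error from $|Y_f \cap Y_g|$-type overlaps in the bound $|Y_f \cup Y_g| \le Y_f + Y_g$ is absorbed using the event $\Z(m)$, which bounds such intersections by $(\log n)^2$. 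For the second sum, uniformity of $h$ on $O(G_m)$ gives $\Ex[X_h] = \Xb$, and by the symmetry of the $Y$-neighbour relation,
$$\Ex\bigg[\sum_{f \in Y_h(m)} X_f(m)\bigg] \,=\, \frac{1}{Q(m)} \sum_f X_f Y_f \,=\, \Xb\,\Yb + \Cov(X,Y).$$
Combining gives $\Ex[\Delta S(m)] = -3\Xb - 3\Xb\Yb - 3\Cov(X,Y)$, up to an error of order $O(\Xt)$.

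It remains to process the quotient-rule correction. Writing $\Xb(m+1) = \Xb(m) + \Delta \Xb(m)$,
$$\Ex\bigg[\frac{S(m+1)\,\Delta Q(m)}{Q(m+1)}\bigg] \,=\, \Xb(m)\,\Ex[\Delta Q(m)] \,+\, \Ex\big[\Delta \Xb(m)\cdot \Delta Q(m)\big],$$
modulo a smaller-order error from replacing $Q(m+1)$ with $Q(m)$ (which is harmless since $|\Delta Q| \le Y_h + 1 \le 2\Yt$ under $\Y(m)$). By Lemma~\ref{Qeq} the first piece equals $-\Xb\Yb - \Xb$; subtracting this (divided by $Q$) from $\Ex[\Delta S]/Q$ reduces the coefficient of $\Xb\Yb$ from $-3$ to $-2$ and turns the spurious $-3\Xb$ term into $-2\Xb = O(\Xt)$, which yields the main terms of the lemma.

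The main obstacle will be to control the cross term $\Ex[\Delta \Xb(m) \cdot \Delta Q(m)]/Q(m)$ together with the second-order error from the approximation $Q(m+1) \approx Q(m)$; together these produce the $\Yt^2$ contribution to the error bound. Since $|\Delta Q|$ is of order $\Yt$ under $\Y(m)$, and $|\Delta \Xb|$ is controlled via $(|\Delta S| + \Xb|\Delta Q|)/Q$, these terms can be as large as $O(\Yt^2/Q)$ but no larger. Combined with the $O(\Xt)$ error from the $-2\Xb$ term and from the $|Y_f \cap Y_g|$ overlaps, this gives the claimed total error of $O\big((\Xt + \Yt^2)/Q\big)$.
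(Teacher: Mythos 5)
Your overall plan is the same as the paper's: write $\Xb = S/Q$ with $S = \XX = \sum_{e\in O(G_m)} X_e$, compute $\Ex[\Delta S]$, and apply the quotient rule (Lemma~\ref{A/B}). The decomposition $\Delta S = \sum_{e\in O(G_m)} \Delta X_e - \sum_{e\in R} X_e$ with $R = \{h\}\cup Y_h(m)$ is correct, the symmetry computations $\Ex[X_h] = \Xb$ and $\Ex[\sum_{f\in Y_h} X_f] = \Xb\Yb + \Cov(X,Y)$ are correct, and the quotient-rule bookkeeping and cross-term bound check out. The paper instead works directly with the identity $\XX = 6\cdot(\text{number of open triangles})$ and counts destroyed triangles; your route is to sum~\eqref{eq:Xeq} over $e$, which is fine, since the two decompositions of $\Delta\XX$ coincide.

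There is, however, a gap in your handling of the ``overlap'' correction to~\eqref{eq:Xeq}. You invoke the event $\Z(m)$ to bound $|Y_f\cap Y_g|$ by $(\log n)^2$, but $\Z(m)$ is \emph{not} in the hypothesis of Lemma~\ref{Xbeq} (only $\X(m)\cap\Y(m)\cap\Q(m)$). Worse, even if $\Z(m)$ were available, this would not suffice: summing an $O((\log n)^2)$ per-triangle overlap over all $\sim \Xt$ triangles through $e$, then over $e$, and dividing by $Q$ gives a contribution of order $(\log n)^2\Xt/Q$ to $\Ex[\Delta\Xb]$. Since $\Yt^2 = 8t^2\Xt$, we have $(\log n)^2\Xt \gg \Xt + \Yt^2$ for all $\omega < t\le t^*$, so this would blow the claimed error bound. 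The correct observation, which the paper's triangle-count exploits implicitly, is that these overlaps are \emph{identically zero}: if $f,g$ are two sides of an open triangle $\{e,f,g\}$, then $Y_f\cap Y_g = \emptyset$, because any common $Y$-neighbour of $f$ and $g$ would force a common $G_m$-neighbour of the two endpoints of $e$, contradicting the fact that $e$ is open. The same reasoning gives $Y_e\cap Y_f = \emptyset$ and shows that no open triangle can have two or more of its edges land in $R$. With this in hand,~\eqref{eq:Xeq} is exact and your computation $\sum_e \Ex[\Delta X_e] = -2\Cov(X,Y) - 2\Xb\Yb - 2\Xb$ holds without any appeal to $\Z(m)$; the rest of your argument then goes through.
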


Recall first that the $Y$-graph has vertex set $O(G_m)$, and an edge between each pair $\{f,f'\}$ such that $f' \in Y_f(m)$. We begin with a simple observation about this graph.

\begin{obsa}\label{obs:Y:trianglefree}
The $Y$-graph is triangle-free, i.e., for every $m \in \N$, there do not exist three distinct open edges $e$, $f$ and $g$ of $G_m$ with $e \in Y_f(m)$, $f \in Y_g(m)$ and $g \in Y_e(m)$.
\end{obsa}

\begin{proof}
This follows from the fact that $G_m$ is triangle-free. Indeed, let $\{e, f, g \} \subset O(G_m)$ be open edges which form a triangle in the $Y$-graph, and suppose first that they have a common endpoint. Then the other endpoints of $e$, $f$ and $g$ form a triangle in $G_m$, which is a contradiction. On the other hand, if $e$, $f$ and $g$ do not share a common endpoint, then they must form a triangle (since they are pairwise intersecting), which contradicts (e.g.) the assumption that $e \in Y_f(m)$. It follows that no such triple exists, as claimed.
\end{proof}

In order to prove Lemmas~\ref{Ybeq} and~\ref{Xbeq}, we shall use the variables
$$\YY(m) = \sum_{e \in Q(m)} Y_e(m) \qquad \text{and} \qquad \XX(m) = \sum_{e \in Q(m)} X_e(m),$$
which are exactly twice the number of edges in the $Y$-graph, and six times the number of open triangles in $G_m$, respectively. 

We first bound the expected change in $\YY(m)$. 

\begin{lemmaa}\label{YY1}
$$\Ex\big[ \Delta \YY(m) \big] \, = \, \frac{1}{Q(m)} \bigg( \XX(m) \, - \, 2 \sum_{e \in Q(m)} Y_e(m)^2 \bigg).$$
\end{lemmaa}

\begin{proof}
Suppose we add edge $e \in O(G_m)$ in step $m+1$. %, and observe that $\YY(m)$ is exactly twice the number of edges in the $Y$-graph. 
We claim that
\begin{equation}\label{eq:deltaYY}
\Delta \YY(m) \, = \, X_e(m) - 2 \sum_{f \in Y_e(m)} Y_f(m).
\end{equation}
To see this, observe that in step $m+1$ we remove from the $Y$-graph the vertices corresponding to each $f \in Y_e(m)$, and add a matching between the vertices corresponding to $X_e(m)$. Since the $Y$-graph is triangle-free, by Observation~\ref{obs:Y:trianglefree}, and each $f \in Y_e(m)$ is (by definition) a $Y$-neighbour of $e$, it follows that the number of edges of the $Y$-graph which are removed is exactly $\sum_{f \in Y_e(m)} Y_f(m)$. Since $\YY(m)$ is equal to twice the number of edges in the $Y$-graph,~\eqref{eq:deltaYY} follows.

Summing over edges $e \in Q(m)$, it follows that
$$\Ex\big[ \YY(m+1) - \YY(m) \,\big|\, G_m \big] \, = \, \frac{1}{Q(m)} \sum_{e \in Q(m)} \Big[X_e(m) - 2\sum_{f \in Y_e(m)} Y_f(m) \Big ].$$
Now recall that $f \in Y_e(m)$ if and only if $e \in Y_f(m)$, and thus 
$$\sum_{e \in Q(m)} \sum_{f \in Y_e(m)} Y_f(m) \,=\, \sum_{f \in Q(m)} Y_f(m)^2.$$
(Indeed, this is simply the number of walks of length two in the $Y$-graph.) Hence
$$\Ex\big[ \Delta \YY(m) \big] \, = \, \frac{\XX(m)}{Q(m)} \, - \, \frac{2}{Q(m)}\sum_{e \in Q(m)} Y_e(m)^2,$$
as required.
\end{proof}

Recall the following simple lemma, which was stated (but not proved) in~\cite[Section~5]{FGMO}.

\setcounter{subsubsection}{5}
\setcounter{thm}{27}

\begin{lemma}\label{A/B}
$$\Ex \bigg[ \Delta \left( \frac{A(m)}{B(m)} \right) \bigg] \, = \, \frac{\Ex \big[ \Delta A(m) \big]}{B(m)} \, - \, \frac{1}{B(m)}\Ex\Bigg[\frac{A(m+1) \Delta B(m)}{B(m+1)} \,\Big| \, G_m \Bigg]$$
\end{lemma}

\begin{proof}%[Proof of Lemma~\ref{A/B}]
We have
\begin{multline*}
\Ex\Bigg[ \frac{A(m+1)}{B(m+1)} - \frac{A(m)}{B(m)} \,\Big| \, G_m \Bigg] = \Ex\Bigg[\frac{B(m)A(m+1)-B(m+1)A(m)}{B(m)(B(m+1)} \,\Big| \, G_m \Bigg]\\
 = \frac{1}{B(m)} \Ex\Bigg[\frac{B(m+1)\big(A(m+1)-A(m)\big) + \big(B(m)-B(m+1)\big)A(m+1)}{B(m+1)} \,\Big| \, G_m \Bigg]\\
 = \frac{\Ex\big[ \Delta A(m)\big]}{B(m)} \, + \, \frac{1}{B(m)}\Ex\Bigg[\frac{\big(B(m)-B(m+1)\big ) A(m+1)}{B(m+1)} \,\Big| \, G_m \Bigg],
\end{multline*}
as claimed.
\end{proof}

Recall that $\Var\big( Y_e(m) \big) = \Ex\big[ Y_e(m)^2 \big] - \Yb(m)^2$, where the expectation is over the choice of the edge $e \in O(G_m)$, and that the event $\X(m) \cap \Y(m)$ implies that
\begin{equation}\label{eq:recallXYbounds}
X_e(m) \in \big( 2 \pm o(1) \big) e^{-8t^2} n \quad \text{and} \quad Y_e(m) \in \big( 4 \pm o(1) \big) t e^{-4t^2} \sqrt{n}
\end{equation}
for every $e \in O(G_m)$. % and every $\omega \cdot n^{3/2} < m \le m^*$. 
We can now prove Lemma~\ref{Ybeq}.

\begin{proof}[Proof of Lemma~\ref{Ybeq}]
By Lemma~\ref{A/B}, we have
\begin{equation}\label{eq:deltaYb}
\Ex \big[ \Delta \Yb(m) \big] \, = \, \frac{\Ex \big[ \Delta \YY(m) \big]}{Q(m)} \, - \, \frac{1}{Q(m)}\Ex\Bigg[\frac{\YY(m+1) \Delta Q(m)}{Q(m+1)} \Bigg].
\end{equation}
Now, we claim that
%\begin{equation}\label{eq:deltaYb}
$$\frac{\YY(m+1)}{Q(m+1)} \, \in \, \left( 1 \pm \frac{1}{\Yt(m)^2} \right) \frac{\YY(m)}{Q(m)} \, \subset \, \Yb(m) \,\pm\, 1.$$
%\end{equation}
To see this, note that by~\eqref{eq:deltaYY} %Lemma~\ref{YY1} 
and~\eqref{eq:recallXYbounds}, since $\X(m) \cap \Y(m) \cap \Q(m)$ holds we have 
$$\frac{|\Delta \YY(m)|}{\YY(m)} \, \le \, 3 \cdot \frac{\Xt(m) + \Yt(m)^2}{\Yt(m) \cdot \Qt(m)} \, \ll \, \frac{1}{\Yt(m)^2} \quad \text{and} \quad \frac{|\Delta Q(m)|}{Q(m)} \, \le \, 2 \cdot \frac{\Yt(m)}{\Qt(m)} \, \ll \, \frac{1}{\Yt(m)^2}.$$ 
Since $\Ex\big[ \Delta Q(m) \big] = - \Yb(m) - 1$,  and using~\eqref{eq:deltaYb}, it follows that
$$\Ex \big[ \Delta \Yb(m) \big] \, \in \, \frac{\Ex \big[ \Delta \YY(m) \big]}{Q(m)} \, + \, \frac{\Yb(m)^2 \pm O\big( \Yt(m) \big)}{Q(m)}.$$
By Lemma~\ref{YY1}, and since $\Ex\big[ Y_e(m)^2 \big] = \Var\big( Y_e(m) \big) + \Yb(m)^2$, this implies that
\begin{align*}
\Ex \big[ \Delta \Yb(m) \big] & \, \in \, \frac{1}{Q(m)} \left(  \frac{\XX(m)}{Q(m)} \, - \, 2 \Big( \Yb(m)^2 + \Var\big(Y_e(m)\big) \Big) \right) \, + \, \frac{\Yb(m)^2 \pm  O\big( \Yt(m) \big)}{Q(m)}\\
& \, = \, \frac{1}{Q(m)} \Big( \Xb(m) - \Yb(m)^2 - 2 \cdot \Var\big(Y_e(m)\big) \pm  O\big( \Yt(m) \big) \Big),
\end{align*}
as required.
\end{proof}

The proof for $\Xb(m)$ is similar. Recall that $\Cov(X,Y) = \Ex\big[ X_e \cdot Y_e \big] - \Xb \cdot \Yb$, where the expectation is over the (uniformly random) choice of the edge $e \in O(G_m)$. 

\begin{lemmaa}\label{XX1}
If $\X(m) \cap \Y(m)$ holds, then
$$\Ex\big[ \Delta \XX(m) \big] \, \in \, - \frac{3}{Q(m)} \sum_{f \in Q(m)} X_f(m) \cdot Y_f(m) \, \pm \, O\Big( \Xt(m) + \Yt(m)^2 \Big).$$
\end{lemmaa}

\begin{proof}
Observe first that $\XX(m)$ is simply the number of labelled open triangles in $G_m$ (i.e., six times the number of unlabelled open triangles). We claim that if edge $e$ is chosen in step $m + 1$ of the triangle-free process, then
\begin{equation}\label{eq:deltaXX}
- 3 \cdot X_e(m)  \, \le \, \Delta \XX(m) + 3 \sum_{f \in Y_e(m)} X_f(m) \, \le \, 6 \cdot Y_e(m)^2
%\Delta \XX(m) \, \in \, - 3 \sum_{f \in Y_e(m)} X_f(m) \, \pm \, O\Big( \Xt(m) + \Yt(m)^2 \Big).
\end{equation}
To see the lower bound, note that no new open triangles are created, and each edge $f \in Y_e(m) \cup \{e\}$ which is closed in step $m+1$ destroys at most $X_f(m)/2$ unlabelled open triangles. On the other hand, an open triangle is destroyed in two different ways  only if it has two edges in $Y_e(m) \cup \{e\}$, and hence we have double-counted at most $Y_e(m)^2$ unlabelled open triangles, which gives the upper bound in~\eqref{eq:deltaXX}.

Now, summing over edges $e \in Q(m)$, it follows that
$$\Ex\big[ \Delta \XX(m) \big] \, \in \, - \frac{3}{Q(m)} \sum_{e \in Q(m)} \sum_{f \in Y_e(m)} X_f(m) \, \pm \, O\Big( \Xt(m) + \Yt(m)^2 \Big).$$
Since $f \in Y_e(m)$ if and only if $e \in Y_f(m)$, it follows that
$$\sum_{e \in Q(m)} \sum_{f \in Y_e(m)} X_f(m) \,=\, \sum_{f \in Q(m)} X_f(m) \cdot Y_f(m),$$ 
and hence
$$\Ex\big[ \Delta \XX(m) \big] \, \in \, - \frac{3}{Q(m)} \sum_{f \in Q(m)} X_f(m) \cdot Y_f(m) \, \pm \, O\Big( \Xt(m) + \Yt(m)^2 \Big),$$
as required.
\end{proof}

We can now prove Lemma~\ref{Xbeq}.

\begin{proof}[Proof of Lemma~\ref{Xbeq}]
By Lemma~\ref{A/B} we have
$$\Ex \big[ \Delta \Xb(m) \big] \, = \, \frac{\Ex \big[ \Delta \XX(m) \big]}{Q(m)} \, - \, \frac{1}{Q(m)}\Ex\Bigg[\frac{\XX(m+1) \Delta Q(m)}{Q(m+1)} \Bigg],$$
and we have
$$\frac{\XX(m+1)}{Q(m+1)} \, \in \, \left( 1 \pm \frac{1}{\Yt(m)^2} \right) \frac{\XX(m)}{Q(m)} \, \subseteq \, \Xb(m) \,\pm\, 1.$$
To see this, note that by~\eqref{eq:recallXYbounds} and~\eqref{eq:deltaXX}, % Lemma~\ref{XX1} 
since $\X(m) \cap \Y(m) \cap \Q(m)$ holds we have 
$$\frac{|\Delta \XX(m)|}{\XX(m)} \, \le \, 4 \cdot \frac{\Xt(m) \cdot \Yt(m)}{\Xt(m) \cdot \Qt(m)} \, \ll \, \frac{1}{\Yt(m)^2} \quad \text{and} \quad \frac{|\Delta Q(m)|}{Q(m)} \, \le \, 2 \cdot \frac{\Yt(m)}{\Qt(m)} \, \ll \, \frac{1}{\Yt(m)^2}.$$ 
Since $\Ex\big[ \Delta Q(m) \big] = - \Yb(m) - 1$, it follows that
$$\Ex \big[ \Delta \Xb(m) \big] \, \in \, \frac{\Ex \big[ \Delta \XX(m) \big]}{Q(m)} \, + \, \frac{\Xb(m)\Yb(m) \pm O\big( \Xb(m) + \Yb(m) \big)}{Q(m)}.$$
By Lemma~\ref{XX1}, and since $\Ex\big[ X_e \cdot Y_e \big] = \Xb \cdot \Yb + \Cov(X,Y)$, this implies that
$$\Ex \big[ \Delta \Xb(m) \big] \, \in \, \frac{1}{Q(m)} \bigg( - 2 \cdot \Xb(m) \Yb(m) - 3 \cdot \Cov(X,Y) \pm O\Big( \Xt(m) + \Yt(m)^2 \Big) \bigg),$$
as required.
\end{proof}

We can now deduce Lemma~\ref{whirlpool} via a rather tedious calculation. 

\begin{proof}[Proof of Lemma~\ref{whirlpool}] 
Let $\omega \cdot n^{3/2} < m \le m^*$, and suppose that $\X(m) \cap \Y(m) \cap \Q(m)$ holds. By Lemma~\ref{A/B} we have
$$\Ex\big[ \Delta \Qs(m) \big] \, = \, \frac{\Ex \big[ \Delta Q(m) - \Delta \Qt(m) \big]}{g_q(t) \Qt(m)} \, - \, \frac{\Delta \big( g_q(t) \Qt(m) \big)}{g_q(t) \Qt(m)} \cdot \Ex\Big[\Qs(m+1) \,\big| \, G_m \Big].$$
Recall that $\Ex\big[ \Delta Q(m) \big] = - \Yb(m) - 1$ and $\Delta \Qt(m) \in - \Yt(m) \pm 1$, and note that
$$\Delta \big( g_q(t) \Qt(m) \big) \, \in \, - \frac{4t}{n^{3/2}} \cdot g_q(t) \Qt(m) \pm 1,$$
since $g_q(t) \Qt(m)$ is equal to $e^{-2t^2}$ times some function of $n$.  It follows that
$$\bigg( 1 - \frac{\big( 4 + o(1)\big) t}{n^{3/2}} \bigg) \Ex\big[ \Delta \Qs(m) \big] \, \in \, \frac{\Yt(m) - \Yb(m)}{g_q(t) \Qt(m)} + \frac{4t}{n^{3/2}} \cdot \Qs(m)  \pm \frac{3}{g_q(t) \Qt(m)},$$
since the event $\Q(m)$ implies that $|\Ys(m)| + |\Qs(m)| \le 2$, and hence
$$\Ex\big[ \Delta \Qs(m) \big] \, \in \, - \frac{\Ys(m) \cdot \Yt(m)}{\Qt(m)} + \frac{4t}{n^{3/2}} \cdot \Qs(m)  \pm \frac{1}{n^{3/2}} \, \subset \, \frac{4t}{n^{3/2}} \Big( - 2 \Ys(m) + \Qs(m) \pm o(1) \Big),$$
as required.

We turn next to $\Ys(m)$. Observe that, by Lemma~\ref{A/B}, we have
\begin{equation}\label{eq:ExdeltaYs}
\Ex\big[ \Delta \Ys(m) \big] \, = \, \frac{\Ex \big[ \Delta \Yb(m) - \Delta \Yt(m) \big]}{g_q(t) \Yt(m)} \, - \, \frac{\Delta \big( g_q(t) \Yt(m) \big)}{g_q(t) \Yt(m)} \cdot \Ex\Big[\Ys(m+1) \,\big| \, G_m \Big],
\end{equation}
and recall that, by Lemma~\ref{Ybeq}, if $\X(m) \cap \Y(m) \cap \Q(m)$ holds then
\begin{equation}\label{eq:ExdeltaYb}
\Ex \big[ \Delta \Yb(m) \big] \, \in \, \frac{1}{Q(m)} \Big( - \Yb(m)^2 + \Xb(m) - 2 \cdot \Var\big(Y_e(m)\big) \pm  O\big( \Yt(m) \big) \Big).
\end{equation}
Now, since $\Yt(m) = 4t e^{-4t^2} \sqrt{n}$, a simple calculation gives 
\begin{equation}\label{eq:deltaYt}
\Delta \Yt(m) \, \in \, - \left( \frac{8t^2 - 1}{t \cdot n^{3/2}} \right) \Yt(m) \pm \frac{1}{n^2} \, \subseteq \, \frac{-\Yt(m)^2 + \Xt(m)}{\Qt(m)} \pm \frac{1}{n^2},
\end{equation}
and similarly, since $g_q(t) \Yt(m) = 4t e^{-2t^2} n^{1/4} (\log n)^3$ and $t \ge \omega$, we obtain
\begin{equation}\label{eq:delta:gqYt}
\Delta \big( g_q(t) \Yt(m) \big) \in - \left( \frac{4t^2 - 1}{t \cdot n^{3/2}} \right) g_q(t) \Yt(m) \pm \frac{1}{n^2}  \, \subseteq \, - \big( 1 \pm o(1) \big) \cdot \frac{4t}{n^{3/2}} \cdot g_q(t) \Yt(m).
\end{equation}
Moreover, since $\Y(m)$ holds, we have
\begin{equation}\label{eq:varYsmall}
\Var\big( Y_e(m) \big) \, \le \, g_y(t)^2 \Yt(m)^2 \, \ll \, g_q(t) \Yt(m)^2.
\end{equation}
Recalling that $\Yb(m) = \big( 1 + g_q(t) \Ys(m) \big) \Yt(m)$ and $Q(m) = \big( 1 + g_q(t) \Qs(m) \big) \Qt(m)$, it follows from~\eqref{eq:ExdeltaYb},~\eqref{eq:deltaYt} and~\eqref{eq:varYsmall} that
\begin{multline*}
\Ex\big[ \Delta \Yb(m) \big] - \Delta \Yt(m)  \, \in \, - \, \frac{\Yt(m)^2}{Q(m)} \Big( \big( 1 + g_q(t) \Ys(m) \big)^2 - \big( 1 + g_q(t) \Qs(m) \big) \Big) \\
 \, + \, \frac{\Xt(m)}{Q(m)} \Big( \big( 1 + g_q(t) \Xs(m) \big)  - \big( 1 + g_q(t) \Qs(m) \big) \Big) \, \pm \, \frac{o(1) \cdot g_q(t) \Yt(m)^2}{\Qt(m)}. 
\end{multline*}
Observing that $\Xt(m) \ll \Yt(m)^2$, which holds since $t \ge \omega$, we deduce that 
\begin{equation}\label{eq:ExdeltaYb:Yt}
\Ex\big[ \Delta \Yb(m) \big] - \Delta \Yt(m) \, \in \, \frac{g_q(t) \Yt(m)^2}{Q(m)} \Big( - 2 \Ys(m) + \Qs(m) \pm o(1) \Big)
\end{equation}
if $|\Xs(m)| + |\Ys(m)| + |\Qs(m)| = O(1)$, which follows from the event $\Q(m)$.

Now, combining~\eqref{eq:ExdeltaYs},~\eqref{eq:delta:gqYt} and~\eqref{eq:ExdeltaYb:Yt}, we obtain
$$\Ex\big[ \Delta \Ys(m) \big] \in \frac{\Yt(m)}{Q(m)} \Big( - 2 \Ys(m) + \Qs(m) \pm o(1) \Big)  + \big( 1 \pm o(1) \big) \frac{4t}{n^{3/2}} \Big( \Ys(m) + \Ex\big[ \Delta \Ys(m) \big] \Big),$$
which easily implies that 
$$\Ex\big[ \Delta \Ys(m) \big] \, \in \, \frac{4t}{n^{3/2}} \Big( - 3 \Ys(m) + 2 \Qs(m) \pm o(1) \Big),$$
as required.

Finally, we turn to $\Xs(m)$. By Lemma~\ref{A/B}, we have
\begin{equation}\label{eq:ExdeltaXs}
\Ex\big[ \Delta \Xs(m) \big] \, = \, \frac{\Ex \big[ \Delta \Xb(m) - \Delta \Xt(m) \big]}{g_q(t) \Xt(m)} \, - \, \frac{\Delta \big( g_q(t) \Xt(m) \big)}{g_q(t) \Xt(m)} \cdot \Ex\Big[ \Xs(m+1) \,\big| \, G_m \Big],
\end{equation}
and by Lemma~\ref{Xbeq}, if $\X(m) \cap \Y(m) \cap \Q(m)$ holds then
\begin{equation}\label{eq:ExdeltaXb}
\Ex \big[ \Delta \Xb(m) \big] \, \in \, \frac{1}{Q(m)} \bigg( - 2 \cdot \Xb(m) \Yb(m) - 3 \cdot \Cov(X,Y) \pm O\Big( \Xt(m) + \Yt(m)^2 \Big) \bigg).
\end{equation}
Now, since $\Xt(m) = 2 e^{-8t^2} n$, a simple calculation gives 
\begin{equation}\label{eq:deltaXt}
\Delta \Xt(m) \, \in \, - \frac{16t}{n^{3/2}} \cdot \Xt(m) \pm \frac{1}{n^2} \, \subseteq \, - \frac{2 \cdot \Xt(m) \cdot \Yt(m)}{\Qt(m)} \pm \frac{o(1)}{n^{3/2}},
\end{equation}
and similarly, since $g_q(t) \Xt(m) = 2 e^{-6t^2} n^{3/4} (\log n)^3$ and $t \ge \omega$, we obtain
\begin{equation}\label{eq:delta:gqXt}
\Delta \big( g_q(t) \Xt(m) \big) \in - \frac{12t}{n^{3/2}} \cdot g_q(t) \Xt(m) \pm \frac{1}{n^2}  \, \subseteq \, - \big( 1 \pm o(1) \big) \cdot \frac{12t}{n^{3/2}} \cdot g_q(t) \Xt(m).
\end{equation}
Moreover, since $\X(m) \cap \Y(m)$ holds, we have
\begin{equation}\label{eq:covXYsmall}
\Cov\big( X_e(m), Y_e(m) \big) \, \le \, g_x(t) \Xt(m) \cdot g_y(t) \Yt(m) \, \ll \, g_q(t) \Xt(m) \Yt(m),
\end{equation}
and note also that $\Xt(m) + \Yt(m)^2 \ll  g_q(t) \Xt(m) \Yt(m)$ for every $m \le m^*$. 

Combining~\eqref{eq:ExdeltaXb},~\eqref{eq:deltaXt} and~\eqref{eq:covXYsmall}, it follows that $\Ex\big[ \Delta \Xb(m) \big] - \Delta \Xt(m)$ is contained in
$$\frac{2 \cdot \Xt(m) \Yt(m)}{Q(m)} \Big( - \big( 1 + g_q(t) \Xs(m) \big) \big( 1 + g_q(t) \Ys(m) \big) + \big( 1 + g_q(t) \Qs(m) \big)  \pm o(1) \cdot g_q(t) \Big),$$
and hence
\begin{equation}\label{eq:ExdeltaXb:Xt}
\Ex\big[ \Delta \Xb(m) \big] - \Delta \Xt(m) \, \in \, \frac{2 \cdot g_q(t) \Xt(m) \Yt(m)}{Q(m)} \Big( - \Xs(m) -  \Ys(m) 
+ \Qs(m) \pm o(1) \Big).
\end{equation}

Finally, it follows from~\eqref{eq:ExdeltaXs},~\eqref{eq:delta:gqXt} and~\eqref{eq:ExdeltaXb:Xt} that
\begin{align*}
& \Ex\big[ \Delta \Xs(m) \big] \in \frac{2 \cdot \Yt(m)}{Q(m)} \Big( - \Xs(m) -  \Ys(m) 
+ \Qs(m) \pm o(1) \Big) \\
& \hspace{5cm} + \, \big( 1 \pm o(1) \big) \cdot \frac{12t}{n^{3/2}} \cdot \Big( \Xs(m) + \Ex\big[ \Delta \Xs(m) \big] \Big),
\end{align*}
which, since $|\Xs(m)| + |\Ys(m)| + |\Qs(m)| = O(1)$, by $\Q(m)$, easily implies that 
$$\Ex\big[ \Delta \Xs(m) \big] \, \in \,  \frac{4t}{n^{3/2}} \Big( - \Xs(m) - 4 \Ys(m) + 4 \Qs(m) \pm o(1) \Big),$$
as required.
\end{proof}

Finally, let's prove Lemma~\ref{XYQalpha}. Recall that 
$$\left( \begin{array}{c}
\Ys \\
\Qs
\end{array} \right) \, = \, 
\eps \left( \begin{array}{cc}
4 & 5 \\
4 & 3
\end{array} \right)
\left( \begin{array}{c}
\lambda \\
\mu
\end{array} \right)$$
and $\Lambda(m) \, = \, \lambda(m)^2 + \mu(m)^2$.
%where obviously $\lambda$ and $\mu$ are functions of $m$. 

\setcounter{subsubsection}{6}
\setcounter{thm}{6}

\begin{lemma}\label{XYQalpha}
Let $\omega \cdot n^{3/2} < m \le m^*$, and suppose that $\X(m) \cap \Y(m) \cap \Q(m)$ holds. Then
$$| \Delta \Xs(m)| + |\Delta \Ys(m)| + |\Delta \Qs(m)| \, \le \, \frac{(\log n)^3}{g_q(t) \cdot n^{3/2}},$$
and hence
$$ |\Delta \Lambda(m) | \, \le \, \frac{(\log n)^4}{g_q(t) \cdot n^{3/2}} \qquad \text{and} \qquad  \Ex\big[ | \Delta \Lambda(m) | \big] \, \le \, \frac{(\log n)^4}{g_q(t) \cdot n^{3/2}}.$$
\end{lemma}

We shall use the following bound, which follows easily from Lemma~\ref{lem:chainstar}. 

\begin{lemmaa}\label{lem:chainstarXYQ}
Let $A \in \{\Xb,\Yb,Q\}$ and let $\At$ be the corresponding member of $\{\Xt,\Yt,\Qt\}$. For every $\omega \cdot n^{3/2} < m \le m^*$, if $\Q(m)$ holds, then
$$| \Delta A^*(m) | \, \le \, \frac{3}{g_q(t)} \cdot  \left( \frac{| \Delta A(m) |}{\At(m)} \,+\, \frac{\log n}{n^{3/2}} \right).$$
\end{lemmaa}

\begin{proof}
Since $\At(m)$ is equal to either $te^{-kt^2}$ or $e^{-k t^2}$ times some function of $n$, where $k \in \{4,8\}$, $g_q(t) \At(m)$ is equal to either $t e^{-(k-2)t^2}$ or $e^{-(k-2)t^2}$  times some function of $n$, and $t > \omega$, we have
$$\Delta \At(m) \in \frac{- 2k t \pm o(1)}{n^{3/2}} \cdot  \At(m) \quad \text{and} \quad \Delta \big( g_q(t) \At(m) \big) \in  \frac{- (2k - 4)t \pm o(1)}{n^{3/2}} \cdot g_q(t) \At(m),$$
and hence
$$|\Delta \At(m) | \, \ll \, \frac{\log n}{n^{3/2}} \cdot \At(m)  \quad \text{and} \quad |\Delta \big( g_q(t) \At(m) \big) | \ll \frac{\log n}{n^{3/2}} \cdot g_q(t) \At(m).$$ 
Moreover, the event $\Q(m)$ implies that $A(m) \le \big( 1 + g_q(t) \big) \At(m)$, and $g_q(t) \ll 1$. Thus, applying Lemma~\ref{lem:chainstar}, we obtain
$$| \Delta A^*(m) | \, \le \, \frac{3}{g_q(t)} \left( \frac{| \Delta A(m) |}{\At(m)} \,+\, \frac{\log n}{n^{3/2}} \right),$$
as claimed.
\end{proof}

%We shall also use the following observation.

%\begin{obs}\label{A/Bdet}
%$\ds\Delta \left( \frac{A(m)}{B(m)} \right) \, = \, \frac{\Delta A(m)}{B(m)} \, - \, \frac{A(m+1) \Delta B(m)}{B(m) B(m+1)}$.
%\end{obs}

\begin{proof}[Proof of Lemma~\ref{XYQalpha}]
By Lemma~\ref{lem:chainstarXYQ}, we have 
$$| \Delta \Xs(m)| \,+\, |\Delta \Ys(m)| \,+\, |\Delta \Qs(m)| \, \le \, \frac{6}{g_q(t)}  \left( \frac{ | \Delta \Xb(m)| }{ \Xt(m) } + \frac{ | \Delta \Yb(m)| }{ \Yt(m) } + \frac{ | \Delta Q(m)| }{ \Qt(m) } + \frac{\log n}{n^{3/2}} \right),$$
so it will suffice to prove that 
$$\max\bigg\{ \frac{ | \Delta \Xb(m)| }{ \Xt(m) }, \frac{ | \Delta \Yb(m)| }{ \Yt(m) }, \frac{ | \Delta Q(m)| }{ \Qt(m) } \bigg\} \, \le \,  \frac{\log n}{n^{3/2}}.$$
For $Q(m)$, the bound is trivial, since if $\Y(m)$ holds then $\Delta Q(m) \in - ( 1 \pm \eps ) \Yt(m)$. %By Lemma~\ref{lem:chainstarXYQ}, it follows that
%$$| \Delta \Qs(m) | \, \le \, \frac{2}{g_q(t)} \cdot  \left( \frac{ | \Delta Q(m)| }{ \Qt(m) } \,+\, \frac{\log n}{n^{3/2}} \right) \, \le \, \frac{3}{g_q(t)} \cdot \frac{\log n}{n^{3/2}},$$
%since $\Yt(m) / \Qt(m) \ll \log n / n^{3/2}$. 
To prove the bound for $\Yb(m)$, recall first (see~\eqref{eq:deltaYY}) that
$$\Delta \YY(m) \, = \, X_e(m) - 2 \sum_{f \in Y_e(m)} Y_f(m),$$ 
and observe that therefore, since $\X(m) \cap \Y(m)$ holds, 
$$\Delta \YY(m) \, \in \, (1 \pm \eps) \Big( \Xt(m) - 2 \cdot \Yt(m)^2 \Big).$$ %\pm g_x(t) \Xt(m) \pm 5 g_y(t) \Yt(m)^2.$$
Since $\Xt(m) \ll \Yt(m)^2$ for $t \ge \omega$, and $\Delta Q(m) \in - ( 1 \pm \eps ) \Yt(m)$, it follows that
$$\Yb(m+1) \, = \, \frac{\YY(m+1)}{Q(m+1)} \, \in \, \bigg( 1 \pm \frac{\Yt(m)}{\Qt(m)} \bigg) \bigg( \frac{\YY(m) + \Xt(m) - 2 \cdot \Yt(m)^2}{Q(m) - \Yt(m)} \bigg),$$
%$$\frac{\YY(m+1)}{Q(m+1)} \, \in \, \bigg( 1 \pm \frac{C g_y(t) \Yt(m)}{\Qt(m)} \bigg) \bigg( \frac{\YY(m) + \Xt(m) - 2 \cdot \Yt(m)^2}{Q(m) - \Yt(m)} \bigg),$$
and hence
$$|\Delta \Yb(m)| \, \le \, \bigg( 1 + \frac{\Yt(m)}{\Qt(m)} \bigg) \bigg( \frac{\YY(m) + \Xt(m) - 2 \cdot \Yt(m)^2}{Q(m) - \Yt(m)} \bigg) - \Yb(m) \, \le \, \frac{3 \cdot \Yt(m)^2}{\Qt(m)},$$
as required.

Finally, to prove the bound for $\Xb(m)$, recall from~\eqref{eq:deltaXX} that
$$- 3 \cdot X_e(m)  \, \le \, \Delta \XX(m) + 3 \sum_{f \in Y_e(m)} X_f(m) \, \le \, 6 \cdot Y_e(m)^2$$
and observe that therefore, since $\X(m) \cap \Y(m)$ holds, 
$$\Delta \XX(m) \, \in \, - (3 \pm \eps) \cdot \Xt(m) \Yt(m).$$ %\pm g_x(t) \Xt(m) \pm 5 g_y(t) \Yt(m)^2.$$
As before, it follows that
$$\Xb(m+1) \, = \, \frac{\XX(m+1)}{Q(m+1)} \, \in \, \bigg( 1 \pm \frac{\Yt(m)}{\Qt(m)} \bigg) \bigg( \frac{\XX(m) - 3 \cdot \Xt(m) \Yt(m)}{Q(m) - \Yt(m)} \bigg),$$
and hence
$$|\Delta \Xb(m)| \, \le \, \bigg( 1 + \frac{\Yt(m)}{\Qt(m)} \bigg) \bigg( \frac{\XX(m) - 3 \cdot \Xt(m) \Yt(m)}{Q(m) - \Yt(m)} \bigg) - \Xb(m) \, \le \, \frac{4 \cdot \Xt(m) \Yt(m)}{\Qt(m)},$$
as required. As noted above, it follows from Lemma~\ref{lem:chainstarXYQ} and our bounds on $|\Delta Q(m)|$, $|\Delta \Yb(m)|$ and $|\Delta \Xb(m)|$ that
$$| \Delta \Xs(m)| + |\Delta \Ys(m)| + |\Delta \Qs(m)| \, \le \, \frac{(\log n)^3}{g_q(t) \cdot n^{3/2}}.$$

Finally, in order to deduce the claimed bounds on $|\Delta \Lambda(m) |$ and $\Ex\big[ | \Delta \Lambda(m) | \big]$, simply note that
$$|\Delta \Lambda(m) | \, \le \, 2 \Big( \big| \lambda(m) \cdot \Delta \lambda(m) \big| + \big| \mu(m) \cdot \Delta \mu(m) \big| \Big) + |\Delta \lambda(m)|^2 + |\Delta \mu(m)|^2,$$
and that 
$$|\Delta \lambda(m)| + |\Delta \mu(m)| = O\big( |\Delta \Ys(m)| + |\Delta \Qs(m)| \big) \quad \text{and} \quad |\lambda(m)| + |\mu(m)| = O(1),$$
since the event $\Q(m)$ holds. It follows immediately that
$$ |\Delta \Lambda(m) | \, \le \, \frac{(\log n)^4}{g_q(t) \cdot n^{3/2}},$$
and therefore that the same bound holds for $\Ex\big[ | \Delta \Lambda(m) | \big]$, as required.
\end{proof}

\section{The martingale inequalities}\label{AppMartSec}

In this section, for completeness, we shall give the proof our main martingale lemma~\cite[Lemma~3.1]{FGMO}. The proof below is taken %\footnote{Do we need permission?} 
from the survey of McDiarmid~\cite{Colin}. Recall that we assume throughout that for each martingale $M$ we consider, $M(m)$ depends only on the graph $G_{m+r}$ for each $m \in [0,s]$, for some (fixed) $r \in \N$ depending on~$M$. 

\setcounter{subsubsection}{3}
\setcounter{subsection}{1}
\setcounter{thm}{0}
\setcounter{thma}{0}

\begin{lemma}[Theorem~3.15 of~\cite{Colin}]\label{mart}
Let $M$ be a super-martingale, defined on $[0,s]$, such that 
\begin{equation}\label{eq:martlemma}
|\Delta M(m)| \le \alpha \qquad  \text{and} \qquad \Ex\big[ |\Delta M(m)| \big] \le \beta
\end{equation}
for every $m \in [0,s-1]$. Then, for every $0 \le a \le \beta s$,
$$\Pr\big( M(s) > M(0) + a \big) \, \le \, \exp\left( - \frac{a^2}{4 \alpha \beta s} \right).$$
\end{lemma}

We begin with a straightforward observation.

\begin{lemmaa}[Lemma~2.8 of~\cite{Colin}]\label{Colin:28}
Let $X$ be a random variable with $\Ex[X] \le 0$ and $X \le b$, and for each $x \in \RR \setminus \{0\}$, set
%$$g(x) \, = \, \sum_{k=0}^\infty \frac{x^k}{(k+2)!} \, = \, \frac{e^x - x - 1}{x^2}.$$
$$g(x) \, = \, \frac{e^x - x - 1}{x^2}.$$
The function $g$ is increasing on $\RR \setminus \{0\}$, and 
\begin{equation}\label{eq:Exofetothex}
\Ex\big[ e^X \big] \, \le \, \exp\Big( g(b) \Ex\big[ X^2 \big] \Big).
\end{equation}
\end{lemmaa}

\begin{proof}
The fact that $g$ is increasing follows from simple calculus. Indeed, for each $x \in \RR \setminus \{0\}$,
$$g'(x) \, = \, \frac{(x-2)e^x + x + 2}{x^3} \, \ge \, 0$$
since $h(x) = (x-2)e^x + x + 2$ satisfies $h(0) = 0$ and $h'(x) = (x - 1)e^x + 1 \ge 0$ for every $x \in \RR$. To see the latter bound, simply note that $h'(0) = 0$ and $h''(x) = x e^x$. 

To deduce~\eqref{eq:Exofetothex}, simply note that
$$e^x \, = \, 1 + x + x^2 g(x) \, \le \, 1 + x + x^2 g(b)$$
for every $x \le b$ (setting $g(0) = 0$). Since $\Ex[X] \le 0$ and $X \le b$, it follows immediately that 
$$\Ex\big[ e^X \big] \, \le \, 1 + g(b) \Ex[X^2] \, \le \, \exp\Big( g(b) \Ex[X^2] \Big),$$
as required.
\end{proof}

We next prove another relatively straightforward preliminary lemma. %, whose origins may be traced back to~\cite[Lemma~3.4]{Kahn}. 
\begin{comment}
%%%%%%%%%%%%%%%%%%%%%%%%%%%%%%%
\begin{lemma}[Lemma~3.16 of~\cite{Colin}]\label{Colin:316}
Let $M$ be a super-martingale defined on $[0,s]$, and let $r \in \N$. Suppose that $M(m)$ depends only on $G_{m+r}$ for each $m \in [0,s]$, and that $\Delta M(m) \le b$ for every $m \in [0,s-1]$. , and let $Z \in \{0,1\}$ be a random variable. 
Then, for any $h \in \RR$,
\begin{equation}\label{eq:secondcolinlemma}
\Ex\Big[ Z \cdot e^{h( M(s) - M(0) )} \,\Big|\, G_r \Big] \, \le \, \sup \bigg( Z \prod_{m=0}^{s-1} \Ex\Big( e^{h \Delta M(m)} \,\big|\, G_{m+r} \Big) \,\Big|\, G_r \bigg).
\end{equation}
\end{lemma}

\begin{proof}
The proof is by induction on $s$. When $s = 0$ the result is trivial, since~\eqref{eq:secondcolinlemma} reduces to $\Ex[ Z ] \le \sup(Z)$. So let $s \ge 1$, and suppose the result holds for $s -1$. Set $A = Z \cdot e^{h (M(s) - M(1))}$ and 
$$B \, = \, Z \prod_{m=1}^{s-1} \Ex\Big( e^{h \Delta M(m)} \,\big|\, G_{m+r} \Big).$$
By the induction hypothesis, we have $\Ex\big[ A \,|\, G_{r+1} \big] \le \Ex\big[ B \,|\, G_{r+1} \big]$, and note that trivially $\sup\big( B \,|\, G_r \big) \le \sup\big( B \,|\, G_{r+1} \big)$. It follows that
\begin{multline*}
\Ex\Big[ Z \cdot e^{h( M(s) - M(0) )} \,\Big|\, G_r \Big] \, = \, \Ex\Big[ e^{h \Delta M(0)} \Ex\big[ A \,\big|\, G_{r+1} \big] \,\Big|\, G_r \Big] \\
\, \le \, \Ex\Big[ e^{h \Delta M(0)} \sup\big( B \,\big|\, G_{r+1} \big) \,\Big|\, G_r \Big] \, \le \, \Ex\Big[ e^{h \Delta M(0)} \sup\big( B \,\big|\, G_r \big) \,\Big|\, G_r \Big] \\
 \, = \, \sup\big( B \,\big|\, G_r \big) \Ex\big[ e^{h \Delta M(0)}  \,\big|\, G_r \big] \, = \, \sup \bigg( Z \prod_{m=0}^{s-1} \Ex\Big( e^{h \Delta M(m)} \,\big|\, G_{m+r} \Big) \,\Big|\, G_r \bigg),
\end{multline*}
as required.
\end{proof}
%%%%%%%%%%%%%%%%%%%%%%%%%%%%%%%
\end{comment}

\begin{lemmaa}[Lemma~3.16 of~\cite{Colin}]\label{Colin:316}
Let $M$ be a super-martingale defined on $[0,s]$, and let $r \in \N$. Suppose that $M(m)$ depends only on $G_{m+r}$ for each $m \in [0,s]$. Then, for any $h \in \RR$,
\begin{equation}\label{eq:secondcolinlemma}
\Ex\Big[ e^{h( M(s) - M(0) )} \,\Big|\, G_r \Big] \, \le \, \sup \bigg( \prod_{m=0}^{s-1} \Ex\Big[ e^{h \Delta M(m)} \,\big|\, G_{m+r} \Big] \,\Big|\, G_r \bigg).
\end{equation}
\end{lemmaa}

\begin{proof}
The proof is by induction on $s$. When $s = 0$ the result is trivial, since~\eqref{eq:secondcolinlemma} reduces to $1 \le 1$. So let $s \ge 1$, and suppose the result holds for $s -1$. Set $A = e^{h (M(s) - M(1))}$ and 
$$B \, = \, \prod_{m=1}^{s-1} \Ex\Big[ e^{h \Delta M(m)} \,\big|\, G_{m+r} \Big].$$
By the induction hypothesis, we have $\Ex\big[ A \,|\, G_{r+1} \big] \le \Ex\big[ B \,|\, G_{r+1} \big]$, and note that trivially $\sup\big( B \,|\, G_r \big) \le \sup\big( B \,|\, G_{r+1} \big)$. It follows that
\begin{multline*}
\Ex\Big[ e^{h( M(s) - M(0) )} \,\Big|\, G_r \Big] \, = \, \Ex\Big[ e^{h \Delta M(0)} \Ex\big[ A \,\big|\, G_{r+1} \big] \,\Big|\, G_r \Big] \\
\, \le \, \Ex\Big[ e^{h \Delta M(0)} \sup\big( B \,\big|\, G_{r+1} \big) \,\Big|\, G_r \Big] \, \le \, \Ex\Big[ e^{h \Delta M(0)} \sup\big( B \,\big|\, G_r \big) \,\Big|\, G_r \Big] \\
 \, = \, \sup\big( B \,\big|\, G_r \big) \Ex\big[ e^{h \Delta M(0)}  \,\big|\, G_r \big] \, = \, \sup \bigg( \prod_{m=0}^{s-1} \Ex\Big[ e^{h \Delta M(m)} \,\big|\, G_{m+r} \Big] \,\Big|\, G_r \bigg),
\end{multline*}
as required.
\end{proof}

We shall use one more trivial observation, which is an immediate consequence of~\cite[Lemma~2.4]{Colin}. (Here $\log$ denotes the natural logarithm.)

\begin{obsa}\label{Colin:24}
For every $0 \le x \le 1$, 
$$(1 + x) \log(1 + x) - x \, \ge \, \frac{x^2}{4}.$$
\end{obsa}

We are now ready to prove Lemma~\ref{mart}.

\begin{proof}[Proof of Lemma~\ref{mart}]
Note that $\Ex\big[ \Delta M(m) \big] \le 0$, since $M$ is a super-martingale, and recall that $|\Delta M(m)| \le \alpha$. Applying Lemma~\ref{Colin:28} to the random variable $X = h \cdot \Delta M(m)$, it follows that
\begin{equation}\label{eq:martproof1}
\Ex\Big[ e^{h \Delta M(m)} \,\big|\, G_{m+r} \Big] \, \le \, \exp\Big( h^2 g(h \alpha ) \Ex\big[ \big( \Delta M(m) \big)^2 \big] \Big).
\end{equation}
Next, note that
\begin{equation}\label{eq:martproof2}
\Ex\big[ \big( \Delta M(m) \big)^2 \big] \, \le \, \alpha \beta,
\end{equation}
since $|\Delta M(m)| \le \alpha$ and $\Ex\big[ |\Delta M(m)| \big] \le \beta$. 

Now, combining Lemma~\ref{Colin:316} with~\eqref{eq:martproof1} and~\eqref{eq:martproof2}, we obtain
\begin{align*}
\Ex\Big[ e^{h( M(s) - M(0) )} \,\Big|\, G_r \Big] & \, \le \, \sup \bigg( \prod_{m=0}^{s-1} \Ex\Big[ e^{h \Delta M(m)} \,\big|\, G_{m+r} \Big] \,\Big|\, G_r \bigg)\\
& \, \le \, \sup \bigg( \prod_{m=0}^{s-1} \Ex\Big[ \exp\Big( h^2 g(h \alpha ) \Ex\big[ \big( \Delta M(m) \big)^2 \big] \Big) \,\Big|\, G_{m+r} \Big] \,\Big|\, G_r \bigg) \\
& \, \le \, \exp\Big( h^2 g(h \alpha ) \alpha \beta s \Big).
\end{align*}
Thus, for any $h > 0$, by Markov's inequality, 
\begin{align*}
& \Pr\big( M(s) > M(0) + a \big) \, = \, \Pr \Big( e^{h( M(s) - M(0) )} \ge e^{ha} \Big)\\
& \hspace{2cm} \, \le \, e^{-ha} \Ex\Big[ e^{h( M(s) - M(0) )} \,\Big|\, G_r \Big] \, \le \, \exp\Big( -ha + h^2 g(h \alpha ) \alpha \beta s \Big).
\end{align*}
Setting $h = \frac{1}{\alpha} \log \big( \frac{a + \beta s}{\beta s} \big)$, and noting that
\begin{align*}
-ha + h^2 g(h \alpha ) \alpha \beta s & \, = \, - \frac{a}{\alpha} \log \bigg( \frac{a + \beta s}{\beta s} \bigg) + \frac{\beta s}{\alpha} \bigg( \frac{a + \beta s}{\beta s} - \log \bigg( \frac{a + \beta s}{\beta s} \bigg) - 1 \bigg)\\
& %\, = \, -\, \frac{a + \beta s}{\alpha} \log\bigg( \frac{a + \beta s}{\beta s} \bigg) + \frac{a}{\alpha} 
\, = \, \frac{\beta s}{\alpha} \bigg( -\, \bigg( 1 + \frac{a}{\beta s} \bigg) \log\bigg( 1 + \frac{a}{\beta s} \bigg) + \frac{a}{\beta s} \bigg).
\end{align*}
Finally, applying Observation~\ref{Colin:24} with $x = a / \beta s$, it follows that
$$\Pr\big( M(s) > M(0) + a \big) \, \le \, \exp\bigg( - \frac{\beta s}{\alpha} \cdot \frac{1}{4} \bigg( \frac{a}{\beta s} \bigg)^2 \bigg) \, = \, \exp\bigg( - \frac{a^2}{4 \alpha \beta s} \bigg),$$
since $a \le \beta s$, as required.
\end{proof}

\begin{comment}
%%%%%%%%%%%%%%%%%%%%%%%%%%%%%%%%
\newpage

\section*{Glossary}

Basics:

\begin{itemize}
\item[$G_m$] The random graph produced after $m$ steps of the triangle-free process.
\item[$G_{n,\triangle}$] The (random) maximal triangle-free graph obtained via the triangle-free process on $[n]$.
\item[$O(G_m)$] The set of open edges in the graph $G_m$. An edge is said to be closed if it is either in $E(G_m)$, or lies in the neighbourhood of a vertex of $G_m$, and is said to be open otherwise. 
\item[$E(G_m)$] The set of edges of the graph $G_m$.
\item[]
\item[]
\item[]
\item[]
\item[]
\item[]
\item[$R(3,k)$] The Ramsey number of the triangle and the $k$-clique. 
\item[$\alpha(G)$] The size of the largest independent set in the graph $G$.
\end{itemize}

Numbers:

\begin{itemize}
\item[$n$] The number of vertices of the graphs $G_m$ and $G_{n,\triangle}$; a sufficiently large integer, $n \ge n_0(\eps,C,\omega)$.
\item[$\eps$] An arbitrary, sufficiently small positive real number. It controls the point at which we lose control of the triangle-free process, via the parameters $m^*$ and $t^*$.
\item[$m^*$] The number of steps for which we control the triangle-free process, 
$$m^* = \bigg( \ds\frac{1}{2\sqrt{2}} - \eps \bigg) n^{3/2} \sqrt{\log n}.$$ 
\item[$t^*$] The \emph{time} at which we lose control of the triangle-free process, $t^* = n^{-3/2} \cdot m^*$. 
\item[$C$] A large constant, $C = C(\eps)$, which depends on $\eps$, and is fixed throughout the proof. 
\item[$\omega$] A function $\omega = \omega(n)$ which goes to infinity slowly as $n \to \infty$, and is fixed throughout the proof. If the reader prefers, she can choose to set $\omega(n) = \log\log\log n$.
\item[]
\item[]
\end{itemize}

Parameters of the graph $G_m$:

\begin{itemize}
\item[$Q(m)$] The number of open edges in the graph $G_m$, $Q(m) = |O(G_m)|$.
\item[$X_e(m)$] 
\item[$Y_e(m)$]
\item[$Z_e(m)$]
\item[$N_\phi(F)(m)$] 
\item[]
\item[]
\item[]
\item[]
\end{itemize}

Graph structures:

\begin{itemize}
\item[$F$] This generally denotes a graph structure. If so, then $F$ consists of a set of vertices $V(F)$, and disjoint sets $E(F) \subset {V(F) \choose 2}$ and $O(F) \subset {V(F) \choose 2}$ of edges and open edges respectively. 
\item[$X_e(m)$] 
\item[$Y_e(m)$]
\item[$Z_e(m)$]
\item[]
\item[]
\item[]
\end{itemize}

%%%%%%%%%%%%%%%%%%%%%%%%%%%%%%%%%
\end{comment}